\newtheorem{theorem}{Theorem}[chapter]
\newtheorem{proposition}[theorem]{Proposition}
\newtheorem{lemma}[theorem]{Lemma}
\theoremstyle{definition}
\newtheorem{definition}[theorem]{Definition}
\theoremstyle{remark}
\newtheorem{remark}[theorem]{Remark}
\numberwithin{section}{chapter}
\numberwithin{equation}{chapter}
\numberwithin{figure}{chapter}
\newtheorem*{P4.17}{Proposition \ref{PROPX}}
\newtheorem*{P4.19}{Proposition \ref{PROP4.19}}
\newtheorem*{LXX}{Lemma \ref{TORTE}}
\newtheorem*{L2.6}{Lemma \ref{lem2.4}}
\newtheorem*{L3.4}{Lemma \ref{GROMOVCONV}}
\newcommand{\nr}{\pmb{|}\negthinspace\pmb{|}}
\newcommand{\bb}{\beta'_{b(a,v)}}
\newcommand{\ga}{\gamma_{a}}
\newcommand{\bba}{\beta_{a}}
\newcommand{\bd}{B_{\delta}}
\newcommand{\pav}{\phi_{a, v}}
\newcommand{\psav}{\psi_{a, v}}
\newcommand{\pt}{\partial_t}
\newcommand{\ps}{\partial_s}
\newcommand{\bt}{B_{\frac{1}{2}}}
\newcommand{\htl}{H^2_{\rm{loc}}}
\newcommand{\av}{\text{av}}
\newcommand{\hb}{B_{\frac{1}{2}}}
\newcommand{\C}{{\mathbb C}}
\newcommand{\norm}[1]{\left\Vert#1\right\Vert}
\newcommand{\abs}[1]{\left\vert#1\right\vert}
\newcommand{\ssc}{\text{sc}}
\renewcommand{\epsilon}{\varepsilon}
\newcommand{\tl}{\triangleleft}
\newcommand{\what}{\widehat}
\newcommand{\wh}{\widehat}
\newcommand{\wt}{\widetilde}
\newcommand{\call}{{\mathcal L}}
\newcommand{\ov}{\overline}
\newcommand{\si}{\Sigma}
\newcommand{\cf}{{\mathcal F}}
\newcommand{\cg}{{\mathcal G}}
\newcommand{\cl}{\operatorname{cl}}
\newcommand{\ce}{{\mathcal E}}
\newcommand{\calm}{{\mathcal M}}
\newcommand{\Z}{{\mathbb Z}}
\newcommand{\id}{\operatorname{id}}
\newcommand{\loc}{\text{loc}}
\newcommand{\ind}{\textrm{Ind\ }}
\newcommand{\supp}{\operatorname{supp}}
\newcommand{\cs}{{\mathcal S}}
\newcommand{\cae}{{\mathcal E}}
\newcommand{\co}{{\mathcal O}}
\newcommand{\mcr}{{\mathcal R}}
\newcommand{\bx}{{\bf X}}
\newcommand{\be}{{\bf E}}
\newcommand{\R}{{\mathbb R}}
\providecommand{\ker}[1]{$\text{ker}\ {#1}$}
\newcommand{\N}{{\mathbb N}}
\newcommand{\Q}{{\mathbb Q}}
\newcommand{\pr}{\text{pr}}
\begin{document}

\frontmatter

\title{Applications of Polyfold Theory {I}:\\ The Polyfolds of
 Gromov--Witten Theory}


\author{ H. Hofer}
\address{Institute for Advanced Study,  USA}
\thanks{Research partially supported
by NSF grant  DMS-0603957 and DMS-1104470}
\email{hofer@ias.edu}
\author{K. Wysocki}
\address{Penn State University}
\email{wysocki@math.psu.edu}
\thanks{Research
partially supported by  NSF grant DMS-0906280}

\author{ E. Zehnder}
\address{ETH-Zurich\\Switzerland}
\email{zehnder@math.ethz.ch}


\subjclass[2000]{58B99, 58C99, 48T99, 57R17}

\keywords{sc-smoothess, polyfolds, polyfold  Fredholm sections, GW-invariants}


\begin{abstract}
In this paper we start with the construction of the symplectic field theory (SFT). As a general theory of symplectic invariants, SFT has been outlined in \cite{EGH}  by Y. Eliashberg, A. Givental and H. Hofer who have predicted its formal properties. The actual construction of SFT is a hard analytical problem which will be overcome be means of the polyfold theory due to the present authors. The current paper addresses  a significant amount of the  arising  issues and the general theory will be completed in part II of this paper. To illustrate the polyfold theory we shall use the results of the present paper to describe an alternative construction of the Gromov-Witten  invariants for general compact symplectic manifolds. 
\end{abstract}

\maketitle

\tableofcontents


\mainmatter
%
%
%


\chapter{Introduction and Main Results}
In this paper  we start with the application of the polyfold theory to  the symplectic field theory (SFT)  outlined in \cite{EGH}. It turns out that the polyfold structures near noded stable curves lead also naturally to a polyfold description of the Gromov-Witten theory which is  a by-product of  the analytical foundation of SFT, presented here. The polyfold constructions for SFT will be completed in \cite{HWZ5}. 
The Gromov-Witten invariants (GW-invariants) are invariants of symplectic manifolds deduced from the structure of stable pseudoholomorphic maps from noded Riemann surfaces to the symplectic manifold. The construction of GW invariants of  general symplectic manifolds goes back to Fukaya-Ono in \cite{FO} and Li--Tian in \cite{LT}. Cieliebak--Mohnke  studied the genus zero case  in \cite{CM}. Earlier work for special symplectic manifolds are due to Ruan in  \cite{R1} and \cite{R2}. We suggest \cite{MW} for a discussion of some of the inherent difficulties in these approaches.

Our approach to the GW-invariants is quite different from the approaches in the literature. We shall apply the  general Fredholm theory developed in  \cite{HWZ2,HWZ3,HWZ3.5} and surveyed in \cite{Hofer} and \cite{H2}. {A comprehensive discussion of the abstract theory will be contained in the upcoming lecture note \cite{HWZ10}}. The theory is designed for the analytical foundations of the SFT in \cite{HWZ5}.

 We recall from \cite{HWZ3.5} that a polyfold $Z$ is a metrizable space equipped with  with an equivalence class of polyfold structures. A polyfold structure $[X,\beta]$ consists of an ep-groupoid $X$ which one could  consider as a generalization of an  \'etale proper Lie groupoid whose object and morphism sets have M-polyfold structures instead of manifold structures, and whose structure maps are sc-smooth maps. Moreover, $\beta:\abs{X}\to Z$ is a homeomorphism between the orbit space of $X$ and the topological space $Z$. The relevant concepts here are recalled in Section \ref{sc-smoothness} below.  

Our strategy to obtain the GW-invariants is as follows. We first construct the ambient space $Z$ 
of stable curves,  from noded  Riemann surfaces to the symplectic manifold,  which are not required to be pseudoholomorphic. The space $Z$ has a natural paracompact {Hausdorff} topology and we construct an equivalence class of natural polyfold structures $[X,\beta]$ on $Z$. The second step constructs a so called strong bundle $p:W\to Z$ which will be equipped with an sc-smooth strong polyfold bundle structure. In the third step we shall show that the Cauchy-Riemann operator $\ov{\partial}_J$ defines an sc-smooth section of the bundle $p$ which is a particular case of SFT. We shall prove that $\ov{\partial}_J$ is an sc-smooth proper Fredholm section of the bundle $p:W\to Z$. The solution sets of the section $\ov{\partial}_J$ are the Gromov compactified moduli spaces which, as usual, are badly behaving sets. However, the three ingredients already established at this point  immediately allows one to apply  the abstract Fredholm perturbation theory from  \cite{HWZ3,HWZ3.5}. {After a  generic perturbation,  the solution sets of the perturbed Fredholm problem become smooth objects, namely compact, weighted, smooth  branched sub-orbifolds. They also have a natural orientation,  so that the branched integration theory from \cite{HWZ7} allows one  to integrate the sc-differential forms over the perturbed solution sets to obtain the GW-invariants in the form of integrals.}

Our main concern in the following is the construction of the polyfold structures which allows one  to deal with noded objects in a smooth way. For this purpose we describe, in particular, the normal forms for families of  noded  Riemann surfaces in the Deligne-Mumford theory used in our constructions.  We also include some related technical results needed for the SFT in \cite{HWZ5}.

\section{The Space Z of Stable Curves}
We start with the construction of the ambient space $Z$ of stable curves. The stable curves are not required to be pseudoholomorphic. We consider maps defined on noded Riemann surfaces $S$ having their images in the closed symplectic manifold $(Q,\omega)$ and possessing various regularity properties.

We shall denote by 
$$
u:{\mathcal O}(S,x)\rightarrow Q
$$
a germ of a map  defined on (a piece) of Riemann
surface $S$ around $x\in S$. 
\noindent {Throughout  the paper we identify $S^1$ with $\R/\Z$ unless otherwise noted.}
\noindent {Moreover, smooth (in the classical sense) means $C^\infty$-smooth.}

\begin{definition}
Let $m\geq 2$ be an integer and $\delta\geq 0$.  A germ of
a continuous map $u:{\mathcal O}(S,x)\rightarrow Q$ is called of class
$(m,\delta)$ near the point $x$,\index{map of class $(m,\delta)$}  if for a smooth chart
$\psi:U(u(x))\rightarrow {\mathbb R}^{2n}$ mapping $u(x)$ to $0$ and for 
holomorphic polar coordinates $ \sigma:[0,\infty)\times
S^1\rightarrow S\setminus\{x\}$ around $x$, the map
$$
v(s,t)=\psi\circ u\circ \sigma(s,t)
$$
which is defined for $s$ large,  has {weak} partial derivatives up to order
$m$, which weighted by $e^{\delta  s}$ belong to the space 
$L^2([s_0,\infty)\times S^1,{\mathbb R}^{2n})$  for $s_0$  sufficiently
large. We  call the germ  of class $m$ around the  point $z\in S$,  if 
$u$ is of class $H^m_{loc}$ near $z$.
\end{definition}
One easily verifies that if $\sigma$ is a germ of biholomorphic map
mapping $x\in S$ to $y\in S'$ then  $u$ is of class
$(m,\epsilon)$ near $x$ if and only if the same is true for
$u\circ\sigma^{-1}$ near $y$. Moreover, the above
definition does not depend on the choices involved, like charts and
holomorphic polar coordinates.

\begin{definition}
A {\bf noded Riemann surface}  with marked points is a tuple $(S,j,M,D)$\index{noded Riemann surface with marked points} in which 
{$(S,j)$ is  an oriented closed smooth surface $S$ equipped with a smooth almost complex structure $j$. }The subset $M$ of $S$  is  a finite collection of marked points which can be ordered or un-ordered, and $D$ is a finite collection of un-ordered pairs $\{x,y\}$ of points in $S$ so that $x\neq y$ and two pairs which intersect are identical. The union of all sets $\{x,y\}$ belonging to $D$, denoted by $|D|$,  is disjoint from $M$. We call $D$ the set of {\bf nodal pairs}  and $|D|$ the set of  {\bf nodal points.}
\end{definition}
{It is a classical result, proved for example in \cite{SD}, Theorem 3.2, that the pair $(S, j)$ determines a unique compatible Riemann surface  structure in the sense of complex manifolds.}

The Riemann surface $S$ can consist of different connected components $C$ called domain components.  
The noded Riemann surface  $(S,j,M,D)$ is called {\bf connected}  if  the topological space $\ov{S}$,  obtained by identifying  the points $x$ and $y$ in the nodal pairs $\{x,y\}\in D$,  is connected.

So in our terminology  it is possible that the  noded  surface  $(S,j,M,D)$  is  connected, but  the Riemann surface $S$ has  several connected components, namely  its domain components.

The {\bf arithmetic genus}  $g_a$ of a connected noded Riemann surface $(S, j, M, D)$  is the integer $g_a$ defined by 
$$g_a=1+\sharp D+\sum_{C}[ g(C)-1]$$
where $\sharp D$ is the  number of nodal pairs in $D$ and where the sum  is taken over the finitely many domain components $C$ of the Riemann surface $S$, and where $g(C)$ denotes the genus of $C$. The  arithmetic genus $g_a$ agrees  with the genus of the connected {closed}  Riemann surface obtained by taking disks around the nodes in every nodal pair and replacing the two disks by a connecting tube. 
In the following we refer to the elements of $M\cup |D|$ as to  the special points. The set of special points lying on the domain component $C$ is abbreviated by $\Sigma_C:=C\cap (M\cup|D|)$.

Two  connected  noded  Riemann surfaces
$$(S,j,M,D)\quad \text{and}\quad (S',j',M',D')$$
are called isomorphic (or equivalent) if there exists a biholomorphic map 
$$\phi:(S,j)\rightarrow (S',j')$$
(i.e., the diffeomorphism satisfies $T\phi\circ j=j'\circ T\phi$) 
mapping the marked points onto the marked points  and the nodal pairs onto the nodal pairs, hence satisfying $\phi(M)=M'$  and $\phi_\ast(D)=D'$ where 
$$
\phi_\ast(D)=\bigl\{ \{\phi(x),\phi(y)\}\in D' \vert  \,  \{x,y\}\in D \bigr\}.
$$
If the marked points $M$ and $M'$ are ordered it is required that $\phi$ preserves the order. 
If the two noded Riemann surfaces are identical, the isomorphism above is called an automorphism of the noded surface $(S,j,M,D)$. In the following we denote by 
$$[(S, j, M, D)]$$
the equivalence class of all connected noded Riemann  surfaces isomorphic to the {\bf connected noded Riemann  surface}  $(S,j,M,D)$.
\begin{definition} The connected noded Riemann surface $(S,j,M,D)$  is called {\bf stable}  if its automorphism group $G$  is finite \index{stable! connected noded Riemann surface}
\end{definition}
One knows that a connected noded Riemann  surface  $(S,j,M,D)$ is  stable if and only if 
every domain component $C$  of $S$ satisfies 
$$
2\cdot g(C) +\sharp\Sigma_C\geq 3
$$
where $g(C)$ is the genus of $C$.

Next we describe the tuples $\alpha=(S, j, M, D, u)$ in which $(S, j, M, D)$ is a, not necessarily stable,  noded Riemann surface with ordered marked points, and $u:S\to Q$ a continuous map,  in more detail.

\begin{definition} [{\bf Stable maps and stable curves}] \index{stable! map} \index{stable! curve}The tuple 
$$\alpha=(S, j, M, D, u)$$ is called  a {\bf stable map} (of class $(m,\delta)$, where  $m\geq 3$ and {$\delta\geq 0$}),
if it has the following properties, 
\begin{itemize}
\item[$\bullet$] The underlying topological space,  obtained by identifying
the two points in every  nodal pair,  is connected. 
\item[$\bullet$] The map $u$ is  of class $(m,\delta)$ around the nodal points in $|D|$ and 
of class $m$ around all other points. (For certain
applications it is useful to require  the map $u$ around the marked points in $M$ to be of class $(m,\delta)$ as well; 
the   minor modifications  are left to the
reader.)
\item[$\bullet$] $u(x)=u(y)$  at  every nodal pair $\{x,y\}\in D$.
\item[$\bullet$]  {$\int_C u^\ast\omega\geq 0$ for every component $C$ of $S$.}
\item[$\bullet$] {\bf Stability condition:}\index{stability condition}\,   if a domain component $C$ of $S$ has genus $g (C)$ and 
$\sharp \Sigma_C$ special points, and satisfies $2\cdot g(C) +\sharp \Sigma_C\leq 2$,  {i.e., $C$ is not stable} , then  
$$\int_C u^\ast\omega >0.$$

\end{itemize}
Two {stable maps}  $\alpha=(S,j,M,D,u)$ and $\alpha'=(S',j',M',D',u')$ are called
equivalent if there exists an isomorphism 
$\phi:(S,j,M,D)\rightarrow (S',j',M',D') $  between  the noded Riemann surfaces  satisfying 
 $$u'\circ\phi=u.$$ 
 Here $\phi$ preserves the ordering of the marked points.
 An equivalence class is called a {\bf stable curve} of class
$(m,\delta)$. Hence a stable curve  is an equivalence class of stable maps and will be  denoted by $[\alpha]$ if $\alpha$ is a representative.
\end{definition}
Next we introduce the space $Z$ which will be equipped with
a polyfold structure.
\begin{definition}
Fix $\delta_0\in (0,2\pi)$. The collection of all equivalence
classes $[\alpha]$ of stable maps  $\alpha$ of class $(3,\delta_0)$ is called the
space of stable curves in $Q$ of class $(3,\delta_0)$ and is denoted
by $Z$ or by $Z^{3,\delta_0}(Q,\omega)$. \index{space of stable curves}
\end{definition}
{We shall equip the set $Z$ with  a natural topology. By  ``natural" we indicate  that the topology is independent
of the choices involved in its construction. The topology is related to the topology on the
Sobolev space of $H^3$-maps on a punctured Riemann surfaces with
exponential decay near the nodes. In Section \ref{natural_topology_Z_section} we shall prove the following theorem.}

\begin{theorem}\label{th-top}
For given $\delta_0\in (0,2\pi)$ the associated space $Z=Z^{3,\delta_0}(Q, \omega)$
of stable curves in $Q$ of class $(3,\delta_0)$ has a natural second countable paracompact Hausdorff topology.
\end{theorem}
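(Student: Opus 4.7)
The plan is to define a neighborhood basis at each $[\alpha]\in Z$ via local models coming from a fixed representative, and then to verify the desired topological properties using separability of the relevant Sobolev spaces together with the finiteness features of Deligne--Mumford theory. I first fix a stable map $\alpha=(S,j,M,D,u)$ representing $[\alpha]$. Its finite automorphism group $G$ acts on every auxiliary choice below. Around each nodal pair $\{x,y\}\in D$ I pick holomorphic polar coordinates and cover $S\setminus|D|$ by finitely many holomorphic disk charts. Small deformations of $(S,j,M,D)$ are then described in the Deligne--Mumford way: a finite-dimensional parameter space $V$ of variations of $j$ supported away from the nodes together with a gluing parameter $a=(a_d)_{d\in D}\in\mathbb{D}^{|D|}_\rho$. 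Standard plumbing at each node with $a_d\neq 0$ produces a smooth Riemann surface $(S_{v,a},j_{v,a})$. A pre-glued map $u_{v,a}:S_{v,a}\to Q$ is obtained from $u$ by a cut-off ``hat gluing'' on the new necks, and a further perturbation is taken in a weighted Sobolev ball $B_\varepsilon$ inside $H^{3,\delta_0}(S_{v,a},u_{v,a}^\ast TQ)$. Modding out by the finite $G$-action produces the candidate basic open set $U_\varepsilon([\alpha])$.

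I next have to check that this defines a topology independent of the chosen representative and auxiliary data. Independence reduces, via the invariance noted right after the definition of $(m,\delta)$-regularity, to the canonical way in which gluing parameters transform under biholomorphism. To verify the basis property I take $[\beta]\in U_\varepsilon([\alpha])\cap U_{\varepsilon'}([\alpha'])$ and realise $\beta$ simultaneously in each local model. Continuity of plumbing and of the hat gluing in all parameters, together with weighted Sobolev continuity of the perturbation, then produces some $U_{\varepsilon''}([\beta])$ contained in the intersection. The Hausdorff property follows because two inequivalent stable maps differ by a positive amount in $H^3$-norm on some fixed compact piece of the domain, and sufficiently small local-model perturbations cannot close this gap.

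Second countability and paracompactness are then obtained from separability and regularity. The moduli of stable noded Riemann surfaces of fixed arithmetic genus and fixed number of marked points is second countable, so a countable set of base points $[\alpha]$ suffices. Over each such $[\alpha]$ the weighted Sobolev space of maps into $Q$ is separable, giving a countable dense set of perturbations; combined with rational $\varepsilon$ and rational gluing parameters $a$, this yields a countable basis. For paracompactness I verify the $T_3$ axiom by showing that each basic open set contains a closed basic neighbourhood of the same shape (again via continuity of the gluing construction), and then invoke the Urysohn metrization theorem --- or equivalently Smirnov's theorem --- which derives paracompactness from second countable $T_3$.

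The main obstacle is the first step, namely producing local models that are simultaneously canonical, equivariant under the finite automorphism group $G$, and compatible with one another under passage to a deformed stable curve. The difficulty lies in tracking how holomorphic polar coordinates transform under biholomorphism and how the cut-off hat gluing interacts with the exponential weights on the newly created necks. This rests on the normal-form results for families of noded Riemann surfaces from Deligne--Mumford theory which the paper develops in the sequel; at this introductory stage those results may be taken as a black box, so that the proof of Theorem \ref{th-top} amounts to packaging them together with the separability arguments above.
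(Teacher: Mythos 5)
Your overall strategy --- local models built from plumbing plus a weighted Sobolev ball of perturbations, quotiented by the finite automorphism group, followed by separability and point-set topology --- is the same as the paper's, which defines the topology as the collection of images $p(O)$ of open subsets of the parameter spaces ${\mathcal O}$ of good uniformizing families (Theorem \ref{nattop}). But two steps you treat as routine are where the actual content lies, and as written they have gaps. First, you assert that modding out $B_\varepsilon$ by $G$ ``produces the candidate basic open set'', i.e.\ that the only identifications inside a local model come from the finite group $G$. This fails for the naive model: the underlying noded surface $(S,j,M,D)$ of a stable map need not be stable (only the map is), so there is residual reparametrization freedom on unstable domain components that is not captured by $G$. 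The paper kills it by adding a stabilization $\si$ and imposing the transversal constraints $\eta(z)\in H_{u(z)}$ at the added points (Definition \ref{GD}, conditions (4)--(10)), and then proves injectivity modulo $G$ in Proposition \ref{reremark1}; the proof that isomorphisms between nearby members of the family are close to elements of $G$ is a Gromov-compactness/bubbling argument (Lemma \ref{gromov-conv}, Propositions \ref{ay} and \ref{az}), not a consequence of Deligne--Mumford theory, which you cannot black-box here precisely because the domains are not stable.

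Second, your Hausdorff argument --- ``two inequivalent stable maps differ by a positive amount in $H^3$-norm on some fixed compact piece'' --- would fail: inequivalence is the non-existence of an isomorphism, and one must rule out that a sequence of isomorphisms between representatives converging to $\alpha$ and to $\alpha'$ respectively produces, in the limit, an isomorphism $\alpha\to\alpha'$. That is again the compactness statement of Lemma \ref{gromov-conv}; no fixed $H^3$-gap exists a priori, since the domains vary and the maps can agree while the marked and nodal data do not. The remainder of your argument (basis compatibility from continuity of the transition maps between local models, second countability from separability of the Sobolev models and of the Deligne--Mumford spaces, paracompactness from second countable plus regular) matches the paper's route and goes through once these two points are supplied.
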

We shall
prove that  the topological space $Z$ carries a  natural polyfold structure provided some
other pieces of data are fixed. A polyfold is  quite similar to a possibly
infinite-dimensional orbifold, where however, and this is crucial for applications,
the local models are sc-smooth retracts divided by  finite group actions.
The notion of sc-smoothness is a new notion of smoothness which in infinite dimensions
is much weaker than the notion of Fr\'echet differentiability. Whereas a smooth
retract for the latter is a split submanifold, an sc-smooth retract can be a very
wild set of locally varying dimensions. The polyfold theory,
among other things,  generalizes differential geometry
to spaces which locally look  like sc-smooth retracts or quotients thereof.
The more fancy local models are needed since the spaces have
to incorporate  analytical limiting behaviors,  like bubbling-off and
breaking of trajectories,  which cannot be satisfactorily described
in the classical set-up of manifolds. We refer to \cite{HWZ3.5}
for the definition of a polyfold structure. 
For the convenience of the reader,  the concepts are recalled in Section \ref{sc-smoothness} below.

In order to describe a preliminary version of our main result we observe that the gluing construction at the nodes requires the conversion of the absolute value $\abs{a}$ of a non-zero complex number $a$ into a real number $R>0$ which is essentially the modulus of the associated cylindrical neck. The conversion is defined by a so called gluing profile $\varphi$ which is a diffeomorphism $\varphi:(0, 1]\to [0, \infty)$.

The main result is the following theorem,  {proved in Section \ref{polyfoldstructure}.} 

\begin{theorem}\label{pfstructure}
Given a strictly increasing sequence $(\delta_m)$, starting at the
previously chosen $\delta_0$ and staying below $2\pi$, and the
gluing profile $\varphi(r)=e^{\frac{1}{r}}-e$,  the space $Z=Z^{3, \delta_0}(Q, \omega)$ of stable curves into $Q$ has {in a
natural way the structure of a polyfold}  for which  the $m$-th level consists of
equivalence classes of stable maps $(S,j,M,D,u)$ in which $u$ is of class
$(m+3,\delta_m)$.
\end{theorem}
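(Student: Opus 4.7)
The plan is to build the polyfold structure from local uniformizing charts around each stable curve and then assemble them into an ep-groupoid whose orbit space is homeomorphic to $Z$. Fix a representative stable map $\alpha=(S,j,M,D,u)$ with automorphism group $G$ (which is finite by stability). First I would choose small disk structures at the special points, i.e.\ holomorphic polar coordinates around each point of $|D|$ and each marked point, and a small transverse slice modeling local deformations of the complex structure $j$ (a Teichm\"uller slice for the underlying punctured surface with its special points). The local parameter space then has three kinds of ingredients: (i) a finite-dimensional complex disk $V\subset T_j\mathcal{T}$ of complex-structure deformations, (ii) one complex gluing parameter $a_\nu$ per nodal pair $\nu\in D$, and (iii) an sc-Banach space $H$ of map perturbations modeled on the exponentially weighted Sobolev space whose $m$-th level consists of sections of $u^\ast TQ$ of class $(m+3,\delta_m)$ near the nodes and of class $m+3$ elsewhere, using the weight sequence $(\delta_m)$ fixed in the theorem.

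The key construction is the gluing: for each nodal pair $\{x_\nu,y_\nu\}$ with gluing parameter $a_\nu\neq 0$, apply the profile $\varphi(|a_\nu|)=e^{1/|a_\nu|}-e$ to obtain a cylindrical neck of length $R_\nu$ in place of the node, and identify the two polar neighborhoods by a rotation determined by $\arg a_\nu$. This produces a family of smooth Riemann surfaces $S_a$ with a distinguished thin part, degenerating to the nodal $S$ as $a\to 0$. The perturbation $\eta\in H$ is then transported to a map $u_{a,\eta}:S_a\to Q$ by an anti-gluing / hat-construction that interpolates the two values of $u$ across the neck using a smooth cutoff and the exponential chart on $Q$; the stability condition is automatically inherited for small parameters. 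Together these data define a map
\begin{equation*}
\Phi_\alpha:\mathcal{O}\longrightarrow Z,\qquad (v,a,\eta)\longmapsto [(S_a,j_v,M,\emptyset,u_{a,\eta})],
\end{equation*}
where $\mathcal{O}$ is a neighborhood of $0$ in $V\times\C^{\sharp D}\times H$. I would then show that $\Phi_\alpha$ descends to a homeomorphism from $\mathcal{O}/G$ onto an open neighborhood of $[\alpha]$ in $Z$, using the topology of Theorem~\ref{th-top}.

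The analytic heart of the argument is to upgrade $\mathcal{O}$ to an M-polyfold and $\Phi_\alpha$ to an sc-smooth uniformizer, which amounts to producing an sc-smooth retraction cutting out the ``good'' parameters and verifying that the transition maps $\Phi_{\alpha'}^{-1}\circ\Phi_\alpha$ between two such charts are sc-smooth. Both rely on the exponential gluing profile $\varphi(r)=e^{1/r}-e$: this profile converts multiplication of gluing parameters into a smooth operation when expressed in the $(R,\arg a)$-coordinates, and matches the exponential decay rate $e^{-\delta_m s}$ built into the Sobolev norms so that the change of trivialization across a forming neck maps each level $m$ continuously into itself with the right regularity gain. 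The weights $\delta_m$ strictly increasing and staying below $2\pi$ is exactly what makes differentiation consume regularity in a way compatible with sc-calculus, since $2\pi$ is the first eigenvalue of $-i\partial_t$ on $S^1$. I expect this step, verifying sc-smoothness of the transition maps between uniformizers involving both changes of Teichm\"uller coordinates near nodes and re-choices of small-disk data, to be the main obstacle; it is essentially an extended application of the gluing estimates and the chain rule in sc-calculus developed in \cite{HWZ2,HWZ3,HWZ3.5}.

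Once sc-smoothness of the transition maps is established, the ep-groupoid $X$ is built by taking disjoint union over the chosen $\alpha$'s of the local models $\mathcal{O}_\alpha$ as objects, and the morphism M-polyfold as the fibered product encoding local isomorphisms (combining elements of $G_\alpha$ with germs of biholomorphisms between neighboring charts). Properness follows from Gromov compactness for the underlying topological space $Z$ together with finiteness of automorphism groups of stable curves, and the \'etale property is immediate from the construction, since the source and target projections from the morphism space are local sc-diffeomorphisms. Finally, one checks independence of the chosen charts by showing that any two systems of good uniformizers yield equivalent polyfold structures, which gives the natural polyfold structure on $Z$ claimed in the theorem, with the level filtration precisely recording the regularity $(m+3,\delta_m)$ of the underlying stable maps.
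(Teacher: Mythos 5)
Your overall architecture coincides with the paper's: good uniformizing families $(a,v,\eta)\mapsto\alpha_{(a,v,\eta)}$ built from a Deligne--Mumford slice, gluing parameters processed through the exponential profile, and a splicing/anti-gluing retraction cutting out the local M-polyfold model; then graphs of these families as objects, isomorphism triples as morphisms, sc-smoothness of transitions as the analytic core, and properness via Gromov compactness. However, there is one genuine gap, and it is not a technicality: you nowhere stabilize the \emph{domain}. The stability hypothesis on $\alpha=(S,j,M,D,u)$ only forces $\int_C u^\ast\omega>0$ on domain components $C$ with $2g(C)+\sharp\Sigma_C\le 2$; such components (e.g.\ an unmarked, un-noded sphere carrying a nonconstant $u$) have positive-dimensional automorphism and deformation groups, so ``a Teichm\"uller slice for the underlying punctured surface with its special points'' does not exist as a finite-dimensional effective slice, and your finite group $G=\operatorname{Aut}(\alpha)$ does not account for the continuous reparametrizations of these components. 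Concretely, without a fix your map $\Phi_\alpha:\mathcal{O}\to Z$ cannot descend to a homeomorphism from $\mathcal{O}/G$ onto a neighborhood of $[\alpha]$: distinct parameters $(v,a,\eta)$ and $(v,a,\eta\circ\phi)$ for $\phi$ in a continuous family of reparametrizations of an unstable component represent the same point of $Z$, so the chart is not locally injective modulo $G$.

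The paper repairs this by choosing a stabilization $\Sigma\subset S$ (Lemma \ref{ax}): finitely many added points at which $Tu$ is injective, with $G$-invariant image configuration, making $(S,j,M\cup\Sigma,D)$ stable so that Theorem \ref{existence-x} applies to the enlarged marked-point set; and, crucially, it imposes the transversal constraint $\eta(z)\in H_{u(z)}=T_{u(z)}M_{u(z)}$ at each $z\in\Sigma$, where $M_{u(z)}$ is a codimension-two slice transverse to $u$. These linear constraints cut the map-perturbation space down so that the only residual isomorphisms between members of one family are induced by the finite group $G$ (Proposition \ref{reremark1}), and the sc-smooth dependence of the transversal intersection point (Proposition \ref{n-prop2.24}) is exactly what lets one locate the stabilizing points in a neighboring chart and prove the transition-map theorem (Theorem \ref{key-z}). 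Your proposal would need to incorporate both the stabilization and the slicing conditions before the rest of the argument --- which is otherwise sound and matches the paper's --- can go through.
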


We shall formulate  a more precise statement later on after  some more preparations. The constructions will show that there
are natural polyfold charts which depend on the choice of the gluing
profile $\varphi$, which we take as specified above (other choices
would be possible). These charts work for all strictly increasing sequences
$\delta_m$ starting at $\delta_0$ and staying below $2\pi$.
Staying below $2\pi$ is an important requirement in  the Fredholm theory.

Let us also remark that we can define similar spaces using maps of
Sobolev class $W^{m,p}$ provided $W^{m, p}$ is continuously embedded  into
$C^1$. This regularity will be needed for the transversal constraint
construction. We will not pursue this  here and leave the details to the
reader.  One has to keep in mind that dealing with the Banach spaces  $W^{m,p}$, $p\neq 2$, one has to check the  existence of sc-smooth partitions of unity, which are guaranteed in  case  the models are sc-Hilbert spaces.

We point out that $Z$ has many connected components
as well as many interesting open subsets.
If $g,m$ are  nonnegative integers we denote by $Z_{g,m}$ the subset of $Z$
consisting of all equivalence  classes $[S,j,M,D,u]$ in which  the underlying noded Riemann surface $(S,j,M,D)$ has arithmetic genus $g$ and
$m$ marked points. This subset is open in $Z$ and therefore has an
induced polyfold structure. If $A\in H_2(Q,{\mathbb Z})$ is a second homology class, we can also
consider the set $Z_{A,g,m}$ which is the open subset of $Z_{g,m}$  consisting of elements $[S, j, M, D, u]$ in which  the map $u$ represents $A$.

There are  natural maps which play an important role in the 
GW-theory and the SFT.  Consider for a  fixed pair $(g,m)$ of nonnegative integers  satisfying $2g+m\geq 3$ the space $Z_{g,m}$.
  The evaluation map at
the $i$-th marked point is defined by 
$$ 
ev_i:Z_{g,m}\rightarrow Q,\quad [S,j,M,D,u]\mapsto u(x_i)
$$
for $i=1,\ldots, m$. 
Further, if $2g+m\geq 3$, we have the forgetful map associating with the stable curve $[\alpha]$ the underlying stable part of the domain. It is
obtained as follows. We take a representative $(S,j,M,D,u)$ of our
class $[\alpha]$. First we forget the map $u$ and consider, if it exists, a component $C$
satisfying $2g(C)+\sharp (C\cap (M\cup |D|))<3$. Then we have the
following cases. Firstly $C$ is a sphere without marked points and with  one nodal point, say $x$. Then  we remove
the sphere, the nodal point $x$ and its partner $y$, where $\{x,y\}\in D$. Secondly $C$ is a sphere with two nodal points. In this case there are two  nodal pairs $\{x,y\}$ and $\{x',y'\}$, where $x$ and
$x'$ lie on the sphere.  We remove the sphere and the two nodal pairs  but add the nodal
pair $\{y,y'\}$. Thirdly $C$ is a sphere with one node and one
marked point. In that case we  remove the sphere but replace the
corresponding nodal point  on the other component by the marked point.
Continuing this way we end up with a stable noded marked Riemann
surface whose biholomorphic type does not depend on the order we
`weeded out'  the unstable components. This way we obtain the map
$$
\gamma:Z_{g,m}\rightarrow \overline{\mathcal
M}_{g,m},\quad  [S,j,M,D,u]\mapsto  [(S,j,M,D)_{stab}].
$$
 Here $\overline{\mathcal M}_{g,m}$ is the standard Deligne-Mumford
compactification  of the space of (ordered) marked stable Riemann surfaces
with its holomorphic orbifold structure. {The map $\gamma$ is illustrated in  Figure \ref{Fig1}}.

\begin{figure}[!htb]
\psfrag{s}{$\gamma$}
\centering
\includegraphics[width=3.8in]{Fig1.eps}
\caption{The forgetful map $\gamma:Z_{g,m}\to \ov{\calm }_{g,m}$}
\label{Fig1}
\end{figure}

\begin{theorem} \label{evaluation_thm}
 If $2g+m\geq 3$, then 
the maps $ev_i:Z_{g,m}\rightarrow Q$ for $1\leq i\leq m$ and $\gamma:Z_{g,m}\rightarrow
\overline{\mathcal M}_{g,m}$  are sc-smooth.
\end{theorem}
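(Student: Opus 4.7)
The plan is to verify sc-smoothness in local polyfold charts on $Z_{g,k}$ and local orbifold charts on $\ov{\calm}_{g,k}$ around a representative stable curve $[\alpha_0] = [(S_0, j_0, M_0, D_0, u_0)]$. By Theorem \ref{pfstructure}, such a chart is parametrized by triples $(j, a, \eta)$: complex-structure deformations $j$ of $S_0$ supported away from the nodal and marked points, a tuple $a = (a_\nu)_{\nu \in D_0}$ of gluing parameters, and a Sobolev section $\eta$ in an $H^{m+3}$-type space representing the map deformation, all modulo the finite automorphism group of $\alpha_0$, which preserves the ordering of the marked points.

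For the evaluation map $ev_i$, the marked point $m_i \in M_0$ is a fixed point of $S_0$ away from the gluing region, so in the chart the associated map $u_{(j,a,\eta)}$ at $m_i$ takes the form $\exp_{u_0(m_i)}(\eta(m_i))$. The point evaluation $\eta \mapsto \eta(m_i)$ is a bounded linear functional on the relevant Sobolev space, since Sobolev embedding yields $H^{m+3} \hookrightarrow C^{m+1}$ on a two-dimensional domain away from the nodes. Bounded linear operators between sc-Banach spaces are automatically sc-smooth, so composing with the smooth exponential and a smooth chart on $Q$ yields an sc-smooth representative.

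For the forgetful map $\sigma$, the target $\ov{\calm}_{g,k}$ is a finite-dimensional complex orbifold, so a map from an sc-space into it is sc-smooth if and only if it is level-preserving and classically smooth on every level in a chart. A chart on $\ov{\calm}_{g,k}$ around $\sigma([\alpha_0])$ is parametrized by complex-structure deformations $\wt j$ of the stabilized curve together with gluing parameters $\wt a = (\wt a_\mu)$ at the remaining nodes. Since $\sigma$ forgets the map, the composite factors through the projection $(j, a, \eta) \mapsto (j, a)$, which simply drops the Sobolev coordinate; sc-smoothness then reduces to classical smoothness of the finite-dimensional assignment $(j, a) \mapsto (\wt j, \wt a)$. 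This assignment is built by Knudsen's stabilization from three elementary local moves: (i) deleting a one-node unstable sphere together with its gluing parameter; (ii) deleting a one-node/one-marked-point unstable sphere, relocating the marked point to the partner nodal point and dropping the gluing parameter; and (iii) contracting a two-node unstable sphere, combining its two gluing parameters $a_\nu, a_{\nu'}$ into a single parameter $\wt a = a_\nu \cdot a_{\nu'}$ at the new node $\{y, y'\}$ after normalizing coordinates on the sphere so that $x$ sits at $0$ and $x'$ at $\infty$. Each move is polynomial in the gluing parameters and linear in the complex-structure parameters (which are supported away from the deleted components), so the chart representative is classically smooth at every level.

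The main obstacle is case (iii): one must check that the product formula $\wt a = a_\nu a_{\nu'}$ correctly represents the gluing of two cylindrical necks into a single neck compatible with the chosen gluing profile $\varphi(r) = e^{1/r} - e$ and with the residual $\mathbb{C}^\ast$-symmetry of the unstable sphere (absorbed by the coordinate normalization placing $x$ at $0$ and $x'$ at $\infty$). Both ingredients are already established in the polyfold chart construction underlying Theorem \ref{pfstructure}, so no new analytic estimates are required; the remaining work is to match the two coordinate systems and verify that the resulting combination law is holomorphic in $a$, which suffices because the target $\ov{\calm}_{g,k}$ is finite-dimensional.
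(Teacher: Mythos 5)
Your treatment of $ev_i$ is fine: away from the nodes the chart representative is $\eta\mapsto\exp_{u_0(m_i)}(\eta(m_i))$, point evaluation is a level-wise bounded linear (hence sc-smooth) operator by the Sobolev embedding, and the rest is a classically smooth finite-dimensional composition. Your reduction of $\sigma$ to a finite-dimensional map $(a,v)\mapsto(\wt a,\wt v)$ is also the right first step and is what the paper does.

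The gap is in your case (iii), and more generally in every place where the stabilized modulus depends on a gluing parameter. The product formula $\wt a=a_\nu\cdot a_{\nu'}$ encodes the additivity of neck lengths \emph{for the logarithmic profile}, where $\varphi(r)=-\tfrac{1}{2\pi}\ln r$ turns addition of gluing lengths into multiplication of gluing parameters. The charts on $Z_{g,k}$ from Theorem \ref{pfstructure} use the exponential profile $\varphi(r)=e^{1/r}-e$, for which the correct combination law is $\varphi(|\wt a|)=\varphi(|a_\nu|)+\varphi(|a_{\nu'}|)+\const$, i.e. $e^{1/|\wt a|}=e^{1/|a_\nu|}+e^{1/|a_{\nu'}|}+\const$. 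This is neither polynomial nor holomorphic in $(a_\nu,a_{\nu'})$ — the paper stresses that the exponential profile yields only a smooth, not holomorphic, orbifold structure on $\ov{\mathcal N}$ — and its smoothness at the origin is exactly the delicate point; it requires the calculus lemma of \cite{HWZ-DM} (the flatness of $r\mapsto e^{-2\pi(e^{1/r}-e)}$ at $r=0$), not a formal coordinate match. The same issue appears in your cases (i) and (ii) when $a_\nu\neq 0$: then the unstable sphere is glued in rather than deleted, and the modulus of the resulting stable component depends on $a_\nu$ through the profile.

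The paper sidesteps all of this in the proof of Theorem \ref{thm2.15-n}: first \emph{add} auxiliary marked points $M^\ast$ so that no component is deleted at all, obtaining a smooth map $O\to\ov{\mathcal M}^{\varphi}_{g,m^{\ast\ast}}$ for the exponential-profile smooth structure; then use that the identity $\ov{\mathcal M}^{\varphi}_{g,m^{\ast\ast}}\to\ov{\mathcal M}_{g,m^{\ast\ast}}$ to the classical (logarithmic, holomorphic) structure is smooth — this single step absorbs the profile change — and finally compose with the classical holomorphic maps $\text{forget}_l$, where your multiplicative combination law is actually valid. To repair your argument you must either follow that route or replace $\wt a=a_\nu a_{\nu'}$ by the profile-correct law and prove its $C^\infty$-smoothness at $a=0$ separately.
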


As a consequence we can pull-back differential forms on $Q$ and $
\overline{\mathcal M}_{g,m}$ to obtain sc-differential forms on the polyfold 
$Z_{g,m}$ which,  suitably wedged together,  can be  integrated over
the smooth moduli spaces obtained as solution sets of  transversal Fredholm (multi-)sections
of suitable strong bundles over $Z$. Here one makes use of  the branched integration
theory  developed in \cite{HWZ7}.

\section{The Bundle W}\label{sect1.2}
Next we introduce the  strong polyfold bundle $W$ over $Z$.
We consider the closed symplectic manifold $(Q,\omega)$ and  choose a 
compatible almost complex structure $J$ on $Q$ so that  $\omega\circ (\id \oplus J)$ is a Riemannian metric on $Q$.

The points
of $W$ are defined as follows. We consider tuples 
$$
\wh{\alpha}=(\alpha,\xi)=(S,j,M,D,u,\xi)
$$
in which the stable map 
$\alpha=(S,j,M,D,u)$ is a representative of an element in $Z$.
Moreover, $\xi$ is a continuous section along $u$ such that the map
 $$
 \xi(z):T_zS\rightarrow T_{u(z)}Q,\quad \text{for $z\in
S$},
$$
 is a complex anti-linear map. Its domain of definition  is  equipped with  the complex structure $j$ and
its target space is equipped with the almost complex structure $J$. Moreover, on $S\setminus |D|$, the map
 $z\rightarrow \xi(z)$ is of class $\htl$. At the nodal  points in
 $|D|$ we require that $\xi$ is of class $(2,\delta_0)$. This requires, 
 taking holomorphic polar coordinates $\sigma$ around the  point $x\in |D|$ and
 a chart $\psi$ around $u(x)$ in $Q$,  that  the map
 $$
 (s,t)\mapsto 
 T\psi(u(\sigma(s,t)))\xi(\sigma(s,t))(\partial_s\sigma (s,t)), 
 $$
and  its weak  partial derivatives up to order $2$,  weighted
 by $e^{\delta_0 |s|}$ belong to the space  $L^2([s_0,\infty)\times S^1,{\mathbb
 R}^{2n})$ for $s_0$ large enough. The definition does not depend on the
 choices involved. We call two such tuples
 equivalent if  there exists an isomorphism  
 $$
 \phi:(S,j,M,D)\rightarrow (S',j',M',D')
 $$
satisfying 
$$
\xi'\circ T\phi=\xi\ \ \hbox{and}\ \ u'\circ\phi=u.
$$
The 
equivalence class  of $\wh{\alpha}=(S,j,M,D,u,\xi)$  is denoted  by $[\wh{\alpha}]$. 

The collection of all such equivalence classes constitutes the space  $W$.

We have defined what it means that an
element $\alpha$ represents an element on level $m$. Let us observe
that if the map $u$ has regularity $(m+3,\delta_m)$ it makes sense to talk
about elements $\xi$ along $u$ of regularity $(k+2,\delta_k)$ for
$0\leq k\leq m+1$. In the case $k=m+1$ the fiber
regularity is $(m+3,\delta_{m+1})$ and the underlying base
regularity is $(m+3,\delta_m)$. The requirement of a faster
exponential decay in the fiber towards a nodal point than the
exponential decay of the underlying base curve is well-defined and
independent of the charts chosen to define it. Our conventions for defining the levels are governed by the overall
convention that sections should be horizontal in the sense that they
preserve the level structure, i.e.,  an element on level $m$ is mapped
by the section $\xi$ to an element on bi-level $(m,m)$. Hence if the
section comes from a first order differential operator we need
precisely the  convention  we have just introduced.  {We  call   an element} 
$$
\wh{\alpha}=(S,j,M,D,u,\xi)
$$
of  (bi)-regularity $((m+3,\delta_m),(k+2,\delta_k))$ as long as $k$
satisfies the above restriction $0\leq k\leq m+1$. We say that $[\wh{\alpha}]\in
W$ is on level $(m,k)$ provided $(u,\xi)$ has the above regularity.
We keep in mind that  bi-levels $(m,k)$ are defined under the restriction $0\leq k\leq m+1$.
\begin{theorem}
The set $W$ has a natural second countable paracompact {Hausdorff} topology so
that the projection map
$$
p:W\rightarrow Z, \quad [\wh{\alpha}]\mapsto [\alpha], 
$$
(forgetting the $\xi$-part),  is continuous.
\end{theorem}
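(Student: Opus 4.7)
My plan is to topologize $W$ by transporting the construction behind Theorem \ref{th-top} into the fiber direction: I will declare a neighborhood basis of $[\wh{\alpha}_0]\in W$ consisting of those $[\wh{\alpha}]$ whose base $[\alpha]$ lies in a prescribed open neighborhood of $[\alpha_0]$ in $Z$ and whose anti-linear section $\xi$, after being compared to $\xi_0$ via a suitable local identification, is within $\varepsilon$ in a weighted Sobolev norm. Continuity of $p$ will then be immediate, and second countability together with paracompactness will be inherited from the corresponding properties of $Z$ (Theorem \ref{th-top}) combined with the fact that the fibers are built from separable Hilbert/Sobolev spaces.

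\textbf{Local model for the fiber.} Fix a representative $\wh{\alpha}_0=(S_0,j_0,M_0,D_0,u_0,\xi_0)$. The construction underlying Theorem \ref{th-top} produces, near $[\alpha_0]$, a family of nearby stable maps $\alpha=(S,j,M,D,u)$ together with a ``thin-thick'' identification: outside small disks around $|D_0|$ the surface $S$ is canonically diffeomorphic to the corresponding piece of $S_0$ (up to small deformation of the complex structure), while each nodal pair of $D_0$ has possibly been replaced by a cylindrical neck whose modulus is controlled by the gluing profile $\varphi$. Using a smooth chart $\psi$ around $u_0(|D_0|)\subset Q$ together with this identification, an anti-linear section $\xi$ along $u$ can be compared with $\xi_0$ along $u_0$: on the thick part one pulls back and measures the difference in $H^2_{\text{loc}}$, while on each cylindrical neck one uses the weighted $L^2$ norm with weight $e^{\delta_0|s|}$. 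I then define a basic open neighborhood of $[\wh{\alpha}_0]$ to consist of those $[\wh{\alpha}]$ whose base lies in a prescribed $Z$-neighborhood of $[\alpha_0]$ and whose transferred $\xi$ is within $\varepsilon$ of $\xi_0$ in this composite norm. The invariance observation already made for germs of class $(2,\delta_0)$ shows that the result does not depend on the chart $\psi$ nor on the holomorphic polar coordinates used to set up the necks, so the definition descends to equivalence classes and is coherent under reparametrization by the finite automorphism group of $\alpha_0$.

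\textbf{Verification of the properties.} That the family of basic neighborhoods is closed under finite intersections and generates a topology is routine. Continuity of $p$ is built into the definition, since a neighborhood of $[\wh{\alpha}_0]$ projects by construction into a neighborhood of $[\alpha_0]$. Second countability follows because $Z$ has a countable base and the fiber condition is determined by countably many pairs (base neighborhood, rational $\varepsilon$). For paracompactness it suffices to exhibit a metric inducing the topology; such a metric is obtained by adding a metric on $Z$ (which exists by Theorem \ref{th-top}, since second countable regular spaces are metrizable) to the locally defined fiber distance, glued together by a partition of unity subordinate to a countable locally finite refinement of the base neighborhoods.

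\textbf{Main obstacle.} The substantive difficulty is making the comparison $\xi\leftrightarrow\xi_0$ well-defined and independent of choices when the domain $S$ differs from $S_0$ by non-trivial gluing at nodes. The cylindrical neck in $S$ has no direct counterpart in $S_0$, which has two punctured disks there instead. The resolution is the same one that lies behind Theorem \ref{th-top}: the gluing profile $\varphi$ supplies a canonical identification of a truncated portion of the neck with matching annuli in the two disks around the node, and the exponential weight $\delta_0\in(0,2\pi)$ guarantees that the ``middle'' of the neck, which is not seen by $S_0$, contributes arbitrarily little to the norm as the gluing parameter shrinks. Once this comparison is shown to be well-defined up to controllable error, the remaining verifications of Hausdorffness, second countability, and metrizability are entirely parallel to those for $Z$.
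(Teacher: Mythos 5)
Your overall architecture is the paper's: the paper topologizes $W$ by lifting the good uniformizing families $(a,v,\eta)\mapsto\alpha_{(a,v,\eta)}$ to families $(a,v,\eta,\xi)\mapsto\wh{\alpha}_{(a,v,\eta,\xi)}$ parametrized by an open subset of a splicing core $K^{\mathcal R}\subset{\mathcal O}\triangleleft F$, where $F$ is exactly your ``composite'' space of anti-linear sections of class $(2,\delta_0)$ along the reference curve, and then takes images of open sets as a neighborhood basis; continuity of $p$, second countability, and paracompactness are obtained in parallel with Theorem \ref{th-top}. So the route is the same, and your identification of the neck region as the main obstacle is correct.

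There is, however, one genuine gap: you declare that ``the family of basic neighborhoods is closed under finite intersections and generates a topology is routine.'' It is not. The basis axiom requires comparing a basic neighborhood centered at $[\wh{\alpha}_0]$ with one centered at a \emph{different} $[\wh{\alpha}_1]$ through a common point, i.e.\ through an isomorphism $\phi$ of the underlying stable maps whose domains may be nontrivially glued surfaces. What you verify (independence of the chart $\psi$, of the polar coordinates, and equivariance under $\Aut(\alpha_0)$) only gives well-definedness for a \emph{fixed} center. The cross-center compatibility is the content of the paper's Theorem \ref{key-z} on the base and, on the fiber, of the transition map $\mu$ of Proposition \ref{propA}: one must solve $\Xi(a,v,\eta,\xi)=\Xi'(a',v',\eta',\xi')\circ T\phi$ together with $\wh{\ominus}_{a'}'(\xi')=0$ and show that $\xi\mapsto\xi'$ is continuous in the $(2,\delta_0)$-norms \emph{uniformly as the gluing parameters degenerate}, which rests on the gluing/anti-gluing isomorphism $\wh{\boxdot}_a$ and its $a$-independent estimates (Theorem \ref{sc-splicing-thm1}, Proposition \ref{HAT_HAT}), not on a chart-independence remark. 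Without this step the collection of sets you define is not known to be a basis. A secondary, fixable point: your metrization argument invokes a locally finite refinement, which presupposes the paracompactness you are trying to prove; the paper instead checks that the topology is second countable, Hausdorff and completely regular and concludes paracompactness (equivalently, Urysohn metrization) from that.
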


Our  main result concerning the polyfold set-up is the following theorem{, proved in Section \ref{polstrbundle}.}

\begin{theorem}\label{main1.10}
Let $Z=Z^{3,\delta_0}(Q, \omega)$ be the previously introduced space  of stable curves with its polyfold
structure associated with  the increasing sequence $(\delta_m)\subset
(0,2\pi)$ and the exponential gluing profile $\varphi$. Then the bundle $p:W\rightarrow Z$ has in a natural way the
structure of a strong polyfold bundle in which the $(m,k)$-bi-level
(for $0\leq k\leq m+1$) consists of elements of base regularity $(m+3,\delta_m)$ and of  fiber
regularity $(k+2,\delta_k)$.
\end{theorem}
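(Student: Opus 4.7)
The plan is to construct the strong polyfold bundle structure on $W$ by lifting the good uniformizers used to build the polyfold structure on $Z$ in Theorem \ref{pfstructure} to strong bundle uniformizers, then checking that transitions and the finite stabilizer actions are strong bundle isomorphisms, and finally identifying the induced topology with the one already given on $W$. I would work ep-groupoid by ep-groupoid: fix a stable map $\alpha_0=(S_0,j_0,M_0,D_0,u_0)$ with stabilizer $G$, take the Teichmüller slice and the sc-Hilbert neighborhood of $u_0$ with weights $(\delta_m)$ that produce the base uniformizer $O\subset E$, and build over it a sc-Hilbert bundle whose fiber at a base point $(j,u)$ consists of complex anti-linear sections $\xi$ along $u$ with $\xi(z):T_zS\to T_{u(z)}Q$ having the bi-regularity specified in the statement, i.e.\ $(k+2,\delta_k)$ at the nodes and $H^{k+2}_{\mathrm{loc}}$ elsewhere, for $0\le k\le m+1$. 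The anti-linearity is a linear constraint cut out by a bounded projection depending sc-smoothly on $(j,u)$, so the fiber is a closed sc-Hilbert subbundle of the ambient section space.

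The core step is the fiberwise gluing at the nodes compatible with the base gluing profile $\varphi(r)=e^{1/r}-e$. Over a neighborhood of $\alpha_0$ with gluing parameters $a=(a_e)$ at the nodal pairs, one already has a base splicing $u\mapsto \oplus_a u$ built from logarithmic cutoffs on the neck of length $R=\varphi(|a|)$; I would define the corresponding fiber splicing $\xi\mapsto (\oplus_a\xi,\ominus_a\xi)$ on anti-linear sections using the analogous cutoffs and its anti-gluing complement. Because the $(0,1)$-nature is preserved by the splicing (the cutoffs commute with contraction by $j$) and because $\delta_k<2\pi$ together with the constraint $k\le m+1$ guarantees that the weights match correctly between base and fiber, this produces a sc-smooth family of sc-Hilbert splicings on the fiber which is horizontal over the base splicing. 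The resulting local model is then a strong M-polyfold bundle in which level $m$ of the base is mapped by the zero section into bi-level $(m,m)$, precisely so that first-order Cauchy--Riemann type operators give horizontal sections.

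Next I would verify that different choices of the auxiliary data in the construction of a uniformizer — charts $\psi$ around $u_0(x)$, holomorphic polar coordinates $\sigma$ around the nodes and marked points, Teichmüller slice representatives, and cutoff functions — induce transition maps on $W$ that are sc-smooth strong bundle isomorphisms. The base transitions are already sc-smooth by Theorem \ref{pfstructure}; their linearizations act on the anti-linear fibers by composition with derivatives of $\psi$ and $\sigma$, and this action is sc-smooth in the base point as a standard consequence of sc-smoothness of composition with diffeomorphism-valued sc-smooth maps on weighted Sobolev spaces. The weighted $L^2$ condition defining the classes $(k+2,\delta_k)$ is invariant under such coordinate changes since biholomorphic changes of polar coordinates differ from translations only by an exponentially decaying correction, so the bi-level structure is preserved. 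The finite stabilizer $G$ then acts on the strong bundle chart by the same principle, yielding a strong bundle ep-groupoid structure.

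The last step is compatibility with the already-defined topology on $W$: the bundle charts produce a neighborhood basis of $[\wh\alpha_0]$ consisting of sets of equivalence classes of glued pairs $(u,\xi)$, and one checks this basis agrees with the natural topology by the same continuous-parameter-family argument used to establish the topology of $Z$ in Theorem \ref{th-top}. The main obstacle will be the fiber gluing/anti-gluing step: one must prove sc-smoothness of the splicing not only for fixed bi-level but uniformly across all $(m,k)$ with $0\le k\le m+1$, controlling the interaction between the logarithmic neck cutoff of length $R=e^{1/|a|}-e$ and the exponential weights $\delta_k$; this is where the specific exponential profile $\varphi$ enters essentially to absorb the derivative loss of cutoffs into the gap $\delta_{k+1}-\delta_k$, and where one must invoke the structural sc-smoothness results for gluing from \cite{HWZ2,HWZ3,HWZ3.5} applied now to the anti-linear section bundle rather than to the base space.
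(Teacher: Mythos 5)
Your outline follows the same route as the paper: lift each good uniformizing family $(a,v,\eta)\mapsto\alpha_{(a,v,\eta)}$ to a bundle uniformizer whose fiber consists of complex anti-linear sections of bi-regularity $(k+2,\delta_k)$, splice the fibers at the nodes with the hat gluing/anti-gluing, check that transitions and the stabilizer actions are strong bundle isomorphisms, and match the topology; you also correctly locate the analytic core in the sc-smoothness of the fiber splicing (this is exactly Theorem \ref{sc-splicing-thm1}, quoted from \cite{HWZ8.7}).

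There is, however, one concrete missing idea. A strong local bundle model has the form $K=R(\co\tl F)$ with a \emph{fixed} sc-Banach space $F$, whereas you describe the fiber over $(j,u)$ as anti-linear sections along the \emph{varying} map $u$ with respect to the \emph{varying} $j$, "cut out by a bounded projection": the ambient space of sections along $u$ with values in $T_{u(z)}Q$ is itself not fixed, so this does not yet produce a local model, and without a fixed model you cannot even write down the transition maps whose sc-smoothness you then invoke. The paper resolves this by taking $F$ to be the anti-linear maps $(T_zS,j)\to(T_{u(z)}Q,J(u(z)))$ along the fixed smooth reference curve $u$ and identifying the actual fiber over $\alpha_{(a,v,\eta)}$ with the splicing core in $\co\tl F$ via the map $\Xi(a,v,\eta,\xi)=\Gamma[\oplus_a\exp_u(\eta),\oplus_a u]\circ\wh{\oplus}_a(\xi)\circ\sigma(a,v)$, where $\sigma(a,v)=\tfrac12\bigl(\id-j(a,0)\circ j(a,v)\bigr)$ intertwines the deformed and reference complex structures and $\Gamma$ is parallel transport of a \emph{complex} connection (one with $\wt{\nabla}J=0$) along $t\mapsto\exp_{\oplus_a u}(t\oplus_a\eta)$. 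The complexness of the connection is essential and is nowhere in your sketch: parallel transport of a generic connection does not preserve $J$-anti-linearity, so the identification would leave the fiber. Once $\Xi$ is in place, the morphism action $\mu:{\bf E}\to E$ is defined by solving $\Xi(a,v,\eta,\xi)=\Xi'(a',v',\eta',\xi')\circ T\varphi$ together with $\wh{\ominus}'_{a'}(\xi')=0$, and one must verify not only sc-smoothness but also the unit and cocycle identities $\mu(1_x,e)=e$ and $\mu(h\circ g,e)=\mu(h,\mu(g,e))$ required of a strong bundle over an ep-groupoid; your appeal to the $G$-action on a single chart does not cover the morphisms between different uniformizers, which is where these identities are actually checked.
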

We refer the reader to \cite{HWZ3.5}, \cite{H2}, and \cite{HWZ7} for the background material on
polyfold theory.  For the reader's convenience we shall also review  the main concepts during the proofs.

\section{Fredholm Theory}
For a compatible smooth almost complex
structure $J$ on $(Q,\omega)$ we define  the Cauchy-Riemann section $\ov{\partial}_J$
of the strong polyfold bundle $p:W\rightarrow Z$ by
$$
\ov{\partial}_J([S,j,M,D,u])=[S,j,M,D,u,\ov{\partial}_{J,j}(u)]
$$
where
$$
\ov{\partial}_{J,j}(u) =\frac{1}{2}\bigl[ Tu+J(u)\circ Tu\circ j\bigr].
$$
We  call a Fredholm section of a strong polyfold bundle
component-proper provided the restriction to every connected
component of the domain is proper.
\begin{theorem}\label{maincauchy-riemann}
The  Cauchy-Riemann section $\ov{\partial}_{J}$ of the strong polyfold bundle  $p:W\rightarrow
Z$ is an sc-smooth component-proper Fredholm section, which is naturally oriented. On the component 
$Z_{A, g, m}$ of the polyfold $Z$ the Fredholm index of  $\ov{\partial}_{J}$ is equal to 
$$\text{Ind}\ (\ov{\partial}_{J})=2c_1(A)+(2n-6)(1-g)+2m,$$
with $2n=\text{dim}\ Q$, where $g$ is the arithmetic genus of the noded Riemann surfaces, and $m$ the number of marked points and $A\in H_2(Q).$
\end{theorem}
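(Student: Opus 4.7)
My plan is to verify the five claims of the theorem---sc-smoothness, the Fredholm property, component-properness, the natural orientation, and the index formula---in turn, working in the explicit polyfold charts for $p:W\to Z$ furnished by Theorem~\ref{main1.10}. For sc-smoothness I would fix a stable map $\alpha_0=(S,j,M,D,u_0)$ and trivialize $W$ in a polyfold chart near $[\alpha_0]$. Away from the nodal points the section reduces to a Nemytskii-type composition $(u,j)\mapsto \tfrac12[Tu+J(u)\circ Tu\circ j]$ acting between weighted $H^{m+3,\delta_m}$-scales, whose sc-smoothness follows from the general sc-smoothness of such composition operators on Hilbert scales, together with the sc-smooth dependence of the complex structure in Teichm\"uller coordinates. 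Near nodes the chart coordinates include gluing parameters converted via the exponential gluing profile $\varphi$ into cylindrical neck lengths, and one must check that the pulled-back operator---obtained by composing $\ov{\partial}_{J,j}$ with the gluing and anti-gluing maps from the construction of the polyfold structure---is sc-smooth in all arguments, including the gluing parameter. This is the analytic heart of the polyfold approach: once sc-smoothness of gluing and anti-gluing is established, sc-smoothness of $\ov{\partial}_J$ follows because the Cauchy-Riemann operator respects the splitting into glued and anti-glued pieces.

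For the Fredholm property I would linearize $\ov{\partial}_J$ at a smooth stable map and decompose the linearization as a classical Fredholm principal part plus an sc$^+$-perturbation, as required by the polyfold Fredholm theory of \cite{HWZ3,HWZ3.5}. The principal part is a standard real Cauchy-Riemann operator on sections of $u^*TQ$ over each domain component, with exponential weights $\delta_m$ at the punctures created by removing the nodal points. The crucial analytic input is that the asymptotic operator $-J_0\partial_t$ on each cylindrical end has spectrum $2\pi\Z$, so the condition $\delta_m\in(0,2\pi)$ forces the weighted operator to be Fredholm by classical elliptic theory on cylindrical ends. Coupled with the finite-dimensional Teichm\"uller and marked-point deformations in the chart, this yields the sc-Fredholm property on each stratum. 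The main technical obstacle is to show that this decomposition persists uniformly as one smooths the nodes: at neck length $R\to\infty$ one needs a uniformly bounded approximate right inverse, supplied by the gluing/anti-gluing filling construction precisely because the decay exponent stays below the asymptotic spectral gap $2\pi$.

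Component-properness is an immediate consequence of Gromov's compactness theorem. The zero set of $\ov{\partial}_J$ in $Z_{A,g,k}$ consists of equivalence classes of stable $J$-holomorphic maps of fixed homology class $A$, arithmetic genus $g$, and number of marked points $k$, and any sequence of such admits a Gromov-convergent subsequence which, by construction of the polyfold topology on $Z$, converges in $Z_{A,g,k}$. The natural orientation comes from the fact that the complex-linear part of the linearization is a genuine $\C$-linear Cauchy-Riemann operator, whose determinant line is canonically oriented by the complex structure, while the antilinear remainder is a compact perturbation that does not affect the orientation of the determinant. This orients the infinite-dimensional factor of the linearization, and the finite-dimensional Teichm\"uller/marked-point factor inherits its orientation from the complex structure of $\ov{\calm}_{g,k}$, so the two combine canonically.

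Finally, the index formula follows from Riemann-Roch combined with the real dimension of $\ov{\calm}_{g,k}$. For a fixed smooth domain of arithmetic genus $g$, Riemann-Roch for the real Cauchy-Riemann operator on $u^*TQ$ gives real Fredholm index $2c_1(u^*TQ)+2n(1-g)=2c_1(A)+2n(1-g)$. Adding the real dimension $6g-6+2k$ of $\ov{\calm}_{g,k}$, which accounts for the varying complex structure and varying marked points captured by the polyfold chart, yields
$$
2c_1(A)+2n(1-g)+6g-6+2k=2c_1(A)+(2n-6)(1-g)+2k,
$$
matching the formula of the theorem. Invariance of the index under smoothing a nodal pair is a standard gluing computation, so the same value holds on every stratum of $Z_{A,g,k}$.
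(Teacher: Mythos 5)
Your outline gets the overall architecture right in three of the five claims: sc-smoothness is indeed proved chartwise, splitting into a classical Nemytskii-type analysis away from the nodes and a gluing/anti-gluing analysis in the necks (this is Proposition \ref{sc-smooth&reg}, resting on Propositions \ref{XCX1} and \ref{XCX2}); component-properness is Gromov compactness (Proposition \ref{comprop} and Remark \ref{GROMOV}); and the index arithmetic $2c_1(A)+2n(1-g)+(6g-6+2k)$ is the standard Riemann--Roch computation which the paper also simply cites. But your treatment of the Fredholm property misstates what has to be proved. In the polyfold theory used here a Fredholm section is \emph{not} certified by writing its linearization as ``classical Fredholm plus sc$^+$-perturbation''; the paper stresses that the linearizations of $\ov{\partial}_J$ do not depend continuously on the base point in the operator norm, so classical Fredholm stability arguments are unavailable. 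What is required (Section \ref{sectionpolyfred}) is (i) the regularizing property --- which you omit entirely; it is Theorem \ref{thm-regularity1}, an elliptic regularity statement compatible with the bi-filtration and the strictly increasing weights $\delta_m$ --- and (ii) that at every smooth point the germ, after subtracting an sc$^+$-section and passing to a \emph{filled version}, is conjugate to an sc$^0$-contraction germ in $\mathfrak{C}_{basic}$. The filler is not an ``approximate right inverse'': it is the extension \eqref{DEFR} of the section off the splicing retract, obtained by prescribing $\wh{\ominus}_a(\xi)=\ov{\partial}_0(\ominus_a(\eta))$ in the anti-glued directions, and the contraction normal form is then extracted from two uniform estimates (Propositions \ref{prop--x} and \ref{prop4.19}) whose entire content is uniformity in the gluing parameter $a$. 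None of this is present in your sketch, and it is the analytic core of the theorem.

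Two smaller gaps. Your orientation argument (complex-linear part plus compact antilinear perturbation orients the determinant line) is the classical one, and the paper explicitly warns that it does not suffice here for exactly the reason above: without continuity of the linearizations in the operator norm the determinant lines do not automatically assemble into a topological line bundle, and the orientation question is deferred to the separate treatment in \cite{HWZ8}. Finally, for properness you should at least note, as the paper does, that one must identify the topology induced on the solution set by the polyfold charts with the usual Gromov topology; near the nodes this is not a tautology and requires the quantitative long-cylinder estimates of \cite{HWZ7.1}.
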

{Theorem \ref{maincauchy-riemann} is restated and proved up to the orientability as 
Theorem \ref{FREDPROP} in Section \ref{c-r-results}. 
 The orientation problem is explained in the Appendices \ref{orientations-abstract} and \ref{orientations}}. It is a special  case of the more elaborate
orientation problems  in SFT, which will be studied in  \cite{HWZ5}.

\section{The GW-invariants}
At this point, in view of Theorem \ref{pfstructure}, Theorem \ref{main1.10} and 
Theorem \ref{maincauchy-riemann},  we are in the position to apply the general   functional analytic Fredholm theory developed in
\cite{HWZ2,HWZ3,HWZ3.5}.   The theory provides, in particular, an abstract perturbation theory to achieve transversality of the Fredholm section. In general, however, transversality can only be  achieved by means of multivalued perturbations.

As a side remark we observe that on components of the polyfold  $Z$,  on which   the solutions are somewhere injective, the more classical theory is still applicable and the transversality of the Fredholm section can be achieved 
by choosing a suitable  almost complex structure  $J$ on the symplectic manifold $(Q,\omega)$. So,  the classical theory of the pseudoholomorphic curve equation is part of our  general
constructions, see \cite{MS2}.

 In general, following the recipe of the abstract perturbation theory in \cite{HWZ3.5}, one {chooses}  a generic 
 $\text{sc}^+$-multisection of the polyfold bundle $W\to Z$ such that the components of the solution spaces of the perturbed section are  oriented, compact, weighted, branched suborbifolds of the polyfold $Z$ of even 
dimension. Depending on the components $Z_{A,g,m}$, the solution spaces can also  be
smooth oriented manifolds or orbifolds. The  solution spaces of 
different perturbations are connected by oriented cobordisms in the same category.

To be precise we consider the  polyfold $Z$ of Theorem \ref{pfstructure}, the strong polyfold bundle $p:W\to Z$
of Theorem \ref{main1.10} and the sc-smooth component-proper Fredholm section $\ov{\partial}_J$ of Theorem 
\ref{maincauchy-riemann}. Given the homology class $A\in H_2(Q, Z)$  of the closed symplectic manifold $(Q,\omega)$ and two integers $g,m\geq 0$,
with $2g+m\geq 3$, we focus on the polyfold $Z_{A, g,m}\subset Z$ of equivalence classes 
$[(S, j, M, D, u)]$ in which the noded Riemann surface $S$ has arithmetic  genus $g$ and is equipped with $m$ marked points; the map $u$ represents the homology class $A$ of $Q$. The sc-smooth evaluation maps $ev_i:Z_{A, g,m}\to Q$ and $\gamma\colon Z_{A,g,m}\to \ov{\mathcal M}_{g,k}$ of Theorem \ref{evaluation_thm} allow to pull back the differential forms on $Q$ and $\ov{\mathcal M}_{g,m}$ to sc-differential forms on the polyfold  $Z_{A,g,m}$.
The solution set $S( \ov{\partial}_J)\subset Z_{A,g,m}$ of the Fredholm section $\ov{\partial}_J$ of the strong polyfold bundle $p:W\to Z$ is the set 
$$S(\ov{\partial}_J)=\{ z\in Z_{A, g,m}\vert \,  \ov{\partial}_J (z)=0 \}$$
where $0$ is the zero section of the bundle $p$. We point out that the fibers of a strong polyfold bundle do not {possess  a linear structure. However,  the bundle possesses a preferred section $0$. }

In view of the perturbation theory in \cite{HWZ3.5} there exists a small generic $\ssc^+$-multisection $\lambda$ of the  bundle $p$ in the sense of Definition 3.41 in \cite{HWZ3.5} so that the pair $(\ov{\partial}_J,\lambda)$  {possesses the properties listed in  Definition 4.7 in \cite{HWZ3.5}}.  We define the solution set of the pair 
$(\ov{\partial}_J,\lambda)$ as the subset
$$S(\ov {\partial}_J, \lambda )=\{z\in Z_{A, g,m}\vert \, \lambda (\ov{\partial}_J(z) )>0\}$$
and the weight function $\vartheta: Z_{A, g,m}\to \Q^+$ by 
$$\vartheta (z)=\lambda (\ov {\partial}_J(z))\in \Q^+.$$ 
{Here $\Q^+$ stands for the set of non-negative rational numbers}. 
By the Theorems 4.13 and 4.14 in \cite{HWZ3.5}, the pair $(S(\ov {\partial}_J, \lambda ),\vartheta)$ is a compact, branched and oriented suborbifold of the polyfold $Z_{A,g,m}$ in the sense of Definition 3.22 in \cite{HWZ3.5}. Its dimension is equal to $\text{Ind}\ (\ov{\partial}_J)$. Therefore, we can apply the branched integration theory of \cite{HWZ7} and conclude from Theorem 4.23 in \cite {HWZ3.5} immediately the following result. 

\begin{theorem}\label{GW-thm}
Let $(Q,\omega)$ be a closed symplectic manifold of dimension $2n$. For a
given  homology class $A\in H_2(Q)$ and for given  integers  $g,m\geq 0$ satisfying  $2g+m\geq 3$,  
there exists  a multi-linear map
$$
\Psi^Q_{A,g,m}: H^\ast(Q;{\mathbb R})^{\otimes m}\rightarrow
H^\ast(\overline{\mathcal M}_{g,m};{\mathbb R})
$$
which,  on $H^*(Q; \R)^{\otimes m}$,   is super-symmetric with respect to the grading by even and odd forms. This map
is uniquely characterized by the following formula. For a given compatible almost complex structure $J$ on $Q$  and a given
small generic perturbation by an $\ssc^+$-multisection $\lambda$,  we have the representation 
\begin{eqnarray*}
&\langle\Psi^Q_{A,g,m}([\alpha_1]\otimes\ldots \otimes[\alpha_m]),[\tau]\rangle&\\
&=\int_{(S(\ov{\partial}_J, \lambda) ,\vartheta)}\gamma^\ast (PD(\tau ))\wedge \text{ev}_1^\ast(\alpha_1)\wedge\ldots \wedge
\mbox{ev}_k^\ast(\alpha_m)&
\end{eqnarray*}
in which  $\alpha_1,\ldots, \alpha_m\in H^*(Q)$,  $\tau \in H_*(\ov{\calm}_{g,m})$, $PD$ denotes the Poincar\'{e} dual, and $\gamma:Z_{g,m}\to \ov{\mathcal M}_{g,m}$ is the map in 
Theorem \ref{evaluation_thm}
\end{theorem}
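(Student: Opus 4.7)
The plan is to reduce the theorem to a direct application of the four preceding structural results, together with the abstract Fredholm perturbation/transversality package of \cite{HWZ3.5} and the branched integration theory of \cite{HWZ7}. Concretely, the ingredients already in hand are: the polyfold structure on $Z_{A,g,k}$ (Theorem \ref{pfstructure}); the strong polyfold bundle $p:W\to Z$ (Theorem \ref{main1.10}); the oriented sc-smooth component-proper Fredholm property of $\ov{\partial}_J$, of index $2c_1(A)+(2n-6)(1-g)+2k$ (Theorem \ref{maincauchy-riemann}); and the sc-smoothness of the evaluation maps $\mathrm{ev}_i$ and the forgetful map $\sigma$ (Theorem \ref{evaluation_thm}).

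First I would invoke the perturbation theorem of \cite{HWZ3.5} to produce a small generic $\ssc^+$-multisection $\lambda$ of $p$ such that $(\ov{\partial}_J,\lambda)$ is generic, so that by Theorems 4.13 and 4.14 of \cite{HWZ3.5} the weighted solution space $(S(\ov{\partial}_J,\lambda),\vartheta)$ is a compact, oriented, branched suborbifold of $Z_{A,g,k}$ of dimension equal to the Fredholm index. Choosing smooth closed form representatives of the classes $\alpha_i$ on $Q$ and of the Poincar\'e dual $\mathrm{PD}(\tau)$ on $\ov{\mathcal M}_{g,k}$, the pullbacks $\mathrm{ev}_i^*(\alpha_i)$ and $\sigma^*(\mathrm{PD}(\tau))$ are sc-differential forms on $Z_{A,g,k}$ in the sense of \cite{HWZ7}, and a degree count matches the total degree of the wedge product with the dimension of $(S(\ov{\partial}_J,\lambda),\vartheta)$. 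The integral on the right-hand side is then defined via the branched integration theory of \cite{HWZ7}, and we use it as the definition of the pairing $\langle\Psi^Q_{A,g,k,J,\lambda}([\alpha_1]\otimes\cdots\otimes[\alpha_k]),[\tau]\rangle$.

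The next task is to show that this pairing descends to cohomology and is independent of the auxiliary choices of $J$, of $\lambda$, and of the form representatives. For the form representatives I would replace $\alpha_i$ by $\alpha_i+d\beta_i$ (or similarly $\mathrm{PD}(\tau)$ by a cohomologous form) and appeal to the branched Stokes theorem of \cite{HWZ7}: since $(S(\ov{\partial}_J,\lambda),\vartheta)$ is closed as a weighted branched orbifold, the difference integral vanishes. For independence of the pair $(J,\lambda)$, I would consider two admissible pairs $(J_0,\lambda_0)$ and $(J_1,\lambda_1)$, build a sc-smooth homotopy of compatible almost complex structures, and run the parametric version of the perturbation theorem on the pullback strong polyfold bundle over $[0,1]\times Z_{A,g,k}$ to produce a generic interpolating multisection. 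By Theorem 4.23 of \cite{HWZ3.5} the resulting weighted solution space is a compact oriented branched cobordism with boundary the two original solution spaces, and a second application of branched Stokes yields equality of the two integrals. Multi-linearity is manifest from the integral formula, and super-symmetry follows because permuting the ordered marked points induces an sc-diffeomorphism of $Z_{A,g,k}$ that intertwines the corresponding $\mathrm{ev}_i$ while producing the standard Koszul sign when odd-degree forms are rearranged.

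The hard part will be the cobordism step: we need a parametric perturbation result that interpolates two prescribed generic boundary multisections while maintaining the component-properness and the Fredholm property on the enlarged polyfold $[0,1]\times Z_{A,g,k}$. All of this is packaged into the abstract theory of \cite{HWZ3.5}, but verifying that the four earlier theorems of the present paper hand off cleanly into that theory—especially the interplay between the exponential weights governing the levels and the parametric transversality—is where the actual work lies. Once this is in place, the rest of the argument is a bookkeeping exercise in degree matching and branched Stokes.
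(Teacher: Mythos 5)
Your proposal follows essentially the same route as the paper, which derives the theorem directly from Theorems \ref{pfstructure}, \ref{evaluation_thm}, \ref{main1.10} and \ref{maincauchy-riemann} together with the abstract perturbation and transversality package of \cite{HWZ3.5} (in particular Theorems 4.13, 4.14 and 4.23 there) and the branched integration theory of \cite{HWZ7}, deferring the remaining bookkeeping (well-definedness on cohomology, independence of $(J,\lambda)$ via oriented cobordism, and the symmetry properties) to \cite{H3}. The Stokes/cobordism steps you outline are precisely the content the paper delegates to those references, so your reconstruction is correct and consistent with the intended argument.
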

The integration theory used is the ``branched integration'' from  \cite{HWZ7}. The integral over the empty set  is defined to be zero.

The a priori real number $\langle\Psi^Q_{A,g,m}([\alpha_1]\otimes\ldots \otimes[\alpha_m]),[\tau]\rangle$ is called a  {\bf Gromov-Witten invariant}. \index{GW--invariant} It is zero if  the Fredholm index does not  agree with the degree of the differential form being integrated. It follows from the construction that the Gromov-Witten invariants are invariants of the symplectic deformation type of the manifold $Q$. Indeed, a deformation produces  an arc of Cauchy-Riemann sections. These  are component-proper since the almost complex structures
occurring are tamed. {With additional efforts one can show that the GW-invariants  are rational numbers  if the (co)homology classes  are integer and the above integral has an interpretation as a rational count of solutions of some intersection problems. 
}\\

\noindent{\bf Acknowledgement} We thank the referee for useful comments.
We thank Urs Fuchs for pointing out a bug in the topology part,
which required a stronger analytical argument comparing two uniformizers.
The first author also would like to thank Nate Bottman and Jiayong Li and the more than 
30 participants of the MIT RTG-sponsored polyfold workshop at Pajaro Dunes 
at which the ideas of polyfold theory as well as this paper was discussed in detail.
This event took place, while the authors  were preparing the final version implementing the suggestions
of the referee,  and the questions and discussion with participants helped to improve
several parts of the presentation. We thank Stefan Suhr and Kai Zehmisch for their comments.
Finally we would like to thank Joel Fish and Katrin Wehrheim
for their interest in this project and their valuable  feedbacks.

Kris Wysocki and Eduard Zehnder would like  thank the  Institute for Advanced Study (IAS) in Princeton and the ForschungsInstitut f\"ur Mathematik (FIM) in Zurich for their hospitality.

%
%
%

\chapter{Recollections and Technical Results}\label{chapter_2_technical results}
A basic ingredient in the construction of the polyfold structure on the space of stable curves is the Deligne-Mumford theory. There is, however, a twist.  In the plumbing constructions of Riemann surfaces we have to apply  a recipe which is quite different from the one used in the DM-theory. The deeper reasons lie in  the analysis of function spaces. The DM-theory, as needed here, has been developed in detail in \cite{HWZ-DM}.  The other ingredients are  nonlinear analysis facts related to  our sc-version of nonlinear analysis. The results needed here have been proved in \cite{HWZ8.7}.
We start by recalling  the results used  in
the polyfold construction later on.

\section{Deligne--Mumford type Spaces}\label{dm-subsect}
In this section we present a Lie groupoid version of the
classical Deligne-Mumford theory.

The gluing construction at the nodes requires a conversion of the absolute value $|a|$ of a non-zero complex number $a$, the gluing parameter,  into a positive real number $R>0$,
which is essentially the modulus of the associated cylindrical neck. {For this purpose we introduce the notion of a gluing profile.
\begin{definition}\label{gluing_profile}
A {\bf gluing profile} is a smooth diffeomorphism 
$$\varphi:(0,1]\to [0, \infty ).
$$
\end{definition}
}

The conversion $|a|\mapsto R$ in the classical Deligne-Mumford case is defined  by the logarithmic gluing profile
\index{gluing profile!logarithmic }
$$
(0,1]\rightarrow [0,\infty), \quad r\mapsto  \varphi(r):=-\frac{1}{2\pi}\ln(r).
$$

In our study of stable curves we  need, in view of the  functional analytic requirements,  other gluing profiles. A very useful one is the exponential gluing profile
\index{gluing profile!exponential}
 $$
(0,1]\rightarrow [0,\infty),\quad  r\mapsto \varphi(r):=e^{\frac{1}{r}}-e.
$$
As it turns out, only in the case of the logarithmic
gluing profile, the Deligne-Mumford space of stable
noded and marked Riemann surfaces has a holomorphic
 orbifold structure. In the case of the exponential
 gluing profile we merely obtain a smooth naturally
 oriented orbifold structure.

We begin with the classical case.
We denote by ${\overline{\mathcal N}}$ the space of biholomorphic
equivalence classes of connected, stable, noded Riemann surfaces
with un-ordered marked points. More precisely
we consider equivalence classes $[\alpha]$ of
nodal Riemann surfaces 
$$\alpha=(S,j,M,D)$$
 in which $(S,j)$ is
a closed Riemann surface (which can  consist
of different connected components), $M$ is a finite subset of $S$ of  marked points
 which we assume to be ordered, un-ordered or partially ordered,
 depending on the situation.
 Isomorphisms between such Riemann
 surfaces have to preserve the marked points,
 and when they are ordered even their ordering.
 For the first part of the discussion we assume $M$ to be un-ordered. The set $D$ consists of  finitely many un-ordered pairs  $\{x,y\}$ of distinct points $x$ and $y$ in $S$ and we  
 require that  two such  pairs are either identical or disjoint, i.e.,  if $\{x,y\}\cap\{x',y'\}\neq \emptyset$,  then $\{x,y\}=\{x',y'\}$.  We call $D$  the set of nodal pairs and  write $|D|\subset S$ for the union of all sets $\{x,y\}$, 
 $$
 |D|=\bigcup_{\{x,y\}\in D} \{x,y\}. 
 $$
The sets $M$ and $|D|$ are  required to be disjoint.  
 We recall that  the nodal Riemann surface  $\alpha$ is said to be connected if the topological space,  obtained by identifying the two points of every  nodal pair is  a connected topological space. We call $\alpha$ stable if  every connected component $C$ of  the Riemann surface $S$  having genus $g(C)$ satisfies
$$
2\cdot g(C)+\sharp (C\cap(M\cup|D|))\geq 3,
$$
where $\sharp(\cdot)$ denotes the cardinality of a set. Two such tuples $\alpha$ and $\alpha'$  are called isomorphic  (or equivalent) provided there exists a biholomorphic map $\phi:(S,j)\rightarrow (S',j')$ satisfying $\phi(M)=M'$ and $\phi_{\ast}(D)=D'$, where $\phi_{\ast}(D)=\{\{\phi(x),\phi(y)\} | \ \{x,y\}\in D\}$. The group $G$ of the automorphisms of the  noded Riemann  surface $(S, j, M, D)$ is finite if and only if $\alpha$ is stable. 
The set $\overline{\mathcal N}$ consists of all equivalence classes of connected, stable, noded Riemann surfaces
with un-ordered marked points. This set has interesting subsets. If $g$ and $m$ are non-negative integers satisfying $2g+m\geq 3$,  we denote by $\overline{\mathcal N}_{g,m}$ the subset consisting of all equivalence classes $[\alpha]$ for which the representative  $\alpha$ has arithmetic genus $g$ and $m$ marked points.
\begin{theorem}
The space $\overline{\mathcal N}$ has a natural second countable
paracompact Hausdorff topology for which every connected component is compact.
Moreover,  if $2g+m\geq 3$, then  the subset $\overline{\mathcal N}_{g,m}$ is a compact subset which also is a connected component.
\end{theorem}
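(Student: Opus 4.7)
The plan is to equip $\overline{\mathcal N}$ with a topology via local finite-dimensional uniformizers built from classical Deligne--Mumford deformation theory combined with plumbing, and then to deduce the stated topological properties from the fact that each uniformizer is a quotient of a finite-dimensional ball by a finite group.

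First, I would construct a local chart around each class $[\alpha]$ represented by a stable noded Riemann surface $\alpha = (S, j, M, D)$ with (finite) automorphism group $G$. The chart domain is an open $G$-invariant neighborhood $U$ of the origin in $V \oplus \mathbb{C}^{\sharp D}$, where $V$ is a $G$-invariant finite-dimensional space of complex structure deformations of $(S, j)$ supported away from disk-neighborhoods of the nodal points (the dimension being controlled by Riemann--Roch applied to the normalization relative to the special points), and $\mathbb{C}^{\sharp D}$ is the space of gluing parameters at the nodal pairs. For each parameter $(v, \mathbf{a}) \in U$, the logarithmic gluing profile $\varphi(r) = -(2\pi)^{-1} \ln r$ converts each $|a_i|$ into a cylindrical neck modulus; standard plumbing then yields a stable noded marked Riemann surface (the node is preserved when $a_i = 0$, and replaced by a smooth neck when $a_i \neq 0$). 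This produces a canonical map $\Phi_\alpha : U/G \to \overline{\mathcal N}$.

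Second, I would declare $O \subset \overline{\mathcal N}$ open if and only if $\Phi_\alpha^{-1}(O)$ is open for every such uniformizer. The two essential verifications are that $\Phi_\alpha$ is injective on a sufficiently small neighborhood (stability forces only finitely many biholomorphisms of nearby curves, and these already come from the $G$-action), and that the overlap maps between different uniformizers are homeomorphisms---this is the classical content of universal deformations in the Deligne--Mumford theory. Each $U/G$ is Hausdorff, separable, and locally compact, so the induced topology on $\overline{\mathcal N}$ inherits these properties; covering by countably many charts, using that the underlying combinatorial types of $(S, M, D)$ are countable, gives second countability, and local compactness together with second countability yields paracompactness.

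Third, I would observe that the arithmetic genus $g_a$ and the number $k = \sharp M$ of marked points are invariants of the plumbing family: smoothing a node decreases $\sharp D$ by one while either raising the genus of the affected component by one (when both branches lie on the same component) or merging two components into one (in either case the quantity $1 + \sharp D + \sum_C [g(C) - 1]$ is unchanged), and $k$ is trivially preserved. Hence $[\alpha] \mapsto (g_a, k)$ is locally constant, so each $\overline{\mathcal N}_{g,k}$ is open and closed and therefore a union of connected components. Compactness of $\overline{\mathcal N}_{g,k}$ for $2g + k \geq 3$ is the classical Deligne--Mumford compactness theorem, provable via the hyperbolic thick--thin decomposition or via Gromov compactness with constant target; together with the well-known connectedness of $\overline{\mathcal M}_{g,k}$, this gives that $\overline{\mathcal N}_{g,k}$ is itself a single connected component, and compactness of arbitrary components follows because every component lies in some $\overline{\mathcal N}_{g,k}$ with $2g + k \geq 3$. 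The chief technical obstacle is the coherent gluing of local uniformizers---verifying that plumbings at different subsets of nodes produce compatible transition maps and that the finite group actions intertwine properly---which is precisely the analysis carried out in detail in \cite{HWZ-DM}.
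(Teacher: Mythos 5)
Your proposal is correct and follows essentially the same route as the paper, which for this classical statement simply cites the literature (Hummel, Deligne--Mumford, and \cite{HWZ-DM}) and describes exactly the neighborhood basis you construct: the sets $V(\alpha,U,{\bf D},\varepsilon)$ obtained from a finite-dimensional deformation of the complex structure away from the nodes together with gluing parameters, modulo the finite automorphism group. Your verification that the arithmetic genus is preserved under plumbing and your reduction of compactness of components to the classical compactness of $\overline{\mathcal M}_{g,k}$ (using that stability forces $2g_a+k\geq 3$) are both sound.
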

A proof of this classical result can be found in \cite{Hu}.
We shall call the underlying topology the DM-topology.  We shall next  describe a basis for this topology. 

We fix {one of the above}  gluing profiles  $\varphi$ and let  the noded Riemann surface $\alpha=(S,j,M,D)$ represent the equivalence  class $[\alpha]\in \overline{\mathcal N}$. 
{Then we take a small disk structure associated with $\alpha$ and defined as follows. 
\begin{definition}[{\bf Small disk structure}]\label{small_disk_structure}
A {\bf small disk structure}  associated with the stable noded Riemann surface $\alpha=(S, j, M, D)$, denoted by ${\bf D}$, assigns to every nodal point $x\in \abs{D}$ a disk like neighborhood $D_x\subset S$ centered at $x$ and having a smooth boundary. The disks are mutually disjoint and disjoint from the set $M$ of marked points (unless otherwise specified). Moreover, the disks are equipped with a choice of holomorphic polar coordinates as introduced below. 
In addition, the union 
$$
|{\bf D}|:= \bigcup_{x\in |D|} D_x.
$$
of the disks is invariant under the  group action of  the automorphism group $G$ of $\alpha$.\index{small disk structure}
\end{definition}
} 

 {There always exists  a small disk structure in any open neighborhood of the nodal points  as proved in   \cite{HWZ-DM}. } 

With every nodal pair $\{x, y\}\in D$ we associate a gluing parameter $a_{\{x, y\}}\in \C$\index{gluing! parameter}  which is a complex number varying in  the closed unit disk. By  $a=\{a_{\{x,y\}}\}_{\{x, y\}\in D}$ we denote the collection of all gluing parameters. Given a fixed  $a=\{a_{\{x,y\}}\}$, we denote by 
$D_a$ the  set of all pairs $\{x,y\}$ for which $a_{\{x,y\}}=0$.

We first consider a nodal pair $\{x, y\}\in D\setminus D_a$ for which, by definition, $a_{\{x, y\}}\neq 0$. In this case we shall connect the boundaries of the associated disks $D_x$ and $D_y$ by a finite cylinder 
$$Z^{\{x, y\}}_{a_{\{x, y\}}}$$
defined as follows. Near $x$ we choose the positive holomorphic polar coordinates\index{positive holomorphic polar coordinates}
$$h_x:[0,\infty)\times S^1\rightarrow D_x\setminus\{x\},\qquad 
h_x(s, t)=\ov{h}_x(e^{-2\pi (s+it)}),$$
and near $y$ we choose the 
negative holomorphic polar coordinates \index{negative  holomorphic polar coordinates}
$$h_y:(-\infty, 0]\times S^1\rightarrow D_y\setminus\{y\},\qquad 
h_y(s, t)=\ov{h}_y(e^{2\pi (s+it)})$$
where
\begin{align*}
&\ov{h}_x:\{w\in \C\vert \, \abs{w}\leq  1\}\to D_x\\
&\ov{h}_y:\{w\in \C\vert \, \abs{w}\leq  1\}\to D_y
\end{align*}
are biholomorphic mappings $(T\ov{h}_x\circ i=j\circ T\ov{h}_x$ and similarly for $\ov{h}_y$) satisfying 
$$\ov{h}_x(0)=x\quad \text{and}\quad \ov{h}_y(0)=y.$$
We remove the  points $z\in D_x$
which are  of the form $z=h_x(s,t)$ for $s>\varphi(|a_{\{x,y\}}|)$  and the points $z'\in D_y$ of the form  $z'=h_y(s',t')$ for  $s'<-\varphi(|a_{\{x,y\}}|)$. Now we identify the remaining annuli as follows. The points 
$$z=h_x(s,t)\quad \text{and}\quad z'=h_y(s',t')$$
are equivalent if 
$$s=s'+\varphi (\abs{a_{\{x,y\}}})\quad \text{and}\quad t=t'+\vartheta    \pmod 1$$
where the polar form of $a_{\{x,y\}}\in \C$ is given by 
$$a_{\{x,y\}}=|a_{\{x,y\}}|e^{-2\pi i\vartheta}.$$
We shall denote the equivalence classes by $[h_x(s, t)]$ or simply $[s,t]$. The equivalence classes define the glued finite cylinder $Z_{a_{\{x, y\}}}^{\{x, y\}}$ connecting $\partial D_x$ with $\partial D_y$. It possesses two  distinguished  global coordinate  systems, namely the positive coordinates
$$\{ (s, t)\vert\, 0\leq s\leq \varphi \bigl(\abs{a_{\{x, y\}}}\bigr) \}$$
and  the negative coordinates
$$\{ (s', t')\vert\,  -\varphi \bigl(\abs{a_{\{x, y\}}}\bigr) \leq s'\leq 0\}. $$
{The gluing construction near a nodal pair $\{x, y\}$ for which $a_{\{x, y\}}\neq 0$ is illustrated in Figure \ref{Fig2}.}

We next consider a nodal pair $\{x, y\}\in D_a$ which satisfies $a_{\{x, y\}}=0$. In this case we do not change the associated disks 
and define 
$$Z^{\{x, y\}}_0=D_x\cup D_y.$$

\begin{figure}[h!]
\psfrag {r}{\small{$R$}}
\psfrag {mr}{\small{$-R$}}
\psfrag {0}{\small{$0$}}
\psfrag {a0}{\small{$0$}}
\psfrag {rp}{\small{${\mathbb R}^+\times S^1$}}
\psfrag  {xy}{\small{$\{x,y\}$}}
\psfrag {hx}{\small{$h_x$}}
\psfrag {rp}{\small{${\mathbb R}^+\times S^1$}}
\psfrag  {rm}{\small{${\mathbb R}^-\times S^1$}}
\psfrag{Z}{\small{$Z_{a_{\{x,y\}}}^{\{x,y\}}$}}
\centering
\includegraphics[width=3.6in]{Fig2.eps}
\caption{Gluing}
\label{Fig2}
\end{figure}

We carry out the above gluing construction at every nodal pair $\{x, y\}\in D$ and obtain this way the new glued Riemann surface  $S_a$.  Because we did not change the complement of the disks $(D_x)$ we can identify the marked points $M$ on $S$ with points on $S_a$, denoted by $M_a$. {An almost}  complex structure $k$ on $S$ coinciding with $j$ on the set $\abs{{\bf D}}$ induces the { almost } complex structure $k_a$ on the glued surface $S_a$.

\begin{figure}[ht]
\psfrag {a}{$(S, k, M, D)$}
\psfrag {b}{$(S_a, k_a, M_a, D_a)$}
\psfrag {a1}{\small{$a_{\{x,y\}}=0$}}
\psfrag  {a2}{\small{$a_{\{x',y'\}}=0$}}
\psfrag {b1}{\small{$a_{\{x,y\}}\neq 0$}}
\psfrag  {b2}{\small{$a_{\{x',y'\}}=0$}}
\centering
\includegraphics[width=4.2in]{Fig3.eps}
\caption{Gluing}
\label{Fig3}
\end{figure}

Summarizing, given the stable noded Riemann surface $\alpha=(S, k, M, D)$,
 {any of the above two gluing profiles $\varphi$}, and  given the small disk structure $(D_x)_{x\in \abs{D}}$ and the gluing parameters $a$, the above gluing construction defines the new stable noded  Riemann  surface $\alpha_a$ defined by
$$\alpha_a=(S_a,k_a,M_a,D_a),$$
{and illustrated in Figure \ref{Fig3}.}
Starting from the noded Riemann surface $\alpha=(S,j,M,D)$, having fixed the disks $\{D_x\}_{x\in \abs{D}}$ of the small disk structure, we let $U$ be an open 
$C^\infty$-neighbor\-hood of $j$ consisting of complex structures on $S$ giving the same orientation. For every $\varepsilon\in (0,\frac{1}{2})$ we define the subset  $V(\alpha,U,{\bf D},\varepsilon)$ of $\ov{\mathcal N}$ by
$$
V(\alpha,U,{\bf D},\varepsilon)
=\{[S_a,k_a,M_a,D_a]\ |\ |a|<\varepsilon,\ k\in U,\ k=j\ \hbox{on}\ |{\bf D}|\}.
$$
\begin{proposition}
The collection of all sets $V(\alpha,U,{\bf D},\varepsilon)$\index{basis for DM-topology}
constitutes  a basis for the natural DM-topology on $\ov{\mathcal N}$. The topology does not depend on the choice of the gluing profile.
\end{proposition}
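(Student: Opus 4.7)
The plan is to proceed in three stages: verify the topological-basis axioms, show that the generated topology coincides with the DM-topology, and then establish independence of the gluing profile.

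First, I would check that the family $\{V(\alpha,U,\mathbf{D},\varepsilon)\}$ satisfies the axioms of a basis. The covering axiom is immediate since any $[\alpha]\in\overline{\mathcal N}$ belongs to every $V(\alpha,U,\mathbf{D},\varepsilon)$ built from a representative of itself by taking $a=0$ and $k=j$. For the intersection axiom, given $[\gamma]\in V(\alpha_1,U_1,\mathbf{D}_1,\varepsilon_1)\cap V(\alpha_2,U_2,\mathbf{D}_2,\varepsilon_2)$, I would realize $\gamma$ as a glued surface coming from either side, pick a small disk structure $\mathbf{D}'$ around the (possibly fewer) nodes of $\gamma$ whose disks sit safely inside the un-glued pieces of $\mathbf{D}_1$ and $\mathbf{D}_2$, and then use the openness of the gluing construction in the parameters $(k,a)$ to select $U',\varepsilon'$ with $V(\gamma,U',\mathbf{D}',\varepsilon')\subset V(\alpha_1,U_1,\mathbf{D}_1,\varepsilon_1)\cap V(\alpha_2,U_2,\mathbf{D}_2,\varepsilon_2)$.

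Second, I would show the basis generates exactly the DM-topology. For the easy direction, I would check that each $V(\alpha,U,\mathbf{D},\varepsilon)$ is DM-open by observing that the gluing map $(k,a)\mapsto [\alpha_a]$, defined on the open parameter set $\{(k,a):k\in U,\,k=j\text{ on }|\mathbf{D}|,\,|a|<\varepsilon\}$, is continuous into the DM-topology; this follows from the standard description of DM-convergence in terms of $C^\infty$-convergence away from the forming nodes and uniform collapse of the glued necks. Conversely, for any DM-open neighborhood $W$ of $[\alpha]$, I would invoke the local normal form for DM-convergence at a stable noded curve, as worked out in \cite{HWZ-DM}: any DM-close curve is biholomorphic to an $(\alpha_a)$ built from $\alpha$ by a $C^\infty$-small deformation of $j$ (constant on a neighborhood of the nodes) together with a gluing of sufficiently small modulus. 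Shrinking $U$ and $\varepsilon$ accordingly, one obtains $V(\alpha,U,\mathbf{D},\varepsilon)\subset W$.

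Finally, for independence of the gluing profile, I would show cofinality between the bases associated to two profiles $\varphi_1,\varphi_2$. Given fixed $(\alpha,U,\mathbf{D})$, the glued surface $(S_a,k_a)$ produced with profile $\varphi_1$ is biholomorphic to the one produced with profile $\varphi_2$ at parameter $a'$ satisfying $\varphi_1(|a|)=\varphi_2(|a'|)$ and $\arg a=\arg a'$; the biholomorphism is the identity outside the inserted necks and a translation in the $s$-coordinate on each neck, since finite cylinders $[0,L]\times S^1$ of the same conformal modulus $L$ are biholomorphic regardless of how they were cut. Hence a $V$-set for $\varphi_1$ with radius $\varepsilon_1$ coincides pointwise with the $V$-set for $\varphi_2$ with radius $\varepsilon_2=\varphi_2^{-1}(\varphi_1(\varepsilon_1))$, and the two bases generate the same topology. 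The main obstacle is the converse part of stage two: showing that \emph{every} DM-open neighborhood is captured by some $V$-set requires the surjectivity of the gluing parametrization onto a DM-neighborhood of $[\alpha]$, which rests on the local deformation theory of nodal families developed in \cite{HWZ-DM}.
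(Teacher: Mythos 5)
The paper itself offers no proof of this proposition: it states it and immediately defers to \cite{HWZ-DM}, so there is no in-paper argument to compare yours against line by line. Your overall architecture (basis axioms, agreement with the DM-topology, profile independence) is the standard one, and your treatment of profile independence is correct and can even be sharpened: since the glued surface depends on $a_{\{x,y\}}\neq 0$ only through the modulus $R=\varphi(|a_{\{x,y\}}|)$ and the twist $\vartheta$, matching $\varphi_1(|a|)=\varphi_2(|a'|)$ with equal arguments produces the \emph{identical} identification of annuli, so $V(\alpha,U,{\bf D},\varepsilon_1)$ for $\varphi_1$ \emph{equals} $V(\alpha,U,{\bf D},\varphi_2^{-1}(\varphi_1(\varepsilon_1)))$ for $\varphi_2$ as a set; no biholomorphism or cofinality argument is needed. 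You also correctly locate the real analytic content (the local normal form for families near a stable noded curve) in \cite{HWZ-DM}, which is exactly where the paper puts it.

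There is, however, one genuine logical slip in your ``easy direction.'' Continuity of the gluing map $(k,a)\mapsto[\alpha_a]$ into the DM-topology shows that preimages of DM-open sets are open in the parameter space; it does \emph{not} show that the image $V(\alpha,U,{\bf D},\varepsilon)$ of an open parameter set is DM-open. For that you need the gluing parametrization to be an \emph{open} map onto a DM-neighborhood of $[\alpha]$ --- equivalently, that $p:O\to{\mathcal U}$ induces a homeomorphism from $G\backslash O$ onto an open subset of $\overline{\mathcal N}$, which is part of the ``good uniformizing family'' package (Definition \ref{citiview}, Theorem \ref{existence-x}). So the forward inclusion is not easier than the converse: both inclusions, as well as your intersection axiom (where you need every surface DM-close to $\gamma$ and realized as a gluing of $\gamma$ to also be realized as a gluing of $\alpha_1$ and of $\alpha_2$ with admissible parameters), rest on the same local surjectivity/openness statement from the deformation theory in \cite{HWZ-DM}. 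As long as you cite that input for all three steps rather than only for the converse of stage two, the outline is sound.
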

We refer to \cite{HWZ-DM} for a proof. In view of  the above construction we shall say that the noded Riemann surface $S_a$ is obtained from $S$  by plumbing or by gluing.

The classical Deligne-Mumford result in  \cite{DM} equips the topological space $\overline{\mathcal N}$ with a holomorphic orbifold structure. This is also proved from a more differential geometric perspective in \cite{RS}. Assume that $(S,j,M,D)$ represents
an element in $\overline{\mathcal N}$ and denote by $G$ its finite
automorphism group. By $\Gamma_0(\alpha)$ we denote  the vector space of
smooth sections of $TS\rightarrow S$ which vanish at the special points in
$|D|\cup M$. Moreover, we denote by $\Omega^{0,1}(\alpha)$ the space of
sections of $\hbox{Hom}_{\mathbb R}(TS,TS)\rightarrow S$ which are
complex antilinear. There is an associated linear Cauchy-Riemann\index{linear Cauchy-Riemann operator $\ov{\partial}$}
operator
$$
\bar{\partial}:\Gamma_0(\alpha)\rightarrow \Omega^{0,1}(\alpha)
$$
which, in holomorphic coordinates, is represented by
$$\ov{\partial }f(z)=\frac{\partial f}{\partial \ov{z}}(z)d\ov{z}=\frac{1}{2}\bigl(\partial_xf+i\partial_y f\bigr)(z)(dx-idy).$$
 {First order elliptic theory shows that $\ov{\partial}$  is a Fredholm operator. Its index can be computed with the help from the Riemann-Roch theorem and we refer to \cite{MS2}. The analysis can be based on  H\"{o}lder  or Sobolev spaces. Since the precise analytical set-ups are standard,  we will allow ourselves to be somewhat imprecise in our notation. We work, in particular, most of the time in the smooth category.  With smooth we mean $C^\infty$ smooth. However, everything can be made precise by taking the appropriate functional-analytic set-up.}
 
Due to the stability condition, the operator $\bar{\partial}$ is
injective. More precisely, as a consequence of the Riemann-Roch theorem the following holds. 
\begin{proposition}
Let $\alpha=(S, j, M, D)$ be a stable and connected nodal Riemann surface. Then the operator
$\bar{\partial}:\Gamma_0(\alpha)\rightarrow \Omega^{0,1}(\alpha)$ is
a complex linear injective Fredholm operator whose  index is equal to 
$$
\ind (\ov{\partial})=-(3g_a+\sharp M-\sharp D-3).\index{Fredholm index of $\ov{\partial}$}
$$
\end{proposition}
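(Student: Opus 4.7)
The plan is to decompose $\bar\partial$ as a direct sum over the connected components $C$ of the (possibly disconnected) Riemann surface $S$ and apply the classical Riemann-Roch theorem on each one. A section of $TS$ that vanishes at all special points in $M \cup |D|$ restricts on $C$ to a section of $TC$ vanishing on $\Sigma_C = C \cap (M \cup |D|)$. Such sections are identified with smooth sections of the holomorphic line bundle $L_C := TC \otimes \mathcal{O}_C(-\Sigma_C)$, and $\Omega^{0,1}(\alpha)$ is identified with $\bigoplus_C \Omega^{0,1}(C, L_C)$. Under this identification $\bar\partial$ becomes the Dolbeault operator of $L_C$ on each component, which is a complex linear elliptic operator; its Fredholm property (with the appropriate Sobolev completion implicit in the statement) is standard, and its kernel and cokernel are identified with $H^0(C, L_C)$ and $H^1(C, L_C)$.

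Next I would compute the index on each component. Since $\deg TC = 2 - 2 g(C)$ and $\deg \mathcal{O}_C(-\Sigma_C) = -k(C)$, where $k(C) := \sharp \Sigma_C$, Riemann-Roch gives
\[
\mathrm{ind}_{\mathbb C}\, \bar\partial_C \;=\; \deg L_C + 1 - g(C) \;=\; 3 - 3 g(C) - k(C).
\]
I then sum over the $N$ connected components of $S$. Every marked point lies on exactly one component and every nodal pair contributes its two endpoints, so $\sum_C k(C) = \sharp M + 2\, \sharp D$. The definition of arithmetic genus rearranges to $\sum_C g(C) = g_a - 1 + N - \sharp D$. Substituting,
\[
\mathrm{ind}_{\mathbb C}\, \bar\partial \;=\; 3N - 3\bigl(g_a - 1 + N - \sharp D\bigr) - \bigl(\sharp M + 2\sharp D\bigr) \;=\; -(3 g_a + \sharp M - \sharp D - 3),
\]
which is the asserted formula.

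For injectivity, the kernel consists of holomorphic vector fields on each component $C$ that vanish on $\Sigma_C$. Stability gives $2 g(C) + k(C) \geq 3$ on every $C$, and I handle the three cases separately: if $g(C) \geq 2$, there are no nonzero holomorphic vector fields at all; if $g(C) = 1$, stability forces $k(C) \geq 1$, and the one-dimensional space of holomorphic vector fields on a torus consists of nowhere-vanishing sections, so vanishing at a special point kills them; if $g(C) = 0$, stability forces $k(C) \geq 3$, and a holomorphic vector field on $\mathbb{C}P^1$ vanishing at three distinct points is identically zero since $H^0(\mathbb{C}P^1, T\mathbb{C}P^1) \cong \mathfrak{sl}(2, \mathbb{C})$ acts simply transitively on ordered triples. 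The main task in the proof is thus the combinatorial bookkeeping that turns the per-component Riemann-Roch formula into the stated global expression; identifying the Cauchy-Riemann operator on vector fields vanishing at $\Sigma_C$ with the Dolbeault operator on $L_C$, its Fredholm property, and the injectivity argument are all classical.
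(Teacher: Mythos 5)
Your argument is correct and is exactly the route the paper indicates: the paper gives no written proof beyond the remarks that injectivity is ``due to the stability condition'' and that the index formula is ``a consequence of Riemann--Roch'' (details deferred to the Deligne--Mumford reference), and your component-by-component Riemann--Roch computation combined with the genus-by-genus stability argument is the standard way to fill this in. The combinatorial bookkeeping with $\sum_C k(C)=\sharp M+2\sharp D$ and $\sum_C g(C)=g_a-1+N-\sharp D$ checks out, so nothing further is needed.
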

We define the linear quotient space  $H^1(\alpha)$ by
$$
H^1(\alpha)= \Omega^{0,1}(\alpha)/\text{im}(\ov{\partial}).
$$
As the notation already indicates it can be viewed as a cohomology
group. This space has complex dimension $3g_a+\sharp M-\sharp D-3$.
Here $\sharp D$ is the number of nodal pairs which is half the
number $\sharp |D|$ of nodal points.

The automorphism group $G$ {whose elements we also denote by $g\in G$}, acts on $H^1(\alpha)$ \index{$G$-action on $H^1(\alpha)$}in a natural way
via
$$
G\times H^1(\alpha)\rightarrow
H^1(\alpha),\quad (g, r)\mapsto  g\ast[r]:=[(Tg)r(Tg)^{-1}].
$$
We call it the natural representation of $G$. If $E$ is a complex vector space, then a representation of $G$ on $E$ is called natural
if  it is  equivalent to the natural representation of $G$. Given a smooth family
$v\mapsto  j(v)$ of complex structures on $S$ we can consider the
family
$$
v\mapsto  \alpha_v=(S,j(v),M,D)
$$
of nodal Riemann surfaces. 
If $j(0)=j$, the family is called a deformation of $\alpha$. \index{deformation of $\alpha$} Assume that $v$
belongs to an open neighborhood of $0$ of some finite-dimensional
complex vector space $E$. The differential $\delta
v\mapsto  Dj(v)\delta v$ induces a linear map
$$
E\rightarrow H^1(\alpha_v),\quad \delta v\mapsto [Dj(v)\delta v].
$$
This linear map,  denoted by $[Dj(v)]:E\rightarrow H^1(\alpha_v)$,  is
called the Kodaira differential. \index{Kodaira differential}The following definitions are very useful.

\begin{definition}\label{def_complex_effective_symmetric}
Let $\alpha=(S,j,M,D)$ be a stable nodal Riemann surface and let $E$ be a complex vector
space  carrying a natural representation of the automorphism group $G$ of $\alpha$.
Let $v\mapsto  j(v)$ be a smooth deformation of $j$ parameterized
by $v$ belonging to an open neighborhood $V$ of $0\in E$. Then the family
$v\mapsto  j(v)$ is called
\begin{itemize}
\item[$\bullet$] complex if $[Dj(v)]$ is complex linear for all $v\in V$.\index{deformation of $\alpha$! complex}\index{complex deformation}
\item[$\bullet$] effective if $[Dj(v)]$ is a real linear isomorphism at
every point $v\in V$. \index{deformation of $\alpha$! effective} \index{effective deformation}
\item[$\bullet$]  symmetric if $V$ is invariant under the $G$-action on $E$
and for every $g\in G$,  the diffeomorphism $g:S\rightarrow S$ is a
biholomorphic map
$$
g:\alpha_v\rightarrow \alpha_{g\ast v}.\index{deformation of $\alpha$! symmetric}\index{symmetric deformation}
$$
\end{itemize}
\end{definition}
{In the symmetric case we have
$$
[Dj(g\ast v)(g\ast\delta v)] =  [(Tg)(Dj(v)\delta v)(Tg)^{-1}].
$$
}
A {\bf good deformation} of $j$\index{good deformation} consists of a complex vector
space $E$ with a  natural representation of $G$, a $G$-invariant
open neighborhood $V$ of $0$ and a smooth family $v\mapsto  j(v)$
of complex structures on $S$ satisfying $j(0)=j$,  which  is
 effective and symmetric and such that,  in addition,  there exists an
open neighborhood $U$ of $|D|\cup M$ on which  $j(v)=j$ for all $v\in
V$. {A good deformation $v\mapsto j(v)$ is called a {\bf good complex deformation}  if the family is, in addition, a  complex family in the sense of the above Definition \ref{def_complex_effective_symmetric}.}

There always exists a good complex deformation according to the next proposition proved in 
 \cite{HWZ-DM}.
\begin{proposition}
{For every stable noded Riemann surface $\alpha=(S, j, M, D)$ there exists a good complex deformation $v\mapsto j(v)$ of $j$. 
In fact we can arrange with $j=j(0)$ that $Dj(v)(j\delta v)= j(v)\circ (Dj(v)\delta v)$ for all $v\in V$  and $\delta v\in E$.}
\end{proposition}

We have already introduced the notion of a
small disk structure. In our gluing construction later on it is convenient to deal with good families which are constant near the nodes. They are  guaranteed by the following result from  \cite{HWZ-DM}.
\begin{proposition}\label{prop2.6-n}
 Let  $\alpha=(S, j, M, D)$ be  a noded Riemann surface
and $v\mapsto  j(v)$ a good deformation of $j$ satisfying $j(v)=j$ on an open neighborhood
$U$ of $|D|$. Then there exists a small disk structure ${\bf D}$ so
that $|{\bf D}|$ is contained in $U$.
\end{proposition}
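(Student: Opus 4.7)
The plan is to build the disks orbit by orbit under the action of the finite automorphism group $G$ of $\alpha$, after first replacing $U$ by a $G$-invariant neighborhood of $|D|$. Since every $g \in G$ is a biholomorphism of $S$ permuting $|D|$, the intersection $\tilde U := \bigcap_{g \in G} g(U)$ is a $G$-invariant open neighborhood of $|D|$ contained in $U$, on which $j(v)=j$ still holds. Shrinking $\tilde U$ further by removing a neighborhood of the finite set $M$ (which is disjoint from $|D|$), I may further assume $\tilde U \cap M = \emptyset$. Any family of disks I construct inside $\tilde U$ will then automatically be disjoint from $M$ and contained in $U$.

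Next I partition $|D|$ into its $G$-orbits $\mathcal O_1,\ldots,\mathcal O_N$ and, for each orbit $\mathcal O_k$, fix a representative $x_k$ with isotropy subgroup $G_{x_k} \subset G$. The group $G_{x_k}$ acts by biholomorphisms of $S$ fixing $x_k$, and since it is finite, a standard equivariant linearization (Bochner-type averaging of an arbitrary holomorphic chart at $x_k$) yields a biholomorphic chart $h_{x_k}:(\mathbb D,0)\to (V_{x_k},x_k)\subset \tilde U$ that conjugates the $G_{x_k}$-action on $V_{x_k}$ to the unitary action of $G_{x_k}$ on $\mathbb D$ induced by its complex-linear representation on $T_{x_k}S$. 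For any sufficiently small $r_k>0$, the closed disk
$$D_{x_k} := h_{x_k}\bigl(\{w\in\mathbb C\mid |w|\leq r_k\}\bigr)$$
is smoothly bounded, centered at $x_k$, $G_{x_k}$-invariant, and contained in $\tilde U$. For any other point $x \in \mathcal O_k$ I pick $g \in G$ with $g(x_k)=x$ and set $D_x := g(D_{x_k})$; the $G_{x_k}$-invariance of $D_{x_k}$ guarantees this is independent of the choice of $g$, and $D_x$ is again a smoothly bounded closed disk centered at $x$ lying in $\tilde U$.

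Finally I shrink the radii $r_1,\ldots,r_N$ simultaneously to make the collection $\mathbf D = (D_x)_{x\in |D|}$ pairwise disjoint. This is possible because $|D|$ is finite and each disk in the orbit of $x_k$ collapses to its center as $r_k\to 0$. By construction $g\cdot D_x = D_{g(x)}$ for every $g\in G$ and every $x\in|D|$, so $|\mathbf D|$ is $G$-invariant; combined with the properties already verified, $\mathbf D$ is a small disk structure on $\alpha$ with $|\mathbf D|\subset \tilde U \subset U$, as required. The only nontrivial step is the equivariant linearization of the $G_{x_k}$-action near each orbit representative; everything else is a finite shrinking argument based on the finiteness of $|D|$, $M$, and $G$.
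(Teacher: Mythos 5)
Your argument is correct and complete. Note that the paper itself does not prove this proposition --- it is stated as a result imported from the Deligne--Mumford reference \cite{HWZ-DM} --- so there is no in-text proof to compare against; your write-up supplies exactly the kind of argument one would expect there. The only substantive ingredient is the equivariant linearization at each orbit representative (Cartan's lemma for a finite group of biholomorphisms fixing a point, obtained by averaging a chart), which is what produces $G_{x_k}$-invariant disks with smooth boundary; without it, naive constructions such as intersecting translates of a disk would destroy smoothness of the boundary. The remaining steps --- passing to the $G$-invariant neighborhood $\bigcap_{g\in G} g(U)$, deleting a $G$-invariant neighborhood of $M$, transporting the disks along the orbit and checking independence of the choice of $g$ via $G_{x_k}$-invariance, and the final simultaneous shrinking of the finitely many radii to achieve disjointness --- are all carried out correctly, and the resulting collection satisfies every clause of the paper's definition of a small disk structure with $|\mathbf{D}|\subset U$.
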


We now fix a stable noded Riemann surface $\alpha=(S, j, M, D)$ having the good deformation $v\mapsto j(v)$ parametrized by $v\in V\subset E$ and the small disk structure ${\bf D}$ satisfying Proposition \ref{prop2.6-n}.

For every nodal point $x$ we  fix
a biholomorphic map $\ov{h}_x:(D^1,0)\rightarrow (D_x,x)$ where 
$D^1=\{w\in \C\vert \, \abs{w}\leq 1\}$.  Such a
choice is unique up to rotation. For every nodal pair $\{x,y\}\in D$ there exists a uniquely determined complex anti-linear map
$\varphi_{x,y}:T_yS\rightarrow T_xS$ such that if $\varphi_{y,x}$
is its inverse,  the map  
$$
T\ov{h}_y(0)^{-1}\circ\varphi_{y,x}\circ T\ov{h}_x(0):{\mathbb
C}\rightarrow {\mathbb C}
$$
is the complex conjugation $z\rightarrow \overline{z}$. If  $\{x,y\}\in
D$ is a nodal pair, then for an automorphism $g\in G$ of $\alpha$,  the composition
$$
\varphi_{x,y}\circ Tg(y)^{-1}\circ\varphi_{g(y),g(x)}\circ
Tg(x):T_xS\rightarrow T_xS
$$
is  a multiplication by an element in the unit circle as proved in  \cite{HWZ-DM}.  {Geometrically, given a nodal pair $\{x, y\}$ and an automorphism $g\in G$, then $\sigma_{\{x,y\}}(g)$ is the relative twist of the gluing parameter induced from the action of $g$.}  One easily verifies that this  element in
$S^1$ is the same if we exchange in the previous expression the order
of $x$ and $y$. Let
us denote this element in $S^1$ by $\sigma_{\{x,y\}}(g)$. 
One also verifies immediately that
$$
\sigma_{\{x,y\}}(hg)=\sigma_{\{g(x),g(y)\}}(h)\cdot\sigma_{\{x,y\}}(g).
$$

Now we are
in the position to extend the natural $G$-action on $E$ as
follows. Let $N$ be the finite-dimensional complex vector space of maps
$D\rightarrow {\mathbb C}$ which associate to the nodal pair $\{x,y\}$
a complex number $a_{\{x,y\}}$. We  define the extended $G$-action
$$
\sigma:G\rightarrow GL(N)\times GL(E)
$$
by
$$
g\ast (a, e)=\sigma(g)(a,e)=(\sigma_N(g)a,g\ast e),
$$
where $\sigma_N(g)a=b$ is defined by
$$
b_{\{g(x),g(y)\}} = \sigma_{\{x,y\}}(g)\cdot a_{\{x,y\}}.\index{extended $G$-action}
$$
From the properties of $\sigma_{\{x,y\}}(g)$ one sees that the
latter defines indeed a representation of $G$ on $N\times E$. The element $a_{\{x,y\}}$ will
occur as a gluing parameter at the nodal pair $\{x,y\}$. With the data
 so far fixed,  we choose for every nodal pair $\{x,y\}$ an ordered pair,
say $(x,y)$, and take  for $x$ and $y$ the holomorphic polar coordinates
$$
h_x:[0,\infty)\times S^1\rightarrow D_x,\quad 
h_x(s,t)=\ov{h}_x(e^{-2\pi(s+it)})
$$
and
$$
h_y:(-\infty,0]\times S^1\rightarrow
D_y,\quad h_y(s',t')=\ov{h}_y(e^{2\pi(s'+it')})
$$
as introduced in Section \ref{dm-subsect}. We have chosen positive holomorphic polar coordinates around
$x$ and negative ones around $y$. In the following $(s,t)$ will
always denote positive polar coordinates and $(s',t')$ negative
ones. In order to carry out the gluing we need a gluing profile.
As already pointed out,  there will be two gluing profiles of interest for us. The
logarithmic gluing profile occurring in the classical holomorphic
Deligne-Mumford theory is defined by $\varphi(r)=-\frac{1}{2\pi}\ln(r)$
and the exponential gluing profile which will be used in  our
constructions is defined by $\varphi(r)=e^{\frac{1}{r}}-e$. Our approach in \cite{HWZ-DM} allows one to derive Deligne-Mumford type results for any gluing profile satisfying suitable derivative bounds. Alternatively,  assuming the DM-theory for the logarithmic gluing profile,  a calculus lemma allows one  to derive the necessary results for the exponential gluing profile as shown in \cite{HWZ-DM}.

For the moment, let us fix the logarithmic gluing profile,  which we denote by $\varphi$. Given $\alpha$, a small disk structure ${\bf D}$, and a good deformation $v\mapsto  j(v)$ of $j$, which coincides with $j$ on $|{\bf D}|$, we obtain as previously described the family of noded Riemann surfaces
$$
(a,v)\mapsto  \alpha_{(a,v)}:=(S_a,j(a,v),M_a,D_a)\, \text{where  $(a,v)\in (B_{\frac{1}{2}})^{\# D}\times V$}.
$$
Here $j(a,v)=j(v)_a$ is the complex structure induced from $j(v)$ on the glued surface $S_a$.
The automorphisms $g\in G$ of $(S, j, M, D)$  induce the canonical isomorphisms 
$$g_a:\alpha_{(a, v)}\to \alpha_{g\ast (a, v)}.$$
{At this point there should be no confusion with the arithmetic genus which is also denoted by $g_a$.}

If the nodal pair $\{x, y\}\in D$ satisfies $a_{\{x, y\}}\neq 0$, we have introduced in Section \ref{dm-subsect}  
the finite cylinder $Z^{\{x,y\}}_{a_{\{x,y\}}}$  consisting of points $z\in S_a$
satisfying  $z=[h_x(s,t)]$ where $(s,t)\in [0,R]\times S^1$ and $R=\varphi(|a_{\{x,y\}}|)$.

If the nodal pair $\{x, y\}$ satisfies $a_{\{x, y\}}=0$, we have introduced $Z^{\{x,y\}}_{a_{\{x,y\}}}=Z^{\{x,y\}}_0=D_x\cup D_y$. We recall that  our gluing construction at the nodal pair  does not change the 
pair $(D_x, x)\cup (D_y, y)$ of disks if $a_{\{x, y\}}=0$ and if  $a_{\{x, y\}}\neq 0$ replaces the pair of disks by the finite cylinder $Z^{\{x,y\}}_{a_{\{x,y\}}}$  connecting the  boundaries   of the disks. 
This cylinder has two sets of
distinguished coordinates namely $[s,t]$ by extending the positive
holomorphic coordinates coming from $x$ or $[s',t']'$ coming from the
negative holomorphic coordinates from $y$. 

If $a_{\{x, y\}}\neq 0$ and $h>0$,  we  define the 
sub-cylinder $Z^{\{x,y\}}_{a_{\{x,y\}}}(-h)\subset Z^{\{x,y\}}_{a_{\{x,y\}}}$ by 
$$
Z^{\{x,y\}}_{a_{\{x,y\}}}(-h)=\{[s,t]\in Z^{\{x,y\}}_{a_{\{x,y\}}}\
|\ s\in [h,R-h]\}
$$
and introduce the subsets $Z_a$  and  $Z_a(-h)$ of $S_a$  defined by 
$$
Z_a = \bigcup_{\{x,y\}\in D} Z^{\{x,y\}}_{a_{\{x,y\}}}\quad  \text{and}\quad 
Z_a(-h) = \bigcup_{\{x,y\}\in D} Z^{\{x,y\}}_{a_{\{x,y\}}}(-h).
$$

{If $a_{\{x, y\}}=0$ and $h>0$,  we  define  
\begin{equation*}
Z_0^{\{x, y\}}(-h)=D_x(-h)\cup D_y(-h),
\end{equation*}}
with the sub-disks 
$D_x(-h)=\{z\in D_x\vert \, \text{$z=x$  or  $z=h_x(s, t)$ for $s\geq h$}\}$ and 
$D_y(-h)=\{z'\in D_y\vert \, \text{$z'=y$  or  $z'=h_y(s', t')$ for $s'\leq -h$}\}.$

\begin{definition}[{\bf Core of $S_a$}]\label{def_core}
{Let $\alpha=(S, j, M, D)$ be a noded Riemann surface with a small disk  structure $\{D_x\}_{x\in \abs{D}}$ and let $\alpha_a=(S_a, j_a, M_a, D_a)$ be  the Riemann surface obtained from $\alpha$ by means of the aforementioned gluing construction using the logarithmic or exponential gluing profile associated to $a\in (B_{\frac{1}{2}})^{\# D}$. Then we define  the {\bf core}  of  $S_a$ \index{core of $S_a$} to be $S_a\setminus \text{int}\ Z_a$, where $Z_a\subset S_a$  is the collection of cylinders defined above.} 

{Hence, in the special case $a\equiv 0$,  the core of $S$ is the complement of the union of the disks of the small disk structure, namely, $S\setminus \bigcup_{x\in \abs{D}}\text{int}\ D_x$.}
\end{definition}
The cores of $S$ and $S_a$ are naturally identified.
The following observation will be useful in the study of
sc-smoothness.

\begin{figure}[ht]
\psfrag {z1}{\small{$Z_a(-h)$}}
\psfrag  {z2}{\small{$Z_a$}}
\psfrag {a1}{\small{$a_{\{x,y\}}=0$}}
\psfrag  {a2}{\small{$a_{\{x',y'\}}=0$}}
\psfrag {b1}{\small{$a_{\{x,y\}}\neq 0$}}
\psfrag  {b2}{\small{$a_{\{x',y'\}}=0$}}
\centering
\includegraphics[width=5in]{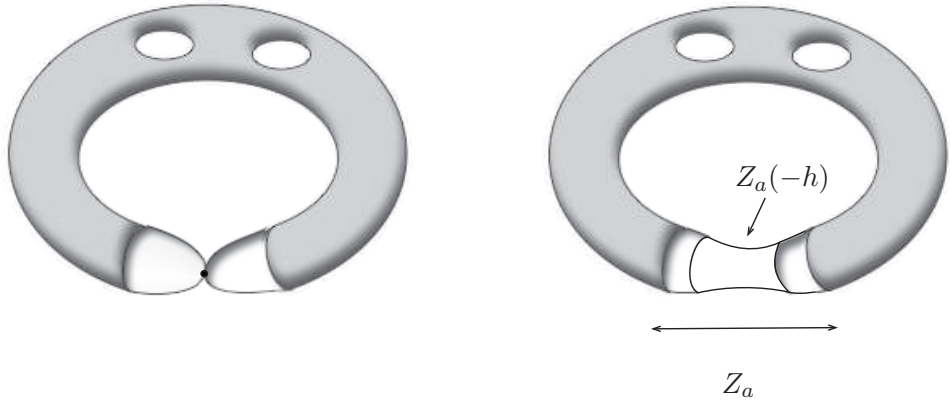}
\caption{The left-hand side  figure  shows a connected nodal Riemann surface of arithmetic genus $3$. The right-hand side  figure shows a glued version having genus $3$ with the distinguished cylinder $Z_a$ and the sub-cylinder $Z_a(-h)$.}
\label{Fig4}
\end{figure}

\begin{remark}\label{google}A  point $z\in S_a$ can be viewed as a point in $S$ as follows. If
$z$ belongs to the core it is obvious. If $z$ is outside of the core
we have two cases. Either it belongs to $D_x$ or to $D_y$, where
$\{x,y\}$ is an unglued nodal pair. In that case the identification
is  also clear. Otherwise it must belong to a ``neck''  $Z^{\{x,y\}}_{a_{\{x,y\}}}$ and hence can be
written in two ways as
$$
z=h_x(s,t)\quad \text{and} \quad z=h_y(s',t'),
$$
where $(s, t)\in [0, R]\times S^1$ and  $(s', t')\in [-R,0]\times S^1$ satisfying $s=s'+R$ and {$t=t'+\vartheta$} where $a_{\{x, y\}}=\abs{a_{\{x, y\}}}\cdot e^{-2\pi i\vartheta}$ and $R=\varphi (\abs{a_{\{x, y\}}})$ is the gluing length. In this case the point $z$ can be identified with two points in $S$, one in the disk $D_x$ and the other in the disk $D_y$. We denote by $\ov{z}$ the lift of a point $z\in S_a$ to $S$ if the lift is unique and by $\ov{z}_x$ and $\ov{z}_y$ if the lift is not unique and  the 
 subscript indicates
into which disk the point is  lifted, see Figure \ref{Fig5}.
\end{remark}

\begin{figure}[ht]
\psfrag {dx}{\small{$D_x$}}
\psfrag {dy}{\small{$D_y$}}
\psfrag  {x}{\small{$x$}}
\psfrag {y}{\small{$y$}}
\psfrag {hx}{\small{$h_x$}}
\psfrag {hy}{\small{$h_y$}}
\psfrag {zx}{\small{$Z_x$}}
\psfrag  {zy}{\small{$Z_y$}}
\psfrag {z}{\small{$Z$}}
\psfrag  {c}{\small{$z$}}
\psfrag  {a}{\small{$\ov{z}_x$}}
\psfrag {b}{\small{$\ov{z}_y$}}
\psfrag  {st}{\small{$(s, t)$}}
\psfrag  {stp}{\small{$(s',t')$}}
\psfrag {sa}{\small{$S_a$}}
\psfrag  {s}{\small{$S$}}
\centering
\includegraphics[width=4.3in]{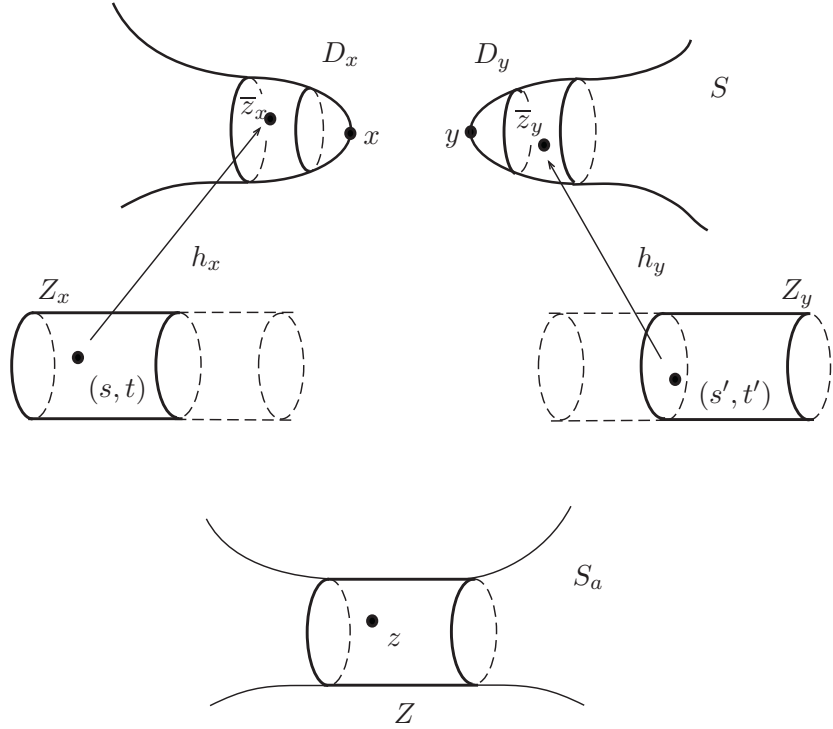}
\caption{Lifts of  a point $z\in S_a$.}
\label{Fig5}
\end{figure}

\begin{definition}\label{def_finite_distance_core}
Let $a^l=(a^l_{\{x, y\}})_{\{x, y\}\in D}$ be a sequence of gluing parameters and let 
$(z_l)\subset  S_{a^l}$ be a sequence of points.  We say that  the sequence
$(z_l)$ stays in a {\bf finite distance to the core} \index{finite distance to the core}  if there exists a real number 
$h>0$ satisfying  $z_l\not \in Z_{a^l}(-h)$ for all large $l$.
\end{definition}

Assume that $(a^0,v^0)$ is given and consider the nodal pairs
$\{x,y\}\in D$ for which $a_{\{x,y\}}^0\neq 0$. If one  varies
only these non-zero gluing parameters, then  one quite easily
constructs a new smooth complex structure $j^\ast(b,v)$ on the noded  Riemann surface $S_{a^0}$ so that the noded surfaces  $(S_{a^0},j^\ast(b,v),M_{a^0},D_{a^0})$  and
$\alpha_{(a^0+b,v^0)}$ are isomorphic by a map which {restricted to the respective cores  agrees with the natural identification}.  Here $b$  is small and has a nonzero component  at the nodal pair $\{x, y\}\in D$ only  if
$a_{\{x,y\}}^0\neq 0$. In  \cite{HWZ-DM} this 
construction is called the ``freezing of gluing parameters''.  
We can  take the Kodaira differential at $(b,v)=(0,v^0)$ with respect
to $(b,v)$ and obtain a linear map
$$
PDj(a^0,v^0):N(a^0)\times E\rightarrow H^1(\alpha_{(a^0,v^0)})
$$
called a  partial Kodaira differential. \index{partial Kodaira differential} The space  $N(a^0)$ is the
subspace of $N$ consisting of all $\{a_{\{x,y\}}\}$ satisfying 
$a_{\{x,y\}}=0$ if  $a^0_{\{x,y\}}=0$.

\begin{definition}[{\bf Good uniformizing family}]\label{citiview}\index{good uniformizing family}
We consider the stable Riemann surface $\alpha=(S,j,M,D)$  whose finite automorphism group is denoted by $G$. Let $v\mapsto j(v)$ be a good complex deformation of $j$ and consider the family 
$(a,v)\mapsto  \alpha_{(a,v)}$ of glued noded Riemann surfaces, whose construction uses the  {\bf logarithmic  or the exponential gluing profile}. The parameters 
$(a,v)$ vary in the open neighborhood $O \subset N\times E$ of the origin which is invariant under the natural representation of $G$. The family $(a, v)\mapsto \alpha_{(a, v)}$ is called a uniformizing family with domain $O$, if the following conditions are satisfied. 
\begin{itemize}
\item[$\bullet$] The set ${\mathcal U}=\{[\alpha_{(a,v)}]\,  \vert \ (a,v)\in O\}$ of equivalence classes of stable Riemann surfaces 
is open in the space $\overline{\mathcal N}$.
\item[$\bullet$]  The map $p:O\rightarrow {\mathcal U}$ defined by
$$
p(a,v)=[\alpha_{(a,v)}]
$$
induces a homeomorphism from the orbit space $G\backslash O$  onto ${\mathcal U}$.
 \item[$\bullet$]  If 
$\phi:\alpha_{(a,v)}\rightarrow \alpha_{(a',v')}$ is an isomorphism of noded Riemann surfaces where $(a, v)\in O$ and $(a',v')\in O$, then there exists an automorphism    $g\in G$   satisfying  $(a',v')=g\ast(a,v)$. Moreover,  $\phi=g_a$ is the induced canonical isomorphism.
\item[$\bullet$]  The partial Kodaira differentials 
$$
PDj(a,v):N(a)\times E\rightarrow H^1(\alpha_{(a,v)})
$$
at the point  $(a,v)\in O$ are all  linear isomorphisms (resp. complex  linear if the logarithmic gluing profile is used).
\end{itemize}
\end{definition}

The following key result  is a reformulation of parts of the classical Deligne-Mumford theory in the case of the logarithmic gluing profile. It holds true also  in the case of the exponential gluing profile and we refer to  \cite{HWZ-DM} for the proof. We also discuss some of the issues later on.
\begin{theorem}\label{existence-x}
For each stable noded Riemann surface  $\alpha=(S,j,M,D)$ there exists a good uniformizing family
$$
(a,v)\mapsto  \alpha_{(a,v)}\quad \text{on the domain $ (a,v)\in O.$}
$$
\end{theorem}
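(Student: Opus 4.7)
The plan is to construct the uniformizing family by assembling already-available pieces: a good complex deformation of $j$, a compatible small disk structure, and the plumbing construction of Subsection~\ref{dm-subsect}. After producing the family, one checks the four conditions in the definition separately; the rigidity statement matching isomorphisms of fibers to elements of $G$ will be the main obstacle.

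First, invoke the existence proposition for good complex deformations to obtain a complex vector space $E$ carrying a natural $G$-representation, together with a good complex deformation $v\mapsto j(v)$ defined on a $G$-invariant open neighborhood of $0\in E$ and satisfying $j(v)=j$ on an open neighborhood of $|D|$. Apply Proposition~\ref{prop2.6-n} to choose a small disk structure $\mathbf{D}$ supported in that neighborhood, extend the $G$-action to $N\times E$ via the representation $\sigma$, and run the plumbing construction to obtain the family
$$
\alpha_{(a,v)}=(S_a,j(v)_a,M_a,D_a),\qquad (a,v)\in O,
$$
on a $G$-invariant open neighborhood $O\subset N\times E$ of the origin. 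Openness of $\mathcal{U}=\{[\alpha_{(a,v)}]:(a,v)\in O\}$ in $\ov{\mathcal{N}}$ is then immediate from the description of the DM-topology basis: each $V(\alpha,U,\mathbf{D},\varepsilon)$ is a neighborhood of $[\alpha]$, and up to the $G$-action coincides with the image $p(O')$ of a subdomain $O'\subset O$.

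To verify the partial Kodaira property, observe that at $(0,0)$ one has $N(0)=\{0\}$, so $PDj(0,0)$ reduces to $[Dj(0)]:E\to H^1(\alpha)$, which is a complex linear isomorphism by the effectiveness of the good complex deformation together with the Riemann-Roch dimension $\dim_{\mathbb{C}}H^1(\alpha)=3g_a+\sharp M-\sharp D-3$. For a general $(a^0,v^0)\in O$ with $n_0$ non-zero components of $a^0$, the freezing-of-gluing-parameters construction identifies $\alpha_{(a^0+b,v)}$ with deformations of $\alpha_{(a^0,v^0)}$ parameterized by $(b,v)\in N(a^0)\times E$, and the dimensions match:
$$
\dim_{\mathbb{C}}\bigl(N(a^0)\times E\bigr)=n_0+\bigl(3g_a+\sharp M-\sharp D-3\bigr)=3g_a+\sharp M-(\sharp D-n_0)-3=\dim_{\mathbb{C}}H^1(\alpha_{(a^0,v^0)}).
$$
Since being a linear isomorphism is an open condition, after shrinking $O$ the map $PDj(a,v)$ is an isomorphism at every $(a,v)\in O$.

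The main obstacle is the rigidity: any isomorphism $\phi:\alpha_{(a,v)}\to\alpha_{(a',v')}$ between two fibers must be of the form $g_a$ with $(a',v')=g\ast(a,v)$ for some $g\in G$. I argue by contradiction and convergence. If the claim failed, there would exist sequences $(a_n,v_n),(a'_n,v'_n)\in O$ with isomorphisms $\phi_n$ not of the prescribed form; after further shrinking $O$ the parameters accumulate at $(0,0)$, so the underlying noded surfaces converge to $\alpha$ in the DM-topology. DM-compactness on the cores, combined with uniform cylindrical estimates on the long necks, extracts a subsequential limit $\phi_\infty$ that is a biholomorphism of $\alpha$, i.e.\ an element $g\in G$. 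Replacing $\phi_n$ by $g_{a_n}^{-1}\circ\phi_n$ yields near-identity isomorphisms between arbitrarily close fibers; the local-slice property provided by the partial Kodaira isomorphism then forces $\phi_n=g_{a_n}$ and $(a'_n,v'_n)=g\ast(a_n,v_n)$, the required contradiction. The same convergence argument, combined with the openness of $\mathcal{U}$, shows that the induced bijection $G\backslash O\to\mathcal{U}$ is a homeomorphism, completing the verification. Full analytic details appear in~\cite{HWZ-DM}.
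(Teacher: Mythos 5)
The paper does not actually prove Theorem \ref{existence-x}: it presents the statement as a reformulation of classical Deligne--Mumford theory and defers the proof entirely to \cite{HWZ-DM}, so there is no in-paper argument to compare yours against line by line. That said, your outline is the expected one and is structurally sound: you assemble the family from a good complex deformation, a compatible small disk structure via Proposition \ref{prop2.6-n}, and the plumbing of Subsection \ref{dm-subsect}; your dimension count for the partial Kodaira differential is correct (the arithmetic genus is preserved under gluing and the nodal count drops by exactly the number of opened nodes, so $\dim_{\C}N(a^0)\times E=\dim_{\C}H^1(\alpha_{(a^0,v^0)})$, and an open-condition argument upgrades the isomorphism at the origin to all of a shrunken $O$); and you correctly identify the rigidity condition as the crux and attack it by the standard contradiction-plus-compactness scheme, which is also exactly how the paper handles the analogous statement for stable maps in Proposition \ref{ay}.

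The caveat is that the two steps carrying all the analytic weight are named rather than carried out. First, extracting the limit $\phi_\infty\in G$ from a sequence of isomorphisms between fibers with degenerating necks requires uniform control on the long cylinders (conformal modulus/annulus estimates and a removable-singularity argument to see that the limit extends over the nodes and maps nodal pairs to nodal pairs); this is not automatic from ``DM-compactness on the cores.'' Second, the step where near-identity isomorphisms $g_{a_n}^{-1}\circ\phi_n$ are forced to be trivial needs more than injectivity of $PDj$ at a point: one must show that the family is locally universal in the sense that a sequence of isomorphisms $\alpha_{q_n}\to\alpha_{q_n'}$ converging to the identity with $q_n,q_n'\to q_0$ eventually satisfies $q_n=q_n'$ and $\phi_n=\operatorname{id}$, which is an implicit-function-theorem/slice statement about the classifying map, not a pointwise linear-algebra fact. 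Both points are precisely the content of \cite{HWZ-DM}, so your sketch is acceptable at the level of detail the paper itself adopts, but a self-contained proof would have to supply them.
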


 We now consider  the uniformizing family
$(a,v)\mapsto  \alpha_{(a,v)}$  on the  domain $ (a,v)\in O$ and,  abbreviating 
$o=(a,v)$,  simply write 
$$
o\rightarrow \alpha_o\quad \text{for $o\in O$}
$$
for the uniformizing family. In order to simplify the notation we shall abbreviate  by $\cg$ or
$\cg_\alpha$ the graph of the family $o\rightarrow \alpha_o$,
$$
\cg=\{(o,\alpha_{o})\ |\ o\in O\}
$$
which we equip with the structure of a complex manifold by requiring
that the map $\cg\rightarrow O$ defined by $(o,\alpha_o)\mapsto  o$ is a biholomorphic diffeomorphism.  We shall also refer to $\cg$ as to  a uniformizing
family. The natural map $p:\cg\rightarrow \overline{\mathcal
N}$ is defined by $p(o,\alpha_o)=[\alpha_o]$.

If  $\cg$ and $\cg'$ are given, we consider for the two uniformizing families $(o,\alpha_o)\in
\cg$ and $(o',\alpha_{o'}')\in \cg'$ the triple 
$$
((o,\alpha_{o}),\phi,(o',\alpha_{o'}') )\equiv (o,\alpha_o,\phi,o',\alpha_{o'}')
$$
in which  $\phi:\alpha_{o}\rightarrow \alpha_{o'}'$ is an
isomorphism between the nodal Riemann surfaces. We
denote the  set  of all such triples by ${\bf
M}(\cg,\cg')$.  There are  two natural maps $s,t$ defined on ${\bf
M}(\cg,\cg')$. Namely,  the source map
$$
s:{\bf
M}(\cg,\cg')\rightarrow\cg, \quad \text{defined by  $s(o,\alpha_o,\phi,o',\alpha_{o'}')=
(o,\alpha_{o})$,}
$$
and the target map 
$$
t:{\bf
M}(\cg,\cg')\rightarrow\cg',\quad \text{defined by $t(o,\alpha_o,\phi,o',\alpha_{o'}')=
(o',\alpha_{o'}')$}.
$$
\begin{figure}[htbp]
\psfrag {s}{\small{$\cg'$}}
\psfrag {s1}{\small{$\cg$}}
\psfrag {a}{\small{$(o,\alpha_o)$}}
\psfrag  {a}{\small{$(o,\alpha_o)$}}
\psfrag {b}{\small{$(o',\alpha_{o'})$}}
\psfrag {c}{\small{morphism} {$((o,\alpha_o), \psi, (o',\alpha_{o'}))$}}
\centering
\includegraphics[width=3.5in]{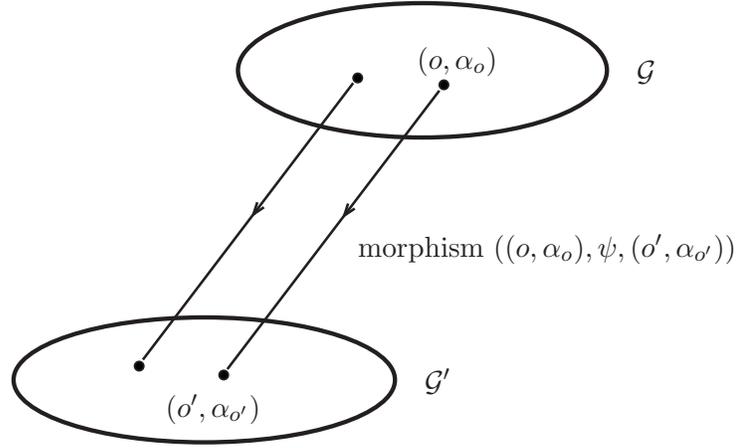}
\caption{One can view the points in $\cg$ and $\cg'$ as objects of a category and the points in ${\bf M}(\cg,\cg')$ as morphisms
between these objects.}
\label{Fig6}
\end{figure}

The following result, {proved in \cite{HWZ-DM},}  is a
reformulation  of parts of the DM-theory.
\begin{theorem}
{We have fixed the logarithmic gluing profile}.
 The set ${\bf
M}(\cg,\cg')$ has a natural structure as a complex manifold so that
the source and the target maps are locally biholomorphic
maps and the map
$${\bf
M}(\cg,\cg')\xrightarrow{s\times t}\cg\times\cg'
$$
is proper. For these complex manifold structures
the inversion map
$$
{\bf M}(\cg,\cg')\rightarrow {\bf M}(\cg',\cg),
$$
defined by 
$$
(o,\alpha_o,\phi,o',\alpha_{o'}')\mapsto
(o',\alpha_{o'}',\phi^{-1},o,\alpha_{o}),
$$
and the $1$-map
$
\cg\rightarrow {\bf M}(\cg,\cg)$, defined by 
$$(o,\alpha_{o})\mapsto
(o,\alpha_{o},\id,o,\alpha_{o}),
$$
are holomorphic maps. Moreover, given $\cg,\cg',\cg''$  the fibered product
$$
{\bf M}(\cg',\cg''){{_s}\times_t}{\bf M}(\cg,\cg')
$$
has, in view  of the first part of the theorem,the structure of
a complex manifold, and the multiplication map, 
$$
{\bf M}(\cg',\cg''){{_s}\times_t}{\bf M}(\cg,\cg')\rightarrow {\bf
M}(\cg,\cg''),
$$
 defined by 
\begin{equation*}
((o',\alpha_{o'}',\phi,o'',\alpha_{o''}''),
(o,\alpha_{o},\psi,o',\alpha_{o'}'))\mapsto 
(o,\alpha_{o},\phi\circ\psi,o'',\alpha_{o''}''), \end{equation*}
is a holomorphic  map.
\end{theorem}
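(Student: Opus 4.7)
The plan is to produce holomorphic charts on $\mathbf{M}(\cf,\cf')$ by showing that every isomorphism $\phi_0:\alpha_{o_0}\to\alpha_{o_0'}'$ uniquely extends to a holomorphic family of isomorphisms parametrized by a neighborhood of $o_0$ in $O$. All other assertions will follow from the explicit form of these charts.

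First I would fix $\mu_0=(o_0,\alpha_{o_0},\phi_0,o_0',\alpha_{o_0'}')\in \mathbf{M}(\cf,\cf')$. Because $p(o_0)=[\alpha_{o_0}]=[\alpha_{o_0'}']=p'(o_0')$, the uniformizing property of $\cf'$ yields a neighborhood $U$ of $o_0$ in $O$ and a continuous map $o\mapsto o'(o)$ with $o'(o_0)=o_0'$ such that $[\alpha_o]=[\alpha_{o'(o)}']$; the isotropy ambiguity is resolved by requiring the associated isomorphism to be close to $\phi_0$. Producing the corresponding family of isomorphisms $\phi(o):\alpha_o\to\alpha_{o'(o)}'$ holomorphically in $o$ is the core analytic step. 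I would argue it by the implicit function theorem applied to the Kodaira differentials: the triviality of $j(v)$ and $j'(v')$ near the nodes allows one to freeze the gluing parameters and reduce the problem to matching deformed complex structures on the cores; the condition that $\phi(o)^\ast j'(a'(o),v'(o))=j(a,v)$ defines a smooth equation whose linearization at $\mu_0$ is identified, up to $\phi_0$-transport, with the partial Kodaira differential $PDj'(o_0')$. Since $\cf'$ is good, this differential is a linear isomorphism, and the resulting $(a'(o),v'(o),\phi(o))$ is unique and holomorphic in $o$. This gives the chart $\Phi:U\to\mathbf{M}(\cf,\cf')$, $\Phi(o)=(o,\alpha_o,\phi(o),o'(o),\alpha_{o'(o)}')$, and equips $\mathbf{M}(\cf,\cf')$ with the structure of a complex manifold. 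In these charts $s$ is the identity and $t$ is $o\mapsto o'(o)$, and the same argument applied on the $\cf'$ side shows $o'(o)$ is a biholomorphism, so both $s$ and $t$ are locally biholomorphic.

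For properness of $s\times t$, I would take a sequence $\mu_n=(o_n,\alpha_{o_n},\phi_n,o_n',\alpha_{o_n'}')$ with $(o_n,o_n')\to(o_\infty,o_\infty')\in\cf\times\cf'$. Then $[\alpha_{o_\infty}]=[\alpha_{o_\infty'}']$, producing at least one isomorphism. The biholomorphisms $\phi_n$ are bounded in $C^\infty$ away from the nodes (by uniformization and elliptic regularity) and respect the finitely many possible nodal degenerations; after extracting a subsequence they converge to an isomorphism $\phi_\infty:\alpha_{o_\infty}\to\alpha_{o_\infty'}'$, so $\mu_n\to\mu_\infty$. Hence preimages of compacta are compact. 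The inversion map, in the charts $\Phi$ on $\mathbf{M}(\cf,\cf')$ and $\Phi'$ on $\mathbf{M}(\cf',\cf)$, is identified with the biholomorphism $o\mapsto o'(o)$ and so is holomorphic; the $1$-map is a holomorphic section in the obvious chart. For the multiplication map, the fibered product inherits its complex structure from the local biholomorphisms $s,t$, and in charts the composed isomorphism $\phi\circ\psi$ is literally obtained by composing the maps $\phi(o'(o))$ and $\psi(o)$ produced by the previous step, so multiplication is holomorphic.

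The main obstacle is the holomorphic deformation step for isomorphisms. One must verify that the linearization of the equation $\phi^\ast j'=j$ at $(\phi_0,o_0,o_0')$, after quotienting by reparametrizations tangent to the stabilizer of $\alpha_{o_0}$, is exactly the (partial) Kodaira differential that the uniformizing hypothesis assumes invertible, and that the nonlinear equation can be solved in a $G$-equivariant manner compatible with the gluing construction. This requires some care at the nodes (where the plumbing gluing profile enters), but given the assertion that we may take the deformations constant on $|\mathbf{D}|$ and that the partial Kodaira differential is an isomorphism, the implicit function theorem closes the argument; everything afterward is formal bookkeeping with local complex coordinates.
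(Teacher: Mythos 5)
The paper does not actually prove this theorem: it is stated as ``a reformulation of parts of the DM-theory'' with the proof deferred to \cite{HWZ-DM}, so there is no in-text argument to compare against line by line. That said, your route is the expected one, and it mirrors almost exactly how the paper handles the parallel statement for stable \emph{maps} in Section \ref{sect4.3-pol.str}: there the charts on the morphism space are obtained by extending a single isomorphism to a germ of a family via the universal property (Theorem \ref{key-z}, whose Deligne--Mumford precursor is Theorem \ref{smoothfamily}), the source map is declared a local (sc-)diffeomorphism to transport the structure, and inversion, unit and multiplication are read off in these charts (Propositions \ref{a-key-z}, \ref{b-key-z}, \ref{c-key-z}), with properness handled by a compactness argument for sequences of isomorphisms (Proposition \ref{rees}). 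Your reduction of the extension step to the implicit function theorem with the partial Kodaira differential supplying surjectivity of the linearization, and injectivity of $\bar{\partial}$ on $\Gamma_0(\alpha)$ (stability) supplying uniqueness, is the correct mechanism and is exactly what the ``good uniformizing family'' hypotheses are designed to feed.

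Two points in your sketch are genuinely incomplete. First, you never verify that the topology you put on ${\bf M}(\cf,\cf')$ is Hausdorff; since distinct morphisms can share the same source \emph{and} target (differing by an automorphism $g\in G'$), this is not automatic, and the paper's stable-maps analogue devotes a separate argument to it (Lemma \ref{huasdorff-lemma}), resting on the uniqueness clause of the universal property plus finiteness of the automorphism group to separate two germs of families through the same point. Without this, ``complex manifold'' is not established. Second, your properness argument asserts that the biholomorphisms $\phi_n$ converge after passing to a subsequence, but the substance of that claim is ruling out degeneration of $\phi_n$ near the necks and nodes as the gluing parameters vary; this is where stability ($2g(C)+\sharp\Sigma_C\geq 3$) must be invoked to exclude bubbling, exactly as in the proof of Lemma \ref{gromov-conv}. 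As stated, ``bounded in $C^\infty$ away from the nodes by uniformization and elliptic regularity'' presupposes the gradient bound it is supposed to deliver. Both gaps are fillable by the arguments the paper uses elsewhere, but they are the two places where the theorem has real content beyond bookkeeping.
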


 The above theorem is the building block for a
complex orbifold structure on  the space $\overline{\mathcal N}$ of equivalence classes of connected and stable noded Riemann surfaces as explained next.

We begin with the complex structure. If $g,m$ are nonnegative integers satisfying 
 $2g+m\geq 3$, we  denote by $\overline{\mathcal N}_{g,m}$ the
connected component of  $\overline{\mathcal N}$ consisting of elements of noded Riemann surfaces having arithmetic genus equal to $g$ and
$m$ marked points. This component is a compact topological space. 

\begin{figure}[h!]
\psfrag {ax}{$\abs{X}$}
\psfrag {x}{\small{$X=\Sigma_1\cup \Sigma_2\cup \Sigma_3\cup \Sigma_4$}}
\psfrag{s1}{\small{$\Sigma_1$}}
\psfrag {s2}{\small{$\Sigma_2$}}
\psfrag {s3}{\small{$\Sigma_3$}}
\psfrag   {s4}{\small{$\Sigma_4$}}
\centering
\includegraphics[width=4in]{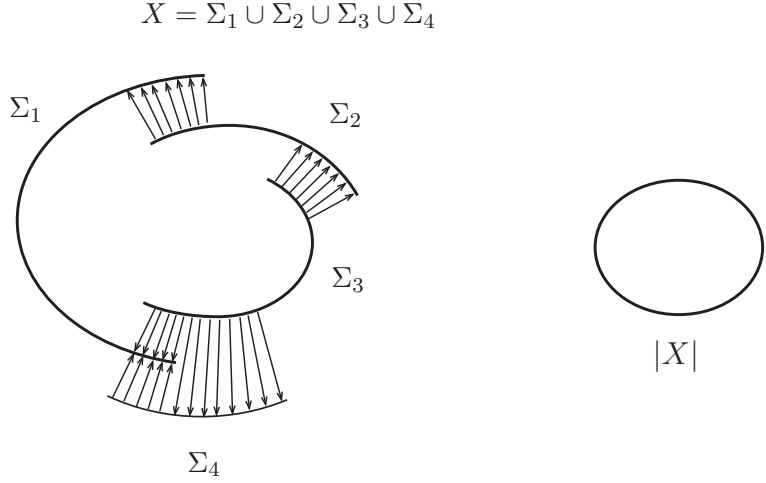}
\caption{The object set is $X$. Only some of the arrows are given in the figure. This figure shows how the circle $S^1$ can be obtained as the orbit space of an \'etale proper Lie groupoid. These groupoids play the same  role an atlas plays in the manifold theory. Modeling the notion of an equivalence of smooth atlases in the present context gives equivalent orbifold structures.}
\label{Fig7}
\end{figure}

We thus
can find finitely many sets $\cg_1,\ldots ,\cg_l$ so that the union of their
images under the natural maps $p:\cg \to \ov{\mathcal N}$,  defined by $p(o,\alpha_o)=[\alpha_o]$,    is equal to $\overline{\mathcal N}_{g,m}$. We denote by $X_{g,m}$ the 
disjoint  union of the sets $\cg_i$, where $1\leq i\leq l$, and by  ${\bf X}_{g,m}$ the disjoint union of the sets  ${\bf M}(\cg_i,\cg_j)$, where $1\leq i,j\leq l$. We  view $X_{g,m}$  as the
object set  in a small category having the set  ${\bf X}_{g,m}$ as its morphism set.
The object and the morphism sets are equipped with the  structure of complex
manifolds, so that in view of the previous theorem all category
operations are holomorphic. The source and target maps $s,t$
are surjective locally biholomorphic maps and the map
$$
{\bf X}_{g,m}\xrightarrow{s\times t} X_{g,m}\times X_{g,m}
$$
is proper. Therefore the category  $X_{g,m}$ is
a proper \'etale Lie groupoid as described  in  \cite{Mj,MM}.
 Finally,  we have a natural homeomorphism
$$
\beta:|X_{g,m}|\rightarrow \overline{\mathcal N}_{g,m},\quad 
|x|\mapsto  p(x)
$$
between the orbit space $|X_{g,m}|$ of $X_{g,m}$,  obtained by identifying two objects if they are connected by a morphism, and the set $\ov{\mathcal N}_{g,m}$. 
Hence the pair $(X_{g,m},\beta)$ defines a holomorphic orbifold structure on the connected component
$\overline{\mathcal N}_{g,m}$. Different choices of finite collections $(\cg_i)$ as introduced  above
define equivalent orbifold structures.  For a proof we refer to \cite{Mj} or \cite{MM}.
 An orbifold is a paracompact Hausdorff space equipped with an equivalence class of orbifold structures.
\begin{remark}\label{LIFT}
Let us also note that by a slight modification one obtains a natural
orbifold structure on the space   $\overline{\mathcal M}$ of equivalence classes of 
stable noded Riemann surfaces with ordered marked points. It can be
obtained from the above construction by admitting  only biholomorphic
isomorphisms preserving the order of the marked points. The associated space   $\overline{\mathcal M}_{g,m}$ is the classical Deligne-Mumford space of equivalence classes of stable Riemann surfaces of arithmetic genus $g$ and $m$ ordered marked points.
\end{remark}

In the previous discussion we have described a version  of the classical
DM-theory {assuming that the gluing profile is the logarithmic gluing profile}. However,  if we study  maps on the Riemann surfaces we are forced 
to choose a different  gluing profile. This is a priori not obvious but follows from rather subtle analytical considerations. The
modification is  carried out  in  \cite{HWZ-DM} and is as follows. 
 For every
family $(a,v)\mapsto  \alpha_{(a,v)}$ we replace the logarithmic
gluing profile by the exponential gluing profile. Therefore, we
consider the family
$$
(a,v)\mapsto  (\wt{a},v)\mapsto  \alpha_{(\wt{a},v)},
$$
of stable noded Riemann surfaces parametrized by the gluing parameters  $\wt{a}=(\wt{a}_1,\ldots ,\wt{a}_k)$ for the exponential gluing profile which  are related to the gluing parameters 
$a=(a_1,\ldots, a_k)$ belonging to the logarithmic gluing profile by the following formula.  If $a_i=0$ then $\wt{a}_i=0$. If
$a_i\neq 0$, we set $a_i=|a_i|\cdot e^{-2\pi i\vartheta}$ and define
${\wt{a}}_i$ by ${\wt{a}}_i=|{\wt{a}}_i|\cdot e^{-2\pi
i\vartheta}$ where 
$$
\abs{a_i} = e^{-2\pi(e^{\frac{1}{\abs{\wt{a}_i}}}-e)}.
$$
If we define the set $X_{g,m}$ of objects 
as before we obtain a naturally oriented smooth orbifold structure
on $\overline{\mathcal N}_{g,m}$. This is the smooth orbifold structure
we shall need for the  further constructions.

One could also start the whole construction with a good but not necessarily complex family $v\mapsto j(v)$.
Then in the corresponding construction of the manifolds ${\bf M}(\Xi,\Xi')$  the source and target maps are smooth, and so are the multiplication, inversion and $1$-maps. The main result here is the following theorem.
\begin{theorem}\label{theorem2.15}
Carrying out the previous gluing constructions  with the   {\bf exponential gluing profile} replacing the logarithmic gluing profile, one obtains  naturally oriented smooth orbifold structures  on the sets $\overline{\mathcal N}$ and $\ov{\mathcal M}_{g,m}$. We denote the spaces equipped with these orbifold  structures  by $\ov{\mathcal N}^{\textrm{exp}}$ and  
$\ov{\mathcal M}^{\textrm{exp}}_{g,m}$.
\end{theorem}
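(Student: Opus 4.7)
My plan is to repeat the entire proof of the holomorphic Deligne--Mumford theory in the smooth category, replacing the logarithmic gluing profile $\varphi_{\log}(r)=-\frac{1}{2\pi}\ln r$ by $\varphi(r)=e^{1/r}-e$ throughout. Since the underlying sets $\overline{\mathcal N}$ and $\overline{\mathcal M}_{g,k}$, their natural DM-topology, and the notion of isomorphism of stable noded Riemann surfaces are all independent of the gluing profile, I only need to verify that the exponentially plumbed uniformizing families $(a,v)\mapsto \alpha_{(a,v)}$, together with their morphism sets ${\bf M}(\mathcal F,\mathcal F')$, form a proper \'etale Lie groupoid in the smooth (rather than holomorphic) category, and that the resulting orbifold carries a canonical orientation. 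The ingredients -- small disk structures, good complex deformations $v\mapsto j(v)$, plumbing, freezing of non-zero gluing parameters, and partial Kodaira differentials -- are re-used verbatim; the only formal change is to Definition~\ref{citiview}, where $PDj(a,v)$ is required only to be a real (not necessarily complex) linear isomorphism.

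The analytic backbone of this transfer is the calculus lemma of \cite{HWZ-DM}, which asserts that the pointwise reparameterization
\[
a\longmapsto \tilde a,\qquad |\tilde a|=e^{-2\pi(e^{1/|a|}-e)},\quad \arg\tilde a=\arg a,
\]
is $C^\infty$ across $a=0$ with all derivatives vanishing there. This flatness permits one to transport the holomorphic charts from the logarithmic case, together with the associated holomorphic source, target, inversion, and composition maps on ${\bf M}(\mathcal F,\mathcal F')$, into smooth charts in exponential coordinates, and to verify that the exponentially glued family $(a,v)\mapsto \alpha_{(a,v)}$ is itself smooth in $(a,v)$ across the nodal strata $\{a_{\{x,y\}}=0\}$. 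This upgrades Theorem~\ref{existence-x} and its groupoid companion to the smooth category, yielding a proper \'etale Lie groupoid $(X_{g,k},{\bf X}_{g,k})$ whose orbit space is $\overline{\mathcal N}_{g,k}$. Performing the same construction while restricting to isomorphisms that preserve the ordering of the marked points (as in Remark~\ref{LIFT}) produces the corresponding groupoid presenting $\overline{\mathcal M}_{g,k}$.

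For the natural orientation, observe that the parameter space $N\times E$ is a complex vector space, the extended $G$-representation $\sigma:G\to GL(N)\times GL(E)$ is complex linear since each cocycle $\sigma_{\{x,y\}}(g)\in S^1$ acts by complex multiplication, and the reparameterization $a\mapsto \tilde a$ is the identity on arguments and strictly increasing on moduli, hence orientation preserving as a map of real vector spaces. Each chart therefore inherits the complex orientation of $N\oplus E$, and all groupoid operations preserve this orientation; the local orientations then patch to a canonical global orientation on the orbifold.

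The main obstacle is the calculus lemma itself: one must propagate the extreme flatness of $a\mapsto \tilde a$ at the origin through all of the analytic ingredients -- the plumbing on necks, the explicit form of the induced $G$-action, the freezing construction, and the isomorphism maps on ${\bf M}(\mathcal F,\mathcal F')$ -- to deduce \emph{smoothness}, rather than mere continuity, at the nodal strata. Once this technical input is granted, the remainder is a careful bookkeeping exercise closely modeled on the holomorphic proof, and the two statements of Theorem~\ref{theorem2.15} follow by assembling the resulting charts and morphism manifolds into the orbifolds $\overline{\mathcal N}^\varphi$ and $\overline{\mathcal M}^\varphi_{g,k}$.
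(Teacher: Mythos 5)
The paper does not actually prove Theorem \ref{theorem2.15}; it defers to \cite{HWZ-DM} and indicates two possible routes, a direct one via an implicit function theorem and an indirect one that derives the exponential case from the classical holomorphic theory by means of a calculus lemma. You chose the second route, which the paper endorses, but your execution has a genuine gap at its central step. The reparameterization $h$ matching the two families of gluing parameters (so that the gluing lengths agree, $-\tfrac{1}{2\pi}\ln|h(a)|=e^{1/|a|}-e$) is indeed smooth and flat at $a=0$, but it is only a homeomorphism, not a diffeomorphism: its inverse behaves like $1/\ln\ln(1/|b|)$ near $b=0$ and is not even H\"older continuous there. Consequently ``transporting the holomorphic charts into smooth charts in exponential coordinates'' is not a valid move. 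A transition map of the exponential atlas has the form $h^{-1}\circ T\circ h$, where $T$ is the holomorphic transition map of the classical theory, and the non-smooth factor $h^{-1}$ sits inside the composition; the flatness of $h$ does not by itself give smoothness of this composition, and for the same reason the smooth compatibility of two transported charts is exactly what remains to be proved.

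What actually makes this route work -- and what the calculus lemma of \cite{HWZ-DM} (compare Lemma 4.5 of \cite{HWZ8.7}, invoked in the proof of Theorem \ref{neck}) really provides -- is different from the flatness statement you quote. One first uses the structure of isomorphisms between plumbed surfaces to show that the holomorphic transition map acts on each gluing parameter by multiplication by a nonvanishing holomorphic germ, so that in exponential coordinates the gluing lengths and angles satisfy $\varphi(|a'|)=\varphi(|a|)+s(a,v)$ and $\arg a'=\arg a+t(a,v)$ with $s,t$ smooth and bounded. The calculus lemma then asserts that any $a'$ determined implicitly by such relations depends smoothly on $(a,v)$ across the stratum $a=0$; this rests on derivative estimates specific to the profile $\varphi(r)=e^{1/r}-e$, not on the flatness of $h$. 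Without these two ingredients the chart compatibility, and hence the proper \'etale Lie groupoid structure on $X_{g,k}$, is unproved. Your orientation argument has a smaller, related issue: since the transition maps are only smooth, orientation preservation must be argued on the stratum $a=0$ as well, e.g.\ by continuity of the Jacobian determinant from the open dense set where all gluing parameters are nonzero and the conjugation by $h$ is an honest orientation-preserving diffeomorphism.
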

 In \cite{HWZ-DM} this result is proved directly by using an implicit function theorem.
However, it  follows alternatively from the classical (holomorphic)
  DM-theory as described above by means of a calculus lemma, also shown in \cite{HWZ-DM}.

Let $\alpha=(S,j,M,D)$  be a connected  noded  Riemann surface with (ordered) marked points. We do not require $\alpha$ to
be stable, but assume that $2g+m\geq 3$, where $g$ is the arithmetic
genus of $\alpha$ and $m$ the number of marked points on $S$. 
{Let $G$ be a finite subgroup of the automorphism group of $\alpha$. In   case $\alpha$ is unstable this latter group will be infinite.  Now we take a collection of compact small disks around the nodal points and require that they have smooth boundaries and their
union is invariant under $G$. The construction is the same as the construction of  a small disk structure. Let $j(w)$ be a
deformation of $j$ satisfying  $j(w)=j$ on these disks. Here $w$ varies in a finite dimensional complex vector space $E$.  Let $N$ be the  finite dimensional complex vector space of the gluing parameters 
$a_{\{x,y\}}$, $\{x, y\}\in D$. Using the exponential gluing profile we obtain a  family
$$
(b,w)\mapsto \beta_{(b,w)}=(S_b,j(b,w),M_b,D_b), \qquad (b,w)\in O,
$$
where $O\subset N\times E$ is an open neighborhood of the origin. Now we apply the forgetful map $\gamma$ to obtain a map
$$
O\rightarrow \overline{\mathcal M}_{g,m}, \quad (b,w)\mapsto
[(\beta_{(b,w)})_{\textrm{stable}}]
$$
into the classical Deligne-Mumford space $\overline{\mathcal M}_{g,m}$ 
space  constructed by means of  the logarithmic gluing profile.  We already explained in the first section how the underlying stable 
Riemann surface is constructed.}

\begin{lemma}\label{thm2.15-n}
The above  map
$$
O\rightarrow \overline{{\mathcal M}}_{g,m}, \quad (b,w)\mapsto 
[{(\beta_{(b,w)})}_{\textrm{stable}}]
$$
into the classical Deligne-Mumford space equipped with the smooth structure related to the holomorphic structure, is a smooth map between orbifolds.
\end{lemma}

 Note that we suppress here  the obvious `overhead'
for $\overline{\mathcal M}_{g,m}$  described  by the sets $\Xi$.
The map in {Lemma} \ref{thm2.15-n} is,  of course, represented near a point in $O$ by a map into some  set $\Xi_i$.
\begin{proof}
We start with the above family 
$$
(b,w)\mapsto  \beta_{(b,w)}:=(S_b,j(b,w),M_b,D_b),\quad (b, w)\in O
$$ 
of noded Riemann surfaces constructed by means of   the exponential gluing profile and a smooth deformation $w\mapsto  j(w)$ which is independent of $w$ near the nodal points. Let us add to the unstable domain components of $(S,j,M,D)$ a few points outside of the small disk structure in order to make them stable. Call this additional set of marked points $M^\ast$.
We may assume that $M^\ast$ is ordered so that the set $M^{\ast\ast}=M\cup M^\ast$ of marked points is ordered.
{We now consider the family 
$$
(b,w)\mapsto \beta^{\ast\ast}_{(b,w)}:=(S_b,j(b,w),M_b^{\ast\ast},D_b)
$$
which is smooth 
into the space $\overline{\mathcal M}_{g,m^{\ast\ast}}^{\textrm{exp}}$ equipped with the smooth structure coming from the exponential gluing profile
as shown in \cite{HWZ-DM}}.  
Here $m^{\ast\ast}=\sharp M^{\ast\ast}$ is the number of marked points.
On the other hand, the identity map 
$$\overline{\mathcal M}_{g,m^{\ast\ast}}^{\textrm{exp}}\xrightarrow{\text{id}} \overline{\mathcal M}_{g,m^{\ast\ast}}$$
 is smooth which is again  proved  in \cite{HWZ-DM}.  The inverse, in fact, is not.
 
 Now we can apply  the following well-known result in the classical DM-theory.
\begin{proposition}\label{classical_prop}
The map which forgets the $l$-th marked point and throws away possibly arising unstable domain components
$$
\text{\em forget}_l:\overline{\mathcal M}_{g,m^{\ast\ast}}\rightarrow
\overline{\mathcal M}_{g,m^{\ast\ast}-1}
$$
is  holomorphic.
 \end{proposition}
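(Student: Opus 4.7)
The plan is to verify holomorphicity by working in local uniformizing families (holomorphic orbifold charts) of $\overline{\mathcal M}_{g,m^{\ast\ast}}$ and $\overline{\mathcal M}_{g,m^{\ast\ast}-1}$, constructed with the logarithmic gluing profile so that all chart data are complex analytic. Fix $[\alpha]$ with representative $\alpha = (S, j, M^{\ast\ast}, D)$ and let $(a, v) \mapsto \alpha_{(a,v)}$ be a good complex uniformizing family on a neighborhood $O \subset N \times E$ of the origin. On the target side, pick a good complex uniformizing family around $[\text{forget}_l([\alpha])]$. It suffices to show that the induced map between these parameter spaces is holomorphic near $(0,0)$; equivariance under the automorphism group $G$ of $\alpha$ (and its image in the automorphism group of the target) will then make this map descend to a holomorphic map of orbifold charts.

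Next I would split into cases according to the domain component $C$ of $S$ containing the $l$-th marked point $m_l$. If $C$ remains stable after removing $m_l$, i.e.\ $2g(C) + \sharp\Sigma_C \geq 4$, no weeding is required, and in the uniformizing parameters one simply forgets the coordinate recording the position of $m_l$ on $C$. This is a holomorphic linear projection. The genuinely interesting cases are those in which $2g(C)+\sharp\Sigma_C = 3$. The alternative $g(C) = 1, \sharp\Sigma_C = 1$ would force the whole curve to equal $C$ and produce an unstable target, contradicting membership in the DM range, so we may assume $g(C) = 0$. Two sub-cases then remain: $(a)$ $C$ is a sphere carrying $m_l$, a second marked point $m'$, and one node $\{x,y\}$ with $x \in C$; $(b)$ $C$ is a sphere carrying $m_l$ and two nodes $\{x,y\}, \{x',y'\}$ with $x, x' \in C$.

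In sub-case $(a)$ the contracted curve erases $C$ and places $m'$ at $y$ on the adjacent component. Normalizing the three special points on $C$ to $0, 1, \infty$, the position of the new marked point on the target curve is given by an explicit cross-ratio expression in the source parameters and is therefore holomorphic. In sub-case $(b)$ the contracted curve replaces $C$ together with the two nodes $\{x,y\}$ and $\{x',y'\}$ by a single new node $\{y,y'\}$; the key claim is the plumbing identity
\[
\tilde a_{\{y,y'\}} \; = \; c(a, v) \cdot a_{\{x,y\}} \cdot a_{\{x',y'\}},
\]
where $c$ is a non-vanishing holomorphic function of the remaining parameters. Granted this identity, the induced change of coordinates into the target uniformizing chart is holomorphic and extends holomorphically across the locus $a_{\{x,y\}} \cdot a_{\{x',y'\}} = 0$ where the unstable sphere is unpinched.

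The main obstacle is establishing the product formula in sub-case $(b)$ directly from the plumbing construction. One must match the two cylindrical necks around $\{x,y\}$ and $\{x',y'\}$, glued with parameters $a_{\{x,y\}}$ and $a_{\{x',y'\}}$, to the single neck around $\{y,y'\}$ on the contracted curve. A direct computation with the logarithmic gluing profile $\varphi(r)=-\frac{1}{2\pi}\ln r$, using the holomorphic disk coordinates $\ov h_x, \ov h_y, \ov h_{x'}, \ov h_{y'}$ and the induced identifications of the tangent lines at the nodes, shows that the two necks compose into a single neck whose modulus is $\varphi(|a_{\{x,y\}}|) + \varphi(|a_{\{x',y'\}}|)$ plus a holomorphically varying correction coming from the cross-ratio geometry of $C$; this additive identity on moduli, combined with the additivity of the rotational parameters, translates into the multiplicative formula above. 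Together with the obvious holomorphicity in the deformation parameters $v$ (chosen constant near the nodes) and the routine argument in sub-case $(a)$, this completes the proof.
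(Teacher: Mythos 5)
The paper does not actually prove this proposition: it is invoked as a classical fact of Deligne--Mumford theory (in the spirit of Knudsen's contraction functor, or the treatment in \cite{RS}) and used as a black box inside the proof of Theorem \ref{thm2.15-n}. So you are supplying an argument where the paper defers to the literature. Your argument is the standard one and is correct in outline: the case split into ``component stays stable'', ``sphere with one node and one other marked point'', and ``sphere with two nodes'' is exhaustive (the $g(C)=1$, $\sharp\Sigma_C=1$ and $g(C)=0$ with three marked points and no nodes alternatives are ruled out exactly as you say, by the standing assumption that the target is in the Deligne--Mumford range), and the multiplicative identity $\tilde a_{\{y,y'\}}=c(a,v)\,a_{\{x,y\}}a_{\{x',y'\}}$ is the correct key fact; in the algebraic plumbing model $\xi\eta=a$ it is literally the composition of $\xi\eta=a_{\{x,y\}}$ and $\xi'\eta'=a_{\{x',y'\}}$ through a M\"obius identification of $\eta$ with $\eta'$ on the middle sphere, which is where the nonvanishing holomorphic factor $c$ comes from, and which translates into your additivity of neck moduli under the logarithmic profile.

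Two points deserve tightening. First, in the stable case the description ``forget the coordinate recording the position of $m_l$'' does not match the charts this paper uses: in a good uniformizing family the marked points are fixed on $S$ and only $(a,v)$ vary, so there is no such coordinate to drop. The uniform and clean way to phrase all three cases is to exhibit $(a,v)\mapsto \bigl((S_a,j(a,v),M_a\setminus\{m_l\},D_a)\bigr)_{stab}$ as a holomorphic family of stable curves and invoke the holomorphic universal property of the target uniformizing family (the logarithmic-profile version of Theorem \ref{smoothfamily}); your cross-ratio and product-formula computations are then exactly what is needed to verify that this contracted family is holomorphic, including across the strata where the relevant gluing parameters vanish. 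Second, in sub-case $(a)$ the interesting regime is $a_{\{x,y\}}\neq 0$, where no contraction occurs but the surviving marked point $m'$ sits in the glued neck and its position in the target chart must be shown to depend holomorphically on $a$ and to extend across $a_{\{x,y\}}=0$; your cross-ratio remark points at the right computation, but this is the step that actually carries the content in that sub-case and should be spelled out rather than called routine.
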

Continuing with the proof of Lemma \ref{thm2.15-n},  we set $m=\sharp M$ and $m^\ast=\sharp M^\ast$,  so that  $m^{\ast\ast}=m+m^\ast$. Then the map 
$
O\rightarrow \overline{\mathcal M}_{g,m}$,  defined by $(b,w)\mapsto  [{(\beta_{(b,w)})}_{\textrm{stable}}]$,
is the following composition of maps, 
\begin{gather*}
O\to \ov{\mathcal M}_{g,m^{\ast\ast}}^{\textrm{exp}}\xrightarrow{\text{id}} \ov{\mathcal M}_{g,m^{\ast\ast}}\to \ov{\mathcal M}_{g,m^{\ast\ast}-1}\to \ldots \to \ov{\mathcal M}_{g,m}\\
 (b,w)\mapsto  \text{forget}_{m+1}\circ \text{forget}_{m+2}\circ \ldots \circ  \text{forget}_{m+m^{\ast}}([\beta^{\ast\ast}_{(b,w)}]).
\end{gather*}
{In view of Proposition \ref{classical_prop}, 
this map is  a composition of smooth maps and hence the map $O\rightarrow \overline{\mathcal M}_{g,m}$ is  smooth,  completing the proof of Lemma \ref{thm2.15-n}.}
\end{proof}

Finally we  recall  the so called ``universal property'' of a good uniformizing family $(a, v)\mapsto 
\alpha_{(a, v)}$ of stable noded Riemann surfaces. The property claims for every family $(b, w)\mapsto \beta_{(b, w)}$ of 
 stable noded Riemann surfaces, containing for some parameter value $(b_0, w_0)$ a 
 surface $\beta_{(b_0, w_0)}$ isomorphic to  a surface $\alpha_{(a_0, v_0)}$ belonging to a  uniformizing family, that there exists a well-defined map $(b, w)\mapsto \mu (b, w)=(a(b, w), v(b, w))$ defined near $(b_0, w_0)$ and satisfying $\mu (b_0, w_0)=(a_0, v_0)$ so that all the surfaces $\beta_{(b, w)}$ are isomorphic to the  surfaces $\alpha_{\mu (b, w)}$. In order to make this statement precise we choose a  stable noded and connected Riemann surface $\alpha=(S,j,M,D)$ 
with unordered marked points and let 
$$
(a,v)\mapsto  \alpha_{(a,v)}=(S_a, j(a, v), M_a, D_a)$$
be a good uniformizing family with domain $(a, v)\in O$ as defined in Definition \ref{citiview}.\index{universal property}

Let $(S', j', M', D')$  be  another stable connected noded  Riemann surface with  unordered marked points, and let  ${\bf D}'$  be a small disk structure ${\bf D}'$ on $S'$. This time we allow the marked points in $M'$ to belong to $\abs{{\bf D}'}$. Of course, $M'$ is still assumed to be disjoint from the nodal points $\abs{D'}$. 
In order to define a deformation $M'(\sigma)$ of $M'$ we choose around every point $z\in M'$ an open neighborhood $V(z)$ whose closure does not intersect $\abs{D'}$.  {Moreover, these sets $V(z)$ are chosen to be mutually disjoint. 
If $M'=\{z_1,\ldots, z_{m'}\}$, we introduce the product space 
$$V(M'):=V(z_1)\times \cdots V(z_{m'}).$$
A point $\sigma=(\sigma_1,\ldots ,\sigma_{m'})\in V(M')$ where $\sigma_j\in V(z_j)$, defines the set $M(\sigma')$ of marked points on $S'$, in the following called a 
{\bf deformation of the marked points $M'$}.} \index{deformation of marked points} 
If $\sigma$ changes smoothly,  the set of marked points $M'(\sigma)$ changes, by definition, smoothly. We now take a $C^{\infty}$ neighborhood $U$ of the complex structure $j'$ and denote by $U_{{\bf D}'}$ the collection of complex structures $k$ in $U$ which coincide with $j'$ on the discs $\abs{{\bf D}'}$ of the small disk structure.
With the triple $(b, k, \sigma)\in B_{1/2} \times U_{{\bf D}'}\times V(M')$ we associate  the noded Riemann surface 
$\beta_{(b, k, \sigma)}=(S'_b, k_b, M' (\sigma)_b, D'_b)$ and study  
the  surfaces for  $\abs{b}$  small, $\sigma$  close to $\sigma_0$ satisfying  $M(\sigma_0)=M'$ and $k$  close to $j'$. If we take a finite-dimensional family 
$$w\mapsto k(w)$$
of complex structures $k$ which depend  smoothly  on a parameter $w$ varying  in a finite dimensional Euclidean space, then we call  the map 
$$(b, w, \sigma)\mapsto  \beta_{(b, k(w), \sigma)}=(S'_b, k(w)_b, M'(\sigma)_b, D'_b)$$
a  smooth family.  The following result is proved in  \cite{HWZ-DM}.

\begin{theorem}[{\bf Universal property}] \label{smoothfamily}
We fix the good uniformizing family $(a, v)\mapsto \alpha_{(a, v)}$.\\
(1) Consider the family 
$(b, k, \sigma)\mapsto  \beta_{(b, k, \sigma)}$ of noded Riemann surfaces as introduced above
($(b,k,\sigma)\in B_{\frac{1}{2}}\times U_{{\bf D}'}\times V(M')$) and assume that there exists an 
isomorphism
$$
\psi:\beta_{(0, j', \sigma_0)}=(S',j',M'(\sigma_0),D')\rightarrow (S_{a_0},j(a_0, v_0),M_{a_0}, D_{a_0})=\alpha_{(a_0,v_0)}
$$
between the two noded Riemann surfaces for some parameter $(a_0, v_0)$. 
Then there exists a uniquely determined continuous germ 
$$
(b,k,\sigma)\mapsto  \mu (b, k, \sigma)=(a(b,k,\sigma),v(b,k,\sigma))
$$
near $(b, k, \sigma)=(0, j', \sigma_0)$ satisfying 
$\mu  (0,j',\sigma_0)=(a_0,v_0)$ and an associated core-continuous germ
$(b, k, \sigma)\to \phi (b, k, \sigma)$ of isomorphisms 
$$\phi(b, k, \sigma):\beta_{(b, k, \sigma)}\to \alpha_{\mu (b, k, \sigma)}$$
between the noded Riemann surfaces satisfying $\phi (0, j', \sigma_0)=\psi$.\\
(2) Moreover, if $w\mapsto k(w)$ is a smooth family of complex structures on $S'$ satisfying $k(0)=j'$, then the family $(b, w, \sigma)\mapsto \phi (b, k(w), \sigma)$ of isomorphisms
$$\phi(b, k(w), \sigma):\beta_{(b, k(w), \sigma)}\to \alpha_{\mu(b, k(w), \sigma)}$$
is core-smooth.

\end{theorem}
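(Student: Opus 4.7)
The plan is to prove the universal property by an implicit function theorem argument, the key analytic input being that the partial Kodaira differential $PDj(a_0,v_0): N(a_0)\times E \to H^1(\alpha_{(a_0,v_0)})$ is a linear isomorphism, together with the uniqueness statement built into Definition \ref{citiview}.

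First, I would use the isomorphism $\psi:\beta_{(0,j',\sigma_0)}\to \alpha_{(a_0,v_0)}$ to transport the small disk structure from $S_{a_0}$ back to $S'$, which is possible because $\psi$ is biholomorphic and matches nodal pairs to nodal pairs and marked points to marked points. In holomorphic polar coordinates around each nodal point, $\psi$ takes a standard linear form, and this lets me define, for small gluing parameters $b$ and deformed marked points $\sigma$, a candidate family of diffeomorphisms from the core of $S'_b$ to the core of $S_{A(b,\sigma)}$ for a suitable choice of gluing parameters $A(b,\sigma)$. Here $A(b,\sigma)$ is constructed so that $b_{\{x,y\}}=0$ forces the corresponding $A$-component to be zero (by the freezing-of-gluing-parameters construction referenced earlier in the paper), and nonzero $b$-components get matched to nonzero $A$-components via the cylindrical coordinate identification induced by $\psi$.

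With the gluing pattern fixed, the remaining problem is to determine the parameter $v$ so that $j(A(b,\sigma),v)$ is isomorphic, via a core-supported diffeomorphism, to the pushforward of $k$ under the candidate diffeomorphism. This is exactly a setting for the implicit function theorem: the map
\begin{equation*}
F:(b,k,\sigma,a,v)\mapsto \bigl[ (\text{pushforward of } k)-j(a,v)\bigr]\in H^1(\alpha_{(a,v)})
\end{equation*}
vanishes at $(0,j',\sigma_0,a_0,v_0)$, and its partial derivative with respect to $(a,v)$ restricted to $N(a_0)\times E$ at that point is precisely $-PDj(a_0,v_0)$, which by hypothesis is a linear isomorphism. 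An application of the implicit function theorem in the appropriate (smooth, finite-dimensional) category produces a continuous germ $\mu(b,k,\sigma)=(a(b,k,\sigma),v(b,k,\sigma))$ with $\mu(0,j',\sigma_0)=(a_0,v_0)$ and a family of core-supported diffeomorphisms realizing the sought isomorphisms $\phi(b,k,\sigma):\beta_{(b,k,\sigma)}\to \alpha_{\mu(b,k,\sigma)}$ with $\phi(0,j',\sigma_0)=\psi$.

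Uniqueness of the germ follows from the third bullet in Definition \ref{citiview}: any isomorphism $\alpha_{(a,v)}\to \alpha_{(a',v')}$ with $(a,v),(a',v')\in O$ arises from a $g\in G$, and the finiteness of $G$ together with continuity in $(b,k,\sigma)$ and the boundary condition $\phi(0,j',\sigma_0)=\psi$ eliminates the $G$-ambiguity, so the germ is pinned down. Finally, when $w\mapsto k(w)$ is smooth, the input to the implicit function theorem is smooth in $(b,w,\sigma)$, hence so is the output, and smoothness of $\mu(b,k(w),\sigma)$ together with the smooth dependence of the constructed core diffeomorphisms gives core-smoothness of the isomorphism germ. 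The main obstacle is that $\phi$ cannot be smooth in the classical sense across the stratum $\{b_{\{x,y\}}=0\}$: as a gluing parameter is turned on, the underlying topology of the domain changes, and the necks have to be matched by a conformal rescaling in which the gluing length tends to infinity. This is precisely why the conclusion is restricted to \emph{core}-smoothness, and it is exactly the freezing-of-gluing-parameters decomposition together with the isomorphism property of the partial Kodaira differential (which is defined on $N(a)\times E$, with $N(a)$ only seeing directions not already collapsed) that keeps the implicit function theorem applicable uniformly across this stratification.
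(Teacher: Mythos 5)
The paper does not actually prove Theorem \ref{smoothfamily}: it states the result and defers the proof to \cite{HWZ-DM}, so there is no in-paper argument to compare yours against. Your skeleton --- an implicit function theorem whose key linear input is the isomorphism property of the partial Kodaira differential, with uniqueness of the germ coming from the third bullet of Definition \ref{citiview}, the finiteness of $G$, and the normalization $\phi(0,j',\sigma_0)=\psi$ --- is the standard and correct one for versality statements of this type. But as written there are genuine gaps.

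First, your map $F$ takes values in $H^1(\alpha_{(a,v)})=\Omega^{0,1}/\operatorname{im}(\bar\partial)$. Solving $F=0$ only says that the pushforward of $k$ and $j(a,v)$ differ by an element of $\operatorname{im}(\bar\partial)$; it does not produce the isomorphism $\phi$, which is the main assertion of the theorem. The correct setup works upstairs: one solves $\varphi^{*}j(a,v)=(\text{pushforward of }k)$ for the pair $(\varphi,(a,v))$, where $\varphi$ is a diffeomorphism near the identity fixing the special points, and the relevant linearization is $\bar\partial\oplus PDj(a_0,v_0):\Gamma_0\oplus(N(a_0)\times E)\to\Omega^{0,1}$, an isomorphism because $\bar\partial$ is injective with cokernel $H^1$ and $PDj$ is an isomorphism onto $H^1$. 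Your quotiented formulation silently discards the unknown that becomes $\phi$. Related to this, there is an internal tension in your treatment of the gluing parameters: you first fix $A(b,\sigma)$ by neck-matching and declare that "the remaining problem is to determine $v$", yet you then invoke the full isomorphism $PDj(a_0,v_0)$ on $N(a_0)\times E$, which requires $a$ to still be a free unknown of the implicit function theorem. If $a$ were pre-determined, the $E$-part of the Kodaira differential alone would in general not be surjective onto $H^1$; the resolution is that $a$ is part of the output, and the freezing construction serves only to present the varying structures on a fixed underlying surface so that the IFT can be formulated.

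Second, and most seriously, the claim that "the input to the implicit function theorem is smooth in $(b,w,\sigma)$, hence so is the output" begs the question at the stratum where gluing parameters vanish. The family of glued surfaces $S'_b$ does not depend smoothly on $b$ in any naive sense as $b_{\{x',y'\}}\to 0$, so there is no fixed smooth finite-dimensional IFT problem whose data vary smoothly across that stratum. Establishing that $\mu$ is smooth and that $\phi$ is core-smooth across $b=0$ is precisely the nontrivial analytic content of the theorem; it depends on the choice of gluing profile and on the quantitative estimates carried out in \cite{HWZ-DM}, and cannot be obtained by a formal appeal to the implicit function theorem as you do.
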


\begin{figure}[h!]
\psfrag {z}{${z}$}
\psfrag {zp}{ ${\phi_{(b_0, w_0, \sigma_0)}(z)}$ }
\psfrag {p}{${\phi_{(b, w, \sigma)}}$}
\centering
\includegraphics[width=3.9in]{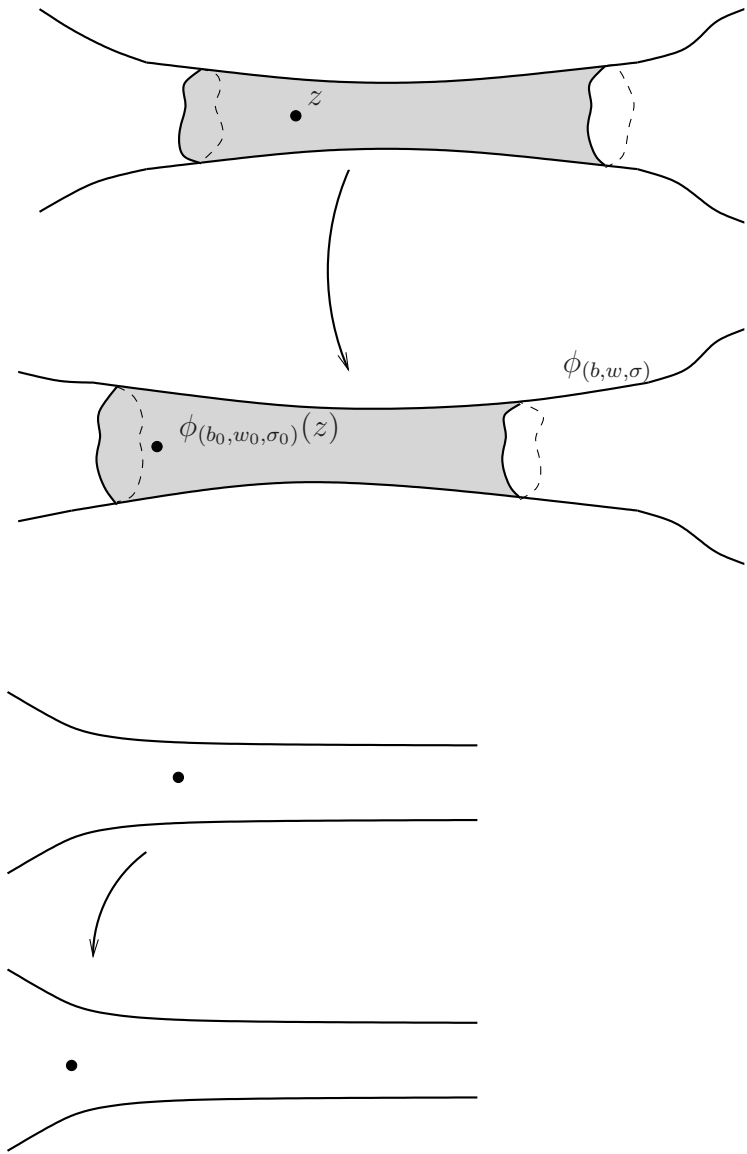}
\caption{Since the domain and target are varying one needs the notion of a family of maps being core smooth.}
\label{Fig8}
\end{figure}

The notions  of core-continuous and core-smooth, illustrated in Figure \ref{Fig8}, are defined as follows. If $(b, w, \sigma)\to \phi (b, k(w), \sigma)$ is a family of isomorphisms 
$$
\phi (b, k(w),\sigma):\beta_{(b,k(w),\sigma)}\rightarrow \alpha_{\mu (b, k(w), \sigma)}
$$
between the noded Riemann surfaces, we fix $(b_0, w_0, \sigma_0)$ in the parameter space. If the point $z$ in $S'_{b_0}$ is not a nodal point, then its image $\zeta$ under 
$\phi (b_0, k(w_0),\sigma_0)$ in $S_{a(b_0, k(w_0),\sigma_0)}$ is also not a nodal point of the target surface $\alpha_{\mu (b_0, k(w_0),\sigma_0)}.$ If $z$ is in the core, a neighborhood of $z$ can canonically be identified with a neighborhood of $z$ viewed as a point in $S'$. If $z$ does not belong to a core, we find a nodal pair $\{x', y'\}\in D'$ and can identify $z$ with a point in $D_{x'}\setminus \{x'\}$ or with a point in $D_{y'}\setminus \{y'\}$ where $D_{x'}$ and $D_{y'}$ are discs belonging to the chosen small disk structure of the noded surface $(S', j', M', D')$. The same alternatives hold for the image $\zeta$ of $z$ under the isomorphism $\phi (b_0, k(w_0),\sigma_0)$. Via these identifications, the family $\phi (b, k(w),\sigma)$ of isomorphisms gives rise to a  family of diffeomorphisms defined on a neighborhood of $z$ in $S'$ into a neighborhood of $\zeta$ in $S$. Being   core-continuous respectively core-smooth requires that all these germs of families  of isomorphisms are continuous respectively  smooth families of  local diffeomorphisms in the familiar sense,  between the fixed Riemann surfaces $S'$ and $S$.\index{core-continuous }\index{core-smooth}
\section{Sc-smoothness, Sc-splicings, and Polyfolds}\label{sc-smoothness}
The theory of polyfolds developed in \cite{HWZ2}-\cite{HWZ7} generalizes the theory of Banach manifolds. The theory is based on a new concept  of differentiability which allows, in particular, to view analytically intricate limiting phenomena like bubbling off or stretching the neck, as smooth phenomena. In this subsection we recall for the convenience of the reader some of the fundamental concepts  of this new theory. We start recalling the underlying smooth structure of Banach spaces, called an sc-structure. 

An {\bf sc-structure} \index{sc-structure}  for the  Banach space $E$  consists of a nested   sequence of Banach spaces 
$$E=:E_0\supset E_1\supset \ldots \supset E_{\infty}:=\bigcap_{i\geq 0}E_i$$
having the properties that the inclusion operators  $E_{i+1}\to E_i$ are compact operators and  that the vector space $E_{\infty}$ is dense in every Banach space $E_i$. 

A point belonging to $E_{\infty}$ is called a  {\bf smooth point}.\index{smooth point}  In the following a Banach space equipped with an sc-structure is called an sc-Banach space. \index{sc-Banach space}
A subset $U\subset E$ inherits the induced  sc-structure defined by the sequence $U_i=U\cap E_i$  equipped with the topology of $E_i$ for all 
$i\geq 0$. Given such an  induced sc-structure on $U$, the set $U_i$ has the induced sc-structure defined by the 
nested sequence  $(U_{i})_{j}:=U_{i+j}$ for all $j\geq 0$, and we write $U^i$ for the set $U_i$ equipped with this sc-structure. If $E$ and $F$ are sc-Banach spaces and $U\subset E$ and $V\subset F$ are open subsets, then $U\oplus V$ carries the sc-structure $(U\oplus V)_i=U_i\oplus V_i$.

 {A subspace $F$ of an sc-Banach space $E$ is  called an {\bf sc-subspace}  if $F$  is closed and the sequence $(F_i)_{i\geq 0}$, where  $F_i: = F \cap E_i $, defines an sc-structure on $F$. An sc-subspace $F$ of an sc-Banach
$E$  has an {\bf sc-complement}   if there exists an algebraic complement $G$  of $F$  equipped with an sc-structure $(G_i)_{i\geq 0}$ so that on every level $i$  we have a topological direct sum
$E_i = F_i \oplus G_i$.}

In the classical theory the tangent space at a point $x\in U\subset E$ has a natural identification with the  Banach space $E$. In the sc-theory, points in $U\setminus U_1$ do not have a tangent space for the given sc-structure and  the tangent space $TU$ of the open set $U\subset E$ is defined as 
$$TU=U^1\oplus E.$$

A linear operator $T:E\to E$ between two sc-Banach spaces is called an {\bf sc-operator}\index{sc-operator}  if  it maps $E_m$ into $F_m$ for every $m$ such that $T:E_m\to F_m$ is a bounded linear operator. An sc-isomorphism $T:E\to F$ is a bijective sc-operator whose inverse is also an sc-operator. In order to construct spaces with  boundary with corners, the concept of a partial quadrant $C$ in the sc-Banach space $E$ is useful.\index{partial quadrant} It is a closed convex subset having the property that there exists an sc-Banach space $W$ and an  linear sc-isomorphism $T:E\to \R^n\oplus W$ mapping $C$ onto  $[0, \infty )^n\oplus W$.

The tangent space of a relatively open subset $U\subset C\subset E$ of a partial quadrant $C$ in the sc-Banach space is defined as $TU=U^1\oplus E$. It carries the sc-structure $(TU)_i=U_{i+1}\oplus E_i$.

The notion of continuity of a map generalizes naturally to $\ssc^0$-maps as follows. 
A map $f:U\rightarrow U'$   between two  relatively open  subsets of the  partial quadrants in sc-Banach spaces is an $\ssc^0$-map, if   $f(U_i)\subset U'_i$  for all $i\geq 0$ and if the induced maps  $f:U_i\rightarrow U'_i$ are  continuous. \index{$\ssc^0$-map} Next we recall the generalization of a smooth map between sc-Banach spaces, called sc-smoothness.

\begin{definition}\label{sscc1} 
Let $U$ and $U'$ be  relatively 
open subsets  of partial quadrants $C$ and $C'$ in the sc-Banach spaces $E$ and $E'$, respectively. 
An  $\ssc^0$-map $f:U\rightarrow U'$ is called an 
$\ssc^1$-map,    or of class $\ssc^1$,   if the following conditions are satisfied.

\begin{itemize}\label{sc-1}
\item[(1)] For every $x\in U_1$ there exists a bounded 
linear  map $Df(x)\in  {\mathcal L}(E_0, E'_0)$ satisfying for $h\in
E_1$, with $x+h\in U_1$,
$$\frac{1}{\norm{h}_1}\norm{f(x+h)-f(x)-Df(x)h}_0\to 0\quad
\text{as\ $\norm{h}_1\to 0$.}\mbox{}\\[4pt]$$
\item[(2)]  The tangent map  $Tf:TU\to TU'$,
defined by
$$Tf(x, h)=(f(x), Df(x)h),
$$
is an $\ssc^0$-map between the tangent spaces. \index{$\ssc^1$-map}
\end{itemize}
\end{definition}

If $Tf:TU\to TU'$ is of class $\ssc^1$, then $f:U\to
E'$ is called of  class $\ssc^2$. Proceeding inductively, the map $f:U\to E'$ is called of class 
$\ssc^k$ if the
$\ssc^0$-map $T^{k-1}f:T^{k-1}U\to T^{k-1}E'$ is of class $\ssc^1$.
Its tangent map $T(T^{k-1}f)$ is then denoted by $T^kf$. It is an
$\ssc^0$-map $T^kU\to T^kE'$.
 A map  which is of class $\ssc^k$ for every $k$ is called  {\bf sc-smooth}  or of  class $\ssc^{\infty}$. \index{sc-smooth! map}
   
 The cornerstone  of the sc-calculus,  the chain rule, holds true and 
 one concludes by induction that the composition of $\ssc^{\infty}$-maps is again of class $\ssc^{\infty}$.

A fundamental  role in the definition  of a M-polyfold is  played by the sc-smooth retractions introduced next. 
\begin{definition}\label{retract}
Let $U\subset C\subset E$ be a relatively open subset of  the partial quadrant $C$ in the sc-Banach space $E$. An $\ssc^{\infty}$-map $r:U\to U$ is called an {\bf sc-smooth retraction} 
or 
an $\ssc^\infty$-retraction  if $r$ satisfies $r\circ r=r$. \index{sc-smooth!retraction}  The image $O=r(U)$ is called 
an  {\bf $\ssc^{\mathbf \infty}$-retract}. \index{sc-smooth!retract} 
\end{definition}

In view of  the chain rule, the tangent map $Tr:TU\to TU$  of an sc-smooth retraction $r:U\to U$ satisfies $Tr\circ Tr=Tr$, and hence  is again an sc-smooth retraction. 

Given a partial quadrant  $C$ in an sc-Banach space $E$,  we define   the degeneration index
$$
d_C:C\to  \N_0
$$
as follows. We choose a linear $\ssc$-isomorphism $T:E\to  \R^k\oplus W$ satisfying $T(C)=[0,\infty)^k\oplus W$. Hence for $x\in C$, we have  
$$T(x)=(r_1, \ldots, r_k, w)$$
where   $(r_1, \ldots, r_k)\in [0,\infty )^k$ and $w\in W$. Then we  define  the integer $d_C(x)$ by
$$
d_C(x)=\sharp\{i\in \{1,\ldots, k\}\vert \, r_i=0\}.$$
It is not difficult to see that this definition is independent of  the choice of an sc-linear isomorphism $T$.

The following concept will replace open subsets of Banach spaces in the classical definition of a local chart of a   
Banach manifold. {In \cite{HWZ10} we spend considerable efforts  to find (among other things) the best notions
for future developments. The local models for M-polyfolds with possible boundaries are sc-smooth retracts $(O,C,E)$, where $E$ is an sc-Banach space, $C\subset E$ a partial quadrant and $O\subset C$ an sc-smooth retract relative to $C$. As it turns out, using this definition,  a M-polyfold (with nonempty boundary) has a boundary with little structure.
In our applications, for example SFT, the boundary has a lot of structure and one can take local models which reflect these. This leads to the definition
of tame sc-smooth retracts. In the current paper, where the main illustration is Gromov-Witten theory, the full strength of the theory in \cite{HWZ10} is not needed
and we shall now introduce for the purpose of this paper the notion of a local M-polyfold model. The reader will immediately verify that this is a special case
of a tame sc-smooth retract.}

\begin{definition}\label{locmodel}
A local M-polyfold model is a triple $(O,C,E)$ in which  $E$ is an sc-Banach space, $C$ is  a partial quadrant of $E$, and $O$ is a  subset of $C$  having the following properties:
\begin{itemize}
\item[(1)] There is an sc-smooth retraction $r:U\to  U$ defined on a relative open subset $U$ of $C$ so that 
$O=r(U).$
\item[(2)] For every  smooth point $x\in O_\infty$,  the kernel of the map  $(\id-Dr(x))$ possesses  an sc-complement which is  contained in $C$.
\item[(3)] For every $x\in O$,  there exists a sequence of smooth points $(x_k)\subset O_\infty$ converging to $x$ in $O$ and satisfying $d_C(x_k)=d_C(x)$.\index{local M-polyfold model}
\end{itemize}
\end{definition}
The choice of $r$ in the above definition is irrelevant as long as it is an sc-smooth retraction onto $O$ defined on a relatively open subset $U$ of $C$. \\

{\bf Note that from now on the local models for our M-polyfolds will be the sc-smooth retractions of the kind just described. Local bundles $K\rightarrow O$
introduced later will have base spaces $O$ using the same type of retracts.}
\begin{remark} {Let us comment on the properties (2) and (3) in Definition \ref{locmodel}.
If $(O,C,E)$ and $(O',C',E')$ are local M-polyfold models and if $f$ is a germ of sc-diffeomorphism 
$f:{\mathcal O}(O,x)\rightarrow {\mathcal O}(O',x')$, at the smooth point  $x$, then Property (2) implies that
$d_C(x)=d_{C'}(x')$. In fact,  one can conclude this  for all elements $x$ on level at least $1$. However, it does not follow
if $x$ is only on  level $0$. Adding now Property (3)  we can show that the conclusion also holds for $x$ on level $0$. 
It follows that $d_C\vert O$ is a local diffeomorphism invariant, which,  describes
numerically the corner  structure of $O$ at the point $x$. Definition \ref{locmodel} is  a useful definition 
for the notion of a M-polyfold with boundary with corners. 
In the current paper we can restrict ourselves to the case without boundary. We point out that  for SFT
we shall need a set-up with boundaries with corners. The above definition is sufficient for SFT, though the more general notion
of a tame retract introduced in \cite{HWZ10}, which looks more natural, works equally well.}
\end{remark}

We would like to point out that in contrast to the smooth retracts in Banach manifolds, which are submanifolds, the sc-smooth retracts can be very wild sets having locally varying dimensions. {We refer  to the paper \cite{HWZ8.7} for illustrations. }

Note that an $\ssc^\infty$-retract $O\subset C\subset E$ has a tangent space $TO\subset TC\subset TE$, defined by
$TO=Tr(TU)$, where we can take any $\ssc^\infty$-retraction $r:U\rightarrow U$, $U\subset C\subset E$, which has $O$ as its image.
Consequently,  we can define the tangent $T(O,C,E)$ of a local M-polyfold model $(O,C,E)$ as 
$$
T(O,C,E):=(TO,TC,TE).\index{tangent of a local M-polyfold model}
$$

The constructions in this present paper, unlike the second part, only involves polyfolds without boundaries. Hence
the relevant local M-polyfold models are of the form $(O,E):=(O,E,E)$, where $O$  is an $\ssc^\infty$-retract, i.e. the image
of an $\ssc$-smooth map $r:U\rightarrow U$ defined on an open subset $U$ of $E$ and satisfying $r\circ r=r$.

In our applications of the polyfold theory later on,  the retractions are derived   from sc-smooth splicings, which are the distinguished sc-smooth retracts defined as follows.  

We consider a relatively open subset $V$ of a partial quadrant $C$ in the sc-Banach space $W$ and let $E$ be another sc-Banach space. Then we consider a family $\pi_v:E\to E$ of linear bounded projections parametrized by $v\in V$ such that the map 
$$\pi:V\oplus E\to E$$ 
defined by $\pi (v, e)=\pi_v (e)$ is sc-smooth. We point out that it is not required that the map $v\mapsto \pi_v$ is continuous in the operator topology. The triple  ${\mathcal S}=(\pi, E, V)$ is called an {\bf  sc-smooth splicing}. \index{sc-smooth! splicing} The  associated {\bf splicing core}  $K=K^{\mathcal S}$ is the set 
$$K=\{ (v, e)\in V\oplus E\vert \, \pi_v (e)=e\}. \index{splicing core} 
$$
Clearly, $V\oplus E$ is a relatively open subset of the partial quadrant $C\oplus E$ in the sc-Banach space $W\oplus E$ and the sc-smooth map 
$$r:V\oplus E\to V\oplus E$$ 
defined by $ r (v, e)=(v, \pi_v (e))$ 
satisfies $r\circ r=r$ and hence is an sc-smooth retraction onto  the retract $r(V\oplus E)=K.$

So far we have defined the sc-smoothness of maps between open subsets of sc-Banach spaces and use it now to define the sc-smoothness  of mappings between local models as follows.
\begin{definition}\label{map}
A map $f:(O, C, E)\to (O', C', E')$ between two  local M-polyfold models is of class $\ssc^1$ if the composition  $f\circ r:U\to E'$ is of class $\ssc^1$ where $U\subset C\subset E$ is a relatively open subset of the partial quadrant $C$ in the sc-Banach space $E$ and where $r:U\to U$ is an sc-smooth retraction onto $r(U)=O$. 
\end{definition}

{The tangent (or linearization)  of the map $f:O\to O'$ at the point $o\in O_1=r(U_1)$ on  level $1$ is defined as 
$$
Tf(o):T_oO\rightarrow T_{f(o)}O'
$$
by $T(f\circ r)|T_oO$.}

The chain rule  generalizes to $\ssc^1$-maps between sc-smooth local models.

Having the local models available we now generalize the concept of a manifold and introduce the notion of an 
{\bf M-polyfold}.

Let $X$ be a 
{paracompact} Hausdorff topological space.  
An  M-polyfold chart  for $X$ is a triple $(U,\varphi, (O, C, E))$, in which $U$ is an open subset of $X$, $(O, C, E)$ is a local  M-polyfold model and $\varphi:U\to O$ is a homeomorphism. Two such  charts $(U,\varphi, (O, C, E))$ and $(U',\varphi', (O', C', E'))$  are called  sc-smoothly compatible  if the transition maps  
\begin{align*}
\varphi'\circ \varphi^{-1}&:\varphi (U\cap U')\to  \varphi' (U\cap U')\\
\varphi \circ (\varphi')^{-1}&:\varphi' (U\cap U')\to \varphi (U\cap U')
\end{align*}
are sc-smooth. \index{M-polyfold! chart} Note that the sets $\varphi (U\cap U')$ and $\varphi'(U\cap U')$ are $\ssc^{\infty}$- retracts for sc-smooth retractions defined on relatively open subsets of partial quadrants $C$ and $C'$ in sc-Banach spaces $E$ and $E'$, respectively.  An sc-smooth atlas for $X$ consists of a family of sc-smoothly compatible charts  so that their domains cover $X$. 

\begin{definition}\label{DEF38}
Let $X$ be a  paracompact  Hausdorff topological space.   
A maximal atlas of sc-smoothly compatible M-polyfold charts is called an {\bf  M-polyfold structure} on $X$.\index{M-polyfold! structure}
\end{definition} 
{
\begin{remark} In a previous paper we have defined a M-polyfold by requiring the topology to be second countable Hausdorff.
The M-polyfold structure of the space then makes the space completely regular. By Urysohn's Theorem
a second countable, completely regular Hausdorff space is metrizable. We have realized later on that everything works under the assumption that  the topology is  paracompact and Hausdorff. As far as the Fredholm theory is concerned
the results we have proved in the previous papers are valid in this context. For certain future constructions it might be necessary to 
impose the condition of  second countability. This is the case, for example,  if the solution space of the unperturbed Fredholm section is not compact
and one constructs a Baire set of perturbations such  that every point has an open neighborhood on which the solution set is generic. In the case of a second countable topology 
one then will be able to obtain a Baire set of perturbations such  that globally the solution set is generic. This is already well-understood in the classical context of the Sard-Smale 
perturbation theory. The difficulties do not arise in the polyfold Fredholm theory,  if the solution set is compact and the perturbations are  sc$^+$-sections.
\end{remark} }
We note that $X$ can have a  boundary with corners or no boundary depending on whether in the charts 
$(U,\varphi, (O, C, E))$ we have $C=E$ or not. M-polyfold versions of  \'etale and proper Lie-groupoids are called ep-groupoids.
 In order to define them it is useful to first recall the concept of a groupoid.

\begin{definition}
A {\bf groupoid} $\mathfrak{G}$ is a small  category whose 
morphisms are all invertible.\index{groupoid}
\end{definition}
Recall that  the category $\mathfrak{G}$ consists of the set  of objects $G$, the set  
${\bf G}$ of morphisms  (or arrows) and the   five  structure maps $(s, t, i, u, m)$. \index{structure maps of a groupoid} Namely, the source and the target maps $s,t:{\bf G}\rightarrow G$ assign to every morphism, denoted by $g:x\to y$,  its source $s(g)=x$ and 
its target $t(y)=y$, respectively.  The associative multiplication (or composition) map 
$$m: {\bf G}{{_s}\times_t}{\bf G}\to {\bf G}, \quad m(h, g)=h\circ g$$
is defined on the fibered product 
$$ {\bf G}{{_s}\times_t}{\bf G}=\{(h, g)\in  {\bf G}\times {\bf G}\, \vert \, s(h)=t(g)\}.$$
For every object $x\in G$, there exists the unit morphism $1_x:x\mapsto  x$ in $\bf{G}$ which is a $2$-sided unit for the composition, that is, $g\circ 1_x=g$ and $1_x\circ h=h$ for all morphisms $g, h\in \bf{G}$ satisfying $s(g)=x=t(h)$. These unit morphisms together define the unit map $u:G\to \bf{G}$ by $u(x)=1_x$. Finally, for every morphism $g:x\mapsto  y$ in $\bf{G}$, there exists the inverse morphism $g^{-1}:y\mapsto  x$ which is a $2$-sided inverse for the composition, that is, $g\circ g^{-1}=1_y$ and $g^{-1}\circ g=1_x$. These inverses together define the inverse map $i:\bf{G}\to \bf{G}$ by $i (g)=g^{-1}.$
The {\bf orbit space} of a groupoid $\mathfrak{G}$,  
$$\abs{\mathfrak{G}}=G/\sim,$$ 
is  the quotient of the set of objects $G$ by the equivalence relation $\sim$ defined by 
$x\sim y$ if and only if  there exists a morphism $g:x\mapsto  y$. \index{orbit space of a groupoid} The equivalence class $\{y\in G\vert \, y\sim x\}$ will be denoted by 
$$\abs{x}:=\{y\in G\vert \, y\sim x\}.$$
If $x, y\in G$ are two objects, then ${\bf G}(x, y)$ denotes the set of all morphisms $g:x\mapsto y$. In particular, for $x\in G$ fixed, we denote by ${\bf G}(x)={\bf G}(x, x)$ the {\bf stabilizer} (or {\bf isotropy }) group of $x$,
$${\bf G}(x)=\{\text{morphisms}\ g:x\mapsto x\}.$$
For the sake of notational economy
we shall denote in the following a groupoid, as well as its object set by the same letter $G$ and its morphism set  by the bold letter ${\bf G}$.

\newenvironment{Myitemize}{%
\renewcommand{\labelitemi}{$\bullet$}%
\begin{itemize}}{\end{itemize}}

Ep-groupoids, as defined next, can be viewed as M-polyfold versions of  \'etale and proper Lie-groupoids discussed e.g. in \cite{Mj} and \cite{MM}.

\begin{definition}\index{ep-groupoid}
{\em An  {\bf ep-groupoid}  $X$ is a groupoid $X$ together with a M-polyfold
structures on the object set $X$ as well as on the morphism set ${\bf X}$
so that all the structure maps  $(s, t, m, u, i)$ are sc-smooth maps and the following
holds true.
\begin{Myitemize}
\item {\em ({\bf \'etale})} The source and target maps
$s$ and $t$ are surjective local sc-diffeomorphisms.
\item {\em ({\bf proper})} For every point $x\in X$,    there exists an
open neighborhood $V(x)$ so that the map
$t:s^{-1}(\overline{V(x)})\rightarrow X$ is a proper mapping.
\end{Myitemize}
}
\end{definition}

We  point out that if  $X$  is a groupoid  equipped  with
M-polyfold structures on the object set $X$ as well as on the
morphism set ${\bf X},$ and  $X$  is \'etale,  then  the
fibered product ${\bf X}{{_s}\times_t}{\bf X}$ has a natural
M-polyfold structure so that the multiplication map $m$ is defined
on an  M-polyfold.  Hence its
sc-smoothness is well-defined, see Lemma \ref{ert} below. 

In an ep-groupoid every morphism $g:x\to y$ can be extended to
a unique local sc-diffeomorphism $t\circ s^{-1}$ satisfying $s(g)=x$
and $t(g)=y$. The properness assumption implies that  the isotropy groups
${\bf G}(x)$ are finite groups.

The orbit space $\abs{X}$ of the ep-groupoid  $X$ is equipped with the quotient topology.

{A M-polyfold  $X$ is equipped with a filtration 
$$
X=X_0\supset X_1\supset X_2\supset \cdots \supset X_\infty:=\bigcap_{j\geq 0}X_j
$$
 into subsets such that the embeddings  $X_{k+1}\to X_k$ are  continuous and $X_\infty$ is dense in every subset $X_k$. Since in an ep-groupoid  the object set $X$ and the morphism set ${\bf X}$ are M-polyfolds, they are both equipped with filtrations. The source and the target maps are local sc-diffeomorphisms,  hence they preserve the levels of the filtrations. Consequently, the filtration of the object set $X$ induces the filtration on the orbit space $\abs{X}$, namely, 
$$
\abs{X}=\abs{X_0}\supset \abs{X_1}\supset \abs{X_2}\supset \cdots \supset \abs{X_\infty}=\bigcap_{j\geq 0}\abs{X_j}.
$$
}
An sc-smooth functor $F:X\to Y$ between two ep-groupoids is called an {\bf equivalence}   if it possesses the following properties.\index{equivalence}
\begin{itemize}
\item[(1)] $F$ is a local sc-diffeomorphism on objects as well as on
morphisms.
\item[(2)] The induced map $\abs{F}:\abs{X}\to \abs{Y}$ between the
orbit spaces {is  a homeomorphism.}
\item[(3)] For every $x\in X$,  the functor $F$ induces an isomorphism 
${\bf G}(x)\rightarrow {\bf G}(F(x)) $ between the isotropy groups.
\end{itemize}
{Let us note that (1)-(3) implies that $|F|(|X_m|)\subset |X_m|$ and that 
$|F|:|X_m|\rightarrow |X_m|$ is a homeomorphism for all $m\geq 0$.}

A {\bf polyfold structure}  on the {paracompact Hausdorff}  topological space  $Z$
is a pair $(X,\beta)$ in which $X$ is an ep-groupoid and
$$\beta:\abs{X}\rightarrow Z$$
a homeomorphism from the orbit space $\abs{X}$ of the ep-groupoid $X$ onto $Z$.\index{polyfold! structure}
Two such polyfold structures $(X,\beta)$ and
$(X',\beta')$ are called  equivalent,
$$(X, \beta )\sim (X', \beta'),$$
if there exists a  third polyfold structure $(X'', \beta'')$ on $Z$ and two equivalences
$$
X\xleftarrow{F} X''\xrightarrow{F'} X'
$$
between the ep-groupoids satisfying
$$\beta''=\beta\circ \abs{F} = \beta' \circ \abs{F'}.$$ 

\begin{definition}
A {\bf polyfold}  is a {paracompact Hausdorff}  topological space 
$Z$ equipped with an equivalence class of polyfold structures.\index{polyfold}
\end{definition}
\begin{remark}
{In our definition we require that a polyfold is a paracompact Hausdorf space.
It is presumably enough to require $Z$ to be a topological space. It is easy to show that
$|X|$ is Hausdorff if $X$ is an ep-groupoid and this implies if $Z$ is homeomorphic to $|X|$
that $Z$ is Hausdorff. At this point it does not seem to be clear whether  $|X|$ is also paracompact 
in general. However, this knowledge is not needed for the Fredholm theory if we know
that the sc-Fredholm section is component-wise proper. We shall investigate this in more detail in \cite{HWZ10}.
There seems to be nothing lost by just requiring in the definition $Z$ to be a topological space.
However, it is important that M-polyfolds are paracompact Hausdorff spaces. In the case of an ep-groupoid
we presently do not know whether  the orbit space $|X|$, which easily is seen to be Hausdorff
as consequence of the properness assumption, has better properties, e. g. is paracompact. }
\end{remark}
In order to recall the concept of a strong bundle over a M-polyfold, we consider two sc-Banach spaces $E$ and $F$ and let $U\subset C\subset E$ be  a relatively  open subset of a partial quadrant $C$ of $E$. By $U\tl F$ we denote the space $U\oplus F$ equipped, however, with the double filtration 
$$(U\tl F)_{m,k}=U_m \oplus F_k,\qquad \text{for $m\geq 0$ and $0\leq k\leq m+1.$}$$
Associated with $U\tl F$ we have the two sc-spaces
$$U\oplus F\qquad \text{and}\qquad U\oplus F^1.$$
An {\bf sc-smooth strong bundle map}  $\Phi:U\tl F\to V\tl G$ is a map of the form 
$$\Phi (u, h)=(f(u), \phi (u, h))$$
which is linear in $h$ and for which the two maps
$$\Phi:U\oplus F^i\to V\oplus G^i,\qquad \text{$i=0$ and  $i=1,$}$$
are sc-smooth.\index{sc-smooth! strong bundle map}
\begin{definition}
An {\bf  sc-smooth strong bundle retraction}  $R:U\tl F\to U\tl F$ is an sc-smooth strong bundle map satisfying $R\circ R=R$. \index{sc-smooth! strong bundle retraction}
\end{definition}
An sc-smooth strong bundle retraction is of the form $R(u, h)=(r(u), \rho (u, h))$  where $\rho$ is linear in $h$ and $r\circ r=r$ so that $r$ is a retraction of $U\subset C\subset E$. We shall abbreviate the associated retracts by 
\begin{align*}
O&=r(U)\subset C\subset  E\\
K&=R(U\triangleleft F)\subset C\triangleleft F \subset E\triangleleft F
\end{align*}
and denote by 
$$p:K\to O, \qquad p(u, h)=u$$
the projection map. 
\begin{definition}\label{local_model_strong_bundle}
{The  sc-smooth  map $p:K\to O$ above   is called a {\bf strong local bundle model}. }
\end{definition}
The retract $K$ is equipped with the double filtration $K_{m,k}$ for $0\leq k\leq m+1$ and $p$ induces the maps $K_{m,k}\to O_m$. By definition, the retraction $R:U\oplus F\to U\oplus F$ is an sc-smooth map and we denote by $K(0)$ the associated retract equipped with the filtration $K(0)_m=K_{m,m}$. Also the retraction $R:U\oplus F^1\to U\oplus F^1$ is, by definition, an sc-smooth map and we denote by $K(1)$ the corresponding retract equipped  with the  filtration $K(1)_m=K_{m, m+1}$. The projection $p:K\to   O$ defines the two sc-smooth maps 
$$p:K(0)\to O\qquad \text{and}\qquad p:K(1)\to O.$$
Hence the bundle map $K(1)\to K(0)$ over the identity map $O\to O$  is a fiberwise compact map. This fact is a very useful tool in the perturbation theory of Fredholm sections. 
\begin{definition}
An {\bf sc-smooth section}  $f$ of the local strong bundle model $p:K\to O$ is an sc-smooth map $f:O\to K(0)$  satisfying $p\circ f=\id$, where $K(0)$ is equipped with the filtration $K(0)_m=K_{m,m}$. \index{sc-smooth! section} An {\bf $\ssc^+$-section} of $p:K\to O$ is an sc-smooth section which is also an sc-smooth map $O\to K(1)$, where $K(1)$ is equipped with the filtration $K(1)_m=K_{m,m+1}$.\index{$\ssc^+$-section}
\end{definition}

We now consider the {paracompact Hausdorff} spaces $X$ and $Y$ and assume that there exists a continuous and surjective map 
$$P:Y\to X$$
having  the property that for every $x\in X$, the preimage $P^{-1}(x)$ posses  the structure of a Banach space.

Then a {\bf strong bundle chart}  $(\Psi, V, K)$ of the bundle $P:Y\to X$ consists of an open subset $V\subset X$, a strong local bundle model $p:K\to O$ and a homeomorphism 
$\Psi:P^{-1}(V)\to K$ which covers a homeomorphism $\psi:V\to O$ and which between every fiber is a bounded linear operator of Banach spaces. \index{strong bundle! chart}
\mbox{}\\
\begin{equation*}
\begin{CD}
P^{-1}(V)@>\Psi>>K\\
@VPVV   @VVpV \\
V@>\psi>>O.\\ 
\end{CD}
\end{equation*}
\mbox{}\\

If $K\to O$ and $K'\to O'$ are two strong local bundle models, then an {\bf sc-smooth strong local bundle map} $f:K\to K'$ is of the form 
$$f(u, h)=(f_0(u), f_1(u, h)),$$
linear in $h$, where $f_0:O\to O'$ is an sc-smooth map and where $f$ induces the two sc-smooth maps 
$$f:K(i)\to K'(i)\qquad \text{for $i=0$ and $i=1$.}$$
{If the above maps $f:K(i)\to K'(i)$ for $i=0$ and $i=1$ are sc-diffeomorphisms, we call $f:K\to K'$ an {\bf sc-smooth strong local bundle diffeomorphism.}}

Two strong bundle charts are sc-smoothly compatible if the transition map is an 
sc-smooth strong local bundle diffeomorphism.

\begin{definition}[{\bf Strong bundle over a M-polyfold}] 
The continuous surjection  $P:Y\to X$ equipped with a maximal atlas of sc-smoothly compatible strong bundle charts is called a strong bundle over the  M-polyfold $X$.\index{strong bundle! over a M-polyfold}
\end{definition}

{Since transition maps of strong bundle charts of the bundle $P:Y\to X$ are sc-smooth and hence sc-continuous, the total space $Y$ inherits a double filtration 
$$Y_{m, k}, \quad \text{for $m\geq 0$ and $0\leq k\leq m+1$.}$$
In particular we obtain the M-polyfolds $Y(0)$ and $Y(1)$ with the filtrations
 $Y(0)_m=Y_{m, m}$ and $Y(1)_m=Y_{m, m+1}$.}

An sc-smooth section of $P$ is a map $f:X\to Y$ satisfying $P\circ f=\id$. Moreover, $f(x)\in Y_{m,m}=Y(0)_m$ if $x\in X_m$ and $f:X\to Y(0)$ is sc-smooth. An {\bf $\ssc^+$-section} of $P$ is a section which satisfies $f(x)\in Y_{m, m+1}=Y(1)_m$ if $x\in X_m$ and the induced map $f:X\to Y(1)$ is sc-smooth. We shall denote these two classes of sections by $\Gamma (P)$ and $\Gamma^+(P)$, respectively.
For a detailed discussion of these concepts we refer to \cite{H2}, \cite{HWZ3.5} and \cite{HWZ7}. 

Finally,  in order to recall from \cite{HWZ3.5} (section 2.4) the notion of a strong bundle over an ep-groupoid we  start with the  ep-groupoid $X=(X, {\bf X})$ and with a strong bundle $p:E\rightarrow X$ over the object set of the ep-groupoid, and we recall the following lemma from \cite{HWZ3.5} (Lemma 2.8).
\begin{lemma}\label{ert}
Let $X$, $Y$ and $Z$ be M-polyfolds. Assume that $f:X\rightarrow Y$ is
a local sc-diffeomorphism and $g:Z\rightarrow Y$ is an $\ssc$-smooth
map. Then the fibered product
$$X{{_f}\times_g}Z=\{(x, z)\in X\times  Z\vert f(x)=g(z)\}$$
 has a natural
M-polyfold structure and the projection map
 $\pi_2:X{{_f}\times_g}Z\rightarrow Z$ is a  local
$\ssc$-diffeomorphism.
\end{lemma}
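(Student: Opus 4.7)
The plan is to build M-polyfold charts on the fibered product $X{{_f}\times_g}Z$ by pulling them back from $Z$ via the projection $\pi_2$, exploiting the fact that a local sc-diffeomorphism $f$ lets one solve $f(x)=g(z)$ locally and uniquely for $x$ in terms of $z$.

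First I would observe that, given any point $(x_0,z_0)$ in the fibered product, there is an open neighborhood $U$ of $x_0$ in $X$ such that $f|_U:U\to f(U)$ is an sc-diffeomorphism onto an open subset $f(U)$ of $Y$. Since $g$ is sc-smooth it is in particular continuous, so $V:=g^{-1}(f(U))$ is open in $Z$ and contains $z_0$. Define
$$
\phi_U:V\longrightarrow X{{_f}\times_g}Z,\qquad \phi_U(z)=\bigl((f|_U)^{-1}(g(z)),\,z\bigr).
$$
Because $f|_U$ is bijective onto $f(U)$, the map $\phi_U$ is a bijection of $V$ onto the set
$\{(x,z)\in X{{_f}\times_g}Z \mid x\in U,\ z\in V\}$, which is precisely the subspace-topology neighborhood of $(x_0,z_0)$ determined by $U\times V\cap (X{{_f}\times_g}Z)$. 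The inverse of $\phi_U$ is the restriction of $\pi_2$, which is continuous in the subspace topology, so $\phi_U$ is a homeomorphism. Using $\phi_U$ I would pull back the ambient M-polyfold charts from the open subset $V\subset Z$ onto this neighborhood of $(x_0,z_0)$ in the fibered product.

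To verify that the resulting collection of charts is an atlas, I would check compatibility of two parametrizations $\phi_{U_1}$ and $\phi_{U_2}$ whose images overlap. On the overlap both parametrizations assign to a given $z$ the pair $(x,z)$ with the same $x=(f|_{U_i})^{-1}(g(z))$ (this $x$ is unique by $f(x)=g(z)$). Hence the transition map $\phi_{U_2}^{-1}\circ\phi_U{_1}$ is simply the identity on an open subset of $Z$, which is trivially sc-smooth. Thus the atlas is sc-smoothly compatible and defines an M-polyfold structure on $X{{_f}\times_g}Z$ whose underlying topology coincides with the subspace topology inherited from $X\times Z$. Note that the construction respects partial quadrants because local sc-diffeomorphisms, by definition, preserve the ambient partial-quadrant structure, so pulling back the charts of $Z$ (which are already of the form $(O,C,E)$) yields admissible local M-polyfold models.

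Finally, since $\phi_U:V\to X{{_f}\times_g}Z$ is a chart and $\pi_2\circ\phi_U=\id_V$, the projection $\pi_2$ expressed in the chart $\phi_U$ on the domain and the identity chart on $V\subset Z$ is the identity. Hence $\pi_2$ is a local sc-diffeomorphism onto $Z$, as claimed. The main technical obstacle is verifying that $(f|_U)^{-1}\circ g$ is sc-smooth; this follows because the local inverse of an sc-diffeomorphism is sc-smooth and the chain rule for sc-smooth maps applies to the composition with the sc-smooth map $g$. All the remaining work is bookkeeping: checking that the charts cover the fibered product and that the topology defined by the atlas agrees with the subspace topology, both of which follow immediately from the construction above.
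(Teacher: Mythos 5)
Your proof is correct, and it is the standard argument: parametrize the fibered product locally by $Z$ via $z\mapsto\bigl((f|_U)^{-1}(g(z)),z\bigr)$, so that the charts of $Z$ transport directly and $\pi_2$ becomes the identity in these charts. The paper itself gives no proof of this lemma (it is quoted from Lemma 2.8 of the cited reference), so there is nothing to contrast with; your construction, including the observation that the transition maps reduce to those of $Z$ and that $(f|_U)^{-1}\circ g$ is sc-smooth by the chain rule, is exactly what is needed.
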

Since  the source map $s:{\bf X}\to X$ is, by definition, a local sc-diffeomorphism, the fibered product 
$$
{\bf X}{{_s}\times_p}E=\{(g,e)\in {\bf X}\times E\vert \,  s(g)=p(e)\}
$$
is an $M$-polyfold in view of the lemma above. Moreover, the bundle 
$${\bf E}:={\bf X}{{_s}\times_p}E\xrightarrow{\pi_1} {\bf X},$$
defined by the projection $\pi_1(g, e)=g$ onto the first factor is, as the pull-back of the strong $M$-polyfold bundle $p:E\to X$ by the source map $s:{\bf X}\to X$, also a strong $M$-polyfold bundle, in view of Proposition 4.11 in \cite{HWZ2}. 
{\begin{remark}
The results in \cite{HWZ2} concerning M-polyfolds and strong bundles originally assume that these spaces were build on local models 
coming from so-called splicings and bundle splicings, respectively. Splicings and bundle splicings are special examples of  
retractions and strong bundle retractions. The proofs  of the results in \cite{HWZ2}  go through without modifications for these more general retractions.
\end{remark}}

Next  we assume that there is an   {sc-smooth}  strong bundle
map  
$$\mu:{\bf E}\to E$$  covering  the target map $t:{\bf X}\to X$, so that   the diagram

\begin{equation*}
\begin{CD}
{\bf E}@>\mu>>E\\
@V\pi_1VV     @VVp V \\
{\bf X}@>t>> X\\
\end{CD}
\end{equation*}
commutes. 

We require that this {sc-smooth}  strong bundle map  $\mu:{\bf E}\to E$  has the following additional properties.
\begin{itemize}
\item[(1)]  $\mu$ is a local sc-diffeomorphism from ${\bf E}(i)$ onto $E(i)$ for $i=0,1$ and is linear on the fibers $E_x$.
\item[(2)] $\mu(1_x,e_x)=e_x$ for all $x\in X$ and $e_x\in E_x$.
\item[(3)] $\mu(h\circ g,e)=\mu(h,\mu(g,e))$ for all $e\in E$ and $g,h\in {\bf X}$ for which $s(h)=t(g)$ and $s(t)=p(e)$.
 \end{itemize}
 Actually  property (1) is a consequence of the properties (2) and (3). Indeed, the bundle map $\mu:{\bf E}\to E$ is, by definition linear and, in view of (2) and (3),  fiberwise a bijection. In order to verify that $\mu$ induces local sc-diffeomorphisms for $i=0,1$, we choose a point $(g, e)\in {\bf E}(i)$ viewed as an element of ${\bf E}$, and choose an open  neighborhood ${\bf U}\subset {\bf X}$  around $g$, so that  the restriction of the target map
 $$\tau:=t\vert{\bf U}:{\bf U}\to U:=t({\bf U})$$
 is an sc-diffeomorphism. Then $\pi_1^{-1}({\bf U})$ is an open neighborhood of $(g,e)$ in ${\bf E}$ and the restricted map 
 \begin{equation}\label{mu1}
 \mu:\pi^{-1}_1({\bf U})\to p^{-1}(U)\tag{$\ast$}
 \end{equation}
 induces the sc-smooth and bijective maps ${\bf E}(i)\vert{\bf U}\to E(i)\vert U.$ From the properties (2) and (3) again, it follows that the map 
 $$
e\mapsto \bigl(  \tau^{-1}\circ p(e),\mu([ \tau^{-1}\circ p(e)]^{-1},e)\bigr) 
$$
defined for $e\in E\vert U$,  is the inverse of the map  ($\ast$). Since it is an sc-smooth map 
$E(i)\vert U\to {\bf E}(i)\vert {\bf U}$ for $i=0,1$, the map $\mu$ is indeed a local sc-diffeomorphism and property (1) is proved. 

As proved in \cite{HWZ3.5} (section 2.4), the above two strong bundles $p:E\to X$ and $\pi_1:{\bf E}\to {\bf X}$ define, in view of the properties of the map $\mu$, the ep-groupoid 
$$E=(E,{\bf E})$$
having $E$ is its objects and ${\bf E}$ as its morphisms. The map $\mu:{\bf E}\to E$ is the target map and the projection $\pi_2:{\bf E}\to E$ onto the second factor is the source map of this ep-groupoid. This  ep-groupoid could be called a strong bundle ep-groupoid over the ep-groupoid $(X, {\bf X})$ via the functor
$$P:=(p, \pi_1):  E=(E,{\bf E})\to X=(X, {\bf X})$$
between the two ep-groupoids. 
\begin{definition}[{\bf Strong bundle over the ep-groupoid}]\label{stbundleep}
A pair $(E,\mu)$ in which  $p:E\rightarrow X$ is a strong bundle  over  the object $M$-polyfod $X$  of the ep-groupoid $(X, {\bf X})$ and $\mu:{\bf E}\to E$   is an  {sc-smooth} strong bundle map possessing 
the above properties (1)--(3) and {the commutativity condition $p\circ \mu =t\circ \pi_1$},  is called a {\bf  strong bundle over the ep-groupoid } $X=(X,{\bf X})$.\index{strong bundle! over an ep-groupoid}
 \end{definition}
 
Later on we shall sometimes refer to the above functor $P:E\to X$, which is deduced from $(E,\mu)$, as  the strong bundle over the ep-groupoid.
 Next we recall from \cite{HWZ3.5}  the notion of  a strong polyfold bundle structure for the continuous surjection map $\wh{p}:W\to Z$ between two paracompact Hausdorff 
 spaces.
 
 \begin{definition}[{\bf Strong polyfold bundle structure}]\label{stbundleep1}
 A strong polyfold bundle structure on $\wh{p}:W\to Z$ consists of a triple 
 $$(P:E\to X, \Gamma, \gamma)$$ in which $P:E\to X$ is a strong bundle over the ep-groupoid $X$ inducing the map $\abs{P}:\abs{E}\to \abs{X}$ between the orbit spaces, $\Gamma:\abs{E}\to W$ is a homeomorphism between the orbit space $\abs{E}$ and $W$, and $\gamma:\abs{X}\to Z$ is a homeomorphism  between  the orbit space $\abs{X}$  and $Z$. Further, we require that  
 $$\wh{p}\circ \Gamma=\gamma\circ \abs{P}$$
 so that the diagram
\begin{equation*}
\begin{CD}
\abs{E}@>\abs{P}>>\abs{X}\\
@V\Gamma VV @VV\gamma V \\
W@>\wh{p}>>Z\\ \\
\end{CD}
\end{equation*}
commutes. \index{strong polyfold bundle structure}
\end{definition}
 
The reader should consult \cite{HWZ3.5} for the  equivalence of strong bundle
structures over ep-groupoids.
 
\section{Polyfold Fredholm Sections of Strong Polyfold Bundles}\label{sectionpolyfred}

We next recall the notion of a (polyfold) Fredholm section from  \cite{HWZ8.7}. The notion is much more general than the classical notion of a Fredholm section. The first  property of a Fredholm section we require is  the regularization property,  which models the outcome of the elliptic regularity theory.

\begin{definition}
Let $P:Y\rightarrow X$ be  a strong M-polyfold bundle over the M-polyfold $X$.  A section $f\in\Gamma(P)$ is said to be {\bf  regularizing}  if  the following holds. If $x\in X_m$ and $f(x)\in Y_{m,m+1}$, then $x\in X_{m+1}$.\index{regularizing property of a section}
\end{definition}
We observe that if $f\in \Gamma(P)$ is regularizing and $s\in\Gamma^+(P)$,  then $f+s$ is  also regularizing.

We now consider a strong local bundle $K\rightarrow O$.
Here $K=R(U\tl F)$ is  the sc-smooth strong bundle retract  of the {sc-smooth strong bundle retraction}
$$R (u, h)=(r(u),\rho (u)h)$$
where $\rho (u):F\to F$ is a bounded linear map. Moreover, $O$ stands for the retract $O=r(U)$. We assume that $0\in O$ and we are interested in germs of sections $(f, 0)$ of the strong local bundle $K\to O$ defined near $0$. Identifying the local section germ with its principal part we view $(f,0)$ as a germ $\co (O, 0)\to F$. { Here we denote by $\co (O, 0)$ an sc-germ of open neighborhoods of $0$ in $O$ consisting  of a decreasing sequence 
$$U_0\supset U_1\supset U_2\supset \cdots \supset U_m\supset \cdots\ .$$
} of relatively open neighborhoods of $0$ in  $C \subset E$.

In the next step we introduce the useful  notion of a  filling of a  sc-smooth section germ $(f,0)$  of  a  strong local bundle $K\rightarrow O$ near the given smooth point  $0$.  
We do knot require that $f(0)=0$!. The notion of  a filling is a new concept specific to the world of retracts. In all known applications it deals successfully with bubbling-off phenomena and similar singular phenomena.

\begin{definition}[{\bf Filling}]\label{filled-def}
We consider a strong local bundle $K\to O$, where $K=R(U\triangleleft F)$  and the set $U\subset C\subset E$ is a relatively open neighborhood  of $0$ in the partial quadrant $C$ of the  sc-Banach space $E$.  Here  $F$ is a sc-Banach space and $R$ is a strong bundle retraction of the form 
$$R(u, h)=(r(u), \rho(u)(h))$$
covering the  retraction $r\colon U\to U$ onto $O=r(U)$
and $\rho (u)\colon F\to F$ is a bounded linear operator.  We also assume that $r(0)=0$. 

A sc-smooth section germ 
$(f, 0)$ of the bundle $K\to O$ possesses a  {\bf filling}
if there exists a  sc-smooth section germ $(g, 0)$ of the bundle $U\triangleleft F\to U$ extending $f$ and having  the following properties.
\begin{itemize}
\item[(1)]  $f(x)=g(x)$  for $x\in O$ close to $0$.
\item[(2)] If $g(y)=\rho (r(y))g(y)$ for a point $y\in U$ near $0$, then $y\in O$.
\item[(3)] The linearization of the map
$
y\mapsto  [\id -\rho(r(y))]\cdot g(y)
$
(which vanishes at $0$) at the point $0$, restricted to $\ker(Dr(0))$, defines a topological linear  isomorphism
$$
\ker(Dr(0))\rightarrow \ker(\rho (0)).
$$
\end{itemize}
\end{definition}
The crucial property of a filler is the fact that the 
solution sets $\{y\in O\, \vert \, f(y)=0\}$ and $\{y\in U\, \vert \, g(y)=0\}$ coincide near $y=0$. 
Indeed, if $y\in U$ is a solution of the filled section $g$ so that $g(y)=0$, then it follows from (2) that $y\in O$ and from (1) that $f(y)=0$. The section $g$ is, however, much  easier to analyze than  the section $f$,  whose domain of definition has a rather complicated structure. It turns out that in the applications these extensions $g$ are surprisingly easy to  detect. In the Gromov-Witten theory and the SFT they  seem almost canonical.

The condition (3) plays an important  role in the comparison of the linearizations $f'(0)$ and $Dg(0)$ if  $f(0)=0=g(0)$ holds, as we are going to explain next.

It  follows from the definition of a retract that
$\rho (r(y))\circ \rho (r(y))=\rho (r(y)).$ Hence, since $y=0\in O$ we have $r(0)=0$ and $\rho(0)\circ \rho(0)=\rho (0)$ so that $\rho (0)$ is a linear sc-projection in $F$ and we obtain the sc-splitting
$$
F=\rho (0)F\oplus (\id -\rho (0))F.
$$
Similarly, it follows from $r(r(y))=r(y)$ for $y\in U$ that $Dr (0)\circ Dr (0)=Dr(0)$ so that $Dr (0)$ is a linear sc-projection in $E$ which gives rise to the sc-splitting
$$\alpha \oplus \beta \in  E=Dr (0)E\oplus (\id -Dr (0))E.$$
We recall that the linearization $f'(0):T_0O\rightarrow K_0$ of the section $f\colon O\to K$ at $y=0=r(0)$ (assuming $f(0)=0$)  is defined as the restriction of the derivative
$D(f\circ r)(0)$  of the map $f\circ r\colon U\to F$ to $T_0O$. We note that it takes its image in $K_0=\ker(Id-\rho(0))=\rho(0)F$,  and that we have
the identity
$T_0O=Dr(0)E$.
 From $\rho (r(y))f(r(y))=f(r(y))$ for $y\in U$  close to $0$,  we obtain, using $f(0)=0$,  by linearization 
at $y=0$ the relation $\rho (0)Df(0)=Df(0)$. From $g(r(y))=f(r(y))=f(r(r(y))$ for $y\in U$ near $0$ we deduce 
$$
Dg(0)\circ Dr(0)=f'(0)\circ Dr(0)
$$
and therefore $Dg(0)|T_0O=f'(0):Dr(0)E\rightarrow \rho(0)F$.
 From the identity
 \begin{gather*}
(\id -\rho (r(y))g(r(y))=0\quad \text{for all $y\in U,$}
\end{gather*}
we deduce, using $g(0)=0$, the relation
$(\id -\rho (0))Dg (0)\circ Dr(0)=0$. 
Hence  the matrix representation of $D g (0)\colon E\to F$ with respect to the above splittings of $E$ and $F$ looks as follows,$$
Dg(0)\begin{bmatrix}\alpha \\ \beta
\end{bmatrix}=
\begin{bmatrix}f'(0)&\rho (0)Dg(0)(\id-Dr(0))\\0&(\id -\rho (0)) Dg (0)(\id-Dr(0))\end{bmatrix}\cdot
\begin{bmatrix}\alpha\\ \beta
\end{bmatrix}.
$$
In view of property (3), the linear map $\beta \mapsto (\id -\rho (0))\circ Dg (0)(\id-Dr(0))\beta$ from $(\id -Dr (0))E$ to $(\id -D\rho(0))F$ is an isomorphism of Banach spaces. Therefore, 
$$
\ker (Dg (0))=\ker(Df (0))\oplus \{0\}.$$
Moreover the filler has the following additional properties assuming $f(0)=0$.

{\begin{proposition}[{\bf Filler}]\label{filler_new_1}\mbox{}
\begin{itemize}
\item[(1)] The operator $f'(0)\colon Dr(0)E\to \rho (0)F$ is surjective if and only if the operator $Dg(0)\colon E\to F$ is surjective.
Further $\ker(f'(0))=\ker(Dg(0))$.
\item[(2)] $f'(0)$  is a Fredholm operator (in the classical sense) if and only if $Dg(0)$ is a Fredholm operator and $\ind f'0)=\ind Dg(0)$.
\end{itemize}
\end{proposition}
\begin{proof}
(1)\, We assume that $f'(0)$ is surjective and let $\gamma \in F$. Since the operator $(\id-\rho (0))Dg(0)(\id-Dr(0))\colon (\id-Dr(0)E\to (\id-\rho (0))F$ is an isomorphism, we find $\beta\in (\id-Dr(0)E$ satisfying $(\id-\rho (0))Dg(0)\beta=(\id-\rho (0))\gamma$. As $f'(0)$ is surjective, there exists $\alpha\in Dr(0)E$ satisfying $f'(0)\alpha=\rho(0)\gamma-\rho (0)Dg(0)\beta$. Therefore,
\begin{equation*}
\begin{split}
Dg(0)(\alpha+\beta)&=f'(0)\alpha+\rho(0)Dg(0)\beta+(\id-\rho (0))Dg(0)\beta\\
&=\rho(0)\gamma+(\id-\rho (0))\gamma=\gamma,
\end{split}
\end{equation*}
showing that $Dg(0)$ is surjective.\\[0.5ex]
Conversely, assuming that  $Dg(0)\colon E\to F$ is surjective, we let $\gamma \in \rho (0)F$ and find $\alpha+\beta\in Dr(0)E\oplus (\id-Dr(0))E$ solving $Dg(0)(\alpha+\beta)=\gamma.$
Since $\gamma \in \rho (0)F$, we have 
$(\id-\rho(0))Dg(0)(\alpha+\beta)=0$. From $(\id-\rho(0))Dg(0)\beta=0$ we conclude $\beta=0$ and hence $(\id-\rho(0))Dg(0)\alpha =0$. Therefore, $Dg(0)(\alpha+0)=\rho (0)Dg(0)\alpha=f'(0)\alpha=\gamma$, proving that $f'(0)$ is surjective.\\

Assume that $(\alpha,\beta)\in \ker(Dg(0))$. Then $\beta=0$ since $\beta \mapsto (\id -\rho (0))\circ Dg (0)(\id-Dr(0))\beta$ is an isomorphism.
This  implies that $f'(0)\alpha=0$. If on the other hand $f'(0)\alpha=0$, then it is immediate that $(\alpha,0)\in\ker(Dg(0))$.\\[0.5ex]

(2)\, To simplify the notation we abbreviate the above matrix representing $Dg(0)$ by 
$$
Dg(0)=\begin{bmatrix}A&B\\0&C\end{bmatrix}
$$
and abbreviate the above splittings by $E=E_0\oplus E_1$ and $F=F_0\oplus F_1$.
The operators in the matrix are bounded between corresponding Banach spaces and $C\colon E_1\to F_1$ is an isomorphism of Banach spaces. Therefore, if $B=0$, the operator $A=f'(0)\colon E_0\to F_0$ is Fredholm if and only if the operator 
$$\begin{bmatrix}
A&0\\0&C\end{bmatrix}\colon E\to F
$$
is Fredholm in which case their indices agree. The statement now follows from the composition formula
$$
\begin{bmatrix}
\id&BC^{-1}\\
0&\id
\end{bmatrix}
\begin{bmatrix}
A&0\\
0&C
\end{bmatrix}=
\begin{bmatrix}
A&B\\
0&C
\end{bmatrix}
$$
since the first factor is an isomorphism from $E$ to $F$, and hence has index equal to $0$ and the Fredholm indices of a composition are additive. This completes the proof of Proposition \ref{filler_new_1}
\end{proof}


To sum up the role of a filler, instead of studying the solution set of the section $f\colon O\to K$ we can as well study the solution set of the filled section $g\colon U \to U\triangleleft F$, which is defined on the relatively open set $U$ of the partial quadrant $C$ in the sc-space $E$ and easier to analyze.

\begin{proposition}\label{prop2.28}
If  the section  germ  $(f,0)$ of the strong local bundle $K\to O$ has the  filled version $(g,0)$, then the section germ  
$(f+s, 0)$ has the filled version $(g+s\circ r, 0)$ for every $\ssc^+$-section  $s$ of the bundle $K\rightarrow O$.
\end{proposition}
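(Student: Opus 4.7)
The plan is to verify the three defining properties of a filled version (Definition \ref{filled-def}) for the candidate germ $(g + s\circ r, 0)$, exploiting throughout the key observation that an $\ssc^+$-section $s$ of $K\to O$ takes values in the fibers of the retraction $\rho$. Concretely, since $(x, s(x))\in K = R(U\triangleleft F)$ for $x\in O$, and $r(x)=x$ on $O$, we have $\rho(x)s(x)=s(x)$ for every $x\in O$, and hence
\[
\rho(r(y))\,s(r(y)) \;=\; s(r(y)) \qquad \text{for every } y\in U \text{ near } 0.
\]
This single identity will do most of the work. Note also that $g + s\circ r$ is sc-smooth on $\co(U,0)$ since $g$ is sc-smooth on $U$, $r$ is sc-smooth on $U$, and $s$ is sc-smooth on $O$, so the candidate is at least a valid sc-smooth section germ of $U\triangleleft F\to U$ extending $f+s$.

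First, to check property (1): for $y=x\in O$ near $0$ we have $r(x)=x$, hence
\[
(g+s\circ r)(x) \;=\; g(x) + s(r(x)) \;=\; f(x) + s(x) \;=\; (f+s)(x),
\]
using property (1) for the original filled version $g$.

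Second, to check property (2), suppose $(g+s\circ r)(y) = \rho(r(y))(g+s\circ r)(y)$ for some $y\in U$ near $0$. Using the displayed identity above,
\[
\rho(r(y))(g+s\circ r)(y) \;=\; \rho(r(y))g(y) + \rho(r(y))s(r(y)) \;=\; \rho(r(y))g(y) + s(r(y)),
\]
and subtracting $s(r(y))$ from both sides of the fixed-point equation leaves $g(y)=\rho(r(y))g(y)$. Property (2) for $g$ then forces $y\in O$.

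Third, for property (3) we compute the germ
\[
y \;\longmapsto\; \bigl[\id-\rho(r(y))\bigr](g+s\circ r)(y) \;=\; \bigl[\id-\rho(r(y))\bigr]g(y) + \bigl[\id-\rho(r(y))\bigr]s(r(y)).
\]
The second summand vanishes identically near $0$ by the displayed identity, so this germ coincides with $y\mapsto [\id-\rho(r(y))]g(y)$. Its linearization at $0$ restricted to $\ker Dr(0)$ therefore inherits the topological isomorphism $\ker Dr(0)\to \ker\rho(0)$ from property (3) for $g$. There is no real obstacle here — the only subtlety is the observation that an $\ssc^+$-section $s$ of $K\to O$, after being pre-composed with the retraction $r$, automatically lies in the $\rho(r(y))$-invariant part of the fiber for every $y\in U$, which is exactly what makes the $s\circ r$-contribution transparent to all three conditions.
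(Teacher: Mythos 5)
Your proof is correct and follows essentially the same route as the paper: the paper also takes the candidate $t=s\circ r$, uses the identity $\rho(r(y))t(y)=t(y)$ (which you justify slightly more explicitly from $s$ taking values in the retract $K$), and then checks the three properties of Definition \ref{filled-def} exactly as you do.
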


\begin{proof}
We show  that if $t$ is the $\ssc^+$-section of $U\tl F\to F$ defined by $t(y)=s(r(y))$, then 
$(g+t, 0)$ is a filled version of $(f+s, 0)$. In order to verify property (1) for $g+t$ we take $x\in O$ and  obtain, using $r(x)=x$, that $(g+t)(x)=g(x)+s(r(x))=f(x)+s(x)$ which is property (1).

If  $(g+t)(y)=\rho(r(y))(g+t)(y)$ for some $y\in U$, we  conclude, using $\rho (r(y))t(y)=t(y)$, that 
$g(y)=\rho (r(y))g(y)$. Therefore, $y\in O$ by property (2) of $g$. 
Finally, using $[\id -\rho(r(y))]t(y)=0$,  we obtain
$$
[\id-\rho(r(y))](g(y)+t(y))=[\id-\rho(r(y))]g(y).
$$
Hence, in view of property (3) for $g$,  the linearisation of the left-hand side at $y=0$,  if restricted to  the kernel of $Dr(0)$,  satisfies the property (3) for  $g+t$.  
\end{proof}

Next  we introduce a class of so-called basic germs denoted by
$\mathfrak{C}_{basic}$.
\begin{definition}[{\bf Basic germ}]
An element in $\mathfrak{C}_{basic}$ is an sc-smooth germ
$$
f:{\mathcal O}({\mathbb R}^n\oplus W,0)\rightarrow ({\mathbb R}^N\oplus W,0),
$$
where $W$ is an sc-Banach space, so that  if $P:{\mathbb R}^N\oplus W\rightarrow W$  denotes  the projection, then the  germ $P\circ f:{\mathcal O}({\mathbb R}^n\oplus W,0)\rightarrow (W, 0)$ has the form
$$
P\circ f(a,w)=w-B(a,w)
$$
for $(a,w)$ close to $(0,0)\in {\mathbb R}^n\oplus W_0$. Moreover, for every $\varepsilon>0$ and $m\geq 0$,  we have the estimate
$$
\abs{B(a,w)-B(a,w')}_m\leq \varepsilon\cdot\abs{w-w'}_m
$$
for all $(a,w)$, $(a,w')$ close to $(0,0)$ on level $m$.\index{$\ssc^0$-contraction germ}
\end{definition}

We are in the position to define the notion of a Fredholm germ.
\begin{definition}
Let $P:Y\rightarrow X$ be a strong bundle, $x\in X_\infty$, and
$f$ a germ of an sc-smooth section of $P$ around $x$. We call $(f,x)$ a {\bf Fredholm germ}  provided there exists a germ of $\ssc^+$-section $s$ of $P$ near $x$ satisfying
$s(x)=f(x)$ and such that  in suitable strong bundle coordinates mapping $x$ to $0$, the  push-forward  germ $g=\Phi_\ast(f-s)$ around $0$ has a filled version
$\ov{g}$ so that the germ $(\ov{g},0)$ is equivalent  to a germ belonging to  $\mathfrak{C}_{basic}$.\index{Fredholm germ}
\end{definition}

Let us observe that tautologically if $(f,x)$ is a Fredholm germ
and $s_0$ a germ of $\ssc^+$-section around $x$, then $(f+s_0,x)$ is a Fredholm germ as well. Indeed,  if $(f-s,0)$ in suitable coordinates has a filled version with the desired properties, then $((f+s_0)-(s+s_0),0)$ has the same filled version.

Finally,  we can introduce the Fredholm sections of  strong M-polyfold bundles.
\begin{definition}
Let $P:Y\rightarrow X$ be a strong M-polyfold bundle and $f\in\Gamma(P)$ an sc-smooth section. The section $f$ is called a {\bf  polyfold Fredholm section}  provided  it has the following properties:
\begin{itemize}
\item[(1)] $f$ is regularizing.
\item[(2)] At every smooth point $x\in X$,  the germ $(f,x)$ is a Fredholm germ.\index{polyfold! Fredholm section}
\end{itemize}
\end{definition}
If $(f,x)$ is a Fredholm germ and $f(x)=0$,  then the  linearisation $f'(x):T_xX\rightarrow T_{f(x)}Y$ is a linear sc-Fredholm operator. The proof can be found in \cite{HWZ3}.
If,  in addition,  the linearization $f'(x):T_xX\rightarrow  T_{f(x)}Y$ is surjective, then our implicit function theorem gives a natural smooth structure on the solution set of $f(y)=0$ near $x$ as the following theorem from \cite{HWZ3} shows.
\begin{theorem}
Assume that $P:Y\rightarrow X$ is a strong M-polyfold bundle and  let $f$ be a  Fredholm section of the bundle $P$. If the point $x\in X$ solves $f(x)=0$ and if the linearization  at this point $f'(x):T_xX\rightarrow T_{f(x)}Y$ is surjective,  there exists an open neighborhood $U$ of $x$ so that the solution set
$$
S_U:=\{y\in U\vert \ f(y)=0\}
$$
has in a natural way a smooth manifold structure induced from $X$. In addition, $S_U\subset X_\infty$.
\end{theorem}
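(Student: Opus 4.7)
The plan is to reduce the equation $f(y)=0$ near $x$ to a problem in a local bundle chart, then split off an infinite-dimensional fixed-point part and a finite-dimensional part, and finally invoke the classical implicit function theorem. I would first use the Fredholm germ hypothesis to choose an $\ssc^+$-section $s$ with $s(x)=f(x)=0$ and strong bundle coordinates $\Phi$ sending $x$ to $0$, so that $g=\Phi_\ast(f-s)$ on the local retract $O=r(U)\subset U\subset E$ admits a filled version $\bar g\colon\co(U,0)\to F$ equivalent to an element of $\mathfrak C_{basic}$. By Proposition \ref{prop2.28}, adding the $\ssc^+$-section $(\Phi_\ast s)\circ r$ back produces a filled version $G=\bar g+(\Phi_\ast s)\circ r$ of $\Phi_\ast f$, whose zero set in $U$ agrees with the zero set of $\Phi_\ast f$ in $O$ (property (2) of a filled version). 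Since the basic-germ class is stable under $\ssc^+$-perturbations, $G$ is again equivalent to a basic germ.

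After a further sc-diffeomorphism we may assume $E=\R^n\oplus W$, $F=\R^N\oplus W$ and
\[
G(a,w)=\bigl(G_1(a,w),\ w-B(a,w)\bigr),
\]
with $B$ a uniform $\ssc^0$-contraction in $w$ on every level and $G(0,0)=0$. The fixed-point equation $w=B(a,w)$ is solvable by the parameterized sc-smooth contraction lemma (the core of the sc-implicit function theorem in \cite{HWZ2,HWZ3}), yielding a unique sc-smooth germ $w=\delta(a)$ with $\delta(0)=0$. Substituting reduces the problem to the finite-dimensional equation
\[
\Psi(a):=G_1(a,\delta(a))=0,\qquad \Psi\colon\co(\R^n,0)\to\R^N,
\]
which is smooth in the classical sense. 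By the discussion following Definition \ref{filled-def} relating the linearizations of $f$, $g$ and $\bar g$, surjectivity of $f'(x)$ translates into surjectivity of $D\Psi(0)\colon\R^n\to\R^N$, so the classical implicit function theorem yields a smooth submanifold $M\subset\R^n$ of dimension $n-N=\operatorname{ind}(f'(x))$ consisting of the zeros of $\Psi$. Pulling back by $a\mapsto(a,\delta(a))$ and then by $\Phi^{-1}$ produces an open neighborhood $U$ of $x$ and a smooth manifold structure on $S_U$; two choices of $s$, coordinates, or filled version give smoothly equivalent structures through the chain-rule identification of their linearizations, which makes the resulting structure natural.

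For the regularity claim $S_U\subset X_\infty$: if $y\in S_U$ lies in $X_m$, then $f(y)=0$ is a smooth element of the fiber, hence $f(y)\in Y_{m,m+1}$, and the regularizing property of $f$ forces $y\in X_{m+1}$. Iterating on $m$ yields $y\in X_\infty$.

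The main technical obstacle is the parameterized sc-smooth contraction step: showing that the fixed-point germ $\delta$ is sc-smooth (not merely continuous on every level) is precisely the content of the sc-implicit function theorem for basic germs, which is non-trivial because sc-smoothness is genuinely weaker than Fréchet smoothness and the contraction estimate must be upgraded from a level-wise Lipschitz bound to control of all tangent iterates $T^k$. Granting that lemma from \cite{HWZ2,HWZ3}, everything else is bookkeeping between the retract picture, its filling, and the basic normal form.
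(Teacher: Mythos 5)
Your proposal is correct and follows essentially the same route as the source: the paper itself states this theorem without proof as a recollection from \cite{HWZ3}, and your argument is the standard one given there — pass to a filled version, conjugate to a basic $\ssc^0$-contraction germ (using stability under $\ssc^+$-perturbation with $s(x)=f(x)=0$), solve the fixed-point part by the parameterized sc-smooth contraction lemma, reduce to a finite-dimensional classical implicit function theorem, and bootstrap $S_U\subset X_\infty$ from the regularizing property via the fact that the zero section is $\ssc^+$. The one genuinely hard input, sc-smoothness of the fixed-point germ $\delta$, you correctly identify and attribute to the germ implicit function theorem of \cite{HWZ2,HWZ3}, so no gap remains.
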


\section{Gluings and Anti-Gluings}\label{gluinganti-sect}

In this section we carry out the  gluing and anti-gluing constructions for maps from conformal cylinders into $\R^{2n}$.  We start with the formal aspects of these  constructions for continuous maps. For this part we can use any gluing profile, however, to prove sc-smoothness of the expressions for gluing and anti-gluing it will be  important to use 
the {\bf exponential  gluing profile}  given by 
$$\varphi (r)=e^{1/r}-e, \quad r\in (0, 1].$$

We first introduce the  domains on which the glued and anti-glued maps will be defined. 
Let $a\in \hb \subset \C$  be  the disk of radius $\frac{1}{2}$.  
If $a=0$,  we define the set $Z_0$ as the disjoint union of two  half-cylinders
$$
Z_0=({\mathbb R}^+\times S^1)\bigsqcup ({\mathbb R}^-\times S^1).
$$
If $a\neq 0$,  we represent $a$ as  $a=\abs{a}\cdot e^{-2\pi i \vartheta}$
and define the gluing length $R$ by means of the gluing profile as 
$$R =\varphi(\abs{a}).\index{gluing!length}$$
Now we introduce the  abstract  infinite cylinder $C_a$.  We  take the disjoint union of  the half-cylinders $\R^+\times S^1$ and $\R^-\times S^1$ and identify the  points $(s,t)\in [0,R]\times S^1$ with  the points $(s',t')\in [-R,0]\times S^1$  if they satisfy the  relation
$$
s=s'+R\quad \text{and}\quad t=t'+\vartheta.
$$

\begin{figure}[htbp]
\psfrag  {a1}{$(s', t')$}
\psfrag  {a2}{$(s,t)$}
\psfrag {a3}{$[s,t]$}
\psfrag  {a}{$\R^-\times S^1$}
\psfrag {b}{$\R^+\times S^1$}
\psfrag {c}{$C_a$}
\psfrag {01}{$0$}
\psfrag {02}{$0$}
\psfrag  {r1}{$R$}
\psfrag  {mr1}{$-R$}
\centering
\includegraphics[width=3.8in]{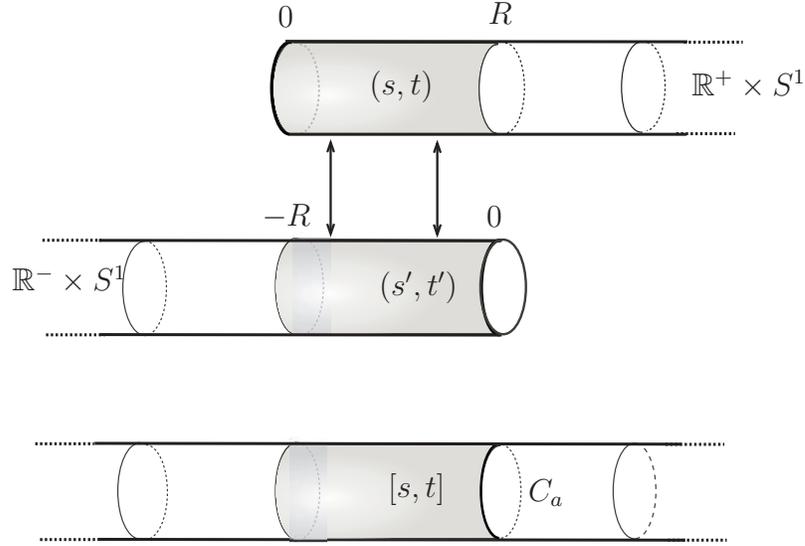}
\caption{Glued  infinite cylinders $C_a$.}
\label{Fig9}
\end{figure}

This identification  is compatible with the standard conformal structure on the cylinders. There are two global conformal coordinates
on $C_a$ defined by
\begin{align*}
&C_a\rightarrow {\mathbb R}\times S^1, \quad [s,t]\mapsto (s,t),\\
\intertext{and}
&C_a\rightarrow {\mathbb R}\times S^1,  \quad [s',t']'\mapsto (s',t').
\end{align*}
The first coordinates are  the extension of the  coordinates  $(s,t)$ for $s\geq 0$  and the second are  the extension of the coordinates  $(s', t')$ for $s'\leq 0$. 
Clearly, if  $[s,t]=[s',t']'$, then  $s=s'+R$ and $t=t'+\vartheta.$
The infinite cylinder $C_a$ contains the finite  sub-cylinder $Z_a$ defined by 
$$Z_a=\{[s, t]\vert\  (s, t)\in [0, R]\times S^1\}=\{[s', t']'\vert \ (s', t')\in [-R, 0]\times S^1\}.$$

 \begin{figure}[htbp]
\psfrag {01}{$0$}
\psfrag {02}{$0$}
\psfrag {a1}{$(s', t')$}
\psfrag {a2}{$(s,t)$}
\psfrag {a3}{$[s,t]$}
\psfrag  {mr1}{$-R$}
\psfrag {r1}{$R$}
\psfrag {a}{$\R^-\times S^1$}
\psfrag {b}{$\R^+\times S^1$}
\psfrag {c}{$Z_a$}
\centering
\includegraphics[width=3.8in]{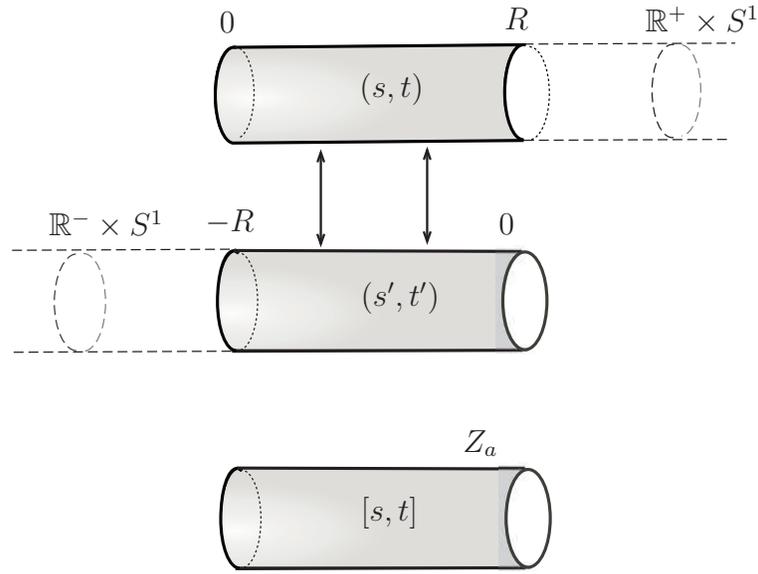}
\caption{Glued  finite cylinders $Z_a$.}
\label{Fig10}
\end{figure}

One  should think of the coordinates $(s, t)\in \R^+\times S^1$ and $(s', t')\in \R^-\times S^1$
as holomorphic polar coordinates on the annuli $D_x\setminus \{x\}$ and $D_y\setminus \{y\}$ in 
the disks of the small disk structure belonging to the nodal pair  $\{x,y\}$. In this case  the gluing  parameter $a=a_{\{x, y\}}$  described in Section \ref{dm-subsect} defines  the neck region $Z_{a_{\{x, y\}}}^{\{x, y\}}$.
The infinite cylinder $C_a$ will be used to describe data which otherwise would be lost during the gluing process. This is not too important for the Deligne-Mumford theory but  it is crucial once we consider maps defined on  the Riemann surfaces. {For the gluing parameter  $a=0$ we define
$$C_0=\emptyset.$$
Observe that given any codomain $A$ there is precisely one map $\emptyset \rightarrow A$. }

In order to define the gluing and anti-gluing construction for maps we need to fix additional data. We choose 
a smooth cut-off function $\beta:{\mathbb R}\rightarrow [0,1] $ having the following properties:
\begin{equation}\label{properties_beta_1}
\begin{aligned}
\bullet &\:\:\beta (-s)+\beta (s)=1\quad \text{for all $s\in \R$}\\
\bullet &\:\:\beta (s)=1\quad \text{for all $s\leq -1$}\\
\bullet &\:\:\beta'(s)<0\quad \text{for all $s\in (-1, 1)$}.
\end{aligned}
\end{equation}

If $a\in \C$ is a gluing parameter and $R=\varphi (\abs{a})$ the associated gluing length,  we introduce the translated function  $\beta_a=\beta_R:\R\to \R$ defined by 
\begin{equation}\label{translation_function}
\beta_a (s):=\beta \biggl( s-\frac{R}{2}\biggr).
\end{equation}

In order to  define the basic ${\mathbb R}^{2n}$-gluing $\oplus_a$, we take  a pair  $(u^+,u^-)$ of continuous maps 
$$u^{\pm}:{\mathbb R}^\pm\times S^1\rightarrow {\mathbb R}^{2n}
$$
satisfying
$$
\lim_{s\to \infty} u^\pm(s,t)=:u^{+}(\infty )=u^{-}(-\infty ):=\lim_{s\to  -\infty} u^-(s, t)
$$
uniformly in $t$.   We call the constant $c:=u^+(\infty )=u^-(-\infty )$ the common asymptotic constant of the pair $(u^-, u^+)$ and we say that $u^\pm$ possess  asymptotic matching conditions. 
For such a pair of maps we define the glued map 
$$\oplus_a(u^+,u^-):Z_a\to \R^{2n}$$
for a gluing parameter $a$  as follows. 
If $a=0$, we define
$$\oplus_0(u^+, u^-):=(u^+, u^-):Z_0\to \R^{2n}.$$
If $0<\abs{a}\leq \frac{1}{2}$,  we define 
\begin{equation*}
\oplus_a(u^+,u^-)([s,t])=\beta_a (s)\cdot u^+(s, t)+(1-\beta_a(s))\cdot u^-(s-R, t-\vartheta).\index{nonlinear  ${\mathbb R}^{2n}$-gluing $\oplus_a$}
\end{equation*}
We will always  use  the coordinates $[s,t]$  on $Z_a$ for $a\neq 0$. The formula in the coordinates $[s',t']'$ looks similar. The above procedure  is a ``nonlinear gluing'' of maps into ${\mathbb R}^{2n}$. We shall  see  below how to replace ${\mathbb R}^{2n}$ by a manifold.

Fixing a pair $(u^+, u^-)$ of maps as above, we shall define the glued and anti-glued maps  for pairs of vector fields  $(h^+, h^-)$ along the pair  $(u^+, u^-)$ of curves  so that 
$h^\pm (s,t)\in T_{u^{\pm}(s,t)}{\mathbb R}^{2n}={\mathbb R}^{2n}$.\index{gluing of vector fields}
We denote by $E$  the vector space of pairs $(h^+, h^-)$ of continuous maps 
$$h^{\pm}:{\mathbb R}^\pm\times S^1\rightarrow {\mathbb R}^{2n}$$
satisfying 
$$\lim_{s\to  \infty}h^{+}(s, t)=:h^+(\infty)=h^-(-\infty ):=\lim_{s\to  -\infty}h^{-}(s, t)$$
uniformly in $t$,  where $h^{\pm}(\pm \infty)$ are the asymptotic  constants  in $ \R^{2n}$. 
For a pair $(h^+, h^-)\in E$, we define the glued vector field  $\oplus_a(h^+, h^-):Z_a\to \R^{2n}$  by the same formulae  as before. Namely, if 
$a=0$, we set 
$$\oplus_0(h^+, h^-)=(h^+, h^-), $$
and if $0<\abs{a}<\frac{1}{2}$,  we define
$$\oplus_a(h^+, h^-)([s, t])=\beta_a(s)\cdot h^+(s, t)+(1-\beta_a(s))\cdot h^-(s-R, t-\vartheta).$$
We may view $\oplus_a(h^+, h^-)$ as a  vector field along the glued map $\oplus_a(u^+,u^-)$. If  $\exp$ is  the exponential map on $\R^{2n}$ with respect to the Euclidean
metric, then, given the  pair of vector fields $(h^+, h^-)\in E$ along the pair $(u^+, u^-)$, so that  
$$
\exp_{(u^+,u^-)}(h^+,h^-)=(u^ ++h^+,u^- +h^-), 
$$
we obtain  the  formula,
\begin{equation}\label{exp-formula}
\oplus_a(\exp_{(u^+,u^-)}(h^+,h^-)) =
\exp_{\oplus_a(u^+,u^-)}(\oplus_a(h^+,h^-)).
\end{equation}

Next we introduce the operation of anti-gluing for vector
fields.\index{anti-gluing of vector fields}
The anti-glued vector field  
$$
\ominus_a(h^+,h^-):C_a \rightarrow \R^{2n}
$$
is defined as  follows.  {If $a=0$, we set 
$$\ominus_a(h^+,h^-)=0.$$ 
Here $0$ is the unique map $\emptyset\rightarrow {\mathbb R}^{2n}$.}
If $0<\abs{a}<\frac{1}{2}$,  we define
\begin{equation*}
\begin{split}
\ominus_a (h^+,h^-)([s,t])&=-(1-\beta_a(s))\cdot [ h^+(s,t)-\av_a(h^+, h^-)]\\
&\phantom{=\ }+\beta_a(s)\cdot[ h^-(s-R,t-\vartheta)-\av_a(h^+, h^-)].
\end{split}
\end{equation*}
The average is the number 
$$
\av_a(h^+, h^-):=\frac{1}{2}\left(  [h^+]_R+[h^-]_R\right) 
$$
where
$$
 [h^\pm ]_R:=\int_{S^1} h^{\pm} \left (\pm \frac{R}{2}, t\right)\  dt.
$$
The anti-glued map possesses  antipodal asymptotic constants, 
$$
\ominus_a(h^+,h^-)(-\infty)=-\ominus_a(h^+,h^-)(+\infty).
$$
\begin{remark}\label{verynewremark1}
Let us summarize the above discussion. The gluing and anti-gluing constructions will be later on implanted  into the manifold setting. The following view point about the  
$\R^{2n}$-case will be useful. 
One fixes a smooth pair $(u^+_0, u^-_0)$ with vanishing asymptotic constant. We refer to   $(u^+_0, u^-_0)$  as the base pair.  We consider nearby pairs $(u^+, u^-)$ with  matching asymptotic constants. For these pairs the nonlinear gluing  is defined. Moreover,  the nearby pairs $(u^+, u^-)$ can be expressed, using the exponential map $\exp$ on $\R^{2n}$,  as  
$$\exp_{(u^+_0, u^-_0)}(h^+, h^-)=(u_0^++h^+, u_0^-+h^-)$$
for some pair of vector fields $(h^+, h^-)$ along $ (u^+_0, u^-_0).$ For vector fields $(h^+, h^-)$ along the base pair $(u^+_0, u^-_0)$,  the linear gluing $\oplus_a (h^+, h^-)$ is defined and  produces a vector field along the glued base pair $\oplus_a (u^+_0, u^-_0)$. 
In addition, the following holds
\begin{equation*}
\oplus_a(\exp_{(u_0^+,u_0^-)}(h^+,h^-)) =
\exp_{\oplus_a(u_0^+,u_0^-)}(\oplus_a(h^+,h^-)).
\end{equation*}
The negative gluing $\ominus_a (h^+, h^-)$  of a pair of vector fields $(h^+, h^-)$  takes values in the tangent space $T_0\R^{2n}$ of $\R^{2n}$ at $0$,  where $0$ is the common asymptotic limit of the base pair  $(u^+_0, u^-_0)$. 
\end{remark}

The gluing constructions introduced above are intimately related  to  the concept of
splicing as we shall describe next. 

Recall that  we have denoted by  $E$  the  space of pairs  of continuous vector fields $(h^+, h^-)$ along $(u^+,u^-)$ satisfying  the asymptotic matching conditions 
$h^+(\infty )=h^-(-\infty)$.  We introduce the space  $E^a_+$ consisting of continuous maps $v:Z_a\to \R^{2n}$ and the space
$E^a_{-}$  consisting of continuous maps $w:C_a\to \R^{2n}$  satisfying $w(-\infty)=-w(\infty)$. 
We define,  for $0<\abs{a}<\frac{1}{2}$,  the {\bf  total glued map}  $\boxdot_a :E\to E^a_+\oplus E^a_{-}$ 
by 
$$\boxdot_a(h^+, h^-)=\left(\oplus_a (h^+, h^-), \ominus_a (h^+, h^-)\right).$$
If $a=0$,   the map $\boxdot_0:E\to  E=E\oplus\{0\}$ is defined  by 
$$\boxdot_0(h^+, h^-)=((h^+, h^-),0).\index{total glued map  $\boxdot_a$} $$
\begin{lemma}\label{direct-lem} 
The map  $\boxdot_a:E\to E^a_+\oplus E^a_{-}$ is an isomorphism. 
\end{lemma}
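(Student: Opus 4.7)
The plan is to construct an explicit inverse of $\boxdot_a$. Since $\boxdot_0$ is by definition the identity on $E$, only the case $0<|a|<\tfrac{1}{2}$ requires work. Given $(v,w)\in E_+^a\oplus E_-^a$, I aim to reconstruct $(h^+,h^-)\in E$ piecewise: on the gluing region of $Z_a$ via a $2\times 2$ linear system, and on the parts of $C_a$ outside the gluing region from $w$ alone.

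The key observation is that, within the gluing region parametrised by $(s,t)\in[0,R]\times S^1$, writing $\beta=\beta_a(s)$, $A=h^+(s,t)$, $B=h^-(s-R,t-\vartheta)$, and $c=\av_a(h^+,h^-)$, the two defining relations $\oplus_a(h^+,h^-)([s,t])=v([s,t])$ and $\ominus_a(h^+,h^-)([s,t])=w([s,t])$ assemble into
\[
\begin{pmatrix} \beta & 1-\beta \\ -(1-\beta) & \beta \end{pmatrix}\begin{pmatrix} A \\ B \end{pmatrix}=\begin{pmatrix} v([s,t]) \\ w([s,t])+(1-2\beta)c \end{pmatrix}.
\]
The coefficient matrix has determinant $\beta^2+(1-\beta)^2\geq \tfrac{1}{2}$, so the system is uniquely solvable for $A$ and $B$. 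Outside the gluing region, i.e.\ at points $[s,t]\in C_a$ with $s>R$ or $[s',t']'\in C_a$ with $s'<-R$, the cut-off $\beta_a$ vanishes respectively equals $1$, and the anti-gluing formula collapses to $w=-(h^+-c)$ or $w=h^--c$, so $h^\pm$ is read off directly from $w$ and $c$.

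To close the loop I must pin down $c$ from $(v,w)$ alone. Evaluating the gluing formula at $s=R/2$, where $\beta_a(R/2)=\tfrac{1}{2}$, and integrating over $t\in S^1$ yields $\int_{S^1}v([R/2,t])\,dt=\tfrac{1}{2}([h^+]_R+[h^-]_R)=\av_a(h^+,h^-)$. I therefore define
\[
c:=\int_{S^1} v([R/2,t])\,dt
\]
at the outset and use this value in the inversion formulas above to produce candidates for $h^+$ and $h^-$. The self-consistency $\av_a(h^+,h^-)=c$ then holds tautologically, by reading the same calculation in reverse.

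Three compatibility conditions remain. First, continuity across $s=R$ and $s'=-R$: at $s=R$ one has $\beta_a(R)=0$ (the bound $R\geq 2$, which follows from $|a|<\tfrac{1}{2}$ together with the exponential gluing profile, enters here), so the matrix formula collapses to $A=c-w([R,t])$, matching the outside prescription $h^+(s,t)=c-w([s,t])$ for $s>R$; the argument at $s'=-R$ is symmetric. Second, the asymptotic matching $h^+(+\infty)=h^-(-\infty)$ follows from $w(-\infty)=-w(+\infty)$, since both limits equal $c-w(+\infty)$. Third, injectivity: the same linear system applied to $v=w=0$ forces $c=0$ and hence $A=B=0$ throughout, after which the outside formulas yield $h^\pm\equiv 0$. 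The only subtle point is organising these ingredients into a well-defined inverse without circularity; once $c$ is extracted from $v$ as above, the remainder is bookkeeping, with the mild continuity check at $s=R$ being the most delicate step.
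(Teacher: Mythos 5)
Your argument is correct and is essentially the paper's own proof: extract the average $c$ by integrating the gluing relation over the middle circle $\{R/2\}\times S^1$, reduce to the same $2\times 2$ system with determinant $\gamma_a=\beta_a^2+(1-\beta_a)^2\geq\tfrac12$, and obtain the asymptotic matching of $h^\pm$ from the antipodality $w(-\infty)=-w(+\infty)$; the only cosmetic difference is that you split the domain at $s=\pm R$ and check continuity there, while the paper writes a single formula valid on each half-cylinder. Note a sign slip in your displayed right-hand side: the correction term should be $w+(2\beta-1)c$ rather than $w+(1-2\beta)c$ (as one sees from $-(1-\beta)(A-c)+\beta(B-c)=w$), but your subsequent boundary check at $s=R$, which yields $A=c-w$, already uses the correct sign, so nothing downstream is affected.
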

In order to  prove the lemma we take a pair $(v, w)\in E^a_+\oplus E^a_{-}$ and solve the system of two equations for $(h^+, h^-)$,
\begin{equation}\label{system1}
\oplus_a (h^+, h^-)=v\quad \text{and}\quad \ominus_a (h^+, h^-)=w.
\end{equation}
The first equation is defined for the  points $[s,t]\in Z_a$ and the second for all points $[s,t]\in C_a$.
Integrating  the first equation over the circle $\{\frac{R}{2}\}\times S^1$, we find that 
$$
\av_a(h^+,h^-)=   \frac{1}{2}\left( [h^+]_R+[h^-]_R\right)=[v]:=\int_{S^1} v\left(\frac{R}{2},t\right)\ dt .
$$
It will be convenient to introduce the so called {\bf hat version of the gluing and anti-gluing}. If  $(h^+, h^-)\in E$, 
we define 
$$\wh{\oplus}_a(h^+, h^-):=\oplus_a (h^+, h^-)$$
and
$$\wh{\ominus}_a (h^+, h^-)[s,t]=-(1-\beta_a)\cdot h^+ (s, t)+\beta_a \cdot h^- (s -R, t -\vartheta)$$
where we have abbreviated $\beta_a=\beta_a(s)$. 
The relation between $\ominus_a (h^+, h^-)$ and $\wh{\ominus}_a(h^+, h^-)$ is  the following,
$$\ominus_a(h^+, h^-)=\wh{\ominus}_a(h^+, h^-)-\wh{\ominus}_a ([v], [v])=\wh{\ominus}_a(h^+, h^-)-(2\beta_a-1)[v]$$
where we have used that  $[v]= \av_a(h^+,h^-)$.
In view of  the hat anti-gluing, the second equation in  \eqref{system1} can be rewritten as
\begin{equation*}
\wh{\ominus}_a(h^+, h^-)=w+(2\beta_a-1)[v]=:\wh{w}
\end{equation*}
so that  the two equations in  \eqref{system1} take the form,
\begin{equation*}
\wh{\oplus}_a(h^+, h^-)=v\quad \text{and}\quad \wh{\ominus}_a(h^+, h^-)=\wh{w},
\end{equation*}
or, in the matrix form,  
\begin{equation}\label{system2}
\begin{bmatrix}
\phantom{-}\beta_a&1-\beta_a\\
-(1-\beta_a)&\beta_a
\end{bmatrix}\cdot
\begin{bmatrix}
h^+ (s, t)\\   h^-(s-R, t-\vartheta)\end{bmatrix}=
\begin{bmatrix}
v(s, t)\\ \wh{w}(s, t)\end{bmatrix}.
\end{equation}
Observe that the second equation $\wh{ \ominus}_a(h^+, h^-)=\wh{w}$  determines $h^+$ for $s\geq \frac{R}{2}+1$ and $h^-$ for $s\leq \frac{R}{2}-1$. Indeed, in view of the properties of the cut off functions $\beta_a$, 
\begin{align*}
h^+(s, t)&=-\wh{w}[s,t]=-w([s,t])+[v], \quad s\geq \frac{R}{2}+1\\
h^-(s-R, t-\vartheta)&=\wh{w}([s, t])=w([s,t])+[v], \quad s\leq \frac{R}{2}-1.
\end{align*}
We conclude that  the asymptotic constant  $h^+(\infty)$ of $h^+$ is equal to $-w(\infty)+[v]$ and  the asymptotic constant $h^-(-\infty)$ of $h^-$ is equal to $w(-\infty)+[v]$. 
Since, by assumption $w(-\infty)=-w(\infty)$, we conclude  
$$h^+(\infty)=h^-(-\infty).$$

Abbreviating by $\gamma_a:=\beta_a^2+(1-\beta_a)^2$ the determinant of the square matrix on the left-hand side above,  we find for the unique solutions  $h^\pm$  the formulae 
\begin{equation}\label{system3p}
h^+(s, t)=\frac{1}{\gamma_a}\left( \beta_a \cdot v(s, t)-(1-\beta_a)\cdot \wh{w}(s, t)\right)
\end{equation}
 for  $s\geq 0$ and $t\in S^1$,   and 
\begin{equation}\label{system3m}
h^-(s-R, t -\vartheta)=\frac{1}{\gamma_a}\left( (1-\beta_a)\cdot  v(s, t)+\beta_a\cdot \wh{w}(s, t)\right)
\end{equation}
 for $s\leq R$ and $t\in S^1$. Since $h^\pm$ have identical asymptotic constants, the pair
$(h^+, h^-)$ belongs to the space $E$. This completes the proof that  the map $\boxdot_a:E\to E^a_+\oplus  E^a_{-}$ is an isomorphism. \hfill $\blacksquare$

The injectivity of $\boxdot_a$ implies $(\ker \oplus_a )\cap (\ker \ominus_a )=\{(0, 0)\}$ and the surjectivity shows that 
$E=\ker \oplus_a \oplus  \ker \ominus_a $. Consequently, the space $E$ has the decomposition
\begin{equation}\label{decomp}
E=(\ker \oplus_a )\oplus (\ker \ominus_a ).
\end{equation}
We now introduce the projection 
$$\pi_a:E\rightarrow E$$
 onto $\ker \ominus_a$ along $\ker \oplus_a$.  If $a=0$, we set $\pi_0=\id$. Using our calculations  above we shall  derive a formula for this projection in the case $a\neq 0$.  Given a pair 
 $(h^+, h^-)\in E$,   we  set 
$$\pi_a (h^+,h^-)=(\eta^+,\eta^-).$$
In view of the decomposition \eqref{decomp}, the pair $(\eta^+,\eta^-)$ is uniquely determined by the requirement
$$
\oplus_a(\eta^+,\eta^-)=\oplus_a(h^+,h^-)\quad \text{and}\quad \ominus_a(\eta^+,\eta^-)=0.
$$
Setting $v=\oplus_a(h^+, h^-)$ and $w=0$,   the system of equations  \eqref{system1} takes the form 
$$
\oplus_a(\eta^+,\eta^-)=v\quad \text{and}\quad \ominus_a(\eta^+,\eta^-)=0
$$
and we find,  in view of \eqref{system3p}, 
\begin{equation}\label{sol1}
\eta^+=\frac{1}{\gamma_a}\left[ \beta_a\cdot  v-(1-\beta_a)\cdot \wh{w}\right]
\end{equation}
where $\wh{w}=(2\beta_a-1)[v]$  and  $[v]$ is equal to 
$$[v]=\int_{S^1}v \left(\frac{R}{2}, t\right)\ dt= \frac{1}{2}\left(  [h^+]_R+ [h^-]_R\right) =\av_a (h^+, h^-).$$ 
Substituting $\oplus_a(h^+, h^-)$ for $v$  and $(2\beta_a-1)[v]$ for $\wh{w}$ into  the equation  \eqref{sol1}, we obtain the explicit formula
\begin{equation}\label{eta-plus}
\begin{split}
\eta^+(s, t)&=\frac{1}{\gamma_a}\left( \beta_a\cdot  \oplus_a(h^+, h^-)(s, t) -(2\beta_a-1)(1-\beta_a)[v]\right)\\
&=\left(1-\frac{\beta_a}{\gamma_a}\right)\cdot \av_a(h^+, h^-) +\frac{\beta_a^2}{\gamma_a}\cdot h^+(s, t)\\
&\phantom{=}+ \frac{\beta_a (1-\beta_a)}{\gamma_a}\cdot h^-(s-R, t -\vartheta)
\end{split}
\end{equation}
for $(s, t)\in \R^+\times S^1$.  We have abbreviated $\beta_a=\beta_a(s)$ and $\gamma_a=\gamma_a(s)$. 
A similar calculation uses \eqref{system3m} and leads to the  following formula for $\eta^-$,
\begin{equation*}
\begin{split}
\eta^-(s-R, t-\vartheta)&=\left( 1-\frac{1-\beta_a}{\gamma_a}\right)\cdot \av_a (h^+, h^-)  +\frac{\beta_a (1-\beta_a)}{\gamma_a}\cdot h^+(s, t)\\
&\phantom{==}+\frac{(1-\beta_a)^2}{\gamma_a}h^-(s -R,t-R)
\end{split}
\end{equation*}
for $(s, t)$ satisfying $s\leq R$. 
To express $\eta^-$ is terms of  the coordinates $(s', t')$ on $\R^-\times S^1$, we note that from  $\beta (s)=1-\beta (-s)$ and $\beta_a (s)=\beta (s-\frac{R}{2})$ and $\gamma_a=\beta_a^2+(1-\beta_a)^2$, it follows that 
$\beta_a (s'+R)=1-\beta_a (-s')\quad \text{and}\quad \gamma_a (s'+R)=\gamma_a (-s').$
Consequently,
\begin{equation}\label{eta-minus}
\begin{split}
\eta^-(s', t')&=\left( 1-\frac{\beta_a (- s')}{\gamma_a (-s')}\right)\cdot \av_a (h^+, h^-)\\
&\phantom{}  +\frac{\beta_a (-s' ) (1-\beta_a (-s' ))}{\gamma_a(-s' )}\cdot h^+(s'+R, t' +\vartheta) +\frac{\beta_a (-s' )^2}{\gamma_a(-s' )}h^-(s', t')
\end{split}
\end{equation}
for $(s', t')\in \R^-\times S^1$.

We next  study  the above constructions  in the analytical framework of
weighted  Sobolev spaces.  For these studies  it is  crucial to  use  the exponential gluing profile
$$\varphi (r)=e^{\frac{1}{r}}-e, \quad r\in (0, 1].$$

We fix   a strictly increasing sequence $(\delta_m)_{m\geq 0}$  satisfying 
$0<\delta_m<2\pi$ for all $m\geq 1$. If $m\geq 0$,  the space $E_m^\pm=H^{3+m,\delta_m}(\R^\pm \times S^1)$ 
consists of $L^2$-maps 
$$u:\R^\pm \times S^1\to \R^{2n}$$
whose  weak partial derivatives $D^{\alpha}u$   up to order $3+m$ belong,   if weighted by $e^{\delta_m \abs{s}}$,   to the space $L^2(\R^\pm \times S^1)$ so that  $e^{\delta_m \abs{s}} D^{\alpha}u\in L^2(\R^\pm \times S^1)$ for all $\abs{\alpha}\leq 3+m$. 
The space $E_m$ consists  of pairs $(u^+, u^-)$ of maps defined on half-cylinders
$$u^\pm :\R^\pm \times  S^1\to \R^{2n}$$
for which there is a constant  $c\in \R^{2n}$ (depending on $(u^+, u^-)$)  so that 
$$u^\pm -c\in E^\pm_m.$$
If $(u^+, u^-)\in E_m$, then $u^\pm=c+r^\pm$ and its $E_m$-norm is defined by 
$$\abs{(u^+, u^-)}^2_{E_m}=\abs{c}^2+\norm{r^+}^2_{3+m,\delta_m}+\norm{r^-}_{3+m,\delta_m}^2$$
where
$$\norm{r^\pm}^2_{3+m,\delta_m}=\sum_{\abs{\alpha}\leq m+3}\int_{\R^\pm \times S^1}\abs{D^{\alpha}r^\pm (s, t)}^2e^{2\delta_m \abs{s}}\ ds dt.$$
Since the sequence $(\delta_m)$ is {strictly}  increasing, it follows from the Sobolev embedding theorem  that the inclusions $E_{m+1}\to E_m$ are compact maps and the vector space $E_{\infty}:=\bigcap_{m\geq 0}E_m$ is dense in every $E_m$.  In our terminology, the space $E:=E_0$ is an sc-Banach space  and the nested sequence $(E_m)$ defines an sc-structure on $E$.
  {We would like to point out that  we could assume, as in our previous papers,  that the sequence $(\delta_n)$ starts with $\delta_0=0$. However, since  the Cauchy-Riemann operator defined on the infinite  cylinder is not Fredholm if $\delta_0=0$,  we will always assume that $0<\delta_m<2\pi$.} 
 
Moreover, we define for every $a\in \hb\subset  \C $, the sc-Banach  space $G^a$  as follows.
If $a=0$, we set  
$$G^0=E\oplus \{0\},$$
and if $a\neq 0$,   we define
$$G^a=H^3(Z_a, \R^{2n})\oplus H^{3,\delta_0}_c(C_a, \R^{2n})$$
where the  space $ H^{3,\delta_0}_c(C_a, \R^{2n})$ consists of functions $u:C_a\to \R^{2n}$ for which there exists an asymptotic constant $c\in \R^{2n}$ such that  $u$ converges to $c$ as $s\to \infty$ and to $-c$ as $s\to -\infty$ and  
$$ u- (1-2\beta_a)\cdot c\in H^{3, \delta_0}(C_a, \R^{2n}).$$
We recall that $\beta_a(s)=0$ if $s\geq \frac{R}{2}+1$ and $\beta_a(s)=1$ for $s\leq \frac{R}{2}-1$. The Hilbert space space $H^3(Z_a, \R^{2n})$ is equipped with the 
sc-structure $H^{3+m}(Z_a, \R^{2n})$ for all $m\geq 0$ and the sc-structure for $H^{3,\delta_0}_c(C_a, \R^{2n})$  is given by 
$H^{3+m,\delta_m}_c(C_a, \R^{2n})$. Thus the sc-structure on $G^a$ is defined by  the sequence 
$$G^a_m=H^{3+m}(Z_a, \R^{2n})\oplus H^{3+m,\delta_m}_c(C_a, \R^{2n}), \quad m\geq 0.$$
The total glued map 
$$\boxdot_a(h^+,h^-)=( \oplus_a(h^+,h^-),\ominus_a(h^+,h^-))$$
maps the $m$-level $E_m$ onto the $m$-level $G^a_m$. Recalling the projection $\pi_a$  onto $\ker \ominus_a$ along $\ker \oplus_a$ and the formulae \eqref{eta-plus} and \eqref{eta-minus} 
for 
$$\pi_a(h^+, h^-)=(\eta^+, \eta^-), $$
one  sees  that $\pi_a:E\to E$. 
{Both parts of the following theorem  are proved in  \cite{HWZ8.7}; part (1) is proved as Theorem 2.23 and part (2) is proved in Section 2.4 of  \cite{HWZ8.7}.}

\begin{theorem}\label{sc-splicing-thm}
\mbox{}
\begin{itemize}
\item[{\em (1)}] For every $a\in \hb $, the total  gluing  map 
$$\boxdot_a=( \oplus_a,\ominus_a):E\to G^a$$
is an sc-linear isomorphism. 
\item[{\em (2)}] The map 
$$\pi: \hb \oplus E\to E,\quad  (a, (h^+, h^-))\mapsto \pi_a(h^+, h^-)$$
is sc-smooth. Consequently, the triple $\cs=(\pi,E,\hb )$  is an sc-smooth splicing.
\end{itemize}
\end{theorem}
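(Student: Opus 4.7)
The plan is to establish both parts by starting from the algebraic identities already proved in Lemma \ref{direct-lem} and the explicit formulas \eqref{eta-plus}--\eqref{eta-minus}, then tracking how weighted Sobolev norms transform under $\oplus_a$ and $\ominus_a$. Part (1) is essentially a careful bookkeeping of Sobolev regularity across the levels, while the real analytic difficulty sits in part (2), specifically in the behavior near $a=0$ where the gluing length $R=\varphi(|a|)$ explodes.

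For part (1) I would fix $a\in\hb$ and show that $\boxdot_a$ restricts to a bounded linear bijection $E_m\to G^a_m$ for each $m\geq 0$. Write $h^{\pm}=c+r^{\pm}$ with $r^{\pm}\in H^{3+m,\delta_m}(\R^{\pm}\times S^1)$. On the finite cylinder $Z_a$, the glued map $\oplus_a(h^+,h^-)$ is a convex combination (via $\beta_a$ and $1-\beta_a$) of two $H^{3+m}$ functions translated by at most $R$, so it lies in $H^{3+m}(Z_a,\R^{2n})$; the unweighted norm on a finite cylinder is automatically controlled by the weighted norm on the half-cylinder. For the anti-glued piece, a direct calculation shows that $\ominus_a(h^+,h^-)-(1-2\beta_a)\mathrm{av}_a(h^+,h^-)$ is the sum $-(1-\beta_a)(h^+-\mathrm{av}_a)+\beta_a(h^-(\cdot-R,\cdot-\vartheta)-\mathrm{av}_a)$, and since $\beta_a$ is smooth with uniformly bounded derivatives, the exponential weight $e^{\delta_m|s|}$ is preserved; the asymptotic matching between $c$ and $\mathrm{av}_a$ provides the decay. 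For the inverse, I would use formulas \eqref{system3p}--\eqref{system3m} and the fact that $\gamma_a=\beta_a^2+(1-\beta_a)^2$ is bounded away from zero, together with the same cut-off arguments.

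For part (2), the approach is to use the explicit formulas \eqref{eta-plus} and \eqref{eta-minus} for $\pi_a(h^+,h^-)=(\eta^+,\eta^-)$ and to prove sc-smoothness of each factor $\beta_a$, $\gamma_a$, and $\mathrm{av}_a$ as a function of $(a,(h^+,h^-))$. Away from $a=0$ everything reduces to classical smoothness of composition and translation on Sobolev spaces, so the entire problem concentrates at the origin. There the dependence on $a$ enters through $\beta_a(s)=\beta(s-R/2)$, and a single derivative in $a$ produces a factor of the form
\[
\partial_{a}\beta_a(s)= -\tfrac{1}{2}\,\beta'(s-R/2)\,\partial_a R,\qquad \partial_a R\sim \varphi'(|a|)=-|a|^{-2}e^{1/|a|}.
\]
This factor blows up as $a\to 0$, but it is supported in the slice $s\in[R/2-1,R/2+1]$. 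On that slice the source norm at level $m+1$ weights $h^{\pm}$ by $e^{\delta_{m+1}R/2}$, while the target norm at level $m$ only requires weight $e^{\delta_m R/2}$. The level shift therefore contributes a free exponential decay factor
\[
e^{-(\delta_{m+1}-\delta_m)R/2}\cdot|\partial_a R|
\]
which, since $R\sim e^{1/|a|}$ in the exponential gluing profile, vanishes to infinite order as $a\to 0$. Iterating this observation along the hierarchy of tangent maps $T^k\pi$ yields the required $\mathrm{sc}^k$ estimates for every $k$.

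The main obstacle, and what really drives the choice of exponential gluing profile, is exactly the previous paragraph: one must prove that the derivatives of $\pi$ at $a=0$ in every direction are continuous into lower levels, and all the analytic work boils down to the compensation between the growth of $\varphi'(|a|)$ and the exponential weight gap $\delta_{m+1}-\delta_m>0$. For the logarithmic gluing profile this compensation fails, which is precisely why the exponential profile is required for the polyfold theory of maps. Modulo this analytic heart of the argument, the rest reduces to verifying chain-rule bookkeeping and continuity statements as in \cite{HWZ8.7}, and finally observing that the sc-smoothness of $\pi$, together with $\pi_a\circ\pi_a=\pi_a$, promotes $\mathcal{S}=(\pi,E,\hb)$ to an sc-smooth splicing in the sense of Section \ref{sc-smoothness}.
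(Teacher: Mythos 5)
The paper does not prove this theorem itself — it is quoted from \cite{HWZ8.7} — but your sketch follows the same route as the proof given there: part (1) via the inversion formulas behind Lemma \ref{direct-lem} together with fixed-$a$ norm bookkeeping, and part (2) via the explicit formulas \eqref{eta-plus}--\eqref{eta-minus} and the compensation of $|\varphi'(|a|)|$ (and of the higher $a$-derivatives of $R$ and of the angle $\vartheta$) by the factor $e^{-(\delta_{m+1}-\delta_m)R/2}$ supplied by the level shift, which is exactly the property that singles out the exponential gluing profile and fails for the logarithmic one. The one point to handle with care is that the asymptotic-constant component of $E_m$ is measured separately from the exponentially weighted part, so the level-shift estimate must be applied to the decomposition $h^{\pm}=c+r^{\pm}$, with $\av_a(h^+,h^-)$ playing the role of the new asymptotic constant of $\eta^{\pm}$ and with the coefficient $1-\beta_a/\gamma_a$ not being supported in the middle slice, rather than to raw pointwise values there.
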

The splicing  ${\mathcal S}=(\pi,E,\hb )$ defines the splicing core $K^{\mathcal S}$ which is  the set
$$
K^{\mathcal S}=\{(a,(h^+,h^-))\in \hb \times E\, \vert \quad
\pi_a (h^+,h^-)=(h^+,h^-)\}.
$$
In view of Theorem \ref{sc-splicing-thm}, the map 
$r:\hb \oplus E\to \hb \oplus E$,  defined by 
$$r(a, (h^+, h^-))=(a, \pi_a (h^+, h^-)),$$
is an sc-retraction  and the splicing core $K^{{\mathcal S}}=r (\hb \oplus E)$ is   the associated sc-smooth retract.

In our construction of strong bundles  below we shall make use of the following version of the hat gluing and hat anti-gluing.

We introduce the space   $F$ of pairs $(\xi^+,\xi^-)$  in which   the maps 
$$\xi^+:\R^+\times S^1\to \R^{2n}\quad \text{and}\quad \xi^-:\R^-\times S^1\to \R^{2n}$$
are continuous and have the common asymptotic constant equal to $0$,
$$\lim_{s\to \infty}\xi^+(s, t)=\lim_{s\to -\infty}\xi^-(s, t)=0,$$
where the  convergence  is uniform in $t\in S^1$. Given $(\xi^+, \xi^-)\in F$, the hat-glued map $\wh{ \oplus}_a (\xi^+, \xi^-):Z_a\to \R^{2n}$ is  defined by
the same formula as  $\oplus_a (\xi^+, \xi^-)$. Namely,  
if $a=0$, then 
$$\wh{\oplus}_0(\eta^+, \eta^-):=(\eta^+,\eta^-)$$
and, if $0<\abs{a}<\frac{1}{2}$, then 
\begin{equation*}
\wh{\oplus}_a(\xi^+,\xi^-)([s,t])=\beta_a(s)\cdot \xi^+(s,t)+
(1-\beta_a(s))\cdot \xi^-(s-R,t-\vartheta).\index{hat--gluing map}
\end{equation*}

The hat-anti-glued map  $\wh{\ominus}_a(\xi^+,\xi^-):C_a\to \R^{2n}$ is defined as follows. 
If  $a=0$, we set 
$$\wh{\ominus}_a(\xi^+,\xi^-)=0.$$
However, if $a\neq 0$, the definition of  $\wh{\ominus}_a (\xi^+, \xi^-)$ differs from the anti-glued map $\ominus_a$. 
If $0<\abs{a}<\frac{1}{2}$,  the hat anti-glued map $\wh{\ominus}_a(\xi^+,\xi^-):C_a\to \R^{2n}$ is defined by  
\begin{equation*}
\wh{\ominus}_a(\xi^+,\xi^-)([s,t])\\
=-(1-\beta_a(s))\cdot \xi^+(s,t)+
\beta_a(s)\cdot \xi^-(s-R,t-\vartheta).\index{hat--anti- gluing map}
\end{equation*}
As  before we have  the  direct sum decomposition 
$$F=\ker \wh{\oplus}_a \oplus \ker \wh{\ominus}_a$$
for every $a\in \hb $,  and introduce  the projection map
$$\wh{\pi}_a:F\to F$$
onto $\ker \wh{\ominus}_a$  along  $\ker \wh{\oplus}_a$.  Representing the projection as 
\begin{equation*}
\wh{\pi}_a(\xi^+, \xi^-)=(\eta^+, \eta^-), 
\end{equation*}
a  calculation similar to the one we performed above  leads to the formulae 
\begin{equation}\label{smooth-splicing-10}
\begin{aligned}
\eta^+ (s, t)& = \frac{\beta_a^2}{\gamma_a}\cdot \xi^+(s, t) +\frac{\beta_a (1-\beta_a)}{\gamma_a}\cdot \xi^-(s -R, t -\vartheta)\\
\eta^- (s', t')&=\frac{\beta_a (-s' ) (1-\beta_a (-s' ))}{\gamma_a(-s' )}\cdot \xi^+(s' +R,t'+\vartheta) +\frac{\beta_a (-s')^2}{\gamma_a(-s' )}\xi^-(s', t')
\end{aligned}
\end{equation}
where $\eta^+$ is defined for $(s, t)\in \R^+\times S^1$ and $\eta^-$  for $(s', t')\in\R^-\times S^1$.  We have abbreviated $\beta_a=\beta_a(s)$ and $\gamma_a=\gamma_a(s)$.

The analytical framework of the hat gluing and anti-gluing is similar to the Sobolev framework above;  the  weighted Sobolev  spaces will however be different.
For  the  strictly increasing sequence $(\delta_m)_{m\geq 0}$  satisfying $0<\delta_m<2\pi$ we let $F$ be the space of pairs $(\xi^+,\xi^-)$ of maps 
$$\xi^\pm :\R^\pm \times S^1\to \R^{2n}$$
satisfying  $\xi^\pm\in H^{2, \delta_0}(\R^\pm \times S^1, \R^{2n})$. In particular, we note,  that the common asymptotic constant  of every pair $(\xi^+, \xi^-)\in F$ is equal to $0$. 
We equip  the space $F$ with 
an sc-smooth structure $(F_m)_{m\geq 0}$ where $F_m$ consists of those pairs $(\xi^+, \xi^-)\in F$ in which $\xi^\pm \in H^{2+m, \delta_m}(\R^\pm \times S^1, \R^{2n})$.  The sc-Banach  space $\wh{G}^a$  is defined as follows. 
If $a=0$,  we set $\wh{G}^0=F\oplus \{0\}$, and for  $a\neq 0$  we  define
$$\wh{G}^a=H^2(Z_a, \R^{2n})\oplus H^{2,\delta_0}(C_a, \R^{2n}).$$
Then $\wh{G}^a$ is an sc-Banach space with an sc-smooth structure for which the  level $m$ consists of pairs 
$(u, v)\in \wh{G}^a$  in which $u$ is of 
Sobolev regularity $2+m$ and $v$ of regularity $(2+m, \delta_m)$.
Recalling the projection $\wh{\pi}_a$ onto $\ker \wh{\ominus}_a$  along  $\ker \wh{\oplus}_a$ defined
by \eqref{smooth-splicing-10},   the following theorem is proved in   \cite{HWZ8.7}.
\begin{theorem}\label{sc-splicing-thm1}\mbox{}\\
\begin{itemize}
\item[{\em (1)}] For every $a\in \hb $, the total hat-gluing 
$$\wh{\boxdot}_a=( \wh{\oplus}_a,\wh{\ominus}_a):E\to \wh{G}^a \index{total hat-glued map $\wh{\boxdot}_a$}$$
is an sc-linear isomorphism. 
\item[{\em (2)}] The map 
$$\wh{\pi}: \hb \oplus F\to F,\quad  (a, (\xi^+, \xi^-))\mapsto \wh{\pi}_a(\xi^+, \xi^-)$$
is sc-smooth. Consequently, the triple $\wh{\cs}=(\wh{\pi},F,\hb )$  is an sc-smooth splicing.
\end{itemize}
\end{theorem}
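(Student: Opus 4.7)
The argument mirrors the proof of Theorem \ref{sc-splicing-thm} carried out in \cite{HWZ8.7}, adapted to the space $F$ of pairs with vanishing asymptotic constant. The case $a=0$ is trivial since both $\wh{\oplus}_0$ and $\wh{\ominus}_0$ reduce to the identity/zero map on the two summands. So I focus on $a\neq 0$.

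For part (1), on the neck strip $[0,R]\times S^1$ the defining formulas of $\wh{\oplus}_a$ and $\wh{\ominus}_a$ assemble into the pointwise linear system
$$\begin{bmatrix}\beta_a & 1-\beta_a\\ -(1-\beta_a)& \beta_a\end{bmatrix}\begin{bmatrix}\xi^+(s,t)\\ \xi^-(s-R,t-\vartheta)\end{bmatrix} = \begin{bmatrix}v(s,t)\\ w(s,t)\end{bmatrix}$$
whose determinant $\gamma_a(s)=\beta_a(s)^2+(1-\beta_a(s))^2$ lies in $[1/2,1]$, so the system is uniquely and boundedly solvable. Outside the neck the two halves decouple: $\beta_a(s)=0$ for $s\geq R/2+1$ forces $\xi^+(s,t)=-w([s,t])$, while $\beta_a(s)=1$ for $s\leq R/2-1$ forces $\xi^-(s-R,t-\vartheta)=w([s,t])$. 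Since $w\in H^{2+m,\delta_m}(C_a,\R^{2n})$ decays to $0$ with the required weighted Sobolev control, the reconstructed pair $(\xi^+,\xi^-)$ lies in $F_m$ with norm controlled by the $\wh{G}^a_m$-norm of $(v,w)$, and the formulas \eqref{smooth-splicing-10} show $\wh{\pi}_a$ has image in $F$ at each level. This gives the sc-linear isomorphism claim of (1).

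For part (2), sc-smoothness of $(a,\xi^+,\xi^-)\mapsto \wh{\pi}_a(\xi^+,\xi^-)$ away from $a=0$ is immediate from \eqref{smooth-splicing-10} by the chain rule, since $\beta_a$, $\gamma_a^{-1}$ and the translations by $R$ depend smoothly on $a$ and are non-degenerate in this regime. The hard part, and the main obstacle, is sc-smoothness at $a=0$: each $a$-derivative of $\beta_a(s)=\beta(s-R/2)$ with $R=\varphi(|a|)=e^{1/|a|}-e$ contributes factors of $\partial_a R$ that blow up like $e^{k/|a|}/|a|^{2k}$ as $|a|\to 0$.

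The mechanism that saves the day is that $\beta_a^{(k)}$ is supported in the band $[R/2-1,R/2+1]\times S^1$. On this band the target weight $e^{\delta_m|s|}$ in $F_m$ contributes at most $e^{\delta_m(R/2+1)}$, while any input in $F_m$ has, by its own weighted norm, values of pointwise order $e^{-\delta_m|s|}$, producing a compensating $e^{-\delta_m(R/2-1)}$. Because $0<\delta_m<2\pi$ and $R=e^{1/|a|}-e$, the resulting factor $e^{-\delta_m R/2}$ dominates any polynomial in $R$ and hence absorbs every singular factor coming from $\partial_a^j R$. Iterating this estimate gives continuity of each tangent map $T^k\wh{\pi}$ at $a=0$, hence sc-smoothness. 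Alternatively, one can reduce the hat-version to Theorem \ref{sc-splicing-thm} via the bounded sc-linear correspondence $\wh{\ominus}_a=\ominus_a+(2\beta_a-1)\av_a$ used earlier in this section, importing the hardest bounds directly from \cite{HWZ8.7}. Either route completes the proof.
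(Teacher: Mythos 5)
First, a point of reference: the paper does not prove this theorem at all --- both Theorem \ref{sc-splicing-thm} and Theorem \ref{sc-splicing-thm1} are quoted verbatim from \cite{HWZ8.7} ("the following theorem is proved in \cite{HWZ8.7}"), so there is no in-paper proof to compare against. Your part (1) is a correct sketch of the standard argument (the $2\times 2$ system with determinant $\gamma_a\in[\tfrac12,1]$, decoupling outside the band $[\tfrac{R}{2}-1,\tfrac{R}{2}+1]$), matching how the non-hat version is established in Section \ref{gluinganti-sect}; note only that the ``$E$'' in the theorem statement is a typo for $F$, as the later restatement in Section \ref{preliminaries} confirms.

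Part (2), however, contains a genuine gap in the key estimate. You claim that on the band where $\beta_a^{(k)}$ is supported, the target weight contributes at most $e^{\delta_m(R/2+1)}$ while the input contributes $e^{-\delta_m(R/2-1)}$, and that ``the resulting factor $e^{-\delta_m R/2}$ dominates any polynomial in $R$.'' But the product of those two factors is $e^{\delta_m(R/2+1)}\cdot e^{-\delta_m(R/2-1)}=e^{2\delta_m}$, a constant: the same-level weights \emph{cancel}, they do not produce decay. (This cancellation is exactly what makes Proposition \ref{HAT_HAT} hold with $a$-independent constants; it cannot simultaneously yield an exponentially small factor.) So as written your mechanism gives only boundedness of the band-supported terms and cannot absorb the blow-up $\partial_a R\sim e^{1/|a|}/|a|^2=(R+e)\bigl(\log(R+e)\bigr)^2$. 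The missing idea is the level shift built into the definition of $\ssc^1$: the derivative is taken at base points $\xi\in F_{m+1}$, whose restriction to the band carries the stronger weight $e^{\delta_{m+1}|s|}$, so the net factor is $e^{-(\delta_{m+1}-\delta_m)(R/2)+O(1)}$ --- a genuine exponential gain, available precisely because the sequence $(\delta_m)$ is \emph{strictly} increasing. This is the reason for that hypothesis, and your argument never uses it. Your alternative route (reducing $\wh{\ominus}_a$ to $\ominus_a$ via $\ominus_a=\wh{\ominus}_a-(2\beta_a-1)\av_a$) is also not a clean reduction: $E$ and $F$ carry different regularities ($(3+m,\delta_m)$ versus $(2+m,\delta_m)$) and different asymptotic conditions, and the correction term $(2\beta_a-1)\av_a$ depends on $a$ in a way that requires its own sc-smoothness analysis.
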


\section{Implanting Gluings and Anti-gluings into a Manifold}\label{section-implant}
Working in a chart we shall implant our gluing constructions  into the symplectic manifold $(Q, \omega)$ of dimension $2n$. We consider  a piece of the Riemann surface $S$ consisting of two disjoint disks $D_x$ and $D_y$ having smooth boundaries and centered at the points $x$ and $y$ in $S$ belonging to the nodal pair $\{x, y\}$. As a reference map we take a continuous map  $u_0:S\rightarrow Q$ 
satisfying the matching condition 
$$u_0(x)=u_0(y)=q\in Q$$ and choose 
a diffeomorphic (in particular onto)  chart  
$\psi: {\bf R}(q) \rightarrow \R^{2n}$
around the point $q$ which satisfies $\psi (q)=0$. The open sets ${\bf R}_r (q)\subset {\bf R}(q)$   are defined as $\psi^{-1}(B_r(0))$ where $B_r(0)\subset \R^{2n}$ is the open ball of radius $r$ centered at the origin.  We assume that ${\bf R}(q)$ is equipped with a Riemannian metric  which over ${\bf R}_4(q)$ is  the pull-back of the Euclidean metric under the map $\psi$ and denote its exponential map by $\exp$.  Then  
$$\psi (\exp_p V(p))=\exp_{\psi (p)}(T\psi (p)\cdot V(p))=\psi (p)+T\psi (p)\cdot V(p)$$
for $p\in {\bf R}_2(q)$, where the tangent vectors $V(p)\in T_pQ$ satisfy $\abs{V(p)}_p\leq 2$.
We assume that $u_0(D_x\cup D_y)\subset {\bf R}_1(q)$. We denote by $O$ a $C^0$-neighborhood of the map $u_0$ consisting of  continuous maps $v:S\to Q$ satisfying $v(x)=v(y)$ and $v(D_x\cup D_y)\subset {\bf R}_2(q)$. We now take a smooth map $u:S\to Q$ in $O$ and choose positive and negative holomorphic polar coordinates centered at $x$ and $y$, and denoted by 
$$h_x:\R^+\times S^1\to D_x\quad \text{and} \quad  h_y:\R^-\times S^1\to D_y.$$ 
This leads to the smooth maps $u^\pm:\R^\pm \times S^1\to \R^{2n}$ defined  by 
$$u^+(s, t)=\psi\circ u\circ h_x(s, t)$$
and 
$$
u^-(s', t')=\psi\circ u\circ h_y(s', t').
$$
Such maps we have already met in the gluing constructions in the  previous section.
If $a=\abs{a}e^{-2\pi i \vartheta}$  is a  gluing parameter satisfying $0<\abs{a}<\frac{1}{2}$,  we identify  
$$
z=h_x(s, t)\in D_x, \quad 0\leq s\leq R
$$
and
$$
z'=h_y(s', t')\in D_y,\quad -R\leq s'\leq 0
$$
if $s'=s-R$ and $t'=t-\vartheta$ where $R=\varphi (\abs{a})$ is the  gluing length. 
If $a=0$, we set 
$$Z_0=D_x\cup D_y.$$
Denoting the gluing operations of the maps into $\R^{2n}$  of the previous section by $\oplus_a^0$, $\ominus^0_a$ and $\wh{\oplus}_a^0$, $\wh{\ominus}_a^0$, we define the glued map 
$$\oplus_a (u):Z_a\to Q$$
 by
$$
\psi (\oplus_a(u)[s,t] )= \oplus_a^0(u^+, u^-)[s,t]=\oplus_a^0(\psi\circ u\circ h_x, \psi\circ u\circ h_y)[s,t],
$$
for all $[s,t]\in Z_a$. The definition of the gluing operation $\oplus_a^0$ implies that 
$$\oplus_a(u)=u$$
near the boundary $\partial Z_a$ of the  glued cylinder $Z_a$.

If $\eta$ is a section of the pull-back bundle $u^*TQ$ with matching data at the nodal pair we denote its principal part by 
$$\eta (z)\in T_{u(z)}Q$$
and hence require that $\eta(x)=\eta (y)\in T_pQ$ where $p=u(x)=u(y)$. The glued 
section  $\oplus_a(\eta)$ is a section of the pull-back bundle $\oplus_a(u)^*TQ$. The vector $\oplus_a(\eta)[s,t]\in T_{\oplus_a(u)[s,t]}Q$ is defined by 
$$T\psi (\oplus_a(u)[s,t])\cdot 
\oplus_a (\eta)[s,t]=\oplus_a^0 (T\psi (u\circ h_x)\cdot \eta\circ h_x, T\psi (u\circ h_y)\cdot \eta\circ h_y)[s,t]$$
where on the right hand-side we have used the $\oplus_a^0$-gluing in $\R^{2n}$ for two vector fields.

Since the Riemannian metric  on ${\bf R}_4(q)$ is the  pull-back of the Euclidean metric in $\R^{2n}$ by the map 
$\psi$ we have the following formula.
\begin{proposition}\label{exp-formula1}
If  $\eta$ is a section of the pull-back bundle $u^*TQ$ satisfying $\eta (x)=\eta (y)$ and $\exp_u(\eta)\in O$, then 
$$
\exp_{\oplus_a(u)}(\oplus_a(\eta))=\oplus_a(\exp_u(\eta)).
$$
\end{proposition}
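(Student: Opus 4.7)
The strategy is to push the identity through the chart $\psi$ and reduce the manifold statement to the linearized gluing formula \eqref{exp-formula} already established in $\R^{2n}$. Because the Riemannian metric on ${\bf R}(q)$ has been defined as the pull-back of the Euclidean one, both $\exp$ and the nonlinear gluing of maps into $Q$ factor through $\psi$ in a compatible way, so conjugating by $\psi$ will turn the manifold identity into an elementary computation in $\R^{2n}$.

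Concretely, I would fix $[s,t]\in Z_a$ and apply $\psi$ to the left-hand side. Using the relation $\psi(\exp_p V)=\psi(p)+T\psi(p)\cdot V$ recalled in the text (valid for $p\in {\bf R}_2(q)$), together with the defining formulas of $\oplus_a(u)$ and $\oplus_a(\eta)$, this yields
\[
\psi\bigl(\exp_{\oplus_a(u)[s,t]}(\oplus_a(\eta)[s,t])\bigr)=\oplus_a^0(u^+,u^-)[s,t]+\oplus_a^0\bigl(T\psi(u\circ h_x)\cdot \eta\circ h_x,\; T\psi(u\circ h_y)\cdot \eta\circ h_y\bigr)[s,t].
\]
Since $\oplus_a^0$ is the pointwise convex combination with weights $\beta_a(s)$ and $1-\beta_a(s)$, it is additive in its pair argument, so the two terms combine into
\[
\oplus_a^0\bigl(u^++T\psi(u\circ h_x)\cdot \eta\circ h_x,\; u^-+T\psi(u\circ h_y)\cdot \eta\circ h_y\bigr)[s,t].
\]
A second application of the Euclidean exponential identity recognizes the two slots as $\psi\circ\exp_u(\eta)\circ h_x$ and $\psi\circ\exp_u(\eta)\circ h_y$, so by the definition of $\oplus_a(\exp_u(\eta))$ the whole expression equals $\psi(\oplus_a(\exp_u(\eta))[s,t])$. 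Injectivity of $\psi$ on its domain then gives the claimed equality.

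The only point requiring genuine care is the bookkeeping of the asymptotic matching conditions needed so that each of the three gluings is well-defined: the pair $(u^+,u^-)$ matches because $u(x)=u(y)$ as $u\in O$; the pair of vector fields matches because $\eta(x)=\eta(y)$ combined with $u(x)=u(y)$ gives $T\psi(u(x))\cdot\eta(x)=T\psi(u(y))\cdot\eta(y)$; and the pair corresponding to $\exp_u(\eta)$ matches precisely by the hypothesis $\exp_u(\eta)\in O$. Beyond this verification there is no substantive analytic obstacle, since once the identity is written out in the flat chart it collapses to the affine nature of the Euclidean exponential and the linearity of $\oplus_a^0$ in its pair of maps, which is exactly the content of the earlier formula \eqref{exp-formula}.
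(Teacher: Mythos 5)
Your proposal is correct and is essentially the paper's own argument: the paper also conjugates by the chart $\psi$, uses the affine form $\psi(\exp_p V)=\psi(p)+T\psi(p)\cdot V$ of the Euclidean pull-back exponential, and invokes the linearity of $\oplus_a^0$ to merge the base and fiber gluings — it merely runs the chain of equalities starting from $\psi(\oplus_a(\exp_u\eta))$ rather than from the left-hand side. Your explicit check of the asymptotic matching conditions is a harmless (and welcome) addition that the paper leaves implicit.
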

\begin{proof}
Denoting the principal part of the section $\eta$ by $\eta (z)\in T_{u(z)}Q$ we recall that 
$$\psi \bigl(\exp_{u(z)}(\eta (z))\bigr)=\psi (u(z))+T\psi (u(z))\cdot \eta (z).$$ Therefore,
\begin{align*}
\psi (\exp_{u\circ h_x}(\eta\circ h_x))&=\psi (u\circ h_x)+T\psi (u\circ h_x)\cdot \eta \circ h_x\\
\psi (\exp_{u\circ h_y}(\eta\circ h_y))&=\psi (u\circ h_y)+T\psi (u\circ h_y)\cdot \eta \circ h_y,
\end{align*}
and, since the  operation $\oplus_a^0$ is linear,
\begin{equation*}
\begin{split}
\psi \bigl(\oplus_a(\exp_u \eta)\bigr)&=\oplus_a^0\bigl(\psi (\exp_{u\circ h_x}(\eta\circ h_x)), \psi (\exp_{u\circ h_y}(\eta\circ h_y)) \bigr)\\
&=\oplus_a^0(\psi\circ u\circ h_x, \psi\circ u\circ h_y)\\
&\phantom{=}+
\oplus_a^0\bigl(T\psi (u\circ h_x)\cdot \eta \circ h_x, T\psi (u\circ h_y)\cdot \eta \circ h_y\bigr)\\
&=\psi (\oplus_a(u))+T\psi (\oplus_a(u))\cdot \oplus_a (\eta)\\
&=\psi (\exp_{\oplus_a(u)}\cdot \oplus_a (\eta))
\end{split}
\end{equation*}
and the proposition is proved.
\end{proof}

As above we denote by $C_a$ the infinite cylinder consisting of two half-cylinders $z=h_x(s, t)\in D_x$ and $z'=h_y(s', t')\in D_y$ in which the points with $0\leq s\leq R$ and $-R\leq s'\leq 0$ are identified if $s'=s-R$ and $t'=t-\vartheta$. Let $\eta$ be a section of the  pull-back bundle $u^*TQ$ satisfying 
$\eta(x)=\eta (y)\in T_pQ$ where $p=u(x)=u(y)$. Then the anti-glued map 
$$\ominus_a (\eta):C_a\to T_pQ$$
 is defined by 
$$T\psi (p)\cdot \ominus_a  (\eta)[s,t]=\ominus^0_a(T\psi (u\circ h_x)\cdot \eta\circ h_x, T\psi (u\circ h_y)\cdot \eta\circ h_y)[s,t].$$
Let   $J$ be  a  compatible almost complex structure on the symplectic manifold $(Q, \omega)$,  {that is,   $\omega (\cdot , J\cdot )$  is a Riemannian metric on $Q$. 
}
If $u:D_x\cup D_y\to {\bf R}_1(q)$ is a smooth map belonging to  our neighborhood $O$,  hence satisfying $u(x)=u(y)$,  and if we have at every point $z\in D_x\cup D_y$ a complex anti-linear map
$$\xi (z): (T_z(D_x\cup D_y), j(z))\to (T_{u(z)}Q, J(u(z)))$$
satisfying $\xi (x)=0=\xi (y)$,  we define the  two vector fields $\xi^\pm$ in $\R^{2n}$ by 
\begin{align*}
\xi^+(s, t)&=T\psi (u\circ h_x(s, t))\cdot \xi (h_x(s, t))\cdot \partial_sh_x (s, t)\\
\xi^-(s', t')&=T\psi (u\circ h_y(s', t'))\cdot \xi (h_y(s', t'))\cdot \partial_{s'}h_y (s', t')
\end{align*}
where $(s, t)\in\R^+\times S^1$ and $(s', t')\in \R^-\times S^1$.  {We point out  the fact that a complex anti-linear map defined on a 1-dimensional complex vector space  is determined by its action on a single  nonzero vector.}
{\begin{remark} The condition $\xi^+(x)=\xi^-(y)=0$ is natural in this context. Namely,  if we have a pair  $(u^+,u^-)$ of maps 
 which converge at the nodal points exponentially fast to the nodal image, then applying the Cauchy-Riemann operator
we obtain the pair  $(\xi^+,\xi^-)$ of vector fields which converges exponentially fast to $0$.
\end{remark}}

The hat glued map $\wh{\oplus}_a(\xi)$ then associates with the point $z=[s,t]\in Z_a$ of the glued cylinder, the   complex anti-linear map along the glued map $\oplus_a (u)$,
$$\wh{\oplus}_a(\xi)(z):(T_zZ_a, j(z))\to (T_{\oplus_a(u)(z)}Q, J(\oplus_a(u)(z)))$$
which  is, {using the previous remark about  complex anti-linearity},  defined by 
$$
T\psi (\oplus_a (u)[s, t])\cdot \wh{\oplus}_a(\xi)([s,t]){\cdot \frac{\partial}{\partial s}}:=\wh{\oplus}^0_a(\xi^+, \xi^-)[s, t].
$$
The hat anti-glued  map $\wh{\ominus}(\xi)$ associates with every point $z=[s,t]$ of the infinite cylinder $C_a$ the  complex anti-linear map 
$$\wh{\ominus}_a(\xi)([s, t]):(T_zC_a, j)\to (T_{p}Q, J(p))$$
which is,  defined by 
$$
T\psi (p)\cdot \wh{\ominus}_a(\xi)(z) {\cdot  \frac{\partial }{\partial s}}:=\wh{\ominus}^0_a (\xi^+, \xi^-)[s, t].
$$
We have considered  above the smooth map $u:D_x\cup D_y\to Q$ satisfying $u(x)=u(y)=p$ and the smooth section $\eta$ along the map $u$ of the  pull-back bundle $u^*TQ$ satisfying $\eta (x)=\eta (y)\in T_pQ$. Moreover, we have introduced the glued map $\oplus_a (u):Z_a\to Q$ and the glued section $\oplus_a (\eta):Z_a\to T_{\oplus_a (u)}Q$ of the pull-back bundle $\oplus_a(u)^*TQ$. We also recall that with every $z=[s,t]\in Z_a$ we have associated the hat-glued complex anti-linear map $\wh{\oplus}_a(\xi):(T_zZ_a, j(z))\to  (T_{\oplus_a (u)(z)}Q, J(\oplus_a (u)(z))).$ 

Finally,  the splicing projections  $\pi_a^0$ and $\wh{\pi}^0_a$ can also be implanted as follows. In the case of $\pi_a^0$ we take a section $\eta$ 
along $u$. This defines,  using holomorphic polar coordinates and  a chart $\psi$ for the manifold $Q$, the pair $(\eta^+,\eta^-)$ of sections along the pair $(u^+,u^-)$.
For the  gluing parameter $a$ the projection $\xi=\pi_a(\eta)$ is defined by its representative $(\xi^+,\xi^-)$ obtained as the unique solution of the equations 
$$
\oplus_a^0(\xi^+,\xi^-)=\oplus^0_a(\eta^+,\eta^-)\ \ \text{and}\ \ \ \ominus_a^0(\xi^+,\xi^-)=0.
$$
If $D_x\cup D_y\subset S$ for a Riemann surface $S$ and if $\eta$ is a section along the map  $u:S\rightarrow Q$,
then $\pi_a(\eta)$ is well-defined acting as the identity over the complement of $D_x\cup D_y$.
By implanting the splicing projection associated to a single nodal pair $\{x,y\}$ we will always assume that the splicing projection
will be extended by the identity in the complement of the relevant disks of the small disk structure.
If we deal with several nodal pairs $\{x,y\}\in D$ we obtain for each $\{x,y\}$ a splicing projection $\pi_{a_{\{x,y\}}}^{\{x,y\}}$ as just described.
Since the disks of the small disk structures are mutually disjoint any  two of these projections commute. By taking the composition
of all of them (whatever the order) we obtain a well-defined projection $\pi_a$, where $a={(a_{\{x,y\}})}_{\{x,y\}\in D}$. This is the splicing
projection associated to the tuple $a$ of gluing parameters. 

\section{More Sc-smoothness Results.}

We next apply the following criterion for the sc-smoothness of a map $f:E\to F$ between  sc-Banach spaces which is  formulated in terms of  classical derivatives.

\begin{proposition}\label{prop2.11}
Let $E$ and $F$ be two sc-Banach spaces and let $U\subset C\subset E$ be an open set in a partial quadrant $C$. We assume
that the map $f:U\rightarrow F$ is sc$^0$-continuous  and that the induced maps 
$f:U_{m+k}\rightarrow F_m$ are of class  $C^{k+1}(U_{m+k}, F_m)$ for every $m\geq 0$ and $k\geq 0$.
Then  the map $f:U\rightarrow E$ is sc-smooth.
\end{proposition}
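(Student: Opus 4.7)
The plan is to verify sc-smoothness by first checking the sc$^1$ condition directly from the hypothesis, and then setting up a clean induction by showing that $Tf$ satisfies precisely the same regularity assumption as $f$ (with indices shifted appropriately).

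First I would define the candidate linearization. For $x\in U_1$, the hypothesis with $m=0,k=0$ gives that $f:U_0\to F_0$ is $C^1$, so the classical Fréchet derivative $Df(x)\in\mathcal{L}(E_0,F_0)$ exists at the point $x$. The key observation is the consistency between levels: for any $x\in U_m$, the map $f:U_m\to F_m$ is $C^1$ (case $k=0$ at level $m$), and its Fréchet derivative $D_m f(x)\in\mathcal{L}(E_m,F_m)$ must coincide with the restriction of $Df(x)$ to $E_m$, by uniqueness of the limit defining the derivative (both are computed from the same difference quotients, using that $E_m\hookrightarrow E_0$ continuously). Consequently $Df(x)$ automatically sends $E_m$ into $F_m$ whenever $x\in U_m$.

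Next I would verify the sc$^1$ axioms. The limit condition
\[
\|h\|_1^{-1}\bigl\|f(x+h)-f(x)-Df(x)h\bigr\|_0\longrightarrow 0\quad(\|h\|_1\to 0,\ x\in U_1)
\]
follows from the $C^2$-regularity (hence $C^1$) of $f:U_1\to F_0$ (case $m=0,k=1$), again using that the level-$1$ Fréchet derivative agrees with the restriction of $Df(x)$ to $E_1$. For the tangent map $Tf(x,h)=(f(x),Df(x)h)$ viewed as $(TU)_m=U_{m+1}\oplus E_m\to (TF)_m=F_{m+1}\oplus F_m$, continuity of the first component is immediate since $f:U_{m+1}\to F_{m+1}$ is $C^1$. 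For the second, the hypothesis with shifted indices $(m,k=1)$ gives $f:U_{m+1}\to F_m$ of class $C^2$, hence $Df:U_{m+1}\to\mathcal{L}(E_m,F_m)$ is continuous, whence $(x,h)\mapsto Df(x)h$ is continuous from $U_{m+1}\oplus E_m$ to $F_m$. This establishes $Tf\in\mathrm{sc}^0$ and thus $f\in\mathrm{sc}^1$.

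The heart of the argument is then to show that $Tf$ itself satisfies the hypothesis of the proposition, so that one can iterate. Fix $m,k\geq 0$; we need $Tf:(TU)_{m+k}=U_{m+k+1}\oplus E_{m+k}\to (TF)_m=F_{m+1}\oplus F_m$ to be of class $C^{k+1}$. The first component $f:U_{m+k+1}\to F_{m+1}$ is $C^{k+1}$ by the hypothesis applied at base level $m+1$ with index $k$. For the second component, the hypothesis at base level $m$ with index $k+1$ gives $f:U_{m+k+1}\to F_m$ of class $C^{k+2}$, so $Df:U_{m+k+1}\to\mathcal{L}(E_m,F_m)$ is $C^{k+1}$; composing with the bounded linear application map and pre-composing in $h$ with the continuous inclusion $E_{m+k}\hookrightarrow E_m$ yields that $(x,h)\mapsto Df(x)h$ from $U_{m+k+1}\oplus E_{m+k}\to F_m$ is $C^{k+1}$. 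Combined with the previous paragraph, $Tf$ inherits both the sc$^0$-continuity and the level-wise classical $C^{k+1}$ regularity that $f$ enjoys.

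A straightforward induction now concludes the proof: since the hypothesis is preserved under $f\mapsto Tf$, every iterated tangent $T^k f$ is sc$^0$, and each $T^{k-1}f$ is sc$^1$ by the base-case argument applied to $T^{k-1}f$ in place of $f$. Hence $f\in\mathrm{sc}^k$ for all $k$, i.e., $f$ is sc-smooth. The principal obstacle is the bookkeeping of how classical differentiability with a loss of one level of regularity (the $C^{k+1}$ hypothesis moving $U_{m+k}$ to $F_m$) translates into sc-differentiability; once one sees that $Tf$ lives at the correctly shifted bi-indices and that the single derivative $Df(x)$ is independent of the level on which it is computed, the rest is a clean induction.
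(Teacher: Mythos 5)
Your base case is essentially sound: defining $Df(x)$ for $x\in U_1$ as the derivative of $f:U_0\to F_0$, checking that the level\nobreakdash-$m$ derivatives are restrictions of it, and reading off the $\ssc^1$ limit condition from the Fr\'echet differentiability of $f:U_1\to F_0$ is correct. (One small slip already there: the continuity of $(x,h)\mapsto Df(x)h$ from $U_{m+1}\oplus E_m$ into $F_m$ does not follow from the $C^2$-ness of $f:U_{m+1}\to F_m$, whose derivative takes values in $\mathcal{L}(E_{m+1},F_m)$; it follows from the $C^1$-ness of $f:U_m\to F_m$, whose derivative takes values in $\mathcal{L}(E_m,F_m)$, restricted to $x\in U_{m+1}$. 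This conflation of $\mathcal{L}(E_{m+1},F_m)$ with $\mathcal{L}(E_m,F_m)$ is harmless here but fatal below.)

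The induction step has a genuine gap: the claim that $Tf$ again satisfies the hypotheses of the proposition is not justified and, I believe, is false in general. You assert that the fiber component $\Phi(x,h)=Df(x)h$, viewed as a map $U_{m+k+1}\oplus E_{m+k}\to F_m$, is $C^{k+1}$ because $f:U_{m+k+1}\to F_m$ is $C^{k+2}$, ``hence $Df:U_{m+k+1}\to\mathcal{L}(E_m,F_m)$ is $C^{k+1}$''. What the hypothesis actually gives is that $x\mapsto Df(x)$ is $C^{k+1}$ into $\mathcal{L}(E_{m+k+1},F_m)$, i.e.\ as operators on the \emph{smaller} space; as operators on $E_{m+k}$ the map is only known to be $C^{k}$ (from $f:U_{m+k}\to F_m\in C^{k+1}$), and $C^{k+1}$-dependence of the restriction to a dense subspace does not upgrade the regularity of the operator-valued map on the whole space. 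Concretely, already for $m=k=0$ your claim requires $\Phi(x,h)=Df(x)h$ to be $C^1$ on $U_1\oplus E_0$ with values in $F_0$; Fr\'echet differentiability at a point with $h\in E_0\setminus E_1$ forces the existence in $F_0$ of the mixed second derivative $\lim_{t\to 0}t^{-1}\bigl[Df(x+t\,\delta x)h-Df(x)h\bigr]$ for $\delta x\in E_1$, whereas the hypotheses provide $D^2f(x)$ only as a bilinear map on $E_1\times E_1\to F_0$ (from $f:U_1\to F_0\in C^2$); a second derivative admitting an argument in $E_0$ is available only with target one level lower. In general the top-order term of the putative $(k+1)$-st derivative of $\Phi$ is $D^{k+2}f(x)(\delta x_1,\dots,\delta x_{k+1},h)$ with $h\in E_{m+k}$, an object the hypotheses simply do not supply with target $F_m$. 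So the inductive hypothesis does not propagate, and the argument collapses at the first iteration. A correct proof (the paper defers to \cite{HWZ8.7}, Proposition 2.4) must instead write out the iterated tangents $T^jf$ explicitly in terms of $f,Df,\dots,D^jf$ and verify the $\ssc^0$- and $\ssc^1$-conditions for each of them directly: the point is that the $\ssc^1$-condition only ever calls for $D^if(x)$ as an $i$-linear map with all arguments on a single level $\geq m+i-1$ (exactly what the hypothesis provides), and its asymmetric remainder estimate --- numerator in the $0$-norm, denominator in the $1$-norm --- absorbs precisely the one-level loss that defeats your version of the induction.
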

{Recalling that  an $\ssc^0$-map $f:U\rightarrow F$ is of class $\ssc^\infty$ if it is of class $\ssc^{k+1}$ for all $k\geq 0$,  Proposition \ref{prop2.11} is a consequence of Proposition 2.4 in \cite{HWZ8.7}.}

We shall make use of Proposition \ref{prop2.11} in the special  situations, in which $F$ is equal to some Euclidean space equipped with the constant sc-structure.  First we shall   carry out a  transversal constraint construction used later on.

We consider the closed disk  $D$ in ${\mathbb C}$ and  a $C^1$-embedding $u:D\rightarrow {\mathbb R}^{2n}$  satisfying $u(0)=0$. Let $H\subset \R^{2n}$ be the
orthogonal complement of the image set  $Tu(0)({\mathbb C})$ so that $\R^{2n}=Tu(0)({\mathbb C})\oplus H$. Let us note that,
in fact, we only need $H$ to be a complement. By means of the implicit function theorem we find an open neighborhood $O$ of the map $u$ in $C^1(D, \R^{2n})$ having the following properties for $v\in O$. 
\begin{itemize}
\item[$\bullet$] The set $v^{-1}(H)$ consists of precisely one point $z_v$
and  $z_v$ belongs to  the open disk $B_{\frac{1}{2}}\subset \C$ of radius $\frac{1}{2}$ and centered at the origin.
\item[$\bullet$] The image point $v(z_v)$ lies in the unit-ball $B^{2n}(1)$ of ${\mathbb
R}^{2n}$.
\item[$\bullet$] The image set $Tv(z_v)({\mathbb C})$ is transversal to $H$ in $\R^{2n}$.
\item[$\bullet$]  The map $\beta:O\to \hb$, defined by $\beta (v)=z_v$, is of class $C^1(D, \R^{2n})$ and if we restrict the map $\beta$ to the space $ O\cap C^m(D, \R^{2n})$, then the map 
$\beta :O\cap C^m(D, \R^{2n})\to \hb$ is of class $C^m$.
\end{itemize}
We  now introduce the sequence of Sobolev spaces $E_m=H^{3+m}(D, \R^{2n})$ for $m\geq 0$ and set $E=E_0$, which is an sc-smooth Banach space equipped  with the filtration $(E_m)_{m\geq 0}$. 
The set $O\cap E$ is an open subset of $E$. In view of  the Sobolev embedding theorem,  there is a continuous {linear}  embedding $E_m\rightarrow C^{m+1}$ and consequently the map $\beta  (v)=z_v$ defines an $\ssc^0$-continuous map $\beta:E\cap O\to \R^{2}$ whose induced maps $E_m\cap O\to \R^{2}$ are of class $C^{m+1}$ for every $m\geq 0$. In view of  Proposition \ref{prop2.11} we obtain the following result.
\begin{proposition}\label{n-prop2.24}
The map $\beta:O\cap E\rightarrow B_{\frac{1}{2}}$ defined above by $\beta (v)=z_v$ is 
sc-smooth.
\end{proposition}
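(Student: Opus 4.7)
My plan is to apply the sc-smoothness criterion of Proposition \ref{prop2.11} with $U=O\cap E$ and target $F=\R^2$ equipped with the constant sc-structure $F_m=\R^2$. Because the filtration on $F$ is constant, the hypothesis of Proposition \ref{prop2.11} reduces to two statements: first, that $\gamma$ is $\ssc^0$-continuous as a map $O\cap E\to \R^2$; and second, that for every pair of integers $m,k\geq 0$ the induced map $\gamma:O\cap E_{m+k}\to\R^2$ is of class $C^{k+1}$ in the ordinary Banach-space sense.

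The key observation is the Sobolev embedding $E_m=H^{3+m}(D,\R^{2n})\hookrightarrow C^{m+1}(D,\R^{2n})$, which is a bounded (in particular $C^\infty$) linear inclusion. By the fourth bulleted property collected before the statement, $\gamma$ is defined on a $C^1$-neighborhood $O$ of $u$ and its restriction to $O\cap C^j(D,\R^{2n})$ is of class $C^j$ for every $j\geq 1$. Therefore, for each $m,k\geq 0$, the restriction of $\gamma$ to $O\cap E_{m+k}$ factors as the composition
\[
E_{m+k}\cap O \;\longrightarrow\; C^{m+k+1}(D,\R^{2n})\cap O \;\xrightarrow{\;\gamma\;}\; \hb\subset\R^2,
\]
where the first arrow is a bounded linear (hence smooth) inclusion and the second is of class $C^{m+k+1}$. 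By the chain rule, $\gamma:E_{m+k}\cap O\to\R^2$ is of class $C^{m+k+1}$, which certainly implies $C^{k+1}$.

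Sc$^0$-continuity follows from the $k=0$ case of the same factorization: for each $m\geq 0$ the map $\gamma:E_m\cap O\to \R^2$ is $C^{m+1}$, hence continuous, and it is clear that $\gamma$ preserves levels since the target has the constant filtration. All hypotheses of Proposition \ref{prop2.11} are then verified, yielding sc-smoothness of $\gamma$. The argument contains no real obstacle; the only point requiring attention is bookkeeping of Sobolev indices to ensure that the level $m+k$ of the domain gives enough classical regularity ($C^{m+k+1}$) to cover the required $C^{k+1}$ for every $m$, which is immediate from our shift by $3$ in the definition $E_m=H^{3+m}$.
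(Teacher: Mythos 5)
Your proposal is correct and follows essentially the same route as the paper: the Sobolev embedding $E_{m}\hookrightarrow C^{m+1}(D,\R^{2n})$, the classical $C^{j}$-regularity of $v\mapsto z_v$ on $O\cap C^{j}$ coming from the implicit function theorem, and then Proposition \ref{prop2.11} with the constant sc-structure on $\R^2$. The index bookkeeping ($C^{m+k+1}\Rightarrow C^{k+1}$ on level $m+k$) is exactly the point the paper relies on as well.
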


The previous transversal constraint construction can easily be  implanted into a manifold.
We consider the Riemann surface $(S,j)$  and the $C^{\infty}$-map 
$u:S\rightarrow Q$  into the  symplectic manifold
$(Q,\omega)$  having the property that at some point $z\in S$ the linearized map $Tu(z)$   is injective. We set $u(z)=q\in Q$. Then we find a disk like neighborhood $D_z$ of $z$ having a smooth boundary, a biholomorphic map 
$h:(D,0)\rightarrow (D_z,z)$  where, as above, $D\subset \C$ is the  closed unit disk, and we take  a diffeomorphic chart 
$\psi: ({\bf R}(q), q)\to ({\mathbb R}^{2n},0)$ around the point $q$. We can assume that 
$u(D_z)\subset {\bf R}_1(q)=\psi^{-1}(B^{2n}(1))$ and that $T\psi  (q)$ maps the image set $Tu(z)(\R^{2})$ onto $\R^{2}\times \{0\}$ in $\R^{2n}$.    We assume that ${\bf R}(q)$ is equipped with the Riemannian metric  which over ${\bf R}_4(q)=\psi^{-1}(B^{2n}(4))$ is  the pull-back of the  Euclidean metric of $\R^{2n}$ by means of the map $\psi$. 
We abbreviate $H=\{0\}\times \R^{2n-2}$. Then the composition 
$\wh{u}:D\to \R^{2n}$,  defined  by 
$$
\wh{u}= \psi\circ u\circ h
$$
is smooth, satisfies $\wh{u}(0)=0$, and $H$ is transversal to $T\wh{u}(0)(\R^2)$ in $\R^{2n}$. Hence by the above discussion there is 
a $C^1$-neighborhood $U(\wh{u})$ in $C^1(D, \R^{2n})$ of the  smooth map $\wh{u}$ so that every map $\wh{v}\in U(\wh{u})$ meets the properties listed above.

Hence, by Proposition \ref{n-prop2.24}, the map $\beta:U(\wh {u})\cap E\to \hb$,  defined by 
$\beta (\wh{v})=z_v$, is sc-smooth.  Consequently, defining the submanifold $Q_0$ of $Q$ by 
$$Q_0=\psi^{-1}(H\cap B^{2n}(2)),$$
we find a $C^1$-neighborhood $U(u)$ in $C^1(D_z, Q)$ of the smooth map $u:D_z\to Q$ such that if $v\in U(u)$,  then $v(D_z)$ intersects $Q_0$ in precisely one point $q_v$,
$$q_v\in v(D_z)\cap Q_0$$
and the intersection is transversal. Moreover, the map $U(u)\to Q_0$, associating with $v\in U(u)$ the intersection point $q_v$, is sc-smooth if  restricted to the sc-structure defined by the nested sequence $H^{3+m}$, $m\geq 0$. 

Later on we shall refer to the map $v\mapsto q_v\in Q_0$ as {``the  transversal constraint $Q_0$''},  and we shall refer to above neighborhood $U(u)$ as to the {\bf  transversal constraint construction}  associated with the smooth map $u$ and the chart $\psi$.\index{transversal constraint construction}

The next result is concerned with the action of parameterized diffeomorphisms. Let $D$ be the closed unit disk in ${\mathbb C}$ and
assume $V$ is an open subset in some ${\mathbb R}^N$. We assume that we are given a smooth family $v\mapsto \phi_v$, $v\in V$ of diffeomorphisms of $D$ having  compact supports  contained in the interior of $D$.
We take  the sc-Banach space $E$ with $E_m=H^{3+m}(D,{\mathbb R}^m)$ and  consider the composition
$$
\Phi:V\oplus E\rightarrow E, \quad (v,u)\mapsto u\circ \phi_v.
$$
We would like to point out that the map $\Phi$ is far from being smooth in the classical sense. 
{In contrast,  we have demonstrated   in \cite{HWZ8.7} (Theorem 1.26)   the following  result.}
\begin{theorem}\label{action-diff}
The map $\Phi:V\oplus E\to E$, $\Phi (v, u)=u\circ \phi_v$,  is sc-smooth.
\end{theorem}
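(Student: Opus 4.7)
The plan is to apply Proposition \ref{prop2.11}: verify that $\Phi$ is $\ssc^0$-continuous and that for every $m,k\geq 0$ the induced map $\Phi:V\oplus E_{m+k}\to E_m$ is of class $C^{k+1}$. The phenomenon underlying the theorem is that reparameterization by a $v$-dependent family of diffeomorphisms costs one Sobolev derivative of $u$ per derivative in $v$; the sc-filtration is designed precisely to absorb this loss, so that while $\Phi$ is not classically Fr\'echet-smooth at any single Sobolev level, it is sc-smooth.

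I would first establish $\ssc^0$-continuity. For fixed $v$, composition with the smooth diffeomorphism $\phi_v$---supported in a fixed compact subset $K\subset \mathrm{int}(D)$---is a bounded linear isomorphism of $H^{3+m}(D,\R^N)$, with operator norm controlled by finitely many $C^r$-norms of $\phi_v$ and $\phi_v^{-1}$. Joint continuity of $(v,u)\mapsto u\circ \phi_v$ from $V\oplus E_m$ to $E_m$ then follows from the smoothness of $v\mapsto \phi_v$ (which in particular yields continuity in any $C^r$-norm on $K$) together with the standard composition estimate in Sobolev spaces.

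For the classical differentiability, since $\Phi$ is linear in $u$, every partial derivative contains at most one direction from $E$. Iterated application of the chain rule yields, for the $j$-fold $v$-derivative,
\begin{equation*}
\partial_v^j\Phi(v,u)[\delta v_1,\ldots,\delta v_j](z)=\sum_{|\alpha|\leq j}(D^\alpha u)(\phi_v(z))\cdot P_{\alpha,j}(v,z;\delta v_1,\ldots,\delta v_j),
\end{equation*}
where the multipliers $P_{\alpha,j}$ are smooth in $(v,z)$, compactly supported in $z$, and multilinear in the $\delta v_i$; an analogous formula holds for the mixed derivative $\partial_v^j\partial_u\Phi$, obtained by replacing $u$ by a tangent direction $h$. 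Each individual term is estimated in the $E_m$-norm via the bounded composition operator and the smoothness of $P_{\alpha,j}$.

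The main obstacle is the careful bookkeeping of Sobolev indices at the top order $j=k+1$: the worst term has $|\alpha|=k+1$, so naively $D^{k+1}u$ is only of class $H^{2+m}$ when $u\in H^{3+m+k}$, one level below the target $E_m=H^{3+m}$. The resolution exploits the special structure of $P_{k+1,k+1}$---which at top order contains only first-order derivatives of $\phi_v$---and an appropriate composition estimate to show that the assembled expression nevertheless defines a continuous multilinear map with values in $E_m$; the continuity as a function of $(v,u)\in V\oplus E_{m+k}$ uses the smoothness of $v\mapsto \phi_v$ in all its derivatives together with the $\ssc^0$-continuity of composition. Once $C^{k+1}$-regularity is established for every $m,k$, Proposition \ref{prop2.11} concludes that $\Phi$ is sc-smooth.
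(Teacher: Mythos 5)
The paper does not prove Theorem \ref{action-diff} here at all: it is quoted from \cite{HWZ8.7} (Theorem 1.26), where it is established by a direct verification of the definition of sc-differentiability together with an induction on tangent maps. Your plan of reducing it to Proposition \ref{prop2.11} cannot work, and the failure is already visible at $k=0$: the criterion demands that $\Phi:V\oplus E_m\to E_m$ be of class $C^1$, but the partial derivative in $v$ is
\begin{equation*}
D_v\Phi(v,u)\,\delta v \;=\; \bigl(Du\circ\phi_v\bigr)\cdot\bigl(\partial_v\phi_v\,\delta v\bigr),
\end{equation*}
which for $u\in E_m=H^{3+m}$ lies only in $H^{2+m}=E_{m-1}$. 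More generally, $\Phi:V\oplus E_{m+k}\to E_m$ is of class $C^k$ but not $C^{k+1}$: the pure $(k+1)$-fold $v$-derivative contains the term $D^{k+1}u(\phi_v)\bigl[\partial_v\phi_v\delta v_1,\ldots,\partial_v\phi_v\delta v_{k+1}\bigr]$, which requires $u\in H^{4+m+k}$ to land in $H^{3+m}$ and cannot cancel against the lower-order terms. Your proposed "resolution" via the special structure of the multiplier $P_{k+1,k+1}$ does not exist; the obstruction sits in the regularity of $D^{k+1}u$, not in the multipliers. Indeed, if $\Phi$ satisfied the hypotheses of Proposition \ref{prop2.11}, reparameterization would not be the motivating example for sc-calculus in the first place; note that the paper explicitly restricts its applications of Proposition \ref{prop2.11} to targets that are Euclidean spaces with the constant sc-structure, precisely because the one-level loss is then invisible.

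The correct argument runs directly through Definition \ref{sscc1}. One first checks $\ssc^0$-continuity (your first paragraph is fine for this). Then, at a point $(v_0,u_0)$ with $u_0\in E_1$, the candidate derivative $(\delta v,\delta u)\mapsto \delta u\circ\phi_{v_0}+(Du_0\circ\phi_{v_0})\cdot(\partial_v\phi_{v_0}\delta v)$ is a bounded operator $\R^N\oplus E_0\to E_0$ \emph{because} $u_0$ is on level $1$, and the remainder is $o(\norm{(\delta v,\delta u)}_1)$ when measured in the level-$0$ norm --- the asymmetry between numerator and denominator in condition (1) of Definition \ref{sscc1} is exactly what absorbs the lost derivative (the quadratic cross term $\delta u\circ\phi_{v_0+\delta v}-\delta u\circ\phi_{v_0}$ is controlled by $\abs{\delta v}\cdot\norm{\delta u}_1$, and the Taylor remainder in $u_0$ is handled by approximating $u_0$ by smooth maps). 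Finally, the tangent map $T\Phi$ is again built from compositions $h\mapsto h\circ\phi_v$ and multiplications by smooth $v$-dependent matrix-valued functions, so it is a map of the same type on the tangent levels, and induction yields $\ssc^\infty$. This is the route taken in \cite{HWZ8.7}.
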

\begin{remark}
{We point out that in the statement of Theorem 1.26 in \cite{HWZ8.7} the crucial assumption is missing. The correct statement of the theorem requires, in addition to the assumption that the family of maps 
$v\rightarrow \phi_v$ is  smooth,  that the maps $\phi_v:D\rightarrow {\mathbb C}$ are smooth embeddings.}
\end{remark} 
{One can deduce  many  corollaries  from this  result. To describe such a corollary,  we assume  that   $S$ and $S'$ are pieces of Riemann surfaces containing the distinguished points $z$ and $z'$.  We denote by $O(z)$ and $O(z')$  open neighborhoods of $z$ and $z'$ in $S$ and $S'$, respectively, and   assume that there exists   a smooth map 
$$
V\times O(z')\rightarrow S, \quad (v,s)\mapsto \phi_v(s)
$$
into 
a neighborhood  of  $z$ satisfying
$$
\phi_v(O(z'))\subset O(z)
$$
for all $v\in V$. {Moreover,  we assume that $\phi_v:O(z')\rightarrow S$ is a smooth embedding
for every $v$.} We consider the Banach space $E$ of maps $u$ of Sobolev class $H^3$ defined on ${O(z)}$ and having their images  in some ${\mathbb R}^n$.  We equip $E$   with the usual sc-structure defined by the sequence $H^{3+m}$ for  $m\geq 0$. We  let  $F$  be the sc-Banach space of the same type of maps defined on some compact subdomain $K$ of $O(z')$  having a smooth boundary. Then the map 
$$\Psi:V\oplus E\to  F,$$
defined by 
$$
(v,u)\mapsto  (u\circ\phi_v)\vert K, 
$$
is an sc-smooth map.}

Later on we will need  the following  version of this result in the case of  noded surfaces.
We assume that $D_x$ and $D_y$ as well as $D_{x'}'$ and $D_{y'}'$ are disk-like Riemann surface 
with smooth boundaries and associated with the  nodal pairs $\{x,y\}$ and $\{x',y'\}$. Using the exponential gluing profile  and proceeding as in Section \ref{dm-subsect}, 
we construct  the glued surfaces $Z_a$ and $Z_{b}'$. For $h>0$ and $a$ (or $b$) small enough we
have  defined the sub-cylinders $Z_a(-h)$ and $Z_b'(-h)$ in Section \ref{dm-subsect} and recall that $Z_0(-h)=D_x(-h)\cup D_y(-h)$ if $a=0$, 
and similarly for $Z_0'$.

Let  $V$ be an open neighborhood of $0$ in some $R^N$ and let $D_{\varepsilon}$  be the closed  disk in $\C$ centered at the origin and of radius $\varepsilon$.  We assume that the  following data {are} 
 given:
\begin{itemize}
\item[(1)]  A smooth map $(a,v)\mapsto  b(a,v)\in {\mathbb C}$ defined for $v\in V$ and  $a\in D_{\varepsilon}$  with  $\varepsilon$  sufficiently small,  
and satisfying  $b(0,v)=0$ for all $v\in V$.
\item[(2)] For sufficiently large $h>0$, 
a core-smooth  family of  holomorphic embeddings
$$
\phi_{(a,v)}:Z_a(-h)\rightarrow Z_{b(a,v)}'
$$
parameterized by $(a,v)\in D_{\varepsilon}\oplus V$.
\item[(3)]  There exists  $H>0$  such  that 
$$
Z'_{b(a,v)}(-H)\subset \phi_{(a,v)}(Z_a(-h))
$$
for all $(a, v)\in  D_{\varepsilon}\oplus V$ where  $\varepsilon$  is sufficiently small.
\end{itemize}
We take two smooth cut-off functions $\beta$ and $\beta'$ as in Section \ref{gluinganti-sect} and use them,  together with  the exponential gluing profile,  to define glued maps on the two finite cylinders $Z_a$ and $Z_b'$. Then we  consider the sc-Banach space $E'$ of maps $\xi:D_{x'}'\cup D_{y'}'\rightarrow {\mathbb R}^{2n}$ of class $(3,\delta_0)$  {satisfying $\xi (x')=\xi (y')$}.

The sc-structure is such that the level m corresponds to the regularity $(m + 3, \delta_m)$, where  the strictly increasing sequence of weights $(\delta_m)_{m\geq 0}$ satisfies $0<\delta_m <2\pi$.
Similarly,  we introduce the sc-Banach space $E$ consisting of maps $\eta: D_x(-h)\cup D_y(-h)$ of regularity  $(3,\delta_0)$ whose level $m$ consists of maps of the same regularity $(m+3,\delta_m)$ 
{and satisfying $\eta (x)=\eta (y)$}. With these spaces we define the map 
$$\Phi:D_{\varepsilon}\oplus V\oplus E'\to E,\quad (a,v,\xi)\mapsto  \eta$$
where the map $\eta\in E$ is defined, using Theorem \ref{sc-splicing-thm},  as the unique solution of the  two equations 
$$
\oplus_a(\eta) = \oplus_{b(a,v)}'(\xi)\circ \phi_{(a,v)}\quad \text{and}\quad  \ominus_a(\eta)=0.
$$ 
Here the prime on $ \oplus'$ and $\ominus'$ indicates that we use  the cut-off functions $\beta'$ for the gluing.
The following theorem which will be employed later when dealing with sc-smoothness issues of transition maps.
\begin{theorem}\label{theorem_neck}
The map $\Phi$ introduced above  is sc-smooth.
\end{theorem}
The result is proved  Appendix \ref{QWE}.

{\begin{remark} 
There is an analogous  result for the sc-Banach space set-up, in which  the regularity levels are given by $(k+2,\delta_k)$, 
the asymptotic constants vanish,  and the hat-gluings are used.  This will be relevant in  the construction of 
strong bundles later on.
\end{remark}
Finally we  shall need the following  result which can also be reduced to Theorem 1.31 in \cite{HWZ8.7}. 
In order to formulate it, we
assume that $((D_x,D_y),(x,y))$ as well as $((D_{x'}',D_{y'}'),(x',y'))$ are disk pairs so that we obtain via gluing the associated
cylinders $Z_a$ and $Z_{a'}'$.  We assume that $E$ is the sc-Banach space as described above associated to the first pair
and $E'$ to the second.  We assume that $(a_k)$ and $(a_k')$ are small gluing parameters converging to $0$ and $$\phi_k:Z_{a_k}\rightarrow Z_{a_k'}'$$
is a sequence of holomorphic embeddings having the following properties.  The restrictions of the maps to 
a fixed annulus-type neighborhood of the boundary of the disc $D_x$ converge, viewed as the maps into the discs $D_{x'}'$, in the $C^\infty$-sense. Similarly, the maps $\phi_k$ restricted to a fixed annulus-type neighborhood of the boundary of $D_y$ converge as maps into $D'_{y'}$ in the $C^\infty$-sense.
Then we consider a sequence $(\xi_k)$ in $E'$ converging on level $0$ to some $\xi_0=(\xi^+,\xi^-)\in E'$ and define the maps 
$\eta_k\in E$ as the unique solutions of the two equations
$$
\oplus_{a_k}(\eta_k) =(\oplus_{a_k'}'(\xi_k))\circ \phi_k\ \ \text{and}\ \ \ \ominus_{a_k}(\eta_k)=0.
$$
\begin{proposition}\label{gotham}
Under the assumptions stated above the sequence $(\phi_k)$ converges in $C^\infty_{loc}$ in finite distance to the left and the  right boundary
to holomorphic embeddings $\phi^+:D_x\rightarrow D_{x'}'$ and $\phi^-:D_y\rightarrow D_{y'}'$ satisfying  $\phi^+(x)=x'$ and $\phi^-(y)=y'$. 
The sequence $(\eta_k)\subset E$ converges on level $0$ to  the map  $\eta_0=(\eta^+,\eta^-)\in E$ given by 
$$
\eta^+ = \xi^+\circ \phi^+\ \ \text{and}\ \ \eta^-=\xi^-\circ\phi^-.
$$
\end{proposition}
\begin{remark}
The proof is lengthy, but  uses only arguments and constructions occurring already in  the proof of Theorem \ref{theorem_neck}.
We sketch the proof and leave the details to the reader.
The first step is to show that,  without loss of generality,  the maps $\phi_k$ are biholomorphic maps $Z_{a_k}\rightarrow Z_{a_k'}'$,
where the domain is equipped with the standard {almost complex} structure,  and the target with a family of almost complex structures $(j_k)$, 
which on suitable sub-cylinders  $Z_{a_k}'(-h)$,  are all  standard structures and converge near the boundaries in the $C^\infty$-sense.
In the next step one shows that,  without loss of generality,  one may assume that the  maps $\phi_k$ map the   distinguished
boundary points on $Z_{a_k}$ to the distinguished boundary points on $Z_{a_k'}'$. Then the proof  is indeed reduced 
to Theorem 1.31 in \cite{HWZ8.7}. We point out that the sequence $(\phi_k)$ converges in a strong sense to $(\phi^+,\phi^-)$. This  is 
discussed in detail, in \cite{HWZ8.7}, Theorem 1.46. 
\end{remark}}


%
%
%

\chapter{The Polyfold Structures }\label{four}
This chapter is devoted to the construction of the polyfold structures on the space 
$Z=Z^{3,\delta_0}(Q, \omega)$ of stable curves into the symplectic manifold $(Q, \omega)$,  and on the bundle $W\to Z$ introduced in Section \ref{sect1.2}.  
In particular, we shall give the proofs of the Theorems \ref{th-top}, \ref{pfstructure} and
 \ref{main1.10} announced in the introduction. The polyfold construction is based on the concept of  good uniformizing families of stable curves introduced next.
 \section{Good Uniformizing Families of Stable Curves}\label{xA}
Fixing  a number $\delta_0\in (0,2\pi) $  we consider the 
$(3,\delta_0)$-stable map
$$
\alpha=(S,j,M,D,u).
$$

We recall that $(S, j, M, D)$ is a connected nodal Riemann surface with ordered marked points, and $u:S\to Q$ is a map into the closed symplectic manifold $(Q,\omega)$ which is of class $(3,\delta_0)$ near the nodal points in $\abs{D}$ and of class $H^3_{\loc}$ around all the other points of $S$. Moreover, $u(x)=u(y)$ for every nodal pair $\{x, y\}\in D$. We shall denote the set of such mappings $u:S\to Q$ by 
$$H^{3,\delta_0}(S, Q).$$
If $C$ is an unstable domain component of $S$, the map $u:C\to Q$ satisfies the stability condition 
$\int_Cu^*\omega >0$.  Two stable maps $(S,j,M,D,u)$ and $(S',j',M',D',u')$ are isomorphic or equivalent, if there exists an isomorphism $\phi:(S,j,M,D)\to (S',j',M',D')$ between the underlying noded surfaces satisfying $u'\circ \phi =u$. An equivalence class is called a stable curve. We shall denote the space of equivalence classes of  stable maps of class $(3, \delta_0)$ by 
$$Z=Z^{3,\delta_0}(Q,\omega ).$$

A consequence of the stability condition $\int_Cu^*\omega >0$ is the finiteness of the automorphism group  $G$ of the stable map $(S, j, M, D, u)$.

The underlying  connected nodal  Riemann surface $(S, j, M, D)$ is not necessarily stable. In order to achieve stability, we shall add more marked points.
\begin{definition}\label{stab-f}
A finite set
$\si$ of points in $S$ is called a {\bf stabilization} of $\alpha=(S,j,M,D,u)$  if  the following holds true.\index{stabilization}
\begin{itemize}
\item[(1)] The set $\si$ lies in the complement of $M\cup |D|$.
\item[(2)]  {Every element $g$ of the automorphism group $G$ of $\alpha$  maps  the set $\si$ onto itself.}
\item[(3)] If we denote by $M^\ast$ the un-ordered set $M\cup\si$, then the nodal Riemann surface $(S,j,M^\ast,D)$ is stable.
\item[(4)] If $u(z)=u(z')$ for two points in $\si$, then there exists an automorphism  $g$ in $G$ satisfying $g(z)=z'$.
\item[(5)] The image $u(\si)$ does not intersect $u(|D|\cup M)$.
\item[(6)]  {At  the points $z\in \si$, the tangent map $Tu(z)$ is  injective, the $2$-form $u^\ast\omega (z)$ is non-degenerate and determines on $T_zS$ the same orientation  as the almost complex structure $j(z)$ does.}
\end{itemize}
\end{definition}
The automorphism group $G^\ast$ of  the nodal Riemann  surface $(S,j,M\cup\si,D)$  is finite and contains the automorphism $G$ of  the stable map  $\alpha$.
{The latter property about $u^\ast\omega$ will be used solely in the  discussion of the  orientability of the Cauchy-Riemann section later on.  Here we adopt the convention  that the orientation of the tangent space $T_zS$ determined by the non-degenerate $2$-form $u^\ast\omega (z)$ is defined by an {ordered} basis $\{e_1, e_2\}$ satisfying $u^\ast\omega(z)(e_1,e_2)>0$ and the orientation determined by the almost complex structure $j(z)$ is defined by an  {ordered}  basis $\{v, j(z)v\}$ in $T_zS$.}

\begin{lemma}\label{ax}
{ The stable map $\alpha=(S,j,M,D,u)$ possesses a stabilization $\si$. 
}
\end{lemma}

\begin{proof}
We choose a domain component $C$ of $S$ which together with its special points $C\cap (M\cup |D|)$ is unstable.
 Since $\alpha$ is stable, we know that $\int_C u^\ast\omega>0$. Hence we find 
 a point $z\in C\setminus (M\cup |D|)$ at which   $Tu(z)$ is injective,  {$u^\ast\omega (z) $ is non-degenerate and   determines on $T_zS$ the same orientation as $j(z)$ does.} By moving $z$ slightly we can keep this property, and  can achieve that, in addition,  
 $$
 u(z)\not\in u(M\cup |D|).
 $$
 Now we use the automorphism group $G$ to move $z$ around hitting  possibly  other domain components.
 If $\si_1$ is  the collection of points of the orbit $\{g(z)\ |\ g\in G\}$, then  $\si_1$ satisfies all properties of a stabilization with the exception that $(S,j,M\cup \si_1,D)$ might not be stable yet. If the latter still has unstable domain components we can pick a point $z'\in S$ in such a component in the complement of $M\cup\si_1\cup |D|$ so that $Tu(z')$ is injective, {
 $u^\ast\omega (z')$ is non-degenerate and   determines on $T_{z'}S$ the same orientation as $j(z')$ does.} Moreover, 
$u(z')$ is not contained in $u(|D|\cup M\cup\si_1)$. Then the set $\si_2=\si_1\cup \{g(z')\ |\ g\in G\}$ satisfies all properties of a stabilization with the exception that $(S,j,M\cup \si_2,D)$ might not be stable yet.
 At every step of this procedure an unstable domain component
 obtains {at least one}  additional point. Since we only have a finite number of domain components  and since we have to add at most three points to an unstable domain component to obtain stability, it is clear that after a finite number of steps we arrive at  the desired stabilization $\si$  of $\alpha$.
\end{proof}

Let $\alpha=(S,j,M,D,u)$ be a stable map equipped with the stabilization $\si$. 
Recalling that $u$ maps $S$ into the symplectic manifold $Q$, we label the images of the nodal points in $\abs{D}$ and of the points in the stabilization $\si$  by
$u(\abs{D})=\{w_1,\ldots, w_k\}$ and $u(\si)=\{w_{k+1},\ldots ,w_{k+l}\}$,  {respectively}. By definition of the stabilization, $u(|D|)\cap u(\si)=\emptyset$.
For every $i\in \{1,\ldots, k+l\}$ we fix a smooth bijective chart
$$
\psi_i:({\bf R}(w_i),w_i)\rightarrow ({\mathbb R}^{2n},0)
$$
so that the closures of the domains are mutually disjoint. We denote
by ${\bf R}_r(w_i)$ the preimage of the $r$-ball $B_r(0)$ around $0$ under the chart map $\psi_i$.
Next we fix a small disk structure ${\bf D}$,  and, in addition, closed disks $D_z$ with
smooth boundaries  centered at the points $z$ in $\si$ so that all disks are
mutually disjoint and so that the following holds. \\[0.3ex]

\noindent $\boldsymbol{(\ast)}$ For every $z\in |D|\cup \si$ the image under $u$ of the
disk $D_z$ is contained in ${\bf R}_1(w_i)$, where $w_i=u(z)$. Moreover the union $\bigcup_{z\in \Sigma} D_z$ is invariant under $G^\ast$.\\
\\
We note that invariance under $G$ would actually be sufficient for the later constructions.
We choose  a Riemann metric $g$ on the manifold $Q$ which,  over  the open neighborhood ${\bf R}_4(w_i)$, is the
pull-back of the standard metric on ${\mathbb R}^{2n}$ under the map  $\psi_i.$
At this point we consider the stable noded Riemann surface $(S,j,M\cup\si,D)$ in which the marked points $M\cup \si$ are not ordered.  We denote its  finite automorphism  group by $G^\ast$. Using the exponential gluing
profile,  we use our small disk structure and take a good {complex} deformation
$v\mapsto  j(v)$ of $j$ so that for a suitable $G^\ast$-invariant
open neighborhood $O$ of $(0,0)\in N\oplus E$ we have the  good uniformizing
family
$$
(a,v)\mapsto  (S_a,j(a,v),(M\cup\si)_a,D_a)
$$
possessing  all the properties listed in Definition \ref{citiview}.

We denote by $U(u)$ a $C^0$-open neighborhood of the map $u$ in the space of continuous maps from $S$ into $Q$, which is so small that the images of the discs
$D_z$ under the maps belonging to $U(u)$ lie  in ${\bf R}_2(w_i)$ for $w_i=u(z)$.  Hence we can define
a family
$$
(a,v,u')\rightarrow (S_a,j(a,v),M_a,D_a,\oplus_a(u')),
$$
where $(a,v)$ belongs to the $G^\ast$-invariant open neighborhood
$O$ of $(0,0)\in N\times E$ and  where the continuous map $u':S\to Q$ belongs to $U(u)$. Let us fix for every $z\in
\si$ a subdisk $SD_z$ which is contained in the interior of $D_z$
and also has smooth boundary so that their union is invariant under
the $G$-action. There is no loss of generality to assume that the subdisks are concentric, i.e. of the form
$$
SD_z = D_z(-h)=\{\zeta \in D_z\vert \, \text{$ \zeta =z$ or $\zeta=h_x(s, t)$ for $s\geq h$}\},$$
where $h>0$ and where $h_x$ are positive holomorphic polar coordinates on $D_z$ centered at $z$.
We also fix subdisks having  the same invariance property for the  discs $D_x$ and $D_y$ associated with the nodal points $\{x, y\}$. Recall that given the  gluing parameter $a={(a_{\{x, y\}})}_{\{x, y\}\in D}$ we have introduced the union
$$
Z_a=\bigcup_{\{x,y\}\in D} Z_{a_{\{x,y\}}}^{\{x,y\}}
$$
of the cylinders $Z_{a_{\{x,y\}}}^{\{x,y\}}$ which connect the boundaries of the discs $D_x$ and $D_y$ if $a_{\{x, y\}}\neq 0$ and which are equal to $Z_0^{\{x, y\}}=D_x\cup D_y$ if 
$a_{\{x, y\}}=0$. Recalling the subcylinders  $SZ_{a_{\{x,y\}}}^{\{x,y\}}=Z_{a_{\{x,y\}}}^{\{x,y\}}(-h)$ from section \ref{dm-subsect}, we have abbreviated  
$$
SZ_a=Z_a(-h)=\bigcup_{\{x,y\}\in D} Z_{a_{\{x,y\}}}^{\{x,y\}}(-h).
$$
 {We recall that $G$ is the automorphism group of the stable map $\alpha=(S, j, M, D, u)$, not necessarily having a stable domain $(S, j, M, D)$, but equipped with a stabilization $\si$. Moreover,  $G^\ast$ is   the  group of automorphisms  of the  stable nodal  Riemann  surface $(S,j,M\cup\si,D)$, where
$M\cup\si$ is viewed as an unordered set. The group  $G^*$ is finite and, by the properties of a stabilization, contains $G$.} 

{The behavior of isomorphisms  near the nodes  is described in the following proposition}.
\begin{proposition}\label{ay}
We consider the stable map  $(S, j, M, D, u)$ and recall the above definitions. Then given $h>0$, there exists  a $G^\ast$-invariant
open neighborhood $O'\subset O$ of $(0,0)\in N\oplus E$ and a $G$-invariant $C^0$-neighbor\-hood $U'(u)\subset U(u)$ of the map  $u$ so that the following holds. If $u',u''\in U'(u)\cap H^{3,\delta_0}(S, Q)$ are  maps from $S$ to $Q$ and 
$(a,v),(b,w)\in O'$ and if 
$$
\phi:(S_a,j(a,v),M_a,D_a,\oplus_a(u'))\rightarrow
(S_b,j(b,w),M_b,D_b,\oplus_b(u''))
$$
is an isomorphism, then there exists an automorphism $g$ {in the automorphism group $G$ of the  original stable map $\alpha=(S, j, M, D, u)$} satisfying 
$$
\phi(Z_{a_{\{x,y\}}}^{\{x,y\}}(-h))\subset Z_{b_{\{g(x),g(y)\}})}^{\{g(x),g(y)\}}
$$
at  all nodal pairs $\{x,y\}\in D$. In addition,   if  $z\in \si$,  then
$$
\phi(D_z(-h))\subset D_{g(z)}.
$$
\end{proposition}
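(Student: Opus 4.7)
The proof proceeds by a compactness–contradiction argument combined with the universal property of the good uniformizing family of $(S,j,M\cup\si,D)$ (Theorem \ref{smoothfamily}) together with the rigidity clause in Definition \ref{citiview}. Assume the conclusion fails. Then shrinking hypothetical neighborhoods to $\{0\}$ and $\{u\}$ we find sequences $(a_n,v_n),(b_n,w_n)\to(0,0)$ in $O$, maps $u'_n,u''_n\in U(u)\cap H^{3,\delta_0}(S,Q)$ with $u'_n,u''_n\to u$ in $C^0$, and isomorphisms $\phi_n:(S_{a_n},j(a_n,v_n),M_{a_n},D_{a_n},\oplus_{a_n}(u'_n))\to(S_{b_n},j(b_n,w_n),M_{b_n},D_{b_n},\oplus_{b_n}(u''_n))$, none of which is compatible with any $g\in G$ in the claimed sense.

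The first task is to locate the preimages of the stabilization points. For every $z'\in\si$, viewed as a point in the core of $S_{b_n}$, the isomorphism identity $\oplus_{b_n}(u''_n)\circ\phi_n=\oplus_{a_n}(u'_n)$ gives $\oplus_{a_n}(u'_n)(\phi_n^{-1}(z'))=u''_n(z')\to u(z')\in u(\si)$. Since $u(\si)\cap u(|D|\cup M)=\emptyset$ by stabilization property (5), and since $\oplus_{a_n}(u'_n)$ squeezes each neck $Z_{a_n}^{\{x,y\}}$ into an arbitrarily small neighborhood of $u(x)\in u(|D|)$ as $n\to\infty$ (because $u'_n\to u$ uniformly and $u(D_x\cup D_y)$ lies near $u(x)$), the point $\phi_n^{-1}(z')$ must lie in the core of $S_{a_n}$ for all large $n$ and identifies with a point of $S$. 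Setting $\tau_n:=\phi_n^{-1}(\si_{b_n})\subset S$ and passing to a subsequence, we obtain $\tau_n\to\tau^\ast\subset S$.

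At this stage $\phi_n$ is an isomorphism of stable noded Riemann surfaces $\beta_{(a_n,v_n,\tau_n)}:=(S_{a_n},j(a_n,v_n),M_{a_n}\cup\tau_n,D_{a_n})\to\beta_{(b_n,w_n,\si_{b_n})}:=(S_{b_n},j(b_n,w_n),M_{b_n}\cup\si_{b_n},D_{b_n})$, treated as surfaces with the unordered marking $M\cup\si$. The universal property of Theorem \ref{smoothfamily} identifies each side with an element $\alpha^\ast_{\mu_n^{\pm}}$ of the good uniformizing family $(a,v)\mapsto\alpha^\ast_{(a,v)}=(S_a,j(a,v),(M\cup\si)_a,D_a)$ near $(0,0)$, with $\mu_n^-,\mu_n^+\to(0,0)$. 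The composite isomorphism $\alpha^\ast_{\mu_n^-}\to\alpha^\ast_{\mu_n^+}$ is then, by the rigidity clause of Definition \ref{citiview}, induced by a unique $g_n\in G^\ast$ satisfying $\mu_n^+=g_n\ast\mu_n^-$. Finiteness of $G^\ast$ allows one to pass to a constant subsequence $g_n=g\in G^\ast$. Taking the limit in $\oplus_{b_n}(u''_n)\circ\phi_n=\oplus_{a_n}(u'_n)$ on the core using $u'_n,u''_n\to u$ in $C^0$ yields $u\circ g=u$, and preservation of the ordering of $M$ by $\phi_n$ passes to the limit; therefore $g\in G$. In particular $g(\si)=\si$ and $\tau^\ast=g^{-1}(\si)$.

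The containment conclusions now follow from the core-continuity of $\phi_n\to g$ furnished by the universal property. For $z\in\si$ the disk $D_z(-h)$ is compactly contained in $D_z$, and $g(D_z)=D_{g(z)}$, so $\phi_n(D_z(-h))\subset D_{g(z)}$ for all large $n$. For a nodal pair $\{x,y\}\in D$, core-continuity controls $\phi_n$ on a thin collar of $\partial D_x\cup\partial D_y$ lying inside the core of $S_{a_n}$, mapping it uniformly close to $\partial D_{g(x)}\cup\partial D_{g(y)}\subset S_{b_n}$; since $\phi_n$ is a homeomorphism and $Z_{a_n}^{\{x,y\}}(-h)$ is connected, bounded away from this collar, and bordered across the neck by $\partial D_x\cup\partial D_y$, its image must lie entirely in the unique neck of $S_{b_n}$ adjacent to $\partial D_{g(x)}\cup\partial D_{g(y)}$, namely $Z_{b_n}^{\{g(x),g(y)\}}$. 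This contradicts the failure assumption and completes the proof. The principal technical obstacle is the localization in the second step — pinning $\phi_n^{-1}(\si_{b_n})$ down in the core of $S_{a_n}$ so that the universal property becomes applicable — after which the rigidity of the good uniformizing family does the rest.
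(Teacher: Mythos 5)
Your overall strategy (argue by contradiction with sequences, extract a limiting automorphism $g\in G$, then conclude by uniform convergence near the core) is the same as the paper's, but you replace the paper's key compactness input by the universal property of the good uniformizing family, and this substitution has a genuine gap. The paper's proof rests entirely on Lemma \ref{gromov-conv}: the stability condition $\int_C u^\ast\omega>0$ excludes bubbling off in the sequence $(\phi_k)$, and Gromov compactness then yields a subsequence converging in $C^{\infty}_{\text{loc}}$ away from the nodes to an automorphism $\phi_0=g$ of the stable map $(S,j,M,D,u)$; the containments follow at once. Your argument never invokes this stability condition, and the step where you would need it is exactly the one you gloss over.

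Concretely, Theorem \ref{smoothfamily} takes as a \emph{hypothesis} an isomorphism $\psi$ at the base parameter value and only then produces the germ of isomorphisms at nearby parameters. For your left-hand family $(a_n,v_n,\tau_n)\mapsto\beta_{(a_n,v_n,\tau_n)}$ the base object is $(S,j,M\cup\tau^\ast,D)$, and you have no isomorphism from it to any member of the uniformizing family $(a,v)\mapsto\alpha^\ast_{(a,v)}$: producing such an isomorphism is equivalent to producing the limit of the $\phi_n$, which is what the whole argument is supposed to deliver. So the sentence ``the universal property identifies each side with an element $\alpha^\ast_{\mu_n^{\pm}}$'' is unjustified for the left-hand side, and the rigidity clause of Definition \ref{citiview} (which only classifies isomorphisms \emph{between members of the family}) cannot yet be applied. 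A second, related problem is that the limit set $\tau^\ast$ of $\tau_n=\phi_n^{-1}(\si_{b_n})$ may be degenerate: your localization argument (via property (5) of the stabilization) does keep $\tau_n$ in the core and away from $M\cup|D|$, but it does not prevent two points of $\tau_n$ from colliding in the limit (property (4) only says colliding images force the points to lie in one $G$-orbit). If they collide, $(S,j,M\cup\tau^\ast,D)$ is not a legitimately marked surface and the deformation framework of Theorem \ref{smoothfamily} does not apply. Excluding such degeneration requires uniform gradient bounds on $\phi_n$ near the core, i.e. precisely the bubbling-off analysis carried out in the proof of Lemma \ref{gromov-conv}. In short, the universal property can finish the argument only after the Gromov-compactness step has been done; it cannot replace it.
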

\begin{proof}
Arguing indirectly we may assume that we have two sequences
$u_k',u_k'' \rightarrow u$ of maps  in $U'(u)\cap H^{3,\delta_0}(S, Q)$ converging in $C^0$,  sequences $(a_k,v_k)\rightarrow (0,0)\in
O$ and $(b_k,w_k)\rightarrow (0,0)\in O$, and a sequence of isomorphisms
\begin{eqnarray*}
&\phi_k:(S_{a_k},j(a_k,v_k),M_{a_k},D_{a_k},\oplus_{a_k}(u'_k))\rightarrow\ \ \ \ &\\
&\ \ \ \ \ \ \ \ \ \ \ \ (S_{b_k},j(b_k,w_k),M_{b_k},D_{b_k},\oplus_{b_k}(u''_k))&
\end{eqnarray*}
which violates the conclusion of the proposition.
The stability assumption on the stable maps $(S, j, M, D, u)$ excludes bubbling off in  the sequence $(\phi_k)$ of mappings and one concludes by Gromov compactness that any subsequence of $(\phi_k)$ has a subsequence converging in $C^{\infty}_{\text{loc}}$ away from the nodes and the neck to some isomorphism 
$$\phi_0 :(S, j, M, D, u)\to (S, j, M, D, u).$$
This fact is a  consequence of the following lemma which is proved  in Appendix  \ref{section5.1}.
\begin{lemma}\label{GROMOVCONV}
We consider the stable map  $(S, j, M, D, u)$ and two sequences $u_k, u_k'\in H^{3,\delta_0}(S, Q)$  converging  to $u$ in $C^0$. We assume 
that $(a_k,v_k)\rightarrow (0,0)\in O$ and $(b_k,w_k)\rightarrow (0,0)\in O$ and  assume that  
\begin{eqnarray*}
&\phi_k:(S_{a_k},j(a_k,v_k),M_{a_k},D_{a_k},\oplus_{a_k}(u_k))\rightarrow\ \ \ \ \ \ \ &\\
&\ \ \ \ \ \ \ \ (S_{b_k},j(b_k,w_k),M_{b_k},D_{b_k},\oplus_{b_k}(u'_k))&
\end{eqnarray*}
is a sequence of isomorphisms. Then there is a subsequence of $(\phi_k)$ which converges in $C^{\infty}_{\text{loc}}$ away from the nodes to an automorphism $\phi_0$ of $(S, j, M, D, u)$. 
\end{lemma}
Consequently, $\phi_0=g\in G$ and hence $\phi_0(D_z(-h))=D_{g(z)}(-h)\subset D_{g(z)}$ for $z\in \Sigma$ and $\phi_0(D_x(-h) \cup D_y(-h))\subset D_{g(x)}\cup D_{g(y)}$ for the nodal pairs  $\{x, y\}$. After passing to a subsequence, we see that for $k$ large enough
$\phi_k (D_z(-h))\subset D_{g(z)}$, and 
$\phi_k (D_x(-h)\cup D_y(-h))\subset D_{g(x)}\cup D_{g(y)}$ contradicting the choice of the sequence and implying the result.
\end{proof}

The same arguments prove also the next proposition.

\begin{proposition}\label{az}
Given $h>0$ there exists $H>0$ and a $G^\ast$-invariant open
neighborhood $O''\subset O'$ of $0$ and a $C^0$-neighborhood $U''(u)\subset U(u)$  of the map 
$u$ so that the following holds. If $u', u''\in U''(u)$, $(a,v),(b,w)\in
O''$ and if 
$$
\phi:(S_a,j(a,v),M_a,D_a,\oplus_a(u'))\rightarrow
(S_b,j(b,w),M_b,D_b,\oplus_b(u''))
$$
is an isomorphism,  then there exists an automorphism $g\in G$ satisfying 
$$
\phi(D_z(-H))\subset D_{g(z)}(-h)
$$
for all $z\in \si$, and 
$$\phi(Z^{\{x,y\}}_{a_{\{x,y\}}}(-H))\subset
Z^{\{g(x),g(y)\}}_{b_{\{g(x),g(y)\}}}(-h))
$$
for every nodal pair $\{x,y\}\in D$.
\end{proposition}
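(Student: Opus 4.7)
The proof proceeds along the same lines as Proposition \ref{ay}, arguing by contradiction via Lemma \ref{gromov-conv}. The only new ingredient is to take $H$ strictly larger than $h$, so that the images of the deep subsets $D_z(-H)$ and $Z^{\{x,y\}}_{a_{\{x,y\}}}(-H)$ under the limiting automorphism $g\in G$ lie strictly inside the shallow targets $D_{g(z)}(-h)$ and $Z^{\{g(x),g(y)\}}_{b_{\{g(x),g(y)\}}}(-h)$; uniform convergence then propagates this strict containment to the prelimit objects. Concretely, fix $H>h$ sufficiently large (quantitatively pinned down by the geometry below), and suppose for contradiction that no triple $(H,O'',U''(u))$ witnesses the claim. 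Shrinking $O''\subset O'$ and $U''(u)\subset U'(u)$ produces sequences $(a_k,v_k),(b_k,w_k)\to(0,0)$ in $O'$, maps $u'_k,u''_k\in U'(u)\cap H^{3,\delta_0}(S,Q)$ converging to $u$ in $C^0$, and isomorphisms
\[
\phi_k:(S_{a_k},j(a_k,v_k),M_{a_k},D_{a_k},\oplus_{a_k}(u'_k))\to (S_{b_k},j(b_k,w_k),M_{b_k},D_{b_k},\oplus_{b_k}(u''_k))
\]
for which no $g\in G$ realizes both inclusions with this choice of $H$. By Lemma \ref{gromov-conv}, a subsequence of $(\phi_k)$ converges in $C^\infty_{\mathrm{loc}}$ away from the nodes to an automorphism $g$ of $(S,j,M,D,u)$.

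For each stabilization point $z\in\si$, the disk $D_z$ lies in $S\setminus|D|$, so $\phi_k\to g$ uniformly on $D_z$. In the chosen holomorphic polar coordinates, $g$ sends $D_z$ onto $D_{g(z)}$ by a rotation $(s,t)\mapsto(s,t+c)$, so $g(D_z(-H))=D_{g(z)}(-H)$, which lies in the interior of $D_{g(z)}(-h)$ at positive distance from its boundary whenever $H>h$. Uniform convergence therefore forces $\phi_k(D_z(-H))\subset D_{g(z)}(-h)$ for all large $k$. For a nodal pair $\{x,y\}\in D$, Proposition \ref{ay} applied with gluing depth $h$ already guarantees $\phi_k\bigl(Z^{\{x,y\}}_{a_{k,\{x,y\}}}(-h)\bigr)\subset Z^{\{g(x),g(y)\}}_{b_{k,\{g(x),g(y)\}}}$ for all large $k$. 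Combined with uniform convergence of $\phi_k$ to $g$ on fixed outer annular collars of $D_x$ and $D_y$ (which are compact subsets of $S\setminus|D|$), this forces $\phi_k\bigl(Z^{\{x,y\}}_{a_{k,\{x,y\}}}(-H)\bigr)\subset Z^{\{g(x),g(y)\}}_{b_{k,\{g(x),g(y)\}}}(-h)$ for all large $k$, provided $H$ was chosen sufficiently larger than $h$. This contradicts the defining property of the sequence $(\phi_k)$.

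The principal obstacle is the analysis in the middle of each long conformal neck, where $C^\infty_{\mathrm{loc}}$ convergence away from the nodes supplies no direct description of $\phi_k$. The resolution is a two-step control: Proposition \ref{ay} first corrals the image of the full sub-cylinder $Z^{\{x,y\}}_{a_{k,\{x,y\}}}(-h)$ inside the target cylinder, while uniform convergence of $\phi_k\to g$ on the fixed outer collars of $D_x$ and $D_y$ pins the two ends of this image deep inside the target. Conformal rigidity of an embedding of one finite conformal cylinder into another then upgrades these end-estimates to full-image control, so a deeper sub-cylinder $Z(-H)$ of the domain must be mapped into $Z(-h)$ of the target once $H-h$ is chosen large enough; this geometric gap is what determines the required $H$ in terms of $h$.
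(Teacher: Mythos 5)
Your proposal is correct and follows exactly the route the paper intends: the paper gives no separate argument for this proposition beyond the remark that ``the same arguments prove also the next proposition,'' i.e.\ the indirect compactness argument of Proposition \ref{ay} via Lemma \ref{gromov-conv}, which is precisely what you carry out. Your additional discussion of the neck middles (two-sided end control from $C^\infty_{\mathrm{loc}}$ convergence plus Proposition \ref{ay}, upgraded by the standard modulus/rigidity estimate for essential conformal embeddings of long cylinders, which fixes $H$ as $h$ plus a uniform constant) is a legitimate and in fact more detailed account of the step the authors leave implicit.
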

In order  to define the notion of a good uniformizing family for $Z=Z^{3,\delta_0}$, we start with the notion of good data.

\begin{definition}[{\bf Good Data}]\label{GD}\index{good data (definition)}
Let $\alpha=(S,j,M,D,u)$ be a stable map  representing the class $[\alpha]\in Z$ {where $(S, j, M, D)$ is a noded, not necessarily stable, Riemann surface}, and let $G$ be the automorphism group of $\alpha$. 
{\bf Good data}  centered at $\alpha$  are the following data.
\begin{itemize}
\item[(1)] A stabilization $\si$ of $\alpha$ and a good uniformizing family
$$
(a,v)\mapsto  (S_a,j(a,v),(M\cup\si)_a,D_a),\ \ (a,v)\in O,
$$
where the gluing is associated with  a small disk structure ${\bf D}$ around the points in $|D|$. In addition,  disks with smooth boundaries have been fixed around the points in $\si$, so that all the disks are mutually disjoint. Further,  the union of the disks is invariant under the $G^\ast$-action, where $G^\ast$ is the automorphism group of the stable noded   Riemann  surface $(S,j,M\cup\si,D)$. Here $M\cup\si$ is viewed as an un-ordered set.
\item[(2)]  Abbreviating the set $W=u(|D|\cup\si)$, there is  a collection of bijective charts around every point $w\in W$, 
    $$
    \psi_w:({\bf R}(w),w)\rightarrow ({\mathbb R}^{2n},0),
     $$
     so that the closures of the domains are mutually disjoint. Moreover, 
     for every $z\in |D|\cup\si$ the image $u(D_z)$ of the disk $D_z$ is contained in 
     ${\bf R}_1(u(z))$.
\item[(3)] The exponential map $\exp$ on the manifold $Q$ is associated with  a Riemannian metric $g$, which on the open sets ${\bf R}_4(w)$ is the pull-back of the standard metric on ${\mathbb R}^{2n}$ by $\psi_w$. Let $\wt{\mathcal O}$ be an open neighborhood of the zero-section in $TQ$, which is fiber-wise convex and has the property 
  that $\exp:\wt{\mathcal O}_q:=\wt{\mathcal O}\cap T_qQ\rightarrow Q$ is an embedding for every $q\in Q$.
\item[(4)]  Every point  $w\in u(\si)$ lies in  a  $(2n-2)$-dimensional submanifold $M_w\subset Q$ such that  $\psi_w(M_w)\subset {\mathbb R}^{2n}$ is a linear subspace.  Moreover, if $w=u(z)$ for $z\in \si$, then the tangent space  $H_w:=T_{w}M_w$ is {a complement} in $T_wQ$ of the image of $Tu(z)$. We point out that near $w$ the manifold  $M_w$  is the image of vectors in $T_wM_w\subset T_wQ$ under the exponential map.
\item[(5)] An open $G$-invariant neighborhood $U$ of the zero-section $0\in H^{3,\delta_0}_c(S, u^\ast TQ)$ so that every section $\eta\in U$ has its image in
    {$u^*\wt{\mathcal O}$ where $\wt{\mathcal O}$ is the  above open neighborhood of the zero-section in $TQ$.}
\item[(6)] {Concentric} subdisks $SD_z$ of $D_z$ with smooth boundaries for all $z\in \si$ so that their union is invariant under $G$.
    \end{itemize}

\begin{itemize}
\item[(7)] If  $z\in \si$, then  the restriction $u|D_z$ is an embedding transversal to $M_{u(z)}$. Moreover, if $z'\in D_z$ satisfies $u(z')\in M_{u(z)}$, then   $z'=z$.
\item[(8)]  For every  section $\eta\in U$ and every $z\in \si$, the map $f:=\exp_u(\eta):S\to Q$
satisfies  $f(D_z)\subset {\bf R}_2(u(z))$ and the restriction $f\vert D_z$ is an embedding transversal to $M_{u(z)}$ and intersects  $M_{u(z)}$ at a single point
$f(z')$ for a unique $z'\in SD_z$.
\item[(9)] Take the small disk structure of (1)  and let $j(a, v)$ be as given in (1).  If for $(a,v,\eta)$ and $(a',v',\eta')$ in $O\oplus U$ the tuples
    $$
    \alpha_{(a,v,\eta)}:=(S_a,j(a,v),M_a,D_a,\oplus_a(\exp_u(\eta)))
    $$
     and
$\alpha_{(a',v',\eta')}$ are isomorphic by some isomorphism  $\phi$, then there exists an automorphism  $g\in G$ satisfying  $\phi(SD_z)\subset D_{g(z)}$ for all $z\in\si$.
\item[(10)] If the isomorphism $\phi$ in (9) is,  in addition,  an isomorphism
$$
\phi:(S_a,j(a,v),(M\cup\si)_a,D_a)\rightarrow (S_{a'},j(a',v'),(M\cup\si)_{a'},D_{a'})
$$
between  the stable nodal  Riemann surfaces,  then $\phi=g_a$ for some $g\in G$ and $(a', v')=g\ast (a, v).$
\end{itemize}
\end{definition}
{The above space $H^{3,\delta_0}_{\textrm{c}}(S, u^*TQ)$ is the   space of sections having matching asymptotic constants near the nodes}. {As a complete norm we can take the sum of finitely many semi-norms obtained
as follows. Around  a point which is not a nodal point,  we fix a small open disk-like neighborhood ${\mathcal U}$  having a smooth boundary and 
not containing  a nodal point. Then we take a biholomorphic map $\psi:{\mathbf D}\rightarrow \cl({\mathcal U})$
and a smooth trivialization $\Gamma: (u^{\ast}TQ)|{\mathcal U}\rightarrow {\mathbf D}\times {\mathbb R}^{2n}$. 
If $\eta$ is a section of $u^\ast(TQ)$,  then $v=pr_2\circ\Gamma\circ \eta\circ \psi$ defines a map on the open unit disk
with image in ${\mathbb R}^{2n}$. Take a smooth function  $\beta:{\bf D}\rightarrow [0,1]$ which has compact support in
the open unit disk and is identically to $1$ on some $\delta$-disk for  $\delta\in (0,1)$ close to $1$. Then the map
$$\eta\mapsto \norm{\beta v}_{H^3({\bf D},{\mathbb R}^{2n})}$$
defines one of the semi-norms we are going to use.  Such constructions already occur in \cite{El}.
Near a nodal pair $\{x,y\}$ we take two open disk-like neighborhoods $D_x$ and $D_y$ with smooth boundaries
so that the images of their closures under the map $u$ lie in the domain of a smooth diffeomorphic chart $\psi$ of the manifold $Q$, 
$$\psi:(O(u(x)),u(x))\rightarrow ({\mathbb R}^{2n},0).$$
Then we take holomorphic polar coordinates on $D_x\setminus\{x\}$ (positive) and $D_y\setminus\{y\}$ (negative), 
denoted by $\sigma_x$ and $\sigma_y$.  Given a section $\eta$ with common asymptotic limit over
$\{x,y\}$,  we consider the pair $(v^+,v^-)$ of maps,  defined by
$$v^+(s,t)=pr_2\circ T\psi(u\circ\sigma_x(s,t))\eta(\sigma_x(s,t))$$
and 
$$v^-(s,t)=pr_2\circ T\psi(u\circ\sigma_y(s',t'))\eta(\sigma_y(s',t')).$$
{The maps} $v^+$ and $v^-$ have the same asymptotic constant, say $c$. Taking a suitable smooth function
$\beta:{\mathbb R}\rightarrow [0,1]$ with support in $(0,\infty)$ and satisfying $\beta(s)=1$ for $s\geq 1$
we define $\beta^+(s)=\beta(s)$ and $\beta^-(s')=\beta(|s|)$.  Now  we define the square of the  semi-norm 
using the usual $(3,\delta_0)$-norms by
$$|c|^2 +\norm{\beta^+(v^+-c)}_{H^{3,\delta_0}}^2 +\norm{\beta^-(v^--c)}_{H^{3,\delta_0}}^2.$$
Using a suitable choice of finitely many  semi-norms of this  type, one can define complete norms
on $H^{3,\delta_0}_{\textrm{c}}(S, u^*TQ)$,   and any such construction will lead to an equivalent norm.}

\begin{proposition}
There exists  good data centered at a stable map  $\alpha$ representing $[\alpha]\in Z$.
\end{proposition}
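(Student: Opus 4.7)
The plan is to assemble the ten conditions one at a time, using the lemmata and propositions established earlier in the section, and then to shrink the various neighborhoods at the end to make the compatibility requirements (9) and (10) hold simultaneously.

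First I would apply Lemma \ref{ax} to produce a stabilization $\sigma$ of $\alpha=(S,j,M,D,u)$. This gives the $G^\ast$-invariant stabilized nodal surface $(S,j,M\cup\sigma,D)$ with finite automorphism group $G^\ast\supset G$. Applying Theorem \ref{existence-x} to this stable noded Riemann surface using the exponential gluing profile, one obtains a good uniformizing family $(a,v)\mapsto (S_a,j(a,v),(M\cup\sigma)_a,D_a)$ parameterized by $(a,v)$ in a $G^\ast$-invariant open neighborhood $O$ of $0\in N\oplus E$. Next I would choose a small disk structure ${\bf D}$ around $|D|$ and mutually disjoint disks with smooth boundaries around every point of $\sigma$, with $G^\ast$-invariant union, which by Proposition \ref{prop2.6-n} may be assumed contained in the region where the good deformation is constant. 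This yields condition (1). For condition (2), I would pick the charts $\psi_w:(\mathbf R(w),w)\to(\mathbb R^{2n},0)$ one by one (the images $W=u(|D|\cup\sigma)$ are finitely many and distinct by properties (4), (5) of a stabilization), and shrink the domains if necessary so that the $u$-images of the fixed disks $D_z$ lie in $\mathbf R_1(u(z))$; this is possible because $u$ is continuous and $u(z)$ is fixed. Condition (3) is obtained by picking any Riemannian metric on $Q$ and modifying it on the disjoint sets $\mathbf R_4(w)$ to agree there with the pull-back of the Euclidean metric; the neighborhood $\mathcal O\subset TQ$ exists by standard Riemannian geometry.

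For condition (4), at every $z\in\sigma$ the tangent $Tu(z)$ is injective by property (6) of a stabilization, so its image is a real $2$-plane in $T_{u(z)}Q$; I would take $H_{u(z)}$ to be its orthogonal complement and then define $M_{u(z)}:=\exp_{u(z)}(H_{u(z)}\cap\mathcal O)$, further arranging via an adjustment of the chart $\psi_{u(z)}$ by a linear change of coordinates that $\psi_{u(z)}(M_{u(z)})$ is the linear subspace $\{0\}\times\mathbb R^{2n-2}$. Condition (7) is then an implicit function / transversality statement: shrinking the disks $D_z$ around $z\in\sigma$ to $SD_z\subset D_z$ of small radius, the map $u|D_z$ is an embedding transversal to $M_{u(z)}$ at $z$ (because $Tu(z)$ is injective and complementary to $H_{u(z)}$), and by property (5) of a stabilization $u(\sigma)\cap u(|D|\cup M)=\emptyset$ ensures that after shrinking the single intersection is $z$ itself. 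Condition (8) follows from the transversal constraint construction built in Proposition \ref{n-prop2.24} and the paragraph after it: in chart coordinates the map $f=\exp_u(\eta)|D_z$ is a $C^1$-small perturbation of $u|D_z$ for $\eta$ small in $H^{3,\delta_0}_c(u^\ast TQ)$, hence preserves the transversal intersection in a unique point of $SD_z$. This determines a $G$-invariant neighborhood $U$ of the zero section (averaging over $G$ if necessary), giving conditions (5), (6), (8).

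The remaining conditions (9) and (10) are shrinking statements. For (9), I would invoke Proposition \ref{ay} with $h$ chosen smaller than the radius of the chosen $SD_z$: the proposition produces a $G^\ast$-invariant neighborhood $O'\subset O$ and a $C^0$-neighborhood $U'(u)$ of $u$ such that any isomorphism $\phi$ between the glued stable maps sends $SD_z$ into $D_{g(z)}$ for some $g\in G$. Replacing $U$ by its intersection with the preimage of $U'(u)$ under $\eta\mapsto\exp_u(\eta)$ (which is open because the exponential is continuous on sections and the image of $0$ is $u$), and replacing $O$ by $O'$, gives condition (9). For condition (10), by the definition of a good uniformizing family (Definition \ref{citiview}, third bullet) any isomorphism between $\alpha_{(a,v,\eta)}$ and $\alpha_{(a',v',\eta')}$ that also intertwines the marked-point data $M\cup\sigma$ must be of the form $\phi=g_a$ for a unique $g\in G^\ast$ with $(a',v')=g\ast(a,v)$; the hypothesis $u'\circ\phi = u$ together with the stabilization property that $u(\sigma)$ has the orbit structure of $G$ then forces $g$ to lie in the subgroup $G\subset G^\ast$.

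The main obstacle I expect is ensuring that all the shrinkings in the last paragraph can be carried out simultaneously while preserving $G$-invariance, and in particular verifying that $G^\ast$-automorphisms which move $\sigma$ around but respect $M$ restrict to genuine $G$-automorphisms of $\alpha$; this is where the full strength of the stabilization properties (in particular (4), together with the injectivity of $Tu$ at $\sigma$) is essential to force $\phi=g_a$ with $g\in G$ rather than merely $g\in G^\ast$.
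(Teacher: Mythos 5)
Your assembly of the data for conditions (1)--(9) follows the paper's route exactly: Lemma \ref{ax} for the stabilization, Theorem \ref{existence-x} for the good uniformizing family of the stabilized surface $(S,j,M\cup\si,D)$, Proposition \ref{prop2.6-n} for the small disk structure, and Proposition \ref{ay} for the shrinking needed in (9); the construction of the charts, metric, linear constraints and transversality in (2)--(8) is the routine part and you handle it correctly.

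The gap is in your argument for condition (10). From the good uniformizing family of the stabilized surface you correctly obtain $\phi=g^\ast_a$ for some $g^\ast\in G^\ast$; but to conclude $g^\ast\in G$ you must verify the identity $u\circ g^\ast=u$ on all of $S$, and the orbit structure of $u(\si)$ under $G$ cannot deliver this. At best (using the transversal constraints and the disjointness of the chart domains) it yields $u(g^\ast(z))=u(z)$ for each $z\in\si$, whence by property (4) of a stabilization each $g^\ast(z)$ lies in the $G$-orbit of $z$; this controls $g^\ast$ only on the finite set $\si$, and an element of $G^\ast$ agreeing with some $g\in G$ on $\si$ need neither satisfy $u\circ g^\ast=u$ away from $\si$ nor coincide with $g$ as an automorphism (an automorphism of a stable surface fixing finitely many points need not be the identity). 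The paper closes (10) by a smallness-plus-finiteness argument instead: if the neighborhoods $U$ of the zero section and $O$ of $(0,0)$ are chosen small enough, the Gromov-compactness argument behind Lemma \ref{gromov-conv} and Proposition \ref{ay} forces any such $\phi$ to be $C^{\infty}$-close on the core to some element $g$ of $G$, and since $G^\ast$ is finite the maps $g^\ast_a$, $g^\ast\in G^\ast$, are uniformly separated for small data, so $g^\ast=g\in G$. You need this step; the marked-point bookkeeping alone does not close the argument.
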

\begin{proof}
Since the existence of a good uniformizing family is guaranteed by 
Theorem \ref{existence-x}, the proposition follows in view of  Lemma \ref{ax} and Propositions \ref{ay}, up to property (10) which requires the following additional argument. 
If the assumption (10) holds, then we conclude by the properties of a good uniformizing family that 
$\phi=g^\ast_a$ for an automorphism  $g^\ast\in G^\ast$.  However, if we assume,  in addition,  that the open neighborhood 
$U$ of the zero-section of $u^*TQ$ in (5) and the neighborhood $O$ of $(0, 0)$ in (1) are sufficiently small, then $\phi$ is necessarily $C^{\infty}$-close on the core to an automorphism $g$ in $G$. Since $G\subset G^\ast$ is a finite group, we conclude that $\phi=g\in G$. This completes the proof of the proposition.
\end{proof}

 If we have good data centered at  the stable map $\alpha=(S,j,M,D,u)$, with automorphism group $G$,  where the map $u$ is only of class { $(3,\delta_0)$} we can find arbitrarily $H^{3,\delta_0}$-close,  a  smooth map $u'$, which is of class $(m+3,\delta_m)$, for all $m\geq 0$, such  that we have good data centered at $(S,j,M,D,u')$ with the same choice of small disk structure, the same stabilization $\si$ etc. and such  that $u=\exp_{u'}(\eta)$ for an arbitrarily small $\eta\in U'$ and the same automorphism group.  This is  true, since our conditions in the above definition are $C^0$-smallness conditions and,  at the disks around points in $\si$ they are  $C^1$-smallness conditions. We shall refer to good data centered at $(S, j, M, D, u')$ as to {\bf smooth data}. Hence we  {have proved} the following {basic} result.

\begin{theorem}\label{blog}
For every stable map $\alpha=(S,j,M,D,u)$ representing an element in $Z$ there exist good data centered at some smooth stable map  $\alpha'$ of the form
$$
\alpha'=(S,j,M,D,u')
$$
such that  $u=\exp_{u'}(\eta)$ and $\eta$ is an arbitrarily small section in the open neighborhood $U$ of the zero-section of $H^{3,\delta_0}_c((u')^\ast TQ).$
Moreover $\alpha$ and $\alpha'$ have the same automorphism group.
\end{theorem}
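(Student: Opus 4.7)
The strategy is to apply the existence of good data (established in the preceding proposition) not to the given stable curve $\alpha$ directly, but to a smooth approximation $\alpha'$ obtained by perturbing $u$ slightly in $H^{3,\delta_0}$, and then to take $\eta$ to be the relative displacement between $u$ and $u'$ in exponential coordinates. The key point is that good data are specified by $C^0$-smallness (and $C^1$-smallness near the stabilization disks) conditions together with transversality requirements, all of which are open under small $H^{3,\delta_0}$-perturbations of the base map in view of the Sobolev embedding $H^{3,\delta_0} \hookrightarrow C^1$ on the relevant compact pieces.

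First I would approximate $u$ by a smooth map $u' \in \bigcap_{m \geq 0} H^{m+3,\delta_m}(S,Q)$ with $u'$ arbitrarily close to $u$ in the $H^{3,\delta_0}$-topology. Some care is required near the nodes: the approximation must preserve the matching condition $u'(x) = u'(y)$ for every nodal pair $\{x,y\} \in D$. This can be arranged by first fixing the common value $u(x) = u(y)$ at every node as a hard constraint, locally lifting $u$ to two maps into a common chart around this value, smoothing by mollification in the weighted cylindrical coordinates off the nodes, and patching with cut-offs. Stability of $\alpha' = (S, j, M, D, u')$ follows since $\int_C u'^*\omega > 0$ is an open $C^0$-condition on each unstable domain component. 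Now I invoke the existence of good data for $\alpha'$, producing a stabilization $\Sigma$, small disk structure $\mathbf{D}$, charts $\psi_w$, submanifolds $M_w$, a good uniformizing family on a neighborhood $O$, and a neighborhood $U'$ of the zero section in $H^{3,\delta_0}_c((u')^*TQ)$ satisfying conditions (1)--(10) of Definition \ref{GD}.

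It then remains to produce $\eta$ with $u = \exp_{u'}(\eta)$ and to check $\eta \in U'$. Since $u'$ is $C^0$-close to $u$ and the Riemannian exponential is a diffeomorphism on a fixed neighborhood of the zero-section of $TQ$, the section $\eta(z) = \exp_{u'(z)}^{-1}(u(z))$ is well-defined and pointwise small. Away from $|D|$ the section $\eta$ is of class $H^3_{\text{loc}}$ by composition with a smooth local inverse of $\exp$. At a nodal point, writing things in a fixed chart $\psi_w$ with $w = u(x) = u(y) = u'(x) = u'(y)$, one has $\eta$ represented as the difference of two maps both of class $(3,\delta_0)$ with the same asymptotic constant (namely $0$), hence $\eta$ itself has class $(3,\delta_0)$ near the nodes. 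Thus $\eta \in H^{3,\delta_0}_c((u')^*TQ)$, and by choosing $u'$ sufficiently close to $u$ in $H^{3,\delta_0}$ at the outset, the norm of $\eta$ can be made as small as desired, so in particular $\eta \in U'$.

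The main obstacle I anticipate is the first step: constructing the smooth approximation $u'$ which simultaneously respects the nodal matching condition \emph{and} has the same asymptotic constant at each node as $u$, so that the difference $\eta$ actually lies in the weighted space $H^{3,\delta_0}_c$ rather than merely $H^3_{\text{loc}}$. Once the approximation is set up with the common asymptotic value frozen, the verification of the ten conditions of Definition \ref{GD} for the smooth center $\alpha'$ is routine because it is precisely the content of the preceding existence proposition applied to $\alpha'$, and the openness of those conditions was already exploited in Remark \ref{polk}.
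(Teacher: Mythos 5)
Your proposal is correct and follows essentially the same route as the paper, which (in Remark \ref{polk}) first fixes good data centered at the merely $H^{3,\delta_0}$-regular map $u$ and then observes that the same stabilization, small disk structure and neighborhood remain good data for an arbitrarily $H^{3,\delta_0}$-close smooth $u'$, since all the conditions of Definition \ref{GD} are open $C^0$- (resp.\ $C^1$-) smallness conditions. The one point to watch in your ordering is that the neighborhood $U'$ is produced only after $u'$ is chosen, so before concluding $\eta\in U'$ you should note that the admissible size of $U'$ is bounded below uniformly for $u'$ in a small $H^{3,\delta_0}$-neighborhood of $u$ (or simply fix the data at $\alpha$ first and transfer them, as the paper does).
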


\begin{figure}[ht]
\psfrag {u}{$u$}
\psfrag  {uz}{$u(z)$}
\psfrag  {ez}{$\eta (z)$}
\psfrag  {h}{$H_{u(z)}$}
\psfrag  {m}{$M_{u(z)}$}
\psfrag  {ex}{$\exp_u(\eta)$}
\centering
\includegraphics[width=4.1in]{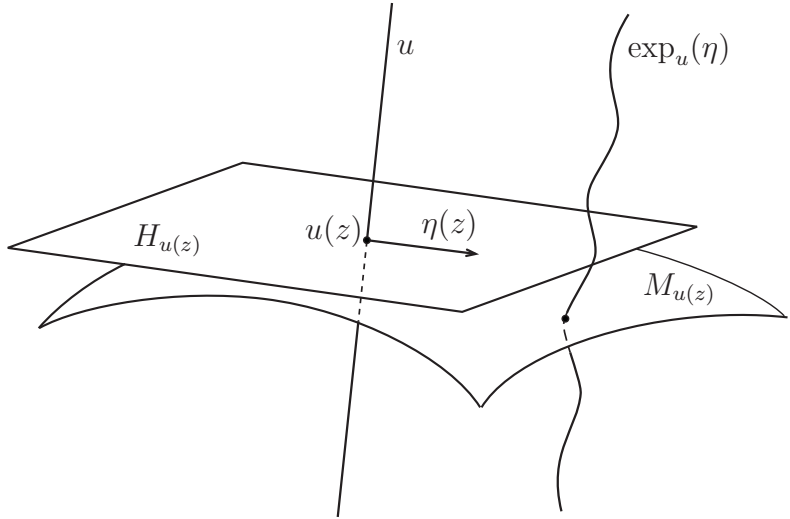}
\caption{The linear constraint belonging to $z\in \Sigma$.}
\label{Fig11}
\end{figure}

Next we consider good data centered at a smooth stable map  $\alpha=(S,j,M,D,u)$. If  $z$ belongs to the stabilization set $ \si$, we  introduce the plane  $H_{u(z)}=T_{u(z)}M_{u(z)}\subset T_{u(z)}Q$ in  the tangent space, which we refer to as the linear constraint associated with the point $z\in\si$. 
We note that points $z,z'\in\si$ satisfying  $u(z)=u(z')$ have the same associated constraint. 

In Section \ref{section-implant}  we have described the gluing and anti-gluing into the manifold $Q$.  Starting with the sc-Banach space
$H^{3, \delta_0}_c(S, u^\ast TQ)$ of sections $\eta$ (with matching nodal values) along the smooth map $u:S\to Q$ we define an sc-subspace of finite co-dimension $E_u$ by
$$
E_u=\{\eta\in H_c^{3,\delta_0}(u^\ast TQ)\vert \, \text{$\eta(z)\in H_{u(z)}$ for $z\in \si$}\}
$$
consisting of sections (along the smooth map $u:S\to Q$) satisfying the linear constraint $H_{u(z)}$ at the  points  $z\in \si$.
For every nodal pair $\{x,y\}\in D$ we can implant the family of splicing projections, which initially where defined for sections with matching nodal values 
$(\eta_x,\eta_y)$, where $\eta_x$ is a section of $(u|D_x)^\ast TQ$ and similarly for $D_y$. Near the boundaries of $D_x$ and $D_y$ the projection acts as the identity
and one can extend it to $E_u$ canonically, so that outside of the disks associated to the nodal pair it acts as the identity.
We shall denote it by $\pi^{\{x,y\}}_{a_{\{x,y\}}}$. The projections obtained for the various nodal pairs $\{x,y\}\in D$ commute and we denote by
$\pi_a$ with $a= {(a_{\{x,y\}})}_{\{x,y\}\in D}$ their product. It is defined for all $a$ with $|a_{\{x,y\}}|<\frac{1}{2}$, and therefore  for  $a\in {(B_\frac{1}{2})}^{\# D}$ where ${(B_\frac{1}{2})}^{\# D}$ is the cartesian product of $\# D$-copies of $B_{\frac{1}{2}}$.

We view the parameters $(a, v)$ as splicing parameters,  though $v$ is actually ineffective,  and  define the splicing core $K$ on $E_u$ as the set 
$$
K=\{(a, v,\eta)\vert\, \text{$ (a, v)\in O,\, \eta\in E_u$ and $\pi_a(\eta)=\eta$}\}
$$
{where $O$ is an open subset of the complex linear parameter space $(a, v)$ and $a\in (B_{\frac{1}{2}})^{\# D}$.}
In the following we shall abbreviate by  ${\mathcal O}$ the subset 
\begin{equation}\label{paraset}
{\mathcal O}=\{(a,v,\eta)\in K\vert \, \text{$(a,v)\in O$ and $\eta\in U\cap E_u $ }\},
\end{equation}
{where $U$ is the open neighborhood of the zero section $0\in H^{3,\delta_0}(S, u^*TQ)$ postulated in property (5) of the good data in Definition \ref{GD}.} The set  ${\mathcal O}$ is an open subset of the splicing core $K$.

\begin{definition}[{\bf Good uniformizing family of stable maps}]\label{def_g_uni_f_stable_maps}
{We assume we have good data (according to Definition \ref{GD}) centered at the smooth stable map $\alpha=(S, j, M, D, u)$ with automorphism group $G$, and consider the good uniformizing family 
$$
(a,v)\mapsto  \alpha_{(a,v)}:=(S_a,j(a,v),(M\cup \Sigma)_a,D_a), \quad  (a,v)\in O,
$$
of stable noded Riemann surfaces according to condition (1) in the Definition \ref{GD}.
Then the associated family of stable maps 
$$
(a,v,\eta)\mapsto  \alpha_{(a,v,\eta)}:=(S_a,j(a,v),M_a,D_a,\oplus_a\exp_u(\eta)), \quad  (a,v,\eta)\in {\mathcal O}, 
$$
is called a {\bf good uniformizing family of stable maps  centered at the smooth stable map} $\alpha=(S, j, M, D, u)$. } \index{good uniformizing family}
\end{definition}

From  
Theorem \ref{blog} one concludes immediately the following proposition. 
\begin{proposition}\label{imremark}
Given an element $[\alpha']\in Z$ in the space of stable maps,  there exists a good uniformizing family
$
(a,v,\eta)\mapsto  \alpha_{(a,v,\eta)}$, 
$(a,v,\eta)\in {\mathcal O}$, 
centered at a smooth $[\alpha]\in Z$,  and there exists a parameter $(a_0, v_0, \eta_0)\in {\mathcal O}$ for which  
$$[\alpha']=[\alpha_{(a_0,v_0,\eta_0)}].$$
\end{proposition}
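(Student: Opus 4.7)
The plan is to combine Theorem~\ref{blog} (which produces a smooth representative together with an arbitrarily small deformation section) with a reparametrization that enforces the transversal linear constraint defining the space $E_u$.

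First I would fix an arbitrary representative $\alpha'=(S,j',M,D,u')$ of $[\alpha']$ and apply Theorem~\ref{blog}. This furnishes a smooth stable curve $\alpha=(S,j',M,D,u)$, good data centered at $\alpha$ (stabilization $\sigma$, small disk structure ${\bf D}$, good complex deformation $v\mapsto j(v)$ with $j(0)=j'$, splicing parameter domain $O\ni(0,0)$), and a section $\eta\in H^{3,\delta_0}_c(u^\ast TQ)$ of arbitrarily small norm with $u'=\exp_u(\eta)$. The tautological candidate $(a_0,v_0,\eta_0):=(0,0,\eta)$ immediately satisfies $\alpha_{(0,0,\eta)}=(S,j',M,D,\exp_u(\eta))=\alpha'$, so $[\alpha_{(0,0,\eta_0)}]=[\alpha']$ holds. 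The only remaining question is whether $(0,0,\eta)\in\mathcal{O}$, which amounts to the requirement $\eta\in E_u$, i.e.\ $\eta(z)\in H_{u(z)}$ for every $z\in\sigma$.

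To enforce this constraint I would exploit property~(8) of good data. For $\eta$ sufficiently small, each $z\in\sigma$ admits a unique point $z^\ast\in SD_z$ with $\exp_u(\eta)(z^\ast)\in M_{u(z)}$, depending smoothly on $\eta$ and satisfying $z^\ast\to z$ as $\eta\to 0$. Choose a $G$-equivariant diffeomorphism $\phi:S\to S$, close to the identity, equal to the identity on $M\cup|D|$ and outside $\bigcup_{z\in\sigma}D_z$, with $\phi(z)=z^\ast$ for every $z\in\sigma$. Setting $\tilde u:=u'\circ\phi$ and writing $\tilde u=\exp_u(\tilde\eta)$, property~(4) of good data (which identifies $M_{u(z)}$ locally with the exponential image of $H_{u(z)}$) forces $\tilde\eta(z)\in H_{u(z)}$, whence $\tilde\eta\in E_u$. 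The map $\phi$ is tautologically an isomorphism of stable maps $(S,\phi^\ast j',M,D,\tilde u)\to(S,j',M,D,u')$, so both sides represent $[\alpha']$.

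Because $\phi$ is the identity on a neighborhood of $|D|$, the pulled-back complex structure $\phi^\ast j'$ coincides with $j=j(0)$ on $|{\bf D}|$, and lies $C^\infty$-close to $j$. The effectiveness of the partial Kodaira differential (condition~(4) of Definition~\ref{citiview}), combined with the universal property of good uniformizing families (Theorem~\ref{smoothfamily}) applied to the underlying noded Riemann surface $(S,j,M\cup\sigma,D)$, produces a unique $v_0$ near $0$ in the deformation domain, together with a further core-close automorphism of $S$, realizing an isomorphism $(S,\phi^\ast j',M,D)\cong(S,j(v_0),M,D)$. Absorbing this auxiliary isomorphism into $\phi$ (and correspondingly into $\tilde\eta$) yields a parameter $(0,v_0,\tilde\eta_0)\in\mathcal{O}$ with $[\alpha_{(0,v_0,\tilde\eta_0)}]=[\alpha']$. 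The principal obstacle in this scheme is the coordination between the reparametrization $\phi$ and the finite-dimensional family $v\mapsto j(v)$: one must realize $\phi^\ast j'$ \emph{exactly} as $j(v_0)$, not merely as a complex structure close to it. Theorem~\ref{smoothfamily} is precisely the device for carrying out this bookkeeping, and once invoked the remaining smallness conditions follow from the arbitrary smallness of $\eta$ guaranteed by Theorem~\ref{blog}.
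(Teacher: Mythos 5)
Your instinct that the only non-trivial point is the linear constraint $\eta(z)\in H_{u(z)}$, $z\in\si$, is exactly right: the paper itself disposes of the proposition in one line, as an immediate consequence of Remark \ref{polk} and Theorem \ref{blog}, taking $(a_0,v_0)=(0,0)$ and $\eta_0=\eta$ with $u'=\exp_u(\eta)$. However, your mechanism for enforcing the constraint does not close up. After you reparametrize by $\phi$ with $\phi(z)=z^\ast$, the structure $\phi^\ast j'$ differs from $j=j(0)$ precisely on the disks $D_z$, $z\in\si$, whereas every member $j(v)$ of the good deformation is \emph{equal} to $j$ on a neighborhood of $|D|\cup M\cup\si$; so $\phi^\ast j'$ genuinely leaves the finite-dimensional family and can only be matched to some $j(v_0)$ up to a further isomorphism $\psi$ of the domain. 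Absorbing $\psi$ into $\phi$ replaces the map by $u'\circ\phi\circ\psi$, and since $\psi$ in general moves the points of $\si$, the normalization $\phi(z)=z^\ast$ — the whole purpose of introducing $\phi$ — is destroyed, and the new $\eta$ again fails the constraint. Reconciling ``the complex structure lies exactly in the family'' with ``the map meets $M_{u(z)}$ exactly at $z$'' is a genuine simultaneous-matching problem; in the paper it is solved not here but in Theorem \ref{key-z}, where the stabilizing points are \emph{defined} as the transversal intersection points $\Theta_q$ and fed into the universal property as deformed marked points. Your proposal invokes Theorem \ref{smoothfamily} as ``the device for the bookkeeping'' but never performs it, and as written the two steps are circular.

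The repair is much simpler and explains why the paper calls the conclusion immediate: do not touch the domain $(S,j',M,D)$ of the representative at all, and instead use the freedom in the choice of the smooth center $u$. Having produced any smooth $u''$ close to $u'$ as in Remark \ref{polk}, precompose it with a small $G$-equivariant diffeomorphism $\tau$ of $S$ supported in $\bigcup_{z\in\si}D_z$ and sending each $z\in\si$ to the unique parameter $\zeta_z\in SD_z$ at which $u'(z)$ lies on the totally geodesic submanifold $M_{u''(\zeta_z)}$ (equivalently, $\exp^{-1}_{u''(\zeta_z)}(u'(z))\perp \operatorname{im}Tu''(\zeta_z)$; in the Euclidean chart $\psi_w$ this is the nearest-point condition, solvable uniquely by the implicit function theorem since $H^3\hookrightarrow C^1$ and $u'|D_z$ is an embedding). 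The map $u:=u''\circ\tau$ is still smooth and arbitrarily close to $u'$, all conditions of good data are open and persist, and now the section $\eta$ defined by $u'=\exp_u(\eta)$ satisfies $\eta(z)\in H_{u(z)}$ by construction, so that $(0,0,\eta)\in{\mathcal O}$ and $\alpha_{(0,0,\eta)}=(S,j',M,D,u')=\alpha'$ on the nose — no change of complex structure, no appeal to Theorem \ref{smoothfamily}, and no isomorphism to absorb.
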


\begin{definition}
If $G$ is the automorphism group of the stable map $(S, j, M, D, u)$,  the action of $G$ on ${\mathcal O}$ is defined by 
$$g\ast (a, v, \eta)=(a', v', \eta'),\qquad g\in G,$$
where 
$(a', v')=g\ast (a, v)$ and $\eta'\circ g=\eta$.
\end{definition}
\begin{proposition}\label{reremark1}
If  $(a, v, \eta)\mapsto \alpha_{(a, v, \eta)}$, $(a, v,\eta)\in {\mathcal O}$,  is a  good uniformizing family {of stable maps}, and 
$$[\alpha_{(a, v, \eta)}]=[\alpha_{(a', v' , \eta')}]$$
for two parameters $(a, v, \eta)$ and $(a', v', \eta')$ in ${\mathcal O}, $ 
then there exists an automorphism $g\in G$ satisfying
$$(a', v', \eta')=g\ast (a, v, \eta)$$
and  every isomorphism  between the two stable maps is of the form
$$g_a:\alpha_{(a, v,\eta)}\to \alpha_{(a', v', \eta')}.$$
\end{proposition}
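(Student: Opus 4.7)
The plan is to reduce the claim to properties (9) and (10) of good data (Definition \ref{GD}) by showing that the given isomorphism $\phi$ of stable maps is in fact an isomorphism of the associated stabilized noded Riemann surfaces, after which the desired relation on $\eta$ will be extracted from $\phi = g_a$.

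Let $\phi:\alpha_{(a,v,\eta)}\to \alpha_{(a',v',\eta')}$ be an isomorphism of stable maps. Property (9) of Definition \ref{GD} immediately supplies an automorphism $g\in G$ with $\phi(SD_z)\subset D_{g(z)}$ for every $z\in\sigma$. Since each disk $D_z$, $z\in\sigma$, is disjoint from the nodal neck, $\oplus_a\exp_u(\eta)|D_z=\exp_u(\eta)|D_z$; by property (8) this map meets the slice $M_{u(z)}$ at a unique point $z_\eta\in SD_z$, and analogously $\exp_u(\eta')|D_{g(z)}$ meets $M_{u(g(z))}$ at a unique point $(g(z))_{\eta'}$. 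The identity $u\circ g=u$ forces $M_{u(z)}=M_{u(g(z))}$, and the intertwining relation
\[
\oplus_{a'}\exp_u(\eta')(\phi(z_\eta)) \;=\; \oplus_a\exp_u(\eta)(z_\eta)\;\in\;M_{u(z)}
\]
combined with $\phi(z_\eta)\in D_{g(z)}$ and the uniqueness in property (8) forces $\phi(z_\eta)=(g(z))_{\eta'}$.

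Next I invoke the universal property of Theorem \ref{smoothfamily}, applied to the uniformizing family $(a,v)\mapsto (S_a,j(a,v),(M\cup\sigma)_a,D_a)$ of stabilized noded Riemann surfaces and a family in which the $\sigma$-marked points on the target side are replaced by a deformation covering the perturbed points $\{(g(z))_{\eta'}:z\in\sigma\}$. After shrinking the good-data neighborhoods $O$ and $U$ in the style of Propositions \ref{ay} and \ref{az}, the matching $\phi(z_\eta)=(g(z))_{\eta'}$ established above guarantees that $\phi$ becomes an isomorphism of the corresponding noded Riemann surfaces with this deformed stabilization. The universal property then produces a unique germ of parameters and, via the $G$-equivariance built into the uniformizing family, identifies the target parameter as $g\ast(a,v)$ and $\phi$ as the canonical morphism $g_a$. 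In other words, after the normalization provided by the universal property, property (10) becomes applicable and delivers $\phi=g_a$ and $(a',v')=g\ast(a,v)$.

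The remaining assertion $\eta'\circ g=\eta$ follows by unwinding $\oplus_{a'}\exp_u(\eta')\circ g_a=\oplus_a\exp_u(\eta)$. Off the small disk structure the map $g_a$ acts as $g$, so we obtain $\exp_u(\eta')\circ g=\exp_u(\eta)$ there; using $u\circ g=u$ and the fact that $\exp$ is a fibrewise embedding on the neighborhood $\mathcal{O}$ from property (3) of Definition \ref{GD}, we conclude $\eta'\circ g=\eta$ away from the necks. On the glued cylinders the analogous identity follows from the linearity of the gluing operation $\oplus_a$ applied to the local representatives, and by continuity of $\eta,\eta'$ in $H^{3,\delta_0}_c$ the equality propagates across each nodal pair to all of $S$. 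The main obstacle is the middle step: $\phi$ a priori matches only the map-dependent transversal points $z_\eta$ rather than the fixed stabilization $\sigma$, so to make property (10) applicable one must deploy the universal property together with the uniform smallness estimates afforded by Propositions \ref{ay} and \ref{az}.
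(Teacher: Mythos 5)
Your proof identifies as its "main obstacle" something that is not actually an obstacle, and the device you introduce to overcome it (the universal property with a deformed stabilization) leaves a genuine gap. The point you miss is that the parameters $(a,v,\eta)\in{\mathcal O}$ satisfy the linear constraint $\eta(z)\in H_{u(z)}=T_{u(z)}M_{u(z)}$ for $z\in\Sigma$, and by item (4) of the good data the submanifold $M_{u(z)}$ is, near $u(z)$, precisely the image of $T_{u(z)}M_{u(z)}$ under the exponential map of the chosen metric. Hence $\exp_{u(z)}(\eta(z))\in M_{u(z)}$, so $z$ itself is the unique transversal intersection point of $\exp_u(\eta)\vert D_z$ with $M_{u(z)}$: in your notation $z_\eta=z$, and likewise $(g(z))_{\eta'}=g(z)$. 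Your own computation then yields $\phi(z)=g(z)$ for every $z\in\Sigma$, so $\phi$ carries $(M\cup\Sigma)_a$ to $(M\cup\Sigma)_{a'}$ on the nose and property (10) applies verbatim; no deformation of marked points and no appeal to Theorem \ref{smoothfamily} is needed. This is exactly how the paper argues. As written, your middle step does not close: property (10) requires $\phi$ to be an isomorphism of the noded surfaces with the \emph{actual} marked points $(M\cup\Sigma)_a$, whereas the universal property applied to a deformed stabilization produces some core-smooth family of isomorphisms but gives no uniqueness statement identifying your given $\phi$ with a member of that family, nor any mechanism for returning to the undeformed marked points, so "property (10) becomes applicable" is asserted rather than proved.

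There is a second, smaller gap in your last step: the equality $\eta'\circ g=\eta$ on the glued necks does not "propagate by continuity" from the core. On the disks $D_x\cup D_y$ the gluing operator $\oplus_a$ has a nontrivial kernel, so the identity $\oplus_{a'}\exp_u(\eta')\circ g_a=\oplus_a\exp_u(\eta)$ alone does not determine $\eta'$ from $\eta$ there. What closes the argument is that both sections lie in the splicing core, $\pi_a(\eta)=\eta$ and $\pi_{a'}(\eta')=\eta'$ (equivalently $\ominus_a(\eta)=0=\ominus_{a'}(\eta')$); combined with the glued-map identity and the definition of $g_a$ this pins down $\eta'\circ g=\eta$ on all of $S$, which is the paper's concluding step.
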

\begin{proof}
By assumption there exists an isomorphism 
\begin{equation*}
\begin{split}
&\phi:\alpha_{(a, v, \eta)}=(S_a, j(a, v), M_a, D_a, \oplus_a (\exp_u (\eta)))\\
&\phantom{=======}\to 
\alpha_{(a', v', \eta')}=(S_{a'}, j(a', v'), M_{a'}, D_{a'}, \oplus_{a'} (\exp_u (\eta')))
\end{split}
\end{equation*}
satisfying 
$$\oplus_a (\exp_u (\eta))= \oplus_{a'} (\exp_u (\eta'))\circ \phi.$$
Take a point $z\in \si$ in the stabilization. Such a point lies, by construction, at a  finite distance  to the core 
part of $S$ which can be identified with the core part of $S_a$. Since the gluing takes place in the discs of the small disk structure, we have 
$\oplus_a (\exp_u (\eta)) (z)=\exp_{u (z)} (\eta (z))$ at $z\in\si$, and $\eta (z)\in H_{u(z)}\subset T_{u(z)}M_{u(z)}\subset T_{u(z)}Q.$

In view of the data (2), (6),  and (8), the restricted map $\exp_u (\eta)\vert D_z$ is an embedding of the disk $D_z$ which is transversal to the manifold $M_{u(z)}$ and intersecting $M_{u(z)}$ in the single point $z$. Here we have used that  the manifold $M_{u(z)}$ is totally geodesic. By the data (9), there exists an automorphism $g\in G$ satisfying
$$\phi (SD_z)\subset D_{g(z)}.$$
By definition of $G$, $u(g(z))=u(z)$ and hence,
$$\exp_u (\eta')\circ \phi(z)=\exp_u (\eta) (z)\in M_{u(z)}=M_{u(g(z))}.$$
Similarly, the map $\exp_u (\eta')\vert D_{g(z)}$ is an embedding of $D_{g(z)}$ and intersects the manifold $M_{u(g(z))}$ in precisely one point, namely in $g(z)$. Since $\phi (z)\in D_{g(z)}$, we therefore conclude that 
$$\phi (z)=g(z).$$
Hence, if $z\in \si$, then also $\phi (z)\in \si$ and consequently $\phi$ is an isomorphism
$$(S_a, j(a, v), (M\cup \si)_a, D_a)\to (S_{a'}, j(a', v'), (M\cup \si)_{a'}, D_{a'})$$
of nodal Riemann surfaces. In view of the data (10), there exists an automorphism $g\in G$ satisfying  $(a', v')=g\ast (a, v)$ and $\phi=g_a$. On the core part of $S$ we therefore have $g_a=g$ and hence, using $ u\circ g=u$,
$$
\exp_u (\eta')\circ g=\exp_u (\eta'\circ g)=\exp_u (\eta).
$$
Therefore, $\eta'\circ g=\eta$ on the core of $S$. The isomorphism $g_a$ maps 
$Z_{a_{\{x, y\}}}^{\{x, y\}}$ onto $Z_{a_{\{g(x), g(y)\}}}^{\{g(x), g( y)\}}$ for all nodal pairs $\{x, y\}\in D$.  
Using the definition of $g_a$, the fact that $u\circ g=u$ and the assumptions $\pi_a(\eta)=\eta$ and $\pi_a(\eta')=\eta'$, it follows that $\eta'\circ g=\eta$ on all of $S$.
This finishes the proof of Proposition \ref{reremark1}.
\end{proof}

{The next theorem  is the key result for proving the sc-smooth compatibility of good uniformizers. It is also important 
for the topological considerations later on.}

\begin{theorem}\label{key-z}
We consider two good uniformizing families of stable maps $q\mapsto  \alpha_{q}$ parametrized by  $q\in {\mathcal O}$ and $q'\mapsto  \alpha_{q'}'$ parametrized $q'\in {\mathcal O}'$. We assume that for two points  $q_0\in O$ and $q_0'\in O'$ there exists an isomorphism 
$$
\phi_0:\alpha_{q_0}\rightarrow \alpha_{q_0'}'
$$
between the associated stable maps. 
Then there exist a unique local germ of an sc-diffeomorphism 
$$f:({\mathcal O}, q_0)\to ({\mathcal O}', q_0'),\quad q\mapsto q'=f(q)$$
between the parameter spaces  satisfying $f(q_0)=q_0'$ and there exists a core-smooth germ of a family $q\mapsto  \phi_q$ of isomorphisms
$$\phi_q:\alpha_q\to \alpha_{f(q)}'$$
satisfying  $\phi_{q_0}=\phi_0$.
\end{theorem}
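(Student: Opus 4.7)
The plan is to reduce the statement to the universal property for noded Riemann surfaces (Theorem~\ref{smoothfamily}) enhanced with the transversal constraint construction. The point is that after stabilizing the underlying noded Riemann surfaces by means of the transversal constraints, the map-data $\eta$ enters the Deligne--Mumford picture only through the positions of auxiliary marked points, while the residual information is recovered by transporting $\eta$ through the resulting isomorphism of noded surfaces.

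First I would produce a stabilized family underneath $q \mapsto \alpha_q$. For $q = (a, v, \eta) \in {\mathcal O}$ and $z \in \si$, data~(8) of Definition~\ref{GD} asserts that $\oplus_a \exp_u(\eta)$ meets the submanifold $M_{u(z)}$ in a unique point $\si(q)_z$ inside $SD_z$, and by the manifold version of Proposition~\ref{n-prop2.24} the assignment $q \mapsto \si(q)$ is $G$-equivariant and sc-smooth. Likewise for the primed family. The isomorphism $\phi_0$ intertwines $u, u'$ and the canonical manifolds $M_w$, so $\phi_0(\si(q_0)) = \si'(q_0')$; hence $\phi_0$ restricts to an isomorphism
$$
\phi_0^{\mathrm{st}} : (S_{a_0}, j(a_0, v_0), M_{a_0} \cup \si(q_0)_{a_0}, D_{a_0}) \longrightarrow (S'_{a_0'}, j'(a_0', v_0'), M'_{a_0'} \cup \si'(q_0')_{a_0'}, D'_{a_0'})
$$
of stabilized noded Riemann surfaces.

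Next, viewing the left-hand family as a smooth family of stable noded Riemann surfaces in the sense of the paragraph preceding Theorem~\ref{smoothfamily} (with the varying marked points $\si(q)$ playing the role of the parameter $\sigma$ there), the universal property furnishes a unique continuous germ $q \mapsto (a'(q), v'(q))$ and a unique core-smooth germ of isomorphisms
$$
\phi_q : (S_a, j(a, v), M_a \cup \si(q)_a, D_a) \longrightarrow (S'_{a'(q)}, j'(a'(q), v'(q)), M'_{a'(q)} \cup \si'(\cdot)_{a'(q)}, D'_{a'(q)}),
$$
with $\phi_{q_0} = \phi_0^{\mathrm{st}}$. I then define $\eta'(q)$ by the transport equation
$$
\oplus_{a'(q)} \exp_{u'}(\eta'(q)) \;=\; \bigl(\oplus_a \exp_u(\eta)\bigr) \circ \phi_q^{-1},
$$
which is uniquely solvable in the fixed charts $\psi_w$, where the exponential is just an affine map. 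The splicing identity $\pi_{a'(q)}(\eta'(q)) = \eta'(q)$ follows because, by Proposition~\ref{az} applied to both families, $\phi_q$ identifies the neck sub-cylinders of $S_a$ with those of $S'_{a'(q)}$ by a shift $s' = s + \mathrm{const}$ in cylindrical coordinates, and such a shift preserves the decomposition $E = \ker \oplus_a \oplus \ker \ominus_a$ from Section~\ref{gluinganti-sect}.

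Sc-smoothness of the assembled germ $f(q) = (a'(q), v'(q), \eta'(q))$ then splits into three pieces: (i)~the finite-dimensional component $(a'(q), v'(q))$ is sc-smooth as the composition of the classically smooth Deligne--Mumford germ $(a, v, \si) \mapsto (a', v')$ of Theorem~\ref{smoothfamily} with the sc-smooth constraint $q \mapsto \si(q)$; (ii)~on the core of $S$ the transport of $\eta$ reduces to composition with a core-smooth family of diffeomorphisms, hence is sc-smooth by Theorem~\ref{action-diff}; (iii)~on the neck regions it is exactly the setup of Theorem~\ref{neck}, which outputs an sc-smooth section landing in the splicing core of the primed family. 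Applying the same construction to $\phi_0^{-1}$ produces the sc-smooth inverse germ, so $f$ is a germ of an sc-diffeomorphism. Uniqueness follows from item~(10) of Definition~\ref{GD}: any second pair $(\tilde f, \tilde\phi_q)$ satisfying the conclusion would differ from $(f, \phi_q)$ at each $q$ by an automorphism of $\alpha'_{f(q)}$ lying in the finite group $G'$, and continuity in $q$ together with the initial condition $\tilde\phi_{q_0} = \phi_0 = \phi_{q_0}$ forces this element to be trivial. The technical heart of the argument, and the main obstacle, is step~(iii): the transport on the neck is not composition with a fixed diffeomorphism, since the neck length $R = \varphi(|a|)$ itself varies through the exponential gluing profile, and handling this in weighted Sobolev spaces is precisely the issue for which Theorem~\ref{neck} was designed.
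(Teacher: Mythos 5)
There is a genuine gap at the very first step, and it propagates through the whole argument. You stabilize the source family using its \emph{own} constraint data: for $z\in\si$ you intersect $\oplus_a\exp_u(\eta)$ with $M_{u(z)}$. But since $\eta\in E_u$ already satisfies $\eta(z)\in H_{u(z)}$, that intersection point is $z$ itself, so your $\si(q)$ is the constant set $\si$; more importantly, your claim that $\phi_0(\si(q_0))=\si'(q_0')$ is false in general. The stabilizations $\si,\si'$ and the slices $M_w, M'_{w'}$ are \emph{independent} choices attached to the two good uniformizing families; they are not part of the data of the stable maps $\alpha_{q_0},\alpha'_{q_0'}$, and the isomorphism $\phi_0$ (which only satisfies $\oplus'_{a_0'}\exp_{u'}(\eta_0')\circ\phi_0=\oplus_{a_0}\exp_u(\eta_0)$ and preserves $M,D$) has no reason to carry $\si$ to $\si'$. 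The correct move, which the paper makes, is to pull back the \emph{target's} stabilization: set $\Theta_0=\phi_0^{-1}(\si')\subset S_{a_0}$ (these points may lie in the necks and are in general unrelated to $\si$), and for nearby $q=(a,v,\eta)$ define $\Theta_q$ as the transversal intersections of $\oplus_a\exp_u(\eta)$ with the target's manifolds $M'_{u'(z')}$. This is essential for two reasons: it is what makes $(a'(q),v'(q))$ depend on $\eta$ at all (with your constant marked points the Deligne--Mumford germ would be a function of $(a,v)$ only, which cannot be right, since reparametrizing so that the map crosses the target slices exactly at $\si'$ changes $v'$ and $a'$ as $\eta$ varies), and it is what guarantees that the resulting $\eta'(q)$ satisfies the target's linear constraints $\eta'(q)(z')\in H'_{u'(z')}$, i.e., that $f(q)$ actually lands in ${\mathcal O}'$.

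A secondary issue: on a nontrivially glued neck the single transport equation $\oplus_{a'(q)}\exp_{u'}(\eta'(q))=\oplus_a\exp_u(\eta)\circ\phi_q^{-1}$ does not determine $\eta'(q)$, because $\oplus_{a'}$ has a large kernel (a pair of maps on two half-cylinders cannot be recovered from one map on the finite cylinder $Z_{a'}$). The anti-gluing condition $\ominus_{a'(q)}(\eta'(q))=0$ must be \emph{imposed} as part of the definition, not derived afterwards; with it, uniqueness and sc-smoothness are exactly what Theorem \ref{neck} delivers. Your remaining steps (localization, Theorems \ref{action-diff} and \ref{neck}, uniqueness via the finiteness of the automorphism group) are in the right spirit and match the paper's Lemma \ref{lem4.23}, but they rest on the flawed stabilization step above.
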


\begin{proof}

The proof proceeds in several steps. We first choose the set $\si'\subset S'$ of stabilizing points. 
The set satisfies, by definition, $\si'\cap (M'\cup \abs{D'})=\emptyset$ and $u'(\si')\cap u'(M'\cup \abs{D'})=\emptyset$.  The preimage of $\si'$ under the postulated diffeomorphism 
$\phi_0:S_{a_0}\to S_{a_0'}'$ is denoted by 
$$\Theta_0=\phi^{-1}_0(\si')\subset S_{a_0}.$$
The subset $\Theta_0$ is different from the nodal points $D_{a_0}$ and different from the marked points $M_{a_0}$, and the nodal Riemann surface 
$$(S_{a_0}, j(a, v), M_{a_0}\cup \Theta_0, D_{a_0})$$
is stable. Moreover, it is possible that $\Theta_0$ contains points lying in the neck regions of $S_{a_0}$. Considering $(a, v)$ close to $(a_0, v_0)$, we denote by $\Theta\subset S_a$ a deformation of $\Theta_0$ as defined in Section \ref{dm-subsect}. Therefore, we deduce from the universal family property of the stabilized target family (Theorem \ref{smoothfamily}) the following result. 
\begin{lemma}\label{lem4.19}
There exists a germ of a smooth map
$$(a, v, \Theta)\to (a' (a, v,\Theta), v'(a, v, \Theta))$$
satisfying 
$$(a' (a_0, v_0,\Theta_0), v'(a_0, v_0, \Theta_0))=(a_0', v_0'),$$
and there exists an associated core-smooth germ of isomorphisms 
$$\phi_{(a, v, \Theta)}:(S_a, j(a, v), M_a\cup \Theta, D_a)\to 
(S_{a'}, j'(a', v'), (M'\cup \si')_{a'}, D_{a'}), $$
where 
$a'=a'(a, v, \Theta)$ and $v'=v'(a, v, \Theta)$, satisfying 
$$\phi_{(a_0, v_0, \Theta_0)}=\phi_0.$$
\end{lemma}
By definition of an isomorphism between stable maps,  the diffeomorphism $\phi_0:S_{a_0}\to S'_{a_0'}$ satisfies
$$\oplus_{a_0}\exp_u (\eta_0)=\oplus_{a_0'}'\exp_{u'}'(\eta'_0)\circ \phi_0.$$
If $z\in \Theta_0$, then $\phi_0(z)=z'\in \si'$ and therefore
$$\oplus_{a_0}\exp_u (\eta_0)(z)=\oplus_{a_0'}'\exp_{u'}'(\eta'_0)(z')\in M'_{u'(z')}.$$
Hence the map $\oplus_{a_0}\exp_u (\eta_0):S_{a_0}\to Q$  intersects the embedded  submanifold $M'_{u(z')}$ at the point $z$ transversally, and we call the transversal intersection point $z=z(a_0, v_0,\eta_0)\in S_{a_0}$. 
If $(a, v,\eta)$ is close to $(a_0, v_0,\eta_0)$ on level $0$  and hence in the $C^1$-topology, it follows from Proposition \ref{n-prop2.24} that also the nearby maps 
$\oplus_a\exp_u(\eta):S_a\to Q$ intersect the manifold $ M'_{u'(z')}\subset Q$  transversally at the unique points $z(a, v, \eta)\in S_a$ so that 
$$\oplus_a\exp_u (\eta)(z(a, v, \eta))\in  M'_{u'(z')}.$$
Moreover, the map
$$(a, v,\eta)\mapsto z(a, v,\eta)\in S_a$$
is sc-smooth in the following sense,  considering the  fact that the target surface depends on $a$. Abbreviating $q=(a, v,\eta)$ and assuming that $z(q_0)$ lies in the core region of $S$, we can  consider the map $q \to z(q)$ as a map into the fixed surface $S$. If the point $z(q)$ lies in  a nontrivially glued neck, we describe the map with respect to the two distinguished polar coordinate systems 
$[s,t]$ or $[s',t']$ and the maps obtained this way are sc-smooth. Hence we obtain the sc-smooth germ 
$$q\to \Theta_q=\{z(q)\}$$
for $q=(a, v,\eta)$ near $q_0$ and we  can summarize the discussion as follows.
\begin{lemma}\label{lem4.20}
Let $\Theta_q$ consist of the unique points $z(q)=z(a, v,\eta)\in S_a$ at which 
$$
\oplus_a\exp_u(\eta)(z(q))\in M'_{u'(z')},\ \text{some}\ \ z'\in\Sigma'.
$$
Then the germ $q\to \Theta_q$ at $q_0$ is sc-smooth in the sense defined above.
\end{lemma} 
Recalling from Lemma \ref{lem4.19} the germ of a  smooth map 
$$(a, v,\Theta)\mapsto (a'(a, v, \Theta), v'(a, v, \Theta))$$  and the associated germ $(a, v, \Theta)\to \phi_{(a, v, \Theta)}$ of isomorphism, we define 
the germ of a smooth map as the following composition of maps 
\begin{align*}
a'(a,v,\eta):&=a'(a, v, \Theta_{(a, v,\eta)})\\
v'(a,v,\eta):&=v'(a, v, \Theta_{(a, v,\eta)})
\end{align*}
and introduce the associated core-smooth germ $q\to \phi_q$ of isomorphisms 
\begin{equation*}
\begin{split}
\phi_{(a, v,\eta)}&:=\phi_{(a, v, \Theta_{(a, v, \eta)})}:\\
&(S_a, j(a, v), M_a\cup \Theta_{(a, v, \eta)}, D_a)\to 
(S_{a'}, j'(a', v'), (M'\cup \si')_{a'}, D'_{a'})
\end{split}
\end{equation*}
where 
$a'=a'(a,v,\eta)$ and $v'=v'(a,v,\eta)$ and where $(a, v, \eta)$ is close  to $(a_{0},v_{0}, \eta_{0})$. Then 
\begin{gather*}
\phi_{(a_0, v_0, \eta_0)}=\phi_0\\
(a'(a_0,v_0,\eta_0), v'(a_0,v_0,\eta_0)=(a'_0, v'_0).
\end{gather*}
In view of Lemma \ref{lem4.19} and Lemma \ref{lem4.20}, and using the chain rule, we obtain the following lemma.
\begin{lemma}\label{lem4.21}
The germ 
$$(a, v,\eta)\to (a'(a, v, \eta), v'(a, v, \eta))$$
is sc-smooth near $(a_0, v_0, \eta_0)$.
\end{lemma}
At this point we have constructed near $q_0=(a_0, v_0, \eta_0)$ the sc-smooth germ 
$$q\to (a'(q), v'(q))$$
of functions and the associated core-smooth germ $q\to \phi_q$ of isomorphisms 
\begin{eqnarray*}
&\phi_q:
(S_a, j(a, v), M_a\cup \Theta_q, D_a)\to\ \ \ \ \ \ \ \ \ \ \ \ \ \ \ \ \ \ \ \ \ \ \ \ \ \ \ &\\ 
&\ \ \ \ \ \ \ \ \ \ \ \ (S_{a'(q)}, j'(a'(q), v'(q)), (M'\cup \si')_{a'(q)}, D'_{a'(q)})&
\end{eqnarray*}
where $q=(a, v,\eta)$. It satisfies, in particular, 
$$
\phi_q(z(q))=z'\in \si'\ \ \text{for some}\ \ z'\in\Sigma'.
$$
By construction of $z(q)$, the maps 
$$(\oplus_a\exp_u(\eta))\circ \phi_{(a, v,\eta)}^{-1}:S'_{a'(a, v,\eta)}\to Q$$
are at the points $z'\in \si'$ transversal to the submanifolds $M_{u'(z')}'$. 

We recall that at $q_0=(a_0, v_0, \eta_0)\in {\mathcal O}$, the map $\phi_{q_0}=\phi_0$ satisfies
$$\oplus_{a'(q_0)} \exp'_{u'}(\eta'_0)=\oplus_{a'_0} \exp_{u}(\eta_0)\circ \phi_{q_0}^{-1}.$$
With the following lemma the proof of Theorem \ref{key-z} is complete.
\begin{lemma}\label{lem4.23}
For $q=(a, v,\eta)$ near $q_0=(a_0, v_0, \eta_0)$ there exists a uniquely defined section $\eta'(q)$ of the bundle $(u')^\ast TQ$ near the section $\eta_0'$, satisfying 
$$
\oplus_{a'(q)} \exp'_{u'}(\eta'(q))=\oplus_{u} \exp_{u}(\eta)\circ \phi_{q}^{-1},
$$
and  the linear constraint conditions,  and $(a'(q), v'(q),\eta'(q))\in {\mathcal O}'$. In addition, 
$$q\to \eta'(q)$$
is an sc-smooth map.
\end{lemma}
\begin{proof}
The  formula 
$$
\oplus_{a'(q)} \exp'_{u'}(\eta')=\oplus_{a} \exp_{u}(\eta)\circ \phi^{-1}_q\ \ \text{and}\ \  \ominus_{a'(q)} \eta'(q)=0
$$
determines the section $\eta'$ as a function of $q$ in a local way and the following observations simplify our task.\\

{\bf Observation 1:}  Using a partition of unity argument in the target $S'$ it suffices to show that $\eta'$ restricted to suitable subsets, as a function of $(a,v,\eta)$ is sc-smooth. The suitable subsets are either small open subsets whose closures do not contain nodal points or small open neighborhoods of nodal points.\\

 We can consider $\eta$ as a variable or alternatively $\exp_u(\eta)$, since the map $\eta\rightarrow \exp_u(\eta)$ is level-wise smooth,  a classical nonlinear analysis fact, which is used in  the construction of manifolds of maps in \cite{El}.\\

{\bf Observation 2:} If $z_0'$ is a point which is not a nodal point
in $|D_{a_0'}'|\subset S'$,  the section $\eta'$ near $z_0'$ is determined by
$\eta$ near $z_0:=\phi_{(a_0,v_0,\eta_0)}^{-1}(z_0')$ and $z_0$ is not a nodal point in $|D_{a_0}|$.\\

If $z'_0\in S'_{a'}$ lies outside of the discs of the small disk structure of $S'$ we have, in a neighborhood of $z'_0$ for $q$ near $q_0$,
$$\oplus_{a'(q)}\exp'_{u'}(\eta')=\exp_{u'}'(\eta').$$
In this case the section $\eta'$ near $z_0'$ is determined by the formula 
$$\eta'=(\exp'_{u'})^{-1}\circ \oplus_a\exp_u(\eta)\circ \phi_q^{-1}.$$
Since the isomorphism $\phi_q=\phi_{(a, v, \Theta_q)}$ is core-smooth and $q\to \Theta_q$ is smooth, it follows from Theorem \ref{action-diff} and Lemma \ref{lem4.20} using the chain rule, that the right hand side depends sc-smoothly on $q$. 

If $z_0'$ is in a neck we can study the two equations
\begin{align*}
\oplus_{a'(q)}\exp'_{u'}(\eta')&= \oplus_a\exp_u(\eta)\circ \phi_q^{-1}\\
\ominus_{a'(q)}(\eta')&=0
\end{align*}
in an open neighborhood of $z_0'$. Again the right hand side depends sc-smoothly on $q$. In the polar coordinates on the discs one sees that  the unique solution $\eta'=\eta'(q)$ on a neighborhood of $z_0'$ is, 
{in view of Proposition \ref{ay} and Proposition \ref{az}}, guaranteed by Theorem \ref{theorem_neck}.
Moreover, $(a'(q), v'(q),\eta'(q))\in {\mathcal O}$ and $q\to \eta'(q)$ is sc-smooth.

Finally, we look at a nodal point $z_0'=x$ on $S'$. This implies that also $\phi_0^{-1}(z_0')=z_0$ is a nodal point on $S$. In this case $a_{\{x, y\}}=0$ and $a'_{\{x', y'\}}=0.$  A direct application of Theorem \ref{theorem_neck} implies our assertion. This completes the proof of Lemma \ref{lem4.23}.
\end{proof}
With Lemma  \ref{lem4.23} the proof of Theorem \ref{key-z} is complete.
\end{proof}

\section{Compatibility of Good Uniformizers}\label{sect4.3-pol.str}
In the construction of the polyfold structure on the space $Z$ of stable curves  we shall imitate the Lie groupoid approach to the Deligne-Mumford theory.

We take a good uniformizing family 
$$(a, v, \eta)\mapsto  \alpha_{(a, v, \eta)},\quad \text{where $(a, v,\eta)\in {\mathcal O}$}$$
of stable maps  centered at the smooth stable map  $\alpha=(S, j, M, D, u)$, continue to abbreviate 
$$q:=(a, v, \eta)\in {\mathcal O},$$
and define the graph $\cg$ of the family as the set
$$\cg=\{(q, \alpha_q)\vert \, q\in {\mathcal O}\}.$$
We recall that ${\mathcal O}$ is the open subset of a splicing core introduced in the formula \eqref{paraset} in Section \ref{xA}.  We equip the set $\cg$ with the  topology which turns  the bijective projection map 
$$\pi:\cg\to {\mathcal O},\qquad \pi (q, \alpha_q)=q$$
into a homeomorphism. Then the sc-smooth chart $(\pi, {\mathcal O})$ defines the natural sc-structure on $\cg$ so that $\cg$  is an M-polyfold and the map $\pi:\cg\to {\mathcal O}$ is an sc-diffeomorphism. We shall use the M-polyfolds  $\cg$  the same way we have used the graphs of uniformizing families in the Deligne-Mumford theory.

{We consider the two M-polyfolds  
$$
\cg=\{(q, \alpha_q)\vert \, q\in {\mathcal O}\}\ \ \text{and}\ \  
\cg'=\{(q', \alpha'_{q'})\vert \, q'\in {\mathcal O}'\},
$$
 where $q=(a,v,\eta)\in{\mathcal O}$ and $q'=(a', v', \eta')\in {\mathcal O}'$. For two points $(q, \alpha_q)\in \cg$ and $(q', \alpha'_{q'})\in \cg'$ for which there exists an isomorphism 
$$\phi:\alpha_q\to \alpha'_{q'}$$
between the stable maps, we form the triple 
$$\Phi:=((q, \alpha_q), \phi, (q', \alpha'_{q'})).
$$
We view $\Phi$ as a morphism $(q,\alpha_q)\rightarrow (q',\alpha_{q'}')$ and write
$$\Phi:(q,\alpha_q)\rightarrow (q',\alpha_{q'}').$$ }
The set of all {morphisms} $\Phi$ will be denoted by 
$$ M(\cg, \cg').$$
The source  map 
$$
s: M(\cg, \cg')\rightarrow \cg$$
is defined by 
$$s((q, \alpha_q), \phi, (q', \alpha'_{q'}))=(q, \alpha_q)$$
and the target map 
$$t: M(\cg, \cg')\rightarrow \cg'$$
is defined by 
$$
t((q, \alpha_q), \phi, (q', \alpha'_{q'}))=(q', \alpha'_{q'}).
$$

Next we equip the set  $M(\cg, \cg')$ with a topology,  defined by means of a neighborhood basis. Let  
$$\wh{\Phi}= ((\wh{q}, \alpha_{\wh{q}}), \wh{\phi}, (\wh{q}',\alpha'_{\wh{q}'}))$$
 be a morphism in  $M(\cg, \cg')$. We are going to define sets ${\mathcal V}$ of  open neighborhoods of $\wh{\Phi}$.  In view of Theorem \ref{key-z},  there exists  a germ of a  local sc-diffeomorphism 
 $$f: {\mathcal O}\to {\mathcal O}',\qquad q\mapsto q'(q)$$ 
 satisfying $f(\wh{q})=\wh{q}'$ and a germ of a core-smooth family 
$$\phi_q:\alpha_q\to \alpha'_{f(q)}$$ of isomorphisms satisfying $\phi_{\wh{q}}=\wh{\phi}.$  We consider  the sets  ${\mathcal V}$ consisting of the morphisms 
$((q, \alpha_q), \phi_q, (f(q), \alpha'_{f(q)}))$ where   $q\in V\subset {\mathcal O}$ and where $V$ is a {sufficiently small} open neighborhood  of $\wh{q}$. We shall show that the sets ${\mathcal V}$ form  a basis  for a Hausdorff  topology on $M(\cg, \cg')$. 
To see this,  we take two such sets ${\mathcal V}_1$ and ${\mathcal V}_2$  containing a morphism 
$\Phi_{q_0}=((q_0, \alpha_{q_0}), \phi_0, (q_0', \alpha'_{q_0'}))$.
The set ${\mathcal V}_1$  consist of triples 
$$
((q, \alpha_q), \phi^1_q, (f_1(q), \alpha'_{f_1(q)}))
$$
 in which  $q\in V_1\subset {\mathcal O}$  where $V_1$ is an open neighborhood of $q_1$, in which  $f_1:V_1\to f_1(V_1)\subset {\mathcal O}'$ is an sc-diffeomorphism satisfying $f_1(q_1)=q_1'$,  and in which $q\mapsto \phi_q^1$,  for $q\in V_1$,  is a core-smooth family  $\phi^1_q:\alpha_q\to \alpha'_{f_1(q)}$ 
of isomorphisms satisfying $\phi^1_{q_1}=\phi_1.$ The set ${\mathcal V}_2$ is defined similarly. It is  centered at  the point $q_2$,  has  the sc-diffeomorphism $f_2:V_2\to f_2(V_2)\subset {\mathcal O}'$, and the core-smooth family $q\mapsto \phi_q^2$ replacing $q_1$, $f_1$, and $q\mapsto \phi_q^1$, respectively. Since $((q_0, \alpha_{q_0}), \phi_0, (q_0', \alpha'_{q_0'}))\in {\mathcal V}_1\cap {\mathcal V}_2$, it follows that $q_0\in V_1\cap V_2$. We next show  that on a  small open neighborhood of $q_0$ contained in $V_1\cap V_2$, the maps $f_1$ and $f_2$ coincide and the two  maps $q\mapsto \phi_q^1$ and $q\mapsto \phi_q^2$ also coincide. We consider  for $q\in V_1\cap V_2$, the following  isomorphisms between the noded surfaces
$$\phi_q^2\circ (\phi_q^1)^{-1}:\alpha'_{f_1(q)}\to \alpha'_{f_2(q)}.$$
At the point $q_0$ we have $f_1(q_0)=f_2(q_0)=q_0'$ and 
$$\phi_{q_0}^2\circ (\phi_{q_0}^1)^{-1}=\phi_0\circ (\phi_0)^{-1}=\text{id}:\alpha'_{q_0'}\to \alpha'_{q_0'}.$$
We claim that $f_1(q)=f_2(q)$ for $q$ close to $q_0$. Indeed,  arguing by contradiction we 
assume that there exists a sequence $(q_k)\subset V_1\cap V_2$ converging to $q_0$ such that  $f_1(q_k)\neq f_2(q_k)$.  In view of Proposition \ref{reremark1} and the fact that the group of automorphisms is finite,  there exists an automorphism $g$  such that $f_2(q_k)=g\ast f_1(q_k)$ for all $k$ and $\phi_{q_k}^2\circ (\phi_{q_k}^1)^{-1}=g_{a_k}$ where the gluing parameter $a_k$ is the first component of $f_1(q_k)$. Since 
$\phi_{q_0}^2\circ (\phi_{q_0}^1)^{-1}=\text{id}$, it follows that $g=\text{id}$ so that $f_2(q_k)=\text{id}\ast  f_1(q_k)=f_1(q_k)$ which is a contradiction. We conclude that indeed  there exists an open  neighborhood  $V_3$ of $q_0$ contained in $V_1\cap V_2$ such that $f_1(q)=f_2(q)$ for all $q\in V_3$.  Using similar arguments and Proposition \ref{reremark1} one shows that also 
$\phi_q^1=\phi_q^2$ for all $q\in V_3$. Consequently, setting $f(q)=f_1(q)=f_2(q)$ and $\phi_q=\phi_q^1=\phi_q^2$ for all $q\in V_3$ and ${\mathcal V}_3=\{((q, \alpha_q), \phi_q, (f(q), \alpha'_{f(q)}))\vert q\in V_3\}$, we conclude that  ${\mathcal V}_3\subset {\mathcal V}_1\cap {\mathcal V}_2$. Hence the  collection of sets ${\mathcal V}$ defines  a basis of the topology on $M(\cg, \cg')$.

\begin{lemma}\label{huasdorff-lemma}
The  topology on $M(\cg, \cg')$ is Hausdorff.
\end{lemma}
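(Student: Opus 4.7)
The plan is to split the verification into two cases according to whether the source and target of $\Phi_1$ and $\Phi_2$ agree or not, and then reduce to a uniqueness argument based on Theorem \ref{key-z}. Let $\Phi_i=((q_i,\alpha_{q_i}),\phi_i,(q_i',\alpha'_{q_i'}))$, $i=1,2$, be two distinct morphisms in $M(\cf,\cf')$.

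First I would handle the case where $(q_1,\alpha_{q_1})\neq (q_2,\alpha_{q_2})$ or $(q_1',\alpha'_{q_1'})\neq (q_2',\alpha'_{q_2'})$. Since $\cf$ and $\cf'$ are M-polyfolds, hence Hausdorff, one can pick disjoint open neighborhoods in $\cf$ (respectively $\cf'$). The source and target maps $s,t$ are continuous in the defined topology on $M(\cf,\cf')$: unwinding the definition, any basic open set ${\mathcal V}$ parametrized over $V\subset {\mathcal O}$ is mapped by $s$ into $\pi^{-1}(V)\subset\cf$ and by $t$ into $\pi^{-1}(f(V))\subset\cf'$. By shrinking the parameter sets $V$ around $q_1,q_2$ so that the resulting base sets lie over disjoint open subsets of $\cf$ (or $\cf'$), we separate $\Phi_1$ and $\Phi_2$.

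The essential case is when $q_1=q_2=q_0$ and $q_1'=q_2'=q_0'$ but $\phi_1\neq\phi_2$. Using Theorem \ref{key-z} twice, starting from $\phi_1$ and from $\phi_2$, choose basic neighborhoods ${\mathcal V}_1$ and ${\mathcal V}_2$ of $\Phi_1$ and $\Phi_2$, associated to local sc-diffeomorphism germs $f_1,f_2:({\mathcal O},q_0)\to({\mathcal O}',q_0')$ and to core-smooth germs of families of isomorphisms $q\mapsto\phi_q^1$ with $\phi_{q_0}^1=\phi_1$ and $q\mapsto\phi_q^2$ with $\phi_{q_0}^2=\phi_2$. Suppose for contradiction that for every choice of shrinking the parameter sets, the corresponding base sets meet. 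Then there is a sequence $q_k\to q_0$ with $\phi_{q_k}^1=\phi_{q_k}^2$ as morphisms, which forces $f_1(q_k)=f_2(q_k)$ and $\phi_{q_k}^2\circ(\phi_{q_k}^1)^{-1}=\id$ on the whole surface $S'_{f_1(q_k)}$.

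To complete the argument I would use a core-continuity passage to the limit combined with the uniqueness clause of Theorem \ref{key-z}. By the core-smoothness of both families, $\phi_{q_k}^i$ converges to $\phi_{q_0}^i=\phi_i$ on the core of $S$ as $k\to\infty$. Hence $\phi_2\circ\phi_1^{-1}$ agrees with the identity on the core of $S'_{q_0'}$, and by holomorphic continuation on each connected domain component it is the identity on all of $S'_{q_0'}$, contradicting $\phi_1\neq\phi_2$. The main obstacle is ensuring that this passage to the limit is legitimate: it relies on core-continuity of the families provided by Theorem \ref{key-z} together with the fact that a biholomorphism of a connected Riemann surface is determined by its restriction to a nonempty open subset (applied domain-component-wise, using that the core contains an open piece of every domain component of $S$). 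Once these points are justified, the contradiction shows that suitably shrunk neighborhoods ${\mathcal V}_1$ and ${\mathcal V}_2$ are disjoint, proving the lemma.
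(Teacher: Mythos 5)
Your proof is correct, and your first two cases (distinct sources, or equal sources with distinct targets) coincide with the paper's treatment. In the essential third case the two arguments genuinely diverge. The paper applies Proposition \ref{reremark1} together with the finiteness of the automorphism group to produce, on a small neighborhood of $q_0$, a fixed $g\in G$ with $g\neq\id$ satisfying $f_2(q)=g\ast f_1(q)$ and $\phi_q^2\circ(\phi_q^1)^{-1}=g_{a(q)}$, whence $\phi^1_q\neq\phi^2_q$ throughout that neighborhood and the two basic sets are disjoint. You instead argue by contradiction with a sequence $q_k\to q_0$ along which $\phi^1_{q_k}=\phi^2_{q_k}$, pass to the limit using core-continuity to conclude $\phi_1=\phi_2$ on the core, and then invoke the identity theorem for holomorphic maps component by component to force $\phi_1=\phi_2$ everywhere. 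Both routes are sound: the paper's reuses the group-theoretic rigidity (Proposition \ref{reremark1}) that was already needed to check that the sets ${\mathcal V}$ form a basis, while yours trades that for the analytic rigidity of biholomorphisms plus the core-continuity clause of Theorem \ref{key-z}. The two points you flag are indeed the details to supply, and both hold: the passage to the limit is legitimate because $\phi_q=\phi_{(a,v,\Theta_q)}$ factors through the finite-dimensional data $(a,v,\Theta_q)$ which depends sc-continuously on $q$, and the image under $\phi_1$ of the core meets every connected component of $S'_{a_0'}$ in a nonempty open set because the core of $S_{a_0}$ (the complement of finitely many disjoint closed disks and necks) has nonempty interior in every connected component and $\phi_1$ maps components onto components.
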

\begin{proof}  We take two distinct points  $\Phi_{1}=((q_1, \alpha_{q_1}), \phi_1, (q_1', \alpha_{q_1'}'))$ and $\Phi_{2}=((q_2, \alpha_{q_2}), \phi_2, (q_2', \alpha_{q_2'}'))$
and show that there are two disjoints sets ${\mathcal V}_1$ and ${\mathcal V}_2$ containing $\Phi_{1}$ and $\Phi_{2}$, respectively.
{We consider the following three cases.\\[0.5ex]
{\bf Case 1}.\, $q_1\neq q_2.$\\
{\bf Case 2}.\,  $q_1=q_2$ and $q_1'\neq q_2'$.\\
{\bf Case 3}.\,  $q_1=q_2$, $q_1'=q_2'$, and $\phi_1\neq \phi_2$.}\\[0.5ex]

{In case 1 we take} two disjoint small open  neighborhoods $V_1$ and $V_2$ of $q_1$ and $q_2$, and define  the sets ${\mathcal V}_1=\{((q, \alpha_q), \phi^1_q, (f_1(q), \alpha'_{f_1(q)}))\vert q\in V_1\}$ and ${\mathcal V}_2=\{((q, \alpha_q), \phi^2_q, (f_1(q), \alpha'_{f_2(q)}))\vert q\in V_2\}$. Clearly, ${\mathcal V}_1\cap {\mathcal V}_2=\emptyset$.

{In case 2 we
find} an open neighborhood $V$ of $q_1=q_2$ such that $f_1(V)\cap f_2(V)=\emptyset$. Then the sets 
 ${\mathcal V}_1=\{((q, \alpha_q), \phi^1_q, (f_1(q), \alpha'_{f_1(q)}))\vert q\in V\}$ and ${\mathcal V}_2=\{((q, \alpha_q), \phi^1_q, (f_2(q), \alpha'_{f_2(q)}))\vert q\in V\}$ are disjoint.
 
 { Finally, in  case 3, we take  a small open neighborhood $V$ of $q_1=q_2$ and apply Proposition \ref{reremark1} to find two  sc-diffeomorphisms $f_1: V\to {\mathcal O}'$ and $f_2:V\to {\mathcal O}'$ satisfying  $f_1(q_1)=q_1'$ and $f_2(q_2)=q_2'$ and the core-smooth families 
 $\phi_q^1:\alpha_{q}\to \alpha'_{f_1(q)}$ and  $\phi_q^2:\alpha_{q}\to \alpha'_{f_2(q)}$ of isomorphisms satisfying $\phi_{q_1}^1=\phi_1$ and $\phi_{q_2}^2=\phi_2$.  } The composition 
 $$\phi_q^{2}\circ (\phi_q^1)^{-1}:\alpha'_{f_1(q)}\to \alpha'_{f_2(q)}$$ 
 is an isomorphism for every $q\in V$ and at $q=q_1=q_2$ we have $\phi_{q_2}^{2}\circ (\phi_{q_1}^1)^{-1}=\phi_2\circ (\phi_1)^{-1}:\alpha'_{q_1'}\to \alpha'_{q_2'}$ which,  by assumption,  is different from the identity map.
Again applying Proposition \ref{reremark1} we find a possibly smaller open neighborhood of $q_1=q_2$, still denoted by $V$, and  an automorphism $g\in G$ different from the identity map satisfying 
$$f_2(q)=g\ast f_1(q)\quad \text{and}\quad  \phi_{q}^{2}\circ (\phi_{q}^1)^{-1}=g_{a(q)}$$
 where the gluing parameter $a$ is the first  component of $f_1(q)$. 
Hence $\phi_q^2=g_{a(q)}\circ \phi_q^1$. Now since the families $q\mapsto \phi^1_q$ and $q\mapsto \phi^2_q$ are core smooth we find an open neighborhood $V$ of $q_1=q_2$ such that $\phi^1_q\neq \phi^2_q$ for all $q\in V$.  Consequently, the sets ${\mathcal V}_1=\{((q, \alpha_q), \phi^1_q, (f_1(q), \alpha'_{f_1(q)}))\vert q\in V\}$ and ${\mathcal V}_2=\{((q, \alpha_q), \phi^2_q, (f_1(q), \alpha'_{f_2(q)}))\vert q\in V\}$ are disjoint. We have verified that our  topology of  $M(\cg, \cg')$ is Hausdorff.
\end{proof}

Recalling  that the topology on $\cg$ is defined by requiring that the projection map $\pi:\cg\to {\mathcal O}$, $\pi (q, \alpha_q)=q$, is a homeomorphism, we see that the source  map $s:M(\cg,\cg')\to \cg$ and the target map $t:M(\cg, \cg')\to \cg'$ are continuous. 
Next we consider, in a small neighborhood of $(q_0, \alpha_{q_0})$,  the  injective map 
$$
\Gamma:\cg\to M(\cg, \cg'),$$
defined by 
$\Gamma (q,\alpha_q)=((q, \alpha_q), \phi_q, (q'(q), \alpha'_{q'(q)}))$, where $q'(q)=f(q)$ with the local sc-diffeomorphism $f:{\mathcal O}\to {\mathcal O}'$ 
guaranteed by Theorem \ref{key-z}.  The compositions $s\circ \Gamma:\cg\to \cg$ and $t\circ \Gamma:\cg\to \cg'$ satisfy 
$$s\circ \Gamma (q, \alpha_q)=(q, \alpha_q)\qquad \text{and}\qquad t\circ \Gamma (q, \alpha_q)=(q'(q), \alpha_{q'(q)}).$$

Therefore, the map $t\circ \Gamma:\cg \to \cg'$ is, in view of Theorem \ref{key-z}, a local sc-diffeomorphism.  We also conclude that the map $\Gamma:\cg\to M(\cg,  \cg')$ is continuous. Since $\Gamma \circ s =\id$, i.e., 
$$\Gamma \circ s ((q, \alpha_q), \phi_q, (q'(q), \alpha'_{q'(q)}))=((q, \alpha_q), \phi_q, (q'(q), \alpha'_{q'(q)})),$$
we see that  $\Gamma$ and $s$ are local homeomorphisms. Therefore, 
the local homeomorphism
$$s:M(\cg, \cg')\to \cg$$
allows us to equip  the space $M(\cg, \cg')$ with an sc-structure which turns the source map $s$ into a local sc-diffeomorphism. Hence also $\Gamma=s^{-1}:\cg\to M(\cg, \cg')$ is a local sc-diffeomorphism and since $t\circ \Gamma (q, \alpha_q)=(q'(q), \alpha'_{q'(q)}):\cg\to \cg'$ is, by Theorem \ref{key-z}, a local sc-diffeomorphism, the target map $t=(t\circ \Gamma )\circ \Gamma^{-1}:M(\cg, \cg')\to \cg'$ is also a local sc-diffeomorphism. We have proved the following proposition.

\begin{proposition}\label{a-key-z}
If $\cg$ and $\cg'$ are equipped with their natural
M-polyfold structures, then the set $M(\cg,\cg' )$ of morphisms has also a natural M-polyfold structure for which the source map $s$ and the target map $t$ 
are local sc-diffeomor\-phisms.
\end{proposition}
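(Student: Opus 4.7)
The plan is to build the M-polyfold structure on $M(\cf,\cf')$ by transporting the sc-structure from $\cf$ through the natural local sections furnished by Theorem~\ref{key-z}, and then to use these same sections to identify the target map with a composition of sc-diffeomorphisms.

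First I would verify that the neighborhood basis $\{\mathcal V\}$ defined just above the proposition statement really generates a (Hausdorff) topology on $M(\cf,\cf')$ for which the source and target maps are continuous. The Hausdorff property has already been verified in Lemma~\ref{huasdorff-lemma}, and continuity of $s$ and $t$ is immediate since, in any chart $\mathcal V$ parameterized by an open set $V\subset\mathcal O$, the source coordinate is $q$ and the target coordinate is $f(q)\in \mathcal O'$ with $f$ the sc-diffeomorphism supplied by Theorem~\ref{key-z}.

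Next, given a morphism $\widehat\Phi=((\widehat q,\alpha_{\widehat q}),\widehat\phi,(\widehat q',\alpha'_{\widehat q'}))$, I would apply Theorem~\ref{key-z} to obtain the local sc-diffeomorphism $f:(\mathcal O,\widehat q)\to(\mathcal O',\widehat q')$ and the core-smooth family $\phi_q:\alpha_q\to\alpha'_{f(q)}$ with $\phi_{\widehat q}=\widehat\phi$. These assemble into a map
$$
\Gamma:\cf\supset U\to M(\cf,\cf'),\qquad (q,\alpha_q)\mapsto ((q,\alpha_q),\phi_q,(f(q),\alpha'_{f(q)})),
$$
defined on a neighborhood $U$ of $(\widehat q,\alpha_{\widehat q})$. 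By construction $s\circ\Gamma=\mathrm{id}_U$, so $\Gamma$ is a continuous injection and a section of $s$; combined with the definition of the basis $\{\mathcal V\}$, this shows that $\Gamma$ is a homeomorphism from $U$ onto an open subset of $M(\cf,\cf')$ and that $s$ is a local homeomorphism. I would then declare the collection of all such pairs $(\Gamma(U),\Gamma^{-1})$ to be an sc-smooth atlas on $M(\cf,\cf')$, where the model sc-structure on $\Gamma(U)$ is pulled back from the open subset $U$ of the M-polyfold $\cf$. By definition, $s$ is then a local sc-diffeomorphism.

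The principal step to verify is that this atlas is sc-smoothly compatible, and here Proposition~\ref{reremark1} is the key input. If two sections $\Gamma_1,\Gamma_2$ built from data $(f_1,\phi^1_q)$ and $(f_2,\phi^2_q)$ have overlapping images around a common morphism, then the finiteness and rigidity statement in Proposition~\ref{reremark1} forces $f_1=f_2$ and $\phi^1_q=\phi^2_q$ on an open neighborhood of the base point, exactly as in the basis-axiom argument preceding the proposition. Hence the transition map $\Gamma_2^{-1}\circ\Gamma_1$ is the identity on $\cf$ and trivially sc-smooth, so compatibility is automatic. Finally, for the target map I would write $t=(t\circ\Gamma)\circ\Gamma^{-1}$ in any chart; the map $(q,\alpha_q)\mapsto(f(q),\alpha'_{f(q)})$ is sc-smooth and in fact a local sc-diffeomorphism because $f$ is such by Theorem~\ref{key-z} and the projections $\pi:\cf\to\mathcal O$, $\pi':\cf'\to\mathcal O'$ are sc-diffeomorphisms. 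Since $\Gamma^{-1}=s$ restricted to $\Gamma(U)$ is an sc-diffeomorphism, $t$ is a local sc-diffeomorphism as well. The main obstacle is the bookkeeping in the compatibility check, but Proposition~\ref{reremark1} reduces it to a triviality; the substantive analytic content sits entirely inside Theorem~\ref{key-z}.
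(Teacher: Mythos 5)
Your proof is correct and follows essentially the same route as the paper: transport the sc-structure from $\cf$ via the local sections $\Gamma$ of $s$ furnished by Theorem~\ref{key-z}, then write $t=(t\circ\Gamma)\circ\Gamma^{-1}$. Your explicit chart-compatibility check via Proposition~\ref{reremark1} is a detail the paper leaves implicit (it is the same rigidity argument used earlier to verify the basis axiom for the topology), but it is the right justification.
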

Using the same arguments, we obtain the following additional consequence of Theorem \ref{key-z}. 
\begin{proposition}\label{b-key-z}
Consider $\cg$, $\cg',$ and  $M(\cg,\cg')$ equipped with their natural M-polyfold structures. Then,  
\begin{itemize}
\item[{\em (1)}] The unit map
$
u:\cg \rightarrow M(\cg,\cg)$, 
defined by 
$$u(q, \alpha_q)=((q,\alpha_q), \id, (q,\alpha_q)),$$
is an sc-smooth map.
\item[{\em (2)}] The inversion map
$
i: M(\cg,\cg')\rightarrow  M(\cg',\cg)$, defined by 
$$i((q,\alpha_q),\phi,(q',\alpha_{q'}'))= ((q',\alpha_{q'}'),\phi^{-1},(q,\alpha_{q})),$$
is sc-smooth.
\end{itemize}
\end{proposition}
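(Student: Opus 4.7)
The plan is to exploit Proposition~\ref{a-key-z}, which gives that the source and target maps provide local sc-diffeomorphisms on the morphism spaces and thus serve as sc-charts. Expressing $u$ and $i$ in these charts will reduce their sc-smoothness to sc-smoothness of maps that are either the identity or already controlled by Theorem~\ref{key-z}. The essential point in both parts is the \emph{uniqueness} built into Theorem~\ref{key-z}, combined with the rigidity used in the Hausdorffness argument of Lemma~\ref{huasdorff-lemma}: once the local sc-diffeomorphism $f$ on parameters is pinned down, the accompanying core-smooth family of isomorphisms is pinned down as well.

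For (1), I would fix $(q_{0},\alpha_{q_{0}})\in\cf$ and apply Theorem~\ref{key-z} with $\cf'=\cf$ and $\phi_{0}=\id:\alpha_{q_{0}}\to\alpha_{q_{0}}$. The uniqueness of the local germ $f$, together with the observation that $f=\id_{\mathcal O}$ itself satisfies every required condition, forces $f\equiv\id$ near $q_{0}$; the rigidity argument from Lemma~\ref{huasdorff-lemma} then forces the associated core-smooth family of isomorphisms to be the germ of $\id_{S_{a(q)}}$. Consequently a basic neighborhood of $u(q_{0},\alpha_{q_{0}})$ in $M(\cf,\cf)$ has exactly the form $\{((q,\alpha_{q}),\id,(q,\alpha_{q}))\mid q\in V\}=u(V)$, so $u$ is a local homeomorphism onto this neighborhood, and $s\circ u=\id_{\cf}$. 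Thus locally $u=s^{-1}$, which is sc-smooth since $s$ is a local sc-diffeomorphism by Proposition~\ref{a-key-z}.

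For (2), I would fix $\Phi_{0}=((q_{0},\alpha_{q_{0}}),\phi_{0},(q'_{0},\alpha'_{q'_{0}}))\in M(\cf,\cf')$ and apply Theorem~\ref{key-z} both to $\phi_{0}$ (producing an sc-diffeomorphism germ $f:(\mathcal O,q_{0})\to(\mathcal O',q'_{0})$ and a core-smooth family $\phi_{q}:\alpha_{q}\to\alpha'_{f(q)}$ with $\phi_{q_{0}}=\phi_{0}$) and to $\phi_{0}^{-1}$ (producing a germ $g:(\mathcal O',q'_{0})\to(\mathcal O,q_{0})$ and a core-smooth family $\psi_{q'}:\alpha'_{q'}\to\alpha_{g(q')}$ with $\psi_{q'_{0}}=\phi_{0}^{-1}$). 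The uniqueness of $f$ and $g$, applied to the germs $f\circ g$ and $\id$, forces $g=f^{-1}$; the Lemma~\ref{huasdorff-lemma} argument then forces $\psi_{q'}=\phi_{f^{-1}(q')}^{-1}$. Hence $i$ sends the basic neighborhood $\{((q,\alpha_{q}),\phi_{q},(f(q),\alpha'_{f(q)}))\mid q\in V\}$ of $\Phi_{0}$ bijectively onto the basic neighborhood $\{((q',\alpha'_{q'}),\psi_{q'},(f^{-1}(q'),\alpha_{f^{-1}(q')}))\mid q'\in f(V)\}$ of $i(\Phi_{0})$, so $i$ is continuous. Writing $s_{\ast}$ for the source map of $M(\cf',\cf)$ and $t$ for the target map of $M(\cf,\cf')$, one checks directly from the formula for $i$ that $s_{\ast}\circ i=t$; since both $s_{\ast}$ and $t$ are local sc-diffeomorphisms by Proposition~\ref{a-key-z}, we conclude $i=s_{\ast}^{-1}\circ t$ locally and hence $i$ is sc-smooth.

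The main obstacle, which I would flag, is verifying that the natural candidate neighborhood obtained by inverting each morphism really coincides with the basic neighborhood of $i(\Phi_{0})$ defined via a fresh application of Theorem~\ref{key-z}. This is not automatic: different applications of Theorem~\ref{key-z} could a~priori produce different $f$'s and different $\phi_{q}$'s related by automorphisms in $G$, which would make $i$ multivalued when described through these charts. The uniqueness statement for $f$ in Theorem~\ref{key-z} together with the argument from Lemma~\ref{huasdorff-lemma} (which shows that once $f$ agrees at a point, the accompanying family of isomorphisms is determined locally) is precisely what rules this out and legitimizes the chart identifications $u=s^{-1}$ and $i=s_{\ast}^{-1}\circ t$.
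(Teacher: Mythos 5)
Your proof is correct and follows essentially the same route as the paper: both express $u$ and $i$ through the source/target charts furnished by Proposition~\ref{a-key-z}, reading off $s\circ u=\id$ and the corresponding identity for $i$ to conclude sc-smoothness by the chain rule. The only difference is that you explicitly verify, via the uniqueness in Theorem~\ref{key-z} and the rigidity argument of Lemma~\ref{huasdorff-lemma}, that $u$ and $i$ carry basic neighborhoods to basic neighborhoods (so that they really coincide with the intended local inverses of $s$), a continuity point the paper leaves implicit.
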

\begin{proof}
By the previous proposition the source map $s:M(\cg, \cg')\to \cg$ is a local sc-diffeomorphism and consequently defines a local sc-smooth chart. The map $u$ is in these local charts represented by $s\circ u=\id$ and hence is an sc-smooth map. The inversion map $i$,  composed with $t$ and the locally inverted $s$,  satisfies
$$t\circ i \circ s^{-1}=\id.$$
Consequently, locally $i=t^{-1}\circ s$ is, by the chain rule, an sc-smooth map, as claimed.
\end{proof}

In order to define the multiplication map, it is useful to first recall the definition of the fibered product and its properties
given in Lemma \ref{ert}.
Now consider the M-polyfolds $\cg$, $\cg'$ and  $\cg''$   of graphs as 
introduced above. Then the multiplication map 
$$
m:M(\cg',\cg''){{_s}\times_t}M(\cg,\cg')\rightarrow M(\cg,\cg'')
$$
on the fibered product  is defined by 
$$m((q', \alpha'_{q'}), \psi, (q'', \alpha''_{q''})), ((q, \alpha_{q}), \phi, (q', \alpha'_{q'})))=
((q', \alpha'_{q'}), \psi\circ \phi, (q'', \alpha''_{q''})).$$
\begin{proposition}\label{c-key-z}
The multiplication map is an sc-smooth map.
\end{proposition}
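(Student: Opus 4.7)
The plan is to reduce the sc-smoothness of $m$ to the identity map by composing with the local sc-charts provided by the source maps, whose sc-diffeomorphism property was established in the proof of Proposition \ref{a-key-z}.

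Fix a point
$$\Phi_0 = \bigl(((q'_0, \alpha'_{q'_0}), \psi_0, (q''_0, \alpha''_{q''_0})),\, ((q_0, \alpha_{q_0}), \phi_0, (q'_0, \alpha'_{q'_0}))\bigr)$$
in the fibered product, so that the composed isomorphism $\psi_0 \circ \phi_0 : \alpha_{q_0} \to \alpha''_{q''_0}$ is well-defined. Applying Theorem \ref{key-z} to $\phi_0$ and to $\psi_0$ separately, I obtain germs of sc-smooth maps $f:({\mathcal O}, q_0) \to ({\mathcal O}', q'_0)$ and $g:({\mathcal O}', q'_0) \to ({\mathcal O}'', q''_0)$, together with core-smooth germs of families of isomorphisms $\phi_q : \alpha_q \to \alpha'_{f(q)}$ and $\psi_{q'} : \alpha'_{q'} \to \alpha''_{g(q')}$ with $\phi_{q_0} = \phi_0$ and $\psi_{q'_0} = \psi_0$. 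Composing, the germ $q \mapsto \psi_{f(q)} \circ \phi_q$ is a core-smooth family of isomorphisms from $\alpha_q$ to $\alpha''_{g(f(q))}$ extending $\psi_0 \circ \phi_0$, and by the uniqueness clause in Theorem \ref{key-z} applied to $\psi_0 \circ \phi_0$ it must agree with the core-smooth family associated to the composed morphism.

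Next I use the M-polyfold structures on the morphism spaces coming from the proof of Proposition \ref{a-key-z}: near $\Phi_0$ the local inverses of the respective source maps are
$$\Gamma_1:(q, \alpha_q) \mapsto ((q, \alpha_q), \phi_q, (f(q), \alpha'_{f(q)})), \qquad \Gamma_2:(q', \alpha'_{q'}) \mapsto ((q', \alpha'_{q'}), \psi_{q'}, (g(q'), \alpha''_{g(q')})),$$
and both are local sc-diffeomorphisms. By Lemma \ref{ert}, the fibered product carries a natural M-polyfold structure for which the projection onto the second factor is a local sc-diffeomorphism. Consequently, near $\Phi_0$ the map
$$(q, \alpha_q) \mapsto \bigl(\Gamma_2(t \circ \Gamma_1(q, \alpha_q)),\, \Gamma_1(q, \alpha_q)\bigr)$$
is a local sc-diffeomorphism from $\mathcal{F}$ onto an open neighborhood of $\Phi_0$ in the fibered product, and the source map $s : M(\mathcal{F}, \mathcal{F}'') \to \mathcal{F}$ is a local sc-diffeomorphism near $((q_0, \alpha_{q_0}), \psi_0 \circ \phi_0, (q''_0, \alpha''_{q''_0}))$.

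In these two sc-charts the multiplication map $m$ is represented by
$$(q, \alpha_q) \;\longmapsto\; ((q, \alpha_q), \psi_{f(q)} \circ \phi_q, (g(f(q)), \alpha''_{g(f(q))})) \;\stackrel{s}{\longmapsto}\; (q, \alpha_q),$$
which is the identity map on $\mathcal{F}$ and hence sc-smooth. The only substantive point—and the one that must be checked carefully—is that the source chart on $M(\mathcal{F}, \mathcal{F}'')$ used above is genuinely the one associated (via $\Gamma$) with the composed morphism $\psi_0 \circ \phi_0$; this is precisely the content of the uniqueness statement in Theorem \ref{key-z}, which forces the core-smooth family $q \mapsto \psi_{f(q)} \circ \phi_q$ to coincide with the family produced by Theorem \ref{key-z} applied to $\psi_0 \circ \phi_0$. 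With that identification the composition above is literally the identity, and the sc-smoothness of $m$ follows. Since sc-smoothness is a local property and $\Phi_0$ was arbitrary, this completes the proof.
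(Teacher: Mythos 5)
Your proposal is correct and follows essentially the same route as the paper: both express $m$ in the local sc-charts given by the local inverses of the source maps (together with Lemma \ref{ert} for the fibered product) and observe that in these charts $m$ becomes the identity. Your explicit appeal to the uniqueness clause of Theorem \ref{key-z} to identify the composed core-smooth family $q\mapsto \psi_{f(q)}\circ\phi_q$ with the family attached to the chart at the composed morphism is a point the paper leaves implicit, but it is the same argument.
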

\begin{proof}
We take two morphisms 
\begin{equation*}
\Phi_1=((q_0, \alpha_{q_0}), \phi_0,(q_0',\alpha'_{q'_0}))\in M(\cg, \cg')
\end{equation*}
and
\begin{equation*}
\Phi_2=((q'_0, \alpha'_{q_0}), \phi'_0,(q_0'',\alpha''_{q''_0}))\in M(\cg', \cg'')
\end{equation*}
satisfying $t(\Phi_1)=s(\Phi_2)$ so that $m(\Phi_2, \Phi_1)\in M(\cg, \cg'')$ is well defined. The source map $s:M(\cg, \cg')\to \cg$ is a local sc-diffeomorphism and the local inverse $s^{-1}$ maps $(q_0, \alpha_{q_0})\in \cg$ into 
$((q_0, \alpha_{q_0}), \phi_0, (q'_{0}, \alpha'_{q_0}))\in M(\cg, \cg')$.  This local diffeomorphism, in view of Theorem \ref{key-z},  is of the  form
$$s^{-1}(q, \alpha_{q})=((q,\alpha_q), \phi_q, (q'(q), \alpha'_{q'(q)}))$$
in  a neighborhood of $(q_0, \alpha_{q_0})$ where $q\mapsto q'(q)$ is a local sc-diffeomorphism of ${\mathcal O}$ mapping $q_0$ into $q'(q_0)=q'_0$. The same holds true for the source map $s_1:M(\cg', \cg'')\to \cg'$. Its local inverse $s_1^{-1}$ mapping 
$(q_0', \alpha_{q'_0}')\in \cg'$ into $((q_0', \alpha'_{q_0}), \phi_0', (q_0'', \alpha_{q_0''}))\in M(\cg', \cg'')$  is of the form 
$$s_1^{-1}(q', \alpha'_{q'})=((q',\alpha'_{q'}), \phi'_{q'}, (q''(q'), \alpha''_{q''(q')}))$$
where 
$q'\mapsto q''(q')$ is a local sc-diffeomorphism mapping $q'_0$ into $q''(q'_0)=q''_0$.  Consequently, 
the inverse $\psi^{-1}$ of an sc-smooth chart 
$$\psi:M(\cg',\cg''){{_s}\times_t}M(\cg,\cg')\to \cg$$ has  the form 
\begin{equation*}
\begin{split}
&\psi^{-1}(q, \alpha_q)\\
&\qquad =\Bigl( 
\bigl((q'(q), \alpha'_{q'(q)}), \phi'_{q'(q)}, (q''(q), \alpha''_{q''(q)})\bigr), 
\bigl( (q, \alpha_{q}), \phi_{q}, (q'(q), \alpha'_{q'(q)})\bigr)
\Bigr).
\end{split}
\end{equation*}
Also the source map $s_2:M(\cg,\cg'')\to\cg$ is a local sc-diffeomorphism and  defines a local sc-smooth chart. 
The representation of the multiplication map $m$ in these local sc-smooth charts becomes $s_2\circ m\circ \psi^{-1}$ which is equal to the identity map. Therefore, the multiplication map $m$ is an sc-smooth map and the proof of Proposition \ref{c-key-z} is complete.
\end{proof}

One often represents a morphism $\Phi\in M(\cg, \cg')$ by the arrow 
$$\Phi:s(\Phi)\to t(\Phi),$$
and if the morphisms  $\Phi\in M(\cg, \cg')$  and $\Psi  \in M(\cg', \cg'')$  satisfy $t(\Phi)=s(\Psi)$ one often writes the multiplication map 
as a composition 
$$m(\Psi, \Phi)=\Psi\circ \Phi:s(\Phi)\to t(\Psi).$$
Moreover, if $i:M(\cg, \cg')\to M(\cg', \cg)$ is the inversion map, one uses for the inverse arrow the notation 
$$\Phi^{-1}:=i(\Phi):t(\Phi)\to s(\Phi).$$

{\section{Compactness 
Properties of $M(\cg,\cg')$}
\label{section_compactness}}
{We next establish compactness properties for a pair of good uniformizers which will be  crucial in the study of the topology of $Z$ later on. We start with the following basic compactness result.}
\mbox{}\\
\begin{theorem}\label{rees}
{Let $\cg=\{(q, \alpha_q)\vert \, q\in \co \} $ be a M-polyfold as defined above, where 
$q\mapsto \alpha_q$, $q=(a, v, \eta)\in \co$, is a good uniformizing family of stable maps, and let  
$\pi:\cg\rightarrow \co $ be the associated sc-diffeomorphism.  Then, for every point $q\in \co$, 
there exists an open neighborhood 
$U^{\cg}(q)\subset \co$ which has the following property.
}

{If $\cg$ and $\cg'$ are two such M-polyfolds and $q\in \co$ and $q'\in \co'$ two points, then every sequence  $\Phi_k \in M(\cg,\cg')$ of morphisms  
 satisfying 
$$\pi\circ s(\Phi_k)\in U^\cg(q)\quad 
 \text{ and} \quad \pi'\circ t(\Phi_k)\rightarrow q'$$
 possesses a subsequence converging  in  $M(\cg,\cg')$.}
\end{theorem}

\begin{remark} 
{The above result will imply that the natural topology ${\mathcal T}$ on the space $Z$ of equivalence classes of stable maps, defined later on,  is Hausdorff and completely regular. These two topological properties of $Z$ will be deduced from the  representation of $Z$ as the orbit space of a M-polyfold  groupoid $(X, {\bf X})$ whose object set $X$ is a disjoint union of M-polyfolds $\cg$ and whose morphism set ${\bf X}$ 
is a disjoint union of morphism sets $M(\cg, \cg')$.}

{It will become apparent later on, that in case we only wish to establish the Hausdorff property of $Z$, it suffices to show for any two given points $q\in \co$ and $q'\in \co'$ that every sequence 
$(\Phi_k)$ of morphisms satisfying $\pi\circ s(\Phi_k)\to q$ and $\pi'\circ t(\Phi_k)\to q'$  possesses a convergent subsequence.} 

{Finally, let us note that we can always replace 
the open neighborhood $U^{\cg}(q)$ quaranteed by Theorem \ref{rees} by a smaller open neighborhood for which  the conclusions of the theorem will still hold.}

\end{remark}


{As a preliminary step in the proof of Theorem \ref{rees} we fix an M-polyfold 
$\cg=\{(q, \alpha_q)\vert \, q\in \co\}$ and a point 
$q_0\in \co$, and start with the construction of the desired open neighborhood $U^\cg(q_0)$.}

{
Our good uniformizer centered around the stable map $(S, j, M, D, u)$   is denoted by 
$$
q\mapsto \alpha_q, 
\quad q=(a, v, \eta)\in \co.
$$
It has the properties listed in the good 
data according to Definition \ref{GD} which will be used heavily in the following constructions.}

{We denote the stabilization set on $(S, j, M, D)$ by $\si$ and view the set $M^\ast:=M\cup \si$ as an unordered set. The noded Riemann surface 
$(S, j, M^\ast, D)$ is stable and has the automorphism group $G^\ast$. The 
automorphism group $G$ of the stable map $(S, j, M, D, u)$ is a subgroup of $G^\ast$.  There exists an open $G^\ast$-invariant neighborhood $O$ of the origin $(0, 0)$  in the parameter space  
$ {\mathbb C}^{\# D}\times E$, where $E$ is the finite dimensional complex vector space which parametrizes the deformations $j(v)$ of the almost complex structure $j$. The map 
$$(a, v)\mapsto (S_a, j(a, v), M^\ast_a, D_a),\quad (a, v)\in O,$$ 
is a good uniformizing family centered at the stable noded surface $(S, j, M^\ast, D)$ according to Definition \ref{citiview}.
}

{
\begin{definition}\label{gf}
 A {\bf family of Riemannian metrics} $g_a$  for the family  $S_a$ of surfaces, where  $a\in B_{(1/2)}^{\# D}$,  consists
 of a choice of a 
Riemannian metric $g_a$ on 
$S_a\setminus |D_a|$ for every parameter value $a$, having the following properties.  The cylinders $Z_{a_{\{x,y\}}}^{\{x,y\}}$  are isometric
to the standard cylinder $[0,R]\times S^1$ equipped with the product metric (if $a_{\{x,y\}}\neq 0$). On  the core regions of $S_a$ (which can can be canonically identified) the metrics are all  the same.  If 
$a_{\{x,y\}}=0$,  the metrics on  the punctured disks $D_x\setminus\{x\}$ and $D_y\setminus\{y\}$ are  isometric to the corresponding half-cylinders $\R^+\times S^1$ and $\R^-\times S^1$, respectively.
\end{definition}
}
{
Every connected component of $S_a$ having its nodal points removed, is now equipped with a metric, which is complete and denoted by $d_a$.
Continuing  with the construction of the open neighborhood $U^\cg(q_0)$ we denote our good uniformizer  by
$$
q\mapsto  \alpha_q=(S_a,j(a,v),M_a,D_a,\oplus_a(\exp_{u}(\eta))),\quad  q\in\co,
$$
where $q=(a, v, \eta)\in \co$ and where $\co$ is an open neighborhood of the origin $(0, 0, 0)$ in the splicing core $K\subset {\mathbb C}^{\# D}\oplus  E\oplus H^{3,\delta_0}_c(u^\ast TQ)$.
We recall that if $(a,v,\eta)\in {\mathcal O}$, then $(a,v)\in O$ and $\eta$ is a section of $u^\ast\tilde{\mathcal O}$ for the specific open neighborhood $\tilde{\mathcal O}$  of the zero-section in $TQ$ introduced in (3) of 
Definition \ref{GD}. }

{Our distinguished point $q_0\in \co$ is given by 
$$q_0=(a_0, v_0,\eta_0).$$
We now take a small open neighborhood $V$ of 
$(a_0, v_0)\in O$ satisfying $\cl (V)\subset O$. 
For the symplectic manifold $(Q,\omega)$ we choose a Riemaniann metric $d_Q$. Then with the metric $g_{a_0}$ already fixed on 
$S_{a_0}\setminus \abs{D_{a_0}}$, the tangent map 
$Tu_0$ of the map $u_0:=\oplus_{a_0}\exp_u(\eta_0):S_{a_0}\setminus \abs{D_{a_0}}\to Q$ is uniformly bounded, in view of the exponential convergence of $\eta_0$ at the ends of the infinite cylinders. }

{For a  given map $w:S_a\rightarrow Q$ we introduce the notation 
$$\norm{Tw}=\sup  \abs{Tw(z)h}_Q,$$
where the supremum is taken over all $z\in S_a\setminus \abs{D_a}$ and $h\in T_zS_a$ having  
norm at most equal to $1$. So, $\norm{Tu_0}<\infty$ as just explained. If $z\in S_a\setminus \abs{D_a}$, we set 
$$\norm{Tw(z)}=\sup \abs{Tw(z)h}_Q,$$
 where the supremum is take over 
all $h\in T_zS_a$ of  norm at most  $1$.
Finally, we choose 
a small open neighborhood $U_0$ of $\eta_0$ in $H^{3,\delta_0}_c(u^\ast TQ)$ having the following properties.
\begin{equation}\label{eq_properties_2}
\begin{aligned}
&\quad (1)\quad  \bigl(\cl (V)\oplus \cl (U_0)\bigr)\cap K \subset \co\\
&\quad (2)\quad 
\text{There exists  a constant $C>0$ such that}
\norm{T(\oplus_a(\exp_u(\eta)))}\leq C\\
&\quad \phantom{(2)}\quad  \text{\ for every $(a,v,\eta)\in V\oplus U_0$.}
\end{aligned}
\end{equation}
The desired neighborhood $U^\cg(q_0)$ is now defined as  
\begin{equation}\label{neighborhood}
U^\cg (q_0) = (V\oplus U_0)\cap K.
\end{equation}
We  note that the closure of $U^\cg(q_0)$ in $\C^{\# D}\oplus  E\oplus H^{3,\delta_0}_c(u^\ast TQ)$ is a subset
of $\co$.
}

\begin{proof}[Proof of Theorem \ref{rees}]
{
We fix $q_0\in \co$ and $U^\cg(q_0)$ as just constructed and consider  a second M-polyfold 
$$\cg'=\{(q', \alpha'_{q'})\vert \, q'=(a', v', \eta')\in \co'\}$$
where $q'\to \alpha_{q'}'$ is a also a good uniformizer around the stable map $(S', j', M', D', u')$ whose automorphism group we denote by $G'$. The stabilization of the noded surface $(S', j', M', D')$ is denoted by $\si'$. We let 
$$q_0'=(a_0', v_0', \eta_0')\in \co'$$
 be the  second distinguished  point. We consider a sequence $(\Phi_k)\in M(\cg, \cg')$ of morphisms 
satisfying
$$
\pi'\circ t(\Phi_k)\rightarrow q_0'\quad \text{and}\quad \pi\circ s(\Phi_k)\in U^\cg(q_0).
$$
and shall show that the sequence $(\Phi_k)$ has a subsequence which converges in $M(\cg, \cg')$. }

{Without loss of generality we may assume, going over to a subsequence, that the sequence  
$$q_k:=\pi \circ s(\Phi_k)=:(a_k,v_k,\eta_k)$$
has the property that 
$$(a_k,v_k)\rightarrow (a,v)\in \cl (V).$$
By assumption, we have the convergence 
$$q_k'=\pi' \circ t(\Phi_k)=(a_k',v_k',\eta_{k}')\to q_0'=(a_0', v_0', \eta_0').$$
 }

{Associated with the sequence $(\Phi_k)\in M(\cg,\cg')$ is the sequence
$$
\phi_k:(S_{a_k},j(a_k,v_k),M_{a_k},D_{a_k})\rightarrow (S_{a_k'}',j'(a_k',v_k'),M_{a_k'}',D_{a_k'}')
$$
of isomorphisms  satisfying 
\begin{equation}\label{condition_5}
(\oplus'_{a_k'}(\exp'_{u'}(\eta_k')))\circ \phi_k = \oplus_{a_k}(\exp_{u}(\eta_k))
\end{equation}
for all $k$. Abbreviating the maps 
$$
u_k'=(\oplus'_{a_k'}(\exp'_{u'}(\eta_k')))\quad \text{and}\quad u_k= \oplus_{a_k}(\exp_{u}(\eta_k)),
$$
\eqref{condition_5} says that 
\begin{equation}\label{condition_6}
u_k'\circ \phi_k=u_k.
\end{equation}
By the assumptions of the theorem,
\begin{equation}\label{condition_7}
u_k'\to u_0'.
\end{equation}
}
{We claim that the gradients of the maps $\phi_k:S_{a_k}\setminus \abs{ D_{a_k} }\to S'_{a_k'}$ are uniformly bounded in $k$. 
\begin{lemma}\label{lemma_3.23}
$$\sup_{ z\in S_{a_k}\setminus \abs{D_{a_k} }, k\geq 1 } \norm{T\phi_k (z)}<\infty.$$
\end{lemma}
}

\begin{proof}[Proof of Lemma 
\ref{lemma_3.23}]
{
Arguing indirectly we
assume that there  is  subsequence (in k) of points $c_k\in S_{a_k}\setminus \abs{D_{a_k}}$ for which 
$$\norm{T\phi_k(c_k)}\to \infty.$$
For this, so called bubbling off sequence $(c_k)$,  we perform a bubbling off analysis. Such an analysis is carried out in detail in Appendix \ref{section5.1}. 
}
{
We start with the first case in which (possibly going over to a subsequence) the points $c_k$ as well as their image points $\phi_k(c_k)$ all lie in a finite distance to the respective core
according to the 
Definition \ref{def_finite_distance_core}. In this case, the result of the bubbling off analysis is a non-constant and injective holomorphic map $\psi:\C\to S_{a_0'}'$ which 
contains at most one marked point from $M_{a_0'}'$ and no nodal points from $\abs{D_{a_0'}'}$. After applying the removable singularity theorem for holomorphic mappings we obtain a non-constant holomorphic map $\wt{\psi}:S^2\to S_{a_0'}'$ whose image, in view of the Hurwitz theorem, has to be a spherical component of $S'_{a_0}$. If this spherical component is stable, it carries at least $3$ special points from 
$M_{a_0'}'\cup \abs{D_{a_0'}'}$, but only $2$ of them can lie in the image of $\wt{\psi}$.
Hence $\wt{\psi}$ cannot be surjective and this case is not possible. If the image of 
$\wt{\psi}$ is an unstable spherical  component of $S_{a_0'}'$, it carries at least one stabilization point 
$z_0'$ from $\si'$. Now we recall that the part of the surfaces $S_{a_k'}'$ at finite distance 
to the core,  can be canonically identified. By definition of a stabilization point, there exists a disk-like neighborhood $D'_{a_0'}(\equiv D'_{a_k'})$  centered at $z_0'$ and  having a smooth boundary $\partial D'_{a_0'}$,  on which the maps $u_k':D'_{a_0'}\to Q$ are all embeddings. We know that 
$u'_k\to u_0'$. In view of $\norm{T\phi_k(c_k)}\to \infty$ we find two sequences $(z^1_k)$ and $(z_k^2)$ on the same component of $S_{a_k}$ which, denoting their images in $S_{a_k'}'$ by  $z_k^{1'}=\phi_k(z_k^1)$ and $z_k^{2'}=\phi_k(z_k^2)$,  have the following properties.
\begin{itemize}
\item[$\bullet$] 
$d_{a_k}(z_k^1,z_k^2)\rightarrow 0$ as $k\to \infty$.
\item[$\bullet$] $z_k^{1'}=z_0'$ and $z^{2'}_k\in \partial D'_{a_0'}.$
\end{itemize}
Hence the distance between  $z_k^{1'}$ and $z_k^{2'}$  is bounded away from $0$
uniformly in $k$.  Since the restrictions of $u_k'$ onto the disk $D'_{a_0'}$ are embeddings and $u_k'\to u_0'$, also the distance
$d_Q(u_k'(z_0'),u_k'(z_k^{2'}))$ is bounded away from $0$ as $k\rightarrow \infty$. }

{
On the other hand, 
$$
d_Q(u_k(z_k^1),u_k(z_k^2))\rightarrow 0
$$
as $k\rightarrow 0$ because, by the assumption \eqref{eq_properties_2} on $U^\cg(q_0)$ the gradients of the maps $u_k$ are  uniformly  bounded in $k$  and
$d_{a_k}(z_k^1,z_k^2)\rightarrow 0$.  However, from $u_k'\circ \phi_k=u_k$ we conclude that 
$$
d_Q(u_k'(z_0'),u_k'(z_k^{2'}))=d_Q(u_k(z_k^1),u_k(z_k^2)).
$$
We again have arrived at a contradiction and see that the first case cannot occur. }

{In the second case we assume that the sequence $(c_k)$ stay in finite distance to the core,   but the images $\phi_k(c_k)$ lie in  cylinders with increasing distance to the core as $k\rightarrow\infty$.
In this case the bubbling off analysis produces an injective non-constant holomorphic map $\psi:\C\to \R\times S^1$, which is impossible.
}

{Now we assume that the  points $c_k$  lie in cylinders,  with increasing distance to the core as $k\rightarrow\infty$. If the images $\phi_k(c_k)$ 
are also in cylinders and  have  increasing distance to the core, we obtain a contradiction as in the previous case.
}

{In the final case the  points $c_k$ lie in cylinders with increasing distance to the core and $\phi (c_k)$ in finite distance
to the core. In this  case the bubbling-off analysis produces an injective and non-constant holomorphic  map $\psi:\C\rightarrow S_{a_0'}'$
which can have at most one nodal point in its  image and no marked points.  Now the arguments of  the first case lead to a contradiction.}

Having exhausted all cases of possible bubbling-off, we conclude that the gradients  $\phi_k$ are indeed uniformly bounded.
\end{proof}

{
Continuing with the proof of  Theorem \ref{rees}, 
the uniform gradient bounds for the sequence $(\phi_k)$ of isomorphisms imply,  by Ascoli-Arzela's theorem,  a $C^\infty_{\textrm{loc}}$ convergence of a subsequence of $(\phi_k)$ in finite distance to the core. 
Using Proposition \ref{gotham} we can pass to the limit $\phi_0=\lim_{k\to \infty}\phi_k$ in $C^\infty_{\textrm{loc}}$. Using 
the postulated convergence $\eta'_k\to \eta'_0$ we conclude from formula 
\eqref{condition_5},  in view of Theorem \ref{theorem_neck},  that $\eta_k$ converges in finite distance 
to the core in the Sobolev norm $H^3$ and then, by Proposition \ref{gotham}, in the neck regions in the $H^{3,\delta_0}_{c}$-norm if the corresponding gluing parameter goes to $0$. We call the limit section $\wh{\eta}_0=\lim_{k\to \infty}\eta_k$. }


{Recalling  that 
$$\pi' \circ t(\Phi_k)=q_k'=(a_k',v_k',\eta_{k}')\to q_0'=(a_0', v_0', \eta_0')\quad \text{in $\co'$,}$$
we have proved so far also 
the convergence 
$$\pi\circ s(\Phi_k)=q_k=(a_k, v_k, \eta_k)\to  \wh{q}=(a, v, \wh{\eta}_0)\quad \text{in $\co$. }$$
Moreover, we have found the element 
$$\Phi_0=\bigl((\wh{q}_0,\alpha_{\wh{q}_0}),  \phi_0, (q_0', \alpha'_{q'_0})\bigr)\in M(\cg, \cg'),$$
in which $\phi_0:\alpha_{\wh{q}_0} \to \alpha'_{q'_0}$ is an isomorphism between the stable maps.}
 
 { 
 To show that $\lim_{k\to \infty}\Phi_k=\Phi_0$ in $M(\cg, \cg')$, we take a set 
 $\wh{\mathcal V}$ in the basis of the topology of $M(\cg,\cg')$ containing 
 $\Phi_0$,  and show that 
 $\Phi_k\in \wh{\mathcal V}$ for $k$ large.  By definition (based on Theorem \ref{key-z}), 
$$\wh{\mathcal V}=\{\bigl((q, \alpha_q), \psi_q, (f(q), \alpha'_{f(q)})\bigr)\vert \, q\in \wh{\co}\subset \co\},$$
where $\wh{\co}\subset \co$ is a small open neighborhood of $\wh{q}_0\in \co$. 
The local germ of sc-diffeomorphism 
$f:(\wh{\co}, \wh{q}_0)\to (\wh{\co}', q_0')$ between the parameter spaces  satisfies 
$f(\wh{q}_0)=q_0'$. The core-smooth germ of a family $q\mapsto \psi_q$ of isomorphisms 
$\psi_q:\alpha_q\to \alpha'_{f(q)}$ satisfies $\psi_{q_0}=\phi_0$.
}

{
Since  
$$
\psi_{q_k}\circ \phi_k^{-1} :\alpha_{q_k'}'\rightarrow \alpha_{f(q_k)}'
$$
is an isomorphism, there exists, by 
Proposition \ref{reremark1}, an isomorphism $g'\in G'$ of the stable map 
$(S', j', M', D', u')$ satisfying 
$$f(q_k)=g'\ast q'_k\quad \text{and}\quad 
\psi_{q_k}\circ \phi_k^{-1}=(g')_{a_k'}.$$
}
{
In finite distance to core, the sequence 
$\psi_{q_k}\circ \phi_k^{-1}$ converges 
in $C^\infty$ to the indentity map on $S'_{a_0'}$. Therefore, $g'=\text{id}\in G'$. Hence $(g')_{a_k'}=\text{id}$ on $S'_{a_k'}$ and so, $\psi_{q_k}=\phi_k$. Consequently, 
$\Phi_k=\bigl((q_k, \alpha_{q_k}), \phi_k, (q_k', \alpha'_{q_k'})\bigr)\in \wh{\mathcal V}$ for $k$ large and the proof of Theorem \ref{rees} is complete.}
\end{proof}

{\section{The Topology on  $Z$}\label{natural_topology_Z_section}}

We consider a good uniformizer $q\rightarrow \alpha_q$ of {stable maps}  defined on the second countable paracompact Hausdorff M-polyfold $\co$. The associated graph 
$\cg=\{(q, \alpha_q)\vert \, q\in \co\}$ is equipped with the M-polyfold structure making the projection
$\pi:\cg\rightarrow {\mathcal O}$ an sc-diffeomorphism. 
The space ${\mathcal O}$ comes with a finite group action by sc-diffeomorphisms
$$
G\times{\mathcal O}\rightarrow {\mathcal O},\quad (g,q)\mapsto  g\ast q.
$$
The quotient space $G\backslash {\mathcal O}$ is again a second countable paracompact Hausdorff space and the projection map
$$
\pi:{\mathcal O}\rightarrow G\backslash{\mathcal O}
$$
is trivially open since $\pi^{-1}(\pi(U))$ is a finite union
of the diffeomorphic images of $U$ under the  action of $g\in G$. 
In view of Proposition \ref{reremark1}, 
the map
$$
G\backslash{\mathcal O}\rightarrow Z,\quad [a,v,\eta]\mapsto  [S_a,j(a,v),M_a,D_a,\oplus_a(\exp_u(\eta))]
$$
into the space of stable curves is injective.
This implies that there is at most  one choice of sets  defining the topology on $Z$ such  that all such maps are homeomorphisms onto  open sets. 

This leads us to the following definition of a collection ${\mathcal T}$ of subsets of $Z$ as follows.
\begin{definition}
A subset $U$ of $Z$ belongs to ${\mathcal T}$ provided for every $z\in U$ there exists a good uniformizer
${\mathcal O}\ni q\mapsto \alpha_q$ satisfying  $z=[\alpha_{q_0}]$ for some $q_0\in {\mathcal O}$,  and an open neighborhood
$V(q_0)\subset {\mathcal O}$ such that $\{[\alpha_q]\ |\ q\in V(q_0)\}\subset U$.
\end{definition}

The main  result in this section is the following theorem which shows that ${\mathcal T}$ is a metrizable topology.

\begin{theorem}\label{nattop}
The collection ${\mathcal T}$ of subsets of the space $Z$ of stable curves  is a second countable, paracompact, Hausdorff topology which also is completely regular. In view of Urysohn's theorem the topology ${\mathcal T}$ is in particular metrizable.
Moreover the un-noded elements in $Z$ constitute an open and dense subset. There exists a sequence of smooth elements in $Z$ representing
un-noded elements which is dense in $Z$. In particular,  $Z$ is separable.
\end{theorem}

The proof of Theorem \ref{nattop} will be split into several parts.

\begin{lemma}\label{topology}
The collection ${\mathcal T}$ of subsets of $Z$ is a topology on $Z$.
\end{lemma}
\begin{proof}
It is clear that $\emptyset\in {\mathcal T}$. Also $Z\in {\mathcal T}$ since for every element $z\in Z$ there exists
a good uniformizer $q\mapsto \alpha_q$ with $z=[\alpha_{q_0}]$ as was already shown. It follows immediately from the definition
of ${\mathcal T}$ that the  union of elements in ${\mathcal T}$ also belongs to ${\mathcal T}$. If   $U_1,U_2\in {\mathcal T}$, 
we shall show that $U=U_1\cap U_2$ belongs to ${\mathcal T}$. Pick $z\in U$. We find two good uniformizers
${\mathcal O}\ni q\mapsto\alpha_q$ and ${\mathcal O}'\ni q'\mapsto \alpha_{q'}'$ such  that for some $q_0\in \co$ and 
$q'_0\in \co'$, 
$$
z=[\alpha_{q_0}]=[\alpha_{q_0'}'].
$$
Moreover,  there exist open sets  $V(q_0)\subset {\mathcal O}$ and $V'(q_0')\subset {\mathcal O}'$ for which 
$$
\{[\alpha_q]\ |\ q\in V(q_0)\}\subset U_1\ \ \text{and}\ \ \{[\alpha_{q'}']\ |\ q'\in V'(q_0')\}\subset U_2.
$$
Let  $\phi_0:\alpha_{q_0}\rightarrow \alpha_{q_0'}'$ be an isomorphism.  In view of the fundamental Theorem \ref{key-z}
there exists a germ of sc-diffeomorphism $f:({\mathcal O},q_0)\rightarrow ({\mathcal O}',q_0')$ satisfying  $f(q_0)=q_0'$ 
and a core smooth family of isomorphisms $\phi_q$, 
$${\phi_q}:
\alpha_q\rightarrow\alpha_{f(q)}, 
$$
defined for $q$ near $q_0$  and satisfying $\phi_{q_0}=\phi_0$. This implies, for a suitable small open neighborhood $V_1(q_0)$, that the following properties hold.
\begin{itemize}
\item[(1)] $q_0\in V_1(q_0)\subset V(q_0)\subset {\mathcal O}$.
\item[(2)] $V'_1(q_0'):=f(V_1(q_0))$ is open and $q_0'\in V_1'(q_0')\subset V'(q_0')\subset {\mathcal O}'$.
\item[(3)] $[\alpha_q]=[\alpha_{f(q)}']$ for all $q\in V_1(q_0)$.
\end{itemize}
Hence 
$$
\{[\alpha_q]\ |\ q\in V_1(q_0)\}=\{[\alpha_{q'}']\ |\ q'\in V_1'(q_0')\}\subset U_1\cap U_2 =U.
$$
This completes the proof that ${\mathcal T}$ is a topology on $Z$.  
\end{proof}

\begin{lemma}\label{cb}
Every point in the topological space $(Z,{\mathcal T})$ has a countable basis of open neighborhoods.
\end{lemma}
\begin{proof}
Fix $z\in Z$ and take a good unitormizer ${\mathcal O}\ni q\rightarrow \alpha_q$ so that $[\alpha_{q_0}]=z$ for a suitable $q_0\in {\mathcal O}$.  Since ${\mathcal O}$ is a subset of a metric space,  we may assume that $\co$  is equipped with a metric $d$. We 
take a sequence of positive numbers $(\varepsilon_k)$ converging to $0$ and consider the associated open balls $B_{\varepsilon_k}(q_0)$.
Then the associated open sets $V_k\subset Z$,  defined by
$$
V_k=\{[\alpha_q]\ |\ q\in B_{\varepsilon_k}(q_0)\},
$$
constitute  a countable neighborhood basis at  the point $z$. Note that given any other family $q'\rightarrow \alpha_{q'}'$ with
$[\alpha_{q_0'}']=z$,  the associated construction $V'_{k'}$ for a suitable sequence $(\varepsilon_{k'}')$ will lead to  an equivalent basis which can 
easily proved by utilizing  Theorem \ref{key-z} as above.
Indeed, via this theorem one proves that for given $k$ there exist $k'$ and $k''$ such that
$$
z\in V_{k''}\subset V_{k'}'\subset V_k.
$$
\end{proof}
\begin{lemma}\label{hausdorff}
The topology ${\mathcal T}$ is Hausdorff.
\end{lemma}
\begin{proof}
Let $z, z'\in Z$ be two different points. Using the fact that ${\mathcal T}$ has for every point a countable basis of neighborhoods
(Lemma \ref{cb}) we can take a basis of open neighborhoods $(V_k)$ of $z$
and $(V_{k'}')$ of $z'$ so that
$$
V_{k+1}\subset V_k\quad  \text{and}\quad  V_{k+1}'\subset V_k'.
$$
Arguing indirectly we may assume that $V_k\cap V_{k}'\neq \emptyset$ for all $k$,  
so that we find  an element $z_k\in V_k\cap V_k'$ for every $k$.

{We choose  a good uniformizer 
$q\mapsto \alpha_q$ on $\co$
such that $[\alpha_{q_0}]=z$ for some $q_0\in \co$ and abbreviate 
$\cg=\{(q,\alpha_q)\vert \, q\in \co\}$. 
Theorem \ref{rees} guarantees the distinguished open neighborhood 
$U^\cg(q_0)\subset {\mathcal O}$  of $q_0$.  If $k$ is  large, we may assume that  $z_k=[\alpha_{q_k}]$ and $q_k\to q_0$. 
Similarly, we choose for $z'$  the second good uniformizer $q'\rightarrow \alpha_{q'}'$ on ${\mathcal O}'$ such that 
$[\alpha_{q_0'}']=z'$ and represent the sequence $z_k$ by $q_k'\in {\mathcal O}'$ satisfying $q_k'\rightarrow q_0'$. We 
abbreviate $\cg'=\{(q',\alpha'_{q'})\vert \, q'\in \co'\}$. 
Since $[\alpha_{q_k}]=[\alpha_{q_k'}']=z_k$,  we find a sequence 
$\phi_k:\alpha_{q_k}\to \alpha'_{q_k'}$ of isomorphisms which defines the sequence 
$$
\Phi_k=\bigl((q_k,\alpha_k), \phi_k, (q_{k}',\alpha_{q_k'}')\bigr)
$$
of morphisms in $M(\cg,\cg')$. By Theorem \ref{rees}  there exists a convergent subsequence $\Phi_{k_j}\rightarrow \Phi$ in $M(\cg,\cg')$. In view of $q_k\rightarrow q_0$ and  $q_k'\rightarrow q_0'$, the limit  has the form 
$$\Phi=((q_0,\alpha_{q_0}),\phi_0,q_0',(\alpha_{q_0'}'))$$ for the isomorphism $\phi_0:\alpha_{q_0}\to \alpha'_{q_0'}$. 
Therefore, $[\alpha_{q_0}]=[\alpha'_{q_0'}]$ in contradiction to the assumption 
$z\neq z'$. Hence the topology ${\mathcal T}$ must be Hausdorff.}
\end{proof}
\begin{lemma}\label{cr}
The topology ${\mathcal T}$ is completely regular.
\end{lemma}
\begin{proof}
We consider a point $z\in Z$ and a closed subset $A\subset Z$ not containing $z$. We have to construct a continuous map
$\gamma:Z\rightarrow [0,1]$ satisfying $\gamma(z)=0$ and $\gamma(a)=1$ for $a\in A$.

We choose a good uniformizer ${\mathcal O}\ni q\rightarrow \alpha_q$  such  that 
$[\alpha_{q_0}]=z$ for a suitable $q_0\in {\mathcal O}$. We find an open neighborhood $V(q_0)\subset {\mathcal O}$ so that
$A$ and the open neighborhood $V=\{[\alpha_q]\ |\ q\in V(q_0)\}$ of $z\in Z$,  are disjoint.  We choose  an open neighborhood $U^\cg(q_0)\subset {\mathcal O}$ of $q_0$
whose closure (in ${\mathcal O})$ is contained in $V(q_0)$, 
$$
\cl_{\mathcal O}(U^\cg(q_0))\subset V(q_0), 
$$
and such that  for $U^\cg(q_0)$ the assertion of Theorem \ref{rees} holds (see the the remark after this theorem).  If $G_{q_0}=\{g\in G\vert g\ast q_0=q_0\}$ is the stabilizer of $q_0$
we may assume by  replacing $U^\cg(q_0)$ and $V(q_0)$  by  possibly smaller neighborhoods,  that,  in addition to the already stated
properties,  the open sets $U^\cg(q_0)$ and $V(q_0)$ are  invariant under $G_{q_0}$. We choose a continuous map
$\tilde{\gamma}:V(q_0)\rightarrow [0,1]$ which vanishes at $q_0$ and is $G_{q_0}$-invariant and takes
the value $1$ on $V (q_0)\setminus U^\cg(q_0)$. Passing to $Z$ we obtain a continuous map $\gamma_1:V\rightarrow [0,1]$ on the 
 open neighborhood $V$ of $z$ which is  disjoint from  $A$. 
 
 The open neighborhood 
$U=\{[\alpha_q]\vert \, q\in U^\cg(q_0)\}$ of $z$ is contained in $V$, and 
$$\text{$\gamma_1(z)=0$ \quad and \quad $\gamma_1(\zeta)=1$ for $\zeta \in V\setminus U$.}$$
Note that we cannot conclude without further work that the closure of $U$ in $Z$ is contained in $V$ despite the fact that
$\cl_{\mathcal O}(U^\cg(q_0))\subset V(q_0)$.
Let us define the map $\gamma:Z\rightarrow [0,1]$ by extending the map $\gamma_1$ outside
of $V$ by $1$. The main work consists in proving the continuity of the map $\gamma$. Since, By Lemma \ref{cb},  every point has
a countable neighborhood basis of open sets,  we can argue with sequences. 

{
Let $b\in Z$ and let $b_k\rightarrow b$.
We distinguish the two cases $b\in V$ and $b\not \in V$,  and start with the case that $b\in V$. Then $b=[\alpha_{q_b}]$ for some $q_b\in {\mathcal O}$ and  $b_k=[\alpha_{q_k}]$ for $k$ large,  where 
$q_k\rightarrow q_b$ in ${\mathcal O}$.
Now,  $\gamma(b_k)=\tilde{\gamma}(q_k)\rightarrow \tilde{\gamma}(q_b)=\gamma(b)$ and $\gamma$ is continuous at such
a point $b$. }

{
Next we consider the second case, assuming that $b\in Z\setminus V$.
Then $\gamma(b)=1$, by definition of 
$\gamma$,  and we  have to show that $\gamma (b_k)\to 1$. If $b_k\not \in V$, then  $\gamma(b_k)=1$ and  so we may assume without loss of generality that $b_k\in V$ for all $k$. We take a 
good uniformizer $q''\rightarrow \alpha_{q''}''$ on $\co''$ satisfying  $b=[\alpha_{q''_0}'']$ for some $q_0''\in \co''$. 
For large
$k$ we can represent the points $b_k$ in the form $b_k=[\alpha''_{q_k''}]$ where $q_k''\rightarrow q_0''$ in ${\mathcal O}''$. 
We can also represent  them  using the good uniformizer $q\mapsto \alpha_q$ on $\co$ in the form $b_k=[\alpha_{q_k}]$ and  $q_k\in {\mathcal O}$. If $q_k\not\in V(q_0)$,  then
$\gamma(b_k)=1$ and if $q_k\in V(q_0)\setminus U^\cg(q_0)$ then also $\gamma (b_k)=1$.  Hence we may assume that $q_k \in U^\cg(q_0)$.}

{ We shall see that this case is impossible,
which then completes the proof that $\gamma$ is continuous at $b$.}

{ Since $[\alpha_{q_k}]=[\alpha''_{q_k''}]$, there exists a sequence 
$
\phi_k:\alpha_{q_k}\rightarrow \alpha_{q_k''}''
$
of isomorphisms. Abbreviating $\cg''=\{(q'', \alpha''_{q''})\vert \, q''\in \co''\},$ we obtain the sequence 
$$\Phi_k=\bigl((q_k, \alpha_{q_k}), \phi_k, (q''_k, \alpha''_{q_k''})\bigr)$$ 
of morphisms 
in $M(\cg, \cg'').$ 
Since $q_k\in U^\cg(q_0)$, we know from Theorem \ref{rees} that every subsequence  $(\Phi_{k_j})$ of the sequence $(\Phi_k)$ possesses a 
a convergent subsequence $(\Phi_{k_{j_l}})$ so that 
$$
\Phi=
\lim_{l\rightarrow\infty} \Phi_{k_{j_l}}\quad \text{in $M(\cg, \cg'')$}.
$$
Consequently, there exists an isomorphism $\phi_0:\alpha_{q}\rightarrow \alpha_{q_0''}''$ for some  $q\in \cl(U^\cg(q_0))\subset V(q_0)\subset \co$.
Therefore, 
$b=[\alpha_{q_0''}'']=[\alpha_{q}]\in V$ contradicting our assumption that $b\in Z\setminus  V$ and completing the proof of Lemma \ref{cr}. 
}
\end{proof}

Continuing with the proof of Theorem \ref{nattop}, we consider triples $(S,M,D)$ consisting of a closed oriented surface $S$ having perhaps several connected components,  equipped with a finite ordered
set $M$ of marked points, and a finite set of nodal pairs $D$, such  that $|D|\cap M=\emptyset$. Two such triples
$(S,M,D)$ and $(S',M',D')$ are called diffeomorphic if  there exists a smooth orientation-preserving diffeomorphism
$\phi:S\rightarrow S'$ mapping $M$ bijectively to $M'$, preserving the ordering of the points, and mapping $D$ to $D'$.
The diffeomorphism class of $(S,M,D)$, denoted by $[[S,M,D]]$,  is the class of triples diffeomorphic to $(S,M,D)$.
We denote by $\Delta$ the set of all diffeomorphism types. It is countable set.

On our space $Z$ of stable curves, we  introduce the  map
$$ 
\text{type}:Z\rightarrow \Delta:[(S,j,M,D,u)]\rightarrow[[S,M,D]].
$$
Given a type $d\in\Delta$,  we denote by $Z_d$ the subset of $Z$ consisting of the elements having the prescribed type $d$.
For example,  if  
$d_{g,k}=[[S,M,\emptyset]]$, where $S$ is a  connected surface of genus $g$,  and  $M$ is an ordered set of $k$ marked points, then 
the set $Z_{d_{g,k}}$ consists of the un-noded elements of genus $g$ with $k$ {ordered} marked points.
Note that for certain types $d$,  the set $Z_d$ can be  empty. For example,  if $Q=T^{2n}$, then $Z_{d_{0,0}}=\emptyset$.
Indeed, a map $u:S^2\rightarrow T^{2n}$ is contractible and hence  the  tuple  $(S^2,j,\emptyset,\emptyset,u)$ cannot be a stable map.

{
For every type $d\in\Delta$,  the subset $Z_d\subset Z$,  equipped with the topology induced  from $(Z,{\mathcal T})$,  is, in view of Lemma \ref{hausdorff}, a Hausdorff
topological space,  denoted by 
$(Z_d,{\mathcal T}_d)$.}

{
We now fix a type $d\in\Delta$ represented  by the triple $(S,M,D)$.  Then every element 
$z\in Z_d$ has a representative of the form 
$(S,j,M,D,u)$, where $j$ is a smooth almost complex structure on $S$ which determines  the orientation of $S$,  and the map $u$ belongs to the space $H^{3,\delta_0}_c(S,Q)$. 
We recall that the map $u$ is 
 of class $H^3_{loc}$ away from the nodes  and at the nodes of class $(3,\delta_0)$ with matching nodal values over the nodal pairs. }
 
{ We note that  $H^{3,\delta_0}_c(S,Q)$  is a smooth Hilbert manifold. If  $f:Q\rightarrow {\mathbb R}^N$ is a smooth embedding of $Q$ into a high dimensional Euclidean space, we obtain the  smooth embedding  
$$F: H^{3,\delta_0}_c(S,Q)\rightarrow H^{3,\delta_0}_c(S,{\mathbb R}^N)$$  given by  the composition 
$
F(u)=f\circ u.
$
The topology on $H^{3,\delta_0}_c(S,Q)$  corresponds to the topology of the embedded 
space $F(H^{3,\delta_0}_c(S,Q))\subset 
H^{3,\delta_0}_c(S,{\mathbb R}^N)$. The embedding induces a complete metric on $H^{3,\delta_0}_c(S,Q)$,  denoted by $\tau$.}

{Also the space 
of smooth almost complex structures on $S$ determining the given orientation of $S$
is a complete metric space,  denoted  by $(J_d,\rho_d)$.  By $H_d\subset H^{3,\delta_0}_c(S,Q)$ we denote  the open subset
consisting}  {of all stable maps 
$(S, j, M, D, u)$.
We equip $H_d$  with the induced metric, denoted by $\tau_d.$
The canonical map
$$
\Phi_d:J_d\times H_d\rightarrow Z_d
$$
is defined by 
$\Phi_d(j,u)=[(S,j,M,D,u)].$}
\begin{lemma}\label{ium}
For every  type $d\in\Delta$ the canonical map $\Phi_d:J_d\times H_d\rightarrow Z_d$ is continuous and open.
Moreover,  the map $\Phi_d$ is surjective.
\end{lemma}
\begin{proof}
We fix a point $(j_0,u_0)\in J_d\times H_d$ and let 
{
$$\alpha_0=
\Phi_d(j_0,u_0)=(S,j_0,M,D,u_0).$$ be the associated stable map which we equip with the stabilization $\si$.}
{  In view of Theorem \ref{blog}, there is a good uniformizing family 
$$q\mapsto \alpha_q=(S_a, j(a, v), M_a, D_a, \oplus_a\exp_{u_0'}\eta)$$
 of stable maps, where 
$q=(a, v,\eta)\in \co$. It is  centered at the stable map
$
\alpha:=(S,j_0,M,D,u_0')
$
for a smooth map $u_0'$ near $u_0$. The family $\alpha_q$ satisfies the properties of good data in 
Definition \ref{def_g_uni_f_stable_maps}. Moreover, 
at $a=0$, 
$$\alpha_{q_0}=\alpha_0\quad \text{and}\quad 
 q_0=(0,0,\eta_0)$$
 where $u_0=\exp_{u_0'}(\eta_0)$.
We recall that if the gluing parameter vanishes, there is no gluing and $\oplus_a$  is the identity operator if $a=0$. We introduce the abbreviated notation, for $a=0$,
$$j(0, v)=j_0(v),\quad S_0=S, $$
so that $j(0, 0)=j_0(0)=j_0$ on $S$.
Given an almost complex structure $j$ on $S$ close to $j_0$ and a deformation $\si'$ of the stabilization $\si$ close to $\si$ (as defined in Section \ref{dm-subsect}) we find by 
Theorem \ref{smoothfamily} a unique pair  $(\phi_{j, \si'}, v(j, \si'))$ such that 
$$\phi_{j, \si'}:(S, j, M\cup \si', D)\to 
(S, j_0(v(j, \si')), M\cup \si,  D)$$ is an isomorphism of marked  Riemann surfaces close to the identity map of $S$. Moreover, 
$\phi_{j, \si'}$ and $v(j, \si')$ depend continuously on $j$ and $\si'$ and satisfy 
$\phi_{j, \si'}=\text{id}$ and $v(j_0, \si')=0$.
Every stable map $u$ on $(S, j_0, M, D)$ close to $u_0$ intersects the constraints at $u_0(\si)$ in $Q$ transversally and, since $u_0$ near $\si$ is an embedding,  defines 
the deformation $\si_u\subset S$ of $\si$.  This way we obtain the continuous map 
$$(j, u)\mapsto (v(j, \si_u), u\circ \phi_{j, \si_u}^{-1}).$$
Defining the section $\eta(j, u)$ close to $\eta_0$ by the equation
$$\exp_{u_0'}(\eta (j, u))=u\circ \phi^{-1}_{j, \si_u}, $$
we have $\eta (j_0, u_0)=\eta_0$, and 
$$[(S, j_0(v(j, \si_u)), M, D, \exp_{u_0'}(\eta (j, u))=[(S, j, M, D, u)].$$
The continuity of the map 
$(j, u)\mapsto (0, v(j, \si_u), \eta(j, \si_u))\in \co$ implies, in view of the definition of the topology of $H_d$, that the map $\Phi_d$ is continuous in a small neighborhood of 
$(j_0, u_0)$. }

{In order to prove that the map $\Phi_d$ is open we take an open subset $U$ of $J_d\times H_d$ and consider its  image $\wh{U}=\Phi_d (U)$. If $[(S, j_0, M, D, u_0)\in \wh{U}$, then $(j_0, u_0)\in U$. Proceeding as above
we choose a small stable map $u_0'$ near $u_0$ and take a good uniformizer 
$q\mapsto \alpha_q$ centered at $(S, j_0, M, D, u_0')$ such that 
$[\alpha_{q_0}]=[(S, j_0, M, D, u_0)]$ at 
the point $q_0=(0, 0, \eta_0)\in \co$, where 
 $\exp_{u_0'}(\eta_0)=u_0$.  For $\abs{v}$ 
 small and $\eta$ near $\eta_0$,  the map 
 $$(v, \eta)\mapsto (j_0(v), \exp_{u_0'}(\eta))$$
 into $U$ is continuous and 
 $$\Phi_d(j_0(v), \exp_{u_0'}(\eta))=
 [(S, j_0(v), M, D, \exp_{u_0'}(\eta))].$$
 In view of the definition of the topology of $Z_d$ we have found an open neighborhood around 
 $ [(S, j_0, M, D, u_0)]$ which belongs to $\wh{U}$. Hence the map $\Phi_d$ is an open map. The surjectivity of $\Phi_d$ is obvious and the proof of Lemma \ref{ium} is complete.}

\end{proof}

\begin{lemma}\label{type-d}
For every type $d\in \Delta$,  the Hausdorff topological space $(Z_d,{\mathcal T}_d)$ admits a countable basis for the topology.
\end{lemma}
\begin{proof}
We fix the type $d\in \Delta$  and recall that,  by Lemma \ref{ium},  the map
$$
\Phi_d:J_d\times H_d\rightarrow Z_d, \quad (j,u)\mapsto [(S,j,M,D,u)]
$$
is open and continuous. Since $J_d \times H_d$ is a separable metric space it posses a countable basis for its topology,
say $\tilde{\mathcal B}_d= {(\tilde{U}_i)}_{i\in {\mathbb N}}$. Define the open sets $U_i=\Phi(\tilde{U}_i)$ and denote its collection by
${\mathcal B}_d$. Let us show that it is a basis for the topology ${\mathcal T}_d$. Pick an open subset $V\subset  Z_d$.
Then let $z\in V$ and pick
$(j_0,u_0)\in J_d\times H_d$ such that $z=\Phi_d(j_0,u_0)$. 
By the continuity of $\Phi_d$ we find an open neighborhood $U$ of $(j_0,u_0)$
such  that $\Phi_d(U)\subset V$. Then we find $\wt{U}_{i_0}\in \wt{\mathcal B}_d$ 
such that  $(j_0,u_0)\in \wt{U}_{i_0}\subset \wt{U}$. This then implies that 
$$
z\in U_{i_0}\subset V.
$$
Since $z$ was arbitrary in $V$ we conclude that
$$
V =\bigcup_{\{i\ |\ U_i\subset V\}} U_i,
$$
which shows  that ${\mathcal B}$ is a basis for the topology ${\mathcal T}_d$ on $Z_d$.
\end{proof} 

\begin{lemma}\label{2nd-count}
The topology ${\mathcal T}$ is second countable.
\end{lemma}
\begin{proof}
Recall that the set $\Delta$ of types $d$ is countable and that every space $(Z_d,{\mathcal T}_d)$ has a countable
basis ${\mathcal B}_d$ for its topology. Hence we have a countable collection $(U_{d,k})$, where $d\in \Delta$ and $k\in {\mathbb N}$. Further the collection ${(U_{d,k})}_{k\in {\mathbb N}}$ is a basis for ${\mathcal T}_d$.
Fix a type $d\in\Delta$. For every $z\in Z_d$ we have a good uniformizing family
$$
{\mathcal O}_z\ni q^z\rightarrow \alpha_{q^z}^z
$$
such that $ [\alpha^z_{q^z_0}]=z$
for a suitable $q^z_0$.  In addition,  the uniformizer is constructed 
with an element of the same type.
In addition, the subset  
$$
V_z=\{[\alpha^z_{q^z}]\ |\ q^z\in {\mathcal O}_z\}\subset Z
$$
is an open neighborhood of $z\in Z$ and $V_z^d=Z_d\cap V_z$ is an open neighborhood of $z$ in $Z_d$.
Also the collections ${(V_z^d)}_{z\in Z_d}$ is an open covering of $Z_d$. Consider the set $\Sigma_d\subset {\mathbb N}$ of integers 
consisting of all $i\in {\mathbb N}$ such  that $U_{d,i}$ belongs to at least one of the  sets $V^d_z$. Since $(V_z^d)$ is an open covering of $Z_d$ and 
${(U_{d,i})}_{i\in {\mathbb N}}$ is a basis for ${\mathcal T}_d$ we conclude that
$$
Z_d =\bigcup_{i\in\Sigma_d} U_{d,i}.
$$
Now we choose for every $i\in\Sigma_d$ a point $z_{d,i}\in Z_d$ such  that $U_{d,i}\subset V_{z_{d,i}}^d$. Then 
the collection of all ${(V_{z_{d,i}}^d)}_{i\in\Sigma_d}$ is an open covering of $Z_d$. We do this for every type $d$ and obtain
the index set
$$
\Sigma : = \bigcup_{d\in\mathfrak{d}} \{d\}\times \Sigma_d
$$
and a map
$$
\Sigma\rightarrow Z:(d,i)\rightarrow z_{d,i}.
$$
Then   we  take the countable collection of all good uniformizers 
$$
{\mathcal O}_{z_{d,i}}\ni q^{z_{d,i}}\rightarrow \alpha^{z_{d,i}}_{q^{z_{d,i}}}.
$$
By construction,  the associated collection of open sets ${(V_{z_{d,i}})}_{(d,i)\in\Sigma}$ covers $Z$.
Every $V_{z_{d,i}}$ carries a metrizable topology and is separable. Hence its admits a countable
basis for its topology. Taking the countable union of all these bases for the $V_{z_{d,i}}$, where $(d,i)$ varies over the countable set
$\Sigma$,  we obtain a countable basis for
$(Z,{\mathcal T})$. 
\end{proof}

\begin{proof}[Proof of Theorem \ref{nattop}]
At this point we know by the previous discussion that ${\mathcal T}$ is a Hausdorff topology (Lemma \ref{topology}, Lemma \ref{hausdorff}), which is second countable (Lemma \ref{2nd-count})
and completely regular (Lemma \ref{cr}). As a consequence of Urysohn's metrization  theorem it is therefore metrizable and consequently also 
paracompact. 

Next we consider the space of un-noded elements in $Z$, denoted  by $\dot{Z}$. From the construction of the uniformizers
it follows that $\dot{Z}$ is open. Indeed, if we start without nodes the good uniformizer does not involve nodes. Hence the original
element has an open neighborhood not containing nodal elements. If $z$ is noded then any neighborhood contains
un-noded elements. This follows immediately from the definition of the topology via good uniformizers. Let $d\in \Delta$ be a type, {of un-noded elements in  $Z$},  and consider the map $\Phi_d:J_d\times H_d\rightarrow Z_d$. The metric space $J_d\times H_d$ is separable and therefore
admits a dense sequence $(j_l,u_l)$. We may assume that the maps $u_l$ have regularity $(3+m,\delta_m)$ for all $m$.
Then the sequence $(\Phi(j_l,u_l))$ consists of smooth elements and is dense in  $Z_d$ because  $\Phi$ is open and surjective.
Since we only have a countable number of such types $d\in \Delta$, 
we conclude that $\dot{Z}$ is separable. Because $\dot{Z}$ is open and dense in $Z$,  we see that  the space $Z$ of stable curves is separable. 
This completes the proof of the Theorem \ref{nattop}.
\end{proof}

\section{The Polyfold Structure on the Space $Z$}\label{polyfoldstructure}
After all these preliminaries we finally construct the polyfold structure of the topological space $Z$ of stable curves. For every $z\in Z$ we find by Proposition \ref{imremark} a good uniformizing family $(a, v, \eta)\to \alpha_{(a, v, \eta)}$,  where $(a, v, \eta)\in {\mathcal O}$. It is  centered at a smooth stable curve $\alpha$ such  that the map 
$$p:{\mathcal O}\to Z,\qquad p(a, v,\eta)=[\alpha_{(a, v,\eta)}]$$
has $z$ in its image $U=p({\mathcal O})$. Let $\cg_{\lambda}$, $\lambda\in \Lambda$, be a family of such good uniformizers having the property that the open sets $U_\lambda=p_\lambda ({\mathcal O}_\lambda)$ cover the space $Z$.

In view of Theorem \ref{nattop}, the space $Z$ is a second countable paracompact Hausdorff topological space. Therefore, there exists a refinement $V_\lambda\subset U_\lambda$ of the covering having the same index set $\Lambda$, which is locally finite. (Of course, some of the sets 
$V_\lambda$ might be empty and we remove all those indices $\lambda$ for which 
$V_\lambda=\emptyset.$) Without loss of generality we may assume that $V_\lambda\neq \emptyset$ for all $\lambda\in \Lambda$. Then we replace the open sets ${\mathcal O}_\lambda$ by the preimages 
$p_\lambda^{-1}(V_\lambda)$, for which we use the same letter ${\mathcal O}_\lambda$.  We therefore may assume that we are given a countable collection of good uniformizing families 
$$q_\lambda \to \alpha_{q_\lambda}^\lambda,\qquad q_\lambda\in {\mathcal O}_\lambda,\, \lambda \in \Lambda, $$
having the property that the images $U_\lambda=p_\lambda ({\mathcal O}_\lambda)$,
$$U_\lambda=\{[\alpha^\lambda_{q_\lambda}]\vert\, q_\lambda\in {\mathcal O}_\lambda\}$$
constitute a locally finite open covering of the space $Z$.

By $X$ we denote the disjoint union of all the graphs 
$$
\cg_\lambda=\{(q_\lambda,\alpha^\lambda_{q_\lambda})\vert \, q_\lambda\in {\mathcal O}_\lambda \},
$$
that is
$$X=\coprod_{\lambda \in \Lambda} \cg_\lambda.$$
This disjoint union  is an M-polyfold  in a natural way and we view $X$ as the object set of a small category. {The morphism set  ${\bf X}$ of the small category is} the disjoint union 
$${\bf X}=\coprod_{\lambda,\lambda'\in \Lambda}M(\cg_\lambda, \cg'_{\lambda'}),$$
which, by Proposition \ref{a-key-z}, is also an M-polyfold.

\begin{theorem}\label{theoremA}
The M-polyfold $X$ is the object set and the M-polyfold set  ${\bf X}$ is the morphism set of a small category. The source and target maps $s, t:{\bf X}\to X$ are surjective local sc-diffeomorphisms. The structure maps $i:{\bf X}\to {\bf X}$, $u:X\to {\bf X}$, and $m:{\bf X}{{_s}\times_t}{\bf X}\to {\bf X}$ are sc-smooth.
In addition, $X$ satisfies the properness assumption, so that $X$ is an ep-groupoid. 
\end{theorem}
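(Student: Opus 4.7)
The plan is to verify the four required properties in turn, leveraging the three propositions already assembled in this subsection. The category structure is straightforward: a morphism is a triple $((q,\alpha_q),\phi,(q',\alpha'_{q'}))$ in which $\phi$ is an isomorphism of stable maps, composition is composition of isomorphisms, the unit at $(q,\alpha_q)$ is given by $\id$, and the inverse of $\phi$ gives the inverse arrow. Associativity and the unit laws are inherited from associativity of function composition. For the M-polyfold and sc-smoothness assertions, I would argue componentwise. Each piece $M(\cf_\lambda,\cf_{\lambda'})$ carries a natural M-polyfold structure and its source/target maps are local sc-diffeomorphisms by Proposition \ref{a-key-z}; passing to the disjoint union $\mathbf{X}=\coprod_{\lambda,\lambda'}M(\cf_\lambda,\cf_{\lambda'})$ preserves these properties. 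Surjectivity of $s$ and $t$ is immediate from the existence of the identity morphism $((q,\alpha_q),\id,(q,\alpha_q))$ at every object. The sc-smoothness of $u$, $i$, $m$ follows from Propositions \ref{b-key-z} and \ref{c-key-z} applied piecewise.

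The main obstacle is the properness condition, and here is where I would invest the real work. Fix $x=(q_0,\alpha^\lambda_{q_0})\in\cf_\lambda$. The idea is to choose an open neighborhood $V(x)\subset\cf_\lambda$ of $x$ whose closure has a well-controlled projection to $Z$. Since $Z$ is completely regular by Theorem \ref{nattop}, I can find an open set $W\subset Z$ with $p_\lambda(x)\in W$ and $\overline{W}\subset U_\lambda$, and then choose $V(x)$ small enough that $p_\lambda(V(x))\subset W$. Consequently $p_\lambda(\overline{V(x)})\subset\overline{W}\subset U_\lambda$, which is the key property of the chosen neighborhood.

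To verify that $t:s^{-1}(\overline{V(x)})\to X$ is proper, consider a sequence $(\Phi_k)$ with $s(\Phi_k)\in\overline{V(x)}$ and $t(\Phi_k)\in K$ for a compact $K\subset X$. Because $X$ is a disjoint union of the clopen pieces $\cf_\mu$, the compact set $K$ meets only finitely many of them; after passing to a subsequence I may assume $t(\Phi_k)\in\cf_{\lambda'}$ for a fixed $\lambda'$, with $t(\Phi_k)\to y\in\cf_{\lambda'}$. Using that the $\Phi_k$ are morphisms, so that $p_\lambda(s(\Phi_k))=p_{\lambda'}'(t(\Phi_k))$, I obtain $p_\lambda(s(\Phi_k))\to z_0:=p_{\lambda'}'(y)\in U_{\lambda'}$. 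The choice of $V(x)$ guarantees $z_0\in\overline{p_\lambda(V(x))}\subset U_\lambda$, so $z_0\in U_\lambda\cap U_{\lambda'}$. Proposition \ref{rees}(2), applied to the pair $(\cf_\lambda,\cf_{\lambda'})$ at the level $z_0$, then yields a subsequence along which $\Phi_{k_i}$ converges in $M(\cf_\lambda,\cf_{\lambda'})\subset\mathbf{X}$, with $s$- and $t$-images converging respectively in $\cf_\lambda$ and $\cf_{\lambda'}$. The limit of the $s$-values lies in $\overline{V(x)}$ by closedness, so the limit morphism lies in $s^{-1}(\overline{V(x)})\cap t^{-1}(K)$, and properness is established. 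The hardest point to get right is precisely the choice of $V(x)$ that forces $z_0\in U_\lambda$; this is what makes the complete regularity proved in Theorem \ref{nattop} essential at this step.
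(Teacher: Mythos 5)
Your proof is correct and follows essentially the same route as the paper: the category structure and sc-smoothness assertions are read off from Propositions \ref{a-key-z}, \ref{b-key-z}, and \ref{c-key-z}, and properness is reduced to the compactness statement of Proposition \ref{rees}(2) after choosing $V(x)$ so that the limit $z_0$ of the projected source sequence is forced into $U_\lambda\cap U_{\lambda'}$. The only (harmless) differences are bookkeeping: the paper pins down the target component via local finiteness of the covering $\{U_\lambda\}$ and a nested pair of neighborhoods of $p(x)$, whereas you use compactness of $K$ against the clopen decomposition of $X$ and the inclusion $\overline{W}\subset U_\lambda$ obtained from regularity of $Z$.
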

\begin{proof}
The theorem follows immediately  from Proposition \ref{a-key-z}, Proposition \ref{b-key-z}, and Proposition \ref{c-key-z} except the  statement about the properness which we shall prove next. 

We choose a point $x\in X=\coprod_{\lambda\in\Lambda} \cg_\lambda$. 
Then $x=(q_{\lambda_0},\alpha_{q_{\lambda_0}}^{\lambda_0})$ belongs to the graph $\cg_{\lambda_0}$. Pick the open neighborhood $U^{\cg_{\lambda_0}}=U^{\cg_{\lambda_0}}(q_{\lambda_0})$ guaranteed
by Theorem \ref{rees}.  If the subset  $\gamma_{\lambda_0}\subset X$ is  the graph of the good uniformizer restricted to  $U^{\cg_{\lambda_0}}$,  we show that
$$
t: s^{-1}(\cl (\gamma_{\lambda_0}))\rightarrow X
$$
is proper.  To this aim we let $K\subset X$ be a compact subset. Then there are only a finite number of  indices $\lambda$  in $\Lambda$ such that 
$K\cap \cg_{\lambda}\neq \emptyset$. Denote them by $\lambda_1,\ldots, \lambda_k$. Next we consider any sequence of morphisms 
$(\Phi_l)$ in ${\bf X}$ satisfying $t(\Phi_l)\in K$ and $s(\Phi_{l})\in \cl (\gamma_{\lambda_0})$. Then,  after taking a subsequence,  we may assume, 
without loss  of generality,  that $t(\Phi_l)\in \cg_{\lambda_{i_0}}$ for a suitable $i_0$ and,  moreover, 
$\pi_{\lambda_{i_0}}(t(\Phi_{l}))\rightarrow q_0$ in ${\mathcal O}_{\lambda_{i_0}}$. In addition, 
$\pi_{\lambda_0}(s(\Phi_{l}))\in \cl({U}^{\cg_{\lambda_0}})$. From Theorem \ref{rees}, we conclude that a 
subsequence of $(\Phi_{l})$ is convergent,  finishing the proof of Theorem \ref{theoremA}.
  \end{proof}

If $\abs{X}$ is the orbit space of the ep-groupoid $X$, then the canonical map $p:X\to Z$ induces the homeomorphism $\abs{p}:\abs{X}\to Z$. Hence the pair $(X, \abs{p})$ defines a polyfold structure on $Z$ and we have proved the following theorem. 
 
 \begin{theorem}\label{theoremB}
 The M-polyfold groupoid $X$ is an ep-groupoid and the pair $(X, \abs{p})$ defines a polyfold structure   on $Z$.
 \end{theorem}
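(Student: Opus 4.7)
The proof is essentially assembled from the pieces already constructed. Theorem \ref{theoremA} establishes that $X = \coprod_{\lambda\in\Lambda} \mf_\lambda$ together with ${\bf X} = \coprod_{\lambda,\lambda'\in\Lambda} M(\mf_\lambda,\mf_{\lambda'})$ is an ep-groupoid, so to obtain Theorem \ref{theoremB} it suffices to verify that the canonical map $p:X\to Z$, defined on each summand by $p(q_\lambda,\alpha^\lambda_{q_\lambda}) = [\alpha^\lambda_{q_\lambda}]$, descends to a \emph{homeomorphism} $\abs{p}:\abs{X}\to Z$. My plan is to check four points: that $\abs{p}$ is well-defined, bijective, continuous, and open.

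First I would verify well-definedness and injectivity together. Two objects $(q_\lambda,\alpha^\lambda_{q_\lambda})\in \mf_\lambda$ and $(q'_{\lambda'},\alpha^{\lambda'}_{q'_{\lambda'}})\in \mf_{\lambda'}$ satisfy $p(q_\lambda,\alpha^\lambda_{q_\lambda}) = p(q'_{\lambda'},\alpha^{\lambda'}_{q'_{\lambda'}})$ precisely when the stable maps $\alpha^\lambda_{q_\lambda}$ and $\alpha^{\lambda'}_{q'_{\lambda'}}$ are isomorphic. By the very definition of $M(\mf_\lambda,\mf_{\lambda'})$ this is equivalent to the existence of a morphism in ${\bf X}$ connecting these two objects, so the fibers of $p$ are exactly the equivalence classes defining $\abs{X}$. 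Thus $\abs{p}$ is a well-defined injection. Surjectivity is immediate: by construction, $Z = \bigcup_\lambda U_\lambda = \bigcup_\lambda p_\lambda(\mathcal{O}_\lambda)$, since the family of good uniformizers was chosen to cover $Z$ by Proposition \ref{imremark}.

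Next I would address continuity and openness. The topology on $Z$ was \emph{defined} in Theorem \ref{nattop} by declaring the basis ${\mathcal B}$ to consist of sets of the form $p_\lambda(O)$ with $O\subset \mathcal{O}_\lambda$ open. Since each $\mf_\lambda\to \mathcal{O}_\lambda$ is, by construction, a homeomorphism, the map $p:X\to Z$ is both continuous and open. Passing to the quotient $\abs{X}$ equipped with its quotient topology, one obtains that $\abs{p}:\abs{X}\to Z$ is a continuous, open bijection, hence a homeomorphism.

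Assembling everything: by Theorem \ref{theoremA}, $(X,{\bf X})$ is an ep-groupoid with M-polyfold structures on objects and morphisms, sc-smooth structure maps, \'etale source and target, and the properness property. By the homeomorphism $\abs{p}:\abs{X}\to Z$ just verified, the pair $(X,\abs{p})$ is exactly a polyfold structure on $Z$ in the sense recalled in Section \ref{sc-smoothness}. There is no single hard step here — the genuine content of the theorem was already discharged when building the transition structure in Theorem \ref{key-z} (which enables Propositions \ref{a-key-z}--\ref{c-key-z}) and the compactness statement in Proposition \ref{rees} (which delivers properness). The one subtlety worth a line of attention is that the chosen covering was made \emph{locally finite} by refinement after Theorem \ref{nattop}; this was needed to run the properness argument in Theorem \ref{theoremA}, but plays no further role in the present theorem.
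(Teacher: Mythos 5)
Your proposal is correct and follows essentially the same route as the paper, which simply combines Theorem \ref{theoremA} with the observation that $p$ induces a homeomorphism $\abs{p}:\abs{X}\to Z$ (the paper asserts this in one line, while you spell out the routine verification that the fibers of $p$ are exactly the morphism orbits and that $p$ is continuous and open for the topology of Theorem \ref{nattop}). No gap; the only nuance worth noting is that continuity against basis sets coming from uniformizers outside the chosen countable family rests on Theorem \ref{key-z}, exactly as in the compatibility argument already given in the proof of Theorem \ref{nattop}.
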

 
 We now assume that we have constructed,   following the same  recipe,  the second  ep-groupoid $X'$ with its  canonical map $p':X'\rightarrow Z$ and the associated polyfold structure $(X', \abs{p'})$ of $Z$. In order to show that these  different choices define equivalent polyfold structures on $Z$  we proceed as follows.  We first define  $X''=X\coprod X'$  as the disjoint union of the corresponding $\cg_\lambda$ and $\cg'_{\lambda'}$ and note that the associated open  covering is still locally finite. Then we take the disjoint union of all morphisms as before. This  way we obtain a third ep-groupoid $X''$. The inclusion maps $F:X\rightarrow X''=X\coprod X'$ and
 $F':X'\rightarrow X''$ are equivalences.  The weak fibered product
 $X'''=X\times_{X''}X'$  is again an ep-groupoid and the projections  
 $\pi_1:X'''\rightarrow X$ and $\pi_2:X'''\rightarrow X'$ are equivalences. {For the notions of weak fibered products and common refinement we refer the reader to Section 2.2 and Section 2.3 in \cite{HWZ3.5}.}

 Therefore, the ep-groupoid   $X'''$ is a common refinement of $X'$ and $X''$.  The relation 
 $$
 p(\pi_1(X'''))=p'(\pi_2(X'''))
 $$
  implies that the pair  $(X''',\abs{p\circ\pi_1})$ defines another   polyfold structure on $Z$. Since 
 $\abs{p\circ \pi_1}=\abs{p}\abs{\pi_1}=\abs{p'}\abs{\pi_2}$, the polyfold structures $(X, \abs{p})$ and  $(X', \abs{p'})$ of $Z$ are indeed equivalent and we have proved the following theorem announced in the introduction as Theorem \ref{pfstructure}.

 \begin{theorem}\label{theoremC}
 Having fixed the exponential gluing profile and a strictly increasing sequence $(\delta_i)_{i\geq 0}\subset (0,2\pi)$,  the second countable paracompact topological space $Z$ of stable curves possesses  a  natural equivalence class of polyfold structures.
 \end{theorem}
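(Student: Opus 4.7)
My plan is to observe that existence of a polyfold structure has essentially already been established in Theorem \ref{theoremB}, so the remaining content of Theorem \ref{theoremC} is the assertion that any two polyfold structures constructed by the recipe of Section \ref{sect4.3-pol.str} lie in the same equivalence class. Second countability and paracompactness of $Z$ are given by Theorem \ref{nattop}. Thus the substance of the proof is to produce, given two structures $(X,\abs{p})$ and $(X',\abs{p'})$ coming from two choices of locally finite families of good uniformizing families $\{\mf_\lambda\}_{\lambda\in\Lambda}$ and $\{\mf'_{\lambda'}\}_{\lambda'\in\Lambda'}$, a common refinement in the sense of equivalences of ep-groupoids compatible with the projections to $Z$.

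The construction proceeds in three steps. First I would form the disjoint union $X''$ of all the graphs $\mf_\lambda$ and $\mf'_{\lambda'}$ as objects, and all morphism sets $M(\mf_\lambda,\mf_\mu)$, $M(\mf'_{\lambda'},\mf'_{\mu'})$, $M(\mf_\lambda,\mf'_{\lambda'})$ and $M(\mf'_{\lambda'},\mf_\lambda)$ as morphisms. Since the union of two locally finite coverings of the paracompact space $Z$ is again locally finite, and since Propositions \ref{a-key-z}, \ref{b-key-z}, \ref{c-key-z} apply uniformly to any pair of uniformizing families, the resulting $X''$ is again an ep-groupoid by the same argument as in Theorem \ref{theoremA}; the properness statement follows from Proposition \ref{rees} exactly as in the proof of Theorem \ref{theoremA}. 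The canonical inclusion functors $F:X\to X''$ and $F':X'\to X''$ are then local sc-diffeomorphisms on objects and morphisms, induce the identity on orbit spaces when composed with $\abs{p''}$, and restrict to bijections on isotropy groups since the additional morphisms between objects of $X$ and of $X'$ come from Theorem \ref{key-z}, which precisely captures all isomorphisms between the corresponding stable maps. Thus $F$ and $F'$ are equivalences.

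Second, I would take the weak fibered product $X'''=X\,{{_F}\times_{F'}}\,X'$ in the category of ep-groupoids. The standard construction (see \cite{MM}) yields an ep-groupoid with projections $\pi_1:X'''\to X$ and $\pi_2:X'''\to X'$ which are equivalences, the required M-polyfold structure on objects and morphisms being furnished by Lemma \ref{ert} applied to the local sc-diffeomorphisms $F$, $F'$. The compatibility $\abs{p}\circ\abs{\pi_1}=\abs{p'}\circ\abs{\pi_2}$ on orbit spaces holds because both sides equal $\abs{p''}\circ\abs{F\circ\pi_1}=\abs{p''}\circ\abs{F'\circ\pi_2}$, and this common map into $Z$ is the homeomorphism induced by the projection of $X'''$ onto stable curves. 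This exhibits the required zig-zag
\[
X\xleftarrow{\pi_1} X''' \xrightarrow{\pi_2} X'
\]
establishing $(X,\abs{p})\sim (X',\abs{p'})$.

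The step I expect to be the most delicate is verifying that the disjoint-union groupoid $X''$ is genuinely an ep-groupoid, since the morphism sets $M(\mf_\lambda,\mf'_{\lambda'})$ mix graphs coming from two different choices. The key inputs are that Theorem \ref{key-z} applies symmetrically to any pair of good uniformizing families regardless of which collection they belong to, and that the properness of $s\times t$ in Proposition \ref{rees}(2) was proved only under the hypothesis that both families uniformize a neighborhood of the limit point $z_0$. Once these hypotheses are verified for the mixed morphism sets, the remaining algebraic structure (units, inversion, associative multiplication) is automatic from the category-theoretic definition, and sc-smoothness of the structure maps follows chart-by-chart from Propositions \ref{a-key-z}--\ref{c-key-z} since sc-smoothness is a local property.
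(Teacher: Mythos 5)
Your proposal follows essentially the same route as the paper: form the disjoint union $X''=X\coprod X'$ with all mixed morphism sets, observe that the inclusions $F:X\to X''$ and $F':X'\to X''$ are equivalences, and then pass to the weak fibered product $X'''=X\times_{X''}X'$ whose projections give the required zig-zag $X\xleftarrow{\pi_1}X'''\xrightarrow{\pi_2}X'$ compatible with $\abs{p}$ and $\abs{p'}$. Your write-up is somewhat more explicit than the paper's (which asserts the ep-groupoid property of $X''$ and the equivalence property of $F,F'$ without elaboration), but the argument is the same.
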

{ \begin{remark}``Natural''  here means the following. The set $Z$ is determined by specifying $\delta_0\in (0,2\pi)$,  and by requiring the maps
 to be stable and of class $(3,\delta_0)$. {As already proved},  one can construct a topology on $Z$ which is second countable, paracompact and Hausdorff and  does not depend on the choices involved in its construction. Then picking a strictly increasing sequence $\delta=(\delta_i)$ starting at $\delta_0$ and staying below $2\pi$, and prescribing the exponential gluing profile
 there is a procedure which results in the construction of a polyfold structure $(X,|p|)$ on $Z$. This procedure
 in general involves choices. However, different choices lead to a polyfold structure $(X',|p'|)$, which
 is equivalent to $(X,|p|)$ in the sense defined in \cite{HWZ2}.  In  other words $Z=Z^{3,\delta_0}$ is in a natural way a polyfold
 once we fix a sequence of strictly increasing weights and the exponential gluing profile. Specifically we want to point out 
 that the choice of the cut-off function $\beta$ in the gluing constructions is irrelevant. Different choices lead to equivalent 
 polyfold structures. This is a consequence of Theorem \ref{theorem_neck} which  implies the compatibility of gluing procedures
 for different choices of cut-off functions.
  \end{remark}}

Summarizing we have constructed an equivalence class of polyfold structures on the topological space $Z$.
 In the following we shall refer to $Z$ as the polyfold $Z$ and keep the particular choices of the weights and the exponential gluing profile in mind when needed.

\section{The Polyfold Structure of  the Bundle $W\to Z$}\label{polstrbundle}
 
In this section we shall construct the strong polyfold structure  on the bundle $W\to Z$ introduced in Section \ref{sect1.2},  and prove Theorem \ref{main1.10}.
The construction of the strong polyfold structure for our bundle $W\to Z$  starts from the polyfold structure $(X, \abs{p})$ of the space $Z$ of stable curves into $Q$, constructed in the previous section. The structure consists of the M-polyfold $X$ and the homeomorphism $\abs{p}:\abs{X}\to Z$ between the orbit space  of $X$ and $Z$. In the previous section we have constructed the object set $X$ as the disjoint union
 $$X=\coprod_{\lambda\in \Lambda}\cg_\lambda$$
 of a countable family $\cg_\lambda$ of graphs $\cg$ of good uniformizing families of stable maps  (we omit the index $\lambda$)
 $$(a, v, \eta)\to \alpha_{(a, v,\eta)}=(S_a, j(a,v), M_a, D_a, \oplus_a\exp_u (\eta))$$
 centered at smooth stable curves $\alpha=(S, j, M, D, u)$. Here $(a, v, \eta)\in \co$, where $\co$ is an open subset of a splicing core $K^\cs$.  We can view the sets $\co$ or equivalently, the graphs $\cg$ of good uniformizing  families, as the local models of the M-polyfold $X$.
 
 Abbreviating, as we did before, the good uniformizing family  by $q\mapsto  \alpha_q$ where $q=(a, v, \eta)$,  we shall construct the associated  lifted family $\wh{q}\mapsto  \wh{\alpha}_{\wh{q}}$ such  that its graph $\wh{\cg}$ is a strong M-poyfold bundle $\wh{\cg}\to \cg$. The disjoint countable union of the corresponding graphs $\wh{\cg}_\lambda$,
 $$E:=\coprod_{\lambda\in \Lambda}\wh{\cg}_\lambda$$ is then  a strong bundle $E\to X$ over the M-polyfold $X$.

 To this aim we fix a good uniformizing family $q\to \alpha_q$, $q=(a, v, \eta)\in \co$, centered at the smooth stable map $\alpha=(S, j, M, D, u)$, and denote by $F$ the sc-Hilbert space consisting of maps  $z\mapsto \xi (z)$, $z\in S\setminus \abs{D}$, where $\xi (z):(T_zS, j)\to (T_{u(z)}Q, J(u(z)))$ is a complex anti-linear map. Moreover, the map 
 $z\mapsto  \xi (z)$ is of Sobolev class $(2, \delta_0)$ as introduced in Section \ref{sect1.2}. The level $k$ of the sc-structure on $F$ corresponds to the regularity $(2+k, \delta_k)$. The strong bundle splicing $\mcr$ on $\co\triangleleft F$ is defined by the strong bundle projection
 $$
 \rho:\co \triangleleft F\to F,\quad (a, v,\eta, \xi)\mapsto  \rho_{(a, v, \eta)}(\xi),
 $$
which is parametrized by $(a, v,\eta)\in \co$, but actually only depends on  $a$ so that 
$\rho_{(a, v,\eta)}=\rho_a$. The  projection operator $\rho_a$ is obtained by implanting the splicing projections using the previously introduced local charts $\psi$ on the symplectic manifold $Q$, however, using this time the hat gluing $\wh{\oplus}_a$ and the 
 hat anti-gluing $\wh{\ominus}_a$ operations. This way we obtain the splicing core
 $$
 K^\mcr=\{(a, v,\eta, \xi)\in \co\triangleleft F\, \vert\, \rho_a(\xi)=\xi\}.
 $$
 The splicing core $K^\mcr$ defines the strong local bundle 
 $$K^\mcr\to \co$$
 by 
 $(a, v,\eta, \xi)\to (a, v, \eta).$ We observe that the automorphism group $G$ of the original stable map $\alpha=(S, j, M, D, u)$ acts on $K^\mcr$ compatibly with the action already defined on $\co$,  namely by
 $$g\ast (a, v,\eta, \xi)=(g\ast a, g\ast v, \eta\circ g^{-1}, \xi\circ Tg^{-1})$$
 for every $g\in G$. 
 
 In order to ``lift'' the good family $q\to \alpha_q$ to a good family $\wh{q}\to \wh{\alpha}_{\wh{q}}$ on the bundle level, we introduce for $v\in V$ the complex linear bundle isomorphism 
 \begin{equation}\label{delta_eq_1}
 \delta (v):(TS, j(v))\to (TS, j),
\end{equation}
 defined by,
 $$ \delta (v)h=\frac{1}{2}\bigl(\id -j\circ j(v)\bigr)h.$$
 It satisfies $\delta  (v)\circ j(v)=j\circ \delta (v)$ and depends smoothly on $v\in V$. Moreover, since $j(v)=j$ on the discs of the small disc structure,
 $$ \delta (v)h=h$$
 above the discs. Similarly, the complex  linear map 
 \begin{equation}\label{delta_eq_2}
 \delta(a, v):(TS_a, j(a, v))\to (TS_a, j(a, 0))
 \end{equation}
 is defined by 
 $$\delta (a, v)h=\frac{1}{2}\bigl(\id -j(a, 0)\circ j(a, v))h.$$
 Next we consider  the complex anti-linear map
 $$
 \xi (z):(T_zS, j)\to (T_{u(z)}Q, J(u(z)).
 $$
  By implanting the hat gluing $\wh{\oplus}_a$,  we have defined the glued section 
 $$\wh{\oplus}_a(\xi )(z):(T_zS_a, j(a, 0))\to (T_{\oplus_a u (z)}Q, J(\oplus_a u (z)))$$
 along the glued map $\oplus_au$ into $Q$. The composition $\xi \circ \delta (v)$ is a complex anti-linear map 
 $$(T_zS, j(v))\to (T_{u(z)}Q, J(u(z)))$$
 which satisfies the identity 
 $$\wh{\oplus}_a(\xi \circ \delta (v))=\wh{\oplus}_a(\xi )\circ \delta (a, v).$$
 
Earlier we have introduced in {Definition \ref{GD} about good data}   the exponential map 
 $\exp:\wt{\co}\to Q$, where $\wt{\co}_p=T_pQ\cap \wt{\co}$ is a convex neighborhood of the origin $0_p$ in the tangent space $T_pQ$,  having the property that the restriction of $\exp$ to $\wt{\co}_p$ is an embedding into $Q$. Let us fix a complex connection on the almost complex vector bundle $(TQ, J)\to Q$. For example, if $\nabla_X$ is the covariant derivative on $Q$ belonging to the Riemaniann metric $\omega\circ (\id \oplus J)$, the connection $\wt{\nabla}_X$, defined by $\wt{\nabla}_XY=\nabla_XY-\frac{1}{2}J(\nabla_XJ)Y$, is complex in the sense that it satisfies $\wt{\nabla}_X(JY)=J(\wt{\nabla}_XY)$. If $\eta\in \wt{\co}_p$ is a tangent vector, the parallel transport  (of a complex connection) along the path $t\mapsto \exp_p (t\eta)$ for $t\in [0,1]$, defines the   linear map
 $$\Gamma  (\exp_p(\eta), p):(T_pQ, J(p))\to (T_{\exp_p(\eta)}Q, J( \exp_p(\eta)))$$
 which is complex linear so that 
 $$\Gamma (\exp_p(\eta), p)\circ J(p)= J( \exp_p(\eta))\circ  \Gamma (\exp_p(\eta), p).$$
 Given the point $(a, v,\eta, \xi)\in K^\mcr$ we define the complex anti-linear map 
$$
\Xi (a, v, \eta, \xi)(z):(T_zS_a, j(a, v))\to (T_{\oplus_a\exp_u (\eta)(z)}Q, J( \oplus_a\exp_u (\eta)(z)))
$$
at the points $z\in S_a\setminus \{\text{not-glued nodal points}\}$ by 
 \begin{equation}\label{sigma_complex_linear_1}
\Xi (a, v, \eta, \xi)(z)=\Gamma  [ \oplus_a\exp_u (\eta) (z), \oplus_au(z)]\circ \wh{\oplus}_a(\xi )(z)\circ \delta (a, v)(z).
\end{equation}
Observe that for $g\in G$
$$
\Xi(a,v,\eta,\xi)\circ Tg^{-1}_a = \Xi(g\ast a,g\ast v,\eta\circ g^{-1},\xi\circ Tg^{-1}).
$$
Now, with the given good uniformizing family $q\to \alpha_q$, explicitly given by 
$$(a, v, \eta)\to (S_a, j(a, v), M_a, D_a, \oplus_a\exp_u (\eta)), \quad (a, v, \eta)=q\in \co,$$
we associate the lifted family $\wh{q}\to \wh{\alpha}_{\wh{q}}$, defined by
$$(a, v, \eta, \xi)\to (S_a, j(a, v), M_a, D_a, \oplus_{a}\exp_u (\eta), \Xi (a, v,\eta, \xi))$$
where  $(a, v, \eta, \xi)=\wh{q}\in K^\mcr$. If $g\in G$ then $g$ induces an isomorphism
$$
\wh{\alpha}_{\wh{q}}\rightarrow \wh{\alpha}_{g\ast \wh{q}}
$$
which on the underlying $\alpha_q$ gives the usual isomorphism to $\alpha_{g\ast q}$. Moreover,  
$$
\Xi(g\ast (a,v,\eta,\xi))\circ Tg_a = \Xi(a,v,\eta,\xi).
$$
Let $\wh{\cg}$ be the graph of the lifted family $\wh{q}\to \wh{\alpha}_{\wh{q}} $ and note that the bundle $(a, v, \eta, \xi)\to (a, v,\eta)$ has the structure of a strong M-polyfold bundle. Therefore, the graph $\wh{\cg}$ carries trivially the natural structure of a strong polyfold bundle if we require that the graph map establishes a strong bundle isomorphism. Hence we have an sc-smooth projection
$$\wh{\cg}\to \cg.$$
Carrying out the above construction for all the graphs $\cg_\lambda$ of good families for all $\lambda\in \Lambda$ used to define the M-polyfold $X$, we define the strong polyfold bundle $E\to X$ over the object space $X$ as the disjoint union of the associated graphs $\wh{\cg}_{\lambda}$ of the lifted families 
$$E=\coprod_{\lambda\in \Lambda}\wh{\cg}_\lambda\to X.$$
We recall that the tuples $\wh{\alpha}=(S, j, M, D, u, \xi)$ and $\wh{\alpha}'=(S', j', M', D', u', \xi')$ are called equivalent, if there exists an isomorphism 
$\varphi:(S, j, M, D, u)\to (S', j', M', D', u')$ between the underlying stable maps satisfying, in addition, 
$$\xi'\circ T\varphi=\xi.$$
The set $W$ is defined as the collection of all such equivalence classes. 
The canonical map 
$$\Gamma:E\to W, $$
associating  with a point in $E$ its  isomorphism class, is defined by 
\begin{equation*}
\begin{split}
&\Gamma ((a, v, \eta, \xi), (S_a, j(a, v), M_a, D_a, \oplus_a \exp_u (\eta), \Xi (a, v, \eta, \xi))\\&\qquad =
[S_a, j(a, v), M_a, D_a, \oplus_a \exp_u (\eta), \Xi (a, v, \eta, \xi)].
\end{split}
\end{equation*}
This map $\Gamma$ covers the canonical map $X\to Z$, {so that  the diagram}
\mbox{}\\
\begin{equation*}
\begin{CD}
E@>\Gamma>>W\\
@VpVV  @VVV \\
X@ >>>Z\\ \\
\end{CD}
\end{equation*}
{commutes}. So far we have constructed a strong M-polyfold bundle
$$p:E\to X$$
over the object set $X$ of the ep-groupoid $(X, {\bf X})$. Next we have to construct the strong bundle map
$$
\mu: {\bf E}:= {\bf X}{{_s}\times_p}E\rightarrow E
$$
covering the target map $t:{\bf X}\rightarrow X$ as specified in 
Definition \ref{stbundleep} of a strong bundle over an ep-groupoid.

\begin{proposition}\label{propA}
There exists a strong bundle map $\mu:\be:=\bx{{_s}\times_{p}}E\to E$ which covers the target map $t:\bx\to X$ of the ep-groupoid $X$ satisfying
$$t\circ \pi_1(\Phi, e)=p\circ \mu (\Phi ,e)$$
for all $(\Phi, e)\in \bx{{_s}\times_{p}}E,$ 
so that the diagram
\mbox{}\\
\begin{equation*}
\begin{CD}
\bx{{_s}\times_{p}}E@>\mu>>E\\
@V\pi_1VV @VVp V \\
\bx@>t>>X.\\ \\
\end{CD}
\end{equation*}
commutes. 
Moreover, $\mu$  has the following additional properties.
\begin{itemize}
\item[{\em (1)}] $\mu$ is a surjective local sc-diffeomorphism and linear in the fibers $E_x$
\item[{\em (2)}] $\mu(1_x,e_x)=e_x$ for all $x\in X$ and $e_x\in E_x$.
\item[{\em (3)}] $\mu(\Phi\circ \Psi,e)=\mu(\Phi,\mu(\Psi,e))$ 
for all morphisms $\Phi, \Psi\in \bx$ and $e\in E$ satisfying $s(\Psi)=p(e)$ and $t(\Psi)=s(\Phi)=p(\mu (\Psi, e)).$
\end{itemize}

\end{proposition}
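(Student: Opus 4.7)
The plan is to take $\mu$ to be the natural pushforward of complex anti-linear sections under isomorphisms of stable maps. Concretely, for $(\Phi, e) \in {\bf X}{{_s}\times_p} E$ with $\Phi = ((q_{\mathrm{b}}, \alpha_{q_{\mathrm{b}}}), \phi, (q'_{\mathrm{b}}, \alpha'_{q'_{\mathrm{b}}}))$ and $e = ((a, v, \eta, \xi), \wh{\alpha}_{(a, v, \eta, \xi)})$ sitting above $(q_{\mathrm{b}}, \alpha_{q_{\mathrm{b}}}) = ((a, v, \eta), \alpha_{(a, v, \eta)})$, I would set $\mu(\Phi, e) := ((a', v', \eta', \xi'), \wh{\alpha}'_{(a', v', \eta', \xi')})$, where $\xi'$ is the unique element of the fiber of $E$ over $(a', v', \eta') = q'_{\mathrm{b}}$ characterized by the requirement that its associated lifted anti-linear field satisfy $\Xi(a', v', \eta', \xi') \circ T\phi = \Xi(a, v, \eta, \xi)$. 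Uniqueness of $\xi'$ follows from the injectivity on fibers of the splicing operator defining $\Xi$. By design $p \circ \mu = t \circ \pi_1$ and $\mu$ is linear in the $\xi$-variable; properties (2) and (3) are then immediate from the functoriality of pushforward under composition of isomorphisms.

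The substantive content is the verification that $\mu$ is a strong bundle map, i.e.\ that it induces sc-smooth maps at both strong-bundle regularities $i = 0, 1$. I would argue locally at a point $(\Phi_0, e_0)$ and invoke Theorem \ref{key-z}, which supplies a germ of sc-diffeomorphism $f:\co \to \co'$ between the base-parameter spaces together with a core-smooth germ $q_{\mathrm{b}} \mapsto \phi_{q_{\mathrm{b}}}$ of isomorphisms $\phi_{q_{\mathrm{b}}}:\alpha_{q_{\mathrm{b}}} \to \alpha'_{f(q_{\mathrm{b}})}$ between the underlying stable maps. Writing the pushforward $\xi \mapsto \xi'$ in the distinguished charts $\psi$ on $Q$ and in the positive/negative holomorphic polar coordinates around the nodes, it decomposes into two pieces. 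Over the core and near the non-nodal special points the map reduces to the parametrized action $(q_{\mathrm{b}}, \xi) \mapsto \xi \circ T\phi_{q_{\mathrm{b}}}^{-1}$ by a core-smooth family of diffeomorphisms, which is sc-smooth by Theorem \ref{action-diff}. Over the neck regions, $\xi'$ is determined as the unique solution of the coupled equations $\wh{\oplus}_{a'(q_{\mathrm{b}})}(\xi') = \wh{\oplus}_{a}(\xi) \circ \phi_{q_{\mathrm{b}}}^{-1}$ and $\wh{\ominus}_{a'(q_{\mathrm{b}})}(\xi') = 0$, and sc-smooth dependence of this solution on $(q_{\mathrm{b}}, \xi)$ is the content of Theorem \ref{neck}, in full parallel with the argument of Lemma \ref{lem4.23} for the section $\eta'$. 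Since the hat gluing/anti-gluing splicings are sc-smooth at both of the relevant regularities (Theorem \ref{sc-splicing-thm1}), one obtains sc-smoothness at bi-levels $i=0$ and $i=1$ simultaneously.

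Property (1) is then essentially automatic. The projection $\pi_1:{\bf E} \to {\bf X}$ is a local sc-diffeomorphism as the pullback of the local sc-diffeomorphism $s$ along $p$; the target map $t$ is a local sc-diffeomorphism by Proposition \ref{a-key-z}; and on fibers $\mu$ is the linear isomorphism $\xi \mapsto \xi \circ T\phi^{-1}$ whose inverse is $\xi' \mapsto \xi' \circ T\phi$. Combining these, $\mu$ is a local sc-diffeomorphism from ${\bf E}(i)$ onto $E(i)$ for $i = 0, 1$. I expect the main obstacle to be precisely the neck-region sc-smoothness in step two: the underlying surfaces $S_a$ themselves vary with $a$ and the $\xi$-data has been splice-glued by $\wh{\oplus}_a$, so the required pushforward is not a literal composition but the solution of a splicing equation with varying gluing parameters — this is exactly the situation Theorem \ref{neck} was formulated to handle, and reducing the problem to an application of that theorem is the heart of the proof.
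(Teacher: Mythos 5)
Your proposal is correct and follows essentially the same route as the paper: the same defining equations $\Xi(a,v,\eta,\xi)=\Xi'(a',v',\eta',\xi')\circ T\varphi$ together with $\wh{\ominus}_{a'}'(\xi')=0$, the same uniqueness argument for properties (2) and (3), and the same reduction of sc-smoothness to Theorem \ref{key-z} plus the core/neck case analysis via Theorems \ref{action-diff} and \ref{neck} (the paper in fact leaves this last case-by-case study to the reader, citing its parallel with Lemma \ref{lem4.23}). Your treatment of property (1) via the fiberwise linear isomorphism covering the local sc-diffeomorphism $t$ also matches the paper's argument.
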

\begin{proof}
In order to define the map $\mu:\bx{{_s}\times_{p}}E\to E$ we abbreviate $q=(a, v, \eta)$, $\wh{q}=(a,v,\eta,\xi)$ and 
\begin{align*}
\alpha_q&=(S_a, j(a, v), M_a, D_a, \oplus_a\exp_u (\eta))\\
\wh{\alpha}_{\wh{q}}&=(S_a, j(a, v), M_a, D_a, \oplus_a\exp_u (\eta), \Xi (a, v, \eta, \xi)).
\end{align*}
If $\Phi\in \bx$ is the morphism
$$\Phi\equiv ((q,\alpha_q), \varphi, (q', \alpha_{q'}')):(q, \alpha_q)\to (q',\alpha_{q'}'),$$
then the isomorphism 
$$\varphi:(S_a, j(a, v), M_a, D_a)\to (S_{a'}', j'(a', v'), M_{a'}', D_{a'}')$$
satisfies 
$$\oplus'_{a'}\exp_{u'}'(\eta')\circ \varphi=\oplus_a\exp_u (\eta).$$
If $e=(\wh{q},\wh{\alpha}_{\wh{q}})\in E$, then $p(e)=(q,\alpha_q)=s(\Phi)$. Hence 
$(\Phi, e)\in \bx{{_s}\times_{p}}E$ and we define 
$$\mu (\Phi, (\wh{q}, \wh{\alpha}_{\wh{q}}))=(\wh{q}', \wh{\alpha}'_{\wh{q}'})$$
where $q'=(a', v', \eta')$ and $\wh{q}'=(a', v', \eta',\xi')=(q',\xi')$. Moreover, $\xi'$ is the unique solution of the two equations
\begin{align*}
\Xi(a, v,\eta, \xi)&=\Xi'(a', v', \eta', \xi')\circ T\varphi\\
\wh{\ominus}_{a'}'(\xi')&=0.
\end{align*}
Explicitly,
\begin{equation*}
\begin{split}
&\wh{\oplus}_{a'}'(\xi')(\varphi (z))\\
&\phantom{===}=\Gamma'[\oplus_{a'}'\exp_{u'}'(\eta')(\varphi (z)), \oplus_{a'}'u'(\varphi (z))]^{-1}\circ \Gamma [\oplus_a\exp_u (\eta)(z), \oplus_au (z)]\\
&\phantom{====\ }\circ \wh{\oplus}_a(\xi)(z)\circ  \delta (a, v)(z)\circ T\varphi (z)^{-1}\circ [ \delta'(a', v')(\varphi (z))]^{-1}
\end{split}
\end{equation*}
and
\begin{equation*}
\wh{\ominus}_{a'}'(\xi')(\varphi (z))=0.
\end{equation*}
We recall that $ \delta (a, v)$ is the complex linear map defined by the formula \eqref{delta_eq_2}.  The second equation  implies that $\rho_{a'}'(\xi')=\xi'$ and thus requires that $\xi'$ is contained in the splicing core.

If $t:\bx\to X$ is the target map and $e=(\wh{q}, \wh{\alpha}_{\wh{q}})$, then $t\circ \pi_1(\Phi, e)=t(\Phi)=(q', \alpha'_{q'})=p\circ \mu (\Phi, e)$ so that $\mu$ covers indeed the target map.

Clearly, the map $\mu$ is surjective and linear on the fibers $E_{(q, \alpha_q)}$. It also preserves the double-filtration $(m, k)$, where $0\leq k\leq m +1$. Moreover, if $\Phi =1_{(q, \alpha_q)}=((q, \alpha_q),\id, (q,\alpha_q))\in \bx$ is the unit morphism, then 
$\mu (1_{(q, \alpha_q)}, (\wh{q}, \wh{\alpha}_{\wh{q}}))=(\wh{q}, \wh{\alpha}_{\wh{q}}).$

To verify the property (3) we take two morphism $\Psi, \Phi\in \bx $ and $e\in E$ satisfying 
$s(\Psi)=p(e)$ and $t(\Psi)=s(\Phi)=p(\mu (\Psi, e)).$  Then with  $e=(\wh{q}, \wh{\alpha}_{\wh{q}})\in E$ we have $p(e)=(q,\alpha_q)=s(\Psi)$ and,  using $t(\Psi)=s(\Phi)$, 
\begin{align*}
\Psi& \equiv ((q, \alpha_q), \phi, (q',\alpha'_{q'})): (q, \alpha_q)\to (q', \alpha'_{q'})\\
\Phi& \equiv ((q', \alpha_{q'}'), \psi, (q'',\alpha''_{q''})): (q', \alpha_{q'}')\to (q'',\alpha''_{q''}). 
\end{align*}
The isomorphisms 
$\psi: (S_a, j(a, v), M_a, D_a)\to (S_{a'}', j'(a', v'), M_{a'}', D_{a'}')$ and $
\phi: (S_{a'}', j'(a', v'), M_{a'}', D_{a'}')\to (S_{a''}'', j''(a'', v''), M_{a''}'', D_{a''}'')$
satisfy 
\begin{equation}\label{twoeq}
\begin{aligned}
\oplus_{a'}'\exp_{u'}'(\eta')\circ \psi&=\oplus_{a}\exp_{u}(\eta)\\
\oplus_{a''}''\exp_{u''}(\eta'')\circ \phi&=\oplus_{a'}'\exp_{u'}'(\eta').
\end{aligned}
\end{equation}
In view of the definition of the map $\mu$, 
$$\mu (\Psi, e)=\mu (\Psi, (\wh{q}, \wh{\alpha}_{\wh{q}}))=(\wh{q'},\wh{\alpha'}_{\wh{q'}})$$
where  $\wh{q'}=(a', v', \eta', \xi')=(q', \xi')$   and where $\xi'$  is the unique solution of  the  two equations
\begin{equation}\label{twoeq1}
\begin{aligned}
\Xi (a, v, \eta, \xi)&=\Xi'(a', v', \eta', \xi')\circ T\psi\\
\wh{\ominus}_{a'}'(\xi')&=0.
\end{aligned}
\end{equation}
Consequently, 
\begin{equation*}
\mu (\Phi, \mu (\Psi, e))=\mu (\Phi, (\wh{q'},\wh{\alpha'}_{\wh{q'}}))=(\wh{q''},\wh{\alpha''}_{\wh{q''}}), 
\end{equation*}
where 
$\wh{q''}=(a'', v'', \eta'', \xi'')$ and where $\xi''$ is the unique solution of the  equations
\begin{equation}\label{twoeq2}
\begin{aligned}
\Xi' (a', v', \eta', \xi')&=\Xi''(a'', v'', \eta'', \xi'')\circ T\phi\\
\wh{\ominus}_{a''}''(\xi'')&=0.
\end{aligned}
\end{equation}
On the other hand, since the composition $\Phi\circ \Psi\in \bx$ is the morphism 
$$\Phi\circ \Psi\equiv ((q, \alpha_q), \phi\circ \psi, (q'', \alpha_{q''}''),$$
where $\phi\circ \psi$ is an isomorphism 
$$\phi\circ \psi:(S_a, j(a, v), M_a, D_a)\to (S_{a''}'', j''(a'', v''), M_{a''}'', D_{a''}'')$$
satisfying, in view of \eqref{twoeq}, 
$$\oplus_{a''}''\exp_{u''}''(\eta'')\circ (\phi\circ \psi)=\oplus_{a'}'\exp_{u'}'(\eta')\circ \psi=
\oplus_{a}\exp_{u}(\eta), $$
we have 
$$\mu (\Phi\circ \Psi, e)=\mu (\Phi\circ \Psi, (\wh{q},\wh{\alpha}_{\wh{q}}))=(\wh{r}, \wh{\alpha}_{\wh{r}}))$$
where 
$r=(a'', v'', \eta'')$ and $\wh{r}=(a'', v'', \eta'', \bar{\xi})$,  and 
$$\wh{\alpha}_{\wh{r}}=(S_{a''}'', j''(a'', v''), M_{a''}'', D_{a''}'' , \oplus_{a''}''\exp_{u''}\exp (\eta''), \Xi''(a'', v'', \eta'', \bar{\xi})).$$
Here $\bar{\xi}$ is the unique solution of the following two equations, 
\begin{align*}
\Xi (a, v, \eta, \xi)&=\Xi''(a'', v'', \eta'', \bar{\xi})\circ T(\phi \circ \psi)\\
\wh{\ominus}_{a''}''(\bar{\xi})&=0.
\end{align*}
In view of \eqref{twoeq1} and \eqref{twoeq2},  
\begin{equation*}
\begin{split}
\Xi (a, v, \eta, \xi)&=\Xi'(a', v', \eta', \xi')\circ T\psi =\Xi''(a'', v'', \eta'', \xi'')\circ T\phi \circ T\psi\\
&=\Xi''(a'', v'', \eta'', \xi'')\circ T(\phi \circ \psi).
\end{split}
\end{equation*}
In view of $\wh{\ominus}_{a''}''(\xi'')=0$,  we conclude by  the uniqueness  that 
$\ov{\xi}=\xi''$ so that $\wh{r}=\wh{q''}$ and $\wh{\alpha}_{\wh{r}}=\wh{\alpha}_{\wh{q''}}''.$
Consequently, $\mu (\Phi\circ \Psi, e)=(\wh{r}, \wh{\alpha}_{\wh{r}} )=(\wh{q''},\wh{\alpha''}_{\wh{q''}})=\mu (\Phi , \mu (\Psi, e))$ as claimed in property (3).


It remains to prove that $\xi'$ is an $\ssc_\triangleleft$--smooth map of $(a, v,\eta, \xi)\in  K^\mcr$.  If $(\Phi_0, (\wh{q}_0,\wh{\alpha}_{\wh{q}_0}))\in \be$ is given, where 
$\Phi_0=(q_0, \alpha_{q_0}), \varphi_0, (q_0',\alpha'_{q_0'}))$ is the morphism in $\bx$ and 
$\varphi_0:\alpha_{q_0}\to \alpha_{q_0'}'$ is an isomorphism, then there exists, in view of Theorem \ref{key-z}, a core smooth germ $q\to \varphi_q$ of isomorphisms 
$$\varphi_q:(S_a, j(a, v), M_a, D_a, \oplus_a\exp_u (\eta))\to (S_b', j'(b, w), M_b', D_b', \oplus'_b \exp'_{u'}(\eta')),$$
where $q=(a, v, \eta)$ and where $b=b(a, v,\eta)$, $w=w(a, v, \eta)$ and $\eta'=\eta'(a,v, \eta)$ are sc-smooth maps, and where $\varphi_{q_0}=\varphi_0$. Then the relationship between $\xi$ and $\xi'$ is defined by 
\begin{equation*}
\begin{split}
&\Gamma [ \oplus_a \exp_u (\eta), \oplus_a u] \circ \wh{\oplus}_a(\xi)\circ \sigma (a, v)\circ T\varphi^{-1}_{(a, v, \eta)}\circ \sigma'(b, w)^{-1}\\
&\qquad =
\Gamma' [\oplus_b'(\exp_{u'}'\eta'), \oplus_b'(u')] \circ \wh{\oplus}_b'(\xi')
\end{split}
\end{equation*}
and 
$$\wh{\ominus}_b'(\xi')=0.$$
Here $(b, w, \eta')$ are the above sc-smooth maps of $(a, v, \eta)$.  As in the previous chapter, a local case by case study shows that $\xi'$ is an $\ssc_\triangleleft$--smooth map of $(a, v,\eta, \xi)\in K^{\mathcal R}$ which is linear in $\xi$. The ingredients and arguments in this study are similar to those in the proof of the sc-smoothness  in the polyfold construction in {Chapter \ref{chapter_2_technical results}}  and the details are left to the reader.  The map $\xi'$  is clearly fiber-wise an isomorphism. A similar discussion shows  that its fiber-wise inverse is also
a strong bundle map, which implies that $\mu$ is a local sc-diffeomorphism since it covers the local sc-diffeomorphism $t$.
This completes the proof that $\mu$ is a strong bundle map.

\end{proof}

At this point we have constructed a strong bundle $(E,\mu)$ over the ep-groupoid $X$ as defined in Definition \ref{stbundleep}. Recalling Definition \ref{stbundleep1}  we are now  in the position to define
the structure of a strong polyfold bundle on the bundle $\wh{p}:  W\rightarrow Z$.
We consider the canonical map $\Gamma:E\to W$ introduced above. Denoting by $\abs{E}$ the space of orbits under the morphisms in $\be$, we see that $\Gamma$ induces the map 
$$\abs{\Gamma}:\abs{E}\to W$$
defined by 
$$\abs{\Gamma} (\abs{(a,v,\eta, \xi, \wh{\alpha}_{(a,v, \eta, \xi)})})=[\wh{\alpha}_{(a,v,\eta,\xi)}].$$
We note that the existence of an isomorphism 
$$\wh{\varphi}:\wh{\alpha}_{(a,v,\eta, \xi)}\to \wh{\alpha'}_{(a',v',\eta',\xi')}$$
implies the existence of an isomorphism $\varphi:\alpha_{(a, v, \eta)}\to \alpha'_{(a',v',\eta')}$ and since $(a, v,\eta)\to \alpha_{(a,v,\eta)}$ is a good uniformizing family, this implies, as we have seen in the previous chapter, that $(a', v', \eta')=g\ast (a, v, \eta)$ and 
$\varphi=g_a$ for a suitable element $g\in G$. It therefore follows from the definition of $\mu$ that the map $\abs{\Gamma}:\abs{E}\to W$ is a homeomorphism. Recalling the natural homeomorphism $\abs{p}:\abs{X}\to Z$ between the orbit space $\abs{X}$ and $Z$, defined as
$$\abs{p}(\abs{(a, v, \eta, \alpha_{(a, v,\eta)})})=[\alpha_{(a, v,\eta)}],$$
and  the projection functor $P:E\to X$ introduced in Section \ref{sc-smoothness},   which induces  the map 
$\abs{P}:\abs{E}\to \abs{X}$ between the  orbit spaces, we conclude that $\abs{p}\circ \abs{P}=\wh{p}\circ \abs{\Gamma},$ so that the diagram
\mbox{}\\
\begin{equation*}
\begin{CD}
\abs{E}@>\abs{P}>>\abs{X}\\
@V\abs{\Gamma} VV @VV\abs{p}V \\
W@>\wh{p}>>Z\\ \\
\end{CD}
\end{equation*}
commutes.
Therefore, the triple $(P:E\to X, \abs{\Gamma}, \abs{p})$ defines the structure of a strong polyfold bundle on the bundle $\wh{p}:W\to Z$ as claimed in Theorem \ref{main1.10} in the introduction.

Herewith the proof of Theorem \ref{main1.10} is complete. \hfill $\blacksquare$

%
%
%

\chapter{The Nonlinear Cauchy-Riemann Operator}\label{fredholm-section-z}
The chapter is devoted to the Cauchy--Riemann section of the strong polyfold bundle $W\to Z$ constructed in the previous section. Its solution sets are the Gromov compactified moduli spaces. We shall prove, in  particular,  
Theorem \ref{maincauchy-riemann} of the introduction.

\section{Fredholm Sections of Strong Polyfold Bundles}
We begin with the definition of a section of a strong bundle.
\begin{definition}
Let $P:E=(E, {\bf E})\to X=(X, {\bf X})$ be a strong bundle over the ep-groupoid $X$. A {\bf section of the strong bundle} $P$ is an sc-smooth functor 
$$F: X\rightarrow E
$$
satisfying $P\circ F=\id$.\index{section of a strong bundle over ep-groupoid}

An $\ssc^+$-section of the strong bundle $P$ is a section $F$ which  induces  an sc-smooth functor $(X, {\bf X})\to (E^{0,1}, {\bf E}^{0,1})$ where $E^{0,1}$ resp. ${\bf E}^{0,1}$ are equipped with the grading $(E^{0,1})_{m}=E_{m,m+1}$  resp. $({\bf E}^{0,1})_{m}={\bf E}_{m,m+1}$ for $m\geq 0$.\index{$\ssc^+$-section of a strong bundle over ep-groupoid}

\end{definition}
The functoriality of the section $F:X\to E$ implies for the section $F:X\to E$ on the objects sets that 
$$\mu (\varphi, F(x))=F(y),$$
and for the section ${\bf F}:{\bf X}\to {\bf E}$ on the morphisms sets that 
$${\bf F}(\varphi)=(\varphi, F(x))\in {\bf X}{_s}\times_p E$$ for every morphism $\varphi \in {\bf X}$ satisfying $s(\varphi)=x$ and $t(\varphi )=y$ where $s, t:{\bf X}\to X$ are the source and the target maps.

\begin{definition}
A {\bf Fredholm section $F$ of the strong bundle}  $P:(E, {\bf E})\to (X, {\bf X})$ over the ep-groupoid $(X, {\bf X})$ is an sc-smooth section $F$ of $P$ which, as a section $F:X\to E$ on the object sets, is an M-polyfold  Fredholm section as defined in Definition 3.6 in \cite{HWZ3.5} and recalled in Section \ref{sectionpolyfred} above.
\index{Fredholm section!of a strong bundle  over an ep-groupoid}
\end{definition}

An example of a Fredholm section is the Cauchy-Riemann operator $\ov{\partial}_J$ dealt with in the next section. The {\bf Cauchy-Riemann section}  $\ov{\partial}_J$ of the bundle $W\to Z$ is defined as 
$$\ov{\partial}_J([S,j, M, D, u])=[S,j, M, D, u, \frac{1}{2}(Tu+J(u)\circ Tu\circ j)].$$
In the strong bundle structure $(P:E\to X, \Gamma, \gamma)$ of the bundle $W\to Z$,  the section $\ov{\partial}_J$ is  represented by the section $F:X\to E$ on the object sets,  defined by 
$$F(q, \alpha_q)=((q,\xi),\wh{\alpha}_{(q,\xi)}).$$Ä
Here, $q=(a,v,\eta)\in {\mathcal O}$ and $(q, \xi)\in K^{\mathcal R}$ where $\xi$ is the unique solution of the two equations 
\begin{equation}\label{cauchy_riemann_eq_0}
\begin{aligned}
\Xi (a, v, \eta, \xi)&=\frac{1}{2}\bigl[T(\oplus_a \exp_{u}(\eta))+J(\oplus_a \exp_{u}(\eta))\circ  T(\oplus_a \exp_{u}(\eta))\circ j(a, v)\bigr] \\
\wh{\ominus}_{a}(\xi)&=0,
\end{aligned}
\end{equation}
{where $\Xi$ is defined in \eqref{sigma_complex_linear_1} as 
$$
\Xi (a, v, \eta, \xi)=\Gamma  [ \oplus_a\exp_u (\eta), \oplus_au]\circ \wh{\oplus}_a(\xi )\circ \delta (a, v).
$$
}

The next section will show that $F$ is a Fredholm section in the sense of Section  \ref{sectionpolyfred}.

If $p:W\to Z$ is a strong polyfold bundle and $(P:E\to X, \Gamma, \gamma)$ a strong polyfold structure of $p$, then the smooth section $F$ of $P:E\to X$ defines a continuous section $f$ of the bundle $p:W\to Z$ by 
$$f\circ \gamma =\Gamma \circ \abs{F}.$$
In diagrams,
\mbox{}\\
\begin{equation*}
\begin{CD}
\abs{X}@>\gamma>>Z\\
@V\abs{F}VV   @VVfV \\
\abs{E}@>\Gamma>> W.\\ 
\end{CD}
\end{equation*}
\mbox{}\\

Let $(P':E'\to X', \Gamma', \gamma')$ be an equivalent strong bundle structure for $p:W\to Z$ in the sense of Definition 3.29 in \cite{HWZ3.5}, and let $(f', F')$ the associated pair of sections where $f'\circ \gamma'=\Gamma'\circ \abs{F'}$. Then the two pairs $(f, F)$ and $(f', F')$ are called equivalent, if $f'=f$ and if there exists an s-{bundle}  isomorphism $\mathfrak{A}:E\to E'$ (as defined in  Section 3.4  of  \cite{HWZ3.5}) whose push-forward satisfies $\mathfrak{A}_\ast (F)=F'$. 
\begin{definition}
An equivalence class $[f, F]$ of sections is called an {\bf sc-smooth section of the polyfold bundle $p:W\to Z$}  and the pair $(f, F)$ is called a representation of the map $f:Z\to W$ in  the model $P:E\to X$.\index{sc-smooth section of the polyfold bundle}
\end{definition}

In view of Proposition 2.25 in \cite{HWZ3.5} these concepts are all well defined. We recall from \cite{HWZ3.5} the following definition.
\begin{definition}
The section $f=[f,F]$ of the  polyfold  bundle $p:W\to Z$ is called a {\bf Fredholm section}  provided  there exists a representation $F$ of $f$ which is a Fredholm section of the strong bundle $P:E\to X$ belonging to the polyfold structure $(P:E\to X, \Gamma, \gamma)$ of the strong bundle. \index{Fredholm section of a polyfold bundle}

The Fredholm section $f$ is called {\bf proper}  if the solution set 
$$S(f)=\{z\in Z\, \vert \, f(z)=0\}$$
is compact in $Z$.\index{proper Fredholm section}
\end{definition}

We end with a useful remark.
\begin{remark}
{
We consider the strong bundle $P:E\to X$ over the object set of the  ep-groupoid $X$.
If $\Phi:e\rightarrow e'$ is a morphism in ${\bf E}$, then 
the associated map $t\circ s^{-1}$ on $E$ defines an isomorphism
between two strong bundles $\Gamma:E\vert U\rightarrow E\vert V$, where $U,V\subset X$
are open subsets of the object space $X$ which are  sc-diffeomorphic by the local diffeomorphism $\gamma$ underlying  the map $t\circ s^{-1}$. 
The points in $E$ have the form
$$
(a,v,\eta,\xi,S_a,j(a,v),M_a,D_a,\oplus_a\exp_u(\eta),\Xi(a,v,\eta,\xi)), 
$$
and are mapped to similar (primed) data, in which 
$(a',v',\eta',\xi')$ are sc-smooth functions of $(a,v,\eta,\xi)$.
Moreover the transformation is linear in $\xi$. The crucial observation
which will allow for a more specialized perturbation theory is the following. If $\xi$ vanishes near the nodal  points in $|D_a|$ then the same
is true for $\xi'$ near $|D_{a'}'|$. This holds in a uniform way with
respect to some open neighborhood around the underlying point $(a,v,\eta)$.
A similar assertion is true with respect to the vanishing near marked  points in $M$. As a consequence we can distinguish the  subclass of $\ssc^+$-multisections which vanish near nodal points or near marked points. If we take the sum of two such multi-sections, as defined in  {Section 3.5}  of  \cite{HWZ3.5}, the same remains true for their  sum}. \end{remark}

\section{The Cauchy-Riemann Section:  Results}\label{c-r-results}
In this section we merely describe the main results of the chapter. Their  proofs are postponed to the  later sections.

As before we denote by 
$(Q,\omega)$ a closed symplectic manifold  of dimension $2n$ and by $J$ a compatible almost complex structure on $Q$. 
The bundle $p:W\to Z$ over the space $Z$ of stable curves from the noded Riemann surfaces into the manifold $Q$ is equipped with the structure of a strong polyfold bundle. The nonlinear Cauchy-Riemann operator defines the continuous section $\ov{\partial}_J$ of the bundle $p$ via 
$$
\ov{\partial}_J([S,j,M,D,u])=[S,j,M,D,u,\ov{\partial}_{J,j}(u)]
$$
where 
$$\ov{\partial}_{J, j}(u)=\frac{1}{2}\bigl(Tu+J(u)\circ Tu\circ j)$$ and 
$u:S\to Q$. The section $\ov{\partial }_{J}$ will be abbreviated by $f=\ov{\partial }_{J}$. The section $f$ is induced from the section $F$ of the model bundle  $P:E\to X$ which is a strong bundle over the ep-groupoid $X$. The pair $(f, F)$ will be called the nonlinear Cauchy-Riemann section of $p$. The key result is the following theorem. 

\begin{theorem}\label{FREDPROP}
The Cauchy-Riemann section $(f, F)$ of $p$ defines an sc-smooth component-proper Fredholm section.  On the component $Z_{g,k,A}$
the  real Fredholm index is  equal to 
$$
2n(1-g)+2c_1(A)+6g-6+2k,
$$
with $2n=\text{dim}\ Q$, where $g$ is the arithmetic genus of the noded Riemann surfaces, $k$ the number of marked points and $A\in H_2(Q)$. Moreover,  $f$ has a natural orientation.
\end{theorem}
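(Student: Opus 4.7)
The plan is to verify the four assertions—sc-smoothness, the Fredholm germ property at every smooth zero, component-properness, and the index/orientation computation—by working in a fixed local strong bundle chart coming from a good uniformizing family $q=(a,v,\eta)\in\co$ centered at a smooth stable map $\alpha=(S,j,M,D,u_0)$, as built in Section~\ref{polstrbundle}. In such a chart the principal part of $F$ is the map $(a,v,\eta)\mapsto \xi(a,v,\eta)$ in the splicing core, uniquely determined by
\begin{equation*}
\Xi(a,v,\eta,\xi)=\tfrac{1}{2}\bigl(T(\oplus_a\exp_{u_0}\eta)+J\circ T(\oplus_a\exp_{u_0}\eta)\circ j(a,v)\bigr),\quad \wh{\ominus}^{\{x,y\}}_{a_{\{x,y\}}}\xi=0.
\end{equation*}
Sc-smoothness of this map would follow by a case split: on the core region of $S$ only finitely many smooth DM-parameters and the classical nonlinear composition $\eta\mapsto \exp_{u_0}\eta$ enter, which is level-wise smooth by standard nonlinear analysis; on the necks one reads $\xi$ off from the classical $\ov{\partial}_{J,j}$-expression of the glued data, so the assertion reduces to the sc-smoothness of the hat splicing $\wh{\pi}_a$ of Theorem~\ref{sc-splicing-thm1}, together with Theorem~\ref{action-diff} and Theorem~\ref{neck} applied to the core-smooth transition diffeomorphisms produced by Theorem~\ref{smoothfamily}.

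For the Fredholm property at a smooth zero I would first check the regularization axiom: on the core this is standard elliptic regularity for $\ov{\partial}_{J,j}$; on the necks the weighted elliptic estimate for the asymptotic operator $-\partial_s+J_0\partial_t$ on $\R\times S^1$ applies because the spectrum of its asymptotic operator is $2\pi\Z$, which is exactly why $\delta_m\in(0,2\pi)$ is imposed throughout the excerpt. To obtain a filled version in the sense of Definition~\ref{filled-def} I would replace the constraint $\wh{\ominus}_a\xi=0$ at each nodal pair by its natural CR-counterpart on the infinite cylinder $C_a$, implanting the anti-glued Cauchy–Riemann expression via the chart $\psi$ of Section~\ref{section-implant}. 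Conditions (1) and (2) of Definition~\ref{filled-def} follow from the uniqueness of $\xi$ and from Lemma~\ref{direct-lem}: vanishing of both the glued CR-equation on $Z_a$ and the anti-glued CR-equation on $C_a$ forces a solution to be a genuine zero of $F$. Condition (3) is the statement that the asymptotic translation-invariant CR-operator on the cylinder is an isomorphism on the $\delta_0$-weighted space complementary to the kernel of $Dr(0)$, which is again the spectral gap consequence of $\delta_0<2\pi$.

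The hardest step, and the one I expect to be the main obstacle, is to show that after subtracting a suitable $\ssc^+$-section the filled germ is equivalent to a germ in $\mathfrak{C}_{basic}$. I would choose a finite-dimensional $\ssc^+$-perturbation $s=s_{DM}+s_\si$ with $s_{DM}$ killing the linearization in the DM-moduli directions $(a,v)$ and $s_\si$ handling the tangential directions at the constraint points of a stabilization $\si$; for such a choice $D(F+s)$ at the base point splits off a finite-dimensional summand and restricts to a topological isomorphism on the fiber variable $w=\eta$. A level-wise Picard argument, invoking the quadratic estimates for $\exp_{u_0}$, the compact embeddings $E_{m+1}\hookrightarrow E_m$ and the uniform bounds on $\wh{\pi}_a$ from Theorem~\ref{sc-splicing-thm1}, then rewrites the projected equation in the normal form $w-B(a,w)$ of an $\ssc^0$-contraction germ. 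The exponential gluing profile is used decisively here to absorb the $\delta_m$-weight losses produced by differentiation of the splicing projections with respect to $a$, mirroring the calculus lemma of \cite{HWZ-DM} quoted in Section~\ref{dm-subsect}.

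Finally, component-properness on $Z_{A,g,k}$ is exactly Gromov compactness for $J$-holomorphic stable curves: the symplectic area is the topological invariant $[\omega]\cdot A$, $Q$ is closed, and the natural topology on $Z$ constructed in Theorem~\ref{nattop} is designed so that Gromov limits are realized as convergence in $Z$; the limit stable map lies in the same component $Z_{A,g,k}$ by preservation of arithmetic genus, marked points and homology class. The index formula comes from applying Riemann–Roch component-wise to the linearized CR-operator on $(S,j,M,D)$ and summing, the $2\sharp D$ correction from the nodal matching combining with the arithmetic-genus formula recalled after Definition~3.4 to yield
\begin{equation*}
\ind(\ov{\partial}_J)=2c_1(A)+(2n-6)(1-g)+2k=2n(1-g)+2c_1(A)+6g-6+2k,
\end{equation*}
and independence of the smooth representative follows from homotopy invariance of the sc-Fredholm index. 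The natural orientation comes from the complex linear symbol of $D\ov{\partial}_{J,j}$: the complex determinant furnishes a canonical orientation of $\det(D\ov{\partial}_{J,j})$ that is preserved under the stabilizing $\ssc^+$-perturbations above because those perturbations are of lower order, the detailed polyfold determinant-bundle construction being deferred to \cite{HWZ8} as indicated in the paper.
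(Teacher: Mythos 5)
Your overall architecture coincides with the paper's: sc-smoothness plus the regularization axiom (elliptic regularity away from the nodes, the weighted estimate with $\delta_m<2\pi$ on the cylinders), a filled version obtained by replacing $\wh{\ominus}_a\xi=0$ with $\wh{\ominus}_a\xi=\ov{\partial}_0(\ominus_a\eta)$, conjugation of the filled germ minus an $\ssc^+$-section to an $\ssc^0$-contraction germ, Gromov compactness for component-properness, the standard Riemann--Roch index count, and deferral of orientations to \cite{HWZ8}. Your verification of conditions (1) and (2) of Definition \ref{filled-def} via the isomorphism property of $\ov{\partial}_0$ on $H^{3,\delta_0}_c(C_a)$ is exactly Proposition \ref{prop4.16}; note only that condition (3) at the base point is trivial there, since $a=0$ forces $Dr(0)=\id$ and $\rho(0)=\id$, so no cylinder isomorphism is needed at that stage.

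The gap is in the contraction-germ step, which you correctly identify as the main obstacle but resolve by a mechanism that does not work here. You propose an $\ssc^+$-perturbation $s_{DM}$ "killing the linearization in the DM-moduli directions $(a,v)$" followed by a level-wise Picard iteration with quadratic estimates. A Picard argument of this type presupposes that the linearized operator depends continuously (in operator norm) on the parameters at which it is linearized, and the paper emphasizes repeatedly that $D_3{\bf f}(a,v,\eta)$ does \emph{not} depend continuously on $(a,v)$ in the operator norm; no $\ssc^+$-section can repair this, since adding an $\ssc^+$-section changes the linearization only by an operator that is compact from level $m$ to level $m$ and does not restore norm-continuity in $a$. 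The paper's substitute is to make no attempt to control the $(a,v)$-derivatives: it lumps $(a,v)$ together with the finite-dimensional kernel $K$ of $D_3{\bf f}(0,0,0)$ into a parameter $b$, takes ${\bf s}$ to be the constant section ${\bf f}(0,0,0)$, and inverts only $P\circ D_3{\bf f}(b)\vert X$ on the sc-complement $X$ of $K$, with uniformity in $b$ supplied by two quantitative inputs: Proposition \ref{prop--x} (uniform-in-$a$ injectivity bounds and a compactness property for sequences $D_3{\bf f}(b_j)h_j=y_j+z_j$) and Proposition \ref{prop4.19} (the estimate $\abs{[D_3{\bf f}(a,v,0)-D_3{\bf f}(a,v,\eta)]h}_{\cf_m}\leq\sigma\abs{h}_{\ce_m}$ uniformly in $a$). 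These two propositions are the technical heart of the proof; their nodal cases require the total gluing estimates of Propositions \ref{HAT_HAT} and \ref{total-gluing}, the uniform Fredholm theory for $\ov{\partial}_0$ on $Z_a^\ast$ and $C_a$ (Propositions \ref{POLTER} and \ref{c-iso}), and the compactness result Proposition \ref{ELLIPTIC-X}. Your plan gestures at the right ingredients (uniform bounds on $\wh{\pi}_a$, the role of the exponential profile in absorbing weight losses) but does not identify the actual statements needed, and as written the step from "invertible linearization at the base point" to the normal form $w-B(a,w)$ with contraction constant uniform in $a$ is not justified.
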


The orientation is discussed in the appendices \ref{orientations-abstract} and \ref{orientations}.  From an abstract point of view the orientability is discussed in \cite{HWZ10}.  It is discussed  in the generality relevant for  Symplectic Field Theory in \cite{HWZ5}.  The component-proper assertion involves arguments of Gromov compactness as explained in Remark \ref{GROMOV} below. The index calculation is well-known. The Fredholm index can be deduced from the corresponding  results  in  \cite{Lock} and  \cite{MS}. The technology can be found in  \cite{Floer1,Floer2,MSCH}.  We should mention that in these computations the nodes are treated as singularities so that the punctured Riemann surfaces are manifolds with cylindrical ends. Theorem \ref{FREDPROP} will follow from the three Propositions \ref{SC-SMOOTHREG}, \ref{C0-CONT}, and \ref{comprop} which we present next.

As described in Section \ref{polstrbundle} the Cauchy-Riemann section in the ep-groupoid description has, in the local  sc-coordinates, the form 
$$
(a,v,\eta)\mapsto  (a,v,\eta,\xi)
$$
where $(a,v,\eta)\in {\mathcal O}$ and $(a,v,\eta,\xi)\in K^{\mathcal R}$. The map  $\xi$ is  the unique solution of the two equations\begin{align*}
\Xi(a,v,\eta,\xi)&= \bar{\partial}_{J,j(a,v)} (\oplus_a\exp_{u}(\eta))\\
\wh{\ominus}_{a}(\xi)&=0.
\end{align*}
In view of the definition of $\Xi$ in the construction of the strong bundle structure,  the above equations  become
\begin{equation}\label{con}
\begin{aligned}
\Gamma(\oplus_a\exp_{u}(\eta),\oplus_a(u))\circ\wh{\oplus}_a(\xi)\circ\delta(a,v)&=\ov{\partial}_{J,j(a,v)} (\oplus_a\exp_{u}(\eta))\\
\wh{\ominus}_{a}(\xi)&=0.
 \end{aligned}
\end{equation}

\begin{proposition}\label{SC-SMOOTHREG}
The Cauchy-Riemann section $\ov{\partial}_J$ of the bundle $p:W\to Z$ is  sc-smooth and regularizing. In particular, all the local expressions 
$$
\co\rightarrow K^{\mathcal R},\quad  (a,v,\eta)\mapsto (a,v,\eta,\xi)
$$
are sc-smooth. If $(a,v,\eta,\xi)\in K^{\mathcal R}_{m,m+1}$, then it follows that  $(a,v,\eta)\in \co_{m+1}$.
\end{proposition}

The proof of Proposition \ref{SC-SMOOTHREG} is postponed to Section \ref{sec5.2}.

As recalled in Section \ref{sectionpolyfred}, in order to verify the (polyfold-)Fredholm property of the section $\ov{\partial}_J$ it remains to show that for every smooth point,  the germ of $\ov{\partial}_J$ around this point admits a filled version,  which after correction by a suitable sc$^+$-section is conjugated to a basic germ.

In order to construct the filled section we denote for every nodal pair $\{x, y\}\in D$,  the image of the nodal points  under the map $u$ by 
$$q_{\{x, y\}}=u(x)=u(y)\in Q.$$

If the gluing parameter $a=a_{\{x,y\}}$ does not vanish,  we denote by 
$$\cae^{\{x,y\}}_a=H^{3,\delta_0}_c(C^{\{x,y\}}_{a_{\{x,y\}}},T_{q_{\{x,y\}}}Q)$$
the usual sc-Hilbert space (introduced in Section \ref{gluinganti-sect}) of mappings defined on the 
infinite cylinder $C_a$ having antipodal asymptotic constants. The level $m$ refers to the regularity $(m+3, \delta_m)$. As always, the strictly increasing sequence $(\delta_m)$ belongs to $(0, 2\pi)$. If $a_{\{x, y\}}=0$, we set 
$$\cae^{\{x,y\}}_0=\{0\}.$$
Similarly we abbreviate 
$$\cf ^{\{x,y\}}_a=H^{2,\delta_0}(C^{\{x,y\}}_{a_{\{x,y\}}},T_{q_{\{x,y\}}}Q)$$ 
if $a=a_{\{x,y\}}\neq 0$ and  otherwise set $\cf_0^{\{x,y\}}=\{0\}$. 

At every nodal pair $\{x,y\}\in D$, the linear  Cauchy-Riemann operator 

$$
\ov{\partial}^{\{x,y\}}_0(k) = \partial_s k+J(q_{\{x,y\}})\partial_t k,
$$
is an sc-isomorphism
$$
\ov{\partial}^{\{x,y\}}_0:\cae^{\{x,y\}}_a\rightarrow \cf^{\{x,y\}}_a.
$$
Here $(s, t)$ are the distinguished coordinates on the infinite cylinder $C_{a_{\{x, y\}}}^{\{x,y\}}$ as explained in Section \ref{preliminaries} below. 
If $a_{\{x, y\}}=0$, this linear Cauchy-Riemann operator is the zero operator between two trivial spaces. The filled section is defined as the map 
\begin{equation}\label{filled-d}
(a,v,\eta)\mapsto\xi
\end{equation}
where  it is no {longer required  that  $(a,v,\eta)$ belongs to the subset $\co$ of the splicing core}.  Instead the section $\eta$ of $u^*TQ$, having matching asymptotic constants across the nodes, is merely required to satisfy the constraints 
$$
\eta(z)\in H_{u(z)}
$$
at  the points $z\in \Sigma$ of the chosen stabilization set $\Sigma$ on $S$. Moreover, the map $\xi$, which associates with every point $z\in S\setminus  \abs{D}$ a complex anti-linear map 
$$\xi (z): (T_zS, j(z))\to (T_{u(z)}Q, J(u(z))),$$
is now determined by the equations
\begin{equation}\label{DEFR}
\begin{aligned}
\Gamma(\oplus_a(\exp_{u}(\eta)),\oplus_a(u))\circ\wh{\oplus}_a(\xi)\circ\delta (a,v)&= \ov{\partial}_{J,j(a,v)} (\oplus_a\exp_{u}(\eta))\\
 \wh{\ominus}_{a_{\{x,y\}}}^{\{x,y\}}(\xi) \cdot \frac{\partial }{\partial s}&= \ov{\partial}^{\{x,y\}}_0(\ominus_{a_{\{x,y\}}}^{\{x,y\}}(\eta))\end{aligned}
\end{equation}
for all nodal points $ \{x,y\}\in D.$

Let us denote by $\cae$ the sc-Banach space consisting  of the sections $\eta$ of the bundle $u^{\ast}TQ$ of  class $(3,\delta_0)$, having matching asymptotic conditions at the nodes and satisfying the  constraints  $\eta(z)\in H_{u(z)}$ at all the points $z\in \Sigma$ of the fixed stabilization set $\Sigma$.  We recall from Definition \ref{GD} of good data that $H_{u(z)}=T_{u(z)}M_{u(z)}$ is a complement in $T_{u(z)}Q$ of the image of $Tu(z)$. By $N$  we denote the vector space of gluing parameters 
$(a_{\{x,y\}})_{\{x,y\}\in D}$ and by $E$ the vector space containing $0\in V\subset E$ where $v\in V$ parametrizes  the complex structures $j(v)$ on $S$. Given $a=\{a_{\{x,y\}}\vert\, \{x,y\}\in D\}\in N$  satisfying $\abs{a_{\{x,y\}}}<\frac{1}{2}$ and given $\eta \in \cae$, we denote by  
$\ominus_a(\eta)$ the tuple $$
\ominus_a(\eta)=\left\{ \ominus^{\{x,y\}}_{a_{\{x,y\}}}(\eta)\right\}_{\{x,y\}\in D}.
$$
The total gluing takes the form
$$
\eta\mapsto  \boxdot_a(\eta)= (\oplus_a(\eta),\ominus_a(\eta)).
$$
Similarly, the total hat-gluing has the form 
$$
\xi\mapsto  \wh{\boxdot}_a(\xi)=(\wh{\oplus}_a(\xi),\wh{\ominus}_a(\xi)).
$$
By $\ov{\partial}_0$  we shall denote the tuple of operators
$$
\ov{\partial}_0 =\left\{ \ov{\partial}^{\{x,y\}}_0\right\}_{\{x,y\}\in D}.
$$
For the following considerations we  shall rewrite the  equations \eqref{DEFR} as follows,
\begin{equation}\label{defined}
\begin{aligned}
\Gamma(\oplus_a\exp_{u}(\eta),\oplus_a(u))\circ\wh{\oplus}_a(\xi)\circ\delta (a,v)&= \ov{\partial}_{J,j(a,v)} (\oplus_a\exp_{u}(\eta))\\
 \wh{\ominus}_a(\xi)\cdot \frac{\partial }{\partial s} &=\ov{\partial}_0 (\ominus_a(\eta)).
\end{aligned}
\end{equation}

Let us denote by $O$ a small open neighborhood
of the origin $(0,0,0)\in N\oplus E\oplus \cae $.  By $\cf$ we denote the collection of all maps $z\rightarrow\xi(z)$ of class $(2,\delta_0)$,
which associate to a point $z\in S\setminus|D|$ a map $\xi(z):(T_zS,j(z))\rightarrow (T_{u(z)}Q,J(u(z)))$ which is complex anti-linear.
The sc-structure is given by requiring that the level $m$ consists of maps of regularity $(m+2,\delta_m)$. {The norms on $\cf$
can be defined as in the case of $H^{3+m,\delta_m}(u^\ast TQ)$ as a sum of semi-norms. For example,  near nodal points from a nodal pair $\{x,y\}$ we take positive (negative) holomorphic polar coordinates centered around $x$ and $y$, respectively and 
associate to $\xi$ the pair $(\xi^+,\xi^-)$ defined by
$$
\xi^+(s,t) =\pr_2\circ T\psi(u(\sigma_x(s,t)))\frac{\partial\sigma_x}{\partial s}(s,t)
$$
and
$$
\xi^-(s,t) =\pr_2\circ T\psi(u(\sigma_y(s',t')))\frac{\partial\sigma_y}{\partial s}(s',t')
$$}

If $(a,v,\eta)\in O$,  we compute the map $\xi\in \cf$ as the solution of the equation \eqref{defined}. This defines the   map
\begin{equation}\label{FILLED}
{\bf f}:O\rightarrow \cf,\quad  {\bf f}(a,v,\eta)=\xi.
\end{equation}

\begin{proposition}[{\bf Filled version}]\label{C0-CONT}
The map ${\bf f}:O\rightarrow \cf$ defined by \eqref{FILLED} is sc-smooth. It is a filled version of  the section $F$ in the ep-groupoid representation. If ${\bf s}$ is an $\ssc^+$-section satisfying  ${\bf s}(0,0,0)={\bf f}(0,0,0)$, then
${\bf f}-{\bf s}$ is conjugated near $(0,0,0)$ to an $\ssc^0$-contraction germ. 
\end{proposition}
The proof of Proposition \ref{C0-CONT} is carried out in Section \ref{section4.4}.

Proposition \ref{SC-SMOOTHREG}  together with Proposition \ref{C0-CONT}  demonstrate the Fredholm property of 
$\ov{\partial}_J$. In order to prove these two propositions we shall establish useful local coordinates expressions for the equations \eqref{DEFR},  making use of the fact that all the occurring expressions are local,  in the sense that  
the properties of the solutions $\xi$ near a point $z\in S$ depend only on $\eta$ and $u$ near $z$ (besides on the parameters $(a, v)$ of course). In the study of nonlinear elliptic pde's there are standard ways to obtain global estimates from local estimates. We shall use these ideas as well. However, the crucial  difficulty going beyond the classical estimates, is to obtain estimates in the neighborhoods of the nodal points, which are independent of the gluing parameter $a$. 

\begin{proposition}\label{comprop}
The Cauchy-Riemann section $\ov{\partial}_J$ of the bundle $p:W\rightarrow Z$ is component-proper.
\end{proposition}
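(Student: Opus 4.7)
My approach is to deduce the statement from Gromov compactness for pseudo-holomorphic curves of bounded energy and prescribed combinatorial type, and then to translate the Gromov limit into a convergent sequence in the natural polyfold topology on $Z$ constructed in Section~\ref{sect4.3-pol.str}. The connected components of $Z$ are indexed by $A\in H_2(Q;\Z)$, the arithmetic genus $g$, and the number of ordered marked points $k$, so it suffices to fix such a component $Z_{A,g,k}$ and show that the solution set
$$
S=\{[\alpha]\in Z_{A,g,k}\ |\ \ov{\partial}_J([\alpha])=0\}
$$
is compact. Let $([\alpha_n])$ with representatives $\alpha_n=(S_n,j_n,M_n,D_n,u_n)$ be a sequence in $S$; I need to produce a subsequence converging in the topology of Theorem~\ref{nattop}.

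First, each $u_n$ is $J$-holomorphic, a priori only of class $H^{3,\delta_0}$; the regularization property recorded in Proposition~\ref{sc-smooth&reg}, combined with elliptic regularity for the smooth nonlinear Cauchy--Riemann equation, shows that each $u_n$ is in fact smooth. Because the homology class $A$ is fixed, the symplectic energy
$$
E(u_n)=\int_{S_n}u_n^\ast\omega=\omega(A)
$$
is a uniform bound. I would then invoke the Gromov compactness theorem for stable $J$-holomorphic maps from noded Riemann surfaces of fixed arithmetic genus and fixed number of marked points into the closed symplectic manifold $(Q,\omega)$; see for instance \cite{MS2}. After passing to a subsequence this produces a stable $J$-holomorphic limit
$$
\alpha_\infty=(S_\infty,j_\infty,M_\infty,D_\infty,u_\infty)
$$
of arithmetic genus $g$, with $k$ marked points, representing $A$, so $[\alpha_\infty]\in Z_{A,g,k}$, and $\alpha_n$ converges to $\alpha_\infty$ in the Gromov sense.

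The substantive step is to translate Gromov convergence into convergence in the natural topology of $Z$. Around $[\alpha_\infty]$ I would choose a good uniformizing family
$$
(a,v,\eta)\mapsto \alpha_{(a,v,\eta)}=(S_a,j(a,v),M_a,D_a,\oplus_a\exp_{u_\infty}(\eta)),\quad (a,v,\eta)\in\co,
$$
as constructed in Section~\ref{xA}, replacing $u_\infty$ by a nearby smooth representative if needed via Theorem~\ref{blog}. The combinatorial convergence of the underlying noded Riemann surfaces, combined with the universal property of Theorem~\ref{smoothfamily} applied to the target family of noded surfaces, yields for all large $n$ parameters $(a_n,v_n)\to(0,0)$ and core-smooth isomorphisms between the underlying noded surface of $\alpha_n$ and $(S_{a_n},j(a_n,v_n),M_{a_n},D_{a_n})$. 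Pulling $u_n$ back by these isomorphisms and using the transversal constraint construction of Proposition~\ref{n-prop2.24} at the stabilization points $\Sigma$ to fix the gauge, one writes $u_n$ uniquely in the form $\oplus_{a_n}\exp_{u_\infty}(\eta_n)$ where $\eta_n$ lies in the splicing core and satisfies $\eta_n(z)\in H_{u_\infty(z)}$ at $z\in\Sigma$. Away from the nodes, $\eta_n\to 0$ in $C^\infty_{\mathrm{loc}}$ by the standard elliptic bootstrap applied to the $J$-holomorphic equation.

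The hard part, and the one I expect to be the main obstacle, is to upgrade this local smooth convergence to convergence in the sc-Hilbert norms of the polyfold structure, especially inside the long necks where the difference between $u_n$ and $u_\infty$ must decay exponentially with the prescribed weights $\delta_m$. The needed decay follows from the classical long-cylinder (``annulus lemma'') estimate for $J$-holomorphic cylinders of small energy: at a nodal point the asymptotic operator is $J(u_\infty(x))\partial_t$ on $L^2(S^1,\R^{2n})$, whose spectrum is $2\pi\Z$, so solutions asymptotic to a constant decay at any exponential rate strictly less than $2\pi$. Since by hypothesis the sequence $(\delta_m)\subset (0,2\pi)$ is bounded strictly below $2\pi$, these estimates yield $\eta_n\to 0$ in every $H^{3+m,\delta_m}$, hence in the sc-Hilbert space $\cae$. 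Combined with $(a_n,v_n)\to(0,0)$ this gives $[\alpha_n]=[\alpha_{(a_n,v_n,\eta_n)}]\to[\alpha_\infty]$ in the polyfold topology of Theorem~\ref{nattop}, proving compactness of $S$ and hence component properness.
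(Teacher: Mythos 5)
Your proposal is correct and follows essentially the same route the paper takes: the paper's own "proof" is just Remark \ref{GROMOV}, which invokes Gromov compactness and reduces the matter to showing that the induced polyfold topology on the solution set coincides with the Gromov topology, with the nodal/neck analysis sharpened by the small-area long-cylinder estimates of \cite{HWZ7.1} — precisely the exponential-decay argument (rates $\delta_m<2\pi$ against the spectrum $2\pi\Z$ of the asymptotic operator) that you identify as the hard step. Your write-up actually supplies more detail than the paper does.
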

\begin{remark}\label{GROMOV} This is essentially the Gromov compactness theorem and as such a special case of the results in \cite{BEHWZ}. There is, however, one issue which has to be pointed out. In principle we need to show that the induced topology from $X$ on the solution set is a compact topology. Now,  as it turns out, this induced topology is the same topology as the topology familiar from  the Gromov compactness proofs.  As long as one stays away from the nodes, one uses the fact that gradient bounds imply $C^\infty$-bounds. At the nodes, the usual discussion has to be sharpened by using  the results (or easy variations of) from 
\cite{HWZ7.1}. We leave the details to the reader.
\end{remark}

\section{Some Technical Results}\label{preliminaries}
In this section we collect some technical tools which will be used in  the proofs of Proposition \ref{SC-SMOOTHREG} and Proposition \ref{C0-CONT}.

As usual the strictly increasing sequence $(\delta_m)_{m\geq 0}$ is fixed and satisfies $0<\delta_m<2\pi$ for all $m\geq 0$. We recall that the Hilbert spaces $F_m^\pm=H^{2+m,\delta_m}(\R^\pm \times S^1)$, $m\geq 0$,  
consist of $L^2$-maps 
$$u:\R^\pm \times S^1\to \R^{2n}$$
whose  weak partial derivatives $D^{\alpha}u$   up to order $2+m$ belong,   if weighted by $e^{\delta_m \abs{s}}$,   to  the space $L^2(\R^\pm \times S^1)$ so that  $e^{\delta_m \abs{s}} D^{\alpha}u\in L^2(\R^\pm \times S^1)$ for all $\abs{\alpha}\leq 2+m$. 
We equip the Hilbert spaces 
$$F_m=F_m^+\oplus F^-_m$$ 
with the  norms 
$
\abs{(\xi^+,\xi^-)}^2_{F_m}:= \norm{\xi^+}^2_{2+m,\delta_m} +\norm{\xi^-}^2_{2+m,\delta_m},
$
where 
$$
{\norm{\xi^\pm}}_{m,\delta_m}^2:=\sum_{|\alpha|\leq m} \int_{{\mathbb R}^\pm \times S^1} |D^{\alpha}\xi^\pm (s,t)|^2 e^{2\delta_m \abs{s}}dsdt.
$$
The  space  $F:=F_0$ is an sc-Hilbert  space  and the nested sequence $(F_m)$ defines an sc-structure on $F$.

{We will also need  the sc-Hilbert space $E$  consisting of pairs $(\eta^+,\eta^-)$ where $\eta^\pm=c+r^\pm$ for some constant $c$ and the pair $(r^+, r^-)$  belongs to 
$$H^{3,\delta_0}({\mathbb R}^+\times S^1,{\mathbb R}^{2n})\oplus
H^{3,\delta_0}({\mathbb R}^-\times S^1,{\mathbb R}^{2n}).$$ The maps $\eta^+$ and $\eta^-$ have matching asymptotic limits in the sense that 
$\lim_{s\to \infty}\eta^+(s, t)=\lim_{s\to -\infty}\eta^-(s, t).$
The sc-structure  on $E$ is given by  the spaces $E_m$ of regularity $(m+3,\delta_m)$. The $E_m$-norm of $(\eta^+,\eta^-)\in E_m$ is defined by 
$$
|(\eta^+,\eta^-)|_{E_m}^2 := |c|^2 +\norm{r^+}^2_{3+m,\delta_m} + \norm{r^-}^2_{3+m,\delta_m}
$$
where $\eta^\pm=c+r^\pm$ for some constant $c$.
}

We  recall from Section \ref{gluinganti-sect} that the total hat-gluing map
$$\wh{\boxdot}_a=(\wh{\oplus}_a, \wh{\ominus}_a):F\to \wh{G}^a=H^2(Z_a, \R^{2n})\oplus H^{2,\delta_0}(C_a, \R^{2n})$$
is an sc-linear isomorphism,  for every gluing parameter $a\in \hb$ satisfying $a\neq 0$. If $a=0$, then 
$$\wh{\boxdot}_0=(\wh{\oplus}_0, 0):F\to \wh{G}_0=F\oplus \{0\}$$
is the identity map. Now we define for $a\neq 0$ the maps 
\begin{equation*}
D^a_t, D^a_s:E\rightarrow F
\end{equation*}
by
\begin{align*}
D^a_t(\eta^+,\eta^-)& =\wh{\boxdot}_a^{-1}\bigl( \partial_t (\oplus_a( \eta^+,\eta^-)), 0\bigr)\\
D^a_s(\eta^+,\eta^-)& =\wh{\boxdot}_a^{-1}\bigl( \partial_s (\oplus_a( \eta^+,\eta^-)), 0\bigr).
\end{align*}
Correspondingly we set, for $a=0$, 
\begin{align*}
D^0_t(\eta^+,\eta^-)& = \partial_t (\eta^+,\eta^-)\\
D^0_s(\eta^+,\eta^-)& = \partial_s (\eta^+,\eta^-).
\end{align*}
Similarly,    the maps 
$$
C^a_t, C^a_s : E\rightarrow F
$$
are defined,  for $a\neq 0$  belonging to $\hb$,  by 
\begin{align*}
C^a_t(\eta^+,\eta^-)& =\wh{\boxdot}_a^{-1}\bigl(0,  \partial_t (\ominus_a( \eta^+,\eta^-))\bigr)\\
C^a_s(\eta^+,\eta^-)& =\wh{\boxdot}_a^{-1}\bigl(0,  \partial_s (\ominus_a( \eta^+,\eta^-))\bigr)
\end{align*}
and, if $a=0$, by 
$$C_s^0(\eta^+, \eta^-)=0=C_t^0(\eta^+, \eta^-).$$

\begin{proposition}\label{XCX1}
The maps $B_\frac{1}{2}\oplus E\rightarrow F$,  defined by 
$$
(a,(\eta^+,\eta^-))\mapsto D^a_{t}(\eta^+,\eta^-)\qquad \text{and}\qquad 
(a,(\eta^+,\eta^-))\mapsto D^a_{s}(\eta^+,\eta^-), $$
are sc-smooth. The same holds  true if we replace the maps $D^a_s, D^a_t$  by the maps $C^a_s, C^a_t$. Moreover,  for  every $m$ there exists a constant $C_m>0$ independent of $a$ such that 
$$
\abs{D^a_s(\eta^+,\eta^-)}_{F_m}\leq C_m\cdot \abs{(\eta^+,\eta^-)}_{E_m},\  \abs{D^a_t(\eta^+,\eta^-)}_{F_m}\leq C_m\cdot \abs{(\eta^+,\eta^-)}_{E_m},
$$
and 
$$
\abs{C^a_s(\eta^+,\eta^-)}_{F_m}\leq C_m\cdot \abs{(\eta^+,\eta^-)}_{E_m},\
\abs{C^a_t(\eta^+,\eta^-)}_{F_m}\leq C_m\cdot \abs{(\eta^+,\eta^-)}_{E_m}.
$$
\end{proposition}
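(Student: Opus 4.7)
The plan is to derive explicit formulas for the four maps and then invoke Theorem~\ref{sc-splicing-thm1} together with hands-on uniform estimates in the neck region.

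The first observation is that the cut-off $\beta_a(s)$ depends only on $s$, so differentiation in $t$ commutes with both gluing and anti-gluing:
$$\partial_t\oplus_a(\eta^+,\eta^-) = \wh{\oplus}_a(\partial_t\eta^+,\partial_t\eta^-),\qquad \partial_t\ominus_a(\eta^+,\eta^-) = \wh{\ominus}_a(\partial_t\eta^+,\partial_t\eta^-),$$
where for the second identity the constant average $\av_a(\eta^+,\eta^-)$ is killed by $\partial_t$. Since $\partial_t\colon E\to F$ is a bounded sc-linear operator (differentiation costs one derivative, weights are preserved, and the asymptotic constants of $\partial_t\eta^\pm$ vanish), and since $\wh{\pi}_a=\wh{\boxdot}_a^{-1}\circ(\wh{\oplus}_a,0)$ is the splicing projection onto $\ker\wh{\ominus}_a$, the two identities yield
$$D^a_t(\eta^+,\eta^-)=\wh{\pi}_a(\partial_t\eta^+,\partial_t\eta^-),\qquad C^a_t(\eta^+,\eta^-)=(\id-\wh{\pi}_a)(\partial_t\eta^+,\partial_t\eta^-).$$
Sc-smoothness of these two maps is then immediate from Theorem~\ref{sc-splicing-thm1}, and the uniform bound $|D^a_t\eta|_{F_m}\leq C_m|\eta|_{E_m}$ follows from the explicit formulas \eqref{smooth-splicing-10} whose coefficients $\beta_a^2/\gamma_a$ and $\beta_a(1-\beta_a)/\gamma_a$ are bounded uniformly in $a$ and $s$ (since $\gamma_a\geq 1/2$).

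For the $s$-derivatives, differentiating the cut-off produces a localized correction term. Setting $h^\pm=\partial_s\eta^\pm\in F$, one computes
$$\partial_s\oplus_a(\eta^+,\eta^-) = \wh{\oplus}_a(h^+,h^-) + \beta_a'(s)\bigl(\eta^+(s,t)-\eta^-(s-R,t-\vartheta)\bigr)$$
and an analogous identity for $\ominus_a$ with a sign flip and the constant average again dropping out. Consequently
$$D^a_s(\eta^+,\eta^-) = \wh{\pi}_a(h^+,h^-) + \wh{\boxdot}_a^{-1}\bigl(\beta_a'(\eta^+-\eta^-(\cdot-R,\cdot-\vartheta)),0\bigr),$$
with a parallel formula for $C^a_s$. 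The first summand is handled exactly as in the $t$-case. The whole difficulty is concentrated in the correction term
$$\Psi(a,\eta):=\wh{\boxdot}_a^{-1}\bigl(\beta_a'(s)(\eta^+(s,t)-\eta^-(s-R,t-\vartheta)),0\bigr),$$
whose output is supported in the narrow annulus $s\in[R/2-1,R/2+1]$. Using the system in Section~\ref{gluinganti-sect} its components are explicit rational expressions in $\beta_a,\beta_a'$ and $\gamma_a$ multiplied by $\eta^\pm$ at symmetric neck-slices.

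The main obstacle is the uniform $F_m$-bound and the sc-smoothness of $\Psi$. For the bound, the key point is that $\eta^+$ and $\eta^-$ share a common asymptotic constant $c$, so on the support of $\beta_a'$ the integrand is really $(\eta^+(s,t)-c)-(\eta^-(s-R,t-\vartheta)-c)$; combined with the $F_m$-weight $e^{2\delta_m s}\approx e^{\delta_m R}$ and the weighted $L^2$-decay of $\eta^\pm-c$ encoded in $|\eta|_{E_m}$, the contributions from $\R^+$ and $\R^-$ are each already dominated by the corresponding piece of $|\eta|_{E_m}^2$ (no smallness from $R$ is actually needed for the bound itself, only compatibility of the weights, which is exactly what holds at $s\approx\pm R/2$). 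Higher derivatives are controlled in the same way, using that $\beta$ is smooth with bounded derivatives and that $E_m$ carries $m+3\geq m+2$ derivatives. For sc-smoothness of $\Psi\colon \hb\oplus E\to F$, one verifies sc$^0$-continuity by the same neck estimates and classical $C^{k+1}$-differentiability on each level $E_{m+k}\to F_m$ from the explicit polynomial dependence on the building blocks, whereupon Proposition~\ref{prop2.11} promotes these to sc-smoothness. The $C^a_s$ case is strictly parallel, with $\wh{\oplus}_a$ replaced by $\wh{\ominus}_a$ in the $\wh{\boxdot}_a^{-1}(\cdots,0)$ slot being moved to the second slot and the sum $\eta^++\eta^-(\cdot-R,\cdot-\vartheta)-2\av_a$ replacing the difference; the corresponding sum decays with the same mechanism since both centered remainders do.
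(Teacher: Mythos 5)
Your proposal is correct and follows essentially the same route as the paper's own proof in Appendix \ref{XCX10}: the identity $D^a_t(\eta)=\wh{\pi}_a(\partial_t\eta)$, the splitting $\partial_s\oplus_a(\eta)=\wh{\oplus}_a(\partial_s\eta)+\beta_a'\bigl(\eta^+-\eta^-(\cdot-R,\cdot-\vartheta)\bigr)$ leading to $D^a_s=\wh{\pi}_a(\partial_s\eta)$ plus a neck-supported correction, and the cancellation of the common asymptotic constant (resp.\ of $\av_a$ in the $C^a_s$ case) in that correction are exactly the steps of the paper. The only divergence is in how the correction term is finished off: the paper cites Proposition 2.17 of \cite{HWZ8.7} for its sc-smoothness and obtains the uniform bound from the $\ssc^0$-property combined with linearity in $\eta$, whereas you propose to re-derive both by hand via Proposition \ref{prop2.11} and direct weight estimates at $s\approx R/2$ --- which is sound, though the level-wise $C^{k+1}$-differentiability in $a$ at $a=0$ is not ``polynomial'' and genuinely uses the gain $\delta_{m+k}>\delta_m$ coming from the level shift, as you implicitly acknowledge.
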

The proof of the proposition is carried out  in Appendix \ref{XCX10}.

Now,  let  $E^{(N)}$, $F^{(K)}$, resp. $F^{(M)}$ be the sc-Hilbert spaces of functions, whose domains of definitions are half-cylinders as above, but whose image spaces are the Euclidean spaces $\R^N,  \R^K$ resp. $\R^M$ instead of $\R^{2n}$. 

The natural evaluation  map
$$
\call (\R^K,\R^M)\oplus \R^K\rightarrow \R^M
$$
will be denoted by 
$$(G, v)\mapsto G\cdot v.$$
With  a smooth map 
$$A:\R^N\rightarrow \call (\R^K,\R^M)$$
we associate the mapping 

$$
\wt{A}:B_\frac{1}{2}\oplus E^{(N)}\oplus F^{(K)}\rightarrow F^{(M)}, 
$$
defined for $a\neq 0$ as 
$$ 
\wt{A}(a,(u^+,u^-),(\eta^+,\eta^-))=(\xi^+, \xi^-), 
$$
where the pair $(\xi^+, \xi^-)\in F^{(M)}$ is the unique solution of the two equations
\begin{equation*}
\begin{aligned}
\wh{\oplus}_a(\xi^+,\xi^-) (z)&= A\bigl(\oplus_a (u^+,u^-)(z)\bigr) \cdot \wh{\oplus}_a  (\eta^+,\eta^-) (z)\\
\wh{\ominus}_a(\xi^+,\xi^-)(z)&=0,
\end{aligned}
\end{equation*}
for $z$ on the cylinder $Z_a$. If $a=0$, we define 
$\wt{A}(0,(u^+,u^-),(\eta^+,\eta^-))=(A(u^+)\eta^+, A(u^-)\eta^-).$

Abbreviating the  notation we shall write $\xi$ instead of $(\xi^+,\xi^-)$ and $u$ instead of $(u^+, u^-)$ etc. so  that the above two equations  
defining $\wt{A}(a, u,\eta)=\xi$ become
\begin{equation*}
\begin{aligned}
\wh{\oplus}_a(\xi) &= A(\oplus_a(u))\cdot \wh{\oplus}_a (\eta)\\
\wh{\ominus}_a(\xi)&=0.
\end{aligned}
\end{equation*}

For $a\in \hb$ fixed,  we denote by 
$$\wt{A}_a:E^{(N)}\oplus F^{(K)}\to F^{(M)}$$
the induced map $\wt{A}_a(u, \eta)=\wt{A}(a, u, \eta)$ and introduce,  for fixed $a\in \hb$ and fixed level $m$,  the map
$$L_a:E_m^{(N)}\to \call (F^{(K)}_m, F_m^{(M)}),$$ 
defined by $L_a(u)=\wt{A}(a, u,\cdot ).$

\begin{proposition}\label{XCX2}
\mbox{}
\begin{itemize}
\item[{\em (1)}] The bundle  map  
$$\bigl(B_\frac{1}{2}\oplus E^{(N)}\bigr)\triangleleft  F^{(K)}\rightarrow \bigl(B_\frac{1}{2}\oplus E^{(N)}\bigr)\triangleleft F^{(M)},
$$
$$
\bigl((a,u),\eta\bigr)\mapsto  \bigl((a,u),\wt{A}(a, u,\eta)\bigr)
$$
is  an $\ssc_\triangleleft$-smooth map.
If $u_0\in  E^{(N)}_m$ and $\varepsilon>0$ are given, then the following estimate  holds, 
$$
\abs{\wt{A}\bigl(a, u, \eta\bigr)-\wt{A}\bigl(a,u_0,\eta\bigr)}_{F_{m+i}^{(M)}}
\leq \varepsilon \cdot |\eta|_{F^{(K)}_{m+i}}
$$  
for all $u\in E_m^{(N)}$ sufficiently close  to
$u_0$ on the level $m$,  for all $\eta \in F^{(K)}_{m+i}$,  for all  $a\in \hb$,  and  for $i=0,1$.
\item[{\em (2)}] For a fixed  gluing  parameter $a\in \hb$ and the fixed  level $m$,  the map
$
L_a:E^{(N)}_m\rightarrow \call (F^{(K)}_m,F^{(M)}_m)$ 
is of class  $C^1$ and, denoting by $DL_a(u) (\wh{u})$ the linearization of $L_a$ at the point $u$ in the direction of $\wh{u}$, the map 
$$\hb \oplus E^{(N)}\oplus E^{(N)} \oplus F^{(K)}\rightarrow F^{(M)},$$
 defined by 
$$\bigl(a,u,\wh{u},\eta\bigr)\mapsto  \bigl[ DL_a(u)(\wh{u})\bigr]\cdot \eta, $$
is of class $\ssc^\infty$. 
\item[{\em (3)}]  Given a level $m$, a point $u_0\in E_m^{(N)}$, and a positive number $\varepsilon>0$, the following estimate holds,
\begin{equation*}
\begin{split}
\abs{\bigl[ DL_a(u)\cdot (\wh{u}) - DL_{a}(u_0)\cdot (\wh{u})\bigr] \cdot \eta}_{F^{(M)}_m}\leq\varepsilon\cdot \abs{\wh{u}}_{E_m^{(N)}}\cdot \abs{\eta}_{E_m^{(K)}}
\end{split}
\end{equation*}
for all $a\in \hb$,  $u$ sufficiently close to $u_0$ on level $m$, and  for all $\wh{u}\in E^{(N)}_m$, and all $\eta\in F_m^{(K)}$.
\end{itemize}
\end{proposition}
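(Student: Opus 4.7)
The plan is to invert the hat-splicing relations explicitly and reduce the proposition to the sc-smooth calculus already established in Theorem \ref{sc-splicing-thm1} together with pointwise Sobolev multiplication and composition estimates that are uniform in the gluing parameter. By Theorem \ref{sc-splicing-thm1}(1), the total hat-gluing $\wh{\boxdot}_a=(\wh{\oplus}_a,\wh{\ominus}_a):F^{(M)}\to \wh{G}^a$ is an sc-linear isomorphism, so setting $v:=A(\oplus_a u)\cdot\wh{\oplus}_a\eta$ the defining equations for $\xi=\wt{A}(a,u,\eta)$ have the unique solution $\xi=\wh{\boxdot}_a^{-1}(v,0)$. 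Solving the $2\times 2$ linear system (same matrix as in \eqref{system2}, with right-hand side $(v,0)$) gives the explicit formulas
$$
\xi^+(s,t)=\frac{\beta_a(s)}{\gamma_a(s)}\,v(s,t),\qquad
\xi^-(s-R,t-\vartheta)=\frac{1-\beta_a(s)}{\gamma_a(s)}\,v(s,t),
$$
valid on the relevant ranges, with $v(s,t)=A(\oplus_a u(s,t))\cdot \wh{\oplus}_a\eta(s,t)$ and $\gamma_a=\beta_a^2+(1-\beta_a)^2\geq \tfrac12$.

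Using these formulas together with Theorem \ref{sc-splicing-thm1}(2), I would decompose $\wt{A}$ into three sc-smooth building blocks: (i) the families $a\mapsto\wh{\oplus}_a$ and $a\mapsto \wh{\boxdot}_a^{-1}(\cdot,0)$, whose sc-smoothness is a direct consequence of the sc-smoothness of $\wh{\pi}$; (ii) the nonlinear substitution $u\mapsto A\circ \oplus_a u$, which on the cylinders with Sobolev class $H^{3+m,\delta_m}\hookrightarrow C^{m+1}$ is level-wise smooth with bounds uniform in $a$ since the cut-offs $\beta_a$ have derivatives bounded uniformly in $a$; and (iii) pointwise multiplication of an $H^{3+m,\delta_m}$-valued matrix with an $H^{2+m,\delta_m}$-section, which is a bounded bilinear map on weighted Sobolev spaces on the cylinders, uniformly in the gluing length. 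Combining these through the chain rule and Proposition \ref{prop2.11} gives the $\ssc_\triangleleft$-smoothness claim in part (1). The Lipschitz-type estimate in part (1) follows because $\eta\mapsto L_a(u)\eta$ is linear and the operator norm of $L_a(u)-L_a(u_0)$ on level $m+i$ is bounded by $\|A\circ\oplus_a u-A\circ\oplus_a u_0\|_{C^{m+i+1}}$ times a constant independent of $a$; classical continuity of nonlinear composition with $A$, once uniformity in $a$ is secured, produces the factor $\varepsilon$ when $u$ is close to $u_0$ on level $m$.

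For part (2), the observation that $\eta\mapsto\wt{A}_a(u,\eta)$ is linear means $L_a(u)\in \call(F_m^{(K)},F_m^{(M)})$, and differentiation in $u$ gives, by the chain rule,
$$
[DL_a(u)(\wh u)]\cdot\eta=\wh{\boxdot}_a^{-1}\!\Bigl(\bigl(DA(\oplus_a u)\cdot \oplus_a\wh u\bigr)\cdot\wh{\oplus}_a\eta,\ 0\Bigr),
$$
which has the identical structure as $\wt{A}$ but with the smooth map $A$ replaced by the smooth map $DA:\R^N\to \call(\R^N,\call(\R^K,\R^M))$. Hence the sc-smoothness in $(a,u,\wh u,\eta)$ reduces to the same argument as above applied to $DA$; the $C^1$-regularity of $L_a$ on the fixed level $m$ then follows. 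Part (3) is the corresponding smallness estimate for the linearization and is proved identically, using continuity of $u\mapsto DA\circ \oplus_a u$. The main obstacle will be the uniformity in $a\in \hb$ of all these estimates: the uniform-in-$a$ bounds on $\beta_a/\gamma_a$ and its derivatives on the lengthening cylinders $[0,R]\times S^1$, the uniform Sobolev multiplication and embedding estimates with the weight $e^{\delta_m|s|}$ as $R\to\infty$, and the uniform continuity of nonlinear composition in these weighted spaces. Once this uniformity is in place, which is precisely what the constraint $\delta_m<2\pi$ together with the uniform cylinder estimates underlying Theorem \ref{sc-splicing-thm1} provide, the proposition reduces to classical level-wise differentiation verified via Proposition \ref{prop2.11}.
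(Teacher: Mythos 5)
Your proposal is correct and follows essentially the same route as the paper's proof: explicit inversion of the $2\times 2$ hat-gluing system to get $\xi^{\pm}=\tfrac{\beta_a}{\gamma_a}v$ resp. $\tfrac{1-\beta_a}{\gamma_a}v$, factorization of $\wt{A}$ into the substitution $A\circ\oplus_a(\cdot)$, a splicing-type linear block in $(a,\eta)$, and pointwise multiplication controlled by the weighted Banach algebra estimates, followed by reduction of parts (2) and (3) to part (1) with $A$ replaced by $DA$. The one device the paper adds, which falls under the uniformity-in-$a$ issue you flag at the end, is replacing $\oplus_a(u)$ by the cut-off extension $g(a,u)(s,t)=c+\beta_a(s-2)\cdot\oplus_a(r)(s,t)$ defined on the \emph{fixed} half-cylinder (legitimate because $\xi^+$ vanishes for $s\geq \tfrac{R}{2}+1$), so that the substitution operator and the multiplication estimates live in $a$-independent sc-Banach spaces and the smallness in part (1) can be made uniform by a compactness argument over $a\in\hb$.
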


The proof of Proposition \ref{XCX2} is postponed to  the Appendix \ref{XCX20}.

Next we recall from \cite{HWZ8.7} the  estimates for the total hat-gluing map $\wh{\boxdot}_a$ and the total gluing map $\boxdot_a$.  If the gluing parameter $a$ satisfies $0<\abs{a}<\frac{1}{2}$,  the space 
$$
\wh{G}^a=H^{2}(Z_a,{\mathbb R}^{2n})\oplus H^{2,\delta_0}(C_a,{\mathbb R}^{2n}),
$$
has the sc-structure given by the sequence $H^{m+2}(Z_a,{\mathbb R}^{2n})\oplus H^{m+2,\delta_m}(C_a,{\mathbb R}^{2n}).$ If 
$a=0$,  we put  $\wh{G}^a=F\oplus\{0\}$.

For a map $q:Z_a\to \R^{2n}$  on the finite cylinder in $H^{k}(Z_a, \R^{2n})$ we define the norms
\begin{equation}\label{preliminaries9}
\nr q\nr^2_{k,\delta} =\sum_{|\alpha|\leq k} \int_{[0,R]\times S^1}   \abs{D^{\alpha}q(s,t)}^2\cdot e^{2\delta \abs{s-\frac{R}{2}}} dsdt
\end{equation}
where $\delta\in \R$. 

{We would like to point out that  here we allow the weight  $\delta$ to be negative;  in fact, this will be used crucially later on.}

If the map $p:C_a\to \R^{2n}$ on the infinite cylinder $C_a$ has  vanishing asymptotic constants we set $p(s, t)=p([s,t])$ and define the norms
\begin{equation}\label{preliminaries9_a}
\nr p\nr^2_{m,\delta} =\sum_{|\alpha|\leq m}\int_{\R\times S^1}   \abs{D^{\alpha}p(s,t)}^2\cdot e^{2\delta \abs{s-\frac{R}{2} }  } dsdt.
\end{equation}
We introduce the $\wh{G}^a_m$-norm of the pair $(q, p)\in \wh{G}^a$
by setting
$$
| (q,p)|_{\wh{G}^a_m}^2:=e^{\delta_m R}\cdot \left[\nr q\nr_{m+2,-\delta_m}^2+\nr p\nr_{m+2,\delta_m}^2\right].
$$

With the total hat gluing $\wh{\boxdot}_a(\xi^+,\xi^-)=\bigl(\wh{\oplus}_a(\xi^+,\xi^-), (\wh{\ominus}_a(\xi^+,\xi^-)\bigr)\in \wh{G}^a$ defined on $(\xi^+,\xi^-)\in F$, the following estimate is proved in \cite{HWZ8.7}, Theorem 2.26.
\begin{proposition}[{\bf Total Hat-Gluing Estimate}]\label{HAT_HAT}
For every $m\geq 0$ there exists a constant $C_m>0$ which does not depend  on $a$, such that
$$
\frac{1}{C_m}\cdot |(\xi^+,\xi^-)|_{F_m}\leq | \wh{\boxdot}_a(\xi^+,\xi^-)|_{\wh{G}^a_m}\leq C_m\cdot |(\xi^+,\xi^-)|_{F_m}
$$
for  $|a|<\frac{1}{2}$ and  $(\xi^+,\xi^-)\in F_m$.
\end{proposition}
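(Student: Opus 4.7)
The plan is to prove both inequalities simultaneously by observing that on every point $[s,t] \in Z_a$ in the neck, the pair $(\wh{\oplus}_a\xi([s,t]),\wh{\ominus}_a\xi([s,t]))$ is the image of the vector $(\xi^+(s,t),\xi^-(s-R,t-\vartheta))$ under the $2\times 2$ matrix
\[
M_a(s) = \begin{pmatrix} \beta_a(s) & 1-\beta_a(s) \\ -(1-\beta_a(s)) & \beta_a(s) \end{pmatrix},
\]
whose determinant $\gamma_a(s) = \beta_a(s)^2 + (1-\beta_a(s))^2$ satisfies $\tfrac{1}{2} \leq \gamma_a(s) \leq 1$ uniformly in $a$ and $s$. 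Hence $M_a(s)$ is a pointwise linear isomorphism with $\|M_a(s)\|$ and $\|M_a(s)^{-1}\|$ bounded independently of $a$. In the regions $s \leq R/2-1$ and $s \geq R/2+1$ the matrix degenerates (since $\beta_a$ is constant there), and the coordinates are just $\pm\xi^+$ or $\pm\xi^-$ shifted by $R$, which makes the identification on the core transparent.

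The whole game is to match the weighted norms, so I would first verify the elementary algebraic identity that for $s\in[0,R/2]$ the product $e^{\delta_m R}\cdot e^{-2\delta_m|s-R/2|}$ collapses to $e^{2\delta_m s}$, which is exactly the weight carried by $\xi^+$ on $\R^+\times S^1$ at the point $s$, while for $s\in[R/2,R]$ it collapses to $e^{-2\delta_m(s-R)}$, which is the weight carried by $\xi^-$ at the translated point $s-R\in[-R/2,0]$. The analogous calculation with the opposite sign of the weight gives the correspondence on the infinite cylinder $C_a$: the weight $e^{2\delta_m|s-R/2|}$ combined with the prefactor $e^{\delta_m R}$ matches the $F$-weights on the \emph{opposite} half-cylinder ($\xi^-$ for $s < R/2$ and $\xi^+$ for $s > R/2$). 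This is the key point where the definition of the norm on $\wh{G}^a$ with the prefactor $e^{\delta_m R}$ becomes essential.

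Armed with this weight correspondence, the upper bound is routine: applying Leibniz to $D^\alpha(\beta_a \xi^+)$ and $D^\alpha((1-\beta_a)\xi^-\circ\tau_a)$, where $\tau_a(s,t) = (s-R,t-\vartheta)$, the ``extra'' terms coming from $D^\beta\beta_a$ are translates of fixed derivatives of $\beta$ and hence uniformly bounded pointwise, supported in the strip $|s-R/2|\leq 1$. Splitting $\int_{[0,R]\times S^1}$ as $\int_{[0,R/2]} + \int_{[R/2,R]}$ and changing variables $s'=s-R$ on the right half bounds each piece by the corresponding contribution to $|\xi^+|_{F_m^+}^2$ or $|\xi^-|_{F_m^-}^2$; the same strategy on $C_a$ takes care of the anti-glued piece. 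For the lower bound I would invert the matrix pointwise:
\[
\xi^+(s,t) = \frac{\beta_a(s)}{\gamma_a(s)}(\wh{\oplus}_a\xi)(s,t) - \frac{1-\beta_a(s)}{\gamma_a(s)}(\wh{\ominus}_a\xi)([s,t]),
\]
and similarly for $\xi^-$, while for $s\geq R$ one simply has $\xi^+(s,t) = -(\wh{\ominus}_a\xi)([s,t])$ and $\xi^- = \wh{\ominus}_a\xi$ for $s' \leq -R$. Using the weight correspondence in reverse and the uniform pointwise bounds on $\beta_a/\gamma_a$, $(1-\beta_a)/\gamma_a$ and their derivatives, the pointwise $L^2$ estimates with weights integrate to the lower bound.

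The main obstacle is keeping the constant $C_m$ independent of $a$ as $|a|\to 0$, equivalently as $R\to\infty$. This is precisely where the prefactor $e^{\delta_m R}$ in the definition of $|\cdot|_{\wh{G}^a_m}$ is decisive: without it, the weights on $Z_a$ and the half-cylinder weights on $F_m^\pm$ differ by an exponential factor in $R$ and the bound deteriorates as $R\to\infty$. Once the weight matching is in place, everything decouples into a fixed, $a$-independent, pointwise linear-algebra problem on $\R^{2n}\oplus\R^{2n}$ and a uniformly bounded perturbation coming from the derivatives of $\beta_a$ in a strip of fixed width, so the constants $C_m$ can be chosen genuinely independent of $a\in B_{1/2}$.
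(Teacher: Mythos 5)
Your proposal is correct. The paper itself gives no proof of this proposition (it is cited as Theorem 2.26 of \cite{HWZ8.7}), so there is nothing to compare against line by line, but your argument is the natural and complete one: the pointwise identity $e^{\delta_m R}\,e^{-2\delta_m\abs{s-\frac{R}{2}}}=e^{2\delta_m s}$ for $s\le \frac{R}{2}$ (resp.\ $=e^{-2\delta_m(s-R)}$ for $s\ge\frac{R}{2}$), together with its counterpart for the weight $e^{+2\delta_m\abs{s-\frac{R}{2}}}$ on $C_a$, is exactly what makes the prefactor $e^{\delta_m R}$ in the $\wh{G}^a_m$-norm work, and the uniform bounds $\frac{1}{2}\le\gamma_a\le 1$ plus the fact that all derivatives of $\beta_a$ are translates of fixed functions supported in a strip of width $2$ give $a$-independent constants on both sides.
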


A  similar estimate holds true also for  the total gluing map $\boxdot_a$. Recall the previously introduced space  $H^{3,\delta_0}_c$ consisting of all pairs $(\eta^+,\eta^-)$ of maps  $\eta^\pm:{\mathbb R}^\pm\times S^1\rightarrow {\mathbb R}^{2n}$ 
which have the same asymptotic constant $c$ such that  $\eta^\pm-c$ belongs to $H^{3,\delta_0}({\mathbb R}^\pm\times S^1,{\mathbb R}^{2n})$. The sc-structure of this space has the regularity $(3+m,\delta_m)$. 
In this case we introduce for $0<\abs{a}<\frac{1}{2}$ the sc-space 
$G^a$ by $G^a=H^{3}(Z_a,{\mathbb R}^{2n})\oplus H^{3,\delta_0}_c(C_a,{\mathbb R}^{2n})$ with the  sc-structure
$$
G^a_m= H^{3+m}(Z_a,{\mathbb R}^{2n})\oplus H^{m+3,\delta_m}_c(C_a,{\mathbb R}^{2n})
$$
where $p\in H^{m+3,\delta_m}_c(C_a,{\mathbb R}^{2n})$ possesses antipodal contants 
$p_{\infty}=\lim_{s\to \infty}p(s, t)=-\lim_{s\to -\infty}p(s, t)$ and we associate with $p$ the map 
$\wh{p}:C_a\to \R^{2n}$ defined by 
$$
\wh{p}([s,t])=p([s, t])-[1-2\beta_a(s)]\cdot p_{\infty}.
$$

\noindent {We recall that  the function $\beta_a$ is defined as $\beta_a(s)=\beta (s-R/2)$ with the function $\beta$ as  in Section \ref{gluinganti-sect}}. Then $\wh{p}$ has vanishing asymptotic limits and we define the norm on level $m$ of $(q, p)\in G^a_m$ by 
$$
\abs{(q,p)}^2_{G^a_m}:=\abs{[q]_a-p_\infty }^2+e^{\delta_mR}\bigl( \nr q-[q]_a+p_{\infty}\nr^2_{m+2,-\delta_m} +\nr \wh{p}\nr^2_{m+2,\delta_m}\bigr)
$$
where the mean value $[q]_a$ is the number 
$$
[q]_a=\int_{S^1} q\bigl(\bigl[\frac{R}{2},t\bigr]\bigr)\ dt, 
$$
and where $\wh{p} =p-(1-2\beta_a)\cdot p_\infty$.
For the total gluing map   $\boxdot_a(\eta^+,\eta^-):=\bigl(\oplus_a (\eta^+,\eta^-),\ominus_a (\eta^+,\eta^-)\bigr)\in E$, the following estimate is proved in  \cite{HWZ8.7},  Theorem 2.23.
\begin{proposition}[{\bf Total Gluing Estimate}]\label{total-gluing}
For every $m\geq 0$ there exists a constant $C_m>0$ independent of  the gluing parameter $a$ such that
$$
\frac{1}{C_m}\cdot |(\eta^+,\eta^-)|_{E_m}\leq |\boxdot_a(\eta^+,\eta^-)|_{G^a_m}\leq C_m\cdot |(\eta^+,\eta^-)|_{E_m}
$$
for $|a|<\frac{1}{2}$ and all $(\eta^+, \eta^-)\in E_m$.\index{total gluing estimates}
\end{proposition}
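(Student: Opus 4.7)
The plan is to reduce the Total Gluing Estimate to the Total Hat-Gluing Estimate of Proposition \ref{HAT_HAT} by separating off the common asymptotic constant. The $G^a_m$-norm has been designed so that, after total gluing, the constant component of $(\eta^+,\eta^-)$ and its decaying component contribute additively and decouple cleanly from one another.

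First I would write $\eta^\pm = c + r^\pm$ with $c$ the common asymptotic limit and $r^\pm \in H^{m+3,\delta_m}(\R^\pm\times S^1,\R^{2n})$, so that by definition of the $E_m$-norm
$$\abs{(\eta^+,\eta^-)}^2_{E_m} = \abs{c}^2 + \norm{r^+}^2_{m+3,\delta_m} + \norm{r^-}^2_{m+3,\delta_m}.$$
Since $\oplus_a$ is built from the partition $\beta_a,\,1-\beta_a$, one has $\oplus_a(c,c)=c$, hence $q:=\oplus_a(\eta^+,\eta^-) = c + \wh{\oplus}_a(r^+,r^-)$. For the anti-gluing, using $\av_a(\eta^+,\eta^-) = c + \av_a(r^+,r^-)$ together with $\eta^\pm-\av_a(\eta^+,\eta^-) = r^\pm-\av_a(r^+,r^-)$, a short algebraic computation gives
$$p := \ominus_a(\eta^+,\eta^-) \;=\; \wh{\ominus}_a(r^+,r^-) + (1-2\beta_a)\,\av_a(r^+,r^-).$$
Since $\wh{\ominus}_a(r^+,r^-)$ vanishes at $\pm\infty$, the asymptotic constants of $p$ satisfy $p_\infty = \av_a(r^+,r^-) = -p_{-\infty}$, confirming $p\in H^{m+3,\delta_m}_c(C_a)$. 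Evaluating $q$ on the circle $\{[R/2,t]\}$, where $\beta_a(R/2)=\tfrac12$, and integrating in $t$ yields $[q]_a = c+\av_a(r^+,r^-)$, so the averages cancel to leave
$$[q]_a - p_\infty = c,\qquad q-[q]_a+p_\infty = \wh{\oplus}_a(r^+,r^-),\qquad \wh{p} = \wh{\ominus}_a(r^+,r^-).$$

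Substituting these identities into the definition of the $G^a_m$-norm, the weighted Sobolev pieces reassemble precisely into the hat-gluing norm, yielding the clean decoupling
$$\abs{\boxdot_a(\eta^+,\eta^-)}^2_{G^a_m} \;=\; \abs{c}^2 + \abs{\wh{\boxdot}_a(r^+,r^-)}^2_{\wh{G}^a_m},$$
where the hat-gluing norm on the right is at the regularity matching $G^a_m$. I would then apply Proposition \ref{HAT_HAT}, or rather its direct analogue at regularity $(m+3,\delta_m)$ obtained by the same arguments in \cite{HWZ8.7} with one further differentiation (the weights $\delta_m$ still lying in $(0,2\pi)$), to obtain a two-sided bound of $\abs{\wh{\boxdot}_a(r^+,r^-)}^2_{\wh{G}^a_m}$ by $\bigl(\norm{r^+}^2_{m+3,\delta_m}+\norm{r^-}^2_{m+3,\delta_m}\bigr)$ with constants independent of $a$. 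Combining with the trivial identity $\abs{c}^2=\abs{c}^2$ and the $E_m$-decomposition above yields the claimed estimate, at the cost of slightly enlarging $C_m$.

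The main obstacle, algebraic rather than analytic, is verifying the decoupling formula in the second step: one must carefully track the mean value $[q]_a$, the antipodal asymptotic constants $\pm p_\infty$ of $p$, and the correction term $(1-2\beta_a)\av_a(r^+,r^-)$ appearing in $\ominus_a$. The definition of the $G^a_m$-norm has been engineered precisely so that this correction is absorbed when one passes to $\wh{p}$. A minor technical matter is that Proposition \ref{HAT_HAT} is stated at regularity $(m+2,\delta_m)$, and one needs uniform-in-$a$ constants at the higher regularity $(m+3,\delta_m)$; this follows from the same arguments of \cite{HWZ8.7}, the crucial inputs being $\delta_m\in(0,2\pi)$ and the weight factor $e^{\delta_m R}$ built into both norms.
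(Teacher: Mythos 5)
The paper itself does not prove this proposition: both it and Proposition \ref{HAT_HAT} are quoted from \cite{HWZ8.7} (Theorems 2.23 and 2.26 respectively). Your proposal instead \emph{derives} the total gluing estimate from the total hat-gluing estimate, and the reduction is correct. The algebra checks out: with $\eta^\pm=c+r^\pm$ one has $\oplus_a(c,c)=c$, hence $q=c+\wh{\oplus}_a(r^+,r^-)$; since the bracketed terms in the definition of $\ominus_a$ only see $\eta^\pm-\av_a(\eta^+,\eta^-)=r^\pm-\av_a(r^+,r^-)$, one gets $p=\wh{\ominus}_a(r^+,r^-)+(1-2\beta_a)\av_a(r^+,r^-)$; and since $\beta_a(R/2)=\beta(0)=\tfrac12$ the mean value is $[q]_a=c+\av_a(r^+,r^-)$, whence $[q]_a-p_\infty=c$, $q-[q]_a+p_\infty=\wh{\oplus}_a(r^+,r^-)$ and $\wh{p}=\wh{\ominus}_a(r^+,r^-)$, so the $G^a_m$-norm splits exactly as you claim and the two-sided bound follows from the hat-gluing estimate applied to $(r^+,r^-)$ together with the $E_m$-decomposition. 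Two caveats, both of which you essentially flag: (i) the version of Proposition \ref{HAT_HAT} you need is at regularity $(m+3,\delta_m)$ — one more derivative, same weight — than the stated $(m+2,\delta_m)$; this is legitimate, since the proof in \cite{HWZ8.7} decouples the derivative count from the weight $\delta\in(0,2\pi)$, but it should be said explicitly that one is invoking that more general statement rather than the proposition as printed; (ii) the exponent $m+2$ in the paper's displayed formula for the $G^a_m$-norm must be read as $m+3$ to match the regularity of $G^a_m=H^{3+m}(Z_a,\R^{2n})\oplus H^{3+m,\delta_m}_c(C_a,\R^{2n})$ (otherwise the lower bound against $\abs{\cdot}_{E_m}$, which controls $m+3$ derivatives, could not possibly hold), and you correctly work at that index. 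The case $a=0$ is trivial since $\boxdot_0=\id$ and $G^0=E\oplus\{0\}$. What your route buys over the paper's treatment is a self-contained deduction of one of the two cited estimates from the other, making transparent that the only genuinely analytic input is the hat-gluing estimate for pairs with vanishing asymptotic constants.
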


Finally  we recall some results about linear Cauchy-Riemann type operators, which   follow easily from
the usual methods for linear elliptic operators on domains with cylindrical ends as presented in  \cite{Lock},  combined with the well-known index calculation
for the Cauchy-Riemann operator on the sphere.
We present  an abbreviated version of the  Appendix 4.4 in  \cite{HWZ8.7}.
Given two copies of a bi-infinite cylinder $\R\times S^1$, say $\Sigma^+$ and $\Sigma^-$, we can construct, given a nonzero gluing parameter $a$, 
the finite cylinder $Z_a$ and two infinite cylinders $C_a$ and $Z_a^\ast$ as illustrated   in Figure \ref{Fig12}. The two cylinders $\Sigma^\pm$ have parts which are 
shaded in different ways in order  to identify the differences between $C_a$ and $Z_a^\ast$.

We denote by $H_c^{3,\delta_0}(C_a,{\mathbb R}^{2n})$ the usual sc-Hilbert space with antipodal asymptotic constants, where
the level $m$ is given by regularity $(m+3,\delta_m)$. As usual the sequence $(\delta_m)$ belongs to $(0,2\pi)$. {In the next proposition $J(0)$ is  a constant almost complex structure on $\R^{2n}$.}

\begin{proposition}\label{c-iso}
 The Cauchy-Riemann operator
$$
\ov{\partial}_0:=\partial_s+J(0)\partial_t: H^{3,\delta_0}_c(C_a,{\mathbb R}^{2n})\rightarrow H^{2,\delta_0}(C_a,{\mathbb R}^{2n})
$$
is an sc-isomorphism,  and for every $m\geq 0$ there exists a constant $C_m>0$ which is independent of $a$,  such that
$$
\frac{1}{C_m}\cdot |\xi|_{H^{3+m,\delta_m}_c}\leq |\bar{\partial}_0\xi|_{H^{2+m,\delta_m}}\leq C_m\cdot |\xi|_{H^{3+m,\delta_m}_c}
$$
for all $\xi\in H^{3+m,\delta_m}_c(C_a, \R^{2n})$.
\end{proposition}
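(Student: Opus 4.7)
The plan is to reduce the problem on $C_a$ to a single $a$-independent, translation-invariant problem on the bi-infinite cylinder $\R \times S^1$ by means of the shift $\tilde{s} = s - R/2$, where $R = \varphi(\abs{a})$ is the gluing length. Under this shift, every object in the statement — the weight, the function spaces, and the operator $\bar\partial_0$ itself — becomes independent of $a$, so the uniform estimate of Proposition~\ref{c-iso} reduces to a single classical estimate on $\R \times S^1$.

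Concretely, the cylinder $C_a$ is diffeomorphic to $\R \times S^1$ via the global coordinate $(s,t)$, and the weight $e^{\delta_m\abs{s - R/2}}$ appearing in the norms on $H^{3+m,\delta_m}_c(C_a)$ and $H^{2+m,\delta_m}(C_a)$ is symmetric about the midpoint $s=R/2$; under $\tilde{s}=s-R/2$ it becomes the standard weight $e^{\delta_m\abs{\tilde{s}}}$. Likewise $\beta_a(s)=\beta(s-R/2)$ becomes just $\beta(\tilde{s})$, and the operator $\bar\partial_0=\partial_s+J(q_{\{x,y\}})\partial_t$ is translation invariant. Decomposing $\xi\in H^{3+m,\delta_m}_c(C_a)$ as
$$
\xi = (1-2\beta_a)\,c + \xi_0,
$$
with $c\in\R^{2n}$ the asymptotic constant and $\xi_0\in H^{3+m,\delta_m}(C_a)$ of vanishing asymptotic limit, and equipping this decomposition with the natural norm $\abs{c}^2+\norm{\xi_0}_{H^{3+m,\delta_m}(C_a)}^2$, the shift gives an isometric identification with the fixed space $\R^{2n}\oplus H^{3+m,\delta_m}(\R\times S^1,\R^{2n})$, independent of $a$. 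The target identifies isometrically with $H^{2+m,\delta_m}(\R\times S^1,\R^{2n})$.

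It therefore suffices to show that the operator
$$
\bar\partial_0 : H^{3+m,\delta_m}_c(\R\times S^1,\R^{2n}) \to H^{2+m,\delta_m}(\R\times S^1,\R^{2n})
$$
is a topological isomorphism with norm and inverse norm depending only on $m$. In the decomposition above one computes $\bar\partial_0\xi=-2\beta'(\tilde s)\,c+\bar\partial_0\xi_0$, the first summand compactly supported. Classical Lockhart--McOwen theory on the bi-infinite cylinder (see \cite{Lock} and the expositions in \cite{Floer1,Floer2,MSCH}) gives that the restricted operator $\bar\partial_0:H^{3+m,\delta_m}(\R\times S^1)\to H^{2+m,\delta_m}(\R\times S^1)$ is Fredholm of real index $-2n$ for $\delta_m\in(0,2\pi)$ (no nonzero indicial root has modulus $\leq\delta_m$), has trivial kernel (a holomorphic function on $\R\times S^1$ with two-sided exponential decay vanishes), and has a $2n$-dimensional cokernel spanned by the constants. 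The pairing
$$
\int_{\R\times S^1}\bigl(-2\beta'(\tilde s)\,c\bigr)\cdot c'\,d\tilde s\,dt = -2\,c\cdot c'\int_\R\beta'(\tilde s)\,d\tilde s = 2\,c\cdot c'
$$
is nondegenerate, so the constant-extension map $c\mapsto -2\beta'\,c$ is a direct complement of the image on the decaying subspace, making the enlarged operator surjective. Its kernel is still trivial because an antipodal asymptotic constant of a holomorphic function would have to equal its own negative.

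The main obstacle in carrying out this program is the verification that the natural norm on $H^{3+m,\delta_m}_c(C_a)$ — given by the above decomposition with weight and cut-off $\beta_a$ centered at $s=R/2$ — produces an \emph{exact} isometry with the fixed model $\R^{2n}\oplus H^{3+m,\delta_m}(\R\times S^1)$, so that the resulting constants are genuinely independent of $a$. This symmetric centering of the weight at the midpoint of the neck is the design principle behind the definitions of Section~\ref{gluinganti-sect} and Proposition~\ref{HAT_HAT}; once verified, uniformity in $a$ is automatic, and sc-smoothness of the inverse follows because a sc-linear map with uniform level-wise bounds on its inverse is sc-smooth.
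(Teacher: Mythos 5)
Your proposal is correct and follows the same route the paper itself takes for the companion result: the paper states Proposition \ref{c-iso} without proof (deferring to Appendix 4.4 of \cite{HWZ8.7}), but its proof of the parallel Proposition \ref{POLTER} in Appendix \ref{POLTERGG} uses exactly your shift $\tilde s = s-\tfrac{R}{2}$ to obtain an isometric, $a$-independent model on $\R\times S^1$ and then quotes the standard weighted Fredholm theory together with the open mapping theorem. Your additional bookkeeping for the antipodal-constant summand — the decomposition $\xi=(1-2\beta_a)c+\xi_0$, the computation $\int_\R\beta'=-1$ showing $c\mapsto -2\beta' c$ complements the range on the decaying subspace, and the triviality of the kernel — is accurate and correctly supplies what the $C_a$-case needs beyond the $Z_a^\ast$-case.
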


\begin{figure}[!htb]
\psfrag {a}{\small{$\Sigma^+$}}
\psfrag {b}{$\Sigma^-$}
\psfrag {c}{$Z^\ast_a$}
\psfrag {d}{$C_a$}
\psfrag  {e}{$Z_a$}
\psfrag  {f}{$Z_a$}
\centering
\includegraphics[width=3.8in]{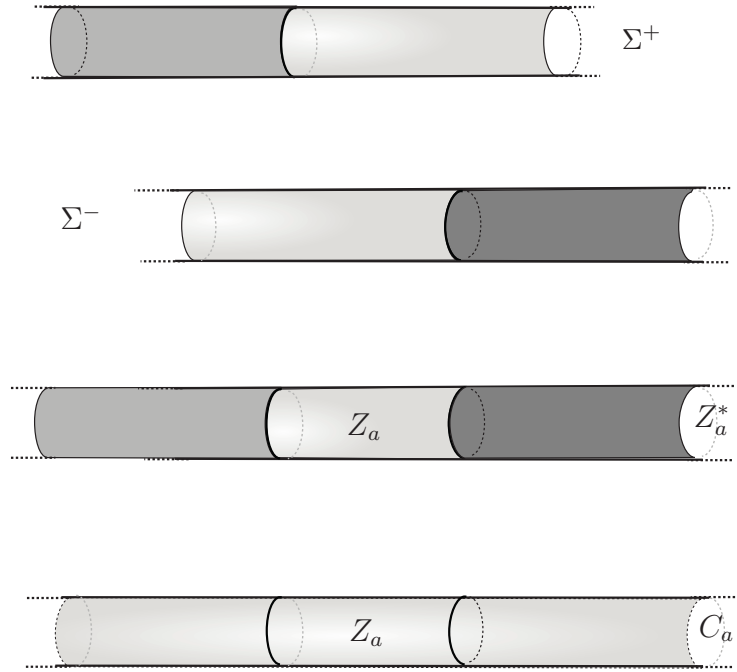}
\caption{The extended cylinder $Z_a^\ast$.}
\label{Fig12}
\end{figure}

Recall that the Hilbert spaces  $H^{3+m,-\delta_m}(Z_a^\ast,\R^{2n} )$ for $m\geq 0$ consist of maps $u:Z_a^\ast\rightarrow\R^{2n}$ for which the associated maps $u:{\mathbb R}\times S^1\rightarrow \R^{2n} $, defined by $ (s,t)\rightarrow u([s,t])$ have partial derivatives up to order $3+m$ which, if  weighted by $e^{-\delta_m|s-\frac{R}{2}|}$ belong to the space $L^2(\R\times S^1)$. 
 The norm of $u\in H^{3+m,-\delta_m}(Z_a^\ast, \R^{2n} )$ is defined  as 
 $$\nr u\nr_{3+m,-\delta_m}^{\ast}=\bigl( \sum_{\abs{\alpha}\leq 3+m }\int_{\R\times S^1}\abs{D^\alpha u}^2e^{-2\delta_m\abs{s-\frac{R}{2}}}\ dsdt\bigr)^{1/2} .$$
 The spaces $H^{2+m,-\delta_m}(Z_a^\ast,\R^{2n})$  and the norms $\nr u\nr_{2+m,-\delta_m}^{\ast}$ are defined analogously.
We denote  by $[u]_a$ the average of a map $u:{\mathbb  R}\times S^1\rightarrow {\mathbb R}^2$ over the  circle at $[\frac{R}{2},t]$,   defined by
$$
[u]_a:=\int_{S^1} u\Bigl(\Bigl[\frac{R}{2},t\Bigr]\Bigr) dt, 
$$
where the number  $R$ is equal to $R =\varphi (\abs{a})$  and $\varphi$ is the exponential   gluing profile.

\begin{proposition}\label{POLTER}
The Cauchy-Riemann operator
$$
\ov{\partial}_0:H^{3+m,-\delta_m}(Z_a^\ast,{\mathbb R}^{2n})\rightarrow H^{2+m,-\delta_m}(Z_a^\ast,{\mathbb R}^{2n})
$$
is a surjective Fredholm operator whose kernel  is of real dimension $2n$. (The kernel consists of the constant functions.) For every $m\geq 0$ there exists a constant $C_m>0$ independent of $a$, such that
\begin{equation}\label{poltereq1}
\frac{1}{C_m}\cdot \nr u-[u]_a\nr^\ast_{m+3,-\delta_m}\leq \nr\bar{\partial}_0(u-[u]_a)\nr^\ast_{m+2,-\delta_m}\leq C_m\cdot \nr u-[u]_a\nr^{\ast}_{m+3,-\delta_m}
\end{equation}
for all $u\in H^{3+m,-\delta_m}(Z_a^\ast,{\mathbb R}^{2n})$.
\end{proposition}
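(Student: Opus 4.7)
The strategy is to reduce everything to a single, $a$-independent model cylinder by exploiting the conformal structure of $Z_a^\ast$. Conformally, $Z_a^\ast$ is biholomorphic to the standard cylinder $\mathbb{R}\times S^1$: this is clear from the fact that the equivalence classes $[s,t]$ on $Z_a^\ast$ are in bijection with points of $\mathbb{R}\times S^1$ under $[s,t]\mapsto (s,t)$. Consequently the affine shift $\tau_R:\mathbb{R}\times S^1\to Z_a^\ast$, $(s,t)\mapsto [s+R/2,t]$ is biholomorphic, intertwines $\bar\partial_0$ with the standard $\bar\partial_0$ on $\mathbb{R}\times S^1$, and converts the weight $e^{-\delta_m|s-R/2|}$ into $e^{-\delta_m|s|}$. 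Pull-back by $\tau_R$ therefore yields $\mathbb{R}$-linear isometries
\[
\tau_R^\ast: H^{k,-\delta_m}(Z_a^\ast,\mathbb{R}^{2n})\longrightarrow H^{k,-\delta_m}(\mathbb{R}\times S^1,\mathbb{R}^{2n})
\]
for all $k\geq 0$, which are norm-preserving and commute with $\bar\partial_0$. Moreover $[u]_a$ corresponds under $\tau_R^\ast$ to the mean value of $\tau_R^\ast u$ over $\{0\}\times S^1$. Thus it suffices to prove the analogous statement on the fixed cylinder $\mathbb{R}\times S^1$ with the fixed weight $e^{-\delta_m|s|}$; since this model is $R$-independent, any constant obtained on it is automatically $a$-independent.

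Next I would invoke the classical theory of the Cauchy--Riemann operator on cylinders with cylindrical ends of Lockhart--McOwen type, in the version that has become standard in Floer theory (see e.g.\ the references in \cite{Lock, MS}). The asymptotic operator as $s\to\pm\infty$ is $-J_0\partial_t$ on $L^2(S^1,\mathbb{R}^{2n})$, whose spectrum is $2\pi\mathbb{Z}$ with the kernel (constants) sitting at the eigenvalue $0$ and having real dimension $2n$, the next eigenvalues being $\pm 2\pi$. Since $0<\delta_m<2\pi$, no nonzero eigenvalue is crossed when shifting by $\pm\delta_m$, so the weighted operator
\[
\bar\partial_0:H^{m+3,-\delta_m}(\mathbb{R}\times S^1,\mathbb{R}^{2n})\longrightarrow H^{m+2,-\delta_m}(\mathbb{R}\times S^1,\mathbb{R}^{2n})
\]
is Fredholm. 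A Fourier series calculation shows that the kernel consists precisely of the constants (any solution is of the form $\sum_{k\in\mathbb{Z}}c_k e^{2\pi k(s+it)}$, and the negative weight admits only the $k=0$ term), giving real dimension $2n$. The index is $2n$ by the Lockhart--McOwen formula (equivalently by duality with the positive-weight case, whose index is $-2n$), hence the operator is surjective.

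The two-sided estimate in \eqref{poltereq1} then follows from the open mapping theorem applied on the topological complement of the constants. More precisely, let $P u=u-[u]_a$; then $P$ is a continuous projection onto a closed complement of $\ker\bar\partial_0$ in $H^{m+3,-\delta_m}$, and the restriction
\[
\bar\partial_0\big|_{\operatorname{Im}P}:\operatorname{Im}P\longrightarrow H^{m+2,-\delta_m}(\mathbb{R}\times S^1,\mathbb{R}^{2n})
\]
is a continuous bijection, hence an isomorphism by the open mapping theorem. This gives both inequalities in \eqref{poltereq1} on the model cylinder, with a constant $C_m$ depending only on $m$ and $\delta_m$. Pulling back through $\tau_R^\ast$, which is an isometry, yields the same inequality on $Z_a^\ast$ with a constant independent of $a$, as required.

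The main conceptual point, and the only place where care is needed, is the $a$-independence of $C_m$. No separate analytic work is necessary here because the conformal identification $\tau_R$ is an exact isometry of the relevant weighted spaces; all of the $a$-dependence that could have appeared is absorbed into the parametrization and disappears on the model cylinder. This is the reason the negative weight is used (it places $u$ and the constants in the same space, so the mean value $[u]_a$ makes sense as an element of the space itself) and why the cutoff $0<\delta_m<2\pi$ appears (to avoid crossing nontrivial eigenvalues of the asymptotic operator).
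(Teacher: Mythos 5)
Your proof is correct and follows essentially the same route as the paper: reduce to the fixed model cylinder $\R\times S^1$ via the isometric shift $(s,t)\mapsto(s+R/2,t)$ (which converts $[u]_a$ into $[v]_0$ and makes the constant automatically $a$-independent), and then apply the open mapping theorem on the topological complement of the constants. The only difference is that you spell out the Lockhart--McOwen/Fourier argument for why the weighted operator on the model cylinder is surjective Fredholm with kernel the constants, whereas the paper simply cites this as known.
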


A proof is presented  in Appendix \ref{POLTERGG}.

\section{Regularization and Sc-Smoothness of  $\ov{\partial}_J$}\label{sec5.2}
We start with the regularity for the Cauchy-Riemann section $\ov{\partial}_J$ of the polyfold bundle $p:W\to Z$.
 { In Section \ref{natural_topology_Z_section} we have introduced the pair $(X, \abs{p})$ which defines a polyfold structure on the space  $Z$ of stable curves. Here $X$ is an ep-groupoid and $\abs{p}:\abs{X}\to Z$ is the
 homeomorphism induced from the canonical map $p:X\to Z$.  The orbit space $\abs{X}$ is equipped with the filtration $\abs{X_0}=\abs{X}\supset \abs{X_1}\supset \abs{X_2}\supset \cdots \supset \abs{X_\infty}=\bigcap_{i\geq 0}\abs{X_i}$ where $\abs{X_\infty}$ is dense in every $\abs{X_i}$. This filtration induces via the homeomorphism $\abs{p}:\abs{X}\to Z$ the filtration 
 $Z=:Z_0\supset Z_1\supset  Z_2\supset \cdots \supset  Z_\infty=\bigcap_{i\geq 0}Z_i.$ Similarly, in Section \ref{polstrbundle}  we have defined the polyfold structure $(P:E\to X, \abs{\Gamma}, \abs{p})$ of the bundle $p:W\to Z$ where $P:E\to X$ is an strong bundle over the ep-groupoid $X$ and $\abs{\Gamma}:\abs{E}\to W$ is a homeomorphism between the orbit space of $E$  and $W$. 
 The double filtration $\abs{E_{i, k} }$ for $i\geq 0$ and $0\leq k\leq i+1$, of the orbit space $\abs{E}$ induces via the homeomorphism $\abs{P}:\abs{E}\to W$ the double filtration $W_{i, k}$ for $i\geq 0$ and $0\leq k\leq i+1$.
 }

Now,  in view of the polyfold constructions, it suffices to prove the following theorem.
\begin{theorem}\label{thm-regularity1}
The section $\ov{\partial}_J:Z\to W$ has the following regularity property. If $\zeta=[(S, j, M, D, u)]\in Z_m$ satisfies $\ov{\partial}_J(\zeta)\in W_{m,m+1}$, then $\zeta\in Z_{m+1}.$
\end{theorem}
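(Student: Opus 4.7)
The statement is local on $S$, so I would split into two cases: interior points away from the nodal set $|D|$, and neighborhoods of the nodes. Away from $|D|$, the Cauchy--Riemann equation $\partial_s u + J(u)\partial_t u = 2\xi$ in isothermal coordinates is a standard quasilinear elliptic system, and the hypotheses $u\in H^{m+3}_{\rm loc}$ together with $\xi\in H^{m+3}_{\rm loc}$ (coming from the base and fiber assumptions) yield $u\in H^{m+4}_{\rm loc}$ by the classical difference-quotient argument, using the multiplication $H^{k}\cdot H^{k}\subset H^{k}$ on a two-dimensional domain for $k\geq 2$.

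Near a nodal point $x\in|D|$, fix positive holomorphic polar coordinates $h_x:[s_0,\infty)\times S^1\to D_x\setminus\{x\}$ and a smooth chart $\psi$ on $Q$ with $\psi(u(x))=0$. Setting $w=\psi\circ u\circ h_x$ and $J_0=J(0)$ (in the chart), the Cauchy--Riemann equation takes the form
\[
L_0 w \;=\; \zeta \;-\; \bigl[J(w)-J_0\bigr]\partial_t w, \qquad L_0=\partial_s+J_0\partial_t,
\]
where $\zeta$ is the coordinate expression of the fiber component of $\ov{\partial}_J(z)$. The base hypothesis $z\in Z_m$ gives $w\in H^{m+3,\delta_m}$ with $w\to 0$, and the bi-level fiber hypothesis $\ov{\partial}_J(z)\in W_{m,m+1}$ gives $\zeta\in H^{m+3,\delta_{m+1}}$. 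Since $J(w)-J_0=O(|w|)$ with matching Sobolev-weighted bounds, the weighted multiplication estimate (using $H^{2}\hookrightarrow C^{0}$ in dimension two, and the fact that weights add under pointwise multiplication) puts the nonlinear term in $H^{m+2,\,2\delta_m}$.

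At this point I would invoke the invertibility of the constant-coefficient operator $L_0$ on weighted Sobolev sections of the half-cylinder with vanishing asymptotic limit at every weight $\delta\in(0,2\pi)$, which follows from Proposition~\ref{c-iso} together with a standard cut-off/extension reduction to the infinite cylinder. This gives $w$ with the regularity of the right-hand side. If $2\delta_m\geq\delta_{m+1}$ a single application already yields $w\in H^{m+3,\delta_{m+1}}$; otherwise I iterate, each cycle roughly doubling the effective weight of the nonlinear term, and the iteration terminates in finitely many steps because $\delta_{m+1}<2\pi$ and $L_0$ is invertible on the whole scale. A final pass, in which $w\in H^{m+3,\delta_{m+1}}$ and $\zeta\in H^{m+3,\delta_{m+1}}$ now push the nonlinear term into $H^{m+2,\,2\delta_{m+1}}$ with $2\delta_{m+1}>\delta_{m+1}$, combined with the interior elliptic regularity used to upgrade the Sobolev order by one, gives $w\in H^{m+4,\delta_{m+1}}$. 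Together with the interior statement, this shows $z\in Z_{m+1}$.

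The main obstacle is the interleaved bootstrap: the weighted multiplication estimate costs one Sobolev derivative while adding weights, whereas the linear inversion of $L_0$ restores a derivative at a fixed weight. Arranging these two moves so that the Sobolev order and the exponential weight both reach their targets requires the translation invariance of $L_0$ in $s$ and uniform weighted estimates as $\delta$ varies in $(0,2\pi)$. This is the technical heart of the proof and is the analytical reason why the polyfold filtration $(m+3,\delta_m)$ is imposed with a strictly increasing sequence $(\delta_m)\subset(0,2\pi)$ in the first place.
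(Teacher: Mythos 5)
Your treatment of the interior points and your reduction to the equation $L_0w=\zeta-[J(w)-J_0]\partial_t w$ near a node match the paper's setup, and the weight-upgrading part of your bootstrap (capping all weights below $2\pi$ so that $L_0$ stays invertible) is workable, if more laborious than necessary. But there is a genuine gap in the final, derivative-gaining step at the puncture. The term $[J(w)-J_0]\partial_t w$ contains a first derivative of $w$, so whenever you place it on the right-hand side and invert the constant-coefficient operator $L_0$, the right-hand side is permanently one Sobolev order behind $w$: from $w\in H^{m+3,\delta}$ you can only conclude the nonlinear term lies in $H^{m+2,\cdot}$, and inverting $L_0$ returns you to $H^{m+3,\cdot}$. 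Your scheme therefore gains weight but can never gain the Sobolev derivative in the weighted space near $s=\infty$. The appeal to ``interior elliptic regularity'' only yields $H^{m+4}_{\mathrm{loc}}$ on compact subsets of the half-cylinder; it says nothing about membership in $H^{m+4,\delta_{m+1}}$ up to the puncture, which is exactly the content of the theorem.

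The paper resolves this by \emph{not} moving the quasilinear term to the right-hand side. Instead (Proposition \ref{lem-reg1} and Lemma \ref{sublemma}) it keeps $\gamma_R\cdot(J(v)-J(0))\partial_t$ as part of the linear operator, where $\gamma_R$ is a cut-off supported in $s\geq R$, and shows that for $R$ large this perturbation has arbitrarily small operator norm simultaneously as a map $H^{m+3,\delta_m}_c\to H^{m+2,\delta_m}$ and $H^{m+4,\delta_{m+1}}_c\to H^{m+3,\delta_{m+1}}$ (this uses the decay of $v$, not just its boundedness). Hence the perturbed operator is an isomorphism on \emph{both} scales. One then solves the equation with right-hand side $\gamma_{R+1}g$ in the high-regularity space, observes that $\gamma_{R+1}v$ solves the same equation in the low-regularity space up to a correction with better right-hand side, and uses uniqueness in the larger space to identify $\gamma_{R+1}v$ with the regular solution. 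This single uniqueness argument delivers both the extra derivative and the improved weight at once; some version of it (or an equivalent a priori estimate for the variable-coefficient operator on the $(m+4,\delta_{m+1})$ scale) is what your proof is missing.
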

\begin{proof}
We choose the representative $(S, j, M, D, u)$ of  the equivalence class 
$$\zeta=[(S, j, M, D, u)]\in Z_m.$$
 By assumption,  the map $u:S\to Q$  has regularity  $(m+3,\delta_m)$. By definition of $\ov{\partial}_J$, 
$$\ov{\partial}_{J}[(S, j, M, D, u)]=[(S, j, M, D, u,\ov{\partial}_{J, j}(u))]\in W_{m, m+1}.$$
Hence $\ov{\partial}_{J, j}(u)=w$ has fiber-regularity $(m+3, \delta_{m+1})$. 
The standard elliptic regularity theory implies that  away from the nodal points $u$ is of Sobolev class 
$H^{m+4}_{\loc}$.

In order to analyze the map $u$ near the nodes we choose a nodal point $x\in S$ and take  the holomorphic polar coordinates 
$ 
\sigma:[0,\infty)\times S^1\rightarrow S\setminus\{x\}
$ 
{belonging to the small disk structure (before denoted by $h_x$) and } satisfying $\sigma (s, t)\to x$ as $s\to \infty$. Moreover,  we choose on the manifold $Q$ a smooth chart 
$
\psi:U(u(x))\rightarrow {\mathbb R}^{2n}
$
mapping $u(x)\in Q$ to $0\in \R^{2n}$, and  obtain for the map $u$ in local coordinates the map $v:[0,\infty)\times S^1\to \R^{2n}$,  defined by 
$$
v(s,t)=\psi\circ u\circ \sigma(s,t).
$$
Then $v(s,t)\rightarrow 0$ as $s\rightarrow\infty$ and $v\in H^{m+3,\delta_m}(\R^+\times S^1, \R^{2n})$. In order to show that $v$ solves an elliptic equation we insert $z=\sigma (s, t)$ into the equation 
{
\begin{equation}\label{cr_0}
\ov{\partial }_{J,j}(u)(z)=\frac{1}{2}\bigl[ Tu(z)+J(u(z))\circ Tu(z)\circ j(z)\bigr]=:w(z)
\end{equation}
so that in local coordinates  the equation \eqref{cr_0} becomes
\begin{equation}\label{cr-2}
\begin{split}
\frac{1}{2}&T\psi (u(\sigma ))\bigl[Tu(\sigma)+J(u\circ \sigma)\circ Tu(\sigma)\circ j(\sigma)\bigr]\cdot T\sigma\\
&=T\psi (u(\sigma))\cdot w(\sigma )\cdot T\sigma =:\wh{w}.
\end{split}
\end{equation}
Using $T\psi (u(\sigma))\circ Tu(\sigma) \circ T\sigma=Tv$ and   $j(\sigma)\circ T\sigma=T\sigma \circ i$, the equation \eqref{cr-2} turns into 
\begin{equation}\label{cr-3}
\frac{1}{2}\bigl[Tv+\wh{J}(v)\circ Tv\circ i\bigr]=\wh{w}.
\end{equation}
Here  $\wh{J}$ is the  smooth almost complex structure on $\R^{2n}$ defined by 
$\wh{J}(y)=T\psi (\psi^{-1}(y))\circ J(\psi^{-1}(y))\circ T\psi (\psi^{-1}(y))^{-1}$ at $y\in \R^{2n}$ and $\wh{w}=T\psi (u(\sigma))\circ w (\sigma)\circ T\sigma$. 
Clearly,  $\wh{w}\circ i=-\wh{J}(v)\circ \wh{w}$,  so that 
$\wh{w}$  is complex anti-linear. }

Applying the  complex anti-linear maps in  \eqref{cr-3}  to the vector  $\frac{\partial}{\partial s}$, we obtain the following equation for  the map $v:[0,\infty)\times S^1\to \R^{2n}$,
$$\frac{1}{2}\bigl[v_s+\wh{J}(v)v_t\bigr]=g$$
where $g(s, t)=\wh{w}(s, t)\frac{\partial}{\partial s}.$ By assumption, $v\in H^{m+3,\delta_m}(\R^+\times S^1,\R^{2n})$ and
 $g\in H^{m+3,\delta_{m+1}}(\R^+\times S^1,\R^{2n})$ and we have to prove that $v$ belongs to the space $H^{m+4,\delta_{m+1}}(\R^+\times S^1,\R^{2n})$. This follows immediately from the following proposition which then concludes the proof of  Theorem \ref{thm-regularity1}.

\begin{proposition}\label{lem-reg1}
Assume that $v$ belongs to $H^{m+3,\delta_m}(\R^+\times S^1, \R^{2n})$ and
 $g$  belongs to  $H^{m+3,\delta_{m+1}}(\R^+\times S^1,\R^{2n})$.   Let $J$ be a smooth almost complex structure on $\R^{2n}$. If $v$ is a solution of the equation $v_s +J(v)v_t= g $
on $\R^+\times S^1$, then for every $\varepsilon>0$  the map $v$ belongs to the space $H^{m+4,\delta_{m+1}}((\varepsilon,\infty)\times S^1, \R^{2n})$.
\end{proposition}

\begin{proof}
The standard elliptic regularity theory and the fact that $g\in H^{m+3}_{\loc}$ around points in $(0,\infty)\times S^1$  imply that $v$ is of class $H^{m+4}_{\loc}$ around such points as well.  The main task is to verify the asymptotic properties of $v$ as $s\to \infty$.  We choose a  smooth cut off function
$
\gamma:\R\rightarrow [0,1]
$
satisfying $\gamma(s)=0$ for $s\leq 1$, $\gamma(s)=1$ for $s\geq 2$, and $\gamma '\geq 0$, and introduce the shifted functions $\gamma_R:\R\to [0,1]$ defined by  
$
\gamma_R(s):=\gamma(s-R)
$
for $R\geq 0$. 
The operator
$$
h\mapsto  \gamma_R\cdot (J(v)-J(0))h_t,
$$
induces two $R$-dependent  linear operators between the following  spaces
$$
H^{m+3,\delta_m}_c(\R\times S^1,\R^{2n})\rightarrow H^{m+2,\delta_m}(\R\times S^1,\R^{2n})
$$
and
$$
H^{m+4,\delta_{m+1}}_c(\R\times S^1,\R^{2n})\rightarrow H^{m+3,\delta_{m+1}}(\R\times S^1,\R^{2n}), 
$$
where the functions in $H^{m+4,\delta_{m+1}}_c$ have antipodal asymptotic limits as $s\to \pm\infty$. 
We estimate these operators as $R\rightarrow \infty$ as follows. 
\begin{lemma}\label{TORTE}
If $\varepsilon>0$,  there exists $R_\varepsilon>0$ such that for all $R\geq R_\varepsilon$ and for $i=0,1$ the estimate
$$
\norm{ \gamma_R\cdot (J(v)-J(0))h_t}_{H^{m+2+i,\delta_{m+i}}(\R\times S^1,\R^{2n})}\leq \varepsilon \cdot\norm{h}_{H^{m+3+i,\delta_{m+i}}_c(\R\times S^1,\R^{2n})}
$$
holds for  all $h\in  H^{m+3+i,\delta_{m+i}}_c(\R\times S^1,\R^{2n})$.
\end{lemma}
The proof  is elementary,  but somewhat lengthy, and is given in the Appendix \ref{Lemma-4.18}

In view of  Lemma \ref{TORTE}  and Proposition \ref{c-iso}, 
the linear operators
\begin{equation*}
\begin{gathered}
H^{m+3+i,\delta_{m+i}}_c(\R\times S^1, \R^{2n})\rightarrow H^{m+2+i,\delta_{m+i}}(\R\times S^1, \R^{2n})\\
h\mapsto h_s+J(0)h_t +\gamma_R\cdot (J(v)-J(0))h_t
\end{gathered}
\end{equation*}
are topological linear isomorphisms  for $i=0$ and  for $i=1$ if $R$ is  sufficiently large. 

From  $g\in H^{m+3,\delta_{m+1}}(\R^+\times S^1, \R^{2n})$ and $\gamma_{R+1}(s)=0$ for $s\leq R+2$,   we conclude  that $\gamma_{R+1}g \in H^{m+3,\delta_{m+1}}(\R\times S^1, \R^{2n})$. Hence there exists a unique solution $w\in H^{m+4,\delta_{m+1}}_c(\R\times S^1, \R^{2n})$ of the equation
\begin{equation}\label{klap1}
w_s+J(0)w_t +\gamma_R \cdot (J(v)-J(0)) w_t =\gamma_{R+1}g.
\end{equation}
Because $\delta_m<\delta_{m+1}$, the maps $\gamma_{R+1}g$ and $w$ belong to the spaces $H^{m+2,\delta_{m}}(\R\times S^1, \R^{2n})$ and  $H^{m+3,\delta_m}_c(\R\times S^1,\R^{2n})$, respectively.  Consequently, the map  $w$ is also the unique solution  of \eqref{klap1} belonging to $H^{m+3,\delta_m}_c(\R\times S^1, \R^{2n})$.   

We multiply the equation
$$
v_s+J(v)v_t=g
$$
by $\gamma_{R+1}$ and obtain  for the product $\gamma_{R+1}v$ the equation
\begin{equation}\label{klap2}
\begin{split}
(\gamma_{R+1} v)_s& +J(0)(\gamma_{R+1}v)_t +\gamma_{R}\cdot(J(v)-J(0))(\gamma_{R+1}v)_t \\
&=\gamma_{R+1}'v +\gamma_{R+1}g.
\end{split}
\end{equation}
In view  of \eqref{klap1} and \eqref{klap2}, the map $u:=w-\gamma_{R+1}v$,  solves  the equation
\begin{equation}\label{reg1}
u_s+J(0)u_t +\gamma_{R}\cdot (J(v)-J(0))u_t = -\gamma_{R+1}'v.
\end{equation}
By the assumption of Proposition \ref{lem-reg1}, $v\in H^{m+3,\delta_m}(\R^+\times S^1, \R^{2n})$. Since $\gamma_{R+1}'(s)=0$ for $s\leq R+2$ and $s\geq R+3$, the right-hand side of the  equation  \eqref{reg1} belongs  therefore to $H^{m+3,\delta_{m+1}}(\R\times S^1,\R^{2n})$.   Consequently, in view of Lemma \ref{TORTE} and Proposition  \ref{c-iso}, 
there exists a unique map $\ov{u}\in H^{m+4,\delta_{m+1}}_c (\R\times S^1,\R^{2n})$ solving \eqref{reg1}.  Moreover, the map $u$ belongs, by construction, to the space 
$H_c^{m+3,\delta_{m}}(\R\times S^1,\R^{2n})$. 
The difference $u-\ov{u}$ is an element of the space $H^{m+3,\delta_m}_c (\R\times S^1, \R^{2n})$ and belongs to the kernel of
our linear isomorphism 
$H^{m+3,\delta_m}_c(\R\times S^1, \R^{2n})\rightarrow H^{m+2,\delta_m}(\R\times S^1, \R^{2n})$. Hence, 
$$
0=u-\ov{u} = (w-\gamma_{R+1}v) -\ov{u}
$$
from which we conclude that 
$$
\gamma_{R+1}v= w-\ov{u}\in H^{m+4,\delta_{m+1}}_c (\R\times S^1,\R^{2n}).
$$
In view of definition of the function $\gamma_R$ and the fact that $v$ is in $H^{m+4}_{\loc}$, we conclude that $v\in H^{m+4,\delta_{m+1}}(\R^+\times S^1,\R^{2n}).$  The proof of Proposition  \ref{lem-reg1} is complete.
\end{proof}
With Proposition \ref{lem-reg1} the proof of  Theorem \ref{thm-regularity1}  is complete.
\end{proof}

Next   we shall  prove the {\bf  sc-smoothness}  of the Cauchy-Riemann  section $f=\ov{\partial}_J$ of the strong polyfold bundle $p:Z\to W$. For this purpose we have to analyze the section in the local coordinates of the model $P:E\rightarrow X$  which is a strong bundle over the ep-groupoid $X$. We have constructed the object set $X$ as the disjoint union of graphs $\cg$ of good uniformizing families 
$$
(a,v,\eta)\rightarrow \alpha_{(a, v,\eta)}\equiv  (S_a,j(a,v),M_a,D_a,\oplus_a(\exp_u(\eta))
$$
where  $(a,v,\eta)\in {\mathcal O}$ defines the sc-structure on $\cg$.  The bundle $E$  over the $M$-polyfold $\cg$ is the associated graph 
$\wh{\cg}$ of the family 
$$
(a,v,\eta,\xi)\rightarrow \wh{\alpha}_{(a, v, \eta, \xi)}\equiv (S_a,j(a,v),M_a,D_a,\oplus_a(\exp_u(\eta)),
\Xi(a,v,\eta,\xi)).
$$
The section $F$  over the bundle $\wh{\cg}\to \cg$ is defined by 
$$
F((a,v,\eta),\alpha_{(a,v,\eta)})=
((a,v,\eta,\xi),\wh{\alpha}_{(a,v,\eta,\xi)}).
$$
where  $\xi$ is the unique solution of the equations
\begin{equation}\label{cauchy_riemann_eq_1}
\begin{aligned}
\Gamma  [ \oplus_a\exp_u (\eta),  \oplus_au]\circ \wh{\oplus}_a(\xi )\circ \delta (a, v)
&=\ov{\partial}_{J,j(a,v)}(\oplus_a(\exp_u(\eta)))\\
\wh{\ominus}_a(\xi)&=0
\end{aligned}
\end{equation}
so that $(a,v,\eta,\xi)\in K^{\mathcal R}$. In view of its functoriality properties, the section $F$ represents the Cauchy-Riemann section
$\ov{\partial}_J$ of the strong polyfold bundle $p$. In view of the sc-smooth structures on $\cg$ and $\wh{\cg}$, we have to establish 
 the sc-smoothness of the map
$$
(a,v,\eta)\mapsto \xi.
$$
In the following we may assume that the gluing parameter $a$ is sufficiently small. Since our constructions are local in the sense that the values of $\xi$ near a point $z_0\in S$ only depend on the other data evaluated near $z_0$ we can take a smooth partition of unity on $S$ and split the sc-smoothness into an analysis of $\xi$ in the nodal area and the analysis of $\xi$ in the core area. 

Fixing at first a point $z_0\in S$ which is not a nodal point, then for $\abs{a}$ small, the map $\xi$ near $z_0$ depends only on the properties of $\eta$ near $z_0$. In this case the map $(a, v,\eta)\to \xi$ near $z_0$ is level wise a smooth map in the classical sense. This implies that away from the nodal points, $\xi$ depends sc-smoothly on $(a, v, \eta)$.

 We next analyze  the complex anti-linear section  $\xi$ near the  nodal pair $\{x, y\}\in D$ in local coordinates. The data depend on $(a,v)$ and the properties of $\eta$ along the map $u:S\to Q$ near the nodal pair. 
 {In order to write the equation \eqref{cauchy_riemann_eq_1} for the complex anti-linear section $\xi$ in local coordinates we take  on $S$ the holomorphic coordinates $\sigma^+:\R^+\times S^1\to D_x$ and $\sigma^-:\R^-\times S^1\to D_y$  belonging to the small disk structure  and satisfying  $\sigma^+(s, t)\to x$ as $s\to \infty$ and $\sigma^-(s', t')\to y$ as $s'\to -\infty$. Moreover, we take the smooth chart $\psi:U\subset Q\to \R^{2n}$ around the point $u(x)=u(y)\in Q$ satisfying $\psi (u(x))=\psi (u(y))=0$. 
Then the map $u:S\to Q$ near  the nodal pair  $\{x, y\}$ is represented, in the local coordinates,  by the pair $(v^+,v^-)$ of  smooth maps defined as 
$$v^\pm (s, t)= \psi\circ u \circ \sigma^\pm(s,t):\R^\pm \times S^1\to \R^{2n}.$$
The vector field $\eta$ along $u$ is represented by the pair $(h^+, h^-)$ of vector fields along $v^\pm$ defined by 
$$h^\pm(s,t) = T\psi(u \circ\sigma^\pm(s,t))\circ \eta\circ\sigma^\pm(s,t).$$
We shall abbreviate the glued maps, respectively vector fields,  by 
 $$
 v_a=\oplus_a (v^+,v^-)\quad \text{and}\qquad h_a=\wh{\oplus }_a (h^+,h^-).
 $$ 
According to Section \ref{section-implant} the glued complex anti-linear map $\wh{\oplus}_a(\xi )$ is, in our local coordinates defined by 
$$T\psi (\oplus_a(u))\biggl(\wh{\oplus}_a(\xi) \cdot \frac{\partial}{\partial s}\biggr)=\wh{\oplus}_a(\xi^+, \xi^-)$$
where the pair $(\xi^+,\xi^-):(\R^+\times S^1)\times ( \R^-\times S^1)\to \R^{2n}$ is defined by 
\begin{align*}
\xi^+:=T\psi (u\circ \sigma^+)\cdot \xi (\sigma^+)\cdot \frac{\partial \sigma^+}{\partial s^{\phantom{+}}}\\
\xi^-:=T\psi (u\circ \sigma^-)\cdot \xi (\sigma^-)\cdot \frac{\partial \sigma^-}{\partial s^{\phantom{-}}}
\end{align*}
on the cylinders $\R^\pm\times S^1$.  For the following we abbreviate the vector field 
$$
\wh{\xi}=(\xi^+, \xi^-):(\R^+\times S^1)\coprod ( \R^-\times S^1)\to \R^{2n}.$$}

{We recall that the complex structure $j$ on $D_x\cup D_y$ does, by construction,  not  depend on the parameters $v$ and  $\delta (a, v)=1$. The polar coordinates are holomorphic and hence satisfy  
$$T\sigma^\pm\circ i=j\circ T\sigma^\pm.$$
For the exponential map with respect to the  Euclidean metric in $\R^{2n}$ we know from Proposition \ref{exp-formula1} that 
$$\oplus_a\exp_v(h)=\exp_{v_a}(h_a)=v_a+h_a.$$
Now recalling that the section $\ov{\partial }_{J, j}$ is defined by 
$$
\ov{\partial }_{J, j}(u)=\frac{1}{2}\bigl[ Tu+J(u)\circ Tu\circ j\bigr]
$$
we apply  the  two complex anti-linear maps in the equation \eqref{cauchy_riemann_eq_1} to the vector $\frac{\partial}{\partial s}$ and finally obtain the desired local equations 
\begin{equation}\label{gamma1}
\begin{aligned}
\wh{\Gamma}(v_a+h_a, v_a)\circ \wh{\oplus}_a(\wh{\xi})&=\frac{1}{2}\bigl[ \partial_s (v_a+h_a)+\wh{J}(v_a+h_a)\partial_t (v_a+h_a)\bigr]\\
\wh{\ominus}_a(\wh{\xi})&=0
\end{aligned}
\end{equation}
for the complex anti-linear section $\xi$, now represented by the pair of vector  fields $\wh{\xi}: (\R^+\times S^1)\coprod (\R^-\times S^1)\to \R^{2n}$. 
Here $\wh{J}$ is the smooth almost complex structure on $\R^{2n}$ defined by $\wh{J}(y)=T\psi (\psi^{-1}(y))\circ J(\psi^{-1}(y))\circ (T\psi (\psi^{-1}(y)))^{-1}$ at $y\in \R^{2n}$. 
In addition,  $\wh{\Gamma}:\R^{2n}\times \R^{2n}\to {\mathcal L}(\R^{2n},\R^{2n})$ is the local representation of the map $\Gamma$ arising from the parallel transport associated with  a connection which preserves the almost complex structure,  and hence,  for every $(p, q)$, the linear map $\Gamma (p, q)$ is invertible. Therefore, the map $\wh{\Gamma}$ is smooth and the matrices  $\wh{\Gamma}(y, y')$ are  invertible for every $(y, y')$.
 }

{By assumption, the map $u:S\to Q$ is smooth and satisfies $u(x)=u(y)$. Hence the pair $(v^+, v^-)$ of maps are smooth and have  vanishing asymptotic constants as $s\to \pm \infty$ and belong to the sc-Hilbert space 
$$H^{3,\delta_0}({\mathbb R}^+\times S^1,{\mathbb R}^{2n})\oplus H^{3,\delta_0}({\mathbb R}^-\times S^1,{\mathbb R}^{2n}). $$ 
The pair of vector fields $(h^+, h^-)$ along the curve $(v^+, v^-)$ has matching asymptotic constants and belongs to the sc-Hilbert space $E$ of pairs $(h^+, h^-)$ of the form 
$h^\pm=c+r^\pm$ for some constant $c\in \R^{2n}$, and the pair $(r^+,r^-)$ belongs to the above sc-Hilbert space. }

{In the following we let  $F$  be the sc-Hilbert space 
$$F= H^{2,\delta_0}({\mathbb R}^+\times S^1,{\mathbb R}^{2n})\oplus H^{2,\delta_0}({\mathbb R}^-\times S^1,{\mathbb R}^{2n}).$$
}

We introduce the smooth maps $A, B:\R^{2n}\oplus \R^{2n}\to \call (\R^{2n}, \R^{2n})$ by 
$$A(p, q)=\frac{1}{2}\wh{\Gamma}(p, q)^{-1}\qquad \text{and}\qquad B(p, q)=\frac{1}{2}\wh{\Gamma}(p, q)^{-1}\wh{J}(p).$$
Applying the inverse of the  map $\wh{\Gamma}$ to  both sides of the first equation in \eqref{gamma1}, we obtain 
\begin{equation*}
\begin{aligned}
\wh{\oplus}_a\bigl(\wh{\xi}\bigr) &= A(v_a+h_a, v_a)\circ\ \partial_s (v_a+h_a)+ B(v_a+h_a, v_a)\partial_t (v_a+h_a)\bigr]\\
\wh{\ominus}_a\bigl(\wh{\xi}\bigr)&=0.
\end{aligned}
\end{equation*}
Abbreviating 
\begin{equation*}
\wh{A}(p, q)=\begin{bmatrix}A(p, q)&0\\
0&\id
\end{bmatrix}
\quad \text{and}\quad 
\wh{B}(p, q)=\begin{bmatrix}B(p, q)&0\\
0&\id
\end{bmatrix},
\end{equation*}
the above equations for $\wh{\xi}$ can be written in matrix form as 
\begin{eqnarray*}
&&\wh{\boxdot}_a\bigl(\wh{\xi}\bigr)\\
&=&\wh{A}(v_a+h_a, v_a)\cdot \begin{bmatrix}
\partial_s (v_a+h_a)\\
0
\end{bmatrix}
+
\wh{B}(v_a+h_a, v_a)\cdot 
\begin{bmatrix}
\partial_t (v_a+h_a)\\
0
\end{bmatrix}\\
&=&\wh{A}(v_a+h_a, v_a)\circ \wh{\boxdot}_a\circ D_s^a(v+h)+\wh{B}(v_a+h_a, v_a)\circ \wh{\boxdot}_a\circ D_t^a(v+h)
\end{eqnarray*}
where  the maps $D_s^a$ and $D_t^a$ are the maps  introduced in Section \ref{preliminaries} .
Consequently, the map $\hb\oplus E\to F$, $(a, h)\mapsto  \wh{\xi}$ where $\wh{\xi}=(\xi^+, \xi^-)\in F$  is given by 
\begin{equation*}
\wh{\xi}=\wh{\boxdot}_a^{-1}\circ \wh{A}(v_a+h_a, v_a)\circ \wh{\boxdot}_a\circ D_s^a(v+h)+
\wh{\boxdot}_a^{-1}\circ \wh{B}(v_a+h_a, v_a)\circ \wh{\boxdot}_a\circ D_t^a(v+h).
\end{equation*}
 \begin{proposition}\label{nfilled}
 The map $B_\frac{1}{2}\oplus E\mapsto F$, $(a,h)\mapsto   \wh{\xi},$ introduced above,  is sc-smooth. 
 \end{proposition}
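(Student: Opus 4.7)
The plan is to read the explicit formula for $\xi$ as a composition of maps whose sc-smoothness is already provided by Propositions~\ref{XCX1} and~\ref{XCX2}, together with an obvious affine map.

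First I would reinterpret the first summand as $\wt{A}(a,(v+h,v),D^a_s(v+h))$, where $\wt{A}$ denotes the map attached in Proposition~\ref{XCX2} to the smooth coefficient $A:\R^{4n}\to \call(\R^{2n},\R^{2n})$ viewed as a function of a pair of base points $(p,q)\in\R^{2n}\oplus\R^{2n}$, now bundled into a single $\R^{4n}$-variable (so that Proposition~\ref{XCX2} applies with $N=4n$, $K=M=2n$). A direct unpacking, using the block-diagonal form of $\wh{A}$ together with the identity $\wh{\boxdot}_a\circ D^a_s(v+h)=(\partial_s(v_a+h_a),0)$, shows that the defining equations for $\wt{A}$ indeed reproduce the first summand of $\xi$. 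The second summand is handled identically with $B$ and $D^a_t$ in place of $A$ and $D^a_s$.

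Next I would check sc-smoothness factor by factor. The map $h\mapsto (v+h,v)$ from $E$ into $E^{(4n)}$ is affine with smooth constant part (recall $v\in E_\infty$) and is therefore trivially sc-smooth. By Proposition~\ref{XCX1}, the maps $(a,h)\mapsto D^a_s(v+h)$ and $(a,h)\mapsto D^a_t(v+h)$ from $\hb\oplus E$ into $F$ are sc-smooth. By part~(1) of Proposition~\ref{XCX2}, $(a,u,\eta)\mapsto \wt{A}(a,u,\eta)$ is $\ssc_\triangleleft$-smooth as a strong bundle map, which in particular yields an sc-smooth map $\hb\oplus E^{(4n)}\oplus F\to F$; the same holds for $\wt{B}$.

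Combining these facts through the sc-smooth chain rule, each summand of $\xi$ depends sc-smoothly on $(a,h)\in \hb\oplus E$, and a finite sum of sc-smooth maps is sc-smooth. The only genuinely analytic content---the delicate gluing-uniform estimates that force $D^a_s$, $D^a_t$, $\wt{A}$ and $\wt{B}$ to behave well across $a=0$---has already been absorbed into Propositions~\ref{XCX1} and~\ref{XCX2}. Thus the main potential obstacle, namely the nonlinear interaction between the splicing-dependent function spaces and the base map $v+h$, has been preempted by those two propositions; the present proof is a routine chain-rule assembly and no new estimates are needed.
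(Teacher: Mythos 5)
Your proof is correct and follows essentially the same route as the paper: both reduce the statement to Propositions \ref{XCX1} and \ref{XCX2} via the sc-smooth chain rule, after writing $\xi$ as a sum of two compositions built from $D^a_s$, $D^a_t$ and the coefficient maps. The only difference is organizational: the paper proves sc-smoothness of the auxiliary map $(a,h,g)\mapsto \wh{\boxdot}_a^{-1}\circ\wh{A}(v_a+h_a,v_a)\circ\wh{\boxdot}_a(g)$ for \emph{arbitrary} $g\in F$, which forces it to solve for the components $k^{\pm}$ explicitly and to invoke Lemma 2.11 of \cite{HWZ8.7} for the term involving $\wh{\ominus}_a(g)$, whereas you exploit the identity $\wh{\ominus}_a(D^a_s(v+h))=0$ to identify the composite directly with $\wt{A}\bigl(a,(v+h,v),D^a_s(v+h)\bigr)$ and bypass that step --- a legitimate shortcut, since the identification is only needed on the kernel of $\wh{\ominus}_a$.
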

 \begin{proof}
 The translation map $E\to E$, $h\mapsto v+h$ is  sc-smooth. Hence, by Proposition \ref{XCX1}  and the chain rule,  the maps
 $\hb\oplus E\rightarrow F$, defined by 
$$
(a,h)\rightarrow D^a_s(v+h)\quad \text{and}\quad (a,h)\rightarrow D^a_t(v+h),
$$
are sc-smooth.  In view of the chain rule, it suffices  to show that  the maps
$\hb\oplus E\oplus F\to F$, defined by 
$$(a, h, g)\mapsto \wh{\boxdot}_a^{-1}\circ \wh{A}(v_a+h_a, v_a)\circ \wh{\boxdot}_a (g)$$
and 
$$(a, h, g)\mapsto \wh{\boxdot}_a^{-1}\circ \wh{B}(v_a+h_a, v_a)\circ \wh{\boxdot}_a (g), $$
are sc-smooth. 
We only consider the first map and derive the formula for $k=\wh{\boxdot}_a^{-1}\circ \wh{A}(v_a+h_a, v_a)\circ \wh{\boxdot}_a(g)$. In view of the definition of the total hat-gluing 
$ \wh{\boxdot}_a$ and of  the matrix $ \wh{A}(v_a+h_a, v_a)$, 
\begin{equation*}
\begin{aligned}
\wh{\oplus}_a(k)&=A(v_a+h_a)\wh{\oplus}_a (g)\\
\wh{\ominus}_a(k)&=\wh{\ominus}_a(g)
\end{aligned}
\end{equation*}
if $a\neq 0$. If $a=0$, then 
$$k=(k^+, k^-)=(A(v^++h^+, v^+)g^+, A(v^-+h^-, v^-)g^-).$$
Solving  the above  equations for $k=(k^+, k^-)\in F$ we obtain 
\begin{equation*}
k^+(s, t)=\frac{\beta_a}{\gamma_a}\cdot A(v_a+h_a)\cdot \wh{\oplus}_a(g)+\frac{\beta_a-1}{\gamma_a}\cdot \wh{ \ominus}_a(g)
\end{equation*}
for $s\geq 0$ 
and
\begin{equation*}
k^-(s-R, t-\vartheta)=\frac{1-\beta_a}{\gamma_a}\cdot A(v_a+h_a)\cdot \wh{\oplus}_a(g)+\frac{\beta_a}{\gamma_a}\cdot \wh{\ominus}_a(g).
\end{equation*}
for $s\leq R$. 

 {We recall that $\gamma_a=\beta_a^2+(1-\beta_a)^2$ and $\beta_a(s)=\beta (s-R/2)$  with  the  function $\beta$ introduced  
 in Section \ref{gluinganti-sect}.} 
 
  In view of the first statement in  Proposition \ref{XCX2}, the map $\hb\oplus E\oplus F\to H^{2,\delta_0}(\R^+\times S^1, \R^{2n})$, defined by $(a, h, g)\mapsto \frac{\beta_a}{\gamma_a}\cdot A(v_a+h_a)\cdot \wh{\oplus}_a(g)$,  is sc-smooth.  The map $\hb \oplus F\to H^{2,\delta_0}(\R^+\times S^1, \R^{2n})$, defined by 
$$(a,g)\mapsto \frac{\beta_a-1}{\gamma_a}\cdot \wh{\ominus}_a(g)(s, t)=\frac{(\beta_a-1)^2}{\gamma_a}g^+(s, t)+\frac{\beta_a(\beta_a-1)}{\gamma_a}g^-(s-R, t-\vartheta),$$
is sc-smooth in view of {Proposition 2.8}  in \cite{HWZ8.7}. The same arguments  show that  also the map $(a, h, g)\to k^{-}$ is sc-smooth. Consequently, the map 
$\hb\oplus E\oplus F\to F$, $(a, h, g)\mapsto \wh{\boxdot}_a^{-1}\circ \wh{A}(v_a+h_a, v_a)\circ \wh{\boxdot}_a (g)$
is sc-smooth.  This finishes the proof of Proposition \ref{nfilled}.
\end{proof}
 
Summing up,  we have verified that the map ${\bf f}:{\mathcal O} \to \cf$,  $(a, v, \eta)\mapsto {\bf f}(a, v, \eta)$,  is sc-smooth. This finishes the proof of the sc-smoothness and the regularization property of the Cauchy-Riemann section  section  $\ov{\partial}_J$  of the strong polyfold bundle $p:W\to Z$. 
The proof of Proposition \ref{SC-SMOOTHREG} is complete.\hfill $\blacksquare$

\section{The Filled Section, Proof of Proposition \ref{C0-CONT}}\label{section4.4}
Using the arguments  of Section \ref{sec5.2}, we first verify that (locally) the filled version ${\bf f}:O \to \cf$, $(a, v,\eta)\mapsto \xi={\bf f}(a, v, \eta)$ is sc-smooth. 
We recall  from \eqref{DEFR} that the complex anti-linear filled section $\xi$  is defined as the unique solution of the two equations
\begin{equation}\label{eq35}
\begin{aligned}
\Gamma (\oplus_a\exp_u (\eta), \oplus_a( u))\circ \wh{\oplus}_a(\xi)\circ \delta (a, v)&=\ov{\partial}_{J, j(a, v)}(\oplus_a \exp_u (\eta))\\
\wh{\ominus}_a \bigl(\xi)\cdot \frac{\partial}{\partial s} &=\ov{\partial}_{0}(\ominus_a (\eta )).\\
\end{aligned}
\end{equation}
 \begin{proposition}\label{locfilled}
 The map ${\bf f}:O \to \cf$, $(a, v, \eta)\mapsto \xi={\bf f}(a, v, \eta)$,  defined by  the two equations \eqref{eq35},  is sc-smooth.
 \end{proposition}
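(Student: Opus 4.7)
The strategy is to parallel the sc-smoothness argument of Proposition \ref{sc-smooth&reg} (and the proof of Proposition \ref{nfilled}), with the new ingredient being the non-zero right-hand side $\bar\partial_0(\ominus_a\eta)$ in the antiglued equation. First I would exploit locality: the value of $\xi$ near a point $z_0\in S$ depends only on the data $(a,v,\eta)$ evaluated near $z_0$. Using a smooth partition of unity on $S$ subordinate to the small disk structure, it suffices to verify sc-smoothness separately in the core region (away from all nodal points) and in a neighborhood of each nodal pair $\{x,y\}\in D$. Away from the nodes the gluing acts as the identity, $\wh\oplus_a(\xi)=\xi$ and $\wh\ominus_a(\xi)=0$, and $\ominus_a(\eta)$ is irrelevant locally, so the equation reduces to the classical local Cauchy--Riemann expression for $\xi$ as a smooth function of the other data. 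Levelwise this map is smooth in the classical sense, hence sc-smooth by Proposition \ref{prop2.11}.

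Near a nodal pair $\{x,y\}$ I would choose positive and negative holomorphic polar coordinates $\sigma^\pm$ on $D_x$, $D_y$ and a smooth chart $\psi$ around $u(x)=u(y)=q_{\{x,y\}}$ mapping this point to $0\in\R^{2n}$, and set $v^\pm=\psi\circ u\circ\sigma^\pm$, $h^\pm=T\psi(u\circ\sigma^\pm)\cdot\eta\circ\sigma^\pm$. Exactly as in the proof of Proposition \ref{nfilled}, the first equation in \eqref{eq35} becomes
\begin{equation*}
\Gamma(v_a+h_a,v_a)\circ\wh\oplus_a(\xi)\circ\sigma(a,v)
=\tfrac12\bigl[\partial_s(v_a+h_a)+\wh J(v_a+h_a)\partial_t(v_a+h_a)\bigr],
\end{equation*}
where $v_a=\oplus_a(v^+,v^-)$, $h_a=\oplus_a(h^+,h^-)$, and $\wh J$ is the push-forward of $J$ by $\psi$. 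Applying $\Gamma^{-1}$ and organising as in Proposition \ref{nfilled} yields
\begin{equation*}
\wh\oplus_a(\xi)\circ\sigma(a,v)=A(v_a+h_a,v_a)\,\wh\oplus_a\!\circ D^a_s(v+h)+B(v_a+h_a,v_a)\,\wh\oplus_a\!\circ D^a_t(v+h)
\end{equation*}
for smooth coefficient maps $A,B$. Meanwhile the new antiglued component $\wh\ominus_a(\xi)=\bar\partial_0(\ominus_a\eta)=\partial_s\ominus_a\eta+J(q_{\{x,y\}})\partial_t\ominus_a\eta$ can, by the definition of $C^a_s,C^a_t$ in Section \ref{preliminaries}, be written as the second component of
\begin{equation*}
\wh\boxdot_a\bigl(C^a_s(\eta)+J(q_{\{x,y\}})\,C^a_t(\eta)\bigr).
\end{equation*}
Assembling the two components into one matrix equation for $\wh\boxdot_a(\xi)$ and inverting by $\wh\boxdot_a^{-1}$, the map $(a,v,\eta)\mapsto\xi$ is exhibited as a composition of:\ the smooth matrix-multiplication maps $\wh\boxdot_a^{-1}\!\circ\wh A(v_a+h_a,v_a)\circ\wh\boxdot_a$ and analogously for $\wh B$, which are sc-smooth by Proposition \ref{XCX2}; the maps $(a,h)\mapsto D^a_{s,t}(v+h)$ and $(a,\eta)\mapsto C^a_{s,t}(\eta)$, which are sc-smooth with $a$-uniform estimates by Proposition \ref{XCX1}; the smooth complex linear transition $\sigma(a,v)$; and the smooth translations by the fixed smooth reference map $u$. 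The dependence on $v$ enters only via $j(a,v)$ and $\sigma(a,v)$ and is handled exactly as in Proposition \ref{sc-smooth&reg}. The explicit formulas for the inverse of $\wh\boxdot_a$ used in Proposition \ref{nfilled} combined with Lemma~2.11 of \cite{HWZ8.7} then deliver the desired sc-smoothness on the nodal patch.

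The main obstacle, and what distinguishes this statement from Proposition \ref{sc-smooth&reg}, is precisely the presence of the antiglued source term $\bar\partial_0(\ominus_a\eta)$: one must check both that it lies in the correct sc-Hilbert space $\mathcal F$ uniformly as $a\to 0$, and that its dependence on $(a,\eta)$ is sc-smooth. This is exactly the content of Proposition \ref{XCX1} for the operators $C^a_s,C^a_t$, combined with the total hat-gluing estimate of Proposition \ref{HAT_HAT} which converts the estimates on $\wh\boxdot_a$ back into estimates on $\xi\in F$. Once these uniform bounds are in place, the proof reduces to careful bookkeeping, stitching together the sc-smoothness on the nodal patches with the classical smoothness on the core via the partition of unity, thereby establishing that $\mathbf f$ is sc-smooth.
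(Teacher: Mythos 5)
Your proposal is correct and follows essentially the same route as the paper: locality plus a partition of unity, the classical argument away from the nodes, and near a node the decomposition of $\xi$ into the solution of the homogeneous antiglued system (exactly Proposition \ref{nfilled}) plus the piece $\wh{\boxdot}_a^{-1}\bigl(0,\ov{\partial}_0(\ominus_a\eta)\bigr)$, which the paper likewise identifies with the operators $C^a_s, C^a_t$ of Section \ref{preliminaries} and handles via Proposition \ref{XCX1} and the chain rule. The paper writes this second piece out explicitly as $\xi_2=\xi-\xi_1$ with $\wh{\oplus}_a(\xi_2)=0$, but the content is identical to your matrix assembly and inversion by $\wh{\boxdot}_a^{-1}$.
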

  \begin{proof}
  {Away from the nodal points,  the proof of the sc-smoothness of the map ${\bf f}$ is identical with  the proof of the sc-smoothness in Proposition \ref{SC-SMOOTHREG}.}
   {Near the nodes we go into the local coordinates as in Section \ref{sec5.2} in which the equation \eqref{eq35} becomes 
    \begin{equation}\label{pom}
  \begin{aligned}
  \wh{\Gamma}(v_a+h_a,v_a)\wh{\oplus}_a(\wh{\xi})& =\frac{1}{2}\bigl[\partial_s (v_a+h_a) +\wh{J}(v_a+h_a)\partial_t (v_a+h_a)\bigr]\\
  \wh{\ominus}_a\bigl(\wh{\xi})&=\ov{\partial}_{0}(\ominus_a(h)).
  \end{aligned}
  \end{equation}
  Here the vector field $\wh{\xi}$ represents locally the filled complex anti-linear section $\xi$. Moreover, $v_a=\oplus_a(v^+, v^-)$ and $h_a=\wh{\oplus}_a(h^+, h^-)$.  The operator 
 $ \ov{\partial}_{0}$ is defined by 
 $$\ov{\partial}_0=\partial_s+\wh{J}(0)\partial_t.$$
Using the decomposition $F=(\ker  \wh{\oplus}_a)\oplus  (\ker  \wh{\ominus}_a)$, we split $\wh{\xi}=\xi_1+\xi_2$ where   $\xi_1\in \ker  \wh{\ominus}_a$ and $\xi_2\in \ker  \wh{\oplus}_a$. Then  the  vector field  $\xi_1$ solves the equations
\begin{equation}\label{pom1}
  \begin{aligned}
 \wh{\Gamma}(v_a+h_a,v_a)\wh{\oplus}_a(\xi_1)& =\frac{1}{2}\bigl[\partial_s (v_a+h_a) +\wh{J}(v_a+h_a)\partial_t (v_a+h_a)\bigr]\\
  \wh{\ominus}_a(\xi_1)&=0.
  \end{aligned}
  \end{equation}
 These are the same equations as studied before, hence the map 
 $\hb\oplus E\to F$, defined by $(a, h)\mapsto \xi_1$ is sc-smooth in view of Proposition \ref{nfilled}.  The  vector field  $\xi_2$ solves the following equations
 \begin{equation}
 \begin{aligned}
\wh{\oplus}_a(\xi_2)& =0\\
\wh{\ominus}_a(\xi_2)&=\ov{\partial}_{0}(\ominus_a(h)).
\end{aligned}
\end{equation}
}
Solving for $\xi_2=(\xi^+, \xi^-)$ 
{one finds
\begin{align*}
(\xi_2)^+(s, t)&=\frac{\beta_a-1}{\gamma_a}\ov{\partial}_0(h_a)\\
(\xi_2)^-(s-R,t-\vartheta)&=\frac{\beta_a}{\gamma_a}\ov{\partial}_0(h_a).
\end{align*}
Using the computations  in Appendix \ref{XCX10}, one obtains the formula 
\begin{equation*}
\xi_2=C_s^a(h)+\wh{J}(0)C_t^a(h).
\end{equation*}
By Proposition \ref{XCX1} the maps $C_s^a$ and $C_t^a$ are sc-smooth, so that  the map $(a, h)\mapsto \xi_2$ is sc-smooth and the proof of Proposition \ref{locfilled} is complete. }
 \end{proof}
 
 \begin{proposition}\label{prop4.16}
 The map ${\bf f}:O\to \cf$  defined by \eqref{eq35} is a filled version of the Cauchy-Riemann section $F=\ov{\partial}_{J}$ in the ep-groupoid representation.
 \end{proposition}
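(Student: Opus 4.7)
The proposition asks us to verify the three properties of Definition \ref{filled-def} for the germ $({\bf f},0)$ relative to the Cauchy--Riemann germ $(F,0)$. The strategy is dictated by how \eqref{eq35} was set up: it differs from \eqref{con} only in the second (hat anti-gluing) equation, where the right-hand side $0$ is replaced by $\ov{\partial}_0(\ominus_a(\eta))$. This particular modification was chosen precisely so that properties (1) and (2) hold, using that $\ominus_a(\eta)=0$ characterizes $(a,v,\eta)\in\co$ and that $\ov{\partial}_0$ is invertible.

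For property (1), on $\co$ we have $\pi_a(\eta)=\eta$, equivalently $\ominus_a(\eta)=0$, so $\ov{\partial}_0(\ominus_a(\eta))=0$ and system \eqref{eq35} collapses to \eqref{con}, giving ${\bf f}=F$ on $\co$. For property (2), suppose $\xi={\bf f}(a,v,\eta)$ satisfies the fiber retract condition $\rho_a(\xi)=\xi$, equivalently $\wh{\ominus}_a(\xi)=0$. Then the second equation of \eqref{eq35} yields $\ov{\partial}_0(\ominus_a(\eta))=0$ at every nodal pair; by Proposition \ref{c-iso} the operator $\ov{\partial}_0:\cae_a^{\{x,y\}}\to\cf_a^{\{x,y\}}$ is an sc-isomorphism, so $\ominus_a(\eta)=0$ and hence $(a,v,\eta)\in\co$.

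For property (3) I would check that the linearization at $(0,0,0)$ of $(a,v,\eta)\mapsto[\id-\rho(r(a,v,\eta))]{\bf f}(a,v,\eta)$, restricted to $\ker Dr(0)$, gives a topological isomorphism onto $\ker\rho(0)$. At $a=0$ all the splicings degenerate to the identity: the $a=0$ cases in Theorems \ref{sc-splicing-thm} and \ref{sc-splicing-thm1} give $\pi_0=\id$ on $\cae$ and $\rho_0=\id$ on $F$. Differentiating $r(a,v,\eta)=(a,v,\pi_a(\eta))$ at $(0,0,0)$, the $a$-variation is multiplied by $\eta=0$ and drops out, so $Dr(0)(\delta a,\delta v,\delta\eta)=(\delta a,\delta v,\delta\eta)$; likewise $\rho(0)=\id$. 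Both kernels are therefore $\{0\}$ and the required isomorphism is trivial.

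The only real content lies in property (2), and the essential input is the invertibility of the model linear Cauchy--Riemann operator $\ov{\partial}_0$ on the infinite cylinder with symmetric exponential weights (Proposition \ref{c-iso}); this is precisely why the filling in \eqref{eq35} was built from $\ov{\partial}_0\circ\ominus_a$ rather than any other extension of the constraint $\wh{\ominus}_a(\xi)=0$. Property (3) turning out to be trivial at the origin reflects the fact that the splicing core fills a full neighborhood at $a=0$, so the nontrivial kernel phenomena only show up for $a\neq 0$, which is not the situation under consideration here.
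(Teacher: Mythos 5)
Your proposal is correct and follows essentially the same route as the paper: property (1) from $\ominus_a(\eta)=0$ on $\co$, property (2) from the injectivity of the model operator $\ov{\partial}_0$ on the infinite cylinder, and property (3) trivially since $Dr(0)=\id$ and $\rho(0)=\wh{\pi}_0=\id$ at the origin, so both kernels are $\{0\}$. (Your citation of Proposition \ref{c-iso} for step (2) is in fact the more apt reference; the paper points to the total gluing estimate instead, but the underlying fact used is the same invertibility of $\ov{\partial}_0$.)
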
 
 \begin{proof}
 We already know that ${\bf f}$ is sc-smooth and it remains to verify the three properties of a filling in Definition \ref{filled-def}. If $(a, v, \eta)\in \co$, then $\ominus_a(\eta)=0$ and hence $\wh{\ominus}_a(\xi)=0$ so that ${\bf f}(a, v, \eta)=F(a, v,\eta)$. In order to verify the second property of a  filler we assume that $\wh{\ominus}_a{\bf }(\xi)=0$. It follows from \eqref{eq35} that $\ov{\partial}_0(\ominus_a (\eta))=0$ and hence $\ominus_a(\eta)=0$ in view of Proposition \ref{c-iso}, so that indeed $(a, v,\eta)\in \co$.
 
 In order to verify the third property of a filler we abbreviate the points in $O$ by $y=(a, v, \eta)$ and let $y_0=(0,0, 0)$. At the point $y_0$, the derivative of the retraction $r$ is the identity map and consequently,  $\ker Dr(y_0)=\{0\}$. Since $\rho(y_0)=\wh{\pi}_0=\id$, we have $\ker \rho (y_0)=\{0\}$. Therefore, the linearization of the map $y\mapsto \bigl[ \id -\rho (r(y))\bigr]{\bf f}(y)$ at the point $y_0$, restricted to $\ker Dr(y_0)$, defines trivially the  isomorphims $\{0\}=\ker Dr(y_0)\to \ker \rho (y_0)=\{0\}$. The proof of Proposition \ref{prop4.16} is complete.
 \end{proof}
 
 In order to complete the proof of Proposition \ref{C0-CONT}, we define the $\ssc^+$-section ${\bf s}$ by its principal part ${\bf s}(a, v, \eta)={\bf f}(0,0, 0)$ so that the section ${\bf h}={\bf f}-{\bf s}$ satisfies ${\bf h}(0,0,0)=0$. We shall now construct a strong bundle isomorphism $\Phi$ having the property that  the push-forward section $\Phi_* ({\bf h})$ is an $\ssc^0$-contraction germ. The difficulty encountered comes from the fact that  although the map $(a, v, \eta)\mapsto {\bf f}(a, v, \eta)$ is sc-smooth and its linearizations $D{\bf f}(a, v, \eta)$ are bounded linear maps, the dependence of these linear operators on the arguments $(a, v,\eta)$ is not continuous in the operator norm. The following proposition will be very helpful.
\begin{proposition}\label{PROPX} 
The filled version $O\to \cf$, $(a, v, \eta)\mapsto {\bf f}(a, v, \eta)$ has the  following properties.
\begin{itemize}
\item[$\bullet$] At every smooth point $(a,v,\eta)$ close to $(0,0,0)$ the linearization 
$$
D{\bf f}(a,v,\eta)\in \call \bigl( (N\oplus E\oplus \ce ), \cf\bigr)
$$
is an sc-Fredholm operator. Their Fredholm indices are all the same, say equal to $d$.  
\item[$\bullet$]  For every level $m$  fixed and small  $(a,v)$, the map 
$$\eta\mapsto {\bf f}(a,v,\eta),\qquad \ce_m\to \cf_m$$ 
is of class $C^1$ in the  Fr\'echet sense.
\item[$\bullet$]  If ${\mathcal K}$ is a finite-dimensional sc-complemented linear subspace of $\ce$,  the following holds  true. 
We consider a sequence $(a_j,v_j,k_j)\in O$ in which $k_j\in {\mathcal K}$  converges  in $\ce_m$ to some point $(a_0,v_0,k_0)\in O$. If  the sequence  $(h_j)\subset \ce_m$ is bounded on level $m$  and satisfies 
$$D_3{\bf f}(a_j,v_j,k_j)\cdot h_j=y_j+z_j$$
where $y_j\in \cf_m$ satisfies 
$y_j\rightarrow 0$ in $\cf_m$ and where  $(z_j)\subset \cf_{m+1}$ is a bounded sequence  in $\cf_{m+1},$ then  the sequence $(h_j)$ has a subsequence which converges in $\ce_m$. 
\end{itemize}
\end{proposition}
 We should note that the points in the finite dimensional sc-complemented subspace ${\mathcal K}$ above, are all smooth.
 We postpone the proof of Proposition \ref{PROPX} to the next section and use it next in the proof of Proposition \ref{C0-CONT}.

We denote by $K\subset \ce$ the kernel of the  sc-Fredholm operator
$$
D_3{\bf f}(0,0,0):\ce\rightarrow \cf. 
$$
{In view of the definition of an sc-Fredholm operator, the finite dimensional kernel $K$ possesses an sc-complement $X$ so that  $\ce=K\oplus X$.}
By $Y=D_3{\bf f}(0,0,0)X$ we denote the range of the  operator and, again by definition of an sc-Fredholm  operator, there is an   
sc-complement $C$ so that $\cf=Y\oplus C$. The projection 
$P:\cf\to \cf$  onto $Y$ along $C$  is an  sc-Fredholm operator whose index is equal to  {$0$}. 
We now consider the family 
$$
(a,v,k)\mapsto P\circ D_3{\bf f}(a,v,k)\vert X
$$
of sc-Fredholm operators from $X$ into $Y$, where $k\in K$.
We emphasize that these linear  bounded operators do not depend as operators 
continuously on the finite dimensional parameter  $(a,v,k)$, which we abbreviate 
by 
$$b\equiv (a, v, k)\in  N\oplus E\oplus K.$$
However, in view of Proposition \ref{locfilled},  the map  
$$
(O\cap (N\oplus E\oplus K))\oplus  \ce \rightarrow  \cf,
$$
defined by 
\begin{equation}\label{map1}
(b, h)\mapsto D_3{\bf f}(b)\cdot h,
\end{equation}
is sc-smooth and raising the regularity index by $1$,  the map \eqref{map1} induces,  {in view of Proposition 2.2 in \cite{HWZ8.7}},  the sc-smooth map 
$$\bigl[ (O\cap (N\oplus E\oplus K))\oplus \ce\bigr]^1 \to \cf^1.$$
Since $K$ is finite dimensional, the left-hand side is equal to 
$$
\bigl(O\cap (N\oplus E\oplus K)\bigr)^1\oplus \ce^1 =\bigl(O\cap (N\oplus E\oplus K)\bigr)\oplus \ce^1 
$$ 
and we conclude that the two maps 
$$
\bigl(O\cap (N\oplus E\oplus K)\bigr)\oplus \ce^i\to \cf^i
$$ 
defined by \eqref{map1} are sc-smooth for $i=0$ and $i=1$. 

We next claim  that the operator $P\circ D_3{\bf f}(b)\vert X_{0}:X_0\rightarrow Y_0$ has a trivial kernel if $b$ is close to $0$. 
Indeed, otherwise  there exist sequences $b_j\to 0$ and $(h_j)\subset X_0\subset \ce_0$  satisfying $\abs{h_j}_{\ce_0}=1$ and 
$P\circ D_3{\bf f}(b_j)h_j=0$. In particular,  $D_3{\bf f}(b_j)h_j = z_j$  is a bounded sequence on every level, and we find by Proposition \ref{PROPX} a converging subsequence $h_j\to h$ in $\ce_0$.
Passing to the limit, $h$ satisfies   $P\circ D_3{\bf f}(0)h=0$. Consequently, $h=0$ in contradiction to  $\abs{h}_{\ce_0}=1$  and hence proving the claim. 

We have found an open neighborhood $\wh{O}$
of $0\in N\oplus E\oplus K$ having the property that for $b\in  \wh{O}$ the sc-Fredholm operator $P\circ D_3{\bf f}(b)\vert X :X\to Y$ is injective.   It is, in addition, an sc-Fredholm operator of index $0$.  To see this, we observe that $D{\bf f}(b)\vert X:X\to \cf$ is the composition of the sc-injection $X\to X\oplus K$ whose Fredholm index is equal to $-\dim K$ and $D{\bf f}(b):\ce \to \cf$ having, in view of Proposition \ref{PROPX}, index $d$. Therefore, the index of the composition is equal to $-\dim K +d$. Using Proposition \ref{PROPX} once more, we obtain for the composition $P\circ D_3{\bf f}(b)\vert X :X\to Y$ the index $\dim C-\dim K+d=-d+d=0$ as claimed. 
We conclude that the Fredholm operators 
$$P\circ D_3{\bf f}(b)\vert X:X\to Y$$
are all sc-isomorphisms if $b$ is close to $0$. 
Moreover,  perhaps replacing $\wh{O}$ by a smaller open  neighborhood of $0$,  we claim that there exists,  for every level $m\geq 0$,  a constant $c_m>0$ such  that
\begin{equation}\label{maps2}
\abs{P\circ D_3{\bf f}(b)h}_{\cf_m}\geq c_m\cdot \abs{h}_{\ce_m}
\end{equation}
for all $h\in X_m$ and for all $b\in \wh{O}$.

In order to prove the estimate \eqref{maps2}, we take an open neighborhood $\wh{O}^\ast$ of $b=0$ having a  compact closure in $\wh{O}$.  Arguing indirectly, we find  a   level $m$ and sequences 
$(h_j)\subset X$  and $(b_j)\subset \wh{O}^\ast$ satisfying $\abs{h_j}_{\ce_m}=1$ and 
 $\abs{P\circ D_3{\bf f}(b_j)h_j}_{\cf_m}\rightarrow 0$.  We may assume, going over to a subsequence that $b_j\to b_0\in \wh{O}^\ast$. We conclude from Proposition \ref{PROPX} the convergence $h_j\to h_0$ in $\ce_m$ of a subsequence, so that 
$
P\circ D_3{\bf f}(b)h_0=0.
$
In view of the injectivity, $h_0=0$, in contradiction to $\abs{h_0}_{\ce_m}=1$ and hence proving the estimates \eqref{maps2}.

In view of our discussion so far, we can now apply the following result from \cite{HWZ8.7},  Proposition 4.8. 
\begin{proposition}\label{WM40}
Assume that  $V$ is an open subset of a finite-dimensional vector space $H$, and $E$ and $F$ are sc-Banach spaces and  consider a family  $v\mapsto L(v)$ of linear operators parametrized by $v\in V$ 
such that  $L(v):E\rightarrow F$ are  sc-isomorphisms having the following properties.
\begin{itemize}\label{property-99}
\item[{\em (1)}] The map $\wh{L}:V\oplus E\rightarrow F$,  defined by 
$$
\wh{L}(v, h):=L(v)h,
$$
is sc-smooth.
\item[{\em (2)}] There exists for every $m$ a constant $C_m$ such that
$$
\abs{L(v)h}_m\geq C_m\cdot \abs{h}_m
$$
for all $v\in V$  and all $h\in E_m$.
\end{itemize}
Let us note that $ L(v)$ is not assumed to be continuously depending on $v$ as an operator.
Since the map $L(v):E\to F$ is an sc-isomorphism, the  equation
$$
L(v)h=k
$$
has  for every pair $(v,k)\in V\oplus F$  a unique solution $h$ denoted by 
$$h=L(v)^{-1}k=:f(v, k),$$
 so that $\wh{L}(v, f(v, k))=k$. Then the 
 map $f:V\oplus F\rightarrow E$ defined above is sc-smooth.
\end{proposition}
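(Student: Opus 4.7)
The plan is to apply the sc-smoothness criterion of Proposition \ref{prop2.11}: it suffices to prove that $f:V\oplus F\to E$ is sc$^0$-continuous, and that the induced maps $V\oplus F_{m+k}\to E_m$ are of class $C^{k+1}$ for every $m,k\geq 0$. The global level-wise estimate $|L(v)h|_m\geq C_m|h|_m$ immediately yields the corresponding bound $|f(v,k)|_m\leq C_m^{-1}|k|_m$, so $f$ maps $V\oplus F_m$ into $E_m$ and is level-wise bounded. The subtlety is that $v\mapsto L(v)$ is not assumed norm-continuous as an operator family; the only continuity available is the sc-continuity of the joint map $\wh L$.

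First I would establish sc$^0$-continuity. For $(v_j,k_j)\to (v_0,k_0)$ in $V\oplus F_m$, set $h_j=f(v_j,k_j)$ and $h_0=f(v_0,k_0)$, and write
\[
L(v_j)(h_j-h_0)=k_j-\wh L(v_j,h_0).
\]
Since $\wh L$ is sc$^0$, the term $\wh L(v_j,h_0)\to \wh L(v_0,h_0)=k_0$ in $F_m$, so the right-hand side tends to $0$ in $F_m$. Hypothesis (2) then forces $|h_j-h_0|_m\leq C_m^{-1}|L(v_j)(h_j-h_0)|_m\to 0$, giving sc$^0$-continuity.

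Next I would establish differentiability at each level. Fix $m\geq 0$ and consider $(v,k)\in V\oplus F_{m+1}$, so that $h:=f(v,k)\in E_{m+1}$ by the previous step applied at level $m+1$. Because $\wh L$ is sc-smooth, Proposition \ref{prop2.11} run backwards (or the inductive definition of sc-smoothness) implies that $\wh L:V\oplus E_{m+1}\to F_m$ is of class $C^1$ in the classical sense, with a continuous Fr\'echet derivative $D\wh L(v,h)\in\mathcal L(H\oplus E_m,F_m)$. Formally differentiating the identity $\wh L(v,f(v,k))=k$ suggests the candidate derivative
\[
Df(v,k)(\delta v,\delta k)=L(v)^{-1}\bigl[\delta k-D_v\wh L(v,f(v,k))\,\delta v\bigr],
\]
which is bounded $H\oplus F_m\to E_m$ by $1/C_m$ times the norm of the bracket. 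To verify this is the derivative, let $h'=f(v+\delta v,k+\delta k)$ and rearrange
\[
L(v+\delta v)(h'-h)=\delta k-\bigl[\wh L(v+\delta v,h)-\wh L(v,h)\bigr].
\]
The classical $C^1$-property of $\wh L$ at level $m$ with input on level $m+1$ gives the remainder estimate for the bracket, and the uniform bound $|L(v+\delta v)^{-1}|_{m\to m}\leq 1/C_m$ together with sc$^0$-continuity of $f$ (to control the argument of $D_v\wh L$) gives the desired first-order expansion in the $E_m$-norm. Hence $f:V\oplus F_{m+1}\to E_m$ is $C^1$ with the stated derivative. Iterating this argument and differentiating the explicit formula for $Df$ produces, at each stage, an expression built from $L(v)^{-1}$ and partial derivatives $D^\ell_v\wh L(v,f(v,k))$; the estimate (2) provides uniform bounds on $L(v)^{-1}$ at every level, while the sc-smoothness of $\wh L$ supplies the needed higher $C^\ell$-regularity at appropriately shifted levels. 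By induction this yields the required $C^{k+1}$ regularity of $f:V\oplus F_{m+k}\to E_m$.

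The main obstacle is precisely that $v\mapsto L(v)$ need not be norm-continuous, so one cannot simply invoke the classical smoothness of operator inversion. The trick is to evaluate $L(v)$ and its derivatives only on concrete elements $h=f(v,k)$ that already live in higher levels (by sc$^0$-continuity), and to use the sc-smoothness of the joint map $\wh L$ together with the uniform estimate (2) to recover the missing continuous operator behavior one level at a time. Once the $C^{k+1}$-regularity on level $m+k$ is established, Proposition \ref{prop2.11} applies and concludes that $f$ is sc-smooth.
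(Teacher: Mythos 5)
Your sc$^0$-continuity argument is correct and is the standard first step. The genuine gap lies in the overall strategy: Proposition \ref{prop2.11} is a \emph{sufficient} criterion for sc-smoothness that is strictly stronger than sc-smoothness itself, and the map $f(v,k)=L(v)^{-1}k$ does not satisfy its hypotheses in general, so the route through that criterion cannot close. For $k=0$ the criterion demands that $f:V\oplus F_m\to E_m$ be classically $C^1$ with \emph{no} level shift, and in general that $f:V\oplus F_{m+k}\to E_m$ be $C^{k+1}$, i.e.\ one degree \emph{more} than the level shift. Your implicit-differentiation argument can only deliver one degree \emph{less}: sc-smoothness of $\wh L$ gives that $\wh L:V\oplus E_{m+k}\to F_m$ is of class $C^{k}$ (not $C^{k+1}$), so differentiating $\wh L(v,f(v,k))=k$ yields at best $C^{k}$ regularity of $f:V\oplus F_{m+k}\to E_m$; indeed your own base case produces $C^1$ only from $V\oplus F_{m+1}$ to $E_m$, which is the instance $k=1$ where $C^2$ would be required, and the instance $k=0$ is never addressed. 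This is not a fixable bookkeeping slip. Take $E=F$ to be the Sobolev scale on $S^1$ and $L(v)h=h\circ\tau_v$ a translation: then $\wh L$ is sc-smooth (Theorem \ref{action-diff}), hypothesis (2) holds with $C_m=1$, but $f(v,k)=k\circ\tau_{-v}$ is not differentiable in $v$ as a map $V\oplus F_m\to E_m$ unless $k\in F_{m+1}$ (the difference quotients converge to $-k'\circ\tau_{-v}$ only one level down). So the hypotheses of Proposition \ref{prop2.11} genuinely fail for $f$, even though $f$ is sc-smooth.

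The correct route --- the present paper only quotes the result from Proposition 4.8 of \cite{HWZ8.7}, where it is proved --- is to verify the definition of sc-differentiability directly rather than the stronger criterion. Definition \ref{sscc1} only asks, at base points of level $1$, for a derivative bounded from level $0$ to level $0$ whose remainder is $o$ of the \emph{level-$1$} norm of the increment measured in the \emph{level-$0$} norm; this is exactly what your candidate formula $Df(v,k)(\delta v,\delta k)=L(v)^{-1}\bigl[\delta k-D_v\wh L(v,f(v,k))\,\delta v\bigr]$ can satisfy, using the uniform bound (2), the corresponding sc$^1$ expansion of $\wh L$, and the sc$^0$-continuity of $f$ you already established. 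One then checks that $Tf$ is sc$^0$ and observes that $Tf$ is again the solution operator of a linear equation of the same structure (a family of sc-isomorphisms of $TE\to TF$ satisfying the analogues of (1) and (2)), so induction on the order of differentiability finishes the proof. As written, your proposal establishes sc$^0$-continuity and identifies the correct derivative, but it does not establish even sc$^1$, because the regularity it actually extracts never reaches the threshold that Proposition \ref{prop2.11} requires.
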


From Proposition \ref{WM40} we deduce that the solution operator 
$$
(N\oplus E\oplus K)\oplus Y\to X,\quad (a, v, k, y)\mapsto x, 
$$
defined by $x=\bigl[P\circ D_3{\bf f}(a, v, k)\bigr]^{-1}\cdot y$,  is an sc-smooth map. We shall use this fact in order to define a local strong bundle isomorphism which transforms the section  section ${\bf f}-{\bf s}=:{\bf h}$ into an $\ssc^0$-contraction form. {Here ${\bf s}$ is an $\ssc^+$-section. It is defined as the constant section ${\bf s}(a, v, k)=f(0,0,0)$ for all $(a, v, k)\in N\oplus E\oplus K$ and is an $\ssc^+$-section because ${\bf f}(0, 0, 0)\in {\mathcal F}_\infty$. It then follows that ${\bf h}(0,0, 0)=0$.} 

We first note that 
$B=N\oplus E\oplus K$ is a finite-dimensional sc-subspace and so  isomorphic to $\R^n$ for a suitable $n$ and we may assume that  $B=\R^n$ so that our  sc-space becomes 
$$
N\oplus E\oplus \ce=(N\oplus E\oplus K)\oplus X=B\oplus X=\R^n\oplus X.
$$
Similarly, $\cf =C\oplus Y$ with $Y=P\cf$ where $C$ is a finite dimensional sc- complement  and we may assume $C=\R^N$, so that 
$$\cf=\R^N\oplus Y.$$
We now choose an open neighborhood $O$ of  the origin $(0,0)\in {\mathbb R}^n\oplus X$ having the property that $(b, x)\in O$ implies $b\in \wh{O}^*$, and define the bundle map 
\begin{equation*}
\begin{gathered}
\Phi: O\triangleleft (\R^N\oplus Y)\rightarrow O\triangleleft (\R^N\oplus X)\\
\Phi((b,x),(c,y)) = \bigl( (b,x)\bigl(c, \bigl[ P\circ D_3{\bf f}(b)\bigr]^{-1}y\bigr)\bigr).
\end{gathered}
\end{equation*}
It  is a strong bundle isomorphism in view of Proposition \ref{WM40} and the previous discussions.  Recalling that ${\bf f}={\bf s}+{\bf h}$ for the constant section ${\bf s}(a, v, k)={\bf f}(0,0, 0)$, the principal part of the  push-forward section ${\bf k}=\Phi_\ast {\bf h}$ is given by the following formula, \begin{equation}\label{push1}
{\bf k}(b, x)=\bigl( (\id -P){\bf h}(b, x), \bigl[ P\circ D_2{\bf h}(b, 0)\bigr]^{-1}P{\bf h}(b, x)\bigr).
\end{equation}

The map \eqref{push1} is a germ of a map (near $(0,0)$)
$$\R^n\oplus X\to \R^N\oplus X.$$
If $Q:\R^N\oplus X\to X$ is the sc-projection onto $X$, we consider the map $Q{\bf k}:\R^n\oplus X\to X$, defined by 
$$Q{\bf k}(b, x)= \bigl[ P\circ D_2{\bf h}(b, 0)\bigr]^{-1}P{\bf h}(b, x),$$
and introduce the germ of a map $H:\R^n\oplus X\to X$ as 
$$
H(b, x):=x-Q{\bf k}(b, x).
$$
Then $H(0, 0)=0$,  and we have to prove the following estimate. Given a level $m$ in $X$ and $\varepsilon>0$, then 
\begin{equation}\label{h1}
\abs{H(b, x)-H(b, x')}_{X_m}\leq \varepsilon\cdot \abs{x-x'}_{X_m}
\end{equation}
for all $\abs{b}$, $\abs{x}_{X_m}$ and $\abs{x'}_{X_m}$ small enough. In order to prove this crucial estimate, we make use of the following result, which will be verified in the next section.

\begin{proposition}\label{PROP4.19}
For  the filled version ${\bf f}:O\rightarrow \cf$  defined on the open neighborhood $O$ of  $(0,0,0)\in N\oplus E\oplus {\mathcal E}$ the following estimate  holds.
If the  level $m\geq 0$  and the positive number $\lambda$ are given,  then
$$
\abs{\bigl[ D_3{\bf f}(a,v,0)-D_3{\bf f}(a,v,\eta)\bigr] h}_{\cf_m}\leq \lambda\cdot \abs{h}_{\ce_m}
$$
for all $h\in \ce_m$ and for $(a,v,\eta)\in O$ small enough on level $m$.
\end{proposition}

Abbreviating $b=(a, v, k)$ we deduce from Proposition \ref{PROP4.19} for $x\in X$ small on level $m$ and for $b$ small and for all $h\in X_m$ the estimate 
\begin{equation*}
\begin{split}
&\abs{\bigl[ D_2{\bf f}(b,x)-D_2{\bf f}(b,0)\bigr] h}_{\cf_m}=\abs{D_3{\bf f}(a,v,\eta)h- D_3{\bf f}(a,v,k)h}_{\cf_m}\\
&\leq  \abs{D_3{\bf f}(a,v,\eta)h-D_3{\bf f}(a,v,0)h}_{\cf_m} + \abs{D_3{\bf f}(a,v,0)h-D_3{\bf f}(a,v,k)h}_{\cf_m}\\
&\leq 2\cdot \lambda\cdot \abs{h}_{X_m}.
\end{split}
\end{equation*}
Since ${\bf s}$ is constant, the same estimate holds for the section ${\bf h}$.  Having this estimate we compute,  assuming that all data are small enough, 
\begin{equation*}
\begin{split}
&\abs{ H(b,x)-H(b,x') }_{X_m}\\
&=\abs{ x-x' + Q{\bf k}(b,x')-Q{\bf k}(b,x) }_{X_m}\\
&=\abs{ \bigl[ PD_2{\bf h}(b,0)\bigr] ^{-1} \lbrace PD_2{\bf h}(b,0)(x-x') - \bigl[ P{\bf h}(b,x)-P{\bf h}(b,x')\bigr]\rbrace }_{X_m}\\
&\leq c_m^{-1}\cdot \abs{ PD_2{\bf h}(b,0)(x-x') - \bigl[ P{\bf h}(b,x)-P{\bf h}(b,x')\bigr]  }_{\cf_m}\\
&=c_m^{-1}\cdot \abs{ \int_0^1 \bigl[ PD_2{\bf h}(b,0)- PD_2{\bf h}(b,tx+(1-t)x')\bigr] (x-x')\ dt }_{\cf_m}\\
&\leq  c_m^{-1}\cdot d_m\cdot \int_0^1\abs{\bigl[ D_2{\bf h}(b,0)- D_2{\bf h}(b,tx+(1-t)x')\bigr] (x-x') }_{\cf_m} \ dt\\
&\leq  c_m^{-1}\cdot d_m\cdot \int_0^1 2\lambda \abs{x-x'}_{X_m}\ dt\\ 
&= 2c_m^{-1}\cdot d_m\cdot \lambda\cdot \abs{x-x'}_{X_m}.
\end{split}
\end{equation*}
We have verified the desired estimate \eqref{h1} and so have confirmed the 
$\ssc^0$-contraction germ property of the filled section ${\bf f}$.

To sum up, we have studied the Cauchy-Riemann section $\ov{\partial }_J$ in local coordinates near a smooth point $0$ and have constructed a filled version ${\bf f}$ and an $\ssc^+$-section ${\bf s}$ satisfying ${\bf f}(0)={\bf s}(0)$, and have found a strong bundle isomorphism $\Phi$ such that the push-forward $\Phi_\ast ({\bf f}-{\bf s})$ is a germ belonging to the distinguished  germs $\mathfrak{C}_{basic}$.
This proves that $\ov{\partial }_J$ is a Fredholm  germ at the point $0$.

Apart from the proof of Proposition \ref{PROPX} and Proposition \ref{PROP4.19}, which will be carried out in the next section, the proof of Proposition \ref{C0-CONT} is complete.\\
\mbox{} \hfill $\blacksquare$

We would like to summarize the previous discussion for further references in the following proposition. 

\begin{proposition}
Let $U\subset E$ be an open neighborhood of $0$ in the  sc-Banach space $E$ and  let ${\bf f}:U\rightarrow F$ be an sc-smooth map into the sc-Banach space $F$, which satisfies the following  three conditions. 
\begin{itemize}
\item[{\em (1)}] At every  smooth  point $x\in U$,   the linearisation $D{\bf f}(x):E\rightarrow F$ is an sc-Fredholm operator.  Its Fredholm index
does not depend on $x$.
 \item[{\em (2)}] There is an sc-splitting $E=B\oplus X$ in which $B$ is a finite-dimensional subspace of $E$ containing the kernel  of $D{\bf f}(0)$,  for which the following holds for $b\in B$ small enough. 
 If $(b_j)\subset B$ is a sequence converging to $b\in B$ and if $(\eta_j)\subset X$ a sequence  bounded on level $m$ and satisfying  
 $$D{\bf f}(b_j)\cdot \eta_j=y_j+z_j$$
 where $y_j\to 0$ in $F_m$ and where the sequence $(z_j)$ is bounded in $F_{m+1}$, then the sequence
 of  $(\eta_j)$
possesses a convergent subsequence  in $X_m$.
\item[{\em (3)}]  If  $m\geq 0$ and $\varepsilon>0$ are given,  then 
$$
\abs{\bigl[ D_2{\bf f}(b,0)-D_2{\bf f}(b,x)\bigr] h}_{F_m}\leq \varepsilon\cdot \norm{h}_{E_m}
$$
for all  $|b|$ and $x$ small enough on level $m$.
\end{itemize}
{Define the $\ssc^+$-map ${\bf s}$ as the constant section ${\bf s}(b, x)={\bf f}(0, 0)\in F_\infty$. Then  
under the conditions (1)--(3),  the map ${\bf h}={\bf f}-{\bf s}$ is conjugated near $0$ to an $\ssc^0$-contraction germ.}
\end{proposition}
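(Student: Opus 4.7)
The proof will closely follow the argument given for the Cauchy--Riemann operator in the preceding pages, replacing the concrete ingredients by the abstract hypotheses (1)--(3). The goal is to construct a strong bundle isomorphism $\Phi$ near $0$ whose push-forward $\Phi_*({\bf f}-{\bf s})$ has the structure of an $\ssc^0$-contraction germ.

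First, I would set up the splittings. Since $B\supset\ker D{\bf f}(0)$, the restriction $D{\bf f}(0)|X:X\to F$ is injective; set $Y=D{\bf f}(0)(X)$ and pick an sc-complement $C$ so that $F=C\oplus Y$, with $P:F\to Y$ the sc-projection along $C$. After identifying $B\cong\mathbb R^n$ and $C\cong\mathbb R^N$, the map $P\circ D{\bf f}(0)|X:X\to Y$ is tautologically an sc-isomorphism, and hypothesis (1) together with the additivity of the Fredholm index along the finite-dimensional inclusion $X\hookrightarrow E$ and projection $P$ forces each $P\circ D{\bf f}(b)|X$ to be an sc-Fredholm operator of index $0$ for $b$ near $0$. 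The next task is to upgrade this to the uniform estimate
\[
|P\circ D{\bf f}(b)\,h|_{F_m}\geq c_m\cdot |h|_{E_m},\qquad h\in X_m,
\]
holding for all $b$ in some neighborhood $\wh O\subset B$ of $0$ and all levels $m$. By a compactness argument, if either bijectivity or the lower bound failed one could produce sequences $b_j\to b_0$ in $\wh O$ and $(h_j)\subset X_m$ with $|h_j|_m=1$ and $P\circ D{\bf f}(b_j)h_j\to 0$ in $Y_m$; writing $D{\bf f}(b_j)h_j=y_j+z_j$ with $y_j\to 0$ in $F_m$ and $z_j\in C$ bounded (and hence bounded on every higher level since $C$ is finite-dimensional), hypothesis (2) furnishes a convergent subsequence $h_j\to h_0\in X_m$ with $|h_0|_m=1$ and $P\circ D{\bf f}(b_0)h_0=0$, contradicting injectivity at $b_0$.

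With these estimates in hand, Proposition \ref{WM40} yields an sc-smooth solution operator $(b,y)\mapsto[P\circ D{\bf f}(b)]^{-1}y$. Fix an open neighborhood $O$ of $(0,0)\in\mathbb R^n\oplus X$ with $b\in\wh O$ whenever $(b,x)\in O$, and set ${\bf h}={\bf f}-{\bf s}$ (so $D_2{\bf h}=D_2{\bf f}$ because ${\bf s}$ is constant). Then
\[
\Phi\bigl((b,x),(c,y)\bigr):=\bigl((b,x),\bigl(c,[P\circ D_2{\bf h}(b,0)]^{-1}y\bigr)\bigr)
\]
defines a strong bundle isomorphism $\Phi:O\triangleleft(\mathbb R^N\oplus Y)\to O\triangleleft(\mathbb R^N\oplus X)$ whose push-forward has principal part
\[
\Phi_*{\bf h}(b,x)=\bigl((\id-P){\bf h}(b,x),\;x-H(b,x)\bigr),\qquad H(b,x):=x-[P\circ D_2{\bf h}(b,0)]^{-1}P{\bf h}(b,x).
\]

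The hard part, and the last step, is to verify the contraction estimate $|H(b,x)-H(b,x')|_{X_m}\leq\varepsilon\cdot |x-x'|_{X_m}$ for $|b|$, $|x|_{X_m}$, $|x'|_{X_m}$ sufficiently small. The obstacle is that the linearisations $D_2{\bf h}(b,\cdot)$ do not depend continuously on their base point in the operator norm, so a naive Taylor expansion of $H$ is unavailable. Instead I would exploit the uniform lower bound for $P\circ D_2{\bf h}(b,0)$ to write
\[
|H(b,x)-H(b,x')|_{X_m}\leq c_m^{-1}\Bigl|\int_0^1\bigl[D_2{\bf h}(b,0)-D_2{\bf h}(b,tx+(1-t)x')\bigr](x-x')\,dt\Bigr|_{F_m},
\]
pull the norm inside the integral, and invoke hypothesis (3) pointwise in $t$ to bound the integrand by $(\varepsilon/c_m)\cdot|x-x'|_{X_m}$ uniformly in $t$ once $|b|$, $|x|_{X_m}$, $|x'|_{X_m}$ are small enough on level $m$. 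This delivers the $\ssc^0$-contraction germ form for $\Phi_*({\bf f}-{\bf s})$ and completes the proof.
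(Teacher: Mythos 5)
Your proposal is correct and follows essentially the same route as the paper, which proves this proposition by pointing back to the argument given for the filled Cauchy--Riemann section: the splittings $E=B\oplus X$, $F=C\oplus Y$, the injectivity and uniform lower bound for $P\circ D{\bf f}(b)|X$ obtained from hypotheses (1)--(2) by the same compactness argument, the solution operator from Proposition \ref{WM40}, the bundle isomorphism $\Phi$, and the contraction estimate for $H(b,x)$ via the integral representation and hypothesis (3). The only cosmetic omission is the constant $\norm{P}$ (the paper's $d_m$) in your final displayed inequality, which does not affect the argument.
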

The proof follows  along the same lines as  above.  The properties (1) and (2) can be used
to define the coordinate change $\Phi$.  The property (3) then allows
to carry out the desired estimate. Let us remark that the conclusion is in fact true for every ${\bf s}$ which satisfies
${\bf s}(0,0)={\bf f}(0,0)$. This follows from one of the results in \cite{HWZ3.5} which is concerned with the perturbation theory.
\begin{remark} We would like to mention that there is a variation of the whole scheme which will be important  for  compactness questions in the context of operations in the SFT.
Namely rather than assuming that $u_0$ is on the infinity level we just assume that it is on the  level zero. In that case we can still define a map ${\bf f}$
$$
{\bf f}:O\rightarrow {\mathcal F},
$$
on  an open neighborhood $O$  of $(0,0,0)$ in $N\oplus E\oplus{\mathcal E}$. But in this case
${\mathcal E}$ is simply  a Banach space. However it still makes sense to distinguish two levels  on ${\mathcal F}$, namely the $0-$ and the $1$-level.
The map ${\bf f}$ is continuous and for fixed $(a,v)$ the map $\eta\rightarrow {\bf f}(a,v,\eta)$ is $C^1$ (even $C^\infty$). We can consider
a bundle $O{\triangleleft}\  {\mathcal F}\rightarrow O$, where the base always has level $0$, but the fiber has two levels. So the bi-levels
$(0,0)$ and $(0,1)$ are well-defined. It makes sense to define continuous $\ssc^+$-sections, namely sections which
map points in $O$ continuously to ${\mathcal F}^1$. We can consider bundle maps quite similarly to the strong bundle maps, but where we only require them
to be continuous and preserving this simple bi-level structure. If we start with an arbitrary strong local bundle we have an underlying structure consisting of  the two bi-levels $(0,0)$ and $(0,1)$. Then it makes sense to  look for  a continuous bundle isomorphism which conjugates ${\bf f}-{\bf s}$, where ${\bf s}$ is 
a continuous $\ssc^+$-section  satisfying  ${\bf f}(0,0,0)={\bf s}(0,0,0)$,  to a map which has the contraction germ property on level $0$ (which is the only level which makes sense).  This  can be used to study compactness questions near elements which are not on the infinity-level,
because the particular contraction type form allows to  control  the  compactness nearby.
\end{remark}

\section{Proofs of  Proposition \ref{PROPX} and Proposition \ref{PROP4.19}}
This section is devoted to the proofs of Proposition \ref{PROPX} and Proposition \ref{PROP4.19} concerning the filled version 
$${\bf f}:O\to {\mathcal F},\quad (a, v, \eta)\mapsto {\bf f}(a, v,\eta),$$
where $(a, v, \eta)\in O$ and $O$ is an open neighborhood of $(0,0, 0)$ in $N\oplus E\oplus {\mathcal E}$. The spaces ${\mathcal E}$ and ${\mathcal F}$ are  introduced in Section \ref{c-r-results}. From the formula for the derivative $D_3{\bf f}(a, v, \eta)$ below in \eqref{identity_25}, it is evident that the proofs can be localized and we shall prove the local versions of the propositions and distinguish between the ``classical case'' and the ``nodal case''. The classical case deals with maps defined on parts of $S$ away from the nodal pairs, while the nodal case deals with maps near the nodal points. Then the propositions follow from the local versions by means of a finite partition of unity argument.

We should  recall that, in general, the Fredholm section ${\bf f}$ is merely sc-smooth and not smooth in the classical sense. This is due to the occurrence of a finite dimensional space of  ``bad'' coordinates, which in our case are the coordinates $(a, v)$. If one keeps $(a, v)$ fixed, the map $\eta\mapsto {\bf f}(a, v, \eta)$ is classically smooth on every level. But in general, the  linear operator $D_3{\bf f}(a, v, \eta)$ does not even depend continuously on $(a, v)$ in the operator norm,  and the polyfold Fredholm property is a consequence of a certain uniformity in the parameters $(a, v)$.

We recall that the filled section map $ (a, v, \eta)\mapsto {\bf f}(a, v,\eta)=\xi$ is defined as the unique solution of the equations 
\begin{equation}\label{defined10}
\begin{gathered}
\Gamma(\exp_{u_a}(\oplus_a(\eta)),u_a)\circ\wh{\oplus}_a(\xi)\circ\delta(a,v)= \bar{\partial}_{J,j(a,v)} (\exp_{u_a}(\oplus_a(\eta)))\\
\wh{\ominus}_a(\xi){\cdot \frac{\partial}{\partial s}}=\bar{\partial}_0 (\ominus_a(\eta)).
\end{gathered}
\end{equation}
Abbreviating  $\Theta(q,p):=\Gamma^{-1}(p,q)$ and $\tau(a,v):=\delta(a,v)^{-1}$, the  equations \eqref{defined10} take  the form 
\begin{equation}\label{defined1000}
\begin{gathered}
\wh{\oplus}_a(\xi)=\Theta(u_a,\exp_{u_a}(\oplus_a(\eta)))\circ \bar{\partial}_{J,j(a,v)} (\exp_{u_a}(\oplus_a(\eta)))\circ\tau(a,v)\\
\wh{\ominus}_a(\xi){\cdot \frac{\partial}{\partial s}}=\bar{\partial}_0 (\ominus_a(\eta)).
\end{gathered}
\end{equation}

In order to deal first  with the classical case we consider only variations of $\eta$ belonging to the space 
 ${\mathcal E}^{\#}$ consisting of all vector fields $\eta$ along $u$ satisfying $\eta (z)=0$ for  all $z$ contained in the union 
 $${\bf D}(-1)= \bigcup_{x\in \abs{D}}D_x(-1)$$
 of the concentric subdisks of the small disk structure as introduced in Section \ref{xA}.  The restriction of ${\bf f}$ to the complement $S\setminus {\bf D}(-1)$ does not depend on the gluing 
 parameter $a$. Therefore, the filled section ${\bf f}$ satisfies for $\eta\in {\mathcal E}^{\#}$ the identity
\begin{equation}\label{MMM}
{\bf f}(a,v,\eta)=\Theta(u,\exp_{u}(\eta))\circ \bar{\partial}_{J,j(v)}(\exp_{u}(\eta))\circ \tau(v),
\end{equation}
where $\tau (v)=\delta (v)^{-1}$. Denoting by ${\mathcal E}^{\#\#}$ the space  of vector fields  $\eta\in {\mathcal E}$
satisfying $\eta (z)=0$ for all $z\in \bigcup_{x\in \abs{D}} D_x(-\frac{1}{2})$, we deduce  the following result by means of the mean value theorem.
\begin{P4.19}[{\bf Classical Case}]
We fix a level $m\geq 0$ and $\lambda>0$. Then 
\begin{equation*}
\abs{D_3{\bf f}(a,v,\eta)h-D_3{\bf f}(a,v,0)h}_{{\mathcal F}_m}\leq \lambda\cdot \abs{h}_{{\mathcal E}_m}.
\end{equation*}
for all $h\in  {\mathcal E}^{\#\#}_m$  and for  $(a,v,\eta)\in O_m$  sufficiently close to $(0,0,0)$ and $\eta\in  {\mathcal E}^{\#\#}_m$.
\end{P4.19}

In  order to deal with the classical case of Proposition \ref{PROPX}   we again consider the section  ${\bf f}$ on the complement $S\setminus  {\bf D}(-1)$ where it is defined by the formula  \eqref{MMM}.  We fix a Hermitian connection $\nabla$ for $TQ\rightarrow Q$, where the fibers are equipped with the complex multiplication $J$, see Appendix \ref{connection} for  more details.
Viewing ${\bf f}$ as a section of the trivial bundle $O\triangleleft{\mathcal F}\rightarrow O$ we fix the trivial connection for the latter so that
$\nabla_{(0,0,h)} {\bf f}(a,v,\eta) =D_3{\bf f}(a,v,\eta)h$. Having fixed these connections we obtain various other connections in standard ways, e.g.
for the function spaces of sections associated to $TQ\rightarrow Q$ and certain Hom-bundles, see \cite{El}.  We compute  the covariant derivative with respect to the third variable  at the section $\eta$ in the direction of the section $h$ of the bundle $u^*TQ\to S$ and obtain  from \eqref{MMM} the formula
\begin{equation}\label{identity_25}
\begin{split}
D_3{\bf f}(a,v,\eta)h&=(\nabla_{(T\exp_{u}(\eta)h, 0)}\Theta)[\exp_{u}(\eta), u] \circ \ov{\partial}_{J,j(v)}(\exp_{u}(\eta))\circ \tau(v)\\
&\phantom{=}+\Theta(\exp_{u}(\eta), u) \circ (\nabla_h\ov{\partial}_{J,j(v)})(\exp_{u}(\eta))\circ \tau(v)\\
&=(\nabla_{(T\exp_{u}(\eta)h, 0)}\Theta)[\exp_{u}(\eta), u] \circ \Gamma(\exp_{u}(\eta),u)\circ {\bf f}(a,v,\eta)\\
&\phantom{=}+\Theta(\exp_{u}(\eta), u) \circ (\nabla_h\ov{\partial}_{J,j(v)})(\exp_{u}(\eta))\circ \tau(v).
\end{split}
\end{equation}
{Note that $\Theta$ depends on two variables and we only differentiate with respect to the first one.
This explains the $0$ in $(T\exp_{u}(\eta)h, 0)$ occurring in $(\nabla_{(T\exp_{u}(\eta)h, 0)}\Theta)[\exp_{u}(\eta), u]$}.
We now  assume, as in the statement of  Proposition \ref{PROPX},  that  the sequence $(a_j,v_j,\eta_j)\subset O$ converges  to $(a_0,v_0,\eta_0)\in O$, where $\eta_j$ and $\eta_0$  belong to the  finite-dimensional sc-complemented subspace $K$ of ${\mathcal E}$ so  that $\eta_j\to \eta_0$ in ${\mathcal E}_k$ for every level  $k\geq 0$. In addition, we assume  that  the sequence $(h_j)\subset {\mathcal E}_m$ is bounded  and has the form 
\begin{equation}\label{convergence_0}
D_3{\bf f}(a_j,v_j,\eta_j)h_j=y_j+z_j,
\end{equation}
where 
$y_j\in {\mathcal F}_m$ satisfies $y_j\to  0$ in ${\mathcal F}_m$ and where the sequence $(z_j)\in {\mathcal F}_{m+1}$ is bounded in ${\mathcal F}_{m+1}$.  In view of the formula \eqref{identity_25}  for the derivative $D_3{\bf f}(a,v,\eta)h$, 
\begin{equation}\label{convergence_1}
\begin{split}
D_3&{\bf f}(a_j,v_j,\eta_j)h_j\\
&=(\nabla_{(T\exp_{u}(\eta_j)h_j,0)}\Theta)[ \exp_{u}(\eta_j), u] \circ\Gamma(\exp_{u}(\eta_j),u)\circ {\bf f}(a_j,v_j,\eta_j)\\
&\phantom{=}+\Theta(\exp_{u}(\eta_j),u) \circ (\nabla_{h_j}\ov{\partial}_{J,j(v_j)})(\exp_{u}(\eta_j))\circ \tau(v_j).
\end{split}
\end{equation}
Since $(a_j, v_j)\to (a_0, v_0)$ and $\eta_j\to \eta_0$ on every level of ${\mathcal E}$ and since the map ${\bf f}(a, v, \eta)$ is sc-continuous, we conclude that ${\bf f}(a_j, v_j, \eta_j)\to {\bf f}(a_0, v_0, \eta_0)$ on every level of ${\mathcal F}$.
We recall  that the  space  ${\mathcal E}^{\#}$  is a  closed subspace of ${\mathcal E}$ consisting of all $\eta$ vanishing on  ${\bf D}(-1)$. Similarly, we define 
the space ${\mathcal F}^{\#}$ consisting of all elements $h\in {\mathcal F}$ which vanish on the set ${\bf D}(-1)$.
The sequence of  linear  operators
\begin{gather*}
L_j:{\mathcal E}^{\#}\rightarrow {\mathcal F}^{\#},
\intertext{defined by}
h\mapsto  (\nabla_{(T\exp_{u}(\eta_j)h, u)}\Theta)[\exp_{u}(\eta_j),u] \circ\Gamma(\exp_{u}(\eta_j),u)\circ {\bf f}(a_j,v_j,\eta_j),
\end{gather*}
consists of $\ssc^+$-operators. The sequence  converges on every level,  as operators from 
${\mathcal E}^{\#}$ to  $\bigl( {\mathcal F}^{\#} \bigr)^1$,  to the $\ssc^+$-operator
$$
h\mapsto   (\nabla_{(T\exp_{u}(\eta_0)h,0)}\Theta)[\exp_{u}(\eta_0),u] \circ\Gamma(\exp_{u}(\eta_0),u)\circ {\bf f}(a_0,v_0,\eta_0).
$$
The boundedness of  the sequence $(h_j)$  in ${\mathcal E}_m$ implies  that the sequence $(L_j(h_j))$ is bounded on  the level $m+1$ in ${\mathcal F}^{\#}$. Hence,  introducing  $z^\ast_j=z_j-L_j(h_j)$ and using  \eqref{convergence_0} and \eqref{convergence_1},  we see  that 
\begin{equation*}
\Theta(\exp_{u}(\eta_j), u) \circ (\nabla_{h_j}\ov{\partial}_{J,j(v_j)})(\exp_{u}(\eta_j))\circ \tau(v_j) = y_j + z_j^\ast
\end{equation*}
where $y_j\rightarrow 0$ in $H^{m+3}$ on $S\setminus {\bf D}(-1)$ and  where $z_j^\ast$ is a bounded sequence
in $H^{m+4}$  on  $S\setminus {\bf D}(-1)$.
We write the above identity as
\begin{equation}\label{convergence_2}
(\nabla_{h_j}\ov{\partial}_{J,j(v_j)})(\exp_{u}(\eta_j)) = \Gamma(\exp_{u}(\eta_j),u)(y_j + z_j^\ast)\circ\delta (v_j)
\end{equation}
where $\Gamma(\exp_{u}(\eta_j),u)=\Theta(\exp_{u}(\eta_j),u)^{-1}$ and $\delta (v)=\tau (v)^{-1}$.

Now we focus  on  a neighborhood of the point $z_0$ in $S\setminus {\bf D}(-1)$.
We  fix  an open disk-like  neighborhood ${\mathcal D}$ around $z_0$ whose  boundary is smooth and whose closure does not intersect ${\bf D}(-1)$.   
We may assume that the disk ${\mathcal D}$ is so small that  the images $\exp_{u}(\eta_j)({\mathcal D})$ for $j$ large  are contained in the image of a chart $\psi:U(u(z_0))\subset Q\rightarrow {\mathbb R}^{2n}$ around $u(z_0)$ mapping $u(z_0)$ to $0$.  In abuse of the notation, we denote by $h_j(z)$ the map $T\psi (\exp_{u}(\eta_j)(z))h_j(z)$. Similarly,  by $y_j(z):T_zS\to \R^{2n}$ and $z^*_j:T_zS\to \R^{2n}$  we denote  the maps $T\psi \circ \exp_{u}(\eta_j)(z)\Gamma(\exp_{u}(\eta_j),u)y_j \circ \delta (v_j) (z)$ and $T\psi (\exp_{u}(\eta_j)(z)\Gamma(\exp_{u}(\eta_j),u)z^*_j \circ \delta (v_j) (z)$. 
Then,  in view of the formula \eqref{covariant2_b} in the Appendix \ref{connection} , the equation \eqref{convergence_2}  can be written   in our local coordinates on $Q$   as  
\begin{equation}\label{convergence_2b}
Dh_j (z)+J_j(z)\circ Dh_j(z)\circ j(v_j)+A_j(z)(h_j(z), \cdot )=y_j +z_j^*.
\end{equation}
We choose  a smooth family
$v\mapsto \varphi_v$ of biholomorphic maps $\varphi_v:(D,i)\rightarrow (\ov{\mathcal D},j(v))$ defined on the standard closed disk $D$ and  mapping $0$ to $z_0$. The family $\varphi_v$ is defined for  $v\in V$ close to $0$. 
Taking the composition $e_j=h_j\circ \varphi_{v_j}:D\to \R^{2n}$ and evaluating  both sides of \eqref{convergence_2b}  at the vectors $T\varphi_{v_j}(\partial_s)$,  we obtain
the identity
$$
\partial_s e_j +J_{j}(\varphi_{v_j}) \partial_t e_j +A_j(\varphi_{v_j})(e_j, T\varphi_{v_j}(\partial_s)) = y^{\ast\ast}_j + z^{\ast\ast}_j
$$
in which the sequences $(y_j^{\ast\ast})$ and $(z_j^{\ast\ast})$ have the following properties. The sequence $(y^{\ast\ast}_j)$ converges to $0$ in $H^{m+2}(D)$ and the sequence 
$(z^{\ast\ast}_j)$ is bounded in $H^{m+3}(D)$. 
Abbreviating 
$$\wt{J}_j=J_{j}(\varphi_{v_j}) \quad \text{and}\quad \wt{A}_j=A_j(\varphi_{v_j})(\cdot,  T\varphi_{v_j}(\partial_s)),$$
the above identity takes the form 
$$
\partial_s e_j +\wt{J}_{j} \partial_t e_j +\wt{A}_je_j= y^{\ast\ast}_j + z^{\ast\ast}_j.
$$
The sequence  $(\wt{J}_j)$  is a sequence of smooth maps $D\rightarrow L({\mathbb R}^{2n})$ associating to a point $z\in D$ the  complex multiplication  $\wt{J}_j(z)$ on ${\mathbb R}^{2n}$. The  sequence  converges  in $C^\infty$ to the  smooth map $z\rightarrow \tilde{J}(z)$ where $\wt{J}(z)$ is  a complex multiplication  on ${\mathbb R}^{2n}$. The sequence $(\wt{A}_j)$  is a sequence of smooth maps $A_j:D\rightarrow L({\mathbb R}^{2n})$ which converges to the map $A:D\rightarrow L({\mathbb R}^{2n})$.  Moreover, since the sequence $(h_j)$ is bounded on level $m$,  the sequence $(e_j)$ is bounded in $H^{m+3}(D, \R^{2n})$.
The sequences $(e_j)$, $(y_j^{\ast\ast})$,  and $(z_j^{\ast\ast})$ meet  the  assumptions of the following lemma  proved as Proposition \ref{appx20} in the Appendix.
\begin{lemma}
We assume that  $(e_j)$ is a bounded sequence in $H^{m+3}(D,\R^{2n})$ satisfying 
$$
\partial_s e_j +\wt{J}_{j} \partial_t e_j +\wt{A}_je_j= y^{\ast\ast}_j + z^{\ast\ast}_j
$$
where 
$(y_j^{\ast\ast})$ is a sequence in $H^{m+2}(D,\R^{2n})$ converging to $0$, and  where $(z_j^{\ast\ast})$  is a bounded sequence in $H^{m+3}(D,\R^{2n})$. Then every $e_j$ decomposes as $e_j=e^1_j+e^2_j$ such  that $e^1_j\to 0$ in $H^{m+3}(D)$ and  such  that the sequence $(e^2_j)$ is bounded in $H^{4+m}(D_{\varepsilon})$, for every $\varepsilon\in (0, 1)$.
\end{lemma}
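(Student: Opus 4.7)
The plan is to reduce the variable-coefficient equation to the standard Cauchy--Riemann operator $\bar\partial_0 = \partial_s + J_0\partial_t$ on $\R^{2n}\cong\C^n$, to localize by a cutoff, and then to invert $\bar\partial_0$ via the Cauchy--Green kernel $T$, which maps $H^k_c(\C)$ boundedly into $H^{k+1}_{\mathrm{loc}}(\C)$ and satisfies $\bar\partial_0 T = I$ on compactly supported data. Fix $\delta\in(0,1)$ and $\chi\in C_c^\infty(D)$ with $\chi\equiv 1$ on a neighborhood of $\overline{D_\delta}$.

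\textbf{Step 1 (Gauge straightening).} Since $(\wt J_j)$ converges in $C^\infty(D)$ to $\wt J$, I would choose a smooth family $\Phi_j:D\to GL(2n,\R)$, uniformly bounded in $C^\infty$ together with its inverse, such that $\Phi_j^{-1}\wt J_j\Phi_j = J_0$ identically. Setting $f_j:=\Phi_j^{-1}e_j$ (bounded in $H^{m+3}(D)$), a direct computation transforms the equation into
\begin{equation*}
\bar\partial_0 f_j = \wh y_j + \wh z_j,
\qquad
C_j := \Phi_j^{-1}\bigl(\partial_s\Phi_j + \wt J_j\partial_t\Phi_j + \wt A_j\Phi_j\bigr),
\end{equation*}
with $\wh y_j := \Phi_j^{-1}y_j^{\ast\ast}$, $\wh z_j := \Phi_j^{-1}z_j^{\ast\ast} - C_j f_j$, and $(C_j)$ uniformly smooth. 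Hence $\wh y_j\to 0$ in $H^{m+2}(D)$ and $(\wh z_j)$ is bounded in $H^{m+3}(D)$.

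\textbf{Step 2 (Localize and invert).} Applying $\bar\partial_0$ to $\chi f_j$ gives $\bar\partial_0(\chi f_j) = \chi\wh y_j + \chi\wh z_j + (\bar\partial_0\chi)f_j$. Extending by zero to $\C$ and applying $T$ yields
\begin{equation*}
\chi f_j \;=\; T[\chi\wh y_j]\;+\;T\bigl[\chi\wh z_j + (\bar\partial_0\chi)f_j\bigr].
\end{equation*}
I would then define $e_j^1 := \Phi_j\, T[\chi\wh y_j]$ and $e_j^2 := e_j - e_j^1$. The first conclusion follows from $\chi\wh y_j\to 0$ in $H^{m+2}_c(\C)$: the mapping property of $T$ gives $T[\chi\wh y_j]\to 0$ in $H^{m+3}_{\mathrm{loc}}(\C)$, and uniform smoothness of $\Phi_j$ yields $e_j^1\to 0$ in $H^{m+3}(D)$. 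For the second conclusion, on $D_\delta$ we have $\chi\equiv 1$, hence $\Phi_j^{-1}e_j^2 = T[\chi\wh z_j + (\bar\partial_0\chi)f_j]$; its argument is bounded in $H^{m+3}_c(\C)$ (using the uniform $H^{m+3}$-bounds on $(\wh z_j)$ and on $(f_j)$), so $(e_j^2)$ is bounded in $H^{m+4}(D_\delta)$.

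The principal technical point requiring care is the $H^{m+3}$-boundedness of the zeroth-order correction $C_j f_j$ that is absorbed into $\wh z_j$: the gauge transformation $\Phi_j$ straightens only the principal symbol of the operator, and its derivatives feed back into a genuine zeroth-order coefficient $C_j$. This is harmless precisely because $(e_j)$ is a priori bounded \emph{one derivative higher} than what the bootstrap in $T$ would otherwise demand, and because the $C^\infty$-convergence $\wt J_j\to\wt J$ and $\wt A_j\to A$ supplies the needed uniformity of $\Phi_j$ and $C_j$. Everything else then reduces to the elementary fact that the Cauchy--Green operator gains exactly one Sobolev derivative on compactly supported arguments, and one can clearly repeat the construction for each $\delta\in(0,1)$.
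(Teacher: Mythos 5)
Your argument is correct, but it follows a genuinely different route from the paper's. The paper works globally on $D$: it imposes the totally real boundary condition $u(z)\in(\R\oplus\{0\})\oplus\ldots\oplus(\R\oplus\{0\})$ on $\partial D$ together with $u(1)=0$, so that $L_ju=u_s+\wt J_ju_t+\wt A_ju$ becomes a Fredholm operator of index $0$ with discrete spectrum; a fixed spectral shift $\varepsilon$ makes $L_j+\varepsilon$ uniformly invertible for large $j$, one solves $(L_j+\varepsilon)e^1_j=y^{\ast\ast}_j$ to get $e^1_j\to 0$ in $H^{m+3}(D)$, and interior elliptic regularity applied to $L_je^2_j=z^{\ast\ast}_j+\varepsilon e^1_j$ gives the $H^{m+4}_{\mathrm{loc}}$-bound on $e^2_j=e_j-e^1_j$. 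You instead straighten the almost complex structure by a pointwise gauge $\Phi_j$ (legitimate here since $\wt J_j$ is uniformly close to $J_0$, e.g.\ via $\Phi_j^{-1}=\frac12\bigl(I-J_0\wt J_j\bigr)$, exactly the paper's construction $\sigma(v)$), absorb the resulting zeroth-order term $C_jf_j$ into the bounded $H^{m+3}$ right-hand side --- correctly exploiting that $e_j$ is a priori one derivative better than $y^{\ast\ast}_j$ --- and then invert $\bar\partial_0$ explicitly on the cutoff function via the Cauchy--Green operator. Your route avoids the boundary value problem and the spectral argument entirely and replaces the appeal to interior elliptic regularity by the concrete one-derivative gain of $T$ on compactly supported data; the price is that your decomposition depends on the cutoff $\chi$ and hence on $\delta$, whereas the paper produces a single decomposition for which $(e^2_j)$ is bounded in $H^{m+4}(D_\delta)$ for all $\delta$ simultaneously, which is what the statement literally asserts. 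Since the lemma is only used to extract, for each fixed $\delta$, a subsequence of the restrictions $e_j|_{D_\delta}$ converging in $H^{m+3}(D_\delta)$, this $\delta$-dependence is harmless, but you should say explicitly that you are proving the (sufficient) $\delta$-wise version of the claim rather than the uniform one.
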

Here $D_{\varepsilon}\subset D$ denotes the  subdisk $D_{\varepsilon}=\{z\in D\vert \, \abs{z}\leq 1-\varepsilon\}$ for $0<\varepsilon<1$.
In view of  the above  considerations and applying  a finite covering argument  we obtain the following result. 
\begin{P4.17}[{\bf Classical Case}]
Let ${\mathcal K}$ be a finite-dimensional sc-complemented linear subspace of ${\mathcal E}$ and let $(a_0,v_0,k_0)\in O$  with $k_0\in {\mathcal K}$. We consider  a sequence ($a_j,v_j,k_j)\in  O$ in which $k_j\in {\mathcal K}$,  converging to $(a_0,v_0,k_0)$,   and a  bounded sequence  $(h_j)\in {\mathcal E}_m$  satisfying 
$$D_3{\bf f}(a_j,v_j,k_j)h_j =y_j +z_j$$
where  $(y_j)\subset {\mathcal F}_m$ converges to $0$ and where the sequence $(z_j)$  is bounded in ${\mathcal F}_{m+1}$. 

Then  there exists a subsequence of  $(h_j)$ having the property that the sequence  $(h_j \vert (S \setminus  {\bf D}({-1/2} ))$ of restrictions converges in ${\mathcal E}_m$. 
\end{P4.17}

Next we shall consider the nodal part of Proposition \ref{PROP4.19}.
\begin{P4.19}[{\bf Nodal Case}]\label{Nodal_case4.19} 
Let  $\{x,y\}\in D$  be a nodal pair and let $m\geq 0$  and $\varepsilon>0$. Then  
$$
\abs{D_3{\bf f}(a,v,\eta)h-D_3{\bf f}(a,v,0)h}_{{\mathcal F}_m}\leq \varepsilon \cdot\abs{h}_{{\mathcal E}_m}
$$
for all $h\in {\mathcal E}_m$ having their support in $D_x\cup D_y$ and for  $(a,v,\eta)\in O_m$  sufficiently close to $(0,0,0)$ on the level $m$.
\end{P4.19}
\begin{proof}
We work in local coordinates near  the nodal pair $\{x, y\}$.  We start with the defining equation \eqref{defined10}
and focus on the subset  $D_x\cup D_y$ of $S$. On this set the complex structure $j$ does not depend on $v$. Hence
\begin{equation}\label{defined100}
\begin{gathered}
\Gamma(\exp_{u_a}(\oplus_a(\eta)),u_a)\circ\wh{\oplus}_a(\xi)= \ov{\partial}_{J,j} (\exp_{u_a}(\oplus_a(\eta)))\\
\wh{\ominus}_a(\xi) \cdot {\frac{\partial}{\partial s}}=\ov{\partial}_0 (\ominus_a(\eta)).
\end{gathered}
\end{equation}

Here $j$ denotes the standard structure on $Z_a$ for the various $a$, i.e. $j=j(a)$. We are going to use the definitions of the gluing
constructions and  pass to local coordinates near the nodal pair $\{x, y\}$.  We take  the holomorphic polar coordinates $\sigma^+:\R^+\times S^1\to D_x$ and $\sigma^-:\R^+\times S^1\to D_y$   {belonging to the small disk structure  and satisfying}  $\sigma^+(s, t)\to x$ and $\sigma^-(s',t')\to y$ as $s\to \infty$ and $s'\to -\infty$. Moreover, we take a chart $\psi:U\subset Q\to \R^{2n}$ around the image of the nodes $u(x)=u(y)\in Q$ satisfying $\psi (u(x))=\psi (u(y))=0$. The map $u:S\to Q$ is represented  on the set $D_x\cup D_y$ by the pair $(u^+, u^-)$  of  smooth maps $u^\pm(s, t)=\psi \circ u\circ \sigma^\pm (s, t):\R^\pm\times S^1\to \R^{2n}$. The vector field $\eta$ along the map $u$ is represented by the pair $h=(h^+, h^-)$ of  vector fields $h^\pm(s, t)=T\psi (u\circ \sigma^\pm (s, t))\circ \eta \circ \sigma^\pm (s, t)$ along the curves $u^\pm$ in $\R^{2n}$. 
The section 
$$(a, v,\eta)\mapsto {\bf f}(a, v, \eta)$$
on $D_x\cup D_y$ is represented in local coordinates  by the pair of vectors  
$$\wh{\xi}=(\xi^+, \xi^-)=({\bf f}^+(a, v, h), {\bf f}^-(a, v,h))$$  defined by 
\begin{equation*}
\begin{aligned}
{\bf f}^+(a, v,h)(s,t)&=T\psi (u\circ \sigma^+(s, t))\circ {\bf f}(a, v,\eta)(\sigma^+ (s, t))\cdot [\partial_s\sigma^+(s, t)]\\
{\bf f}^-(a, v, h)(s',t')&=T\psi (u\circ \sigma^-(s', t'))\circ {\bf f}(a, v,\eta)(\sigma^- (s', t'))\cdot [\partial_s\sigma^-(s', t')]
\end{aligned}
\end{equation*}
for $(s, t)\in \R^+\times S^1$ and $(s',t')\in \R^-\times S^1$.  Abusing the notation we shall write $u=(u^+, u^-)$ and we shall  use the following abbreviations for   the glued curves and  the glued vector fields
$$u_a=\oplus_a(u)=\oplus_a(u^+, u^-)\quad \text{and}\quad h_a=\wh{\oplus}_a(h)=\wh{\oplus}_a(h^+,h^-).$$
{We recall from Proposition \ref{exp-formula1}  the formula  
$$\oplus_a\exp_u(h)=\exp_{u_a}(h_a)=u_a+h_a$$
for  the  exponential map with respect to the Euclidean metric in $\R^{2n}$.}  Hence  the equation 
\eqref{defined100} on $D_x\cup D_y$ becomes 
\begin{equation}\label{nodalB}
\begin{aligned}
\wh{\Gamma}(u_a+h_a,u_a)\circ \wh{\oplus}_a(\wh{\xi})&=\frac{1}{2}\bigl[  \partial_s (u_a+h_a)+ \wh{J}(u_a+h_a)
\partial_t (u_a+h_a)\bigr]\\
 \wh{\ominus}_a(\wh{\xi})& = \bar{\partial}_0 \bigl( \ominus_a(h)\bigr).
 \end{aligned}
 \end{equation}
 We introduce two maps $A, B:\R^{2n}\oplus \R^{2n}\to {\mathcal L}(\R^{2n},\R^{2n})$ by 
 $$A(p, q)=\frac{1}{2}\wh{\Gamma}(p, q)^{-1}\quad \text{and}\quad B(p, q)=\frac{1}{2}\wh{\Gamma}(p, q)^{-1}\wh{J}(p).$$
Applying the inverse of $\wh{\Gamma}$ to both sides of \eqref{nodalB} we obtain 
\begin{equation}\label{nodalBeq2}
\begin{aligned}
\wh{\oplus}_a(\wh{\xi})&=A(u_a+h_a,u_a) \partial_s (u_a+h_a)+B(u_a+h_a,u_a)\partial_t (u_a+h_a)\\
 \wh{\ominus}_a^0(\wh{\xi})& = \bar{\partial}_0 \ominus^0_a(h).
 \end{aligned}
 \end{equation}
We decompose the vector field $\wh{\xi}$  into the sum of two vector fields  
$$\wh{\xi}=(\xi^+,\xi^-)=\xi_1+\xi_2$$
 where  the pair $\xi_1=(\xi^+_1, \xi^-_1)$ satisfies $\wh{\ominus}_a (\xi_1)=0$ and the pair 
$\xi_2=(\xi^+_2,\xi^-_2)$ satisfies $\wh{\oplus}_a(\xi_2)=0$.  In view of\,  $\wh{\oplus}_a(\wh{\xi})=\wh{\oplus}_a(\xi_1)$ and $\wh{\ominus}_a(\xi_1)=0$ and \eqref{nodalBeq2},  the pair $\xi_1$ solves the equations 
\begin{equation*}
\begin{aligned}
\wh{\oplus}_a(\xi_1)&=A(u_a+h_a,u_a) \partial_s (u_a+h_a)+B(u_a+h_a,u_a)\partial_t (u_a+h_a)\\
\wh{\ominus}_a(\xi_1)& = 0.
 \end{aligned}
 \end{equation*}
Introducing the matrices 
 \begin{equation*}
\wh{A}(p, q)=\begin{bmatrix}A(p, q)&0\\
0&\id
\end{bmatrix}
\quad \text{and}\quad 
\wh{B}(p, q)=\begin{bmatrix}B(p, q)&0\\
0&\id
\end{bmatrix},
\end{equation*}
and recalling the total hat gluing map $\wh{\boxdot}_a=(\wh{\oplus}_a,\wh{\ominus}_a)$ and the maps $D_s^a$, $D^a_s$ from Section \ref{preliminaries}, the above equations can be written as 
\begin{equation*}
\begin{split}
\wh{\boxdot}_a(\xi_1)&=\bigl(A(u_a+h_a,u_a) \partial_s (u_a+h_a), 0\bigr)+\bigl(B(u_a+h_a,u_a) \partial_t (u_a+h_a), 0\bigr)\\
&=\wh{A}(u_a+h_a,u_a)\begin{bmatrix} \partial_s (u_a+h_a)\\0\end{bmatrix}+
\wh{B}(u_a+h_a,u_a)\begin{bmatrix} \partial_t (u_a+h_a)\\0\end{bmatrix}\\
&=\wh{A}(u_a+h_a,u_a)\cdot \wh{\boxdot}_a\circ D^a_s(u+h)+\wh{B}(u_a+h_a,u_a)\cdot \wh{\boxdot}_a\circ D^a_t(u+h).
\end{split}
\end{equation*}
Abbreviating 
$$L(a, v, h):=(\wh{\boxdot}_a)^{-1}\wh{A}(u_a+h_a,u_a)\cdot \wh{\boxdot}_a\quad \text{and}\quad  M(a, v, h):=(\wh{\boxdot}_a)^{-1}\wh{B}(u_a+h_a)\cdot \wh{\boxdot}_a,$$
the solution $\xi_1$ is equal to 
$$\xi_1=L(a, v, h)\circ D^a_s(u+h)+M(a, v, h)\circ D^a_t(u+h).$$
We already know that,  for fixed $(a,v)$,  the map
$$
h\mapsto \xi_1 (a, v, h)
$$
is of class $C^1$. Its derivative with respect to $h$ in the direction $k$ is given by
\begin{equation*}
\begin{split}
D_3\xi_1(a, v,h)k&=\bigl[ \bigl( D_3L(a, v, h)k\bigr)\circ D^a_s(u+h) +L(a, v, h)\circ D^a_s(k)\bigr] \\
&\phantom{=}+ \bigl[ \bigl( D_3M(a, v, h)k\bigr)\circ D^a_t(u+h) +M(a, v, h)\circ D^a_t(k)\bigr]\\
&=I (a, v, h, k)+II (a, v, h, k).
\end{split}
\end{equation*}
We claim that given $\varepsilon>0$,
$$\abs{D_3\xi_1(a, v, h)k-D_3\xi_1(a, v, 0)k}_{F_m}\leq \varepsilon\abs{k}_{E_m}$$
for all $k\in E_m$ and for $h\in E_m$ sufficiently close to $0$.
In order to prove this estimate  we shall show that 
\begin{equation*}
\begin{aligned}
\abs{I (a, v, h, k)-I(a, v, 0, k)}_{F_m}&\leq  \varepsilon\abs{k}_{E_m}\\
\abs{II (a, v, h, k)-II(a, v, 0, k)}_{F_m}&\leq  \varepsilon\abs{k}_{E_m}
\end{aligned}
\end{equation*}
for all $k\in E_m$ if $h\in E_m$ is sufficiently close to $0$.
We only prove the first estimate involving $I(a, v, h, k)$ since the same arguments apply to the term $II (a, v, h, k)$. 
The difference $I (a, v, h, k)-I(a, v, 0, k)$ consists of two summands,
\begin{equation*}
\begin{split}
&I (a, v, h, k)-I(a, v, 0, k)\\
&=\bigl[ \bigl(D_3L(a, v, h)k\bigr)\circ D^a_s(u+h) -\bigl(D_3L(a, v, 0)k\bigr)\circ D^a_s(u)\bigr]\\
&\phantom{=}+\bigl[ L(a, v, h)\circ D^a_s(k)-  L(a, v, 0)\circ D^a_s(k)\bigr]
\end{split}
\end{equation*}
and we estimate each of the terms.

{We take $\delta>0$. It plays the role of $\varepsilon$ in the statement (3) of Proposition \ref{XCX2}. } Then, using Proposition \ref{XCX1} and  statements (2) and (3) of Proposition \ref{XCX2},  the first summand of the difference  $I (a, v, h, k)-I(a, v, 0, k)$ can be estimated as 
\begin{equation}\label{identity_26}
\begin{split}
&\abs{\bigl( D_3L(a, v, h)k\bigr)\circ D^a_s(u+h) -\bigl( D_3L(a, v, 0)k\bigr)\circ D^a_s(u)}_{F_m}\\
&\phantom{==}\leq \abs{\bigl[  \bigl( D_3L(a, v, h) -D_3L(a, v, 0)\bigr) k\bigr] \circ D^a_s(u+h)  }_{F_m}\\
&\phantom{===} +\abs{\bigl( D_3L(a, v, 0)k \bigr )\circ D^a_s(h) }_{F_m}\\
&\phantom{==}\leq C_m \abs{k}_{E_m}\cdot \bigl[ \delta\abs{u}_{E_m}+\delta \abs{h}_{E_m}+\abs{h}_{E_m}\bigr]
\end{split}
\end{equation}
for all $k\in {\mathcal E}_m$ and  $h$ sufficiently small  {with a constant $C_m$ independent of $(a, v)$.   That the estimate of the second term is independent of $a$ follows from the sc-smoothness of the map $(a, v, h, k,\eta)\to [D_3L(a,v,h)k]\cdot \eta$ in  statement (2) of Proposition \ref{XCX2}. }

 The second term  of $I (a, v, h, k)-I(a, v, 0, k)$ is estimated by the Propositions \ref{XCX1} and \ref{XCX2} as
\begin{equation*}
\begin{split}
\abs{\bigl[ L(a, v,h)-L(a, v,0)\bigr]\circ D^a_s(k)}_{F_m}
&\leq \delta \cdot C_m\cdot \abs{k}_{E_m}
\end{split}
\end{equation*}
if $h$ is sufficiently close to $0$ in $E_m$.
Consequently, if $\varepsilon>0$ is given, we can choose $\delta$ sufficiently small  and $h$ sufficiently  close to $0$ in $E_m$ such that 
\begin{equation*}
\abs{I (a, v, h, k)-I(a, v, 0, k)}_{E_m}\leq \varepsilon \abs{k}_{E_m}
\end{equation*}
for all $k\in E_m$. 
The estimate for the term $II$ is  the same.   Together the estimates imply that, given $\varepsilon>0$, then 
\begin{equation}\label{identity_28}
\abs{D_3\xi_1(a, v, h)k-D_3\xi_1(a, v, 0)k}_{F_m}\leq \varepsilon \abs{k}_{E_m}
\end{equation}
for all $k\in E_m$ and for  $(a, v, h)\in O_m$ sufficiently close to $(0, 0, 0)$ on the level $m$.

We next consider the map $h\mapsto \xi_2(a, v, h)$. Recall that $\xi_2$ is a solution of the equations 
\begin{equation}\label{mynodal3}
\begin{aligned}
\wh{\oplus}_a(\xi_2)&=0\\
\wh{\ominus}_a(\xi_2)&= \frac{1}{2}\bigl( \partial_s \ominus_a(h) +J_0\partial_t\ominus_a(h)\bigr).
\end{aligned}
\end{equation}
Linearizing the map $h\mapsto \xi_2(a, v, h)$ at $h$ in the direction of $k$ we find 
\begin{equation*}
\begin{aligned}
\wh{\oplus}_a(D_3\xi_2(a, v, h)k)&=0\\
\wh{\ominus}_a(D_3\xi_2(a, v, h)k)&= \frac{1}{2}\bigl( \partial_s \ominus_a(k) +J_0\partial_t\ominus_a(k)\bigr).
\end{aligned}
\end{equation*}
{Observing that $D_3\xi_2(a, v, 0)k$ is also a solution of the two equations, we conclude from Theorem \ref{sc-splicing-thm1} that} 
$$
D_3\xi_2(a, v, h)=D_3\xi_2(a, v, 0).$$
In view of \eqref{identity_28} and $D_3{\bf f}(a, v, h)=D_3\xi_1(a, v, h)+D_3\xi_2(a, v, h)$, we finally obtain,  for given $\varepsilon>0$, the estimate
$$
\abs{ D_3{\bf f}(a, v, h)k-D{\bf f}(a, v,0)k}_{F_m}\leq \varepsilon \abs{k}_{E_m}
$$
for all $k\in E_m$ and for $(a, v, h)$ close to $(a, v, 0)$. This proves the desired estimate in 
Proposition \ref{PROP4.19} (Nodal Case).
\end{proof}
Finally, we shall prove the nodal case of Proposition \ref{PROPX}.

\begin{P4.17}[{\bf Nodal Case}]\label{propa_nodal}
Let ${\mathcal K}$ be a finite-dimensional sc-comple\-mented linear subspace in ${\mathcal E}$ and let  $(a_0,v_0,\eta_0)\in O$ in which $\eta_0\in {\mathcal K}$.
Fix the level $m\geq 0$ and consider a sequence $(a_j,v_j,\eta_j)$ in $O$ such that  $\eta_j\in {\mathcal K}$ and $(a_j,v_j,\eta_j)\to (a_0, v_0, \eta_0)$. 
Then the following holds.  If  $(\wh{\eta}_j)$  is a bounded sequence in  ${\mathcal E}_m$ satisfying  
$$D_3{\bf f}(a_j,v_j,\eta_j)\wh{\eta}_j=y_j+z_j,$$
where  $y_j\to 0$ in ${\mathcal F}_m$ and $(z_j)$ is bounded in ${\mathcal F}_{m+1}$,  then  there exists a subsequence of $(\wh{\eta}_j)$
such that   the sequence $\bigl(\wh{\eta}_j\vert {{\bf D}(-\frac{1}{2})   }\bigr)$ of restrictions converges in ${\mathcal E}_m$.
\end{P4.17}
\begin{proof}  It suffices to study the problem near the nodal pairs. We fix the  nodal pair $\{x, y\}\in D$ and work in local coordinates on  $D_x\cup D_y$  where $D_x$ and $D_y$ are disks of the small disk structure.
As in the previous proposition we choose the holomorphic polar coordinates $\sigma^+:\R^+\times S^1\to D_x$ and $\sigma^-:\R^+\times S^1\to D_y$  satisfying $\sigma^+(s, t)\to x$ and $\sigma^-(s',t')\to y$ as $s\to \infty$ and $s'\to -\infty$, and  we take a chart $\psi:U\subset Q\to \R^{2n}$ around the image of the nodes $u(x)=u(y)$ satisfying $\psi (u(x))=\psi (u(y))=0$. The smooth map $u:S\to Q$ is represented  on the set $D_x\cup D_y$ by the pair $(u^+, u^-)$  of  smooth maps $u^\pm(s, t)=\psi \circ u\circ \sigma^\pm (s, t):\R^\pm\times S^1\to \R^{2n}$ and the vector field $\eta$ along the map $u$ is represented by the pair $h=(h^+, h^-)$ of  vector fields $h^\pm(s, t)=T\psi (u\circ \sigma^\pm (s, t))\circ \eta \circ \sigma^\pm (s, t)$  along the maps $u^\pm$. The section 
$$(a, v,\eta)\mapsto {\bf f}(a, v, \eta)$$
on $D_x\cup D_y$ is represented in local coordinates  by the pair of vectors  
$$\wh{{\bf f}}(a, v, h)=({\bf f}^+(a, v, h), {\bf f}^-(a, v,h))$$
 defined by 
\begin{equation*}
\begin{aligned}
{\bf f}^+(a, v,h)(s,t)&=T\psi (u\circ \sigma^+(s, t))\circ {\bf f}(a, v,\eta)(\sigma^+ (s, t))\cdot [\partial_s\sigma^+(s, t)]\\
{\bf f}^-(a, v, h)(s',t')&=T\psi (u\circ \sigma^-(s', t'))\circ {\bf f}(a, v,\eta)(\sigma^- (s', t'))\cdot [\partial_s\sigma^-(s', t')]
\end{aligned}
\end{equation*}
for $(s, t)\in \R^+\times S^1$ and $(s',t')\in \R^-\times S^1$. Since $j$ does not depend on the parameter $v$  on $D_x\cup D_y$, the section ${\bf f}(a, v,\eta)$ and hence the maps ${\bf f}^\pm(a,v,\eta)$ are independent of 
the parameter $v$ on $D_x\cup D_y$. 
Abusing the notation we write $u=(u^+, u^-)$ and 
abbreviating the  glued map and  the glued vector fields by 
$$u_a=\oplus_a(u)=\oplus_a(u^+, u^-)\quad \text{and}\quad h_a=\wh{\oplus}_a(h)=\wh{\oplus}_a(h^+,h^-), $$
and introducing $\xi^\pm= {\bf f}^\pm (a,v, h)$,   the vector  field  $\wh{\xi}=(\xi^+,\xi^-)$   is a solution of the  equations 
\begin{equation}\label{nodalBeq1}
\begin{aligned}
\wh{\oplus}_a(\wh{\xi})&=\Theta(u_a+h_a,u_a)\bigl[  \partial_s (u_a+h_a)+ \wh{J}(u_a+h_a)
\partial_t (u_a+h_a)\bigr]\\
 \wh{\ominus}_a(\wh{\xi})& = \bar{\partial}_0 \bigl( \ominus_a(h)).
 \end{aligned}
 \end{equation}
\noindent By  our assumptions, we  are given a  sequence $(a_j,v_j,\eta_j)$ in $O$ converging to  $(a_0,v_0,\eta_0)$ and such that $\eta_j$ belongs to a finite-dimensional sc-complemented  vector space ${\mathcal K}$ of ${\mathcal E}$. 
In addition, there is a bounded sequence  $(\wh{\eta}_j)$ in ${\mathcal E}_m$  satisfying 
$$
D_3{\bf f}(a_j,v_j,\eta_j)\wh{\eta}_j=y_j+z_j,
$$ 
where 
$y_j\rightarrow 0$ in ${\mathcal F}_m$ and  the sequence $(z_j)$ is bounded in ${\mathcal F}_{m+1}$. 
In  our  local coordinates on $D_x\cup D_y$, the vector fields  $\eta_j$ and $\eta_0$ are represented by pairs of smooth maps  $h_j=(h^+_j, h^-_j)$ and $h_0=(h^+_0,h^-_0)$ belonging to a finite-dimensional sc-complemented space ${\mathcal K}$ of the sc-Hilbert space $F$. We observe that $h_j\to h_0$ in $E_k$ for every level $k\geq 0$. 
The elements  $y_j$ and $z_j$ are in our local coordinates and in abuse of the notation  represented by the pairs ${y}_j=(y^+_j, y^-_j)\in F_{m}$ and 
${z}_j=(z^+_j, z^-_j)\in F_{m+1}.$
Then,  denoting by $k_j\in E_m$ the  local representatives of $\wh{\eta}_j$  on $D_x\cup D_y$,  we have 
\begin{equation}\label{bubble}
D_3\wh{{\bf f} }(a_j,v_j,h_j)k_j=y_j+z_j.
\end{equation}
By assumption,  the sequence $(k_j)$ is  bounded in $E_m$, the sequence $(y_j)$ converges to  $0$ in $F_m$  and $(z_j)$ is bounded in $F_{m+1}$.

\noindent We linearize  the equations \eqref{nodalBeq1}  with respect to the variable  $h$ and obtain
\begin{equation}\label{identity1}
\begin{split}
&\wh{\oplus}_{a_j} (D_3\wh{{\bf f}}(a_j, v_j, h_j)k_j)\\
&=  \bigl[ D_1\Theta(u_{a_j}+h_{a_j},u_{a_j}) k_{a_j}\bigr]\cdot
\bigl[  \partial_s  (u_{a_j}+h_{a_j}) +\wh{J}(u_{a_j}+h_{a_j}) \partial_t  (u_{a_j}+h_{a_j} )\bigr]\\
&\phantom{=} + \Theta(u_{a_j}+h_{a_j},u_{a_j})\cdot\bigl[ D\wh{J}(u_{a_j}+h_{a_j})\cdot k_{a_j}\bigr]\cdot  \partial_t (u_{a_j}+h_{a_j})\bigr]\\
&\phantom{=} + \Theta(u_{a_j}+h_{a_j},u_{a_j})\cdot
\bigl[  \partial_s  k_{a_j}+\wh{J}(u_{a_j}+h_{a_j}) \partial_t  k_{a_j}\bigr]\\
\end{split}
\end{equation}
and
\begin{equation}\label{identity1_2}
\wh{\ominus}_{a_j}\bigl( D_3\wh{{\bf f}}(a_j, v_j, h_{a_j}) k_{j}\bigr)=\ov{\partial}_0\bigl(\ominus_{a_j}({k}_{j})\bigr).
\end{equation}
In the equation \eqref{identity1}
we have used the abbreviation $k_{a_j}=\oplus_{a_j}(k_j)$. Abbreviating by $A_j, B_j$ and $C_j$ the three terms on the right-hand side of \eqref{identity1},  we have
\begin{equation}\label{identity1_3}
\wh{\oplus}_{a_j} (D_3\wh{{\bf f}}(a_j, v_j, h_{j}) k_j)=A_j+B_j+C_j.
\end{equation}
Recalling the identity  \eqref{nodalBeq1},
\begin{equation*}
\wh{\Gamma} (u_{a_j}+h_{a_j}, u_{a_j})\cdot\bigl[\wh{ \oplus}_{a_j}\wh{{\bf f}}(a_j, v_{j}, h_j)\bigr] =\frac{1}{2}\bigl[   \partial_s  (u_{a_j}+h_{a_j}) +\wh{J}(u_{a_j}+h_{a_j}) \partial_t  (u_{a_j}+h_{a_j})\bigr],
\end{equation*}
the term $A_j$ can be written as 
\begin{equation}\label{identity1_4}
A_j= \bigl[ D_1\Theta(u_{a_j}+h_{a_j},u_{a_j}) k_{a_j}\bigr] \cdot \wh{\Gamma}(u_{a_j}+h_{a_j}, u_{a_j})\cdot \bigl[ \wh{ \oplus}_{a_j}{\bf f}(a_j, v_{j}, h_j)\bigr].
 \end{equation}
By our assumption, 
\begin{equation}\label{identity_7}
D_3\wh{{\bf f}}(a_j,v_j,  h_j) k_j=y_j+z_j, 
\end{equation}
where $(y_j)$ is a sequence converging to $0$ in $F_m$ and $(z_j)$ is a bounded sequence in $F_{m+1}$.
We decompose $y_j$ and $z_{j}$ as  
$$y_j=y_{1,j}+ y_{2,j}\quad \text{and}\quad z_j=z_{1,j}+z_{2,j}$$ according to the splitting $ \ker \wh{\ominus}_{a_j}\oplus  \ker\wh{ \oplus}_{a_j}$.
Since $\wh{\oplus}_{a_j}(y_j)=\wh{\oplus}_{a_j}(y_{1,j})$ and 
$\wh{\ominus}_{a_j}(y_{1,j})=0$, we observe, using  Proposition \ref{HAT_HAT} and  the definition of the $\wh{G}^a_m$-norm,  that 
\begin{equation*}
\begin{split}
C_m\abs{y_j }_{F_m}&
\geq \abs{\bigl(  \wh{\oplus}_{a_j}(y_j), \wh{\ominus}_{a_j }( y_j)   \bigr)}_{\wh{G}_m^{a_j} }
\geq \abs{\bigl( \wh{\oplus}_{a_j}(y_j), 0\bigr) }_{\wh{G}_m^{a_j} } \geq \frac{1}{C_m}\abs{y_{1,j}}_{F_m}.
\end{split}
\end{equation*}
We conclude, in particular, that the sequence $(y_{1,j})$ converges to $0$ in $F_{m}$. Since 
$y_{2,j}=y_{j}-y_{1,j}$, also the sequence $(y_{2,j})$ converges to $0$ in $F_m$.
Similar arguments applied to the sequence $(z_{1,j})$ show that the sequences $(z_{1,j})$ and $({z}_{2,j})$ are bounded in $F_{m+1}.$

\noindent Since $\wh{\oplus}_{a_j}(y_{j}+z_j)=\wh{\oplus}_{a_j}(y_{1,j}+z_{1,j} )$ and  $\wh{\ominus}_{a_j}(y_{1,j}+z_{1,j})=0$,  the maps $y_{1,j}+z_{1,j}$ are, in view of  \eqref{identity1}, the  solutions of the equations  
\begin{equation}
\begin{aligned}
\wh{\oplus}_{a_j}(y_{1,j}+z_{1,j})&=A_j+B_j+C_j\\
\wh{\ominus}_{a_j}(y_{1,j}+z_{1,j})&=0.
\end{aligned}
\end{equation}
Using the total hat-gluing map $\wh{\boxdot}_{a_j}=(\wh{\oplus}_{a_j},\wh{ \ominus}_{a_j})$, the above two equation can be written as 
\begin{equation}\label{identity1_5}
\wh{\boxdot}_{a_j}(y_{1,j}+z_{1,j})=\bigl(A_j+B_j+C_j, 0\bigr)
\end{equation}
Similarly, since $\wh{\oplus}_{a_j}(y_{2,j}+z_{2,j} )=0$ and  $\wh{\ominus}_{a_j}(y_{j}+z_j)=\wh{\oplus}_{a_j}(y_{2,j}+z_{2,j} )$, the maps $ y_{2,j}+z_{2,j}$ solve, in view of  \eqref{identity1_2}, the following two equations 
\begin{equation}\label{identity_8}
\begin{aligned}
\wh{\oplus}_{a_j}(y_{2,j}+z_{2,j})&=0\\
\wh{\ominus}_{a_j}(y_{2,j}+z_{2,j})&=\ov{\partial}_0(\ominus_{a_j}(k_j))
\end{aligned}
\end{equation}
which can be written,   using the total hat-gluing map $\wh{\boxdot}_{a_j}$,  as 
\begin{equation}\label{identity1_9}
\wh{\boxdot}_{a_j}(y_{2,j}+z_{2,j})=\bigl(0, \ov{\partial}_0(\ominus_{a_j}(k_j)) \bigr).
\end{equation}

\noindent By Proposition \ref{sc-splicing-thm1} the total hat gluing map $\wh{\boxdot}_{a_j}$ is an sc-isomorphism,  so that 
\eqref{identity1_5} implies 
\begin{equation}\label{identity1_6}
y_{1,j}+z_{1,j} =\bigl(\wh{\boxdot}_{a_j}\bigr)^{-1}(A_j,0)+\bigl(\wh{\boxdot}_{a_j}\bigr)^{-1}(B_j,0)+
\bigl(\wh{\boxdot}_{a_j}\bigr)^{-1}(C_j,0).
\end{equation}
We consider  each of the sequences on the right-hand side separately and start with 
$(\wh{\boxdot}_{a_j}\bigr)^{-1}(A_j,0)$.
Abbreviating 
$$A(u_{a_j}, h_{a_j}, k_{a_j})= \bigl[ D_1\Theta(u_{a_j}+h_{a_j},u_{a_j}) k_{a_j}\bigr] \cdot \wh{\Gamma}(u_{a_j}+h_{a_j}, u_{a_j}),$$
the maps $A_j$ can be written, in view of \eqref{identity1_4}, as 
$$
A_j= A(u_{a_j}, h_{a_j}, k_{a_j})\cdot \bigl[ \wh{ \oplus}_{a_j}{\wh{\bf f}}(a_j, v_{j}, h_j)\bigr].
$$
In view of our assumption, the sequence $(h_j)$ belongs to the finite dimensional sc-complemeted subspace ${\mathcal K}$ of $E$ and converges to $h_0\in {\mathcal K}$. Hence $(h_j)$ converges to $h_0$ in every space $E_k$, $k\geq 0$,  and  in particular, in the space $E_{m+1}$. Consequently, the sequence $\bigl(\wh{{\bf f}}(a_j,v_j, h_j) \bigr)$ converges to $\wh{{\bf f}}(a_0,v_0, h_0)$ in $F_{m+1}$.  We conclude, applying Proposition \ref{XCX2} , that  the sequence $\bigr( (\wh{\boxdot}_{a_j})^{-1}(A_j,0)\bigr) $ is bounded in $F_{m+1}$.

\noindent Next we consider the sequence $\bigr( (\wh{\boxdot}_{a_j})^{-1}(B_j,0)\bigr).$ 
We recall that  $B_j$ is defined by 
$$B_j= \Theta(u_{a_j}+h_{a_j},u_{a_j})\cdot\bigl[ D\wh{J}(u_{a_j}+h_{a_j})\cdot k_{a_j}\bigr]\cdot  \partial_t (u_{a_j}+h_{a_j})\bigr].$$
We rewrite the sequence  $(B_j,0)$ using the map  $B_{\frac{1}{2}}\oplus E\to F$,   introduced in Section \ref{preliminaries},  and defined by $(a, h)\mapsto D^a_t (h)$.  Introducing the matrix 
$$\wh{A}(u_{a_j}+h_{a_j}, k_{a_j})=\begin{bmatrix}\Theta (u_{a_j}+h_{a_j}, u_{a_j})\cdot\bigl[ D\wh{J}(u_{a_j}+h_{a_j})\cdot k_{a_j}\bigr]&0\\0&\id\end{bmatrix},$$
the sequence $(B_j, 0)$ can be written as 
\begin{equation*}
(B_j,0)=\wh{A}(u_{a_j}+h_{a_j}, k_{a_j})\circ \wh{\boxdot}_{a_j}\circ  D_t^{a_j}(u+h_j)
\end{equation*}
so that 
$$(\wh{\boxdot}_{a_j})^{-1}(B_j,0)=(\wh{\boxdot}_{a_j})^{-1}\circ \wh{A}(u_{a_j}+h_{a_j}, k_{a_j})\circ \wh{\boxdot}_{a_j}\circ D_t^{a_j}(u+h_{j}).
$$
By Proposition  \ref{XCX1},  the map $(a, h)\mapsto D^a_t (h)$ from $B_{\frac{1}{2}}\oplus E$ to $F$ is  sc-smooth. 
In view of the convergence $u+h_j\to u+h_0$ in $E_k$ for every level  $k$,  the sequence  $\bigl( D^{a_j}_t(u+h_j)\bigl)$ converges to $D^{a_0}_t(u+h_0)$ in $F_{m+1}$.  Applying Proposition \ref{HAT_HAT}, we conclude that the 
sequence $ (\wh{\boxdot}_{a_j})^{-1}(B_j,0)$ is bounded in $F_{m+1}$.
Summing up, we have shown that the sequences $(\wh{\boxdot}_{a_j})^{-1}(A_j,0)$  and $ (\wh{\boxdot}_{a_j})^{-1}(B_j,0)$  are bounded in $F_{m+1}.$ Thus,  abbreviating 
$$\wh{z}_{1,j}:=z_{1,j}-(\wh{\boxdot}_{a_j})^{-1}(A_j,0)-(\wh{\boxdot}_{a_j})^{-1}(B_j,0)$$
the sequence $\wh{z}_{1,j}$ is bounded in $F_{m+1}$ and,  in view of \eqref{identity1_6},
$$
y_{1,j}+\wh{z}_{1,j} =\bigl(\wh{\boxdot}_{a_j}\bigr)^{-1}(C_j,0).
$$
The above equation can be written as 
\begin{equation}\label{identity_11}
\begin{aligned}
\wh{\oplus}_{a_j}y_{1,j}+\wh{\oplus}_{a_j}\wh{z}_{1,j}&=C_j\\
\wh{\ominus}_{a_j}y_{1,j}+\wh{\ominus}_{a_j}\wh{z}_{1,j}&=0.
\end{aligned}
\end{equation}
We recall the equations \eqref{identity_8},
\begin{equation}\label{identity_12}
\begin{aligned}
\wh{\oplus}_{a_j}y_{2,j}+\wh{\oplus}_{a_j}z_{2,j}&=0\\
\wh{\ominus}_{a_j}y_{2,j}+\wh{\ominus}_{a_j}z_{2,j}&=\ov{\partial}_0(\ominus_{a_j}(k_j)).
\end{aligned}
\end{equation}
Abbreviating $z'_j=\wh{z}_{1,j}+z_{2,j}$ and recalling that $y_j=y_{1,j}+y_{2,j}$, we obtain after adding the corresponding rows  of \eqref{identity_11} and \eqref{identity_12},
\begin{equation}\label{identity_20}
\begin{aligned}
C_j&=\wh{\oplus}_{a_j}y_{j}+\wh{\oplus}_{a_j}z_{j}'\\
\ov{\partial}_0(\ominus_{a_j}(k_j))&=\wh{\ominus}_{a_j}y_{j}+\wh{\ominus}_{a_j}z_{j}'.
\end{aligned}
\end{equation}
Since 
$$C_j=\Theta(u_{a_j}+h_{a_j},u_{a_j})
\cdot
\bigl[  \partial_s  k_{a_j}+\wh{J}(u_{a_j}+h_{a_j}) \partial_t k_{a_j}\bigr]$$
and
$$\Theta(u_{a_j}+h_{a_j},u_{a_j})=\frac{1}{2}\wh{\Gamma} (u_{a_j}+h_{a_j},u_{a_j} )^{-1},$$
the first equation  in \eqref{identity_20} can be written as 
$$\wh{\Gamma} (u_{a_j}+h_{a_j},u_{a_j})\cdot \bigl( \wh{\oplus}_{a_j}y_{j}+\wh{\oplus}_{a_j}z'_{j}\bigr)=\frac{1}{2}\bigl[  \partial_s  k_{a_j}+\wh{J}(u_{a_j}+h_{a_j}) \partial_t  k_{a_j}\bigr].$$
The left-hand side can be written as 
$$\wh{\Gamma} (u_{a_j}+h_{a_j},u_{a_j})\cdot \bigl( \wh{\oplus}_{a_j}y_{j}+\wh{\oplus}_{a_j}z'_{j}\bigr)=\wh{\oplus}_{a_j}\zeta_j+\wh{\oplus}_{a_j}\zeta_j', $$
where the sequence $(\zeta_j)$  converges to $0$ in $F_m$,  and the sequence $(\zeta_j')$ is bounded in $F_{m+1}$. 
Consequently,  the equations \eqref{identity_20} become
\begin{equation*}
\begin{aligned}
\frac{1}{2}\bigl[  \partial_s \oplus_{a_j}( k_j)+\wh{J}(u_{a_j}+h_{a_j}) \partial_t  \oplus_{a_j} (k_j)\bigr]&=
\wh{\oplus}_{a_j}\zeta_j+\wh{\oplus}_{a_j}\zeta'_j\\
\ov{\partial}_0(\ominus_{a_j}(k_j))&=\wh{\ominus}_{a_j}y_{j}+\wh{\ominus}_{a_j}z_{j}.
\end{aligned}
\end{equation*}
Now we can apply Proposition \ref{ELLIPTIC-X} from  Appendix \ref{appx19} to  deduce that if $\chi^+$ and $\chi^-$ are cut-off functions  near the boundary, 
the sequence $((\chi^+ k_j^+,\chi^-k_j^-))$ has a convergent subsequence in 
$E_{m}$. The proof of Proposition \ref{PROPX}(nodal case) is complete.  
\end{proof}

Adding up the classical cases and the nodal cases of the present  section by means of a finite partition of unity on $S$,  {the proof of Proposition \ref{PROP4.19} and  the proof of the last statement in Proposition \ref{PROPX} follow.  }

  {It remains to prove the first two statements  of Proposition \ref{PROPX}.
 If we keep $(a,v)$ fixed,  the classical linearisation with respect to $\eta$ defines, as is well known,  a linear Cauchy-Riemann type
 operator on a domain with cylindrical ends. These operators have been used and studied by A. Floer in his seminal papers, see
 e.g. \cite{Floer1}. The weights are in the spectral gaps and the associated operator is Fredholm on every level
 in the classical sense and the Fredholm indices are independent of the smooth $(a,v,\eta)$.
 These operators are in particular sc-Fredholm. If we take the full sc-linearisation,  the real Fredholm index increases
 by the real dimension of the parameter space containing $(a,v)$.  This proves that the Fredholm index of $D{\bf f}(a,v,\eta)$
 is independent of the choice of $(a,v,\eta)$, where $\eta$ is assumed to be smooth.
  The second statement about the $C^1$-character of the map $\eta \to {\bf f}(a, v,\eta)$ for fixed parameter $(a, v)$ follows from the classical results in \cite{El}. In fact, with $(a,v)$ fixed, we are dealing with a nonlinear differential operator on a fixed domain. The proof of  Proposition \ref{PROP4.19} and Proposition \ref{PROPX}  are complete.}\hfill $\blacksquare$

%
%
%

\chapter{Appendices}
In the following we shall provide  the proofs  of the results used
earlier. The Appendices \ref{orientations-abstract} and \ref{orientations}  are devoted to the orientation of sc-Fredholm sections. 

\section{Proof of Theorem 2.56}\label{QWE}
The crucial point of the theorem is the sc-smoothness at the points where $a=0$. Hence 
we assume that the parameters $(a,v)$ are close
to $(0,v_0)$.  Since $\pav$ is a core smooth family of holomorphic embeddings, the implicit function theorem  implies that he preimages of the two boundary circles
of the finite cylinders $Z'_{b(a,v)}(-H)$ under the maps $\pav$ are smoothly varying curves in $Z_a(-h)$ with respect to the natural coordinates on $Z_a$.
Here the left curves vary smoothly for the $(s,t)$-coordinates and the right curves for the $(s',t')$-coordinates.

We find, assuming that $(a,v)$ is close to $(0,v_0)$,  a constant $h_0$ such  that these circles do not intersect $Z_a(-h-h_0)$.  Then we construct  a new family of diffeomorphisms $\psav:Z_a\to Z_{b(a, v)}'(-H)$  which is core-smooth and satisfying 
$$
\psav =\pav\quad \text{on\quad  $Z_a(-h-h_0-1)$}.
$$
In addition, there exists a germ of complex structure $j_{a,v}$, namely the pull-back $\psav^*i$ of the standard complex structure,  agreeing with  the standard complex structure $i$ on $Z_a(-h-h_0-1)$
such that 
$$
\psi_{a,v} : (Z_a,j_{(a,v)})\rightarrow (Z_{b(a,v)}(-H),i)
$$
are biholomorohic maps. 

Finally,  we note that instead of $Z_{b(a,v)}(-H)$ we can take $Z_{b'(a,v)}$
so that both are equipped with the standard complex structure and 
are biholomorphic by the identity map. 
The complex numbers $b(a, v)$ and $b'(a,v)$ has the same angular part and the moduli of $b(a, v)$ and $b'(a, v)$ are related by the formula
$$
\varphi(\abs{b(a,v)})-2H =\varphi(\abs{b'(a,v)}),
$$
where $\varphi$  is  the exponential  gluing profile. In view of Lemma 4.4 and Lemma 4.5 in \cite{HWZ8.7}, smoothness of $b(a, v)$ implies  smoothness of  $b'(a, v)$. Since the modification from $\pav$ to $\psav$ does not change the sc-smoothness properties in view of Theorem \ref{action-diff}, 
 it suffices to study  the map 
$$\Phi\colon \bd\oplus V\oplus E\to E,\quad (a, v, \xi)\mapsto \eta,$$
where $\xi$ is the unique solution 
of the  two equations 
\begin{equation}\label{sy_1a}
\oplus_{a}(\eta)= \oplus'_{b'(a,v)}(\xi)\circ \psav\quad \text{and}\quad \ominus_{a}(\eta)=0.
\end{equation}
By abuse of the notation we use the old notation $\pav$ for $\psav$ and $b(a, v)$ for $b'(a, v)$ so that the above system of equations is as follows 
\begin{equation}\label{sys_1a}
\oplus_{a}(\eta)= \oplus'_{b(a,v)}(\xi)\circ \pav\quad \text{and}\quad \ominus_{a}(\eta)=0.
\end{equation}
If $a=0$, we set  
$
\Phi (0, v, \xi)=(\xi^+\circ \phi^+_{0, v}, \xi^-\circ \phi^-_{0, v}).
$

Decomposing  $\xi^\pm$ as $\xi^\pm=u^\pm +c$,  where $c$ is a common asymptotic constant and 
$u^\pm \in H^{3+m, \delta_m}(\R^\pm \times S^1)$, and   proceeding as in Section 2.4, the component  $\eta^+$  of the solution $\eta$ of the  equation \eqref{sy_1a} is given by
\begin{equation}\label{sol_eq_1}
\begin{split}
\eta^+&= c+\biggl( 1-\dfrac{\bba}{\ga}\biggr)
\cdot \int_{S^1}\oplus'_{b(a, v)}(u) \bigl(\pav (R(a)/2, t)\bigr)\ dt\\
&\phantom{=\,\,  c}+\dfrac{\bba}{\ga} \cdot \bb (\pav) \cdot u^+(\pav)\\
&\phantom{=\,\, c}+ \dfrac{\bba}{\ga} \cdot \bigl(1-\bb (\pav)\bigr) \cdot u^-\bigl(\pav -(R(b,v), \vartheta (b,v)\bigr).\\
\end{split}
\end{equation}
  
There is an analogous  formula for the second components $\eta^-$ of $\eta$ in terms of variables $(s', t')$.  We omit the proof of sc-smoothness of the map $\eta^-$ since it  uses the same arguments as the proof of sc-smoothness   $\eta^+$. The formula \eqref{sol_eq_1} defines the following  four maps: \\[1ex]
\noindent {{\bf M1.}} The map 
$$\bd\oplus V\oplus H^{3,\delta_0}_c(\R \times  S^1)\to \R^n,\quad \xi\mapsto c$$
associating  with the  pair $\xi=(\xi^+, \xi^-)$ its asymptotic constant $c$.\\[0.5ex]
\noindent  {{\bf M2.}}  The map 
\begin{gather*}
\bd\oplus V\oplus H^{3,\delta_0}(\R^+\times  S^1)\oplus H^{3,\delta_0}(\R^-\times S^1) \to H^{3,\delta_0}(\R^+\times S^1),\\
(a, v, u)\mapsto\biggl( 1-\dfrac{\bba}{\ga}\biggr)
\cdot \int_{S^1}\oplus'_{b(a,v)}(u)(\pav (R(a)/2, t)) dt.
\end{gather*}
\noindent {{\bf M3.}} The map 
\begin{gather*}
\bd\oplus V\oplus H^{3,\delta_0}(\R^+\times  S^1)\to H^{3,\delta_0}(\R^+\times S^1),\\
(a, v, u^+)\mapsto \dfrac{\bba}{\ga} \cdot \bb (\pav) \cdot u^+(\pav).\\
\end{gather*}
\noindent  {{\bf M4.}}  The map 
\begin{gather*}
\bd\oplus V\oplus H^{3,\delta_0}(\R^-\times  S^1)\to H^{3,\delta_0}(\R^+\times S^1),\\
(a, v, u^-)\mapsto \dfrac{\bba}{\ga} \cdot \bigl(1-\bb (\pav)\bigr) \cdot u^-\bigl(\pav -(R(b,v), \vartheta (b,v)\bigr).
 \end{gather*}
 
  The map  defined in (M1),   associating  with $\xi$  its asymptotic constant,  does not depend on $(a, v)$ and,  since this map  is an sc-projection,  it is sc-smooth. 
The maps (M2)-(M4) are smooth in the classical sense at every point in which $a\neq 0$.  Hence we study sc-smoothness of these  maps in a neighborhood of the  point  with $(0, v_0)$.

In order to simplify our considerations further,  we recall the conclusions  of Theorem 1.46 from  \cite{HWZ8.7} adapted to our notation.  Given $\Delta>1$, there exists a map
$$
\wt{\phi}\colon \bd\oplus V\to \bigcap_{m\geq 3, 0<\varepsilon<2\pi} {\mathcal D}^{m, \varepsilon},
$$
defined in a neighborhood of $(0, v_0)\in \bd\oplus V$,  with the following properties. For every $m\geq 3$ and every $0<\varepsilon <2\pi$, the map 
$(a, v)\mapsto  \wt{\phi}_{a, v}\in {\mathcal D}^{m, \varepsilon}$  is  smooth and satisfies
\begin{equation}\label{wt_eq_1}
\wt{\phi}_{a,v}(s, t) =\pav (s, t)\quad \text{for $(s, t)\in [0, R(a)/2+\Delta ]\times S^1$.}
\end{equation}
The space $ {\mathcal D}^{m, \varepsilon}$ is   the space of diffeomorphisms $u:\R^+\times S^1\to \R^+\times S^1$ having the form
$u(s, t) = (s, t) +  d + r(s, t) $, where $d=(d', d'')\in \R^+\times S^1$ is a constant and the map $r$ has weak derivatives up to order $m$ which weighted by $e^{\varepsilon s}$ belong to
$L^2(\R^+\times S^1,\R^2).$
In particular,  the  map $\wt{\phi}(a,v)$ has the form  
\begin{equation}\label{wt_eq_2}
\wt{\phi}_{a, v}(s, t)=(s, t)+d(a, v)+r(a, v)(s, t)
\end{equation}
in which the map of constants $\bd\oplus V\to \R^2$, $(a, v)\mapsto d(a, v)$ and,  for every $m\geq 3$ and $0<\varepsilon<2\pi$,  the map $r\colon\bd \oplus V\to H^{m, \varepsilon}(\R^+\times S^1)$, $(a, v)\mapsto r(a, v)$, are smooth.

Fixing $\Delta>1$, we observe that, in view of 
\eqref{wt_eq_1},  we can replace the map $\pav$ by the map $\wt{\phi}_{a, v}$ in the definition of the map in (M2). We can do the same with the maps  in (M3) and (M4)  since $\beta_a(s)=0$ for $s\geq R(a)/2+1$. In the following, instead of writing $\wt{\phi}(a, v)$ we use the old notation $\pav$ and assume that $\pav$ has the form \eqref{wt_eq_2} with the maps $d(a, v)$ and $r(a, v)$ having properties as described above.
 
The crucial result used in the study of smoothness of the maps (M2)-(M4) is the following theorem.

\begin{theorem}\label{thm_m3_1}
Assume that $\phi_{a, v}$ is the family of diffeomorphisms 
$\pav\colon \R^+\times S^1\to \R^+\times S^1$, parametrized by $(a, v)\in \bd\oplus V$, having  the form 
$$\pav(s, t)=(s, t)+d(a, v)+r(a, v)(s, t),$$
where $(a, v)\mapsto d(a, v)\in \R^2$ is smooth  and, for every $m\geq 3$ and $0<\varepsilon<2\pi$, the map 
$r\colon \bd\oplus V\to H^{m,\varepsilon}(\R^+\times S^1)$ 
is smooth. Then the the composition
$\wt{\Phi}\colon \bd\oplus V\oplus H^{3,\delta_0}(\R^+\times S^1)\to  H^{3,\delta_0}(\R^+\times S^1)$,  defined by 
\begin{equation}\label{def_comp}
\wt{\Phi} (a, v,u)=u\circ \pav,
\end{equation}  
is well-defined and $\ssc$-smooth. 
\end{theorem}

Postponing the proof,  we first show that  each of the maps defining $\eta^+$ is sc-smooth. We begin with the map defined in (M3).

\noindent {\bf M3.}\,  Abbreviating  by $L$  the sc-Banach space $H^{3,\delta_0}(\R^+\times S^1)$, 
we   consider the map 
$\Phi_1\colon \bd \oplus V\oplus L\to L$, defined by
\begin{equation}\label{eq_m3_0}
\Phi_1 (a, v, u)=\dfrac{\bba}{\ga}\cdot \bb(\pav) \cdot u(\pav)
\end{equation} if $a\neq 0 $, and $\Phi_1 (0, v, u)=u(\phi_{0, v})$ if $a=0$. 

The map $\Phi_1$ is the   composition of the following maps,
\begin{equation*}
\bd\oplus V\oplus L\xrightarrow{\,\,  A_1\,\,  } \bd\oplus V\oplus L 
 \xrightarrow{\,\, A_2\,\,}\bd\oplus L \xrightarrow{\,\, A_3\,\,}\bd\oplus L \xrightarrow{\,\,  A_4\,\,  }L.
\end{equation*}
The map $A_1\colon \bd\oplus V\oplus L\to \bd\oplus V\oplus \bd\oplus L
$ is given  by 
$$A_1(a, v, h)=(a, v, b(a, v), h)$$
and is clearly smooth. The map 
$A_2\colon \bd\oplus V\oplus \bd\oplus L\to \bd\oplus V\oplus L$, defined by 
$$A_2(a, v,b,  h)\mapsto (a, v,\beta'_b\cdot  h),$$
is sc-smooth by Proposition 2.8 (a) in \cite{HWZ8.7}. The same proposition implies  sc-smoothness of the last map 
$A_4\colon  \bd\oplus L\to L$ given by 
$$A_4(a, h)= \dfrac{\bba}{\ga}\cdot h. $$
The map  $A_3\colon \bd\oplus V\oplus L\to \bd\oplus L$ is defined by 
$$A_3(a, v, h)=(a, \wt{\Phi} (a, v, h)), $$
where $\wt{\Phi}$ has been introduced in Theorem \ref{thm_m3_1}.  
Assuming that Theorem \ref{thm_m3_1} holds true, the map  $A_2$ is sc-smooth and hence, by the chain rule,  the map $\Phi_1$ is also sc smooth. \\[1ex]

\noindent {{\bf M4.}}\, Here we  abbreviate by $L'$  the sc-Banach space $H^{3,\delta_0}(\R\times S^1)$ and by $L^\pm$ the sc-Banach spaces $H^{3,\delta_0}(\R^\pm\times S^1)$.  
We consider  the map $\Phi_2:\bd \oplus V\oplus L^-\to L^+$, defined by
\begin{equation}\label{eq_M3_1}
\Phi_2 (a, v, u)=\dfrac{\bba}{\ga}\cdot \bigr(1-\bb(\pav) \bigr)\cdot u\bigl(\pav -(R(b,v), \vartheta (b,v))\bigr)
\end{equation} if $a\neq 0 $, and $\Phi_2 (0, v, u)=0$ if $a=0$. 
We choose a smooth map $\chi:\R\to [0,1]$ satisfying $\chi (s)=1$ for $s\leq 0$ and $\chi (s)=0$ for $s\geq -1/2$. By multiplying $u\in L^-$ by $\chi$ we may assume that $u$ belongs to $L'$.  Moreover, recalling  that  $\abs{R(a)-R(b(a, v))}\leq \Delta'$ for $(a, v)$ close to $(0, v_0)$ and choosing  a constant  $\Delta''$ satisfying $\Delta'+2<\Delta''$, we have 
$$\dfrac{\bba}{\ga}\cdot \bigl(1-\bb (\pav)\bigr)=\dfrac{\bba}{\ga}\cdot \bigl(1-\bb (\pav )\bigr)\cdot  \bb(\pav -\Delta'')$$
for all $(s, t)\in \R^+\times S^1$. Hence  we may assume that $\Phi_2$ is defined as 
\begin{equation}\label{eq_psi2_1}
\Phi_2(a, v, u)=\dfrac{\bba}{\ga}\cdot \bigl(1-\bb (\pav )\bigr)\cdot  \bb(\pav -\Delta'')\cdot 
 u\bigl(\pav -(R', \vartheta')\bigr)
 \end{equation}
 for all $a\neq 0$, where we have abbreviated $(R', \vartheta')=(R(b(a, v), \vartheta(b(a, v))$.

The map $\Phi_2$, as in the case of the map  $\Phi_1$,  can  be represented as a composition of sc-smooth maps,  
\begin{equation*}
\bd\oplus V\oplus L'\xrightarrow{\,\,  B_1\,\,  } \bd\oplus V\oplus\bd  \oplus L'\xrightarrow{\,\,  B_2\,\,  } \bd\oplus V\oplus L^+ \xrightarrow{\,\, B_3\,\,}\bd\oplus L^+ \xrightarrow{\,\,  B_4\,\,  }\bd\oplus L^+.
\end{equation*}
The map  $B_1$ is the same as the map $A_1$ above except that $L$ is replaced by $L'$, and hence it is sc-smooth. The next map 
$B_2\colon \bd\oplus V\oplus\bd  \oplus L'\to \bd\oplus V\oplus L^+$ is given by 
$$B_2(a, v, h)=(a, v, (1-\beta'_b)\cdot \beta_b'(\cdot -\Delta'')\cdot h(\cdot -(R(b), \vartheta (b)).$$
Its sc-smoothness, observing that the  function  $(1-\beta' )\cdot \beta' (\cdot -\Delta'')$ has a compact support, follows from Proposition 2.8 (b) in \cite{HWZ8.7}.  The map 
$B_3\colon \bd\oplus V\oplus L^+\to \bd\oplus V\oplus L^+$,   given 
by 
$$B_3(a, v, h)=(a, v, \wt{\Phi} (a, v, h)), $$
where $\wt{\Phi}(a, v, h)=h\circ \pav$,  is the same as the map $A_3$ above and  is sc-smooth by Theorem \ref{thm_m3_1}.  The last  map $B_4\colon 
\bd\oplus  L^+\to \oplus  L^+$, defined by 
$$B_4(b, h)=\dfrac{\bba}{\ga}\cdot h, $$ 
is sc-smooth by Proposition 2.8 (a) from \cite{HWZ8.7}. In conclusion,  we have proved that the map $\Phi_2$  is sc-smooth.

\noindent {{\bf M2.}}\,  In this part  $L^\pm$ stands for  the sc-Banach space $H^{3,\delta_0}(\R^\pm \times S^1)$.
We consider the map $\Phi_3\colon \bd \oplus V\oplus L^+\oplus L^-\to \R^n$, defined by
\begin{equation}\label{eq_M3_0}
\Phi_3 (a, v, u)= \biggl(1-\dfrac{\bba}{\ga}\biggr )\int_{S^1}\oplus'_{b(a, v)}(u)(\pav (R(a)/2, t )\ dt
\end{equation} if $a\neq 0 $, and $\Phi_3 (0, v, u)=0$ if $a=0$. .
Recalling that $u=(u^+, u^-)\in L^+\oplus L^-$, the map $\Phi_3$ is, in view of the definition of  the gluing map $\oplus'_b$,  the sum of  two maps,
\begin{equation*}
\begin{gathered}
\Phi_3'\colon \bd \oplus V\oplus L^+\to \R^n,\\
\Phi_3' (a, v, u^+)=\biggl(1-\dfrac{\bba}{\ga}\biggr )\int_{S^1}\bb \bigl(\pav (R/2, t)\bigr)u^+(\pav (R/2, t)\ dt,
\end{gathered}
\end{equation*}
and 
$$
\Phi_3''\colon \bd \oplus V\oplus L^-\to \R^n,
$$
\begin{equation*}
\begin{split}
&\Phi_3'' (a, v, u^-)\\
&\quad =\biggl(1-\dfrac{\bba}{\ga}\biggr ) \int_{S^1}\bigl(1-\beta'_b(\pav (R/2, t)\bigr)u^-\bigl(\pav (R/2, t)-(R', \vartheta')\bigr)\ dt
\end{split}
\end{equation*}
if $a\neq 0$ and $\Phi_3'(0, v, u)=\Phi_3''(0, v, u)=0$ if $a=0$. Here we abbreviated $b=b(a, v)$, $R=R(a)$,  and $(R', \vartheta')=\bigl(R(b(a, v), \vartheta (b(a, v))\bigr)$.

The first map $\Phi_3'$ is a composition of the following two sc-smooth maps,
\begin{equation*}
\bd\oplus V\oplus L^+\xrightarrow{\,\,  C_1\,\,  } \bd\oplus L^+ \xrightarrow{\,\, C_2\,\,}\R^n.
\end{equation*}
The map $C_1\colon \bd\oplus V\oplus L^+\to \bd\oplus L^+$, defined 
by 
$$C_1(a, v, h)=(a, \bb(\pav) h(\pav ) \bigr),$$
is a composition of the sc-smooth maps $A_1$, $A_2$, and $A_3$ defined in (M3), and hence  is sc-smooth.

The second  map  $C_2\colon \bd\oplus L^+\to  \R^n$,   defined by 
$$C_2(a, h)=\biggl(1-\dfrac{\bba}{\ga}\biggr)\cdot [h]_a,$$
is  sc-smooth by Lemma 2.19 and Lemma 2.20 in \cite{HWZ8.7}.  Consequently, the map $\Phi_3'$ is sc-smooth. 

Next we consider the map $\Phi_3''$. This map can be represented as a composition of the following sc-smooth maps,
\begin{equation*}
\bd\oplus V\oplus L^-\xrightarrow{\,\,  D_1\,\,  } \bd\oplus L^+ \xrightarrow{\,\, D_2\,\,}\R^n.
\end{equation*}
The map $D_1\colon \bd\oplus V\oplus L^-\to \bd\oplus L^+$ is defined by 
$$D_1 (a, v, h)=\bigl(a,  \bigl(1-\bb(\pav)\bigr)\cdot \bb (\pav -\Delta'') \cdot h(\pav -(R', \vartheta') \bigr)\bigr)$$
where $(R', \vartheta')=\bigl(R(b(a, v), \vartheta (b(a, v))\bigr)$ and a constant $\Delta''$ is defined in (M4). This map is sc-smooth since it is a composition of the sc-smooth map 
$B_1, B_2$, and $B_3$ introduced  in M4.
The map $D_2$  is the same as the map  $C_2$ above,  and hence it is sc-smooth. 

We have proved that the map $\Phi_3''$ is sc-smooth.   This  together with the sc-smoothness of $\Phi_3'$ shows that map $\Phi_3$ is of class $\ssc^\infty$.  

At this point we have proved, assuming Theorem  \ref{thm_m3_1}, that all the maps defining $\eta^+$ are sc-smooth.
Hence it  remains to prove Theorem  \ref{thm_m3_1}.\\[0.3ex]

\begin{proof}[{\bf Proof of Theorem  \ref{thm_m3_1}}]
Let $L$ be the sc-Banach space $H^{3,\delta_0}(\R^+\times S^1)$ and let $\abs{\cdot}_m=\norm{\cdot}_{H^{3+m, \delta_m}(\R^+\times S^1)}$ for $m\geq 0$. We recall that $\wt{\Phi}\colon  \bd\oplus V\oplus L\to L$ 
is the composition 
$$\wt{\Phi} (a, v, u)=u\circ \pav.$$
In the proof we denote by $C$ a generic constant which depends only on 
the order $m$ of differentiation of the maps involved. 

We begin by showing that the map $\wt{\Phi}$ is well-defined.  We fix  $m\geq 0$ and a point $(a, v, u)\in \bd\oplus V\oplus L_m$. It suffices to show that 
\begin{equation}\label{int_eq_1}
\int_{\R^+\times S^1}\abs{D^\alpha (u(\pav))}^2e^{2\delta_m s}\ dsdt
\end{equation}
are finite for all multi-indices $\abs{\alpha}\leq 3+m$. Denoting by 
$\pav^1$ and $\pav^2$ the components of the diffeomorphisms $\pav$, 
we will be useful to introduce the following notation, 
$${\bf D}^{\mu, \nu}(\pav)={\bf D}^\mu(\pav^1)\cdot {\bf D}^\nu (\pav^2),$$
where 
\begin{align*}
{\bf D}^\mu(\pav^1)&=D^{\mu_1}(\pav^1)\cdot \ldots \cdot D^{\mu_k}(\pav^1),\\
{\bf D}^\nu (\pav^2)&=[D^{\nu_1}(\pav^2)\cdot \ldots \cdot D^{\nu_l}(\pav^2),
\end{align*} 
and $\mu=(\mu_1,\ldots,\mu_k)$ and $\nu=(\nu_1,\ldots,\nu_l)$.
With this notation,  the derivative 
$D^\alpha  (u(\pav))$ is a linear combination of the following expressions,
\begin{equation}\label{int_eq_2}
(D^\gamma u)(\pav)\cdot{\bf D}^{\mu, \nu}(\pav)=
(D^\gamma u)(\pav)\cdot{\bf D}^\mu(\pav^1)\cdot {\bf D}^\nu (\pav^2), 
\end{equation}
where  the multi-indices satisfy  $\abs{\gamma}\leq \abs{\alpha}$, 
$\alpha=\mu_1+\ldots +\mu_k+\nu_1+\ldots +\nu_l$,  and $k+l\leq \abs{\gamma}.$  Moreover, $1\leq \abs{\mu_i}, \abs{\nu_j}$, for $1\leq i\leq k$, $1\leq j\leq l$. 
Accordingly the integral \eqref{int_eq_1} is a linear combination of the terms
\begin{equation}\label{int_eq_1a}
\int_{\R^+\times S^1}
\abs{(D^\gamma u)\cdot {\bf D}^{\mu, \nu}(\pav)}^2e^{2\delta_ms}\ dsdt
\end{equation}
with multi-indices satisfying the conditions listed above.
Since, in view of the properties of the map $r(a, v)$, there exists a constant $C$ such that 
\begin{equation}\label{int_eq_2a}
\abs{D^\beta r(a, v)}\leq C \quad \text{on $\R^+\times S^1$,}
\end{equation} for all $\abs{\beta}\leq 3+m$, the terms \eqref{int_eq_1a} can be estimated,  using the change of variable formula,  as
\begin{equation}\label{change_1}
\begin{split}
&\int_{\R^+\times S^1}
\abs{(D^\gamma u)\cdot {\bf D}^{\mu, \nu}(\pav)}^2e^{2\delta_ms}\ dsdt\\
&\quad \leq 
C\int_{\R^+\times S^1}\abs{(D^\gamma u)(\pav))}^2e^{2\delta_m s}\ dsdt\\
&\quad =C\int_{\R^+\times S^1}\abs{(D^\gamma u)(s, t)}^2\cdot \abs{\det (D\psav)(s, t)}e^{2\delta_m \psav^1s}\ dsdt.
\end{split}
\end{equation}
Here we abbreviated by $\psav$ the inverse of $\pav$ and by $\psav^1$ its first component.  Hence we need  to estimate $\psav^1$ and $\abs{\det D\psav}$ on $\R^+\times S^1$.  We claim that there are constants $c_0$ and $c_1$ such that 
\begin{subequations}\label{properties}
\begin{gather}
\abs{\psav^{1}(s, t)-s}\leq c_0,\label{first}\\
\abs{\text{det}\ (D \psav)(s, t)}\leq c_1\label{second}
\end{gather}
\end{subequations}
for all $(s, t)\in \R^+\times S^1$.
Since $\pav(\psav (s, t))=(s, t)$ and 
\begin{equation}\label{int_eq_3}
\pav(s, t)=(s, t)+d(a, v)+r(a, v)(s, t),
\end{equation}
we obtain
\begin{equation}\label{int_eq_4}
\psav(s, t)=(s, t)-d(a, v)-r(a, v)\bigl(\psav (s, t)).
\end{equation}
So, the first assertion  follows from \eqref{int_eq_4} and $\abs{r(a, v)}\leq C$ on $\R^+\times S^1$. For the second claim we use  the identity $\text{det} (D\psav)=\bigl( \text{det}(D \pav)(\psav)\bigr)^{-1}$.  It is enough to  show that  
$\abs{\text{det}(D \pav)(\psav)}\geq 1/c_1$ on $\R^+\times S^1$. From \eqref{int_eq_3} we get,  
$$(D\pav)(\psav)=\text{id}+Dr(a, v)(\psav),$$
and since 
$\abs{Dr(a, v)(s, t)}\leq ce^{-\delta_ms}$, by  \eqref{first}, 
there exists $s_0>0$ such that $\abs{(D\pav)(\psav)}\geq 1/2$  on  $[s_1,\infty)\times S^1$. Since $\pav$ is a diffeomorphism, $\abs{(D\pav)(\psav)}>0$ on 
$[0,s_0]\times S^1$, finishing the prove of the claim \eqref{second}.

With \eqref{first} and \eqref{second}, the right-hand side of 
\eqref{change_1} is dominated  by 
\begin{equation}\label{int_eq_3a}
C\int_{\R^+\times S^1}\abs{D^\gamma u(s, t)}^2e^{2\delta_m s}\ dsdt.
\end{equation}
Since this holds true for every multi-index $\gamma$ satisfying $
\abs{\gamma}\leq 3+m$, we have proved that 
the integral \eqref{int_eq_1} is finite and that the map $\wt{\Phi}$ is well-defined. Also, the above discussion shows   that  
\begin{equation}\label{int_eq_3ab}
\abs{u\circ \pav}_m\leq C\abs{u}_m
\end{equation}
where the constant $C$ depends on $m$ and the parameter $(a, v)$.

We make the following observation which will be  used in the proof of the next lemma. Given a point $(a_0, v_0)\in \bd\oplus V$, the estimates 
\eqref{int_eq_2a}, \eqref{first}, \eqref{second} as well as  \eqref{int_eq_3ab} still hold true, perhaps with bigger constants,  for all $(a, v)$ close to $(a_0, v_0)$. This follows from the fact that the map $(a, v)\mapsto r(a, v)$ is smooth as a map from $\bd\oplus V\to H^{m',\varepsilon}(\R^+\times S^1)$ for every $m'\geq 3$ and $0<\varepsilon<2\pi$. 

Now we are ready to prove sc-continuity of the map $\wt{\Phi}$.

\begin{lemma}\label{lem_M_3}
The map $\wt{\Phi} $ is sc-continuous.
\end{lemma}

\begin{proof}
We fix level $m\geq 0$,  a point $(a_0, v_0,u_0)\in \bd\oplus V\oplus L_m$,   
and take $\rho>0$. We choose a compactly supported smooth function $h_0\colon \R^+\times S^1\to \R^n$ satisfying $\abs{u_0-h_0}_m<\rho$ and $\supp h_0\subset [0, s_1]\times S^1$. 
Then we  estimate,
\begin{equation}\label{eq_lem_M_3}
\begin{split}
\abs{\wt{\Phi} (a, v, u)-\wt{\Phi}  (a_0, v_0, u_0)}_m&\leq \abs{\wt{\Phi}  (a, v, u)-\wt{\Phi} (a, v, h_0)}_m\\
&\phantom{\leq }+\abs{\wt{\Phi}   (a, v, h_0)-\wt{\Phi}  (a_0, v_0, h_0)}_m\\
&\phantom{\leq }+\abs{\wt{\Phi} (a_0, v_0, h_0)-\wt{\Phi}  (a_0, v_0, u_0)}_m\\
&=I+II+III.
\end{split}
\end{equation}

We consider the term $I$. By \eqref{int_eq_3ab} and recalling that  
$\abs{u_0-h_0}_m<\rho$, we obatin

\begin{equation}\label{eq_I_a}
\begin{split}
I&=\abs{\wt{\Phi} (a, v, u-h_0)}_m\leq C\abs{u-h_0}_{m}\\
&\leq C\abs{u-u_0}_{m}+
C\abs{u_0-h_0}_{m}\\
&\leq C\abs{u-u_0}_{m}+C\rho.
\end{split}
\end{equation}
for $(a, v)$ sufficiently close to $(a_0,  v_0)$ with the constant $C$ depending only on $m$.

Using \eqref{int_eq_3ab} again, we obtain the estimate for the term $III$, 
\begin{equation}\label{int_est_M3_6}
III=\abs{ \wt{\Phi} (a_0, v_0, u-h_0)}_m  \leq C\abs{u_0-h_0}_{m}\leq C\rho.
\end{equation}

It remains to estimate the term $II=\abs{\wt{\Phi}   (a, v, h_0)-\wt{\Phi} (a_0, v_0, h_0)}_m=\abs{h_0(\pav)-h_0(\phi_{a_0, v_0})}_m$. The square of the norm 
$\abs{h_0(\pav)-h_0(\phi_{a_0, v_0}}_m$ is a linear combination of 
the terms 
\begin{equation}\label{int_est_M3_15}
\int_{\R^+\times S^1}\abs{D^\alpha \bigl[h_0(\pav)-h_0(\phi_{a_0, v_0})\bigr]}^2e^{2\delta_m s}\ ds dt
\end{equation}
where the sum is taken over all multi-indices $\alpha$ satisfying $\abs{\alpha}\leq 3+m$.  Recalling the notation introduced in the proof that $\wt{\Phi}$ is well-defined, 
the derivatives  $D^\alpha \bigl[h_0(\pav)-h_0(\phi_{a_0, v_0})\bigr]$ can be written as a  linear combinations of the expressions of the form
\begin{equation}\label{int_est_M3_8}
\begin{split}
(D^\gamma h_0)(\pav)\cdot &\bigl[ {\bf D}^{\mu, \nu}(\pav)-{\bf D}^{\mu, \nu}(\phi_{a_0,v_0})\bigr]\\
&\phantom{=}+\bigl[(D^\gamma h_0)(\pav)-(D^\gamma h_0)(\phi_{a_0, v_0})\bigr]\cdot {\bf D}^{\mu, \nu}(\phi_{a_0,v_0})\\
&=II_1+II_2
\end{split}
\end{equation}
where $\abs{\gamma}\leq \abs{\alpha}\leq 3+m$, $\alpha=(\mu_1+\ldots +\mu_k)+(\nu_1+\ldots +\nu_l)$, and $k+l\leq \abs{\gamma}.$
We consider  the term $II_1$. The factors ${\bf D}^{\mu, \nu}(\pav)-{\bf D}^{\mu, \nu}(\phi_{a_0,v_0})$ can be estimated as follows,
\begin{equation*}
\begin{split}
&\abs{{\bf D}^{\mu, \nu}(\pav)-{\bf D}^{\mu, \nu}(\phi_{a_0,v_0})}\\
&\quad \leq \abs{{\bf D}^\mu(\pav^1)\cdot {\bf D}^\nu (\pav^2)-{\bf D}^\mu(\phi_{a_0,v_0}^1)\cdot {\bf D}^\nu (\phi_{a_0,v_0}^2)}\\
&\quad  \leq \abs{{\bf D}^\mu(\pav^1)-{\bf D}^\mu(\phi_{a_0,v_0}^1)}\cdot\abs{ {\bf D}^\nu (\pav^2)}\\
&\quad \quad +\abs{{\bf D}^\mu(\phi_{a_0,v_0}^1)}\cdot \abs{{\bf D}^\nu (\pav^2)-{\bf D}^\nu (\phi_{a_0,v_0}^2)}\\
&\quad  \leq C\cdot \bigl[  \abs{{\bf D}^\mu(\pav^1)-{\bf D}^\mu(\phi_{a_0,v_0}^1)}+
\abs{{\bf D}^\nu (\pav^2)-{\bf D}^\nu (\phi_{a_0,v_0}^2)}\bigr]
\end{split}
\end{equation*}
since  $\abs{D^\beta (\pav)}\leq C$ on $\R^+\times S^1$ for all $\abs{\beta}\leq 3+m$.\\
\noindent From 
\begin{equation*}
\begin{split}
&{\bf D}^\mu(\pav^1)-{\bf D}^\mu(\phi_{a_0,v_0}^1)=
{\bf D}^\mu(r^1(a, v))-{\bf D}^\mu(r^1(a_0,v_0))=\\
&\sum_{1\leq i\leq k}D^{\mu_1}(r^1(a, v))\cdots\bigl[ D^{\mu_i}\bigl(r^1(a, v))-r^1(a_0, v_0)\bigr)\bigr]\cdots D^{\mu_k}(r^1(a, v)),
\end{split}
\end{equation*}
we conclude that 
\begin{equation*}
\begin{split}
\abs{{\bf D}^\mu(\pav^1)-{\bf D}^\mu(\phi_{a_0,v_0}^1)}&\leq C
\sum_{1\leq i\leq k}\bigl[ D^{\mu_i}\bigl(r^1(a, v))-r^1(a_0, v_0)\bigr)\bigr]\\
&\leq C
\sum_{\abs{\beta}\leq 3+m}\bigl[ D^{\beta}\bigl(r(a, v))-r(a_0, v_0)\bigr)\bigr].
\end{split}
\end{equation*}
The estimate for the factor ${\bf D}^\nu(\pav^1)-{\bf D}^\nu(\phi_{a_0,v_0}^2)$ is the same, so that 
$$
\abs{{\bf D}^{\mu, \nu}(\pav)-{\bf D}^{\mu, \nu}(\phi_{a_0,v_0})}\leq C\sum_{\abs{\beta}\leq 3+m} \bigl[ D^\beta (r(a, v))-
D^\beta(r(a_0, v_0))\bigr].
$$
Now we recalling that $h_0$ is a smooth function having support contained in $[0,s_1]\times S^1$, we let $C_0=\abs{h_0}_{C^{3+m}([0,s_1]\times S^1)}$. Then the  contribution of the term $II_1$ to the integral \eqref{int_est_M3_15} is bounded above by 
\begin{equation*}
\begin{split}
&C_0\cdot C\int_{\R^+\times S^1}\abs{ \sum_{\abs{\beta}\leq 3+m}  D^\beta \bigl[ r(a, v))-
r(a_0, v_0)\bigr] }^2e^{2\delta_ms}\ dsdt\\
&\quad \leq C_0\cdot C\sum_{\abs{\beta}\leq 3+m}
\int_{\R^+\times S^1}\abs{D^\beta \bigl[ r(a, v))-
r(a_0, v_0)\bigr]}^2e^{2\delta_ms}\ dsdt\\
&\quad \leq  C_0\cdot C\abs{r(a, v)-r(a_0, v_0)}^2_m.
\end{split}
\end{equation*}
For the term  $II_2$, we have 
\begin{equation*}
\begin{split}
&\abs{\bigl[ (D^\gamma h_0)(\pav)-(D^\gamma h_0)(\phi_{a_0, v_0})\bigr]\cdot {\bf D}^{\mu, \nu}(\phi_{a_0, v_0})}\\
&\quad \leq C\cdot 
\abs{(D^\gamma h_0)(\pav)-(D^\gamma h_0)(\phi_{a_0, v_0})} \leq C_0\cdot C \cdot 
\abs{\pav-\phi_{a_0, v_0}}\\
&\quad=C_0\cdot C\cdot \bigl[ \abs{d(a, v)-d(a_0, v_0)}+\abs{r(a, v)-r(a_0, v_0)}\bigr].
\end{split}
\end{equation*}
Hence  the contribution of this term  to 
the integral \eqref{int_est_M3_15} is bounded  by 
\begin{equation*}
\begin{split}
C_0^2\cdot \dfrac{(e^{2\delta_m s_1}-1)}{2\delta_m}\cdot  \abs{d(a, v)-d(a_0, v_0)}^2+C_0^2\cdot \abs{r(a, v)-r(a_0, v_0)}_m^2.
\end{split}
\end{equation*}
Combining both estimates,  the term $II$ is bounded by 
$$
II\leq C_0\cdot C\cdot (e^{\delta_m s_1}-1)\cdot \abs{d(a, v)-d(a_0, v_0)} +C_0\cdot C\cdot \abs{r(a, v)-r(a_0, v_0)}_m
$$
and hence, by \eqref{eq_I_a} and \eqref{int_est_M3_6},  we get 
\begin{equation}\label{int_est_M3_11}
\begin{split}
&\abs{\wt{\Phi} (a, v,u)-\wt{\Phi} (a_0, v_0,u_0)}_m\leq C\abs{u-u_0}_m+2C\rho\\
&\phantom{\leq }  +
C_0(1-e^{\delta_m s_1})\abs{d(a, v)-d(a_0, v_0)} +C_0C\cdot \abs{r(a, v)-r(a_0, v_0)}_m.
\end{split}
\end{equation}
The number $\rho$ can be chosen to be as small as we wish. Also, the terms $\abs{d(a, v)-d(a_0,v_0)}$ and $\abs{r(a, v)-r(a_0, v_0)}$ are small for $(a, v)$ sufficiently close to $(a_0, v_0)$.  Therefore,  \eqref{int_est_M3_11} shows  that 
$\wt{\Phi} $ is continuous on every level $m$. This means that $\wt{\Phi} $ is of class $\ssc^0$ and the  proof of the lemma is complete. 
\end{proof} 

Next we show that $\wt{\Phi} $  is of class $\ssc^1$. To shorten out notation we write $b=(a, v)$. The candidate for the linearization $D\wt{\Phi} ( b,  u)\colon \bd\oplus V\oplus L\to L$ at the point $(b, u)$ is given by 
\begin{equation}\label{eq_lin_1}
\begin{split}
D\wt{\Phi}  (b, u)(c,   w)=\wt{\Phi} (b, w)+
\wt{\Phi} (b,  u_s)\cdot D\phi_b^1 \cdot c+\wt{\Phi} (b, u_t)\cdot D\phi_b^2 \cdot c,
\end{split}
\end{equation}
where $(c, w)\in \R^2\oplus \R^N\oplus L$ and $D\phi_b^1$ (resp. $D\phi_b^2$) denotes the derivative of $\phi_b^1$ (resp. $\phi_b^2$)  with respect to the variable $b$. 

We have already proved  that the map $\wt{\Phi} \colon \bd \oplus V\oplus L\to L$ is of class $\ssc^0$. The maps 
$L^1\to L$, defined by $u\mapsto u_s, u_t$,  are sc-operators. Hence, by  the chain rule,  
 the maps $\bd \oplus V\oplus L^1\to L$, defined $(b, u)\mapsto\wt{\Phi}  (b, u_s)$ (resp. $\wt{\Phi} (b,u_t)$) are of class $\ssc^0$. 
The map  $D\phi_b^1(s, t)\cdot c$ is  linear in $c$ and since 
$D\phi_b^1(s, t)=Dc(b) +Dr(b)(s, t)$,  where $D$ denotes  derivative with respect to $b$,   we deduce from the properties of $b\mapsto c(b)$ and $b\mapsto r(b)$, that the map $(b, c, s, t)\mapsto D\phi_b^1(s, t)\cdot  c$ is continuous. This implies that 
the map $(b,  u, c, s, t)\mapsto \Psi  (b, u_s)D\phi_b^1(s, t)\cdot c$ is continuous on every level $m$. The same holds for the map $\wt{\Phi} (b, u_t)D\pav^2\cdot c$. 

We have proved that the map $T(\bd\oplus V\oplus L)\to  TL$, defined by 
\begin{equation}\label{guess1}
T(b,u,b, w)\mapsto  \bigl(\wt{\Phi} (b, u),D\wt{\Phi} (b,u)(b, w)\bigr),
\end{equation}
 is $\ssc^0$.
Next we show that the right-hand side of \eqref{eq_lin_1} defines the linearization of $\wt{\Phi}$.
With $(b,u,c,w)\in T(V\oplus E)$ and using
 $$\wt{\Phi} (b+c, u)-\wt{\Phi} (b, u)=\int_0^1\frac{d}{d\tau }\wt{\Phi} (b+\tau c, u)\ d\tau,$$ 
 we have 
\begin{equation*}
\begin{split}
& \wt{\Phi} (b+c, u+w)-\wt{\Phi} (b,u)-D\wt{\Phi} (b, u)(c,w)\\
&\phantom{===}=\wt{\Phi} (b+c,w)-\wt{\Phi} (b, w) \\
&\phantom{====}+\int_0^1 \bigl[ \wt{\Phi} (b+\tau c, u_s)D\phi_{b+\tau c}\cdot c-\wt{\Phi} (b,u_s)D\phi^1_{b}\cdot c\bigr]\ d\tau\\
&\phantom{====}+\int_0^1 \bigl[\wt{\Phi} (b+\tau c,u_t)D\phi^2_{b+\tau c}\cdot c-\wt{\Phi} (b,u_t)D\phi_b^2\cdot c]\ d\tau\\
&\phantom{===}=I+II+III.
\end{split}
\end{equation*}
We consider the term $I$. Since $\wt{\Phi}$ is linear with respect to the second variable, we have
\begin{equation*}
\begin{split}
&\dfrac{1}{\abs{b}+\abs{w}_1}\cdot \abs{\wt{\Phi} (b+c,w)-\wt{\Phi} (b,w) }_0\\
&\quad =\dfrac{\abs{w}_1}{\abs{b}+\abs{w}_1}\cdot \left| \wt{\Phi} i\left( b+c,\frac{w}{\abs{w}}_1\right)-\wt{\Phi} \left(b, \frac{w}{\abs{w}}_1\right) \right|_0
\end{split}
\end{equation*}
for $w\neq 0$. The inclusion  $L_1\to  L_0$  is compact and hence we may assume that
$\frac{w}{\abs{w}_1}\to w_0$ in $L_0$. Since $\wt{\Phi}$ is $\ssc^0$,  we conclude that
$$\frac{1}{\abs{b}+\abs{w}_1}\cdot \abs{\wt{\Phi} (b+c,w)-\wt{\Phi} (b,w) }_0\to 0$$
as $\abs{b}+\abs{w}_1\to 0$.  Next we consider the second term $II$. We have,   for $c\neq 0$,
\begin{equation*}
\begin{split}
&\frac{1}{\abs{c}+\abs{w}_1}\left| \int_0^1 [\wt{\Phi} (b+\tau c,u_s)\cdot D\phi^1_{b+\tau c}\cdot c-\wt{\Phi} (v,u_s)\cdot D\phi^1_{b}\cdot c]\ d\tau\right|_0\\
&\leq \frac{\abs{c}}{\abs{c}+\abs{w}_1}\int_0^1 \left| \wt{\Phi} (b+\tau c,u_s)\cdot D\phi^1_{b+\tau c}\cdot \frac{c}{\abs{c}}-\wt{\Phi} (b,u_s)\cdot D\phi^1_{b}\cdot  \frac{c}{\abs{c}}\right|_0 \ d\tau
\end{split}
\end{equation*}
Since $\wt{\Phi} $ is $\ssc^0$ and  $(b, c, (s, t))\mapsto D\phi^1_{b} (s, t)(c/\abs{c})$ is smooth,  we conclude that
the above expression converges to $0$ as $\abs{c}+\abs{h}_1\to 0$. The same holds for the term $III$.
We have proved that
$$\frac{1}{\abs{c}+\abs{w}_1}\cdot \abs{ \wt{\Phi} (b+c,u+w)-\wt{\Phi} (b, u)-D\wt{\Phi} (b,u)(c, w)  }_0\to 0  $$
as $\abs{c}+\abs{w}_1\to 0$ so that the right-hand side of \eqref{eq_lin_1} is indeed  the linearization of $\wt{\Phi} $ in the sense of Definition \ref{sscc1}.

Therefore, the sc-continuous map given by equation \eqref{guess1} defines the tangent map $T\Phi :T(\bd\oplus V\oplus L)\to TL$. This finishes the  proof  that the map $\wt{\Phi} $ is of class $\ssc^1$.

Sc-smoothness of the map $\wt{\Phi} $ is a consequence of the following lemma which gives the form of the iterated tangent map $T^k\wt{\Phi} $. 
\begin{lemma}\label{structure}  For every $k$, the map 
$$\wt{\Phi} \colon \bd\oplus V\oplus L\to  L,\quad (b,u)\mapsto  u\circ \psi_b,$$ 
is $\ssc^k$. Moreover, assuming that $\pi\colon T^kL\to L^j$ is the  projection onto a factor of  $T^kL$,  the composition $\pi\circ T^k\wt{\Phi} $ is a linear combination of maps  of the form
\begin{equation*}
\begin{gathered}
\bd \oplus V\oplus L^m\oplus(\R^{2+N})^p\to  L^j,\\
(b,w,c_1, \ldots ,c_p)\mapsto
\wt{\Phi} (b,D^{\alpha}w)\cdot f(b, c_1, \ldots ,c_p), 
\end{gathered}
\end{equation*}
where $f\colon \bd \oplus V\oplus  (\R^{2+N})^p\times \R^+\times S^1\to  \R$ is a smooth function which is  linear in every variable $c_i$.  Moreover,  $\abs{\alpha}\leq m-j$ and $p\leq k$.
\end{lemma}
\begin{proof} We prove the lemma by induction with respect to $k$ starting with $k = 0.$
In this case, the statement is trivially satisfied,  since by Lemma \ref{lem_M_3} the map 
$\wt{\Phi} $ is of class $\ssc^0$. Moreover,  there is exactly one projection $\pi:T^0L=L\to L$, namely the identity map, so that $\pi\circ \wt{\Phi} =\wt{\Phi} $ has the required form
with  $m=j=0$, $\alpha=(0,0)$, $p=0$, and $f\equiv 1$.
The statement of the lemma also holds for $k=1$. It  follows from  \eqref{eq_lin_1} and \eqref{guess1},  that the compositions of the tangent map $T\Phi$ with projections $\pi$ onto factors of $TL=L^1\oplus L$  are linear combinations of  maps  of the required form.

We assume that the statement has been proved for $k$ and verify that it holds for $k+1$. It suffices to show that the compositions of the  iterated tangent map $T^{k}\wt{\Phi} \colon T^k(\bd\oplus V\oplus L)\to T^kL$ with the projections $\pi\colon T^kL\to L^j$ onto the  factors of $T^kL$
are of class $\ssc^1$ and their linearizations have the required form.
By inductive assumption, $\pi\circ T^{k}\wt{\Phi} $ is the linear combination of maps  having the particular forms, 
\begin{equation*}
\Gamma\colon \bd\oplus V\oplus L^m\oplus (\R^{2+N})^{p}\to  L^j,
\quad (b,w,c)\mapsto
\wt{\Phi}(b,D^{\alpha}w)\cdot f(b,c),
\end{equation*}
where we abbreviated $c=(c_1,\ldots ,c_p)\in  (\R^{2+N})^{p}$  and $\abs{\alpha}\leq m-j$ and $p\leq k$.
It  is enough  to show that our claim holds for each of these  maps. By assumption, 
the function $f\colon \bd\oplus V\times (\R^{2+N})^p\times \R^+\times S^1 \to \R$ is  smooth and linear in each variable $c_i$. 

The map $\Gamma$ is the composition of the following maps. The sc-operator $L^m\to L^{m-\abs{\alpha}}$,  defined by $h\mapsto D^{\alpha}h$, is composed with  the map
$$\wt{\Phi} \colon V\oplus L^m \to  L^j, \quad (v,u)\mapsto \wt{\Phi} (v,u)$$
which we already know is of class $\ssc^1$. By the chain rule, this composition is at least of class $\ssc^1$. Multiplication of this composition  by a smooth function $\bd \oplus V\oplus (\R^{2+N})^p\to \R$ defined by $(b, c)\mapsto f(b, c)$ gives  an $\ssc^1$-map.   Hence the map $\Gamma$ is of class $\ssc^1$ and it  remains to show that
the compositions $\pi\circ T\Gamma$  of the tangent map $T\Gamma\colon T(\bd\oplus V\oplus L^m\oplus (\R^{2+N})^{p})\to T(L^j)$  with the projections onto factors of $T(L^j)$ are linear combinations of maps of the required form.
The tangent map of $\Gamma$ is given by
$$T\Gamma(b, w, c, \delta b, \delta w, \delta c)=\bigl( \Gamma (b, w, c), D\Gamma (b, w, c)( \delta b, \delta w, \delta c)\bigr)$$
where $(b, w, c)\in \bd\oplus V\oplus L^{m+1}\oplus (\R^{2+N})^{p}$ and $( \delta b, \delta w, \delta c)\in \R^{2+N}\oplus L^m\oplus (\R^{2+N})^{p}.$ Denoting by $\pi\colon T(L^j)=L^{j+1}\oplus L^j\to L^{j+1}$  the projection onto  the first factor of $T(L^j)$, then, by our inductive assumption, 
$\pi\circ T\Gamma =\Gamma$  is a linear combination of terms having required form but with the indices $m$ and $j$ raised by $1$.

So, we consider the projection $\pi\colon T(L^j)\to L^j$  onto the second factor of $T(L^j)$ and the map $\pi\circ T\Gamma=D\Gamma$. Using the chain rule and the linearization of $\Gamma$ given by 
\eqref{eq_lin_1},
the linearization $D\wt{\Phi} $ is a linear combination of the following four types of maps:
\begin{align*}
(1)&\quad \bd\oplus V\oplus L^{m}\oplus (\R^{2+N})^p \to L,\\
&\qquad \qquad(b, \delta w, c)\mapsto  \Gamma (b,D^{\alpha} (\delta w) )\cdot f(b,c).&\\
(2)&\quad\bd\oplus V\oplus  L^{m+1}\oplus (\R^{2+N})^{p+1}\to L,\\
&\qquad \qquad(b, w,  (\delta b, c))\mapsto    \Gamma (b,D^{\alpha +(1,0)} w)\cdot (D\phi^1_{b}\cdot \delta b) f(b,c),\;\; \text{and}\\
&\qquad \qquad(b, w,  (\delta b, c))\mapsto  \Gamma(b,D^{\alpha +(0,1)} w)\cdot (D\phi^2_{b}\cdot \delta b) f(b,c).&\\
(3)&\quad\bd\oplus V\oplus  L^{m+1}\oplus (\R^{2+N})^{p+1}\to L,\\
&\qquad \qquad(b, w, (\delta b, c))\mapsto   \Gamma(b,D^{\alpha} w )\cdot D_bf(b,c)\cdot \delta b.&\\
(4)&\quad\bd\oplus V\oplus  L^{m+1}\oplus (\R^n)^{p+1}\to L,\\
&\qquad \qquad(b, w, (c, \delta c_i))\mapsto   \Gamma(b,D^{\alpha} w )\cdot  f(b,(c_1,\ldots ,\delta c_i, \ldots , c_p))\\
\phantom{(4)}&\qquad \text{for every $1\leq i\leq p$}.
\end{align*}

All of these four types of maps have the desired form. Having verified the statement for $k+1$, the proof of Lemma  \ref{structure} is complete.
\end{proof}

With the proof of Lemma \ref{structure}, the proof of Theorem  \ref{thm_m3_1} is finished.\\[0.5ex]

This completes the proof of Theorem \ref{theorem_neck}.
\end{proof}

\section{Proof of Lemma \ref{GROMOVCONV} }\label{section5.1}
\begin{L3.4}
We consider the stable map  $(S, j, M, D, u)$ and two sequences $u_k, u_k'\in H^{3,\delta_0}(S, Q)$ of maps from the noded Riemann surface $S$ into the symplectic manifold $Q$   converging  to $u$ in $C^0$. We assume 
that $(a_k,v_k)\rightarrow (0,0)\in O$ and $(b_k,w_k)\rightarrow (0,0)\in O$ and  assume that  
$$
\phi_k:(S_{a_k},j(a_k,v_k),M_{a_k},D_{a_k},\oplus_{a_k}(u_k))\rightarrow
(S_{b_k},j(b_k,w_k),M_{b_k},D_{b_k},\oplus_{b_k}(u'_k))
$$
is a sequence of isomorphisms. Then there is a subsequence of $(\phi_k)$ which converges in $C^{\infty}_{\text{loc}}$ away from the nodes to an automorphism $\phi_0$ of $(S, j, M, D, u)$. 
\end{L3.4}
\begin{proof}
We abbreviate $S_k=S_{a_k}$ and $S'_k=S_{b_k}$ and choose two sequences $g_k$ and $g_k'$ of Riemann metrics on $S_k$ and $S_k'$, which are independent of $k$ on the core regions of $S_k$ and $S_k'$, respectively, and which are translation and rotation invariant standard metrics on the neck regions of $S_k$ and $S_k'$
(identifying the necks with cylinders). We abbreviate  the norms  by  $\abs{ \cdot  }_k=\sqrt{g_k'(\cdot , \cdot )}$.

In a first step we  show that the sequence $\abs{\nabla \phi_k}_k$ is uniformly bounded.  Arguing by contradiction we assume that there is a sequence of  points $z_k\in S_k$ satisfying 
$$
R_k=\abs{\nabla \phi_k (z_k)}_k\to \infty. 
$$

We next show that the sequence $(z_k)$ stays at a finite distance to  the core regions of $S_k$. Arguing by contradiction, we assume, after taking a subsequence, that $z_k$ lies in the neck region 
$N_k$ associated with the nodal pair $\{x, y\}\in D$ and that $\text{dist}\ (z_k, \partial N_k)\to \infty$. 
We distinguish the following two cases. 
\begin{itemize}
\item[(1)] The image  sequence  $(\phi_k(z_k))$ lies in the neck $N_k'$ of $S_k'$  associated with the nodal pair $\{x',y'\}\in D$ and $\text{dist}\ (\phi_k(z_k), \partial N_k')\to \infty$.
\item[(2)] The image  sequence  $(\phi_k(z_k))$ lies at a  finite distance to the core region of $S_k'$.
\end{itemize}

To deal with the case (1) we recall that  the necks  $N_k$ of $S_k$ are   the finite cylinders $Z^{\{x,y\}}_{a_k}:=Z^{\{x,y\}}_{a_k^{\{x,y\}}}$  connecting the boundaries of the associated disks $D_x$ and $D_y$ of the small disk structure. These  cylinders are defined as follows. 
We choose the positive and negative  holomorphic polar coordinates
\begin{equation*}
\begin{aligned}
&h_x:[0,\infty)\times S^1\rightarrow D_x\setminus\{x\},&\qquad 
&\bar{h}_x(s, t)=\ov{h}_x(e^{-2\pi (s+it)}),\\
&h_y:(-\infty, 0]\times S^1\rightarrow D_y\setminus\{y\},&\qquad 
&\bar{h}_y(s, t)=\ov{h}_y(e^{2\pi (s+it)}),
\end{aligned}
\end{equation*}
where
\begin{align*}
\ov{h}_x:\{w\in \C\vert \, \abs{w}\leq  1\}\to D_x\quad \text{and}\quad 
\ov{h}_y:\{w\in \C\vert \, \abs{w}\leq  1\}\to D_y
\end{align*}
are biholomorphic mappings. 
We remove from the disk $D_x$ the  points  $z=h_x(s,t)\in D_x$  for $s>\varphi(|a_k|)$ and from $D_y$ the points $z'=h_y(s',t')\in D_y$  for  $s'<-\varphi(|a_k|)$. Here $\varphi $ is the exponential gluing profile.  The remaining  points of the disks $D_x$ and $D_y$ are identified as follows. The points 
$z=h_x(s,t)$ and $z'=h_y(s',t')$
are equivalent if 
$s=s'+R_k$ and $t=t'+\vartheta    \pmod 1$, where $R_k=\varphi (\abs{a_k})$  and
 $a_k=\abs{a_k}e^{-2\pi i\vartheta}.$ 
 
 Assuming that  the sequence $(\phi_k(z_k))$ lies in the neck $N'_k=Z^{\{x', y'\}}_{b_k}$  
 associated with the nodal pair $\{x',y'\}\in D$ and $\text{dist}\ (\phi_k(z_k), \partial N_k')\to \infty$, we identify 
 $Z^{\{x,y\}}_{a_k}$ and $Z^{\{x',y'\}}_{b_k}$ with the cylinders  $[0,R_k]\times S^1$ and $[0, R'_k]\times S^1$. 
 Then  $z_k=(s_k, t_k)\in [0,R_k]\times S^1$ satisfies $s_k\to \infty$ and $R_k-s_k\to \infty$. Similarly, if $\phi_k (z_k)=(r_k, \theta_k)\in [0, R_k']\times S^1$, then $r_k$  satisfies $r_k\to \infty$ and $R_k'-r_k\to \infty$. In view of the periodicity in the $t$-variable, we view the maps $\phi_k$ as maps on (subsets of)  $\C$. 
 
Applying   the bubbling off arguments  from  \cite{H0}, we choose a sequence $(\varepsilon_k)$ of positive numbers such that $\varepsilon_k\to 0$ and $\varepsilon_kR_k\to \infty$.  In view of 
Lemma  26 in  \cite{H0} we can modify the sequences $(\varepsilon_k)$ and $(z_k)$ so that the new sequences  satisfy 
 \begin{gather}\label{gr-eq1}
\varepsilon_k\to 0,   \quad  R_k\varepsilon_k\to \infty\\
 \abs{\nabla \phi_k (z)}\leq 2R_k\quad \text{if \ $\abs{z-z_k}\leq \varepsilon_k$}.
 \end{gather}
Introducing the representation  $\phi_k (z)=(a_k(z), \theta_k (z))\in \R\times S^1$ and  $m_k=a_k(z_k)$, we define the sequence $(\wt{\phi}_k)$ of rescaled holomorphic maps  on the disks $\abs{z}\leq \varepsilon_k R_k$ by 
$$\wt{\phi}_k(z)=
\bigl(\wt{a}_k(z), \wt{\theta}_k(z)\bigr)=\bigl( a_k\big( z_k+\frac{z}{R_k}\bigr)-m_k, \theta_k 
\bigl(z_k+\frac{z}{R_k}\bigr)\bigr).
$$
The maps $\wt{\phi}_k$ satisfy
$$
\abs{\nabla \wt{\phi}_k(0)}_k=1 \ \  \text{and}\quad  \abs{\nabla \wt{\phi}_k(z)}_k\leq 2\quad \text{if}\ \ \abs{z}\leq \varepsilon_kR_k.
$$
We also note that the maps are injective. 
From the gradient bounds, one derives $C^\infty_{\text{loc}}$-bounds for the sequence $(\wt{\phi}_k)$. \
Since  the maps $\wt{\phi}_k$ are, by construction,  locally bounded in k, we find by  Ascoli-Arzela's theorem a converging subsequence $\wt{\phi}_k\to \phi $ in $C_{\textrm{loc}}^\infty (\C)$. The limit map $\phi (z)=(a, \theta):\C\to \R\times S^1$ is  a non-constant injective  holomorphic map satisfying 
$$
\text{$ \abs{ \nabla \phi (0) }=1$}.
$$
We may view ${\mathbb R}\times S^1$    as $S^2\setminus\{0,\infty\}$ conformally. Using the removable singularity theorem
we obtain the non-constant holomorphic map
$$
\phi:S^2\rightarrow S^2
$$
which misses  at least one point. This contradicts the fact that there is no non-constant holomorphic map $S^2\rightarrow {\mathbb C}$.
 
In case (2), the bubbling off analysis as above,   but replacing the maps $\wt{\phi}_k$ by the maps
$$\wt{\phi}_k(z)=\phi_k \biggl( z_k+\frac{z}{R_k}\biggr)
\quad \text{for $\abs{z}\leq \varepsilon_k R_k$},$$
produces a non-constant  holomorphic map $\phi:\C\to C$  into a connected and compact component $C$ of the noded  Riemann surface  $S$. In view of the removable singularity theorem, we can extend the holomorphic map $\phi$ from $\C$  to a holomorphic map $\wt{\phi}:S^2=\C\cup \{\infty\}\to C$.
Since non-constant  holomorphic maps are open and since $S^2$ is compact, the map $\wt{\phi}$ is surjective onto $C$, and we conclude from the Riemann-Hurwitz formula that the genus  $g(C)$ is equal to $0$ so that $C=S^2$. 
Since the maps $\phi_k$, by definition,  map special  points onto special  points, and since the neck regions do not  contain any  special  points, we conclude  from   the surjectivity of $\wt{\phi}$ and the connectedness of the noded Riemann  surface $(S, j, M, D)$ that  the component $C=S^2$ possesses precisely one special point, namely a nodal point, which is hit by the extension $\wt{\phi}:S^2\to C\equiv S^2$.  Since the number of special points is equal to $1$, we conclude from the stability assumption  of the stable map $(S, j, M, D, u)$ that $\int_Cu^*\omega>0$. Consequently,  we  find two  points $p$ and $q$ in $\C$ such that  their image points  $p'=\phi (p)$ and $q'=\phi (q)$ are different  and  such that $\text{dist}\ (u(p'), u(q'))>0$.
Since $\phi_k$ is an isomorphism, we know that $\oplus_{b_k}(u_k')\circ \phi_k=\oplus_{a_k}(u_k)$. 
Hence at a finite distance to the core regions we have $u'_k\circ \phi_k=u_k$ for $k$ large.  Abbreviating by $\tau_k(z)=z_k+\frac{z}{R_k}$ the maps introduced for the rescaled map $\wt{\phi}_k$, we  compute,  
\begin{equation*}
\begin{split}
0<\text{dist}\ (u(p'), u(q'))&=\text{dist}\ (u\circ \phi (p), u\circ  \phi (q))\\
&=\lim_k \text{dist}\ (u'_k\circ  \phi_k\circ \tau_k (p), u'_k\circ  \phi_k\circ \tau_k (q))\\
&=\lim_k  \text{dist}\ (u_k\circ \tau_k (p), u_k\circ \tau_k (q))\\
& =\text{dist} (u(x), u(x))=0,
\end{split}
\end{equation*}
which again is a contradiction. 
 
Consequently, we  have proved that  the sequence $(z_k)$ necessarily stays  at a  finite distance to  the core region of $S_k$. Since there are only finitely many connected components of $S_k$, we may assume that the sequence $(z_k)$ stays at a finite distance to the core region of the connected and compact component $C$ of the noded Riemann surface $S$.  Again we distinguish the two cases (1) and (2). 
Arguing as before, both cases lead to contradictions and we have proved the claim  that bubbling off in the sequence $(\phi_k)$ of isomorphisms does not occur.

Therefore,  one concludes  by Gromov--compactness that any subsequence of $(\phi_k)$ possesses a subsequence which converges in $C^\infty_{\text{loc}}$ away from the nodes to some automorphism 
$\phi_0:(S,j, M, D, u)\to (S,j, M, D, u)$ of the stable map as claimed in Lemma \ref{GROMOVCONV}.
\end{proof}

\section{Linearization of the CR-Operator}\label{connection}
In order to linearize the Cauchy-Riemann operator we first introduce some notation. If 
$\nabla$ is a connection of the vector bundle $E\to Q$ over the smooth manifold $Q$, then the covariant derivative $\nabla_Xs$  of the  section $s$ in the direction of the vector field $X$ on $Q$ is the section, which in local coordinates is given by 
$$
\nabla_Xs(x)=Ds (x)\cdot X(x) + \Gamma(x)(X(x),s(x)).
$$
Here, $Ds$ is the derivative of $s$ and $\Gamma$ is the local connector associated with $\nabla$. In the case of the tangent bundle $E=TQ\to Q$, the torsion $N$  of the connection $\nabla$ is, in local coordinates given by 
$$
N(x)(X,Y)=\Gamma(x)(X,Y)-\Gamma(x)(Y,X).
$$
If $u:S\to Q$ is a smooth map, the pull-back connection  $u^*\nabla$ on the pull-back bundle $u^*TQ\to S$ is defined as follows. The covariant derivative $(u^*\nabla)_X\eta$  of a section $\eta$ of the pull-back bundle (hence satisfying $\eta (z)\in T_{u(z)}Q$) in the direction of the vector field $X$ on $S$, is the section, which in local coordinates is given by 
$$
(u^*\nabla )_X\eta (z)=D\eta (z)\cdot X(z)+\Gamma (u(z))(Du(z)\cdot X(z), \eta (z)).
$$
Here $D\eta$ and $Du$ are the derivatives of $\eta$, respectively $u$, in local coordinates. 

Connections on the tangent bundle $TQ\to Q$ induce natural connections on function spaces associated with the manifold $Q$, as is explained in 
\cite{El}. We consider the function space of functions $u:S\to Q$ having sufficient regularity and denote by 
$$\partial : u\mapsto Tu$$
the map associating with the map $u$ its tangent map $Tu$, which is a section of the vector bundle $T^*S\otimes u^*(TQ)\to S$ so that $Tu(z)\in \call (T_zS, T_{u(z)}Q)$ for all $z\in S$. Let $\eta$ be a section of the pull-back bundle $u^*TQ\to S$. The covariant derivative $(\nabla_\eta \partial )(u)$ of the map $\partial$ at the point $u$ in the direction of $\eta$  is the section of $T^*S\otimes u^*(TQ)\to S$, in local coordinates given by 
$$(\nabla_\eta \partial )(u)(z)=D\eta (z)+\Gamma (u(z))\bigl(\eta (z), Du(z)\cdot \bigr).$$
Since, for the fixed map $u:S\to Q$ the covariant derivative $(u^*\nabla)_{(\cdot)}\eta$ of the section $\eta$ of $u^*TQ\to S$ is the  section of 
$T^*S\otimes u^*(TQ)\to S$ which in local coordinates is given by 
$$(u^*\nabla)_{(\cdot)}\eta (z)=D\eta(z)\cdot   +\Gamma (u(z))\bigl(Du(z)\cdot , \eta (z)\bigr),$$
we obtain the formula 
\begin{equation}\label{covariant1}
(\nabla_\eta\partial)(u)=(u^*\nabla )_{(\cdot)}\eta+N(\eta, Tu\cdot ).
\end{equation}
Let us simplify the notation by setting for a vector field $\eta$ along $u$, since $u$ is usually fixed,
$$
\nabla\eta := (u^\ast\nabla)_{(\cdot)}\eta.
$$
Note that $(\nabla\eta)(z)$ for $z\in S$ is a linear map $T_zS\rightarrow T_{u(z)}Q$.
We now assume that $(Q,\omega)$ is our  symplectic manifold  equipped with the compatible almost complex structure $J$. We choose a connection $\nabla$ on $TQ\to Q$ which satisfies  $\nabla J=0$. 
For fixed $(a, v)$ we consider the space of maps $u:S_a\to Q$ and compute the  covariant derivative $(\nabla_\eta \ov{\partial}_{J, j(a, v)})(u)$ of the Cauchy-Riemann map 
$$u\mapsto  \ov{\partial}_{J, j(a, v)}(u)=\frac{1}{2}\bigl[ Tu+J(u)\circ Tu \circ j(a, v)\bigr]$$
at the point $u$ in the  direction of the  section $\eta$ along $u$. In view of $\nabla J=0$ and the formula \eqref{covariant1} we obtain the following formula  for the section 
$(\nabla_\eta \ov{\partial}_{J, j(a, v)})(u)$  of the  vector bundle $T^*S\otimes u^*(TQ)\to S$,
\begin{equation}\label{covariant2}
\begin{split}
(\nabla_\eta \ov{\partial}_{J,j(a,v)})(u)&=\frac{1}{2} \bigl[ \nabla\eta  + J(u)\circ  (\nabla\eta) \circ j(a, v)\cdot \bigr]  \\
&\phantom{=}+\frac{1}{2}\bigl[ N(\eta ,Tu \cdot )  +J(u)\circ N(\eta,Tu\circ  j(a,v)\cdot )\bigr].
\end{split}
\end{equation}
The Cauchy-Riemann type operator in \eqref{covariant2} is in local coordinates of the form 
\begin{equation}\label{covariant2_b}
D\eta (z)+J(u(z))\circ D\eta (z)\circ  j(a, v)+A(z)(\eta (z), Tu(z)).
\end{equation}
Here $A(z)$ is a bilinear map.  Fixing the point $z_0\in S_a$, we choose a smooth family $\varphi_v:(D, i)\to ({\mathcal D}, j(a, v))$ of biholomorphic mappings which map $0\in \C$ onto $z_0$. 
Taking the composition $\xi=\eta\circ \varphi_v:D\to \R^{2n}$ and evaluating the above operator at the vectors $T\varphi_v \cdot \frac{\partial}{\partial s}$ we obtain the following local expression for 
$\nabla_\eta \ov{\partial }_{J,j (a, v)}(u)$, 
$$\frac{\partial }{\partial s}\xi+J(u\circ \varphi_v)\cdot \frac{\partial}{\partial t}\xi+A(\varphi_v)(\xi, \frac{\partial}{\partial s}\bigl( u\circ \varphi_v )\bigr) .$$
Let us apply the above considerations to our filled section ${\bf f}:O\to \cf$ of Section \ref{section4.4} which is defined by 
\begin{equation*}
\begin{aligned}
\Gamma [\exp_{u_a}(\oplus_a(\eta)), u_a]\circ \wh{\oplus}_a{\bf f}(a, v,\eta)\circ \sigma (a, v)&=\ov{\partial}_{J, j(a, v)}(\exp_{u_a}\bigl(\oplus_a(\eta))\bigr)\\
\wh{\ominus}_a{\bf f}(a, v, \eta)&=\ov{\partial}_0\bigl(\ominus_a(\eta)\bigr).
\end{aligned}
\end{equation*}
For fixed $(a, v)$ we take the covariant derivative with respect to the section $\eta$ at the section $k$ in the direction of the section $h$ of the bundle $u^*TQ\to S$ and obtain 
\begin{equation*}
\begin{split}
&\bigl(\nabla_{(T\exp_{u_a}(\oplus_a (k))\cdot \oplus_a(h),0)}\Gamma\bigr)[ \exp_{u_a}(\oplus_a(k)), u_a]\circ (\wh{\oplus}_a{\bf f}(a, v, k))\circ \sigma (a, v)\\
&\phantom{===}+\Gamma[\exp_{u_a}(\oplus_ak), u_a]\circ (\wh{\oplus}_aD_3{\bf f}(a, v, k)\cdot h)\circ \sigma (a, v)\\
&\phantom{==}=(\nabla_{(T\exp_{u_a}(\oplus_ak )\cdot \oplus_ah)}\ov{\partial}_{J, j(a, v)})(\exp_{u_a}(\oplus_a k))
\end{split}
\end{equation*}
and 
\begin{equation*}
\wh{\ominus}_a(D_3{\bf f}(a, v, k)\cdot h)=\ov{\partial}_0(\ominus_a (h)).
\end{equation*}
We note that 
$T\exp_{u_a}(\oplus_a(k))\cdot \oplus_a (h)$  is a section of $[\exp_{u_a}(\oplus_a (k))]^*TQ$.

\section{Consequences of  Elliptic Regularity}\label{appx19}
In the following we first  apply the  elliptic regularity theory to a family of Cauchy-Riemann operators on a fixed domain. In sharp contrast to this ``classical case'' we then apply the  elliptic regularity theory 
in the ``noded version'' to a family of Cauchy-Riemann operators  {\bf on  varying  domains}. \\[0.5ex]

\noindent{\bf Classical Case}\\
 We denote by  $D$ the closed unit disk in $\C$ and by $D_{\delta}\subset D$ the concentric subdisks $D_\delta =\{ z\in D\vert, \, \abs{z}\leq 1- \delta\}$ for $0<\delta<1$.  By $(J_k)$ we denote  
a sequence of complex multiplications on $\R^{2n}$ smoothly parametrized by $D$, i.e.,  $J_k(z)$ for $z\in D$ is a complex structure smoothly depending on $z$. We assume that $J_k\rightarrow J$
in $C^{\infty}(D)$  so that  $J$ is a again a smooth family of complex structures on $\R^{2n}$.  We also assume that $J(z)$ is close to $i$
so that in particular  for $z\in\partial D$ the space $({\mathbb R}\oplus \{0\})\oplus\ldots \oplus ({\mathbb R}\oplus \{0\})$ is totally real for $J(z)$.
 Moreover,  we consider a sequence $(A_k)$ of smooth maps 
$A_k:D\to \call (\R^{2n}, \R^{2n})$ which converge to the map $A:D\to \call (\R^{2n}, \R^{2n})$ in the $C^\infty$-topology. We assume that 
$(y_k)$ is a sequence in $H^{2+m}(D,\R^{2n})$ converging to $0$, 
$$y_k\to 0\quad \text{in $H^{2+m}(D,\R^{2n})$},$$
 and we assume that the sequence 
$(z_k)\subset H^{3+m}(D,{\mathbb R}^{2n})$ is  bounded. Finally,  we assume that the sequence $(u_k)$ is bounded  in  $H^{3+m}(D,\R^{2n})$ and satisfies the equations 
\begin{equation}\label{bond}
\partial_su_k +J_k\partial_t u_k + A_k u_k =y_k+z_k.
\end{equation}
\begin{proposition}\label{appx20}
Under the above assumptions,  every $u_k$ splits as $u_k=v_k+w_k$ where the sequence  $(w_k)\subset H^{m+3}(D,\R^{2n})$
converges to $0$ in $H^{m+3}(D,{\mathbb R}^{2n})$ and  the sequence  $(v_k)$,  if restricted to $D_\delta$,  is bounded in 
$H^{m+4}(D_\delta,\R^{2n})$ for every $0<\delta<1$. In particular, the sequence $(u_k)$ possesses  a subsequence which,  restricted to $D_\delta$,  converges  in
$H^{m+3}(D_\delta,\R^{2n})$.
\end{proposition}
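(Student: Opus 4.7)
The plan is to split $u_k$ into a piece $w_k$ that absorbs the weakly-converging forcing $y_k$ and a higher-regularity piece $v_k=u_k-w_k$ driven only by the $H^{m+3}$-bounded forcing $z_k$, then to apply interior elliptic regularity to $v_k$. The construction of $w_k$ will use a uniform right parametrix for the family $L_k := \partial_s + J_k\partial_t + A_k$, which is an elliptic first-order operator on $D$ depending $C^\infty$-continuously on the index $k$.

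First I would set up the Fredholm framework. Because $J_k\to J$ and $A_k\to A$ in $C^\infty(D)$ and $J(z)$ is close to $i$ on $\partial D$ so that $\R^n\oplus\cdots\oplus\R^n\subset\R^{2n}$ is totally real for $J(z)$, the operators
$$
L_k : H^{m+3}_{L}(D,\R^{2n})\longrightarrow H^{m+2}(D,\R^{2n})
$$
form a continuous family of Fredholm operators of fixed index, where the subscript $L$ denotes the totally real boundary condition. Standard elliptic theory supplies right parametrices $Q_k:H^{m+2}(D)\to H^{m+3}_L(D)$ with $L_k Q_k = \id - \pi_k$, where $\pi_k$ is a projection onto a finite-dimensional cokernel sitting in $C^\infty(D,\R^{2n})$. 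The $C^\infty$-convergence of the coefficients guarantees that $Q_k,\pi_k$ may be chosen so that $\sup_k\|Q_k\|_{H^{m+2}\to H^{m+3}}<\infty$ and the finite-dimensional ranges of $\pi_k$ lie in a fixed $C^\infty$-bounded family. Setting $w_k:=Q_k y_k$ yields $w_k\to 0$ in $H^{m+3}(D)$, since $y_k\to 0$ in $H^{m+2}$, and $L_k w_k = y_k-\pi_k y_k$, with $\pi_k y_k\to 0$ in every Sobolev norm because it is a linear combination of a bounded family of smooth functions with coefficients tending to zero.

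Then I would define $v_k := u_k-w_k$ and verify the two required properties. First, $v_k$ is bounded in $H^{m+3}(D)$ since both $u_k$ and $w_k$ are. Second,
$$
L_k v_k = (y_k+z_k)-(y_k-\pi_k y_k) = z_k + \pi_k y_k,
$$
which is bounded in $H^{m+3}(D)$. The classical interior regularity estimate for the first-order elliptic system $L_k$,
$$
\|v\|_{H^{m+4}(D_\delta)} \leq C_\delta\bigl(\|L_k v\|_{H^{m+3}(D)} + \|v\|_{H^{m+3}(D)}\bigr),
$$
holds with $C_\delta$ independent of $k$ by the uniform $C^\infty$-bounds on $J_k,A_k$, and immediately gives that $\{v_k\}$ is bounded in $H^{m+4}(D_\delta)$. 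The ``in particular'' statement then follows from Rellich's theorem: a subsequence of $v_k$ converges in $H^{m+3}(D_\delta)$, and since $w_k\to 0$ in $H^{m+3}(D)$, the same is true for $u_k=v_k+w_k$.

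The main obstacle is the uniform-in-$k$ construction of the parametrix $Q_k$ and its cokernel projection $\pi_k$ on the fixed disk $D$. The hypotheses that $J_k\to J$ and $A_k\to A$ in $C^\infty$, combined with the totally real boundary condition guaranteed by the closeness of $J$ to $i$, are precisely what is needed so that $\{L_k\}$ is a continuous family of Fredholm operators to which classical perturbation theory applies and produces the required uniform parametrix; everything else in the argument is then a bookkeeping step built on standard interior Schauder-type estimates.
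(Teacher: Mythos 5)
Your proposal is correct and follows essentially the same route as the paper: decompose $u_k=v_k+w_k$ with $w_k$ obtained by solving the linear boundary value problem for $L_k$ with data $y_k$ (so that $w_k\to 0$ in $H^{m+3}$), observe that $L_kv_k$ is then bounded in $H^{m+3}$, and conclude by interior elliptic regularity plus Rellich. The only difference is cosmetic: where you invoke a uniform right parametrix and must additionally control the cokernel projections $\pi_k$, the paper shifts the operator to $L_k+\varepsilon$ with $\varepsilon$ off the discrete spectrum, obtaining genuine isomorphisms with uniformly bounded inverses and the harmless error term $\varepsilon w_k$ in place of $\pi_k y_k$.
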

\begin{proof}
We introduce the subspace  $W\subset H^{m+3}(D,{\mathbb R}^{2n})$ of maps satisfying the  boundary conditions 
 $u(1)=0$ and $u(z)\in ({\mathbb R}\oplus \{0\})\oplus\ldots \oplus ({\mathbb R}\oplus \{0\})$ for all $z\in \partial D$. Then the linear operator
$$
L:W\rightarrow H^{m+2}(D,{\mathbb R}^{2n}),\quad u\mapsto u_s +J u_t +Au
$$
is a Fredholm operator of index $0$ whose spectrum is a countable discrete set of points with no finite limit point.  The same holds true for the linear operator 
$$L_k:W\to H^{2+m}(D, \R^{2n}),\quad u\mapsto u_s+J_ku_t+A_ku$$
and there exists an $\varepsilon>0$ such that  for $k$ sufficiently large the operators $L_k+\varepsilon $ are linear isomorphisms. Moreover, 
$\norm{(L_k+\varepsilon )^{-1}}\leq C$ for all $k\geq k_0$, with  a constant $C$ independent of $k$.
For $k\geq k_0$, we  let $w_k\in W$ be the unique  solution of 
$$
(L_k +\varepsilon )w_k = y_k.
$$
Since $y_k\rightarrow 0$ in $H^{m+2}(D,\R^{2n})$, we conclude from 
$$\norm{w_k}_{H^{m+3}(D,{\mathbb R}^{2n})}=\norm{(L_k+\varepsilon )^{-1}(y_k)}_{H^{m+3}(D,{\mathbb R}^{2n})}\leq C\norm{y_k}_{H^{m+2}(D,{\mathbb R}^{2n})}$$
that $w_k\rightarrow 0$ in $H^{m+3}(D,{\mathbb R}^{2n})$.
Now we set $v_k=u_k-w_k$ and note that  the sequence $(v_k)$ is bounded in $H^{m+3}(D,{\mathbb R}^{2n})$ in view of the assumption  on the sequence $(u_k)$. We compute,  using \eqref{bond}, 
$$
L_k(v_k) = L_k(u_k)-L_k(w_k) = (y_k+z_k)-(y_k -\varepsilon w_k)= z_k+\varepsilon w_k =:z_k^{\ast}.
$$
Since $w_k\rightarrow 0$ in $H^{m+3}(D,{\mathbb R}^{2n})$ and since,  by assumption,   the sequence $(z_k)$ is bounded in $H^{m+3}(D,{\mathbb R}^{2n})$, the sequence $(z_k^*)$  is also  bounded in $H^{m+3}(D,{\mathbb R}^{2n})$. Now using that $(v_k)$ is bounded in $H^{m+3}(D,{\mathbb R}^{2n})$, 
we conclude by the interior elliptic regularity theory  that  the sequence $(v_k)$ is bounded in the space  $H^{m+4}_{loc}(\textrm{int} (D),{\mathbb R}^{2n})$. This implies the desired conclusion of Proposition \ref{appx20}.
\end{proof}

\noindent{\bf Noded Version}\\
 We assume that we  are given two smooth maps 
$$
[0,\infty)\times S^1\rightarrow \call ({\mathbb R}^{2n},\R^{2n}),\quad  (s,t)\mapsto  J^+(s,t)
$$
and 
$$
(-\infty,0]\times S^1\rightarrow \call ({\mathbb R}^{2n},\R^{2n}),\quad  (s',t')\mapsto  J^-(s',t')
$$
where  $J^+(s,t)$ and $J^-(s',t')$ are complex structures on ${\mathbb R}^{2n}$ for every $(s,t)$ and $(s',t')$. Moreover, we assume that there is a complex structure $J_0$  on $\R^{2n}$ such that 
$$J^+(s,t)\to J_0\qquad  \text{and}\qquad J^-(s',t)\to J_0$$
uniformly in $t$ as $s\to \infty$ and $s'\to -\infty$.  In addition,  we assume
that for every multi-index $\alpha$ and  every  positive number $\varepsilon$,  there exists a number $s_0>0$ such that 
\begin{equation*}
\begin{aligned}
&\abs{D^{\alpha} (J^+(s,t)-J_0)}\leq \varepsilon&\quad &\text{for $s\geq s_0$}\\
&\abs{D^{\alpha} (J^-(s',t')-J_0)}\leq \varepsilon&\quad &\text{for $s'\leq -s_0.$}
\end{aligned}
\end{equation*}

Further, we assume that  the sequence $(a_k)$ of gluing parameters in $\hb \setminus \{0\}$ converges to $0$,
$$a_k\to 0\qquad \text{as $k\to \infty$}.$$
By $Z_{a_k}$ we denote the associated glued finite cylinders introduced in Section \ref{dm-subsect}
and by $Z_{a_k}(-r)$ the sub-cylinder 
$$Z_{a_k}(-r)=\{[ s,t]\in Z_{a_k}\vert \, r\leq s\leq R_k-r\},$$
where, as usual, $R_k=e^{\frac{1}{\abs{a_k}}}-e$.

In addition, we let $z\mapsto  J_k(z)$ be a smooth family of complex structures on ${\mathbb R}^{2n}$ parametrized by $z\in Z_{a_k}$.  We assume that the sequence $(J_k)$ converges to   $J=(J^+, J^-)$ on $Z_0=({\mathbb R}^+\times S^1)\cup ({\mathbb R}^-\times S^1)$ in the following sense. Recall that on every cylinder $Z_{a_k}$ we have  the two  distinguished   canonical holomorphic coordinates
$(s,t)\in [0,R_k]\times S^1$ and $(s',t')\in [-R_k,0]\times S^1$. 
Then,  
\begin{itemize}
\item[$\bullet$] given $m\geq 0$ and $\varepsilon>0$ and $r>0$, there  exists  $k_0$ such that
\begin{equation}\label{dalpha1}
\abs{D^{\alpha}(J_k([s,t])-J_0)}<\varepsilon
\end{equation}
for all $[s,t]\in Z_{a_k}(-r)$, all multi-indices  $\alpha$ satisfying $\abs{\alpha}\leq m$, and all $k\geq k_0$.    
\item[$\bullet$]
Moreover, for every $r>0$ and $m\geq 0$,  
\begin{align*}
&\text{ $J_k([s,t])\rightarrow J^+(s,t) \quad $  in $C^m([0,2r]\times S^1)$}\\
&\text {$J_k([s',t'])\rightarrow J^-(s',t') \quad $  in $C^m([-2r,0]\times S^1)$ }
\end{align*}
as $k\to \infty$.
\end{itemize}

With the sc-Hilbert  spaces  $E$ and $F$ of pairs $h=(h^+, h^-)$ of maps 
$h^\pm:\R^\pm \times S^1\to \R^{2n}$ defined in Section 2.4 
the following result holds true.
\begin{proposition}\label{ELLIPTIC-X}
Fix  a level $m\geq 0$.  We assume  that $(y_k)$ and  $(y_k')$  are sequences  in $F_m$  which converge   to $0$ and $(z_k)$, $(z_k')$ are  two  bounded sequences  in $F_{m+1}$.  Moreover, we assume that $(h_k)$, where $h_k=(h_k^+, h_k^- )$,  is   a bounded sequence in $E_m$ satisfying the equations
\begin{equation}\label{contbound1}
\begin{aligned}
\partial_s (\oplus_{a_k}(h_k))+ J_k\partial_t (\oplus_{a_k}(h_k))&=\wh{\oplus}_{a_k}(y_k)+\wh{\oplus}_{a_k}(z_k)\\
\ov{\partial}_0(\ominus_{a_k}(h_k)) &= \wh{\ominus}_{a_k}(y_k')+\wh{\ominus}_{a_k}(z_k').
\end{aligned}
\end{equation}
Then the following statement holds true. If $\alpha:{\mathbb R}^+\rightarrow \R$  is a   smooth function  which vanishes near $0$ and   is equal to $1$ outside of a compact set,  and if   the functions $\alpha^\pm:\R^\pm\to \R$ are  defined by  $\alpha^+(s)=\alpha(s)$  for $s\geq 0$ and $\alpha^-(s')=\alpha(-s')$ for $s'\leq 0$,  then the sequence 
$$(\alpha^+h^+_k, \alpha^-h^-_k)\in E_m$$
possesses a subsequence  which converges in $E_m$. 
\end{proposition}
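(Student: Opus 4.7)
The plan is to reduce to the classical interior elliptic compactness statement of Proposition \ref{appx20}, combined with the weighted isomorphism of Proposition \ref{c-iso}, by extracting Cauchy--Riemann type equations directly for $h_k^\pm$ on the fixed half-cylinders $\R^\pm \times S^1$. The starting observation is that the gluing and anti-gluing degenerate to single-component expressions away from the neck: for $s \leq R_k/2 - 1$ one has $\oplus_{a_k}(h_k)([s,t]) = h_k^+(s,t)$ and $\wh{\oplus}_{a_k}(y_k)([s,t]) = y_k^+(s,t)$, so the first equation of \eqref{contbound1} reduces to
\begin{equation*}
\partial_s h_k^+ + J_k([s,t])\partial_t h_k^+ = y_k^+ + z_k^+;
\end{equation*}
while for $s \geq R_k/2 + 1$ one has $\ominus_{a_k}(h_k)([s,t]) = -(h_k^+(s,t) - \mathrm{av})$, so the second equation reduces to
\begin{equation*}
\partial_s h_k^+ + J_0 \partial_t h_k^+ = y_k^{\prime +} + z_k^{\prime +}.
\end{equation*}
As $R_k \to \infty$ the transition strip $[R_k/2-1,R_k/2+1]$ recedes to infinity, and hypothesis \eqref{dalpha1} guarantees $J_k \to J_0$ uniformly on $Z_{a_k}(-r)$, so the two equations agree asymptotically on their overlap. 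A symmetric analysis yields equations for $h_k^-$ on $\R^- \times S^1$.

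I would then patch these two local equations using a cut-off in $s$ supported away from the receding middle strip, obtaining on all of $\R^+ \times S^1$ an equation of the form
\begin{equation*}
\partial_s(\alpha^+ h_k^+) + \tilde J_k \partial_t(\alpha^+ h_k^+) = (\alpha^+)' h_k^+ + \alpha^+ r_k^+,
\end{equation*}
in which $\tilde J_k$ is the patched complex structure, with prescribed asymptotics $J^+$ near $s=0$ and $J_0$ at infinity and convergent in $C^\infty_{\mathrm{loc}}$, and where $r_k^+$ decomposes as $y_k^{(1)} + z_k^{(1)}$ with $y_k^{(1)} \to 0$ in $H^{2+m,\delta_m}(\R^+ \times S^1)$ and $z_k^{(1)}$ bounded in $H^{3+m,\delta_{m+1}}(\R^+ \times S^1)$. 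The commutator term $(\alpha^+)' h_k^+$ is supported in a fixed compact strip in $s$, where the classical interior estimate of Proposition \ref{appx20} produces a strongly convergent subsequence after passage to a subsequence.

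The third step is to apply a weighted elliptic estimate for the Cauchy--Riemann operator $\xi \mapsto \partial_s \xi + J_0 \partial_t \xi$. By Proposition \ref{c-iso} and a standard perturbation argument absorbing the decaying coefficient $\tilde J_k - J_0$ as in Lemma \ref{sublemma}, this operator is, modulo lower-order perturbations, an sc-isomorphism between the weighted Sobolev spaces on $\R^+ \times S^1$ with weight $\delta_{m+1} \in (0,2\pi)$. This yields boundedness of $\alpha^+ h_k^+$ in $H^{4+m,\delta_{m+1}}(\R^+ \times S^1)$, i.e.\ at level $m+1$ of the sc-structure on the half-cylinder component of $E$. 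Since the embedding $E_{m+1} \hookrightarrow E_m$ is compact by definition of the sc-structure, a subsequence converges in $E_m$; applying the same reasoning to $h_k^-$ with the $(s',t')$-coordinates and invoking the $C^m$-convergence $J_k([s',t']) \to J^-(s',t')$ from the hypotheses produces convergence of the pair $(\alpha^+ h_k^+, \alpha^- h_k^-)$ in $E_m$.

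The main obstacle is the uniformity of the elliptic constants as $k \to \infty$: the underlying cylinder $Z_{a_k}$ grows without bound and the complex structure $J_k$ varies with $k$, so naive elliptic estimates would degenerate. The key point is that the weighted Fredholm theory of Proposition \ref{c-iso} has $k$-independent norms precisely because the weights $\delta_m$ remain strictly below $2\pi$, the first positive eigenvalue of $-J_0 \partial_t$ on $S^1$; this spectral gap, built into the choice of the sc-structure, is what allows the cut-off patching and the limiting argument to be carried out with constants uniform in the gluing parameter.
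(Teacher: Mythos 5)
Your first step (interior compactness on fixed compact pieces of the half-cylinders via the pull-back to the disk and Proposition \ref{appx20}) matches the paper's Step 1, and your identification of which equation governs $h_k^+$ on which region is correct. The gap is in your treatment of the middle of the neck, and it is the crux of the whole proposition. The two single-component equations you extract hold on $\{s\leq R_k/2-1\}$ and $\{s\geq R_k/2+1\}$ respectively; they do not overlap, and on the transition strip $[R_k/2-1,R_k/2+1]$ the system \eqref{contbound1} genuinely couples $h_k^+$ and $h_k^-$ through $\beta_{a_k}$. Any cut-off whose transition occurs near $s=R_k/2$ produces commutator terms $\chi_k' h_k^+$ supported there, and in the $E_m$-norm (weight $e^{\delta_m s}$) these are only \emph{bounded} at level $m$ --- and the contribution of the asymptotic constant $c_k$ even blows up like $|c_k|e^{\delta_m R_k/2}$ --- so they cannot be absorbed into either a null sequence at level $m$ or a bounded sequence at level $m+1$; a cut-off at fixed $s$, on the other hand, patches nothing across the middle. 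Moreover, Proposition \ref{c-iso} concerns $\ov{\partial}_0$ on the infinite cylinder $C_a$ with weight $e^{\delta|s-\frac{R}{2}|}$ and antipodal asymptotic constants; it is not the operator on $\R^+\times S^1$ with weight $e^{\delta_m s}$ that your step three requires, and no $k$-uniform estimate for the latter on the receding region $s\sim R_k/2$ follows from it. This is precisely why the paper does \emph{not} work componentwise: it keeps the glued objects, solves the decoupled problems for $B_k=\ov{\partial}_0+\alpha_k^{\tau-(r_1-r_0)}(J_k-J_0)\partial_t$ on $Z_{a_k}^{\ast}$ (Proposition \ref{POLTER} plus Lemma \ref{twoadd1}) and for $\ov{\partial}_0$ on $C_{a_k}$ (Proposition \ref{c-iso}), and only then transfers control back to the pair $(h_k^+,h_k^-)$ through the total gluing estimates of Propositions \ref{HAT_HAT} and \ref{total-gluing}, whose $e^{\delta_m R}$ normalization is exactly what converts the $-\delta_m$-weighted control on $Z_{a_k}$ into the $e^{\delta_m s}$-weighted control on the half-cylinders near $s\sim R_k/2$. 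Without some substitute for this mechanism your argument does not close.

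A secondary but genuine error: your step three concludes that $\alpha^+h_k^+$ is \emph{bounded} in $H^{4+m,\delta_{m+1}}$. That cannot hold, since the right-hand side contains the $y$-part, which is only level-$m$ data converging to $0$. The correct conclusion --- and the one the paper establishes, mirroring Proposition \ref{appx20} --- is a splitting $\wt{h}_k=h_{0,k}+h_{2,k}$ with $h_{0,k}\to 0$ in $E_m$ (the piece driven by $(y_k,y_k')$) and $h_{2,k}$ bounded in $E_{m+1}$ (the piece driven by $(z_k,z_k')$ together with the averages $[\wt{h}_k]_{a_k}$), after which compactness of $E_{m+1}\hookrightarrow E_m$ yields the subsequence. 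You have the right target but the wrong intermediate claim.
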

\begin{proof}
We take  a  function $\alpha:\R^+\to \R$ having the properties as in the statement of the lemma. Since $\alpha$ is equal to $0$ near $0$, we extend $\alpha$  by $0$ on the negative half-line. By assumptions, there are two numbers $0<r_0<r_1$ such that 
$\alpha (s)=0$ for all $s\leq r_0$ and $\alpha (s)=1$ for all $s\geq r_1$.
We shall show that the sequence $(\alpha^+ h_k^+, \alpha^- h^-_k)$ possesses  a convergent subsequence in $E_m$. This will be done in  two steps. In the first step we shall show that   the sequences   $(\alpha^+h_k^+)$  and $(\alpha^-h_k^-)$ possess converging subsequences in  the spaces $H^{m+3}([0,r]\times S^1,{\mathbb R}^{2n})$ and $H^{m+3}([-r,0]\times S^1,{\mathbb R}^{2n})$ respectively for every $r>0$.  In the second step we then  show that  for  large $r>0$  the sequences $(\alpha^+h_k^+)$  and $(\alpha^-h_k^-)$ possess converging subsequences in  $H^{3+m,\delta_m}([r,\infty)\times S^1,{\mathbb R}^{2n})$ and $H^{3+m,\delta_m}((-\infty, -r]\times S^1,{\mathbb R}^{2n})$, respectively. 

In order to carry out the first step of the proof we choose  $r>r_1$ and introduce the function $\gamma:\R\to \R$ by 
$$
\gamma (s)=\alpha^+(s)\cdot \alpha^-(s-2r)
$$
for $s\in \R$.

\begin{figure}[htbp]
\psfrag {1}{$1$}
\psfrag {0}{$0$}
\psfrag {r0}{$r_0$}
\psfrag {r1}{$r_1$}
\psfrag  {r}{$r$}
\psfrag  {r2}{$2r-r_1$}
\psfrag  {r3}{$2r-r_0$}
\psfrag {s}{$s$}
\psfrag {g}{$\gamma (s)$}
\centering
\includegraphics[width=4in]{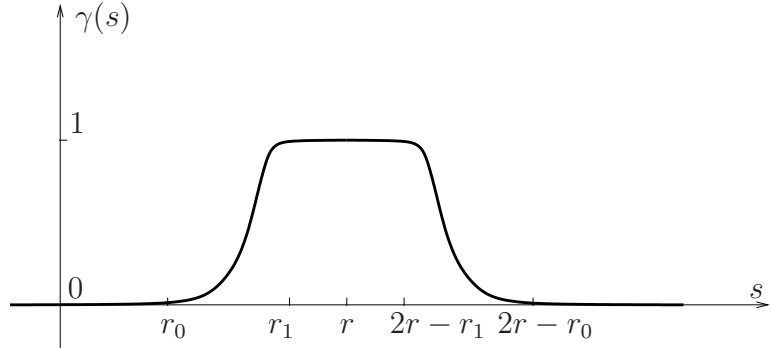}
\caption{The graph of the function $\gamma$.}
\label{Fig13}
\end{figure}

We shall prove  that  $(\gamma h^+_k)$ has a converging subsequence in the space $H^{m+3}([0,2r]\times S^1,{\mathbb R}^{2n})$. Since $\gamma h^+_k=\alpha^+h^+_k$ on $[0,r]\times S^1$, this then  implies that 
the  sequence $(\alpha^+h^+_k)$ has a converging subsequence in $H^{m+3}([0,r]\times S^1,{\mathbb R}^{2n})$.  
{Clearly,  the product rule gives 
\begin{equation*}
\begin{split}
\gamma \cdot \partial_s (\oplus_{a_k}h_k)&=\partial_s (\gamma \oplus_{a_k}h_k)- (\partial_s\gamma)\cdot \oplus_{a_k}h_k\\
&=\partial_s (\gamma \oplus_{a_k}h_k)-\dot{\gamma}\cdot \oplus_{a_k}h_k.
\end{split}
\end{equation*}
}
{Now recalling that  $R_k=e^{\frac{1}{\abs{a_k}}}-e$,  so that $R_k\to \infty$ as $a_k\to 0$, we take $k$  so large that $2r<\frac{R_k}{2}-1$. Observing that for large values of $k$,  
$\oplus_{a_k} (h_k)=h^+_k$ on $[0,2r]\times S^1$, we compute, using   \eqref{contbound1}, 
\begin{equation*}
\begin{split}
&\partial_s ( \gamma h^+_k)+J_k \partial_t ( \gamma h^+_k)\\
&\phantom{====}=\gamma \bigl[ \partial_s h^+_k +J_k\partial_t h^+_k\bigr]+\dot{\gamma}h^+_k\\
&\phantom{====}=\gamma \bigl[ \partial_s (\oplus_{a_k}h_k)  +J_k\partial_t  (\oplus_{a_k}h_k)  \bigr]+\dot{\gamma}h^+_k\\
&\phantom{====}=\gamma \bigl[ \wh{\oplus}_{a_k}(y_k)+ \wh{\oplus}_{a_k}(z_k)  \bigr]+\dot{\gamma}h^+_k\\
&\phantom{====}=\gamma y_k^+  +\gamma  z_k^+ +\dot{\gamma}h^+_k.
\end{split}
\end{equation*}
}
Because  $\gamma (s)=0$ for $s\geq -r_0+2r$ we may  consider the maps $\gamma h^+_k$, $\gamma y^+_k$, $\gamma z^+_k$  and  $\dot{\gamma}h^+_k$ as  equal to $0$ for  $s\geq -r_0+2r$.  
The maps $h^+_k$ are of class $H^{3+m,\delta_m}$ and form a bounded  sequence  in the space $H^{3+m, \delta_m}(\R^+\times S^1, \R^{2n})$. Since they are supported in 
the finite cylinder $[0,2r]\times S^1$, the sequences $ \dot{\gamma}h^+_k$ and  $\gamma z^+_k$ are bounded in $H^{3+m, \delta_{m+1}}(\R^+\times S^1, \R^{2n})$. Hence 
the above equation becomes 
$$\partial_s ( \gamma h^+_k)+J_k \partial_t ( \gamma h^+_k)=\gamma y_k^++\rho_k$$
where the sequence $(\rho_k)$ is bounded in $H^{3+m, \delta_{m+1}}(\R^+\times S^1, \R^{2n})$ and the sequence $(\gamma y_k^+)$ converges in 
 $H^{2+m, \delta_m}(\R^+\times S^1, \R^{2n})$.

Moreover, the  maps $\gamma y^+_k$ and $\rho_k$ are supported in the finite cylinder 
$[0,2r]\times S^1$. 
If $h:\R^+\times S^1\to D\setminus \{0\}$ is the  holomorphic map defined by $h (s, t)=e^{-2\pi (s+it)}$, we denote its inverse  map by $\phi:
D\setminus \{0\}\to \R^+\times S^1$ and define $v_k(z)=\gamma\cdot  h^+_k\circ \phi (z)$. The maps  $v_k$  have their supports  contained in a  fixed closed annulus $A$ and satisfy the equations 
$$\partial_xv_k+\wt{J}_k\partial_y v_k=\ov{y}_k+\ov{\rho}_k$$
on $D\setminus \{0\}$, where the maps $\ov{y}_k$ and $\ov{\rho}_k$ have their supports  in $A$. 
Here the parameterized complex structures $\wt{J}_k$ are equal to $\wt{J}_k(x, y)=J_k(\phi (x, y))$. 
Extending all the maps involved by $0$, we rewrite the  equation as 
$$\partial_xv_k+J_0\partial_y v_k =\ov{y}_k+(J_0-\wt{J}_k)\partial_y v_k+\ov{\rho}_k.$$
In view of our assumption \eqref{dalpha1} on the  sequence  $(J_k)$  and in view of  the fact that the maps $(v_k)$ are supported  in a  fixed closed annulus $A\subset D\setminus \{0\}$, we conclude that 
$$(J_0-\wt{J}_k)\partial_y v_k\to 0$$
in $H^{2+m}(D)$ as $k\to \infty$.
Consequently, the sequence  of maps $\ov{y}_k+(J_0-\wt{J}_k)\partial_y v_k$ converges to $0$ in $H^{2+m}(D)$,  the  sequence $(\ov{\rho}_k)$ is bounded in $H^{3+m}(D)$, and the equation is satisfied on the whole disk $D$. By Proposition \ref{appx20}, the  sequence $(v_k)$ has a converging subsequence in $H^{3+m}(D)$ which in turn implies that $( \gamma h^+_k)$ has a converging subsequence in $H^{3+m, \delta_m}(\R^+\times S^1).$   Hence $(\alpha^+h^+_k)$ has a converging subsequence in $H^{m+3}([0,r]\times S^1,{\mathbb R}^{2n})$.  By  similar arguments,   the sequence $(\alpha^-h^-_k)$ possesses  a converging subsequence in $H^{m+3}([-r,0]\times S^1,{\mathbb R}^{2n})$. This finishes the first step of the proof.

In order to carry out the second step of the proof we introduce  for $\tau>0$ 
the shifted functions $\alpha_\tau^\pm:\R \to \R$ by
$$\alpha_\tau^+(s):=\alpha^+(s-\tau)\quad \text{and}\quad \alpha_\tau^-(s):=\alpha^-(s+\tau)$$
and  define   
for $k$ large the  sequence $(\alpha_k^\tau)$ of  functions $\alpha_k^\tau:[0, R_k]\to \R$  by 
\begin{equation*}
\alpha_k^\tau (s)=
\begin{cases}\alpha^+_\tau (s) \quad &\text{$0\leq s\leq \frac{R_k}{2}$}\\
\alpha^-_\tau (s-R_k)\quad &\text{$\frac{R_k}{2}\leq s\leq R_k$.}
\end{cases}
\end{equation*}

\begin{figure}[htbp]
\psfrag  {1}{\small{$1$}}
\psfrag  {0}{\small{$0$}}
\psfrag {r0}{\small{$r_0+\tau$}}
\psfrag  {r1}{\small{$r_1+\tau$}}
\psfrag  {r}{\small{$\frac{R_k}{2}$}}
\psfrag  {r2}{\small{$R_k-(r_1+\tau)$}}
\psfrag {r3}{\small{$R_k-(r_0+\tau)$}}
\psfrag {s}{\small{$s$}}
\psfrag {g}{\small{$\alpha_k^\tau (s)$}}
\centering
\includegraphics[width=4.1in]{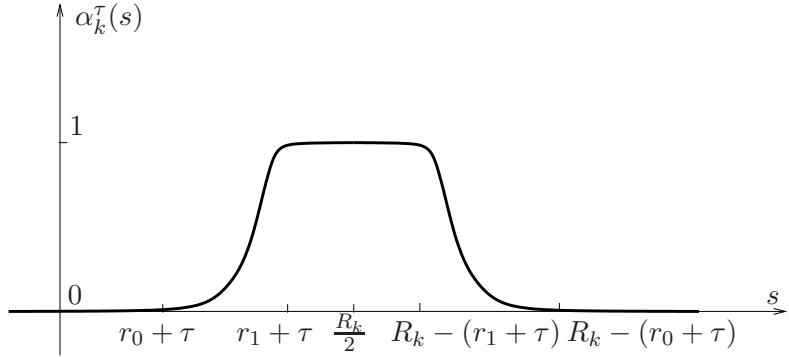}
\caption{The graph of the function $\alpha_k^\tau $.}
\label{Fig14}
\end{figure}

We abbreviate 
$$\wt{h}_k=(\alpha^+_\tau h^+_k, \alpha^-_\tau h^-_k).$$
{Since $\alpha^+_\tau(s)=1$ for $R_k/2\leq s\leq R_k$ and 
$\alpha^-_\tau (s-R_k)=1$ for $0\leq s\leq R_k/2$ for $k$ large, we observe that 
$$\alpha_k^\tau (s)=\alpha^+_\tau (s) \alpha^-_\tau (s-R_k).$$ Moreover, 
$\alpha_\tau^-(s-R_k)\beta_{a_k}(s)=1$ for $s\leq \R_k/2+1$. Hence
$$\alpha_k^\tau(s)\beta_{a_k}(s)=\alpha^+_\tau (s)\alpha^-_\tau (s-R_k)\beta_{a_k}(s)=\alpha^+_\tau (s)\beta_{a_k}(s),$$
and similarly
$$\alpha_k^\tau(s)(1-\beta_{a_k}(s))=\alpha^-_\tau (s-R_k)(1-\beta_{a_k}(s)).$$
Then, using these identities and the formula for the gluing $\oplus_{a_k}$, we find that 
\begin{equation}\label{sigmaeq1}
\alpha_k^\tau \oplus_{a_k}(h_k) = \oplus_{a_k}(\wt{h}_k)
\quad \text{and}\quad \ominus_{a_k}(\wt{h}_k) =\ominus_{a_k}(h_k).
\end{equation}
}
From   the first equations in \eqref{contbound1} and in \eqref{sigmaeq1},  we  deduce 
\begin{equation}\label{contbound2}
\begin{split}
&\partial_s (\oplus_{a_k}(\wt{h}_k))+ {J}_k\partial_t (\oplus_{a_k}(\wt{h}_k))\\
&\phantom{====}=
\partial_s (\alpha_k^\tau \oplus_{a_k}(h_k) )+ {J}_k\partial_t (\alpha_k^\tau \oplus_{a_k}(h_k) )\\
&\phantom{====}=\alpha_k^\tau \bigl[\partial_s(\oplus_{a_k}(h_k))+J_k\partial_ t(\oplus_{a_k}(h_k))\bigr] +\dot{ \alpha}_k^\tau\oplus_{a_k}(h_k)\\
&\phantom{====}=\alpha_k^\tau \bigl[\wh{\oplus}_{a_k}(y_k)+\wh{\oplus}_{a_k}(z_k)\bigr]+\dot{ \alpha}_k^\tau\oplus_{a_k}(h_k)\\
&\phantom{====}=\wh{\oplus}_{a_k}(\alpha_\tau^+y_k^+,\alpha_\tau^-y_k^-)+
\wh{\oplus}_{a_k}(\alpha_\tau^+z_k^+, \alpha_\tau^-z_k^-)+\dot{ \alpha}_k^\tau\oplus_{a_k}(h_k).
\end{split}
\end{equation}
The support of $\dot{\alpha}^\tau_k$ lies in the intervals $[r_0+\tau, r_1+\tau]$ and $[R_k-(r_1+\tau), R_k-(r_0+\tau)]$.
We set  
$$r=r_1+\tau$$
and   introduce for large $k$ the maps 
$g_k^\pm:\R^\pm \times S^1\to \R^{2n}$, defined by $g_k^\pm =\dot{\alpha}^\pm_{\tau}\cdot h^\pm_k$ on $[0,2r]\times S^1$ and $[-2r,0]\times S^1$, respectively, and   $g_k^\pm=0$ for $s\geq 2r$  and $s'\leq -2r$, respectively. 
Since $h_k\in E_m$,  the maps $h_k^\pm$ are of class $H^{3+m}$ so that the functions $g_k=(g_k^+, g^-_k)$ belong to $F_{m+1}$.  Since all  the functions $g_k^\pm$ are supported in the fixed finite  cylinders $[0,2r]\times S^1$ and $[-2r,0]\times S^1$, the sequence  $(g_k)$ is also bounded in $F_{m+1}$. 
Moreover,  it satisfies the equations
\begin{equation*}
\begin{split}
\oplus_{a_k}(g_k)&=\beta_a\dot{\alpha}^+_{\tau}\cdot h^+_k+(1-\beta_a)\dot{\alpha}^-_{\tau}(\cdot -R_k)\cdot h^-_k(\cdot -R_k, t-\vartheta_k)\\
&=\dot{\alpha}_k^\tau\cdot \bigl[\beta_a( h^+_k+(1-\beta_a)\cdot h^-_k(\cdot -R_k, t-\vartheta_k)\bigr]\\
&=\dot{\alpha}_k^\tau\cdot \oplus_{a_k}(h_k).
\end{split}
\end{equation*}
Consequently,  abbreviating  
$$\wt{y}_k=(\alpha_\tau^+ \cdot y_k^+, \alpha_\tau^- \cdot y_k^-)\qquad \text{and}\qquad \wt{z}_k=
(\alpha_\tau^+ \cdot z_k^+, \alpha_\tau^- \cdot z_k^-)+g_k, $$
the equation \eqref{contbound2} becomes
\begin{equation}\label{roppp}
\begin{split}
\partial_s (\oplus_{a_k}(\wt{h}_k))+ {J}_k\partial_t (\oplus_{a_k}(\wt{h}_k))=\wh{\oplus}_{a_k}(\wt{y}_k)+\wh{\oplus}_{a_k}(\wt{z}_k).
\end{split}
\end{equation}
We note that the  sequences  $\wh{\oplus}_{a_k}(\wt{y}_k)$ and  $\wh{\oplus}_{a_k}(\wt{z}_k)$,   as well as the sequence $(\oplus_{a_k}(\wt{h}_k))$,  are supported in the sub-cylinder $Z_{a_k}(-r)$.  Therefore, we may extend them by $0$ outside of the  finite cylinders $Z_{a_k}$ and 
since the $J_k$ are  defined on $Z_{a_k}$ we may consider  the equations in \eqref{roppp} as equations on the whole infinite cylinder. Also, in view of  the properties of $(y_k)$, $(z_k)$, and $(g_k)$, the sequence $(\wt{y}_k)$ converges to $0$ in $F_m$ and $(\wt{z}_k)$ is bounded in $F_{m+1}$.

We choose $\tau$ so large that $r_0+\tau -(r_1-r_0)>0$ and observe that $\alpha_k^{\tau-(r_1-r_0)}=1$ for all $r\leq s\leq R_k-r$.  Moreover, the  support of $\alpha_k^{\tau-(r_1-r_0)}$ is contained in $(0, R_k)$. Since the support of $\oplus_{a_k}(\wt{h}_k)$ is contained in $Z_{a_k}(-r)$, we conclude that 
\begin{equation}\label{split1}
\begin{split}
&\partial_s (\oplus_{a_k}(\wt{h}_k))+ {J}_k\partial_t (\oplus_{a_k}(\wt{h}_k))\\
&\phantom{===}=
\partial_s(\oplus_{a_k}(\wt{h}_k))+  J_0\partial_t (\oplus_{a_k}(\wt{h}_k))+ \bigl(J_k-J_0\bigr)\partial_t(\oplus_{a_k}(\wt{h}_k))\\
&\phantom{===}=\partial_s(\oplus_{a_k}(\wt{h}_k))+  J_0\partial_t (\oplus_{a_k}(\wt{h}_k))+ \alpha_k^{\tau-(r_1-r_0)}\cdot \bigl(J_k-J_0\bigr)\partial_t(\oplus_{a_k}(\wt{h}_k)).
\end{split}
\end{equation}
We extend the maps $\alpha_k^{\tau-(r_1-r_0)}\cdot \bigl(J_k-J_0\bigr)$ by $0$ onto  the rest of the infinite cylinder $Z_{a_k}^*$, which by construction contains the finite cylinder $Z_{a_k}$. 
Therefore, introducing the operators 
$$
B_k:H^{3+m+i,-\delta_{m+i}}(Z_{a_k}^\ast,{\mathbb R}^{2n})\rightarrow H^{2+m+i,-\delta_{m+i}}(Z_{a_k}^\ast,{\mathbb R}^{2n})
$$
by 
\begin{equation}\label{opk1}
B_k := \ov{\partial}_0 +\alpha_k^{\tau-(r_1-r_0)}(J_k-J_0)\partial_t,
\end{equation}
the equation  \eqref{roppp}  becomes  
\begin{equation}\label{contbound0}
 B_k(\oplus_{a_k}(\wt{h}_k)) =\wh{\oplus}_{a_k}(\wt{y}_k)+\wh{\oplus}_{a_k}(\wt{z}_k).
\end{equation}
By Proposition \ref{POLTER},   
 the standard Cauchy-Riemann operator
$$
\ov{\partial}_0:H^{3+m+i,-\delta_{m+i}}(Z_{a_k}^\ast,{\mathbb R}^{2n})\rightarrow H^{2+m+i,-\delta_{m+i}}(Z_{a_k}^\ast,{\mathbb R}^{2n}),
$$
is a surjective Fredholm operator and 
there exists a constant $C>0$ such that 
\begin{equation}\label{es1}
\begin{split}
\frac{1}{C}\nr u-[u]_{a_k}\nr_{3+m+i,-\delta_{m+i}}^\ast&\leq \nr \ov{\partial}_0(u-[u]_{a_k})\nr_{2+m+i,-\delta_{m+i}}^\ast\\
&\leq C\nr u-[u]_{a_k}\nr^\ast_{3+m+i,-\delta_{m+i}}
\end{split}
\end{equation}
for $i=0, 1$ and for all $u\in H^{3+m+i,-\delta_{m+i}}(Z_{a_k}^\ast,{\mathbb R}^{2n})$. 
We will need an estimate for $\nr \alpha_k^{\tau-(r_1-r_0)}(J_k-J_0)\partial_t (u-[u]_{a_k})\nr_{2+m+i,-\delta_{m+i}}^\ast$ for $k$ sufficiently  large.

\begin{lemma}\label{twoadd1}
\mbox{}
\begin{itemize}\label{addition1}
\item[{\em (1)}] Given $\varepsilon>0$, there is an integer $k_0$ such that  
$$\nr \alpha^{\tau-(r_1-r_0)}_k(J_k-J_0)(\partial_t u)\nr_{2+m+i,-\delta_{m+i}}^\ast\leq \varepsilon\nr u\nr_{3+m+i,-\delta_{m+i}}^*$$
for all $u\in H^{3+m+i,-\delta_{m+i}}(Z_{a_k}^*)$ and all $k\geq k_0$.
\item[{\em (2)}] 
There  exists $k_0$ such that for all $k\geq k_0$ the operators 
$$B_k:H^{3+m,-\delta_m}(Z_{a_k}^*, \R^{2n})\to H^{2+m,-\delta_m}(Z_{a_k}^*, \R^{2n})$$
are surjective Fredholm operators whose kernels consist of constant functions.  
\end{itemize}
\end{lemma}
\begin{proof}
(1)\, 
In view of our assumption on the sequence of almost complex structures, given $\varepsilon>0$, there exists $k_0$ such that 
\begin{equation}\label{add01}
\abs{D^\beta (J_k(s, t)-J_0)}\leq \varepsilon
\end{equation}
for all $s$ satisfying $r'_0:=r_0+\tau-(r_1-r_0)\leq s\leq R_k-r_0'$, for all $k\geq k_0$, and for all multi-indices $\beta$ satisfying  $\abs{\beta}\leq 3+m$.
We abbreviate by  $\Sigma_k$ the finite cylinders 
$$\Sigma_k=[ r_0', R_k-r_0']\times S^1.$$
Since 
$\alpha^\tau_k (s)=0$  for $s\leq r_0'$ and for $s\geq R_k-r_0'$ and the derivatives of $\alpha^\tau_k$ are bounded by a  constant independent of $k$ and $\tau$,  the square of the norm $\nr\alpha_k^{\tau-(r_1-r_0)} (J_k-J_0)(\partial_t u)\nr_{2+m+i,-\delta_{m+i}}^\ast$ is  bounded above by a constant $C$  (independent of $k$ and $\tau$) times 
the sum of  integrals of the form
\begin{equation*}
\int_{\Sigma_k}\abs{D^\beta (J_k(s, t)-J_0)}^2\abs{D^\gamma (\partial_tu)}^2e^{-2\delta_{m+i}\abs{s-\frac{R_k}{2}}}\ ds dt
\end{equation*}
where the multi-indices $\beta$ and $\gamma$ satisfy   $\abs{\beta}+\abs{\gamma}\leq 2+m+i$.
Using  \eqref{add01} we conclude that 
\begin{equation*}
\begin{split}
&\int_{\Sigma_k}\abs{D^\beta (J_k(s, t)-J_0)}^2\abs{D^\gamma (\partial_tu)}^2e^{-2\delta_{m+i}\abs{s-\frac{R_k}{2}}}\ ds dt\\
&\leq \varepsilon^2 \int_{\Sigma_k}\abs{D^\gamma (\partial_tu)}^2e^{-2\delta_{m+i}\abs{s-\frac{R_k}{2}}}\ ds dt
\leq  \varepsilon^2 ( \nr u\nr_{3+m+i,-\delta_{m+i}}^\ast )^2.
\end{split}
\end{equation*}
Consequently, 
$$\nr \alpha_k^{\tau-(r_1-r_0)} (J_k-J_0)(\partial_t u)\nr_{2+m+i,-\delta_{m+i}}^\ast\leq C\varepsilon'  \nr u\nr_{3+m+i,-\delta_{m+i}}^\ast.$$
This finishes the prove of the part (a) of the lemma.\\
(2)\,  By Proposition \ref{POLTER}, the operator  
$$
\ov{\partial}_0:H^{3+m+i,-\delta_{m+i}}(Z^*_{a_k})\to H^{2+m+i, -\delta_{m+i}}(Z^*_{a_k})
$$
 is a Fredholm operator whose index is equal to 
 $\text{ind} (\ov{\partial}_0)=2n$. Hence there exists $\varepsilon>0$ such that  if  $M:H^{3+m+i,-\delta_{m+i}}(Z^*_{a_k})\to H^{2+m,-\delta_m}(Z^*_{a_k})$ is a bounded linear operator whose operator norm is less than $\varepsilon$, then the  operator $\ov{\partial}_0+M$ is a  Fredholm operator having index  equal to $\text{ind}( \ov{\partial}_0+M)
 =\text{ind}( \ov{\partial}_0)=2n$.  In particular, introducing the operators   $M_k:H^{3+m+i,-\delta_{m+i}}(Z^*_{a_k})\to H^{2+m+i,-\delta_{m+i}}(Z^*_{a_k})$ by 
 $$M_k u:= \alpha_k^{\tau-(r_1-r_0)} (J_k-J_0)\partial_t (u),$$
 part (1) implies that for $k$ large  the operators $B_k=\ov{\partial}_0+M_k$ are Fredholm and $\text{ind} ( B_k)=2n$. If, in addition $\varepsilon <\frac{1}{2C}$ where $C$ is the constant in the estimate  \eqref{es1}, we conclude from  \eqref{es1} and  \eqref{opk1} that 
 \begin{equation}\label{es2}
\begin{split}
\frac{1}{2C}\cdot \nr u-[u]_{a_k}\nr_{3+m+i,-\delta_{m+i}}^\ast&\leq \nr B_k(u-[u]_{a_k})\nr_{2+m+i,-\delta_{m+i}}^\ast\\
&\leq 2C\cdot \nr u-[u]_{a_k}\nr^\ast_{3+m+i,-\delta_{m+i}}
\end{split}
\end{equation}
for $k$ sufficiently large. 
If $u$ belongs to the kernel of $B_k$, then the the above estimate implies that $u=[u]_{a_k}$ and since the constants belong to the  kernel $\ker (B_k)$ we conclude that the kernel consists of the constant functions. Since the Fredholm  index of $B_k$ is equal to the dimension of the constant functions, namely to  $\R^{2n}$, the operator $B_k$ is surjective. 
 This finishes the proof of the part (2) and hence  the proof of Lemma \ref{twoadd1}
\end{proof}
We continue with the  proof of  Proposition \ref{ELLIPTIC-X}.  
Using the second equation in \eqref{contbound1} and the second identity in \eqref{sigmaeq1}, we obtain 
\begin{equation}\label{contbound4}
\ov{\partial}_0(\ominus_{a_k}(\wt{h}_k) ) = \wh{\ominus}_{a_k}(y_k')+\wh{\ominus}_{a_k}(z_k').
\end{equation}

In view of Lemma \ref{twoadd1}  and Proposition  \ref{c-iso},  
we find   a unique map $\wh{f}_k$ belonging to $H^{3+m,-\delta_m}(Z^{\ast}_{a_k}, \R^{2n})$ having vanishing mean value,  $[\wh{f}_k]_{a_k}=0$,   and  a unique map $f_k\in H_c^{3+m,\delta_m}(C_{a_k}, \R^{2n})$ such that 
\begin{equation}\label{contbound10}
B_k (\wh{f}_k)=\wh{\oplus}_{a_k}(\wt{y}_k)\quad \text{and}\quad \ov{\partial}_0(f_k)=\wh{\ominus}_{a_k}(y_k').
\end{equation}
The map $\wh{f}_k$, if restricted to the finite cylinder  $Z_{a_k}$,  belongs to the space $H^{3+m}(Z_{a_k}, \R^{2n})$ so that the pair $(\wh{f}_k, f_k)$ belongs to the sc-space $G^{a_k}$ introduced in Section \ref{preliminaries}. 
By Theorem \ref{sc-splicing-thm},  the total gluing map $\boxdot_a:E\to G_a$ is an sc-isomorphism. Therefore, 
there exists a pair $h_{0,k}=(h_{0,k}^+, h_{0,k}^-)\in E_m$  satisfying 
$$\oplus_{a_k}(h_{0,k})=\wh{f}_k\quad \text{and}\quad \ominus_{a_k}(h_{0,k})=f_k.$$
Consequently, 
\begin{equation}\label{h0jeq}
B_k(\oplus_{a_k}(h_{0,k}) )=\wh{\oplus}_{a_k}(\wt{y}_k)\quad \text{and}\quad  \ov{\partial}_0( \ominus_{a_k}(h_{0,k}) )=\wh{\ominus}_{a_k}(y_k').
\end{equation}
In addition,  the averages $[h_{0,k}]_{a_k}$ are equal to 
\begin{equation*}
\begin{split}
[h_{0,k}]_{a_k}&=\frac{1}{2}\int_{S^1}\bigl[ h^+_{0,k}(R_k/2, t)+h^-_{0,k}(-R_k/2, t)\bigr]\ dt\\
&=\int_{S^1}\oplus_{a_k}(h_{0,k})(R_k/2, t)\ dt=
\int_{S^1}\wh{f}_k(R_k/2, t)\ dt=[\wh{f}_k]_{a_k}=0.
\end{split}
\end{equation*}
By similar arguments, we find  a sequence  $(h_{1,k})\subset  E_{m+1}$ satisfying 
\begin{equation}\label{h1jeq}
B_k(\oplus_{a_k}(h_{1,k}) )=\wh{\oplus}_{a_k}(\wt{z}_k)\quad \text{and}\quad  \ov{\partial}_0( \ominus_{a_k}(h_{1,k}) )=\wh{\ominus}_{a_k}(z_k')
\end{equation}
having vanishing averages, $[h_{1,k}]_{a_k}=0$.  We claim that 
$$\wt{h}_k=([\wt{h}_k]_{a_k}, [\wt{h}_k]_{a_k})+h_{0,k}+h_{1,k}.$$
Indeed, abbreviating  
$\rho_k=\wt{h}_k-([\wt{h}_k]_{a_k}, [\wt{h}_k]_{a_k})-h_{0,k}-h_{1,k}, $
we have  $[\rho_k]_{a_k}=0$ and, in view of equations  \eqref{roppp}, \eqref{h0jeq}, and \eqref{h1jeq}, we obtain 
\begin{equation}\label{h3jeq}
B_k(\oplus_{a_k}(\rho_k))=0\quad \text{and}\quad  \ov{\partial}_0( \ominus_{a_k}(\rho_k ))=0.
\end{equation}
From the second equation and Proposition \ref{c-iso} 
we conclude that  $\ominus_{a_k}(\rho_k )=0$. In view of the definition of  the anti-gluing $\ominus_{a_k}$, we conclude that $\rho_k^+(s, t)=0$ for $s\geq \frac{R_k}{2}+1$ and $\rho_k^-(s', t')=0$ for $s'\leq -\frac{R_k}{2}-1.$ Consequently, it follows from the definition of the gluing that $\oplus_{a_k}(\rho_k )(s, t)=0$ on  the finite cylinders $[0, R_k/2-1]\times  S^1]$ and $[R_k/2+1, R_k]\times S^1$ so that we can extend  the map $\oplus_{a_k}(\rho_k )$  by $0$ beyond the finite cylinders $Z_{a_k}$.  Hence $\oplus_{a_k}(\rho_k )\in H^{3+m,-\delta_m}(Z_{a_k}^\ast,{\mathbb R}^{2n})$ and 
$B_k(\oplus_{a_k}(\rho_k )=0$.  Now  Proposition \ref{POLTER} 
implies that $\oplus_{a_k}(\rho_k )=0$.  Since by Theorem \ref{sc-splicing-thm} the total gluing map $\boxdot_a:E\to G_a$ is an sc-isomorphism,  we conclude that indeed $\rho_k=0$  and our claim is proved.

 From now on we view  the maps $\oplus_{a_k}h_{i,k}$, $i=0,1$,   as defined on  the finite cylinders  $Z_{a_k}$. We shall use extensively the estimates for the global gluing constructions in  Proposition \ref{HAT_HAT} 
 and  Proposition  \ref{total-gluing}.
{First, recalling the $\wh{G}^{a_k}$-norm introduced in 
Section \ref{preliminaries}, we have the following estimates
\begin{align*}
e^{\delta_m \frac{R_k}{2} }\nr \wh{\oplus}_{a_k}(\wt{y}_k)\nr_{m+2,-\delta_m}
\leq |\wh{\boxdot}_a(\wt{y}_k) |_{\wh{G}^a_m}
\intertext{and}
e^{\delta_m\frac{R_k}{2} }\nr \wh{\ominus}_{a_k}(y_k) \nr_{m+2,\delta_m}
\leq |\wh{\boxdot}_a(y_k) |_{\wh{G}^a_m}.
\end{align*}
Hence, in view of Proposition \ref{HAT_HAT}, it  follows that 
 \begin{equation}\label{hatestimates1}
 e^{\delta_m\frac{R_k}{2}}\nr \wh{\oplus}_{a_k}(\wt{y}_k)\nr_{m+2,-\delta_m}\to 0\quad \text{and}\quad e^{\delta_m\frac{R_k}{2}}\nr \wh{ \ominus}_{a_k}(y_k)\nr_{m+2,\delta_m}\rightarrow 0.
\end{equation}
since the sequences  $(\wt{y}_k)$ and $(y_k)$ converge to $0$ in $F_m$.}
{
Now,  $[h_{0,k}]_{a_k}=0$ implies that $[\wh{\oplus}_{a_k}(h_{0,k}]_{a_k}=0$ and we conclude from \eqref{h0jeq}, using \eqref{es2}, \eqref{hatestimates1},   and 
Proposition \ref{c-iso}, that }
\begin{equation}\label{hatestimates2}
e^{\delta_m\frac{R_k}{2}}\nr \oplus_{a_k}(h_{0,k})\nr_{m+3,-\delta_m}\to 0\quad \text{and}\quad 
e^{\delta_mt\frac{R_k}{2}} \norm{ \ominus_{a_k} (h_{0,k} ) }_{H^{m+3,\delta_m}_c(C_{a_k},{\mathbb R}^{2n})}\to 0.
\end{equation}

Abbreviating by $c_k$ the asymptotic constants of the  maps $h_{0,k}$, it follows from the definition of the norm $\norm{\cdot}_{H^{m+3,\delta_m}_c(C_{a_k},{\mathbb R}^{2n})}$ and  from the second limit in \eqref{hatestimates2} that also $e^{\delta_m\cdot\frac{R_k}{2}}\cdot  c_k\to 0$.  Treating $c_k$ as constant functions  defined on the finite cylinders $Z_{a_k}$, we estimate 
$$\nr c_k\nr^2_{m+3,-\delta_m}=\abs{c_k}^2\int_{[0, R_k]}e^{-2\delta_m \abs{s-\frac{R_k}{2}}}\leq C^2\abs{c_k}^2.$$
We next compute the $G^{\alpha_k}$-norm of the pair $(\oplus_{a_k}(h_{0,k}), \ominus_{a_k}(h_{0,k}))$. 
 Using $[\oplus_{a_k}(h_{0,k})]_{a_k}=0$,  we have 
 \begin{equation*}\label{nh0j}
 \begin{split}
 &\abs{ (\oplus_{a_k}(h_{0,k}), \ominus_{a_k}(h_{0,k})}^2_{G^{a_k}_m}\\
 &=
 \abs{c_k}^2+e^{\delta_mR_k}\bigl[ \nr \oplus_{a_k}(h_{0,k})+c_k\nr^2_{m+3,-\delta_m}+
 \nr \ominus_{a_k}(h_{0,k})-[1-2\beta_{a_k}]\cdot c_k\nr^2_{m+3,\delta_m}\bigr].
 \end{split}
 \end{equation*}
 We already know that $\abs{c_k}\to 0$ and 
 \begin{equation*}
 \begin{split}
\nr \oplus_{a_k}(h_{0,k})+c_k\nr_{m+3,-\delta_m}&\leq \nr \oplus_{a_k}(h_{0,k})\nr_{m+3,-\delta_m}+ \nr c_k\nr_{m+3,-\delta_m}\\
&\leq   \nr \oplus_{a_k}(h_{0,k})\nr_{m+3,-\delta_m}+C\abs{c_k}
\end{split}
\end{equation*}
so that, in view of $e^{\delta_m\cdot\frac{R_k}{2}}\cdot  c_k\to 0$ and in view of the first limit in \eqref{hatestimates2}, we conclude 
$$ e^{\delta_mR_k}\nr \oplus_{a_k}(h_{0,k})+c_k\nr^2_{m+3,-\delta_m}\to 0.$$
The second limit in  \eqref{hatestimates2} implies the convergence
$$e^{\delta_m\cdot\frac{R_k}{2} }\cdot \nr \ominus_{a_k}(h_{0,k})-[1-2\beta_{a_k}]\cdot c_k\nr_{m+3,\delta_m}\to 0.$$
Summing up, we have proved that  $\abs{ (\oplus_{a_k}(h_{0,k}), \ominus_{a_k}(h_{0,k})}^2_{G^{a_k}_m}\to 0$.  Now  Proposition \ref{total-gluing}
 implies that $h_{0,k}$ converges to $0$ in $E_m$.  In a similar way, using the equations \eqref{h1jeq} and the fact that the  sequences 
$(\wt{z}_k)$ and $(z_k)$ are bounded, one shows that the sequence $(h_{1,k})$ is bounded in $E_{m+1}$. Since the sequence $(\wt{h}_k)$ is bounded, we conclude,  using the Sobolev embedding theorem, that 
the sequence of averages $([h_k]_{a_k})$ is bounded.  Hence,  the sequence 
 $\wt{h}_k$ admits the following decomposition
 $$
 \wt{h}_k = h_{0,k}+ \bigl( h_{1,k } +([\wt{h}_k]_{a_k},[\wt{h}_k]_{a_k}\bigr) =:h_{0,k} + h_{2,k}.
 $$
The sequence  $(h_{0,k})$ converges to $0$  in  $E_m$ and  the  sequence $(h_{2,k})$ is bounded in $E_{m+1}$. 
Since,   by the definition of the sc-structure,  the embedding $E_{m+1}\to E_m$ is compact,  we conclude that $( \wt{h}_k )$ possesses a convergent subsequence in $E_m$. This completes the proof of Proposition \ref{ELLIPTIC-X}.

\end{proof}
\section{Proof of Proposition \ref{XCX1}}\label{XCX10}
\noindent In this section we give a proof of Proposition \ref{XCX1}. 
We employ the notation $\eta=(\eta^+,\eta^-)$ and $\xi=(\xi^+, \xi^-)$. We start with  the map $B_\frac{1}{2}\oplus E\rightarrow F$, defined by  
$(a,\eta)\mapsto  D^a_{t}(\eta)$. The  pair $\xi:=D^a_{t}(\eta)$ is uniquely determined by the two equations
\begin{equation*}
\begin{aligned}
\wh{\oplus}_a(\xi) &= \pt (\oplus_a\eta )\\
\wh{\ominus}_a(\xi)&=0.
\end{aligned}
\end{equation*}
Since $\pt (\oplus_a(\eta))= \wh{\oplus}_a(\pt\eta)$, the pair $\xi$  solves the  equations
\begin{equation}\label{xi1}
\begin{aligned}
\wh{\oplus}_a(\xi)&= \wh{\oplus}_a(\pt\eta)\\
\wh{\ominus}_a(\xi)&=0.
\end{aligned}
\end{equation}
Recalling the projection $\wh{\pi}_a:F\to F$  onto $\ker \wh{\ominus}_a$  along  $\ker \wh{\oplus}_a$ introduced in Section \ref{gluinganti-sect}, 
 we find that 
$$
\xi  =\wh{\pi}_a(\partial_t \eta ).
$$
The map 
$\pt:E\rightarrow F,$   defined by $\eta \mapsto \pt\eta $,  is an sc-operator,  so that for every $m$ there is 
a positive constant $C'_m$ such that 
\begin{equation}\label{pi1}
\abs{\pt\eta}_{F_m}\leq C_m' \cdot \abs{\eta}_{E_m}.
\end{equation}
for all  $\eta \in E_m$.   Moreover,  in view of Theorem 1.29 in \cite{HWZ8.7}, 
 the map 
$\wh{\pi}: \bt \oplus F\to F$, defined by $(a, h)\mapsto \wh{\pi}_a(h)$,  is 
sc-smooth. Consequently,  applying the chain rule for sc-smooth maps,  
the map  $\xi$ depends sc-smoothly  on $(a,\eta)$. 
With  the total hat-gluing $\wh{\boxdot}_a(h):=(\wh{\oplus}_a(h),\wh{\ominus}_a(h))$, the equations \eqref{xi1} can be written as 
$$\wh{\boxdot}_a(\xi)=(\wh{\oplus}_a(\pt \eta), 0).$$
Now applying the estimates of  Proposition \ref{HAT_HAT}  
and \eqref{pi1},   there is a constant $C_m$ independent of $a$ such  that the following estimate  holds, 
\begin{equation*}
\begin{split}
\abs{D^a_t(\eta)}_{F_m}&=\abs{\xi}_{F_m}
\leq C''_m \abs{\wh{\boxdot}_a (\xi)}_{\wh{G}_m^a}= 
C''_m \abs{(\wh{\oplus}_a(\pt \eta), 0)}_{\wh{G}_m^a}\\
&\leq C''_m \abs{(\wh{\oplus}_a(\pt \eta), \wh{\ominus}_a(\pt \eta))}_{\wh{G}_m^a}=
C_m''\abs{\wh{\boxdot}_a(\pt \eta)}_{\wh{G}_m^a}\\
&\leq C_m''' \abs{(\pt \eta)}_{F_m}\leq C_m\abs{(\eta^+,\eta^-)}_{E_m}.
\end{split}
\end{equation*}
We now turn to  the operator $D^a_s:E\to F$.  With $\xi=D_s^a(\eta)$, the pair $\xi$  solves the  equations 
\begin{equation*}
\begin{aligned}
\wh{\oplus}_a(\xi)& = \ps(\oplus_a(\eta))\\
 \wh{\ominus}_a(\xi)&=0.
\end{aligned}
\end{equation*}
The first equation  can be written as
\begin{equation*}
\wh{\oplus}_a(\xi)
=\wh{\oplus}_a(\ps \eta)+ \beta_a'(s)
\cdot [ \eta^+(s,t)-\eta^-(s-R,t-\vartheta)], 
\end{equation*}
where $\beta_a(s)=\beta \bigl(s-\frac{R}{2}\bigr).$
To prove that the  map $(a,\eta)\mapsto  D^a_{t}(\eta)$ is sc-smooth, we introduce  the solution  $\xi_1:=(\xi^+_1, \xi^+_1)$  of the equations 
\begin{equation}\label{sy1}
\begin{aligned}
\wh{\oplus}_a(\xi_1)(s, t)&=\beta'_a(s)(\eta^+(s,t)-\eta^-(s-R,t-\vartheta))\\
 \wh{\ominus}_a(\xi_1)(s,t)&=0
\end{aligned}
\end{equation}
so that 
\begin{equation*}
\begin{aligned}
\wh{\oplus}_a(\xi-\xi_1)&=\wh{\oplus}_a(\ps \eta) \\
\wh{\ominus}_a(\xi-\xi_1)&=0.
\end{aligned}
\end{equation*}
Using  again the fact that $\wh{\pi}_a$ is a projection 
onto $\ker \wh{\ominus}_a$  along  $\ker \wh{\oplus}_a$, we find 
\begin{equation}\label{de1}
\xi-\xi_1=\wh{\pi}_a(\ps \eta).
\end{equation}
Hence the map $B_{\frac{1}{2}}\oplus E\to F$, defined by 
$$\bigl(a, \eta)\mapsto \xi-\xi_1,$$ 
is sc-smooth as a compositions of sc-smooth maps $\ps:E\to F$, $\eta\mapsto  \ps \eta$,  and  $B_{\frac{1}{2}}\oplus E\to E$, $\bigl(a, h\bigr)\mapsto \wh{\pi}_a (h)$. 
To show that the map $(a, \eta)\mapsto \xi_1$ is sc-smooth, we  write  the system \eqref{sy1} in matrix form 
\begin{equation*}
\begin{bmatrix}\beta_a&1-\beta_a\\
\beta_a-1&\beta_a\end{bmatrix}\cdot \begin{bmatrix}\xi_1^+(s, t)\\ \xi_1^-(s-R,t-\vartheta)
\end{bmatrix}=\begin{bmatrix}f(s, t)\\0\end{bmatrix},
\end{equation*}
where we have abbreviated $\beta_a=\beta_a(s)$ and $f(s,t)=\beta'_a(s)(\eta^+(s,t)-\eta^-(s-R,t-\vartheta))$. 
Introducing the  determinant $\gamma_a(s)=\beta_a^2(s)+(1-\beta_a)^2(s)$  of the matrix on the left hand side and multiplying both sides by the inverse of this matrix we arrive at the formula
\begin{equation*}
\begin{bmatrix}\xi_1^+(s, t)\\ \xi_1^-(s-R,t-\vartheta)\end{bmatrix}=\frac{1}{\gamma_a}
\begin{bmatrix}\beta_a&\beta_a-1\\
1-\beta_a&\beta_a\end{bmatrix}\cdot \begin{bmatrix}f(s,t)\\ 0\end{bmatrix}.
\end{equation*}
From this formula we read off the formulae for $\xi_1^+$ and $\xi_1^-$,
\begin{equation*}
\begin{split}
\xi_1^+(s, t)&=
 \frac{\beta_a\beta'_a}{\gamma_a}(s)\cdot [ \eta^+(s,t) -\eta^-(s-R,t-\vartheta)] \\
&= \frac{\beta_a\beta'_a}{\gamma_a}(s)\cdot [r^+(s,t) -r^-(s-R,t-\vartheta)]
\end{split}
\end{equation*}
and 
\begin{equation*}
\begin{split}
\xi_1^-(s-R, t-\vartheta)&=\frac{(1-\beta_a)\beta_a}{\gamma_a}(s)\cdot 
 [ \eta^+(s,t) -\eta^-(s-R,t-\vartheta)] \\
&=\frac{(1-\beta_a)\beta_a}{\gamma_a}(s)\cdot [r^+(s,t) - r^-(s-R,t-\vartheta)],
\end{split}
\end{equation*}
where we have decomposed  the maps  $\xi_1^\pm$ as $\xi_1^\pm =c+r^\pm$. Here $c$ is  the common asymptotic  constant of $\xi_1^\pm$. 
In view of Proposition 2.17  in \cite{HWZ8.7}, 
 the map
$\Phi:B_\frac{1}{2}\oplus E\rightarrow F$,  defined by 
$$
(a,\eta)\mapsto \begin{cases}\xi_1&\quad \text{$a\neq 0$}\\
0&\quad \text{$a=0$,}
\end{cases}
$$
is sc-smooth.   Summing up,  we have proved that the map 
$\bigl(a, \eta)\mapsto \xi$ is sc-smooth. 

To obtain the desired estimate, we first note that for every level $m$ there is a positive constant $C'_m$ such that 
\begin{equation}\label{pi2}
\abs{\ps \eta}_{F_m}\leq C'_m\cdot \abs{\eta}_{E_m}
\end{equation}
for all pairs $\eta\in E_m$. 
Writing the equation \eqref{sy1}  as 
$$\wh{\boxdot}_a(\xi-\xi_1)=(\wh{\oplus}_a(\ps \eta), 0)$$
and applying the estimates of  Proposition \ref{HAT_HAT}  
and \eqref{pi2}, we find  a constant $C_m$ independent of $a$ so that the following estimate  holds, 
\begin{equation}\label{ps2}
\begin{split}
\abs{\xi-\xi_1}_{F_m}&\leq C''_m \abs{\wh{\boxdot}_a (\xi-\xi_1)}_{\wh{G}_m^a}= C''_m \abs{\bigl(\wh{\oplus}_a(\ps \eta), 0\bigr)}_{\wh{G}_m^a}\\
&\leq C''_m \abs{(\wh{\oplus}_a(\ps \eta), \wh{\ominus}_a(\ps \eta)}_{\wh{G}_m^a}\\
&=
C_m''\abs{\wh{\boxdot}_a(\ps \eta)}_{\wh{G}_m^a}
\leq C_m'''\abs{\ps \eta}_{F_m}\\
&\leq C_m\abs{\eta}_{E_m}.
\end{split}
\end{equation}
In order to obtain similar estimates for $\xi_1$, we use  the fact that  the map $\Phi$ is, in particular, of class $\ssc^0$.
Hence, given  the level $m$, there exists  $\delta>0$ such  that
$|\eta|_{E_m}\leq \delta$ and $|a|\leq \delta$ implies  $|\Phi(a,\eta)|_{F_m}\leq 1$.
Since $\Phi (a, \cdot )$ is linear, we conclude that 
$$
|\Phi (a,\eta )|_{F_m}\leq \delta^{-1}\cdot |\eta |_{E_m}
$$
for $|a|\leq \delta$ and $\eta \in E_m$.   Moreover,   it follows from the formulae for $\xi_1^\pm$  that there exists a positive constant $C_m''>\delta^{-1}$ for which  
$$
|\Phi(a,\eta)|_{F_m}=|\xi_1|_{F_m}\leq C_m''\cdot | r |_{F_m}\leq C_m'' \cdot |\eta |_{E_m}
$$
for every $a$ satisfying  $\delta \leq |a|<\frac{1}{2}$ and for all $\eta \in E_m$. 
Combining   the estimate \eqref{ps2} with the above estimates,  we find a constant $C_m$ independent of $a$ such that 
$$
|D^a_s(\eta)|_{F_m}=\abs{\xi}_{F_m}\leq C_m\cdot |\eta|_{E_m}
$$
for all $\eta\in E_m$.

We next discuss the operators $C^a_s, C^a_t:E\to F$. Setting $C_t^a(\eta)=\xi$, the map $\xi$  satisfies  the equations
\begin{equation*}
\begin{aligned}
\wh{\oplus}_a(\xi)&=0\\
\wh{\ominus}_a(\xi)&=\partial_t (\ominus_a(\eta)).
\end{aligned}
\end{equation*}
Solving for $\xi=(\xi^+, \xi^-)$, we find 
\begin{equation}\label{cat}
\begin{split}
\xi^+(s, t)&=\frac{\beta_a-1}{\gamma_a}(s)\cdot \partial_t(\ominus_a (\eta))(s, t)\\
&=\frac{(\beta_a-1)^2}{\gamma_a}(s)\cdot \partial_t \eta^+(s, t)+\frac{\beta_a(\beta_a-1)}{\gamma_a}(s)\cdot \partial_t\eta^-(s-R, t-\vartheta)
\end{split}
\end{equation}
and
\begin{equation*}
\begin{split}
\xi^-(s-R, t-\vartheta)=\frac{\beta_a(\beta_a-1)}{\gamma_a}(s)\cdot\partial_t \eta^+(s, t)+\frac{\beta_a^2}{\gamma_a}(s)\cdot \partial_t \eta^-(s-R, t-\vartheta).
\end{split}
\end{equation*}
The map $E\to F$, defined by $\eta\mapsto \partial_t\eta$, is sc-smooth.  Also, in view of Proposition 2.8  in  \cite{HWZ8.7}, the maps 
$\hb \oplus F\to F$, defined by 
$$(a, \eta)\mapsto \frac{(\beta_a-1)^2}{\gamma_a}(s)\cdot \eta^+(s, t)\qquad \text{and}\qquad 
(a, \eta)\mapsto \frac{\beta_a^2}{\gamma_a}(s)\cdot \eta^-(s-R, t-\vartheta),$$
are sc-smooth.  Hence, by the chain rule, the map $\hb\times E\to H^{2,\delta_0}(\R^+\times S^1)$, $(a, \eta)\mapsto \xi^+$ is sc-smooth. The same conclusion holds for the map $\hb\times E\to H^{2,\delta_0}(\R^-\times S^1)$ given by  $(a, \eta)\mapsto \xi^-$. Summing up, 
the map $\hb\times E\to F$, $(a, \eta)\mapsto \xi$ is sc-smooth.  

We finally consider the map 
$\hb\oplus E\to F$, defined by $(a, \eta)\mapsto C_s^a(\eta)$. Setting $\xi=C_s^a(\eta)$, the map $\xi$ solves the following two equations,
\begin{equation*}
\begin{aligned}
\wh{\oplus}_a(\xi)&=0\\
\wh{\ominus}_a(\xi)&=\partial_s (\ominus_a(\eta)).
\end{aligned}
\end{equation*}
We obtain for the pair $\xi=(\xi^+, \xi^-)$ the  formulae
\begin{equation*}
\begin{split}
\xi^+(s, t)&=\frac{\beta_a-1}{\gamma_a}(s)\cdot \wh{ \ominus}_a (\partial_s\eta )(s, t)\\
&\phantom{==}+\frac{(\beta_a-1)\beta_a'}{\gamma_a}\cdot \bigl(\eta^+(s, t)-\av_a (\eta)\bigr)\\
&\phantom{==}+\frac{(\beta_a-1)\beta_a'}{\gamma_a}\cdot \bigl(\eta^-(s-R, t-\vartheta)-\av_a (\eta)\bigr)
\end{split}
\end{equation*}
and an analogous expression for $\xi^-(s-R, t-\vartheta)$. 
The average $\av_a (\eta)$ is the  number 
$$\av_a (\eta):=\frac{1}{2}\biggl( \int_{S^1}\eta^+\Bigl(\frac{R}{2}, t\Bigr)\ dt+ \int_{S^1}\eta^-\Bigl(-\frac{R}{2}, t\Bigr)\ dt\biggr).$$
The  map $\hb \times E\to F$, 
$$(a, \eta)\mapsto \frac{\beta_a-1}{\gamma_a}\cdot \wh{ \ominus}_a (\partial_s\eta))$$
is sc-smooth by the same argument as above. 
In view of the results in  Section 2.4 of  \cite{HWZ8.7},  the maps 
\begin{equation*}
\begin{aligned}
(a, \eta^+)&\mapsto \frac{(\beta_a-1)\beta_a'}{\gamma_a}\cdot \bigl(\eta^+ -\av_a (\eta)\bigr)\\
(a, \eta^-)&\mapsto \frac{(\beta_a-1)\beta_a'}{\gamma_a}\cdot \bigl(\eta^-(s-R, t-\vartheta)-\av_a (\eta)\bigr)
\end{aligned}
\end{equation*}
from $\hb\oplus H^{3,\delta_0}(\R^+\times S^1)$ to $H^{3,\delta_0}(\R^+\times S^1)$ and from  $\hb\oplus H^{3,\delta_0}(\R^-\times S^1)$ to $H^{3,\delta_0}(\R^-\times S^1)$, respectively, are sc-smooth.  The same arguments apply to the component $\xi^-$.  Therefore,   the map  $\hb\times E\to F$, $(a, \eta)\mapsto \xi$ is sc-smooth. The desired estimates follow from the formulae for the components of the maps 
$(a, \eta)\mapsto C^a_t(\eta)$ and $(a, \eta)\mapsto C^a_s(\eta)$.  This completes the proof of 
Proposition  \ref{XCX1}. \hfill $\blacksquare$

\section{Banach Algebra Properties}\label{XCX19} 
Our aim is to prove the following Banach algebra properties  for the Banach spaces  $H^{m,\delta} (\R^+\times S^1)$ and $H_{\textrm{c}}^{m,\delta} (\R^+\times S^1)$  equipped with the norms
$$\norm{f}_{m, \delta}^2= \sum_{ \abs{\alpha}\leq m}\int_{\R^+\times S^1}\abs{ D^{\alpha}f(s, t) }^2e^{2\delta s}\  dsdt$$
for $f\in H^{m,\delta} (\R^+\times S^1)$ and 
$$\norm{g}_{m, \delta}^2=\abs{c}^2+\sum_{ \abs{\alpha}\leq m}\int_{\R^+\times S^1}\abs{ D^{\alpha}( g(s, t) -c)}^2e^{2\delta s}\  dsdt$$
for $g\in H_{\textrm{c}}^{m,\delta} (\R^+\times S^1)$, where $c$ is the asymptotic constant. 

\begin{theorem}\label{balgebra}
For $m\geq 3$ and $2\leq k\leq m$ and $0\leq  \sigma\leq \delta$, there exists a constant $C$ such that  
$$\norm{f\cdot g}_{k,\delta}\leq C\norm{f}_{m,\delta}\cdot \norm{g}_{k,\sigma}$$
for all $f\in H^{m,\delta}(\R^+\times S^1)$ and $g\in H^{k,\sigma}(\R^+\times S^1)$. Moreover,  
\begin{itemize}
\item[{\em (1)}] If $m\geq 3$ and $2\leq k\leq m$ and $0\leq \delta$,  
$$\norm{f\cdot g}_{k,\delta}\leq C\norm{f}_{H_c^{m,\delta}(\R^+\times S^1)}\cdot  \norm{g}_{k,\delta}$$
for all $f\in H_c^{m,\delta}(\R^+\times S^1)$ and all $g\in H^{k,\delta}(\R^+\times S^1)$.
\item[{\em (2)}] If $k\geq 3$ and $\delta>\sigma\geq 0$, then 
$$\norm{f\cdot g}_{k,\delta}\leq C\norm{f}_{H_c^{k,\sigma}(\R^+\times S^1)}\cdot  \norm{g}_{k,\delta}$$
for all $f\in H_c^{k,\sigma}(\R^+\times S^1)$ and all $g\in H^{k,\delta}(\R^+\times S^1)$.
\end{itemize}
\end{theorem}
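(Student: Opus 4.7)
The plan is to decouple the weight from the regularity by localizing to unit cylinders and then redistributing the exponential factors. Tile the half-cylinder as $\R^+\times S^1=\bigcup_{n\geq 0}Q_n$ with $Q_n=[n,n+1]\times S^1$. For $s\in[n,n+1]$ one has $e^{2\delta n}\leq e^{2\delta s}\leq e^{2\delta(n+1)}$, which gives the equivalence
\begin{equation*}
\norm{h}_{k,\delta}^2 \;\asymp\; \sum_{n\geq 0}e^{2\delta n}\norm{h}_{H^k(Q_n)}^2,
\end{equation*}
with constants depending only on $\delta$; after separating out the asymptotic constant, the analogous statement holds for elements of $H_c^{k,\delta}$.

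First I would establish the main inequality $\norm{fg}_{k,\delta}\leq C\norm{f}_{m,\delta}\norm{g}_{k,\sigma}$ for $f\in H^{m,\delta}$ and $g\in H^{k,\sigma}$. On the two-dimensional domain $Q_n$ the classical Sobolev algebra property (following from the Leibniz rule and the embedding $H^2(Q_n)\hookrightarrow L^\infty(Q_n)$) gives $\norm{fg}_{H^k(Q_n)}\leq C\norm{f}_{H^k(Q_n)}\norm{g}_{H^k(Q_n)}$ for $k\geq 2$, with a constant $C$ independent of $n$ by translation invariance of the $Q_n$. Splitting the output weight as $e^{2\delta n}=e^{2(\delta-\sigma)n}\cdot e^{2\sigma n}$, one obtains
\begin{equation*}
\norm{fg}_{k,\delta}^2 \leq C\sum_{n\geq 0}\bigl(e^{2(\delta-\sigma)n}\norm{f}_{H^k(Q_n)}^2\bigr)\bigl(e^{2\sigma n}\norm{g}_{H^k(Q_n)}^2\bigr).
\end{equation*}
The key step is to bound the first factor uniformly in $n$: using $m\geq k$ together with $e^{2\delta n}\norm{f}_{H^m(Q_n)}^2\leq C\norm{f}_{m,\delta}^2$ yields
\begin{equation*}
e^{2(\delta-\sigma)n}\norm{f}_{H^k(Q_n)}^2 \leq C\,e^{-2\sigma n}\norm{f}_{m,\delta}^2 \leq C\norm{f}_{m,\delta}^2,
\end{equation*}
where the last estimate relies crucially on $\sigma\geq 0$. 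Pulling this supremum out of the sum and using $\sum_{n\geq 0}e^{2\sigma n}\norm{g}_{H^k(Q_n)}^2\leq C\norm{g}_{k,\sigma}^2$ gives the main inequality.

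Both (1) and (2) then reduce to the main inequality after writing $f=c+r$ with $c$ the asymptotic constant. The contribution $\norm{cg}_{k,\delta}=\abs{c}\norm{g}_{k,\delta}$ is absorbed via $\abs{c}\leq\norm{f}_{H_c^{m,\delta}}$ in case (1) and $\abs{c}\leq\norm{f}_{H_c^{k,\sigma}}$ in case (2). For the piece $\norm{rg}_{k,\delta}$, case (1) is the main inequality with $\sigma=\delta$, while case (2) follows by applying the main inequality with the two factors interchanged and $m=k$, so that $g$ plays the role of the higher-weight, higher-regularity factor and $r$ that of the lower-weight factor, yielding $\norm{rg}_{k,\delta}\leq C\norm{g}_{k,\delta}\norm{r}_{k,\sigma}$. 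The one genuine obstacle in the whole scheme is the local Sobolev algebra estimate on $Q_n$ with a constant independent of $n$, but this reduces at once via translation to the classical two-dimensional fact that $H^k$ is a Banach algebra for $k\geq 2$; the rest is bookkeeping of exponential factors.
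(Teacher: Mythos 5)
Your proof is correct, but it takes a genuinely different route from the paper's. The paper works globally with the weighted norms: after the Leibniz rule it inspects each term $D^{\alpha}f\cdot D^{\beta}g$ with $\abs{\alpha}+\abs{\beta}\leq k$ and, since $k\geq 3$ forces $\abs{\alpha}\leq k-2$ or $\abs{\beta}\leq k-2$, puts a weighted $L^{\infty}$ bound (via the Sobolev embedding on the half-cylinder) on whichever factor carries the fewer derivatives, using $\sigma\geq 0$ to drop or reinstate exponential weights; the case $k=2$ is then treated separately, and it is precisely there that $m\geq 3$ is used, to control $e^{\delta s}D^{\alpha}f$ in $L^{\infty}$ for $\abs{\alpha}\leq 1$. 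Your tiling into unit cylinders $Q_n$ instead decouples the weight from the regularity entirely: all the analysis is outsourced to the classical unweighted algebra property of $H^k$ on a fixed two-dimensional domain, and the weights are handled by the elementary splitting $e^{2\delta n}=e^{2(\delta-\sigma)n}e^{2\sigma n}$ together with a sup-times-sum estimate. This is more modular, avoids the case distinction, and in fact shows that the hypothesis $m\geq 3$ is superfluous for the main inequality (only $2\leq k\leq m$ is used); the paper's argument is more direct and self-contained but genuinely needs $m\geq 3$ when $k=2$. Parts (1) and (2) are handled the same way in both texts, including the role swap $m:=k$ in part (2), which is where the hypothesis $k\geq 3$ of that part enters your version of the statement.

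One small inaccuracy in your justification, not in your strategy: for $k=2$ the local algebra estimate on $Q_n$ does not follow from the Leibniz rule and $H^2(Q_n)\hookrightarrow L^{\infty}(Q_n)$ alone, since the term with $\abs{\alpha}=\abs{\beta}=1$ requires the bilinear bound $H^1\times H^1\to L^2$, i.e.\ the embedding $H^1(Q_n)\hookrightarrow L^4(Q_n)$ in two dimensions. The fact you invoke is classical and true, so this affects only the parenthetical, but it should be stated correctly since the uniform-in-$n$ constant for $k=2$ is the one place your argument goes beyond what $H^2\hookrightarrow L^{\infty}$ delivers.
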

\begin{proof}
We first assume that $k\geq 3$.  Then the square of the norm $\norm{f\cdot g}_{k,\sigma}$ is bounded by some constant  times the sum of integrals of the form 
\begin{equation}\label{intba1}
\int_{\R^+\times S^1}\abs{D^\alpha f}^2\abs{D^\beta g}^2\cdot e^{2\sigma s}
\end{equation}
where the multi-indices $\alpha$ and $\beta$ satisfy $\abs{\alpha}+\abs{\beta}\leq k$.  Then, since $k\geq 3$, either $\abs{\alpha}\leq k-2$ or $\abs{\beta}\leq k-2$. We assume that $\abs{\alpha}\leq k-2$.  Since $e^{\delta \cdot }f$ belongs to $H^{k}$, it follows from the Sobolev embedding theorem that 
$\max_{0\leq \abs{\alpha}\leq k-2} \abs{D^{\alpha} (e^{\delta \cdot} f )}_{L^\infty(\R^+\times S^1)}\leq C'\norm{e^{\delta \cdot }f}_{H^k}\leq C\norm{f}_{k,\delta}$ for some constant $C$. In particular, we conclude that $\abs{e^{\delta \cdot}D^{\alpha}f}_{L^\infty(\R^+\times S^1)}\leq C\norm{f}_{k,\delta}$ for all multi-indices $\alpha$ satisfying 
$\abs{\alpha}\leq k-2$. 
Hence, if $\abs{\alpha}\leq k-2$, we obtain the estimate
\begin{equation*}
\begin{split}
\int_{\R^+\times S^1}\abs{D^\alpha f}^2\abs{D^\beta g}^2\cdot e^{2\delta s}
&\leq C^2\norm{f}^2_{k,\delta}\cdot \int_{\R^+\times S^1}\abs{D^\beta g}^2\leq C^2\norm{f}_{k,\delta}^2\cdot \norm{g}^2_{k,\sigma}\\
&\leq 
C^2\norm{f}_{m,\delta}^2\cdot \norm{g}^2_{k,\sigma}
\end{split}
\end{equation*}
since $m\geq k$.  If  the multi-index $\beta$ satisfies $\abs{\beta}\leq k-2$,  then the derivative $D^\beta g$ belongs to $H^{k-\abs{\alpha}}\subset H^2$  so that, in view of the Sobolev embedding  theorem,  $\abs{D^\beta g}_{L^\infty (\R^+\times S^1)}\leq  C\norm{g}_{k,\sigma}$, and hence 
$$
\int_{\R^+\times S^1}\abs{D^\alpha f}^2\abs{D^\beta g}^2\cdot e^{2\delta s}
\leq C^2\norm{g}^2_{k,\sigma}\cdot \int_{\R^+\times S^1}\abs{D^\alpha f}e^{2\delta s}\leq C^2\norm{f}_{m,\delta}^2\cdot \norm{g}^2_{k,\sigma}. 
$$
Next we consider the  remaining case  $k=2$ and $m\geq 3$. 
Then the multi-indices $\alpha$ and $\beta$ in \eqref{intba1} satisfy  $|\alpha|+|\beta|\leq 2$.
Since $m\geq 3$, it follows as before that 
$\max_{\abs{\alpha}\leq 1}\abs {e^{\delta \cdot }D^{\alpha}f}_{L^\infty (\R^+\times S^1)}\leq C\norm{f}_{m,\delta}$ and we conclude for the multi-indices $\abs{\alpha}\leq 1$ that 
\begin{equation*}
\begin{split}
&\int_{\R^+\times S^1}\abs{f}^2\cdot |D^\beta g|^2\cdot e^{2\delta s}\leq  C^2\norm{f}^2_{m,\delta}\int_{\R^+\times S^1} |D^\beta g|^2\leq C^2\norm{f}^2_{m, \delta}\cdot \norm{g}^2_{k,\sigma}.
\end{split}
\end{equation*}
If $\abs{\alpha}=2$ so that $\beta=0$,  then, in view of the Sobolev embedding theorem applied to $g\in H^{2}$, we can   estimate 
 $\abs{g}_{L^\infty (\R^+\times S^1)}\leq C\norm{g}_{k,\sigma}$,  and obtain
\begin{equation*}
\begin{split}
\int_{\R^+\times S^1}\abs{D^\alpha f }^2\cdot \abs{g}^2\cdot e^{2\delta s}\leq  C^2\norm{g}^2_{k,\sigma}\int_{\R^+\times S^1} \abs{D^\alpha f}^2e^{2\delta s}\leq C^2\norm{f}^2_{m, \delta}\cdot \norm{g}^2_{k,\sigma}.
\end{split}
\end{equation*}
In order to  prove the statements (1) and (2), we split $f=c+r$ where $c$ is the asymptotic constant and $r$ belongs to $H^{m,\delta}(\R^+\times S^1)$ in case (1) and to $H^{k,\sigma}(\R^+\times S^1)$ in case (2). Since  in both cases $g\in H^{k,\delta}(\R^+\times S^1)$, we have $cg\in  H^{k,\delta}(\R^+\times S^1)$ and, in  view of the previous part,  also $rg\in H^{k,\delta}(\R^+\times S^1)$.  Recalling that $\norm{g}^2_{H_c^{k,\delta} (\R^+\times S^1)}=\abs{c}^2+\norm{r}^2_{k,\delta}$, we estimate in case (1),
\begin{equation*}
\begin{split}
\norm{fg}_{k,\delta}&=\norm{cg+rg}_{k,\delta}\leq \abs{c}\cdot \norm{g}_{k,\delta}+\norm{rg}_{k,\delta}\\
&\leq  \abs{c}\cdot \norm{g}_{k,\delta}+C\norm{r}_{m,\delta}\cdot \norm{g}_{k,\delta}\leq 2C\norm{f}_{H_c^{k,\delta}(\R^+\times S^1)}\cdot \norm{g}_{k,\delta}, 
\end{split}
\end{equation*}
and in case (2),
\begin{equation*}
\begin{split}
\norm{fg}_{k,\delta}&=\norm{cg+rg}_{k,\delta}\leq \abs{c}\cdot \norm{g}_{k,\delta}+\norm{rg}_{k,\delta}\\
&\leq  \abs{c}\cdot \norm{g}_{k,\delta}+C\norm{r}_{k,\sigma}\cdot \norm{g}_{k,\delta}\leq 2C\norm{f}_{H_c^{k,\sigma}(\R^+\times S^1)}\cdot \norm{g}_{k,\delta}, 
\end{split}
\end{equation*}
The proof of Theorem \ref{balgebra} is complete.
\end{proof}
\section{Proof of Proposition \ref{XCX2}}\label{XCX20}
We recall that,  given a smooth map $A:\R^{N}\to \call (\R^K, \R^M)$, the map 
$\wt{A}:\hb\oplus E^{(N)}\oplus F^{(K)}\to F^{(M)}$, $(a, u, \eta)\mapsto \xi=\wt{A}(a, u,\eta)$,  is defined as the unique solution of the  equations
\begin{equation}\label{ainduced}
\begin{aligned}
\wh{\oplus}_a(\xi) (s,t)&= A\bigl(\oplus_a (u)(s,t)\bigr) \cdot \wh{\oplus}_a  (\eta) (s,t)\\
\wh{\ominus}_a(\xi)(s,t)&=0,
\end{aligned}
\end{equation}
for $[(s, t)]\in Z_a$ where we have abbreviated $u=(u^+, u^-)\in E^{(N)}$, $\eta=(\eta^+, \eta^-)\in F^{(K)}$, and $\xi=(\xi^+, \xi^-)\in F^{(M)}$.  Here the spaces are defined as follows. 
The sc-Hilbert  space $E^{(N)}$ consists of pairs $(u^+, u^-)$ belonging to $H_c^{3,\delta_0}(\R^+\times S^1, \R^N)\oplus 
H_c^{3,\delta_0}(\R^-\times S^1, \R^N)$ having common asymptotic constants as $s\to \pm\infty$ and  the sc-Hilbert space $F^{(K)}$ is the space $H^{2,\delta_0}(\R^+\times S^1, \R^K)\oplus H^{2,\delta_0}(\R^+\times S^1, \R^K)$.  We also point out that in view of  the  second equation and the properties of  the  function $\beta_a$, we conclude $\xi^+(s, t)=0$ for  $s\geq \frac{R}{2}+1$ and $\xi^-(s', t')=0$ for $s'\leq -\frac{R}{2}-1$.  

Without loss of generality we may assume that $N=K=M=1$ so that $A:\R\to \R$ is a smooth function. We also drop the  superscript from $E^{(1)}$ and $F^{(1)}$ and   write  instead simply $E$ and $F$, respectively.  \\[1ex]
{\bf (1).} \   In order to prove the first  statement in Proposition \ref{XCX2}, we derive a  formula for  $\wt{A}\bigl(a, u, \eta\bigr)=\xi\in F$. Abbreviating  $\beta_a=\beta_a(s)$  and 
$$u_a=\oplus_a(u)\quad \text{and}\quad \eta_a=\wh{\oplus}_a(\eta),$$
 we write  the equations \eqref{ainduced}  in matrix form as 
\begin{equation*}
\begin{bmatrix}\beta_a&1-\beta_a\\
\beta_a-1&\beta_a\end{bmatrix}\cdot \begin{bmatrix}\xi^+(s, t)\\ \xi^-(s-R,t-\vartheta)
\end{bmatrix}=\begin{bmatrix}A(u_a(s,t)) \cdot   \eta_a(s,t)\\ 0\end{bmatrix}. 
\end{equation*}
With $\gamma_a(s)=\beta_a^2(s)+(1-\beta_a)^2(s)$ denoting the  determinant  of the matrix on the left hand side and multiplying both sides by the inverse of this matrix,  we arrive at the formula 
\begin{equation*}
\begin{bmatrix}\xi^+(s, t)\\ \xi^-(s-R,t-\vartheta)\end{bmatrix}=\frac{1}{\gamma_a}
\begin{bmatrix}\beta_a&\beta_a-1\\
1-\beta_a&\beta_a\end{bmatrix}\cdot \begin{bmatrix}  A(u_a(s,t)) \cdot   \eta_a(s,t)  \\ 0\end{bmatrix}.
\end{equation*}
The formulae for $\xi^+$ and $\xi^-$ are 
\begin{equation*}
\begin{split}
\xi^+(s, t)=
 \frac{\beta_a}{\gamma_a}(s)\cdot f(s,t)= \frac{\beta_a}{\gamma_a}(s)\cdot A\bigl(u_a(s,t)\bigr) \cdot \eta_a(s,t)
\end{split}
\end{equation*}
and $\xi^+(s, t)=0$ for $s\geq \frac{R}{2}+1$, and 
\begin{equation*}
\begin{split}
\xi^-(s-R, t-\vartheta)=\frac{1-\beta_a}{\gamma_a}(s)
\cdot A\bigl(u_a(s,t)\bigr) \cdot \eta_a(s,t)
\end{split}
\end{equation*}
and $\xi^-(s-R, t-\vartheta)=0$ for $s\leq \frac{R}{2}-1$.  
Our aim is to prove  that for $i=0, 1$ the map $\hb\oplus E\oplus F^i\to F^i$, 
$$(a, u, \eta)\mapsto  \xi,$$
is sc-smooth. To do this we  consider the formula for $\xi^+$. 
We  introduce the map $f:\hb\oplus F^i\to H^{2+i,\delta_i}(\R^+\times S^1)$, defined for $a\neq 0$  by 
\begin{equation}\label{phieta1}
\begin{split}
f(a, \eta)(s, t)=\frac{\beta_a}{\gamma_a}(s)\cdot \wh{\oplus}_a(\eta)(s, t)
\end{split}
\end{equation}
where $(s, t)\in \R^+\times S^1$. If $a=0$,  we define 
$f(0, \eta)(s, t)=\eta^+(s, t)$ for $(s, t)\in \R^+\times S^1$.
 We  point out   that if $a\neq 0$, then 
  $$f (a, \eta)(s, t)=0\quad \text{for $s\geq \frac{R}{2}+1.$}$$
 Decomposing $u^\pm=c+r^\pm$ where $c$ is the common asymptotic constant of the maps $u^\pm$,  the glued map $\oplus_a (u)$ is, in view of Section 2.4 of \cite{HWZ8.7},  given  by 
 \begin{equation*}
 \oplus_a(u)(s, t)=c+\oplus_a(r)(s, t)
 \end{equation*}
where $0\leq s\leq R$ and where $r=(r^+, r^-)$. 
Next we introduce  the map $g:  \hb\times H_c^{3,\delta_0}(\R^+\times S^1)\to H^{3,\delta_0}(\R^+\times S^1)$,  defined by 
\begin{equation}\label{g1}
g(a, u)(s, t)=c+\beta_a(s-2)\cdot \oplus_a(r)(s, t), 
\end{equation}
if $a\neq 0$. If $a=0$, we set $g(0, u)=c+r^+=u^+$.
We note that $g(a,u)(s, t)=0$ for $s\geq \frac{R}{2}+3$ and $\oplus_a(u)(s, t)=g(a, u)(s, t)$ for $s\leq \frac{R}{2}+1$ since $\beta_a(s-2)=1$ for $s\leq \frac{R}{2}+1$.
Since $f (a, \eta)(s, t)=0$ for $s\geq  \frac{R}{2}+1$, we therefore find that 
$$\xi^+(s, t)=A(g(a,u)(s, t))\cdot f(a, \eta)(s, t)$$
if $a\neq 0$, and if $a=0$, then 
$$\xi^+(s, t)=A(u^+(s, t))\cdot \eta^+(s, t)=A(g(0, u)(s, t))\cdot f(0, \eta)(s, t)$$
for $(s, t)\in \R^+\times S^1$.
Consequently, $\xi^+$ is the  product of two maps defined on all of $\R^+\times S^1$ and we shall show that  each map is sc-smooth.

We start with the map $\hb \oplus E\to H^{3,\delta_0}_c(\R^+\times S^1)$, defined by $(a, u)\mapsto A(g(a, u))$.  First, using the arguments in Section 2.4  of  \cite{HWZ8.7} 
one shows that the map $\hb \oplus E\to H^{3,\delta_0}_c(\R^+\times S^1)$, defined by $(a, u)\mapsto  g(a,u)$ is sc-smooth.
The smooth  function  $A:\R\to \R$ induces   
a smooth substitution operator 
$$\wh{A}:H^{m,\delta}_c(\R^+\times S^1)\to H^{m,\delta}_c(\R^+\times S^1), \qquad \wh{A}(u)=A\circ u$$
 for every $m\geq 2$ and $\delta\geq 0$. In particular, the map 
 $$\wh{A}: H^{3,\delta_0}_c(\R^+\times S^1)\to H^{3,\delta_0}_c(\R^+\times S^1)$$ 
 is sc-smooth. 
Applying the chain-rule for sc-smooth maps, we conclude that the map 
$$
\hb\oplus E\to H^{3,\delta_0}_c({\mathbb R}^+\times S^1),\qquad (a, u)\mapsto \wh{A}(g(a,u))=A\circ  g(a, u)
$$
is sc-smooth.   Also  the map $\hb\oplus F^i\to H^{2+i,\delta_i}(\R^+\times S^1)$, defined by \eqref{phieta1} is sc-smooth for $i=0$ and $i=1$,  again by the arguments  in Section 2.4 of  \cite{HWZ8.7}. 

The product map  
$\Phi:H^{3+m,\delta_m}_c(\R^+\times S^1)\oplus H^{2+m+i,\delta_{m+i}} (\R^+\times S^1)\to  H^{2+m+i,\delta_{m+i}}(\R^+\times S^1)$, defined by $\Phi(f, g)=f\cdot g$,   is smooth for every level $m\geq 0$ and $i=0,1$.  This follows from the statement (1) of   Theorem \ref{balgebra} if $i=0$ and from the statement (2) if $i=1$. 
Consequently, applying the chain-rule, the map $\hb\oplus E\oplus F^i\to H^{2+i,\delta_i}(\R^+\times S^1)$, defined by 
$(a, u,\eta)\mapsto \Phi\bigl(\wh{A}(g(a,u)),f (a, \eta)\bigr)$,  is sc-smooth and hence the map $(a, u,\eta)\mapsto \xi^+$ is sc-smooth. 
The same  arguments  applied to  the second map $\hb\oplus E\oplus F^i\to H^{2+i,\delta_i}(\R^-\times S^1)$,\ $(a, u, \eta)\mapsto \xi^-$ show that it  is also sc-smooth. 

Consequently, the map $\hb\times E\oplus F^i\to F^i$, $(a, u,\eta)\mapsto  \xi=\wt{A}(a, u,\eta)$, is sc-smooth as claimed. \\[1ex]

  To obtain the desired estimate we first  consider the map $(a, \eta)\mapsto f(a, \eta)$.  The map is sc-smooth and, in particular, of class $\ssc^0$. Hence, given the  level $m\geq 0$, there exists a constant $\rho>0$ such that 
 \begin{equation*}
 \norm{ f (a, \eta)}_{H^{2+m+i,\delta_{m+i}}(\R^+\times S^1)}\leq 1
 \end{equation*}
 for all $\abs{a}\leq \rho$, $i=0,1$,  and  all $\abs{\eta}_{F_{m+i}} \leq \rho$.  Since the map is linear in the variable $\eta$, we conclude that 
 \begin{equation*}\label{linvar1}
 \norm{ f (a, \eta)}_{H^{2+m+i,\delta_{m+i}}(\R^+\times S^1)}\leq \rho^{-1}\abs{\eta}_{F_{m+i}}
 \end{equation*} 
for all $\abs{a}\leq \rho$, $i=0,1$,  and all $\eta \in F_{m+i}$.  Also, it follows from \eqref{phieta1} that there is a constant $C_m>\rho^{-1}$ such that 
 \begin{equation*}
 \norm{ f (a, \eta)}_{H^{2+m+i,\delta_{m+i}}(\R^+\times S^1)}\leq  C_m\abs{\eta}_{F_{m+i}}
 \end{equation*} 
 for all $\rho\leq \abs{a}\leq \frac{1}{2}$, $i=0,1$,  and all $\eta \in F_{m+i}$.  Consequently, 
\begin{equation}\label{feta}
 \norm{f(a, \eta)}_{H^{2+m+i,\delta_{m+i}}(\R^+\times S^1)}\leq  C_m\abs{\eta}_{F_{m+i}}
 \end{equation} 
 for all $\abs{a}\leq \frac{1}{2}$, $i=0,1$, and all  $\eta\in F_{m+i}$.  Now,  given a pair $u_0=(u_0^+,u_0^-)\in E_m$ where $u^\pm_0=c_0+r^\pm_0$, we  define 
 $$g(a, u_0)(s, t)=c_0+\beta_a(s-2)\cdot  \wh{\oplus}_a(r_0)(s,t)$$
 if $a\neq 0$ and $g(0, u_0)=u_0^+$ if $a=0$. 
We already know that the map $(a, u)\mapsto A \circ g(a, u))$ is sc-smooth,  hence, in particular,  of class $\ssc^0$.  Therefore, fixing a level $m\geq 0 $,  we find  for every $a_0\in \hb$ and $\varepsilon>0$   a positive number $\rho_{a_0}$ such that 
$$\norm{A(g(a, u))-A(g(a_0, u_0))}_{H_c^{m+3,\delta_m}(\R^+\times S^1)}\leq \varepsilon/2$$
 for all $\abs{a-a_0}<\rho_{a_0}$ and $\abs{u-u_0}_{E_m}<\rho_{a_0}.$
 In particular, 
 $$\norm{A(g(a, u_0))-A(g(a_0, u_0))}_{H_c^{m+3,\delta_m}(\R^+\times S^1)}\leq \varepsilon/2$$
implying
\begin{equation*}
\begin{split}
&\norm{A(g(a, u))-A(g(a, u_0)) }_{H_c^{m+3,\delta_m}(\R^+\times S^1)}\\
&\phantom{====}\leq  \norm{A(g(a, u))-A(g(a_0, u_0))}_{H_c^{m+3,\delta_m}(\R^+\times S^1)}\\
&\phantom{====\leq}+ \norm{A(g(a_0, u_0))-A(g(a, u_0))}_{H_c^{m+3,\delta_m}(\R^+\times S^1)}\leq  \varepsilon.
\end{split}
\end{equation*}
Covering the closed disk $\hb$ by open disks  $B(a_0,\rho_{a_0})$ and using compactness we find a constant $\rho>0$ such that 
$$\norm{A(g(a, u))-A(g(a, u_0))}_{H_c^{m+3,\delta_m}(\R^+\times S^1)}\leq \varepsilon$$
for all $a$ and for $u\in E_m$ satisfying $\abs{u-u_0}_{E_m}<\rho.$
 
Now, using  Theorem \ref{balgebra} on  level $m+i$ for $i=0,1$ and the estimate \eqref{feta},  we conclude for given 
$u_0\in E_m$ and $\varepsilon>0$ that there exist $\rho>0$  and a constant  $C_m$ depending on $m$ but not on $a$ such that  
 \begin{equation*}
\begin{split}
\norm{\bigl(A(g(a, u))-A(g(a, u_0))\bigr)\cdot f(a, \eta)}_{H^{m+2+i,\delta_{m+i}}(\R^+\times S^1)}\leq  C_m\cdot \varepsilon\cdot \abs{\eta}_{F_{m+i}}
\end{split}
\end{equation*}
for all $a\in B_{\frac{1}{2}}$, all  $\eta\in F_{m+i}$ and for $u\in E_m$ satisfying  $\abs{u-u_0}_{E_m}<\rho$. This completes the proof of the part (1) of Proposition 4.6 and we next  turn to part (2).  \\[1ex]
{\bf (2).} \  We recall  that,  for a fixed $a\in \hb$,  the map $L_a:E\to \call (F, F)$ is defined by 
$$L_a(u)=\wt{A}(a, u, \cdot )\in \call (F, F).$$
That for fixed $a$ and level $m$  the map $L_a$ is of class $C^1$ follows from standard classical argument
and the fact that gluing is linear. Then one computes the form of $DL_a$ by a formal calculation.
For this we consider  the  smooth  function  $B:\R\oplus \R\to \R$, defined by 
$$B(p, q)=DA(p)\cdot q$$
where $DA$ stands for the derivative of $A$, and introduce the map $\wt{B}:\hb\oplus E\oplus E\oplus F\to F$, $(a, u,\wh{u},\eta)\mapsto \wt{B}(a, u, \wh{u}, \eta)=\xi$,  defined as the unique solution of the two equations
\begin{equation*}
\begin{aligned}
\wh{\oplus}_a(\xi)&=B(u_a, \wh{u}_a)\cdot \eta_a\\
\wh{\ominus}_a(\xi)&=0
\end{aligned}
\end{equation*}
if $a\neq 0$. Here we have  used the notation 
$u_a=\oplus_a (u)$, $\wh{u}_a=\oplus_a (\wh{u})$,  and $\eta_a=\wh{\oplus}_a(\eta).$ If 
$a=0$, then 
$\wt{B}(a, u, \wh{u}, \eta)=\bigl([DA(u^+)\cdot \wh{u}^+]\cdot \eta^+, [DA(u^-)\cdot \wh{u}^-]\cdot \eta^-\bigr).$
 We claim that the derivative $DL_a(u)\wh{u}$ of the  map $L_a$  at the  point $u$ in the direction of $\wh{u}$ is equal to 
\begin{equation}\label{btilde_eq1}
DL_a(u)\wh{u}=\wt{B}(a, u, \wh{u}, \cdot ).
\end{equation}

The continuity of the map $DL_a:E_m\to \call (E_m, \call (F_m, F_m))$  is a consequence of the estimate in the part (3) below. 

 The sc-smoothness  of the map 
$\hb\oplus E\oplus E \oplus F\to F$,  defined by 
$$(a, u, \wh{u}, \eta)\mapsto [DL_a(u)\wh{u}]\eta=\wt{B}(a, u, \wh{u}, \eta), $$
can be reduced to the previous discussion in part (1) by replacing the function $A$ by  the function $B$. This completes the proof of part (2) in Proposition \ref{XCX2}, and we shall prove part (3).\\
 {\bf (3).} \  Fixing a level $m\geq 0$ and $u_0\in E_m$, we have  to estimate
\begin{equation*}
\begin{split}
&\abs{\bigl[DL_a(u)\wh{u}\bigr]\eta -\bigl[DL_{a}(u_0)\wh{u}\bigr]\eta}_{F_m}=
\abs{\wt{B}(a, u, \wh{u}, \eta)-\wt{B}(a, u_0, \wh{u}, \eta)}_{F_m}. 
\end{split}
\end{equation*}
For $a\neq 0$,  we abbreviate  $\xi=\wt{B}(a, u, \wh{u}, \eta)$ and $\xi_0=\wt{B}(a, u_0, \wh{u}, \eta)$. Then, 
 \begin{equation*}
 \begin{split}
 \xi^+-\xi^+_0&=\bigl[ DA(u_a)-DA((u_0)_{a})\bigr] \cdot \wh{u}_{a}\cdot\biggl[\frac{\beta_a}{\gamma_a} \eta_{a}\biggr]
 \end{split}
 \end{equation*}
With the maps $f:\hb\oplus F\to H^{2,\delta_0}(\R^+\times S^1)$, $(a, \eta)\mapsto f(a, v)$,  and $\wh{f}:\hb\oplus E\to H^{3,\delta_0}(\R^+\times S^1)$, $(a,\wh{u} )\mapsto \wh{f}(a, \wh{u})$,  both defined by \eqref{phieta1},  and the map $g: \hb\times H_c^{3,\delta_0}(\R^+\times S^1)\to H^{3,\delta_0}(\R^+\times S^1)$, $(a, u)\mapsto g(a, u)$ defined by \eqref{g1},  the map $\xi^+-\xi^+_0$ is equal to 
 \begin{equation*}
 \xi^+-\xi^+_0=\bigl[ DA(g(a, u))-DA(g(a, u_0))\bigr]\cdot \wh{f}(a, \wh{u})\cdot f(a,\eta)
 \end{equation*}
 For the map $f$ we have the estimate \eqref{feta} and for the map $\wh{f}$ we similarly obtain  the estimate 
 \begin{equation}\label{whfeta}
 \norm{\wh{f}(a, \wh{u})}_{H^{3+m,\delta_m}(\R^+\times S^1)}\leq  C_m\abs{\wh{u}}_{E_m}
 \end{equation} 
 for all $\abs{a}\leq \frac{1}{2}$ and all  $\wh{u}\in E_m$. 
 Hence,  using  the Banach algebra property in Theorem \ref{balgebra} twice, and the estimates \eqref{feta} and \eqref{whfeta},  we obtain 
\begin{equation*}
\begin{split}
&\norm{ \xi^+-\xi^+_0}_{H^{2+m,\delta_m}(\R^+\times S^1)}\\
&\leq C_m\norm{DA(g(a, u))-DA(g(a, u_0))}_{H_c^{3+m,\delta_m}(\R^+\times S^1)}\cdot \abs{\wh{u}}_{E_m}\cdot  \abs{\eta}_{F_m}
\end{split}
\end{equation*}
with a constant $C_m$ depending on $m$ but not on $a$. 
Finally,  repeating the same argument as in the part (1) but with the derivative  $DA:\R\to \R$ instead of $A:\R\to \R$,  we see that for a given $\varepsilon>0$,  there is  a constant $\rho>0$ such  that 
$$\norm{DA (g(a, u)-DA (g(a, u_0))}_{H_c^{3+m,\delta_m}(\R^+\times S^1)}<\varepsilon$$
for all $\abs{a}\leq \frac{1}{2}$  and for $u\in E_m$ satisfying $\abs{u-u_0}_{E_m}<\rho.$
Therefore,
$$\norm{\xi^+-\xi^+_0}_{H^{2+m,\delta_m}(\R^+\times S^1)}<C_m\cdot \varepsilon \cdot \abs{\wh{u}}_{E_m}\cdot \abs{\eta}_{F_m}$$
for $u\in E_m$ satisfying $\abs{u-u_0}_{E_m}<\rho$,  for all  $\wh{u}\in E_m$ and all $\eta\in F_m$.  Since the same estimate also holds for the map $\xi^--\xi^-_0$ ,  we conclude that 
$$\abs{\wt{B}(a, u, \wh{u}, \eta)-\wt{B}(a, u_0, \wh{u}, \eta)}_{F_m}\leq C_m\cdot \varepsilon \cdot \abs{\wh{u}}_{E_m}\cdot \abs{\eta}_{F_m}$$
for $u\in E_m$ satisfying $\abs{u-u_0}_{E_m}<\rho$, for  all $\wh{u}\in E_m$ and all $\eta\in F_m$. The estimate for $a=0$ is obtained  the same way.

The proof of Proposition 4.6 is complete. \hfill $\blacksquare$
\section{Proof of Proposition \ref{POLTER}}\label{POLTERGG}

We recall that 
the Hilbert spaces  $H^{3+m,-\delta_m}(Z_a^\ast,\R^{2n} )$ for $m\geq 0$ consist of maps $u:Z_a^\ast\rightarrow\R^{2n}$ for which the associated maps $v:{\mathbb R}\times S^1\rightarrow \R^{2n} $, defined by $ (s,t)\rightarrow u([s,t])$ have partial derivatives up to order $3+m$ which, if  weighted by $e^{-\delta_m|s-\frac{R}{2}|}$ belong to the space $L^2(\R\times S^1)$. The spaces $H^{2+m,-\delta_m}(Z_a^\ast,\R^{2n})$  are defined analogously.
 The norm of $u\in H^{3+m,-\delta_m}(Z_a^\ast, \R^{2n} )$ is defined  as 
 $$\nr u\nr_{3+m,-\delta_m}^{\ast}=\bigl( \sum_{\abs{\alpha}\leq 3+m }\int_{\R\times S^1}\abs{D^\alpha u}^2e^{-2\delta_m\abs{s-\frac{R}{2}}}\ dsdt\bigr)^{1/2} .$$
The number  $R$ is equal to $R =\varphi (\abs{a})$  where  $\varphi$ is the exponential   gluing profile.
Moreover, the mean value $[u]_a$ of a map $u:Z^\ast_{a}\rightarrow  \R^{2n}$ over the  circle at $\{\frac{R}{2}\} \times S^1$
is   defined as 
$$
[u]_a:=\int_{S^1} u\left(\left[\frac{R}{2},t\right]\right) dt.
$$

If $u\in H^{3+m,-\delta_m}(Z_a^\ast, \R^{2n} )$, then the norm $\nr u\nr^{\ast}_{3+m,-\delta_m}$ is equal to 
\begin{equation*}
\begin{split}
\nr u\nr^{\ast}_{3+m,-\delta_m}&=\bigl(\sum_{\abs{\alpha}\leq 3+m}\int_{\R\times S^1}\abs{D^\alpha u(s,t)}^2e^{-2\delta_m \abs{s-\frac{R}{2}}}\ dsdt\bigr)^{1/2}\\
&=\bigl(\sum_{\abs{\alpha}\leq 3+m}\int_{\R\times S^1}\abs{D^\alpha u(s+R/2,t)}^2e^{-2\delta_m \abs{s}}\ dsdt\bigr)^{1/2}\\
&=\norm{u(\cdot +R/2, \cdot)}_{H^{3+m,-\delta_m}(\R\times S^1, \R^{2n})},
\end{split}
\end{equation*}
where  the  $H^{3+m,-\delta_m}(\R\times S^1, \R^{2n})$-norm of the map $u:\R\times S^1\to \R^{2n}$ is defined by 
$$\norm{u}^2_{H^{3+m,-\delta_m}(\R\times S^1, \R^{2n})}=
\sum_{\abs{\alpha}\leq 3+m}\int_{\R\times S^1}\abs{D^\alpha u(s,t)}^2e^{-2\delta_m \abs{s}}\ dsdt.$$
Thus, the map $I_a:H^{3+m,-\delta_m}(Z_a^\ast,\R^{2n} )\to H^{3+m,-\delta_m}(\R\times S^1,\R^{2n} )$, defined by $u(\cdot +R/2, \cdot )$, is a linear isometry.  The same holds for the spaces $H^{2+m,-\delta_m}(Z_a^\ast,\R^{2n} )$ and $H^{2+m,-\delta_m}(\R\times S^1),\R^{2n} )$. Moreover, if $v(s, t)=u(s+R/2, t)$, then $v-[v]_0=u-[u]_a$ where $R=\varphi (\abs{a})$.  Consequently, it suffices to study 
 the standard Cauchy-Riemann operator
$$
\ov{\partial}_0:H^{3+m,-\delta_m}({\mathbb R}\times S^1,{\mathbb R}^{2n})\rightarrow H^{m+2,-\delta_m}({\mathbb R}\times S^1,{\mathbb R}^{2n}).
$$
It is known that this operator is a surjective Fredholm operator whose kernel $N$ consists of constant functions.
The subspace $X\subset H^{3+m,-\delta_m}({\mathbb R}\times S^1,{\mathbb R}^{2n})$ of maps $u$ satisfying 
$$[u]_0:=\int_{S^1}u(0, t)\ dt=0$$ 
is closed and an  algebraic complement of the kernel $N$. Since $N$ is of  finite dimension, the subspace $X$ is a topological complement of $N$.  Applying  the open mapping theorem, 
there exists a constant $C_m$ such that 
$$
\frac{1}{C_m} \cdot \norm{u}_{m+3,-\delta_m}\leq \norm{\ov{\partial}_0u}_{m+2,-\delta_m}\leq C_m\cdot \norm{u}_{m+3,-\delta_m}
$$
for every $u\in X$.   Now the desired estimate is a consequence of the fact that if $u\in H^{3+m,-\delta_m}({\mathbb R}\times S^1,{\mathbb R}^{2n})$, then the map 
$u-[u]_0$ belongs to $X$. This completes the proof of Proposition \ref{POLTER}.

\section{Proof of Lemma \ref{TORTE} }\label{Lemma-4.18}
We recall the statement for convenience.
\begin{LXX}
If $\varepsilon>0$,  there exists $R_\varepsilon>0$ such that for all $R\geq R_\varepsilon$ and for $i=0,1$ the estimate
$$
\norm{ \gamma_R\cdot (J(v)-J(0))h_t}_{H^{m+2+i,\delta_{m+i}}(\R\times S^1,\R^{2n})}\leq \varepsilon \cdot\norm{h}_{H^{m+3+i,\delta_{m+i}}_c(\R\times S^1,\R^{2n})}
$$
holds for  all $h\in  H^{m+3+i,\delta_{m+i}}_c(\R\times S^1,\R^{2n})$.
\end{LXX}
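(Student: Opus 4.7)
The plan is to isolate the coefficient $\omega_R(s,t) := \gamma_R(s)(J(v(s,t)) - J(0))$, show that $\|\omega_R\|_{H^{m+3,\delta_m}_c(\R^+\times S^1)} \to 0$ as $R \to \infty$, and then invoke the weighted Banach algebra estimates of Theorem~\ref{balgebra} to absorb $\omega_R$ as a multiplier acting on $h_t$. The key point is that we work with the \emph{slower} weight $\delta_m$ for the coefficient, which is exactly what the hypothesis $v \in H^{m+3,\delta_m}$ provides, while retaining the ambient weight $\delta_{m+i}$ on $h$.

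\textbf{Step 1.} First I verify $\omega_R \in H^{m+3,\delta_m}_c(\R^+\times S^1)$ with asymptotic constant zero. Since $J$ is smooth and $v \in H^{m+3,\delta_m}$, writing
$$J(v) - J(0) = v\cdot\int_0^1 DJ(\tau v)\,d\tau$$
and applying a Moser-type nonlinear estimate in weighted Sobolev spaces (Fa\`a di Bruno combined with the two-dimensional Sobolev embedding $H^{m+3} \hookrightarrow C^{m+1}$, with the highest-order factor of each product placed in weighted $L^2$ and the remaining lower-order factors in $L^\infty$) gives $J(v) - J(0) \in H^{m+3,\delta_m}(\R^+\times S^1)$. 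The decay $v(s,t) \to 0$ as $s \to \infty$ forces the asymptotic constant of $\omega_R$ to be zero; multiplying by $\gamma_R$, which is uniformly bounded in every $C^N$ norm independently of $R$, preserves the regularity.

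\textbf{Step 2.} Next I will show $\|\omega_R\|_{H^{m+3,\delta_m}_c} \to 0$. By Leibniz,
$$D^\alpha \omega_R = \gamma_R\, D^\alpha(J(v) - J(0)) + \sum_{0 < \beta \leq \alpha}\binom{\alpha}{\beta}D^\beta \gamma_R \cdot D^{\alpha-\beta}(J(v)-J(0)).$$
The $\beta = 0$ term is supported in $\{s \geq R+1\}$, while for $|\beta| \geq 1$ the derivative $D^\beta \gamma_R$ is supported in the slab $[R+1, R+2]\times S^1$ and uniformly bounded in $R$. Both contributions to $\|\omega_R\|^2_{H^{m+3,\delta_m}}$ are therefore tails of the convergent integrals $\sum_\gamma \int |D^\gamma(J(v)-J(0))|^2 e^{2\delta_m s}\,ds\,dt$ supplied by Step 1, and so tend to zero as $R \to \infty$.

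\textbf{Step 3.} Since $h \in H^{m+3+i,\delta_{m+i}}_c(\R\times S^1)$ has antipodal asymptotic constants that are annihilated by $\partial_t$, one has $\|h_t\|_{m+2+i,\delta_{m+i}} \leq \|h\|_{H^{m+3+i,\delta_{m+i}}_c}$. The product $\omega_R h_t$ is supported in $\R^+\times S^1$, so Theorem~\ref{balgebra} applies there: for $i = 0$ use statement~(1) with $(m_{\mathrm{alg}}, k, \delta) = (m+3,\, m+2,\, \delta_m)$, and for $i = 1$ use statement~(2) with $(k, \delta, \sigma) = (m+3,\, \delta_{m+1},\, \delta_m)$ (both sets of hypotheses hold since $m \geq 0$ and the sequence $(\delta_m)$ is strictly increasing). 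In either case
$$\|\omega_R h_t\|_{m+2+i,\delta_{m+i}} \leq C\,\|\omega_R\|_{H^{m+3,\delta_m}_c}\,\|h\|_{H^{m+3+i,\delta_{m+i}}_c},$$
and Step 2 lets me choose $R_\varepsilon$ so that $C\|\omega_R\|_{H^{m+3,\delta_m}_c} \leq \varepsilon$ for all $R \geq R_\varepsilon$.

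The principal obstacle is Step 1, specifically the weighted Moser bound $J(v) - J(0) \in H^{m+3,\delta_m}$ together with the verification of the vanishing asymptotic constant; once these are in place, the remainder of the argument is a mechanical application of Theorem~\ref{balgebra}.
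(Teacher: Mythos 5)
Your argument is correct, but it is organized differently from the paper's. The paper proves the estimate directly: it expands $D^{\alpha}\bigl[(J(v)-J(0))h_t\bigr]$ by Leibniz and Fa\`a di Bruno and distributes factors between weighted $L^2$ and $L^{\infty}$ case by case, extracting the smallness from the sup of $J(v)-J(0)$ and of the derivatives $D^{\gamma}v$ over the tail $[R,\infty)\times S^1$; in the extremal case where all $m+2$ derivatives fall on $J(v)$, it is $h_t$ (not $v$) that is placed in $L^{\infty}$. You instead factor the problem through the multiplier $\omega_R=\gamma_R(J(v)-J(0))$: you upgrade the coefficient to the full weighted regularity $H^{m+3,\delta_m}$ (one order above the target norm), observe that $\norm{\omega_R}_{H^{m+3,\delta_m}}$ is a tail of a convergent integral and hence tends to $0$, and then let Theorem \ref{balgebra} do the case analysis for you -- your applications of statements (1) and (2), with the weight bookkeeping $\sigma=\delta_m<\delta_{m+1}=\delta$ for $i=1$, check out, and the support of $\omega_R$ in $\{s\geq R+1\}$ legitimately reduces everything to $\R^+\times S^1$ where that theorem lives. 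What your route buys is modularity: once the weighted Moser bound $J(v)-J(0)\in H^{m+3,\delta_m}$ is in place (and your sketch of it -- top-order factor in weighted $L^2$, the rest in $L^{\infty}$ via $H^{m+3}\hookrightarrow C^{m+1}$, noting that at most one factor can exceed order $m+1$ -- is the right mechanism, essentially the same combinatorics the paper performs inline), the rest is mechanical. The cost is that you must prove this slightly stronger intermediate statement, whereas the paper never needs to assert membership of $J(v)-J(0)$ in any space, only the individual product estimates. Both proofs rest on the same two ingredients: $e^{\delta_m s}v\in H^{m+3}\hookrightarrow C^{m+1}$ and the vanishing of tails of convergent weighted integrals.
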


\begin{proof}  
We only consider the case $i=0$; the arguments for the case $i=1$  are the  same. We may assume without loss of generality that  $J$ and $h$ are real-valued. Since we have uniform  bounds on the derivatives of $\gamma_R$ up to any order and $\gamma_R$ vanishes for $s\leq R+1$,  the norm 
$$\norm{ \gamma_R\cdot (J(v)-J(0))h_t}_{H^{m+2,\delta_{m}}}^2$$
is estimated from above  by a constant independent of $R$ times a linear combination of  integrals  of the form 
\begin{equation}\label{int1}
\int_{[R,\infty)\times S^1}\abs{D^{\alpha} \bigl[ \bigl( J(v)-J(0)\bigr)h_t\bigr] }^2e^{2\delta_m\abs{s}}\ dsdt
\end{equation}
where 
the multi-indices $\alpha =(\alpha_1,\alpha_2)$  are of  order $\abs{\alpha}\leq m+2.$ 
Since 
\begin{equation*}
D^{\alpha} \bigl[ \bigl( J(v)-J(0)\bigr)h_t\bigr] =\sum_{\beta\leq \alpha} \binom{\alpha}{\beta} D^{\beta}\bigl( J(v)-J(0)\bigr)\cdot D^{\alpha-\beta}h_t ,
\end{equation*}
the integrals \eqref{int1} are estimated by a  constant independent of $R$,  times a  linear combination of integrals of the form 
\begin{equation}\label{int2}
\int_{[R,\infty)\times S^1}\abs{D^{\beta}\bigl( J(v)-J(0)\bigr)}^2\cdot \abs{D^{\alpha-\beta}h_t}^2e^{2\delta_ms}\ ds dt
\end{equation}
with multi-indices $\alpha$ and $\beta$ satisfying $\beta\leq \alpha$ and $\abs{\alpha}\leq m+2.$ If $\abs{\beta}=0$, then using that $J$ is smooth and $v(s, t)$ approaches $0$ as $s\rightarrow \infty$  uniformly in $t$, the above integral can be estimated by 
$$C_R\int_{[R, \infty)\times S^1}\abs{D^{\alpha}h_t}^2e^{\delta_m s}\ ds dt\leq C'_R\norm{h}^2_{H_c^{m+3,\delta_m}(\R\times S^1,\R^{2n})}$$
where the constants $C_R$ and $C'_R$ converge to $0$ as $R\to \infty.$  We next assume that $1\leq \abs{\beta}\leq m+2$.  Then 
$D^{\beta}\bigl( J(v)-J(0)\bigr)=D^{\beta}\bigl( J(v)\bigr).$  The derivative $D^\beta (J(v))$ is a linear combination of expressions of the form 
\begin{equation}\label{exp1}
(D^k_{I}J)(v)\cdot D^{\gamma_{i_1}}v_{i_1}\cdot \cdots  \cdot D^{\gamma_{i_k}}v_{i_k},
\end{equation}
where $D^k_{I}J$ stands for the $k$-order derivative with respect to the variables $\{i_1,\ldots ,i_k\}$ and $v_i$'s are the components of the vector $v$. Moreover, $k\leq \abs{\beta}$ and the multi-indices $\gamma_{i_1},\ldots, \gamma_{i_k}$ satisfy $\abs{\gamma_{i_1}}+\ldots +\abs{\gamma_{i_k}}=\abs{\beta}$.  Hence 
using that  $v(s, t)$ approaches $0$ as $s\rightarrow \infty$  uniformly in $t$ and that $J$ is smooth, we see 
that for multi-indices $\beta$ satisfying $1\leq \abs{\beta}\leq m+2$ the integrals in \eqref{int2} are  bounded above by integrals of the form 
\begin{equation}\label{int3}
C_R\int_{[R,\infty)\times S^1} \abs{D^{\gamma_1}v_1}^{2} \cdots   \abs{D^{\gamma_{2n}}v_{2n}}^2\cdot \abs{D^{\alpha-\beta}h_t}^2e^{2\delta_ms}\ ds dt
\end{equation}
with a constant $C_R$ tending to $0$ as $R\to \infty$.
To estimate the integrals in \eqref{int3} we first assume that $\abs{\gamma_{i_1}}, \ldots, \abs{\gamma_{i_k}}$ are less or equal to $m+1$. We recall that $v$ is a fixed map belonging to $H^{m+3,\delta_m}(\R^+\times S^1, \R^{2n})$. This is equivalent to saying that $e^{\delta_ms}v$ belongs to $H^{m+3}(\R^+\times S^1, \R^{2n})$. In view of the Sobolev embedding theorem, we conclude that  $e^{\delta_ms}v$ is of class $C^{m+1}((1, \infty)\times S^1)$ and there exists a constant $C$ such that 
$\max_{0\leq \abs{\gamma}\leq m+1}\sup_{(1, \infty)\times S^1}\abs{D^\gamma (e^{\delta_m s}v)}\leq C\norm{v}_{H^{m+3,\delta_m}((1,\infty)\times S^1,\R^{2n})}$. From this we conclude that  the partial derivatives $D^{\gamma_i}v_i$ are uniformly  bounded by a constant independent of $R>1$. If $\abs{\gamma_{i_1}}, \ldots, \abs{\gamma_{i_k}}\leq m+1$,  then
 the integrals in \eqref{int3} are estimated from above by 
$$
K_R\int_{[R,\infty )\times S^1}\abs{D^{\alpha-\beta}h_t}^2e^{2\delta_ms}\ ds dt\leq C'_R\norm{h}_{H^{m+3,\delta_m}(\R\times S^1, \R^{2n})}.
$$
It remains to consider the  case in which there is exactly one multi-index, say $\gamma_{i}$,  for which $\abs{\gamma_{i}}=m+2$. In this case $\alpha=\beta=\gamma_{i}$ and the integral \eqref{int3} takes the form 
\begin{equation}\label{int4}
C_R\int_{[R,\infty)\times S^1}\abs{D^{\alpha}v_i}^2\cdot \abs{h_t}^2e^{2\delta_m s}\ dsdt.
\end{equation}
Since the derivative $h_t$ belongs to $H^{m+2,\delta_m}(\R^+\times S^1, \R^{2n})$, the map 
$e^{\delta_m s}h_t$ belongs to $H^{m+2}(\R^+\times S^1, \R^{2n})$. In particular,  in view of the Sobolev embedding theorem, $e^{\delta_m s}h_t$ belongs to 
the space $C^0(\R^+\times S^1, \R^{2n})$ and 
$$
\norm{e^{\delta_m s}h_t}_{C^0(\R^+\times S^1)}\leq C\norm{e^{\delta_ms}h_t}_{H^{m+2}(\R^+\times S^1,\R)}\leq C\norm{h}_{H_c^{m+3,\delta_m}(\R\times S^1)}.
$$
Consequently, 
\begin{equation}\label{int5}
\begin{split}
C_R\int_{[R,\infty)\times S^1} &\abs{D^{\alpha} v_i}^{2} \cdot \abs{h_t}^2e^{2\delta_ms}\ ds dt\\
&\leq C'_R\norm{h_t}_{C^0(\R^+\times S^1)} \norm{v}_{H^{m+3}(\R^+\times S^1, \R^{2n})}\\
&\leq  C'_R\norm{v}_{H^{m+3}(\R^+\times S^1, \R^{2n})}\cdot \norm{h}_{H^{m+3,\delta_m}_c(\R\times S^1,\R^{2n})}.
\end{split}
\end{equation}
Summing up, each of the integrals \eqref{int1} can be estimated from above by $C_R\norm{h}_{H^{m+3,\delta_m}_c(\R\times S^1,\R^{2n})}$ in which  $C_R\to 0$ as $R\to \infty$. This completes the proof of Lemma \ref{TORTE}.
\end{proof}

\section{Orientations for Sc-Fredholm Sections}\label{orientations-abstract}
The Appendices \ref{orientations-abstract} and \ref{orientations} are devoted to the orientation in Gromov-Witten theory.

\subsection{Basic Ideas}
We start with some general considerations well known from index theory, and refer to \cite{FH} and \cite{HWZ10,HWZ11}  for more details. 
Here we shall restrict ourselves to the case of Fredholm operators in Hilbert spaces. The Banach space case is technically a little bit more complicated and the basic constructions can be found in \cite{HWZ10}.

If $E$ is a finite-dimensional vector space, we denote its dual space by $E^\ast$.  The maximal wedge of $E$ is the $1$-dimensional vector space 
$$
\Lambda^{\textrm{max}} E =\Lambda^{\dim(E)} E.
$$
In case $E=\{0\}$, we put  $\Lambda^{\textrm{max}} E={\mathbb R}$.  
The dual ${\mathbb R}^\ast$ of ${\mathbb R}$ is  canonically identified with ${\mathbb R}$ by the isomorphism ${\mathbb R}^\ast\rightarrow {\mathbb R}$, $\lambda\rightarrow \lambda(1)$. 
Henceforth ${\mathbb R}^\ast={\mathbb R}$. Moreover, there is a canonical  isomorphism
$$
\gamma:\Lambda^{\textrm{max}} ( E^\ast) \rightarrow (\Lambda^{\textrm{max}} E)^\ast
$$
defined by 
$$\gamma( e_1^\ast\wedge..\wedge e_n^\ast)(a_1\wedge \ldots \wedge a_n)=\det (e_i^\ast(a_j)),
$$
where $n=\dim(E)$. If $n=0$, then 
$\Lambda^{\textrm{max}} ( E^\ast) =\Lambda^{\textrm{max}} (0)=\R=\R^\ast$ and $\gamma(r)t=r\cdot t$.

\begin{definition}\label{def_determinant_1}
The determinant $\text{det}(T)$ of a bounded linear Fredholm operator $T:E\to F$ between Hilbert spaces is the $1$-dimensional real vector space defined by
$$
\text{det}(T)= (\Lambda^{\textrm{max}} \ker(T))\otimes (\Lambda^{\textrm{max}} \text{coker(T)}^\ast).
$$
\end{definition}

Our aim is to study orientation questions for sc-Fredholm sections. The situation is more complicated than  in the classical case.
In fact there are many possibilities for natural constructions
and different conventions can  lead to different signs in the  orientations of the  moduli spaces. 
In the SFT the orientation analysis requires many 
conventions due to a wealth of additional  structures. This analysis will be carried out in the  forthcoming paper \cite{HWZ5} and,  of course, it covers GW as  a special case. However, the special case of the Gromov-Witten theory allows a somewhat  simpler approach.

Given an exact sequence 
$$
{\bf E:}\ \ 0\rightarrow A\xrightarrow{\alpha}B\xrightarrow{\beta}C\xrightarrow{\gamma} D\rightarrow 0
$$
of finite dimensional vector spaces, we shall construct two associated natural isomorphisms 
\begin{align*}
\Phi:&=\Phi_{\bf E}:\, \Lambda^{\textrm{max}} A\otimes \Lambda^{\textrm{max}} D^\ast \rightarrow \Lambda^{\textrm{max}} B\otimes\Lambda^{\textrm{max}} C^\ast,\\
\Psi:&=\Psi_{\bf E}:\, \Lambda^{\textrm{max}} C\otimes \bigl(\Lambda^{\textrm{max}} A \oplus \Lambda^{\textrm{max}} D^\ast\bigr) \rightarrow \Lambda^{\textrm{max}} B.
\end{align*}

Let $\dim(A)=n$, $\dim(B)=m$, $\dim(C)=k$,  and $\dim(D)=l$. 
The exactness of ({\bf E}) implies the equality 
$$m-n= k-l.$$
\begin{definition}\label{lem_isom_phi_1}
The  isomorphism
$$
\Phi:=\Phi_{\bf E}\, :\Lambda^{\textrm{max}} A\otimes \Lambda^{\textrm{max}} D^\ast \rightarrow \Lambda^{\textrm{max}} B\otimes\Lambda^{\textrm{max}} C^\ast
$$
associated with the exact sequence $({\bf E})$  is constructed as follows. We fix a basis $d_1^\ast,\ldots ,d^\ast_l$ of $D^\ast$. If $A\neq \{0\}$, given $h\in \Lambda^{\textrm{max}} A\otimes \Lambda^{\textrm{max}} D^\ast$, we choose vectors $a_1,\ldots, a_n\in A$ such that 
$$
h=(a_1\wedge\ldots \wedge a_n)\otimes (d_1^\ast\wedge\ldots \wedge d_l^\ast).
$$
Next we choose  linearly independent vectors $b_1,\ldots ,b_{m-n}$  which are not in the image of the linear map $\alpha\colon   A\to B$ and define the vectors $c_i=\beta (b_i)$ in $C$,  for  $i=1,\ldots ,m-n$. Then $c_1,\ldots ,c_{m-n}$  
are linearly independent in $C$ and span the image of the linear map $\beta\colon  B\to C$. 
We take  dual vectors  $c_1^\ast,\ldots,c_{k-l}^\ast$ in $C^\ast$ so that the square matrix 
$(c_i^\ast(c_j))$ has determinant $1$.
The vectors 
$d_1^\ast\circ\gamma,\ldots ,d_l^\ast\circ\gamma, c_1^\ast,\ldots,c_{m-n}^\ast$
form  a basis in  $C^\ast$ and the map $\Phi$ is defined by 
$$
\Phi(h)= 
(\alpha (a_1)\wedge..\wedge \alpha (a_n)\wedge b_1\wedge..\wedge b_{m-n})\otimes (d_1^\ast\circ \gamma\wedge..\wedge d_l^\ast\circ \gamma\wedge c_1^\ast\wedge..\wedge c_{m-n}^\ast).
$$
If  $A=\{0\}$, we replace $ \alpha (a_1)\wedge\ldots \wedge \alpha (a_n)$ by the real number $r$  which replaces
$a_1\wedge\ldots \wedge a_n$, so that $h=r (d_1^\ast\wedge\ldots \wedge d_l^\ast)$ and then define 
$$\Phi(h)=r( b_1\wedge\ldots \wedge b_{m-n})\otimes (d_1^\ast\circ \gamma\wedge\ldots \wedge d_l^\ast\circ \gamma\wedge c_1^\ast\wedge\ldots \wedge c_{m-n}^\ast).$$
\end{definition}

The  definition of the isomorphism $\Phi_{\bf E}$ does  not depend on the choices involved,  apart from the two convention of the  order in which  we write the right-hand side of $\Phi (h)$.
The two conventions are that $b_1,\ldots ,b_{m-n}$ are listed after $\alpha (a_1),\ldots ,\alpha (a_n)$ and $c^\ast_1,\ldots, c^\ast_{m-n}$ are listed after $d_1^\ast\circ\gamma,\ldots ,d_l^\ast\circ \gamma$.  
Apart from these two conventions, the resulting definition does not depend on the choices involved, see Lemma \ref{very_new_lemma} below.

\begin{definition}\label{lem_isom_phi_2}
The natural isomorphism 
$$\Psi=\Psi_{\bf E}:
\Lambda^{\textrm{max}} C\otimes \bigl( \Lambda^{\textrm{max}}A \otimes 
\Lambda^{\textrm{max}}D^*\bigr)\to 
\Lambda^{\textrm{max}}B$$ 
is constructed as follows. Again we fix a basis 
$d_1^*,\ldots d^*_l\in D^*$. Then we choose non-vanishing elements $c_1^*,\ldots ,c^*_{m-n}\in C^*$, which do not vanish on the image of the map $\beta:B\to C$, such that the functionals  
$$(c_1',\ldots , c_k')=(d_1^*\circ \gamma,\ldots ,
d_l^*\circ \gamma, c_1^*,\ldots , c_{m-n}^*)$$
form a basis of the dual space $C^*$, recalling that 
$k=l+(m-n)$.  Then we choose vectors $b_1,\ldots ,b_{m-n}$ such that the matrix 
$(c_i^*(\beta(b_j))$ has determinant $1$. The functionals 
$(c_1',\ldots ,c_l')=(d_1^*\circ \gamma,\ldots ,
d_l^*\circ \gamma)$ vanish on $\beta(b_1), \ldots , \beta(b_{m-n})$. Then we choose the vectors $c_1,\ldots ,c_l\in C$ such that the matrix $(c_i'(c_j))$, where $i, j=1,\ldots ,l$, has determinant equal to $1$. Denoting by 
$$(c_1,\ldots ,c_k)=(c_1,\ldots ,c_l, \beta(b_1), \ldots , \beta(b_{m-n})$$
the basis of $C$ we have, by construction,
$$\det \bigl(c_i'(c_j)\bigr)_{1\leq i,j\leq k}=1.$$
Since we can choose the vectors $a_1,\ldots, a_n\in A$ arbitrarily, every vector 
$h\in \Lambda^{\textrm{max}}C\otimes \bigl(
 \Lambda^{\textrm{max}}A\otimes  \Lambda^{\textrm{max}}D^*)$ can be represented in the form 
 \begin{eqnarray}\label{must}
 h=(c_1\wedge \ldots \wedge c_k)\otimes (a_1\wedge \ldots \wedge a_n)\otimes (d_1^*\wedge \ldots \wedge d_l^*)
 \end{eqnarray}
 for a suitable choice of $a_1,\ldots ,a_n\in A$. Finally, the isomorphism $\Psi$ is defined by 
 $$\Psi(h)=(\alpha (a_1)\wedge \ldots \wedge \alpha (a_n))\wedge b_1\wedge \ldots \wedge b_{k-l}.$$
 The case $A=\{0\}$ is dealt with as in the previous Definition \ref{lem_isom_phi_1}.
 \end{definition}
An alternative definition of $\Psi_{\bf E}$ is given by  the composition
 $$
 \Lambda^{\textrm{max}}C\otimes (\Lambda^{\textrm{max}}\otimes\Lambda^{\textrm{max}} D^\ast)\xrightarrow{Id\otimes\Phi_{\bf E}} \Lambda^{\textrm{max}} C\otimes\Lambda^{\textrm{max}}B\otimes\Lambda^{\textrm{max}}C^\ast\rightarrow \Lambda^{\textrm{max}} B.
 $$
The last map is the  isomorphism 
 $$
 (c_1\wedge\ldots \wedge c_k)\otimes(b_1\wedge\ldots \wedge b_m) \otimes (c_1^\ast\wedge\ldots\wedge c_k^\ast)\mapsto  \det(c_j^\ast(c_i))\cdot (b_1\wedge\ldots \wedge b_m).
 $$
 To see the last assertion we start with the element $h$ in \eqref{must}. Writing 
 $$
 h= (c_1\wedge \ldots \wedge c_k)\otimes h', 
 $$
 we compute with $\Phi={\Phi}_{\bf E}$ and the vectors previously constructed 
 $$
 \Phi(h')=(\alpha (a_1)\wedge\ldots\wedge \alpha (a_n)\wedge b_1\wedge\ldots\wedge b_{m-n})\otimes (c_1'\wedge\ldots\wedge c_k').
  $$
  Finally,  we observe that $(c_1\wedge\ldots \wedge c_k)\otimes \Phi(h')$ is mapped by the natural isomorphism onto $(\alpha (a_1)\wedge\ldots\wedge \alpha (a_n)\wedge b_1\wedge\ldots\wedge b_{m-n})$. This  proves our assertion noting that $m-n=k-l$.
\begin{lemma}\label{very_new_lemma}
The definitions of the maps $\Phi=\Phi_{\bf E}$ and $\Psi=\Psi_E$ are independent  of the choices involved. Moreover, the maps $\Phi$ and $\Psi$ are isomorphisms.
\end{lemma}
\begin{proof}
In view of the previous discussion we only  need to consider  the map $\Phi$  and prove that it is well-defined  in the case $A\neq \{0\}$. 
Assume we have chosen  a basis $a_1,\ldots,a_n$ for $A$, a basis  $d_1^\ast,\ldots,d^\ast_l$ for $D^\ast$, and linearly independent vectors
$b_1,\ldots,b_{m-n}$ in $B$ which are not in the image of $\alpha$. With these choices fixed, we choose  $c_1^\ast,\ldots,c^\ast_{m-n}$ in $C^\ast$ such  that the matrix 
$(c_i^\ast(\beta(b_j))$ has determinant $1$.  Consider the vector $H$ given by
$$
(\alpha(a_1)\wedge\ldots\wedge\alpha(a_n)\wedge b_1\ldots\wedge b_{m-n})\otimes(d_1^\ast\circ \gamma\wedge\ldots\wedge d^\ast_l\circ\gamma\wedge c_1^\ast\wedge\ldots\wedge c^\ast_{m-n}).
$$
Assuming  we would have made a different choice for the vectors $c_1^\ast,\ldots,c_{m-n}^\ast$, say $e_1^\ast,\ldots,e^\ast_{m-n}$, we can write
$$
e^\ast_j = \sum_{i=1}^{m-n} \lambda_{ji} c_i^\ast + \Delta_j,
$$
where $\Delta_j$ is a linear combination of the vectors $d_1^\ast\circ\gamma,\ldots ,d_l^\ast\circ\gamma$.  Using 
$\gamma\circ\beta=0$, we conclude that 
$$
e^\ast_j(\beta(b_\tau))= \sum_{i=1}^{m-n} \lambda_{ji} c_i^\ast(\beta(b_\tau))
$$
for all $1\leq \tau\leq m-n$. Since by assumption $\det (e^\ast_j(\beta(b_\tau))=\det (e^\ast_j(\beta(b_\tau))=1$, the matrix $(\lambda_{ji})$ has determinant $1$. Therefore,  
$$
d_1^\ast\circ \gamma\wedge\ldots \wedge d^\ast_l\circ\gamma\wedge c_1^\ast\wedge\ldots \wedge c^\ast_{m-n}
=d_1^\ast\circ \gamma\wedge..\wedge d^\ast_l\circ\gamma\wedge e_1^\ast\wedge(e^\ast_j(\beta(b_\tau)) \wedge e^\ast_{m-n}.
$$
Consequently,  the vector $H$ does not depend on the choice of $c_1^\ast,\ldots,c_{m-n}^\ast$, which is the last choice in the construction. Next we show that the vector $H$ is independent of the choice of $b_1,\ldots ,b_{m-n}$. Assume that we have made a different choice for the vectors $b_1,\ldots, b_{m-n}$ which we again denote by  $e_1,\ldots, e_{m-n}$.  The vectors 
$e_1,\ldots, e_{m-n}$ do not belong to the image of the map $\alpha$ and 
$$
e_j =\sum_{i=1}^{m-n} \lambda_{ji} b_i +\Delta_j
$$
for all $1\leq j\leq m-n$ and where $\Delta_j$ is a vector in  the image of $\alpha$.  Denoting by  $c_1^\ast,\ldots ,c_{k-l}^\ast\in C^\ast$ the vectors associated with $b_1,\ldots ,b_{m-n}$ so that the matrix $(c_\tau (\beta (b_i))$ has determinant $1$,  we find, using $\beta\circ \alpha=0$,  that 
$$
c_\tau^\ast(\beta(e_j))=\sum_{i=1}^{m-n} \lambda_{ji} c_\tau^\ast(\beta(b_i)), 
$$
which implies
$$
\det(c^\ast_\tau\beta(e_j))= \det(\lambda_{ji}).
$$ 
Then, abbreviating $r=\det (\lambda_{ji})$,  we can take for the choice $e_1,\ldots, e_{m-n}$ the dual vectors
\begin{equation}\label{must1}
\frac{1}{r}c_1^\ast,c_2^\ast,\ldots ,c_{m-n}^\ast.
\end{equation}
With the choice above we observe that 
$$
r\cdot(\alpha(a_1)\wedge\ldots \wedge \alpha(a_n)\wedge b_1\ldots \wedge b_{m-n}=(\alpha(a_1)\wedge\ldots \wedge \alpha(a_n)\wedge b_1'\ldots \wedge b_{m-n}'
$$
and consequently, using that our new choice for the $c_i^\ast$ is \eqref{must1}, we conclude that the result does not depend on the the third choice
in our construction, namely the $b_i$, either. Finally,  we need to understand the dependency on the choice of $a_1,\ldots,a_n$ and $d_1^\ast,\ldots,d_l^\ast$.
If $a_1',\ldots ,a_n'$ is a second choice for $a_1,\ldots ,a_n$,  there is $r$ such  that
$$
\frac{1}{r}\cdot (a_1\wedge\ldots\wedge a_n) =a_1'\wedge\ldots \wedge a_n'.
$$
Similarly, if  $e_1^\ast,\ldots,e_l^\ast$ is a different choice for $d_1^\ast,\ldots ,d_l^\ast$,  there is $s$ such that 
$$
e_1^\ast\wedge\ldots \wedge e_l^\ast = s\cdot d_1^\ast\wedge\ldots \wedge d_l^\ast.
$$
Assuming that
$$
(a_1\wedge\ldots \wedge a_n)\otimes(d_1^\ast\wedge\ldots\wedge d_l^\ast)=(a_1'\wedge\ldots\wedge a_n')\otimes (e_1^\ast\wedge\ldots\wedge e_l^\ast)
$$
we infer that $r=s$. Therefore,  we conclude, abbreviating $e^\ast=e_1^\ast\wedge\ldots\wedge e_l^\ast$, $a=a_1\wedge\ldots \wedge a_n$, $a'=a_1'\wedge\ldots\wedge a_n'$,
and $d^\ast=d_1^\ast\wedge\ldots\wedge d_l^\ast$,  that the following holds with the obvious abbreviations
\begin{equation*}
\begin{split}
&(\alpha(a)\wedge b_1\wedge\ldots \wedge b_{m-n})\otimes(d^\ast\circ \gamma\wedge c_1^\ast\wedge\ldots\wedge c_{m-n}^\ast)\\
&\phantom{=}=(\alpha(a')\wedge b_1\wedge\ldots\wedge b_{m-n})\otimes(e^\ast\circ \gamma\wedge c_1^\ast\wedge..\wedge c_{m-n}^\ast).
\end{split}
\end{equation*}
This concludes the proof that the definition of $\Phi_{\bf E}$ is independent of the choices involved.
\end{proof}

The Definitions  \ref{lem_isom_phi_1} and \ref{lem_isom_phi_2} will be used in the following constructions. 

If $T:E\rightarrow F$ is  a 
linear Fredholm operator between Hilbert spaces, we 
denote by $\Pi_T$ the collection of all orthogonal  projections $P:F\rightarrow F$ satisfying 
\begin{itemize}
\item[(1)] $\text{dim}(F/R(P))<\infty$.
\item[(2)] $R(P\circ T)= R(P)$.
\end{itemize}
Here we denote by  $R(A)$  the range of the linear operator $A$.

We view the composition $P\circ T$ as an operator $E\rightarrow F$ so that $T$ and $P\circ T$ have the same index.
We introduce a partial ordering $\leq $ on $\Pi_T$ by defining  
$P\leq P'$ if $P=P'\circ P=P\circ P'$. The following holds.
\begin{lemma}
Let $T:E\rightarrow F$ be a Fredholm operator between Hilbert spaces. Assume that $P,P'\in\Pi_T$. Then there exists $Q\in\Pi_T$ with $Q\leq P$ and $Q\leq P'$.
\end{lemma}
\begin{proof}
We abbreviate  by $H$ the subspace $H=R(P)\cap R(P')$ of $F$. The subspace $H$ has finite codimension in $F$  Let $Q$ be the orthogonal projection onto $H$. Denoting by 
$A$ the orthogonal complement  of $H$ in $R(P)$ and by $B$  the orthogonal complement of $H$ in $R(P')$,  we obtain the orthogonal decompositions
$$
F=A\oplus H\oplus R(P)^\perp \ \text{and}\ F=B\oplus H\oplus R(P')^\perp.
$$
Accordingly we can write an element $f\in F$ as $f=a+h+z$ where $a\in A$, $h\in H$,  and $z\in R(P)^\perp$.  Then 
$$
Q(f)=PQ(f) =QP(f).
$$
Similarly, 
$$
Q=QP'=P'Q.
$$
Since $PT\colon E\rightarrow R(P)$ is surjective,  the same holds for $QT=QPT$. Hence  $R(Q)=R(QT)$ and $Q\in \Pi_T$, as claimed.
\end{proof}

Given $P\in \Pi_T$, we  consider the sequence
$$
{\bf (P)}\qquad 0\rightarrow \ker(T)\xrightarrow{j}\ker(PT)\xrightarrow{\Phi_T^P}F/R(P)\xrightarrow{\pi}\text{coker}(T)\rightarrow 0
$$
in which  $j$ is the inclusion map, and 
$\Phi^P_T(x) = T(x) +R(P)$, and  $\pi$ is the surjection  defined by $\pi(y+R(P))=(I-P)y+R(T)$.
\begin{lemma}\label{lem_seq_exact_1}
The sequence ${\bf (P)}$ is exact.
\end{lemma}
\begin{proof}
The inclusion $j$ is injective and $\Phi^P_T\circ j=0$. If $\Phi^P_T(x)=0$, where $PT(x)=0$, then  
$T(x)\in R(P)$. This  implies $T(x)=PT(x)=0$, so that $x$ is in the image of $j$. If $x\in \ker(PT)$ we see that
$\pi\circ \Phi^P_T(x)=\pi(Tx+R(P))=(I-P)T(x) +R(T)=T(x)+R(T)=R(T)$. Hence $\pi\circ \Phi^P_T=0$.
If $\pi(y+R(P))=0$ then by definition $(I-P)y+R(T)=R(T)$, which implies $(I-P)y\in R(T)$. Hence we find $(I-P)y=T(x)$
for some $x\in E$. Applying $P$ to both sides of this equation shows that $PT(x)=0$, and therefore $(I-P)y = (I-P)T(x)$. This  implies $\Phi^P_T(x) =(I-P)T(x)+R(P)=(I-P)y +R(P)=y+R(P)$.
Finally we note that $\pi$ is surjective. Indeed, given $y+R(T)$, choose $x\in E$ with $PT(x)=Py$. Then
$y+R(T)=y-T(x) +R(T)= (I-P)(y-T(x)) +R(T)$. Then we compute
$\pi(y-T(x)+R(P))= y-T(x)+R(T)=y+R(T)$ showing that $\pi$ is surjective. Hence the sequence is exact.

\end{proof}
We now assume that $P\leq Q$ so that $P=Q\circ P=P\circ Q$. Then $P\in \Pi_{QT}$ and one verifies as in Lemma \ref{lem_seq_exact_1} that the following sequences are exact.
\begin{gather*}
0\rightarrow \ker(T)\xrightarrow{j}\ker(QT)\xrightarrow{\Phi_T^Q}F/R(Q)\xrightarrow{\pi}\text{coker}(T)\rightarrow 0\\
0\rightarrow \ker(QT)\xrightarrow{j}\ker(PT)\xrightarrow{\Phi_{QT}^P}F/R(P)\xrightarrow{\pi}\text{coker}(QT)\rightarrow 0\\
0\rightarrow \ker(T)\xrightarrow{j}\ker(PT)\xrightarrow{\Phi_T^P}F/R(P)\xrightarrow{\pi}\text{coker}(T)\rightarrow 0.
\end{gather*}
In view of Definition \ref{def_determinant_1} of the determinant we deduce from Definition \ref{lem_isom_phi_1} and Lemma \ref{very_new_lemma} 
the following isomorphisms.
\begin{gather*}
\gamma_T^Q: \text{det}(T)\rightarrow \text{det}(QT)\\
\gamma_{QT}^P:\text{det}(QT)\rightarrow \text{det}(PT)\\
\gamma_{T}^P:\text{det}(T)\rightarrow\text{det}(PT).
\end{gather*}
\begin{lemma}\label{revision_lem_1}
If  $T:E\rightarrow F$ is a linear  Fredholm operator between Hilbert spaces and if the projections $P,Q\in\Pi_T$
satisfy  $P\leq Q$ we have the following relationship, 
$$
\gamma^P_{QT}\circ \gamma^Q_T=\gamma^P_T.
$$
\end{lemma}
\begin{proof}
We set  $K=\ker(T)$ and denote by $A\subset \ker(QT)$ the  orthogonal complement of $K$ in $ \ker(QT)$ and by $B$ the  orthogonal complement of $\ker(QT)$ in $\ker(PT)$.
Then we have the orthogonal decompositions 
$$
\ker(T)=K,\ \ker(QT)= K\oplus A,\ \text{and}\ \ker(PT) =K\oplus A\oplus B.
$$
Note that we have a canonical identification
$$
F/R(P) = F/R(Q)\oplus R(Q)/R(P).
$$
To see this we  note that every class in $F/R(P)$ has a unique representative $f+R(P)$ with $f\in R(P)^\perp$.
Then $f$ has a unique decomposition $f=q+p$ where  $q\in R(Q)^\perp $ and $p\in R(Q)\cap R(P)^\perp$. Then we  associate with 
 $f+R(P)$ the element $(q+R(Q),p+R(P))$. Consider the map $A\rightarrow F/R(Q)\colon a\mapsto T(a)+R(Q)$.
This map is injective and its image can be written as $L/R(Q)$. Then we can as before canonically  identify $F/R(Q)=F/L\oplus L/R(Q)$.
We introduce the following abbreviations $X=F/L$, $Y=L/R(Q)$, and $Z=R(Q)/R(P)$. 
We can identify the exact sequence associated with $T$ and $P$ with the exact sequence
$$
0\rightarrow K\xrightarrow{j} K\oplus A\oplus B\xrightarrow{\Phi}X\oplus Y\oplus Z\xrightarrow{\pi} F/R(T)\rightarrow  0.
$$
Here $j(k)=(k,0,0)$, $\Phi(k,a,b)=(0,T(a)+R(Q),QT(b)+R(P))$ and $\pi(g+L,f+R(Q),e+R(P))= (Id-Q)g+R(T)$. 
The maps $B\rightarrow Z: b\rightarrow QT(b)+R(P)$ and $A\rightarrow Y:a\rightarrow T(a)+R(Q)$ are bijections.
The exact sequence associated with  $T$ and $Q$
is then given by
$$
0\rightarrow K\xrightarrow{j'} K\oplus A \xrightarrow{\Phi'} X\oplus Y\xrightarrow{\pi'} F/R(T)\rightarrow 0.
$$
Here $j'(k)=(k,0)$, $\Phi'(k,a)=(0,T(a)+R(Q))$, and $\pi'(g+L,f+R(Q))=(Id-Q)g + R(T)$.  Finally the exact sequence associated with  $ QT$ and $P$ is given by
$$
0\rightarrow K\oplus A\xrightarrow{j''} K\oplus A\oplus B\xrightarrow{\Phi''} X\oplus Y \oplus Z\xrightarrow{\pi''} X\oplus Y\rightarrow 0,
$$
where $j''(k,a)=(k,a,0)$, $\Phi''(k,a,b)=(0,0,QT(b)+R(P))$ and $\pi''(g+L,f+R(Q),e+R(P))=(g+L, f+R(Q))$.
Consider first the sequence associated withe  $T$ and $Q$. Take a basis $k_1,\ldots,k_l$ for $K$ and a basis $d_1^\ast,\ldots,d_n^\ast$
for $(F/R(T))^\ast$. Applying $j'$ to the $k_i$,  we obtain
$$
\ov{k}_1=(k_1,0),\ldots,\ov{k}_l=(k_\ell,0)
$$
and pulling back by $\pi'$ the $d_j^\ast$,  we obtain
$$
\ov{x}_1^\ast = (x_1^\ast,0),\ldots, \ov{x}_n^\ast =(x_n^\ast,0).
$$
Using that $A\rightarrow Y:a\rightarrow T(a)+R(Q)$ is a bijection,  we take a basis $a_1,\ldots ,a_m$ for $A$ and a dual basis $y_1^\ast,\ldots,y^\ast_m$ for $Y^\ast$ associated with  the image basis of the $a_1,\ldots,a_m$. Then
$$
\det (y^\ast_j(T(a_i)+R(Q)))=1.
$$
Defining  $\ov{a}_j=(0,a_j)$ and $\ov{y}_j^\ast=(0,y_j^\ast)$, the vectors 
$$
\ov{k}_1,\ldots ,\ov{k}_l,\ov{a}_1,\ldots,\ov{a}_m
$$
for a basis for $K\oplus A$ and   the vectors  
$$
\ov{x}_1^\ast,\ldots,\ov{x}_n^\ast,\ov{y}_1^\ast,\ldots,\ov{y}_m^\ast
$$
form a basis
for $(X\oplus Y)^\ast$. We observe that these two bases provide the input data for the diagram associated with $QT$ and $P$. We also note that
the map $\Phi"$ induces the bijection $B\rightarrow Z:b\rightarrow QT(b)+R(P)$.  Hence after fixing a basis $b_1,\ldots,b_q$ for $B$
and a dual basis for $Z^\ast$ for the image basis we obtain a basis for $K\oplus A\oplus B$ and for $X\oplus Y\oplus Z$.
Wedging the vectors of each of these bases together and then taking the tensor product is the image in $\det(PT)$
by mapping $h=(k_1\wedge\ldots \wedge k_l)\otimes (x_1^\ast\wedge\ldots\wedge x_n^\ast)$ first under $\gamma_T^Q$ to an element
in $\det(QT)$ and then under $\gamma^P_{QT}$ to $\det(PT)$. Of course, we could have alternatively taken the exact sequence 
associated with $T$ and $P$ and started with $k_1,\ldots,k_l$ and $x_1^\ast,\ldots,x_n^\ast$ and then make the same choice of vectors
for $A$, $B$, $Y$, and $Z$. The result would be the same. This completes the proof.
\end{proof}

\subsection{The Line Bundle Structure on $\text{DET}(E,F)$}
We assume that $E$ and $F$ are two Hilbert spaces and   consider the open subset ${\mathcal Fred} (E, F)$ of 
${\mathcal L}(E, F)$ consisting of all linear Fredholm operators.  We  introduce  the line bundle $\text{DET}(E, F)$ by  
$$\text{DET}(E, F)=\bigcup_{T\in { \mathcal Fred}(E, F)}\{T\}\times \det (T).$$
Our aim in this section is to show that
$$
\text{DET}(E,F)\rightarrow {\mathcal Fred}(E,F)
$$
has in  a natural way the structure of topological line bundle. At this point $\text{DET}(E,F)$ is only a set which fibers over ${\mathcal Fred}(E,F)$
and where the fibers have natural structures as one-dimensional real vector spaces.

We define a collection of bijections possessing additional properties as follows.
We choose $T_0\in {\mathcal Fred}(E,F)$ and let $P$ be a projection in $\Pi_{T_0}$. Then we find $\varepsilon_0>0$ such  that
$R(PT)=R(P)=:H$ for all $T$ satisfying  $\norm{T-T_0}<\varepsilon_0$. We fix the orthogonal complement  $X$ of  $\ker(PT_0)$ in $E$ 
so that
$$
E=\ker(PT_0)\oplus X.
$$
We define $B_{\varepsilon}(T_0)=\{T\in {\mathcal L}(E,F)\,\vert \, \norm{T-T_0}<\varepsilon_0\}$ and consider the map
$$
\Gamma:B_{\varepsilon}(T_0)\times  \ker(PT_0)\oplus X\rightarrow H,\quad (T,y,x)\rightarrow PT(y+x).
$$
Replacing $\varepsilon_0$ by $\varepsilon_1\in (0,\varepsilon_0]$,  we may assume that 
$$
X\rightarrow H, \quad x\mapsto PT(y+x)
$$
is surjective for all $y\in \ker(PT_0)$ and  all $\norm{T-T_0}<\varepsilon_1$. This allows us to define a uniquely determined continuous map
$$
\Phi: B_{\varepsilon_1}(T_0)\times \ker(PT_0)\rightarrow X
$$
satisfying
$$
PT(y+\Phi(T,y))=0.
$$
For a fixed $T\in B_{\varepsilon_1}(T_0)$, the map $y\mapsto \Phi (T, y)$ is linear and  $\Phi(T_0,y)=0$ for all $y\in \ker(PT_0)$.

Consider  the subset $\Theta$ of $\{T\in {\mathcal L}(E,F)\ |\ \norm{T-T_0}<\varepsilon_1\}\times E$
consisting of all pairs $(T,k)$ satisfying  $PT(k)=0$. Then $\Theta$ has an induced topology from the ambient space and the map
$$
\Phi:B_{\varepsilon_1}(T_0)\times \ker(PT_0)\rightarrow \Theta, \quad (T,y)\mapsto (T,y+\Phi(T,y))
$$
is a homeomorphism which is  linear in the fibers and whose  inverse is given by $(T,k)\rightarrow (T,Q_0k)$, where $Q_0$ is the projection onto $\ker(PT_0)$
along $X$. This shows the set $\Theta$ is a topological vector bundle in a natural way and we have given a trivialization. 
Trivially $\{T\ |\ \norm{T-T_0}<\varepsilon_1\}\times (F/H)^\ast$ is a smooth vector bundle. This immediately gives us the structure of a topological line bundle
$$
{\mathcal L}:=\bigcup_{\{T\ |\ \norm{T-T_0}<\varepsilon_1\}} \{T\}\times \det(PT)\rightarrow \{T\ |\ \norm{T-T_0}<\varepsilon_1\}.
$$
The collection of maps $\gamma_T^P:\det{T}\rightarrow \det(PT)$ defines a bijection 
$$
\text{DET}(E,F)|\{T\ |\ \norm{T-T_0}<\varepsilon_1\} \rightarrow {\mathcal L}
$$
which is linear on the fibers
and covers the identity. We equip $$\text{DET}(E,F)|\{T\ |\ \norm{T-T_0}<\varepsilon_1\}$$  with the unique structure of a topological line bundle
making the latter map a topological bundle isomorphism. Hence we have proved the following lemma.
\begin{lemma}
Given $T_0\in {\mathcal Fred}(E,F)$ and $P\in \Pi_{T_0}$, there exists  $\varepsilon>0$ such  that the following holds.
\begin{itemize}
\item[(i)] Every $T\in {\mathcal L}(E,F)$ satisfying  $\norm{T-T_0}<\varepsilon$ is Fredholm.
\item[(ii)] $P\in \Pi_{T}$ for all $T$ satisfying  $\norm{T-T_0}<\varepsilon$.
\item[(iii)] The set ${\mathcal L}_{T_0,P,\varepsilon}=\bigcup_{\{T\ |\ \norm{T-T_0}<\varepsilon\}} \{T\}\times \det(PT)$ has in a natural way the structure of a topological line bundle 
$$
{\mathcal L}_{T_0,P,\varepsilon}\rightarrow \{T\ |\ \norm{T-T_0}<\varepsilon\}.
$$
\item[(iv)] There exists a natural bijection associated with the $\gamma_T^P$ which is linear on the fibers and  which makes the following diagram commutative
$$
\begin{CD}
\bigcup_{\{T\ |\ \norm{T-T_0}<\varepsilon\}} \{T\}\times \det(T) @>\Phi_{T_0,P,\varepsilon}>> {\mathcal L}_{T_0,P,\varepsilon}\\
@VVV @VVV\\
\{T\ |\ \norm{T-T_0}<\varepsilon\} @= \{T\ |\ \norm{T-T_0}<\varepsilon\}
\end{CD}
$$
Here $\Phi_{T_0,P,\varepsilon}(T,h)=(T,\gamma_T^P(h))$.
\end{itemize}
\end{lemma}
Let $T_0\in {\mathcal Fred}(E,F)$ and $P,Q\in \Pi_{T_0}$ with $P\leq Q$. We find $\varepsilon>0$ so that every bounded linear operator
$T:E\rightarrow F$ with $\norm{T-T_0}<\varepsilon$ is Fredholm and $P,Q\in\Pi_{T}$.
Using the exact sequences 
$$
0\rightarrow\ker(QT)\xrightarrow{j}\ker(PT)\xrightarrow{\Phi^P_{QT}}F/R(P)\xrightarrow{\pi} F/R(Q)\rightarrow 0
$$
we obtain the family of maps
$$
\gamma^P_{QT}:\det(QT)\rightarrow \det(PT).
$$
Near $T_0$ we have the topological vector bundles coming from the families
$\{T\}\times \ker(PT)\rightarrow T$, $\{T\}\times \ker(QT)\rightarrow T$, and the trivial bundles
$\{T\}\times F/R(P)\rightarrow T$, and $\{T\}\times F/R(Q)\rightarrow T$, which fit well with the maps occurring in the exact sequence.
Hence the maps $\gamma^P_{QT}$ define a topological line bundle isomorphisms
$$
{\mathcal L}_{T_0,Q,\varepsilon}\rightarrow {\mathcal L}_{T_0,P,\varepsilon}.
$$
 Consider the transition map 
$$
\Phi_{T_1,Q_1,\varepsilon_1}\circ \Phi_{T_2,Q_2,\varepsilon_2}^{-1}:
{\mathcal L}_{T_2,Q_2,\varepsilon_2}|U\rightarrow 
{\mathcal L}_{T_1,Q_1,\varepsilon_1}|U
$$
where $U=\{T\ |\ \norm{T-T_1}<\varepsilon_1,\norm{T-T_2}<\varepsilon_2\}$. This map is a bijection and is  linear on the fibers. It suffices to show that the transition map is continuous. Let $T_3\in U$. Then $Q_1,Q_2\in \Pi_{T_3}$ and we pick $P\in \Pi_{T_3}$ satisfying
$P\leq Q_1$ and $P\leq Q_2$. We take $\varepsilon$ small enough so that $V:=\{T\ |\ \norm{T-T_3}<\varepsilon\}\subset U$
and consider
$$
\text{DET}(E,F)|V\xrightarrow{\Phi_{T_3,P,\varepsilon}} {\mathcal L}_{T_3,P,\varepsilon}.
$$
First of all we have  the following commutative diagram of bijections which are  linear in the fibers, 
$$
\begin{CD}
\text{DET}(E,F)|V @= \text{DET}(E,F)|V\\
@V \Phi_{T_2,P_2,\varepsilon_2} VV @V \Phi_{T_1,P_1,\varepsilon_1} VV \\
{\mathcal L}_{T_2,P_2,\varepsilon_2}|V @. {\mathcal L}_{T_1,P_1,\varepsilon_1}|V\\
@VVV   @VVV\\
{\mathcal L}_{T_3,Q,\varepsilon} @= {\mathcal L}_{T_3,Q,\varepsilon}
\end{CD}
$$
The bottom vertical arrows are topological bundle isomorphisms. The vertical compositions on the left and on the right are
the same map in view of Lemma \ref{revision_lem_1}. Hence the transition map $\Phi_{T_1,Q_1,\varepsilon_1}\circ \Phi_{T_2,Q_2,\varepsilon_2}^{-1}
$ over the set $V$ can  be written as the transition map associated with the two lower vertical arrows, but the latter is a topological bundle isomorphism.

Hence we have proved the following theorem.
\begin{theorem}
 $\text{DET}(E,F)$ has in a natural way the structure of a topological line bundle
over ${\mathcal Fred}(E,F)$.
\end{theorem}

\subsection{Orientations in the M-Polyfold Case}
Now we are ready to deal with the M-polyfold case. In the following we shall assume that our polyfolds and strong bundles
are modeled on sc-smooth retracts in sc-Hilbert spaces, so that the orientation discussion from the previous subsection is applicable.
A more general orientation discussion covering the Banach space case is contained in \cite{HWZ10,HWZ11}. 
Let $E\rightarrow X$ be a strong M-polyfold bundle and $f$ a sc-Fredholm section. Given a smooth point $x_0\in X$
we can construct near $x_0$ a local sc$^+$-section $s$ satisfying $s(x_0)=f(x_0)$. Taking any such section $s$, the linearization
of $f-s$ at $x_0$, denoted by 
$$
(f-s)'(x_0):T_{x_0}X\rightarrow E_{x_0}, 
$$
 is a well defined (linear) sc-Fredholm operator between sc-Hilbert spaces, see 
\cite{HWZ2,HWZ3}. Given two such sections $s_1$ and $s_2$ we deduce from $f-s_1 =f-s_2 + (s_2-s_1)$ and the property
$(s_2-s_1)(x_0)=0$, that the linearisations $(f-s_1)'(x_0)$ and $(f-s_2)'(x_0)$ differ by the linearisation $(s_2-s_1)'(0)$,
which is a linear sc$^+$-operator and therefore level-wise compact.  We recall from \cite{HWZ2} that sc-Fredholm operators stay
sc-Fredholm operators after a perturbation by  an sc$^+$-operator. 
\begin{definition} {\bf The set $\text{Lin}(f,x_0)$} consists of all linear sc-Fredholm operators
of the form $(f-s)'(x_0)+a$, where $s$ is a fixed   local sc$^+$-section satisfying $s(x_0)=f(x_0)$ and $a:T_{x_0}X\rightarrow E_{x_0}$ is any linear sc$^+$-operator. In a formula, 
$$
\text{Lin}(f,x_0)=\{ (f-s)'(x_0) + a\, \vert \, a\, \text{is an $\ssc^+$-operator}\}.
$$
We call $\text{Lin}(f,x_0)$ the set of linearisations of $f$ at the smooth point $x_0$. The definition of
$\text{Lin}(f,x_0)$ does not depend on the choice of the $\ssc^+$-section $s$ as along as $s(x_0)=f(x_0)$.
\end{definition}
The set  $\text{Lin}(f,x_0)$ is a convex set to which 
we shall refer as the family of linearisations of $f$ at $x_0$.
Every element in $\text{Lin}(f,x_0)$ is an sc-Fredholm operator. In particular,  viewing them
as operators
$$
T_{x_0}X\rightarrow E_{x_0},
$$
i.e. on level $0$, 
we obtain classical Fredholm operators between Hilbert spaces.  If $A_1$ and $A_2$ are two such operators, then $A_1-A_2$ is a compact operator, hence belonging to
the Banach space $K(T_{x_0}X,E_{x_0})$ of compact operators.  The space  $\text{Lin}(f,x_0)$ is a metric space  equipped with the metric 
$$
d(A_1,A_2)=\norm{A_1-A_2}_{{\mathcal L}(T_{x_0}X,E_{x_0})}.
$$
By the previous discussion we have for every
classical Fredholm operator $T$ its determinant $\text{det}(T)$ and conclude the following result.
\begin{proposition}
The set $\text{DET}(f,x_0)$, defined by
$$
\text{DET}(f,x_0)=\bigcup_{A\in \text{Lin}(f,x_0)} \{A\}\times \det(A), 
$$
has in a natural way the structure of a topological line bundle over $\text{Lin}(f,x_0)$.
\end{proposition}
In view of the 
continuous map $\text{Lin}(f,x_0)\rightarrow {\mathcal Fred}(T_{x_0}X,E_{x_0})$ , we  can view the line bundle above as the pull-back bundle
of the determinant bundle over the space of Fredholm operators.
Since the  base space $\text{Lin}(f,x_0)$ is contractible  our bundle is trivial
and has precisely two possible orientations. 
\begin{definition}
{\bf An orientation of the sc-Fredholm section}  $f$ of
the strong $M$-polyfold bundle  $E\rightarrow X$ at the smooth point $x_0$ is a choice of one of the possible two orientations of
$\text{DET}(f,x_0)$.
\end{definition}

We need the  notion of {\bf ``local continuation''} of the orientation. For the following constructions we shall assume the existence of an sc-smooth partitions of unity.  (This is only for convenience and not really necessary, see \cite{HWZ10} for the general argument.)

We start with an sc-Fredholm section $f$ of the strong bundle
$E\rightarrow X$ and write $E\rightarrow [0,1]\times X$ for the pull-back of $E$ via the projection map  $[0,1]\times X\rightarrow X$.
We shall always assume in the following that the M-polyfod $X$ has no boundary, i.e., $\partial X=\emptyset$. The more general argument in the boundary case  $\partial X\neq \emptyset$ will be given in 
\cite{HWZ10}. 

Following the constructions in \cite{HWZ3}, we 
consider   an sc-smooth path $\phi:[0,1]\rightarrow X$ connecting the smooth point $x_0=\phi(0)$ with the smooth point  $x_1=\phi(1)$.
Using a partition of unity argument the following two Lemmata are proved in \cite{HWZ3}.
\begin{lemma}\label{lem_graph_1}
There exists for the strong bundle $E\rightarrow [0,1]\times X$ an sc$^+$-section $s=s(t,x)$ which satisfies
$s(t,\phi(t))=f(\phi(t))$ for $t\in [0,1]$.  
\end{lemma}
The solution set,  consisting  of all $(t,x)\in [0,1]\times X$ solving the equation 
$f(x)-s(t,x)=0$, 
contains the graph of $\phi$.
\begin{lemma}  \label{lem_property_star}
There exist finitely many sc$^+$-sections $s_1,\ldots ,s_k$ of the bundle  $E\rightarrow [0,1]\times X$
such  that the sc-smooth Fredholm section $F$ of the bundle $E\rightarrow [0,1]\times {\mathbb R}^k\times X$,  defined by
$$
F(t,\lambda,x) = f(x)-s(t,x)-\sum_{i=1}^k \lambda_i\cdot s_i(t,x),
$$
has the following property $(\ast)$ at the points $(t,\phi(t))$, $t\in [0,1]$.
\begin{itemize}
\item[($\ast$)] For every $t\in [0,1]$,  the linearization of $F_t$ at the point $(0,\phi(t))$ is surjective.
\end{itemize}
Here $F_t:=F(t,\cdot ,\cdot )$ is the obvious section of $E\rightarrow {\mathbb R}^k\times X$.
\end{lemma}

The implicit function theorem  in  \cite{HWZ3} or \cite{HWZ10} implies that the solution set $\{F(t,\lambda,x)=0\}$  near
$\{(t,0,\phi(t))\ |\ t\in [0,1]\}$   is in a natural way a smooth manifold $M$. By the  property $(\ast)$,
M also
fibers over $[0,1],$
$$
p\colon M\rightarrow [0,1],\quad p(t,\lambda,x)\mapsto  t
$$
and  every fiber $M_t=p^{-1}(t)$ is a manifold as well.
Abbreviating by  $L_t:=T_{(t,0,\phi(t))}M_t$ the tangent space, the bundle  
$$
L=\bigcup_{t\in [0,1]} L_t
$$
is a smooth vector bundle over $[0,1]$ and   
$L_t$ is the kernel of the  linearisation $DF_t(0,\phi(t))$, denoted by $F_t'(0,\phi (t))$. There is the associated smooth line bundle
$$
\Lambda^{\textrm{max}}L\rightarrow [0,1].
$$
An orientation of any of the fibers $L_t$ (any of the lines  $\Lambda^{\textrm{max}}L_t$)  determines an orientation  of  all the other fibers.
It remains to relate these orientations to the orientations of $\det(  (f-s(t,\cdot ))'(\phi(t)) )$ for 
$t\in [0,1]$.
To this aim we introduce for fixed  $t$  the sequence 
\begin{equation}\label{eexact}
\begin{gathered}
0\rightarrow \ker( (f-s(t,\cdot ))'(\phi(t)) )\xrightarrow{j} \ker({F}_t'(0,\phi(t)))\xrightarrow{p}\\
\xrightarrow{p} {\mathbb R}^k\xrightarrow{c} E_{\phi(t)}/R((f-s(t,\cdot ))'(\phi(t)))\rightarrow 0.
\end{gathered}
\end{equation}
Here $j$ is the inclusion map and $p$ the projection onto the ${\mathbb R}^k$-factor. The map $c$ is defined
by 
$$
c(\lambda) =\left(\sum_{i=1}^k \lambda_is_i(t,\phi(t))\right)+ R((f-s(t,\cdot ))'(\phi(t))).
$$
\begin{lemma}\label{lem_exact_seq_2}
The sequence \eqref{eexact} is exact.
\end{lemma}
\begin{proof}
The inclusion map $j$ is injective and $p\circ j=0$. From  $p(\lambda,h)=0$ it follows that 
$\lambda=0$
so that $h\in \ker((f-s(t,\cdot ))'(\phi(t)))$. If $(\lambda,h)\in \ker(F_t'(0,\phi(t)))$,  then $\sum \lambda_is_i(t,\phi(t))$
belongs to the image of $(f-s(t,\cdot ))'(\phi(t))$ which implies $c\circ p=0$. It is also immediate that an
element $\lambda\in {\mathbb R}^k$ satisfying  $c(\lambda)=0$ implies that $\sum \lambda_is_i(t,\phi(t))$ belongs to the image
of $(f-s(t,\cdot ))'(\phi(t))$. This allows us to construct an element $(\lambda,h)\in \ker(F_t'(0,\phi(t)))$ satisfying  $p(\lambda,h)=\lambda$. 
Finally, it follows from the property $(\ast)$ in Lemma \ref{lem_property_star}  that the map $c$ is  surjective. 
The proof of Lemma 
\ref{lem_exact_seq_2} is complete.
\end{proof}

In view of the exactness of the sequence \eqref{eexact} we deduce from Definition
 \ref{lem_isom_phi_1}, for every $t\in [0,1]$,  a uniquely defined isomorphism.

$$
\Psi_t:\Lambda^{\textrm{max}}{\mathbb R}^k \otimes \det(f-s(t,.)'(\phi(t)))\rightarrow
\Lambda^{\textrm{max}} \ker({F}_t'(0,\phi(t))).
$$
There is a uniquely determined smooth structure on  the line bundle 
$$
\bigcup_{t\in [0,1]} \det((f-s(t,\cdot ))'(\phi(t)))\rightarrow [0,1]
$$
 such  that 
the maps $\Psi_t$, $t\in [0,1]$,  are smooth line bundle isomorphism.

The following lemma is proved in \cite{HWZ10}. 
\begin{lemma} The smooth structure on the line bundle  
$$
\bigcup_{t\in [0,1]} \det((f-s(t,\cdot ))'(\phi(t)))\rightarrow [0,1]
$$
 does not depend
on the choice of the  $\ssc^+$-sections $s_i(t,x)$ having the property $(\ast)$ of 
Lemma \ref{lem_property_star}.
\end{lemma}

By construction,  $(f-s(t,\cdot ))'(\phi(t))\in \text{Lin}(f,\phi(t))$. Using the above isomorphism 
$\Psi_t$ and
the standard orientations  for ${\mathbb R}^k$ the following holds true. If for one $t_0 \in [0,1]$ we have an orientation
$\mathfrak{o}_{\phi(t_0)}$ of  $\text{DET}(f,\phi(t_0))$, then we get an orientation of 
$\det((f,s(t_0,\cdot ))'(\phi(t_0)))$ which can be propagated via the determinant line  bundle to an orientation
$\mathfrak{o}_{\phi(t)}$ of  $\text{DET}(f,\phi(t))$ for every $t\in [0,1]$. 
 
 A priori,  the method of propagation of an orientation might still depend on the choice of the $\ssc^+$-section $s(t,x)$ satisfying  $s(t,\phi(t))=f(\phi(t))$.
That  this is not the case is proved in \cite{HWZ10}.  Moreover,  an sc-smooth map $\Phi:[0,1]\times [0,1]\rightarrow X$,
 satisfying $\Phi(s,i)=\Phi(0,i)$ for $i=0,1$ may be viewed as a homotopy between $\phi_i:=\Phi(i,.)$ for $i=0,1$.
 One can carry out the above constructions also in this case. The conclusion  is the following result.
 \begin{lemma}
 Given an sc-smooth path $\phi:[0,1]\rightarrow X$ connecting the smooth points $x_0$ and $x_1$,
 the propagation of an orientation of $\text{DET}(f,\phi(t))$ does not depend
 on the choices of the $\ssc^+$-sections $s(t,x)$ and $s_i(t, x)$.  Of course, it might depend on the path connecting $x_0$ and $x_1$.
However, if two smooth paths connect the same points and are sc-smoothly homotopic, then they 
 propagate the initial orientation to the same orientation at the end point.
 \end{lemma}
As before we assume also in the following that 
the M-polyfold $X$ admits sc-smooth partitions of unity. It comes handy for the constructions in \cite{HWZ3},  to which we  have refered  for the proofs of the Lemmata \ref{lem_graph_1} and \ref{lem_property_star} above.
As already pointed out, the  propagation of an orientation along a path  can be established without the requirement of the existence of an sc-smooth partition of unity, \cite{HWZ10}.

\begin{definition} Let $E\rightarrow X$ be a strong bundle over the M-polyfold $X$ without boundary and let $f$ be an sc-Fredholm section.  An {\bf orientation $\mathfrak{o}$ for $f$}  consists of a choice of orientation $\mathfrak{o}_x$ of $\text{DET}(f,x)$ at  every smooth point $x\in X$, such  that
along  every sc-smooth path $\phi:[0,1]\rightarrow X$,  the propagation of the orientation $\mathfrak{o}_{\phi(0)}$ 
is the orientation $\mathfrak{o}_{\phi(1)}$ of 
$\text{DET}(f,\phi (1))$.
\end{definition}
In general,  it is possible (not for Gromov-Witten)
that $\phi(t)=\phi(t')$ for some  $t\neq t'$ but  the associated orientations are opposite.

\begin{proposition}
Assume that $E\rightarrow X$ is a strong bundle over the M-polyfold $X$ and $f$ an {\bf oriented} proper sc-Fredholm section.
Suppose that for every solution $x\in f^{-1}(0)$ the linearization $f'(x)$ is onto. Then the solution set $M=f^{-1}(0)$ is a smooth compact
orientable manifold inheriting a natural orientation from the orientation $\mathfrak{o}$ of $f$.
\end{proposition}
\begin{proof}
From the implicit function theorem for sc-Fredholm section we deduce that $M$ is a smooth manifold,
which in addition is compact, since $f$ is proper. Then $T_xM =\ker(f'(x))$ for $x\in M$ and using that
$$
\Lambda^{max} T_xM\simeq \Lambda^{max} T_xM \otimes {\mathbb R} =\det(f'(x))
$$
we see that the orientation $\mathfrak{o}_x$  determines an orientation on the tangent space $T_xM$ which we also denote by  $\mathfrak{o}_x$.  Due to the property of propagation 
along paths,  the orientation of $T_yM$ for $y$ near $x\in M$ is precisely the orientation 
$\mathfrak{o}_y$.
\end{proof}

\subsection{Orientations in the Polyfold Case}\label{SS5.11}
In the case of a strong polyfold bundle $W\rightarrow Z$,  an sc-mooth Fredholm section $f$ is represented
by an sc-smooth section functor $F$ of a strong bundle 
$(E\rightarrow X, \mu)$
over the ep-groupoid $X$.

We consider a morphism 
$\phi_0:x_0=s(\phi_0)\rightarrow y_0=t(\phi_0)$, denoting by $s$ and $t$ the source and  target maps from the morphism set of the ep-groupoid $X$ to its object set.
Associated with $\phi_0$ is the 
germ of an sc-diffeomorphism 
$\Phi=t\circ s^{-1}: {O}(x_0)\rightarrow {O}(y_0)$ between open neighborhoods. Its derivative at $x_0$ is, by definition,  the tangent map $T\phi_0:T_{x_0}X\rightarrow T_{y_0}X$. We denote by $\wh{\phi}_0:E_{x_0}\rightarrow E_{y_0}$ the lift of $\phi_0$,  viewed as a linear operator.  
More precisely,  $\wh{\phi}_0=\mu (\phi_0,\cdot ):E_{x_0}\rightarrow E_{y_0}$ where the sc-smooth strong bundle map $\mu\colon {\bf E}\to E$ satisfies the properties of Definition \ref{stbundleep}.

\begin{definition}
Consider an operator $L\in \text{Lin}(F,x_0)$ and let $\phi_0:x_0\rightarrow y_0$ be a morphism. The {\bf push forward
of $L$}, denoted by ${(\phi_0)}_\ast L$,  is the  linear sc-Fredholm operator belonging  to the set $\text{Lin}(F,y_0)$ defined by
\begin{equation}\label{po<po}
{(\phi_0)}_\ast L = \what{\phi}_0\circ L\circ {(T\phi_0)}^{-1}.
\end{equation}
If $\mathfrak{o}$ is an orientation of  $\det(L)$ defined by the vector $(h_1\wedge\ldots \wedge h_k)\otimes (h_1^\ast\wedge\ldots \wedge h_\ell^\ast)$, then the push-forward orientation ${(\phi_0)}_\ast\mathfrak{o}$ of  ${(\phi_0)}_\ast L$ is defined by the vector
$(T\phi_0(h_1)\wedge..\wedge T\phi_0(h_k))\otimes (h_1^\ast\circ \what{\phi}_0^{-1}\wedge\ldots \wedge h_\ell^\ast\circ \what{\phi}_0^{-1})$.
\end{definition}
It is easy to see  that (\ref{po<po}) defines an element in $\text{Lin}(F,y_0)$.
In view of the discussion in the previous subsection,   the proof of the following lemma is straightforward.
\begin{lemma}
If $\phi:x\rightarrow y$ is a morphism between smooth points, then
the map $L\rightarrow \phi_\ast(L) $ defines a topological isomorphism of line bundles
$$
\phi_\ast :\text{DET}(F,x)\rightarrow \text{DET}(F,y),
$$
via $(L,(h_1\wedge\ldots \wedge h_k)\otimes (h_1^\ast\wedge\ldots \wedge h_l^\ast))\rightarrow
(\phi_\ast L, (T\phi(h_1)\wedge\ldots \wedge T\phi(h_k))\otimes (h_1^\ast\circ \what{\phi}^{-1}\wedge\ldots \wedge h_l^\ast\circ \what{\phi}^{-1}))$. In particular,  if   $\mathfrak{o}$ is an orientation
of  $\text{DET}(F,x)$,  we obtain the  induced orientation $\phi_\ast\mathfrak{o}$ of $\text{DET}(F,y)$.
\end{lemma}
\begin{remark}
A particular case arises  if the morphism is an automorphisms $\phi:x\rightarrow x$ between the smooth points $x\in X$.  It gives
us a bundle automorphisms  of $\text{DET}(F,x)$.  In general, an automorphism can act in an orientation-preserving or an orientation-reversing
way. We shall denote the automorphism group of $x$ by $\text{Aut}(x)$. In the case that the automorphisms always act in an orientation-preserving way,  we  conclude  
$\phi_\ast\mathfrak{o}=\psi_\ast \mathfrak{o}$
for arbitrary morphisms $\phi,\psi:x\rightarrow y.$
\end{remark}

If $E\rightarrow X$ and $E'\rightarrow X'$ are two strong bundles over ep-groupoids, we let 
$\Gamma:E\rightarrow E'$ be a strong bundle equivalence covering the equivalence $\gamma:X\rightarrow X'$ in the sense of Definition 2.17 in \cite{HWZ3.5}. 
Let $F$ and $G$ be sc-Fredholm sections of these bundles satisfying  $\Gamma^\ast(G)=F$.
Then given  the smooth points $x\in X$  and  $y'=\gamma(x)$, we define  the  topological  isomorphism
$$
\text{Lin}(F,x)\rightarrow \text{Lin}(G,y),\quad  L\rightarrow \Gamma_\ast(L), 
$$
by
$$
\Gamma_\ast(L) = \Gamma_x \circ L\circ T\gamma(x)^{-1}.
$$
To verify that $\Gamma_\ast(L)$ is  well-defined, we  take a germ of a sc$^+$-section $u$ satisfying  $F(x)-u(x)=0$ and 
let $v$ be the push-forward germ.  Recalling that  a general element $L$ in the set $\text{Lin}(F,x)$  has the form 
$(F-u)'(x)+a$, where $a$ is an $\ssc^+$-operator, 
we compute
$$
\Gamma_\ast((F-u)'(x)+a) = (G-v)'(y) +\Gamma_\ast(a)=: (G-v)'(y) +b.
$$
The section  $b$ is also an $\ssc^+$-section, hence proving the claim.

We can go one step further and obtain along the same lines an isomorphism 
$$
{(\Gamma_x)}_\ast:\text{DET}(F,x)\rightarrow \text{DET}(G,y)
$$
between topological line bundles.
\begin{lemma}
Assume that $E\rightarrow X$ and $E'\rightarrow X'$ are strong bundles over ep-groupoids coming with the sc-Fredholm sections $F$ and $G$. Also assume that $\Gamma:E\rightarrow E'$ is a strong bundle equivalence covering the equivalence  $\gamma:X\rightarrow X'$ of ep-groupoids
and satisfying  $\Gamma^\ast G=F$.  Then for every smooth $x\in X$ and $y=\gamma(x)$,  we have an induced
topological bundle isomorphism
$$
{(\Gamma_x)}_\ast: \text{DET}(F,x)\rightarrow \text{DET}(G,y)
$$
\end{lemma}
\begin{proof}
For $\gamma$ we have the tangent map $T\gamma(x):T_xX\rightarrow T_yX'$. Moreover,  $\Gamma$ gives us an sc-bundle isomorphism
$\Gamma_x:E_x\rightarrow E_y'$. We define
$$
{(\Gamma_x)}_\ast(L) = \Gamma_x \circ L \circ T\gamma(x)^{-1}.
$$
The vectors in $\det(L)$ are pushed forward by the usual coordinate change.
This completes the proof.
\end{proof}

\begin{definition}\label{definition_sc_fred_orient}
Let $E\rightarrow X$ be a strong bundle over an ep-groupoid.
Then an {\bf sc-Fredholm section  $F$ is  called orientable}  if  we can choose for every smooth point $x\in X$ an orientation $\mathfrak{o}_x$
of $\text{DET}(F,x)$ such  that the following holds.
\begin{itemize}
\item[(1)] For every smooth path $\varphi:[0,1]\rightarrow X$, the orientation $\mathfrak{o}_{\varphi(1)}$ is the continuation of $\mathfrak{o}_{\varphi(0)}$.
\item[(2)] For every morphism $\phi:x\rightarrow y$ between smooth points the push forward of $\mathfrak{o}_x$ is $\mathfrak{o}_y$.
\end{itemize}
An {\bf orientation of  the sc-Fredholm section $F$}  of $E\rightarrow X$ is given by a choice
of  an orientation $\mathfrak{o}_x$ of  $\text{DET}(F,x)$ for every smooth $x$
satisfying the above properties $(1)$ and $(2)$. We shall write $\mathfrak{o}$ for an orientation of $F$. The orientation $\mathfrak{o}$ associates with every smooth point $x\in X$ the orientation $\mathfrak{o}_x$ of the topological line bundle $\text{DET}(F,x)\rightarrow \text{Lin}(F,x)$.
\end{definition}

We now assume that  $\mathfrak{o}$  is an orientation of  the sc-Fredholm section $F$ of the strong bundle $E\rightarrow X$.
If $\Gamma:E\rightarrow E'$ is a strong bundle equivalence covering the equivalence $\gamma:X\rightarrow X'$, 
and if $G$ is an sc-Fredholm section of the strong bundle $E'\to X'$ satisfying $\Gamma^\ast G=F$, then there is an induced orientation $\Gamma_\ast \mathfrak{o}$ of the sc-Fredholm section  $G$. It is defined as follows. If  $y\in X'$,  we find $x\in X$ and a morphism $\phi:\gamma(x)\rightarrow y$. Since $F$ is orientable, 
the automorphism group $\text{Aut}(x)$  of $x$ acts in an orientation-preserving way on $\text{DET}(F,x)$. We have the topological 
bundle isomorphism ${(\Gamma_x)}_\ast :\text{DET}(F,x)\rightarrow \text{DET}(G,\gamma(x))$. The automorphism group $\text{Aut}(\gamma (x))$ of $\gamma(x)$ is isomorphic to $\text{Aut}(x)$ and acts also  in an orientation-preserving way. Due to the latter,  the push forward of the 
orientation via the morphism $\phi:\gamma(x)\rightarrow y$ to $\text{DET}(G,y)$ does not depend on the actual choice of the morphism $\phi$.
We also note that, conversely,  given an orientation of the sc-Fredholm section  $G$ we obtain an orientation of the section $F$.  Hence we have the operations
$$
\mathfrak{o}\rightarrow\Gamma_\ast \mathfrak{o}\ \ \text{and}\ \ \mathfrak{o}'\rightarrow\Gamma^\ast\mathfrak{o}'
$$
which associate to an orientation of $F$ an orientation of $G$ and, conversely,  to an orientation of $G$ an orientation of $F$.
These operations are mutually inverse.  

More generally,  if we have two sc-Fredholm sections $F$ and $G$ of the strong bundles $E\rightarrow X$ and $E'\rightarrow X'$, respectively,  and if $\mathfrak{A}:E\Longrightarrow E'$ is a generalized bundle isomorphism satisfying $\mathfrak{A}_\ast F=G$, then an orientation on either side determines an orientation on the other side.
This is an immediate consequence of the previous discussion and the definition of a generalized strong bundle isomorphism in Definition 2.20 of \cite{HWZ3.5}. 
Now,  we are ready for the following definition.

\begin{definition}
Let $f$ be an sc-Fredholm section of the strong polyfold bundle $W\rightarrow Z$.  We call $f$ {\bf orientable} provided
there exists a model $E\rightarrow X$ and an sc-smooth section functor $F$ representing $f$
which admits an orientation.  An {\bf orientation of $f$}  is by definition the  choice of an orientation of $F$ for a suitable model.
\end{definition}

The definition does not depend on the choice of the model because   the orientation of one model determines  orientations of every other model,   by means of a generalized strong bundle isomorphism. 

\begin{remark} We would like to point out that in SFT the orientability is sometimes obstructed due to automorphisms acting in an orientation-reversing way.  The algebraic constructions in SFT have to take this into consideration. 
In the case of Gromov-Witten theory the situation is more pleasant because there is even a natural orientation, called the complex orientation, which we shall discuss next.
\end{remark}

\section{The Canonical Orientation in Gromov-Witten Theory}\label{orientations}

We denote by $Z$ the space of stable curves into  to  a closed symplectic manifold $(Q,\omega)$. 
Having fixed a compatible almost complex structure $J$ on $Q$ we have the associated strong bundle
$W\rightarrow Z$ and the sc-smooth Cauchy-Riemann section $\ov{\partial}_J$.
  Our main result in this section is the following theorem.
\begin{theorem}\label{BINGO}
The Cauchy-Riemann section $\ov{\partial}_J$ of the strong bundle $W\rightarrow Z$ is orientable. Moreover it has a natural orientation, called the complex orientation.
\end{theorem} 
In order to briefly outline the strategy of the proof we recall that the models $F:X\rightarrow E$ are constructed from local models
 $F_\cg:\cg\rightarrow \what{\cg}$. We shall show that these local models have canonical orientations, called complex orientations. They 
are compatible with the morphisms in $M(\cg,\cg')$ and $M(\what{\cg},\what{\cg}')$, respectively.
From this Theorem \ref{BINGO} will then follow easily.

\subsection{A Basic Class of Linear Fredholm Operators}
By ${\mathcal J}={\mathcal J}(Q,\omega)$ we denote the contractible space of compatible smooth  almost complex structures on 
the closed symplectic manifold $(Q,\omega)$. We consider a closed smooth oriented   surface $S$ equipped with the finite set $M$ of ordered marked points.   By $\mathfrak{j}=\mathfrak{j}_{(S,M)}$ we denote the collection of smooth almost complex structures on $S$ which induce the given orientation.

Next we consider the collection of all {\bf un-noded stable} maps $\alpha=(S,j,M,\emptyset,u)$, where $u:S\rightarrow Q$ is smooth and  $j\in \mathfrak{j}$.
 We associate with $\alpha$ the Hilbert space
$H^3(u^\ast TQ)$.  Given the  almost complex structure $J\in {\mathcal J}$ on $Q$, 
we denote by $\Omega^{0,1}_{j,J}$ the bundle over $S\times Q$, whose fiber over the point $(z,q)$ consists of all complex anti-linear maps $(T_zS,j)\rightarrow (T_qQ,J)$.  Whenever  we write $u^\ast\Omega^{0,1}_{j,J}$ for a smooth map $u:S\rightarrow Q$ we actually mean the pullback of the bundle $\Omega^{0,1}_{j,J}$  by the graph map $\text{gr}(u):S\rightarrow S\times Q$. 

\begin{definition} \label{<<<}
 {\bf The collection ${\mathcal F}(S,M)$}  consists of all linear Fredholm operators
\begin{equation}\label{999}
L: H^3(u^\ast TQ)\rightarrow H^2(u^\ast\Omega_{j,J})
\end{equation}
for  suitable $J\in {\mathcal J}$, $j\in \mathfrak{j}$,  and smooth un-noded stable maps $\alpha=(S,j,M,\emptyset, u)$, 
of the form
\begin{equation}\label{n<n}
L(\eta)= \nabla\eta +J(u)(\nabla\eta) j +A(\eta).
\end{equation}
Here the covariant derivative  $\nabla$,   associated with a  complex connection for $(TQ,J)\rightarrow Q$, satisfies $\nabla J=0$. Moreover $A$ is a compact operator.

Fixing the Riemann surface $(S, j, M)$,  {\bf we  denote by 
${\mathcal F}_{\alpha}$} the collection of all Fredholm operators  
$L$  of the  form \eqref{n<n}, associated  with the stable map 
$\alpha=(S, j, M, \emptyset, u)$ and $J$ varying in ${\mathcal J}$.

\end{definition}
\begin{remark}
Note that among the operators in ${\mathcal F}(M,S)$ are linear sc-Fredholm operators if the spaces 
$H^3(u^\ast TQ)$ and $H^2(u^\ast\Omega_{j,J})$ are equipped with their sc-structure.
Although the sc-structure is important in orientation questions,  as shown in the previous subsection,
it will not be needed for a while, but  ultimately it will be incorporated.
\end{remark}

For fixed $u$, $J$, and $j$ the collection of all operators $L$ of the form  \eqref{999} and \eqref{n<n}  is  a convex space, which is contractible. 
Also the collections ${\mathcal J}$ and $\mathfrak{j}$ are contractible. This immediately implies the following lemma.
\begin{lemma}
The space of Fredholm operators of the form \eqref{999}  and \eqref{n<n}, where $u$ is fixed, equipped with the operator norm,  is contractible.
\end{lemma}

\begin{remark} Let us observe that a given $L$ in general can be written
in different ways. However, if 
$$
\nabla\eta +J(u)(\nabla\eta) j +A(\eta) =\nabla'\eta +J'(u)(\nabla'\eta) j' +A'(\eta) \quad  \text{for all $\eta$}
$$
we must always  have $J=J'$, and $j=j'$ because this  is  part of the principal symbol.  Note that the difference 
$\nabla-\nabla'$ is a compact operator.
\end{remark}
If  the operator $L\in {\mathcal F}(S,M)$ is  associated with the almost complex structures $J$ and $j$, we can turn the domain and codomain of $L$ into complex vector spaces by defining $i\eta:=J(u)\eta$ for the domain and $i\xi:=J(u)\circ \xi$ for the codomain.
The operator  $L$ is called complex linear if  $L(i\eta) = iL(\eta)$. This gives in certain cases an additional structure on $L$. Let us observe that if $L\in {\mathcal F}_\alpha$, as described in Definition \ref{<<<},  so is $L'$,  defined by
$$
L'(\eta)=  \nabla\eta +J(u)(\nabla\eta) j. 
$$
The  operator $L'$ is complex linear. This follows from  
$\nabla (J(u)\eta)=J(u)\nabla \eta$ which is implied by  $\nabla J=0$.

For fixed $(S,M)$,  we consider two operators
 $L_0\in {\mathcal F}_{\alpha_0}$ and $L_1\in {\mathcal F}_{\alpha_1}$ associated with the stable maps  $\alpha_i=(S,j_i,M,\emptyset, u_i)$,  and the almost complex structures $J_i$ and $j_i$ for $i=0,1$.
 If $u_0$  and $u_1$ smoothly homotopic by the homotopy $u^t$, $0\leq t\leq 1$, we can construct a smooth path $L_t$ in  ${\mathcal F}$ connecting 
 $L_0$ with $L_1$ of the form
$$
L_t = \nabla^t(\cdot ) +J^t(u^t)(\nabla^t(\cdot ))j^t +A^t(\cdot ).
$$
Here $J^t$ is a smooth path in ${\mathcal J}$ connecting $J_0$ with $J_1$ and $j^t$ is a smooth path in $\mathfrak{j}$ connecting $j_0$ with $j_1$.
The covariant derivatives $\nabla^t$ are associated with the tangent bundle $(TQ, J^t)\to Q$ and satisfy 
$\nabla^tJ^t=0$. The data indexed by $t$ on the right-hand side of $L_t$ depend smoothly on $t$. The domains  of the operators  $L_t$  form  a smooth Hilbert space bundle over $[0,1]$  having the  fiber $H^3({(u^t)}^\ast TQ)$ over $t$, 
whereas the codomains  form a smooth bundle over $[0,1]$. To the path $(L_t)_{0\leq t\leq 1}$ there belongs the determinant bundle having the fiber $\det (L_t)$ over $t\in [0,1]$. 
An orientation of $\det (L_0)$ determines
an orientation of $\det(L_1)$, called the orientation continued along the path $(L_t)_{0\leq t\leq 1}$. In the following  shall refer to $(L_t)$ as a
{\bf smooth path of operators in ${\mathcal F}(S,M)$}.

The fundamental result about the operators in ${\mathcal F}(S, M)$ is  the  following proposition.
\begin{proposition}\label{p<ppp}
For every Fredholm operator $L\in {\mathcal F}(S,M)$ there exists a  natural orientation $\mathfrak{o}_L$ of  $\det(L)$
characterized by the following two requirements.
\begin{itemize}
\item[(1)] Along  every smooth path $(L_t)_{0\leq t\leq 1}$ in ${\mathcal F}(S,M)$, the continuation of $\mathfrak{o}_{L_0}$ is the orientation $\mathfrak{o}_{L_1}$.
\item[(2)] If $L$ is complex linear,  then $\mathfrak{o}_L$ is defined by the vector 
$$
(e_1\wedge ie_1\ldots \wedge e_n\wedge  ie_n)\otimes (h_1^\ast\wedge ih^\ast_1\ldots \wedge h_k\wedge ih^\ast_k),
$$
where $e_1,\ldots \, e_n$ is a complex basis of  $\ker(L)$ and $h_1^\ast,\ldots, h_k^\ast$  is a complex basis of  $\text{coker}(L)^\ast$.
This orientation does not depend on choices involved and is called the {\bf complex orientation
of the (real) determinant of $L$}. 
\end{itemize}
\end{proposition}
\begin{proof}
We define an equivalence relation by calling two Fredholm operators in $F(S,M)$ equivalent if 
they can be connected by a smooth path of operators. 
In  an equivalence class $P$  we find a complex linear operator $L_0$  associated with 
$J_0$ and the smooth stable map $\alpha_0=(S,j_0,M,\emptyset,u_0)$, defined by
$$
L_0\eta  =\nabla\eta +J_0(u_0)(\nabla\eta) j_0. 
$$
The operator $L_0$  is complex linear since, by assumption,  $\nabla J=0$. We fix the complex orientation $\mathfrak{o}_{L_0}$.

If  $L_1\in P$ we take a smooth path $(L_t)$ of operators connecting $L_0$ with $L_1$ and  
 equip $\det(L_1)$ with the continuation of the already chosen orientation 
of $\det(L_0)$ along the  determinant bundle of the path.  A priori the orientation $\mathfrak{o}_{L_1}$ might depend on the choice of the path.
If we take another path,  the two paths together define a smooth loop $\theta\mapsto  L_\theta$, $\theta\in S^1$, 
of  the form
$$
L_\theta(\eta) =\nabla^\theta\eta +J(u^\theta)(\nabla^\theta\eta)j^\theta +A^\theta(\eta),
$$
where $\nabla^\theta J^\theta=0$. Our orientation is well-defined precisely if the determinant bundle $\det (L_\theta)$ over
$S^1$ is orientable.  In order to prove this, 
we homotope the family $\theta\mapsto  L_\theta$ to a family of the form
$$
L^0_\theta(\eta)=\nabla^\theta\eta +J(u^\theta)(\nabla^\theta\eta)j^\theta,
$$
consisting   of a complex linear operator over every fiber. This implies that the family $(L^0_\theta)$ over $S^1$
has an orientable determinant bundle. Hence the original loop is also orientable. A priori the definition of the orientation of the operators in $P$ might still depend on the choice of  the operator $L_0\in P$.
However, if $L_0'$ is a second complex linear operator in $P$, we connect $L_0$ with  $L_0'$ by a path of complex linear operators and see that the complex orientation for $L_0'$ is the one obtained from $L_0$ by propagation along the  path.
This proves that the definition of the orientation for the operators in $P$ is independent of all choices.
We carry out the process for all equivalence classes $P$ and the proof of Proposition \ref{p<ppp} is complete.
\end{proof}

\begin{definition}
The assignment $L\rightarrow \mathfrak{o}_L$ is called (in view of property (2) in Proposition \ref{p<ppp}) the {\bf complex orientation of  ${\mathcal F}(S,M)$},
and for each individual operator $L$, whether  complex linear or not, it is called the {\bf complex orientation of  $L$}.
We also refer interchangeably to $\mathfrak{o}_L$ as the complex orientation for $L$ or $\det(L)$.
\end{definition}

Next we describe  what happens under certain coordinate changes. We assume that the two smooth un-noded stable maps $\alpha=(S,j,M,\emptyset,u)$
and $\alpha'=(S',j',M',\emptyset, u')$ are isomorphic by the isomorphism 
$$
\phi:\alpha\rightarrow\alpha'.
$$

We shall define a map ${\mathcal F}_{\alpha}\rightarrow {\mathcal F}_{\alpha'}$ from ${\mathcal F}_{\alpha}\subset {\mathcal F}(S,M)$ to  ${\mathcal F}_{\alpha'}\subset {\mathcal F}(S',M')$.
Given  the Fredholm operator ${\bf L}$  in ${\mathcal F}_{\alpha}$,  we define ${\bf L}'=\phi_\ast{\bf L}\in 
{\mathcal F}_{\alpha'}$ by the formula
$$
{\bf L}'(\eta') := ({\bf L}(\eta'\circ\phi))T\phi^{-1}, \quad 
\eta'\in H^3((u')^*TQ).
$$
It is useful to recall that, by definition of an isomorphism, $u'(\phi (z))=u(z)$ for all $z\in S$, so that 
$\eta'\circ \phi\in H^3(u^*TQ).$
\begin{lemma}
If  ${\bf L}\in {\mathcal F}_{\alpha}$,  the push forward operator ${\bf L}'=\phi_\ast{\bf L}$ belongs to ${\mathcal F}_{\alpha'}$.
\end{lemma}
\begin{proof}
In order to verify  that the above definition of  ${\bf L}'$ has the required  form we shall use that, by definition of an
isomorphism, $T\phi (z)\cdot j(z)=j'(\phi (z))T\phi (z)$, $z\in S$, and compute, abbreviating  $\eta=\eta'\circ\phi$, 
\begin{equation*}
\begin{split}
{\bf L}'(\eta')&
({\bf L}(\eta'\circ \phi)) T\phi^{-1}\\
&=[\nabla\eta + J(u)(\nabla\eta)j + A(\eta)]T\phi^{-1}\\
&=[(\nabla\eta')T\phi +J(u'\circ\phi)((\nabla\eta')T\phi)j +A(\eta)]T\phi^{-1}\\
&=[(\nabla\eta')T\phi +J(u'\circ\phi)((\nabla\eta')T\phi)j +A'(\eta')T\phi]T\phi^{-1}\\
&=([\nabla\eta' +J(u')(\nabla\eta')j']T\phi)T\phi^{-1}+A'(\eta')\\
&=\nabla\eta' +J(u')(\nabla\eta')j' + A'(\eta').
\end{split}
\end{equation*}
\end{proof}
If the  orientation $\mathfrak{o}$ of ${\bf L}$ is defined by  the vector
$$
(h_1\wedge..\wedge h_k)\otimes(h_1^\ast\wedge\ldots \wedge h_l^\ast),
$$
then the associated orientation $\phi_{\ast}\mathfrak{o}$ of  $\phi_\ast {\bf L}$, called the {\bf orientation obtained by coordinate change},  is given by the
vector
$$
(h_1\circ\phi^{-1}\wedge\ldots \wedge h_k\circ\phi^{-1})\otimes ( h_1^\ast\circ  T\phi^{-1}\wedge\ldots \wedge h_l^\ast\circ T\phi^{-1}).
$$
In the case that ${\bf L}$ is complex linear the same is true for $\phi_\ast {\bf L}$ and the coordinate change 
maps the complex orientation to the complex one. Since the complex orientation for the operators in ${\mathcal F}_{\alpha}$
as well as in ${\mathcal F}_{\alpha'}$ enjoy the continuation along path property, it follows 
 by the same type of (homotopy) argument already used before,
  that $\phi_{\ast}\mathfrak{o}_{\bf L} = \mathfrak{o}_{\phi_\ast {\bf L}}$,
 i.e. the coordinate change maps the complex orientation to the complex orientation.
 Hence we have proved.
\begin{proposition}\label{m<0}
If $\phi:\alpha\rightarrow\alpha'$ is an isomorphism between un-noded smooth stable maps and ${\bf L}\in {\mathcal F}_\alpha$,
then the complex orientation $\mathfrak{o}_{\bf L}$ is mapped via $\phi$ to the complex orientation 
$\mathfrak{o}_{{\bf L}'}$ of the push-forward operator ${\bf L}':=\phi_\ast{\bf L}\in {\mathcal F}_{\alpha'}$.
\end{proposition}

\subsection{Modifications of the  Basic Class}
Up to now the stability of the stable map $\alpha$ did not play any role. However, this will change now. We fix $(S,M)$ in the definition of the space ${\mathcal F}(S, M)$. The following discussion will focus on operators which we might consider as finite-dimensional modifications of Fredholm operators.

Our aim is to establish canonical orientations (again called the complex orientations) for these modified Fredholm operators and to study their behavior under 
coordinate changes which are more general than those considered so far. 

We recall that the stability of a smooth stable map 
$\alpha=(S, j, M, \emptyset, u)$ requires either that 
$(S,j,M)$ is a stable Riemann surface, or, if this is not the case, that the map $u$ satisfies $\int_S u^\ast\omega>0$.

\begin{definition}
A {\bf stab-set} $\Xi$ for the smooth stable map $\alpha=(S, j, M, \emptyset, u)$ is a finite subset $\Xi$  of $S\setminus M$ having the property, that for every $z\in \Xi$
the vector space  $R(Tu(z))$, i.e.,   the image of the tangent map $Tu(z)$,  is  a symplectic subspace of the tangent space $(T_{u(z)}Q, \omega(u(z)))$,  and the orientation induced by $\omega(u(z))$ is the same as the orientation  induced 
from $(T_zS,j)$ via the tangent map  $Tu(z)$.  
\end{definition}

An example  of a stab-set is the stabilization set of a smooth un-noded  stable map as introduced in Definition \ref{stab-f}. In contrast to the stabilization set, the stab-set is not required to have any properties  related to the automorphim group, and the Riemann surface $(S,j,M\cup\Xi)$ is also not required to be stable.
The empty stab-set is a possible stab-set. 

For every point $z\in \Xi$ we introduce the set ${\mathcal H}_z$ consisting of all linear 
subspaces of $T_{u(z)}Q$ of codimension $2$ which are transversal to $R(Tu(z))$.
The set ${\mathcal H}_z$ contains, for example, the symplectic complement of 
$R(Tu(z))$, defined by
$$R(Tu(z))^\omega=\{v\in T_{u(z)}Q\, \vert\, \text{$\omega (u(z))(v, w)=0$ for all $w\in R(Tu(z))$}\}.$$

By ${\mathcal H}$ we denote the collection of all $H={(H_z)}_{z\in\Xi}$ and $H_z\in {\mathcal H}_z$ for $z\in\Xi$.
\begin{definition}
Associated with $H$ is the subspace 
$$H^3_H(u^\ast TQ)\subset H^3(u^\ast TQ)$$
of finite codimension,  consisting of all sections $\eta$ satisfying  $\eta(z)\in H_z$ for $z\in\Xi$. 
\end{definition}

In the following we shall orient (in a coherent way) the Fredholm operators 
$$
L: H^3_H(u^\ast TQ)\rightarrow H^2(u^\ast \Omega^{0,1}_{j,J})
$$
of the  usual form 
$$
L(\eta)=\nabla\eta +J(u)(\nabla\eta)\cdot  j +A(\eta),
$$
where $A$ is a compact operator. We observe that these are the same operators as the ones studied before, but this time  restricted to  a subspace of finite codimension.  We denote this collection of Fredholm operators by
 ${\mathcal F}_{\alpha,\Xi}$.  In ${\mathcal F}_{\alpha,\Xi}$ we allow $J\in{\mathcal J}$ and $H\in {\mathcal H}$ to vary,
whereas $u$ and $j$ are fixed.
In the   case that $H$ has the property $J(u(z))H_z=H_z$ for all $z\in \Xi$,  the space $H^3_H(u^\ast TQ)$ has a complex structure 
 via $i\eta:=J(u)\eta$ so that  in this case  the operator $\eta\rightarrow \nabla\eta +J(u)(\nabla\eta) j $ is a complex linear operator if $\nabla J=0$.
 Hence we have complex linear operators in ${\mathcal F}_{\alpha,\Xi}$ and complex  orientations (for the real determinants).  
Since the space ${\mathcal J}$ of almost complex structures on the manifold $(Q, \omega)$ is contractible, and for fixed $\Xi$ the set ${\mathcal H}$ of constraints is contractible, the space ${\mathcal F}_{\alpha,\Xi}$ of Fredholm operators (for fixed $\alpha$) is contractible. Therefore, we can repeat the arguments in the proof of 
Proposition \ref{p<ppp} to obtain the following result.  
 
  \begin{proposition}
 There is for every $L\in {\mathcal F}_{\alpha,\Xi}$ a natural orientation $\mathfrak{o}_L$
 for $\det(L)$,  characterized by the following requirements.
 \begin{itemize}
 \item[(1)]  For every smooth path $(L_t)_{0\leq t\leq 1}$  in ${\mathcal F}_{\alpha,\Xi}$, the continuation of $\mathfrak{o}_{L_0}$ along the path is $\mathfrak{o}_{L_1}$. 
 \item[(2)] If $L$ is complex linear then $\mathfrak{o}_{L}$ is the complex orientation.
 \end{itemize}
 \end{proposition}
 
If the Fredholm operator $L$ in ${\mathcal F}_{\alpha,\Xi}$ belongs to  the constraints $H=(H_z)$ for $z\in \Xi$ and if $\phi:\alpha\rightarrow\alpha'$ is an isomorphism between the two smooth stable maps, which maps $\Xi$ to $\Xi'$, then the push-forward operator $L'=\phi_\ast L$ 
belongs to the space ${\mathcal F}_{\alpha',\Xi'}$ and satisfies the push-forward constraints $\phi_\ast H$ defined as follows.
 If $H=(H_z)$, $z\in \Xi$, then $\phi_\ast H=(H_{z'})$, $z'\in \Xi'$, is defined as $H_{z'}=H_{\phi (z)}=H_z$, so that  $H_{z'} \subset T_{u'(z')}Q=T_{u(z)}Q$, since, by definition of an isomorphism, 
 $u'(z')=u'(\phi (z))=u(z)$. Therefore, we have a well-defined push-forward of an orientation. Proceeding now as in the proof of 
 Proposition \ref{m<0}, considering  constraints which are $J$-invariant
and applying  the homotopy arguments from above, one  verifies  that the complex orientation $\mathfrak{o}_{\bf L}$ is pushed forward to  
the complex orientation $\mathfrak{o}_{\bf \phi_\ast L}$ and obtains the following proposition.
 
 \begin{proposition}\label{b<b}
 Given an isomorphism $\phi:\alpha\rightarrow \alpha'$ between two smooth un-noded stable maps,   and given stab-sets $\Xi$ and $\Xi'$ satisfying 
 $\phi(\Xi)=\Xi'$, the push-forward operation $\phi_\ast:{\mathcal F}_{\alpha,\Xi}\rightarrow {\mathcal F}_{\alpha',\Xi'}$
 maps the complex orientation $\mathfrak{o}_{\bf L}$ to the complex orientation  $\mathfrak{o}_{\phi_\ast{\bf L}}$.
 \end{proposition}

Next we consider the relationship between operators in ${\mathcal F}(S,M)$ and ${\mathcal F}_{\alpha,\Xi}$.
If  $\alpha=(S,j,M,\emptyset,u)$ is,   as before,  a smooth un-noded stable map, we introduce  the following definition.
\begin{definition}
{\bf A stab-set pair $(\Xi,\Xi_0)$ for $\alpha$}  consists of two stab-sets $\Xi$ and $\Xi_0$ for $\alpha$ such  that $\Xi_0\subset \Xi$.
\end{definition}
We allow $\Xi_0=\emptyset$ and identify in this case the stab-set pair $(\Xi,\emptyset)$ with $\Xi$. Associated with  a stab-set pair we introduce the following distinguished  class of finite-dimensional subspaces of $H^3(u^\ast TQ)$. 

\begin{definition} Let $\alpha$ be a smooth unnoded  stable map as before and $(\Xi,\Xi_0)$ a stab-set pair.
A {\bf $(\Xi,\Xi_0)$-subspace associated with  $\alpha$} is
a finite-dimensional linear subspace
$N$ of $H^3(u^\ast TQ)$ having the following properties.
\begin{itemize}
\item[(1)] $\dim_{\mathbb R}(N)=2\cdot (\sharp\Xi-\sharp\Xi_0)$.
\item[(2)] If  $\eta\in N$,  then  $\eta(z)\in R(Tu(z))$ for all $z\in\Xi\setminus\Xi_0$ and $\eta(z)=0$ for $z\in\Xi_0$.
\item[(3)] The evaluation map $N\rightarrow \bigoplus_{z\in\Xi\setminus\Xi_0} R(Tu(z))$, $\eta\mapsto (\eta(z))_{z\in\Xi\setminus\Xi_0}$
is a linear isomorphism.
\end{itemize}
\end{definition}
{The $(\Xi,\Xi_0)$-subspace  $N$ inherits a natural orientation by the isomorphism  in (3).}

\begin{remark}
For a smooth stable map 
$\alpha=(S,j,M,\emptyset,u)$ and a stab-set $\Xi$ having the associated associated constraints $H$,  we consider the linear subspace $H^3_H(u^\ast TQ)$ of $H^3(u^\ast TQ)$. If $N$ is a $\Xi$-subspace  (where we have identified $\Xi$ with the stab-set $(\Xi, \emptyset)$, the map
$$
N\oplus H^3_H(u^\ast TQ)\rightarrow H^3(u^\ast TQ), \quad (v,\eta)\mapsto v+\eta
$$
is a topological linear isomorphism. 
We also note that,  given a stab-set pair $(\Xi,\Xi_0)$ and the constraints $H$ associated with  $\Xi$,  we can take 
a $\Xi_0$-subspace $N_0$ and a $(\Xi,\Xi_0)$-subspace $N_1$,  and obtain the linear topological isomorphism
$$
N_0\oplus N_1\oplus H^3_H(u^\ast TQ)\rightarrow H^3(u^\ast TQ)\quad (a,b,\eta)\rightarrow a+b+\eta.
$$
Similarly,  if $H_0$ is the subset of $\Xi_0$-constraints which is obtained by restricting the constraint $H$ of $\Xi$ to the subset  $\Xi_0\subset \Xi$, and if  $N_1$ is a $(\Xi,\Xi_0)$-subspace,  we obtain the linear topological isomorphism
$$
N_1\oplus H^3_H(u^\ast TQ)\rightarrow H^3_{H_0}(u^\ast TQ),\quad  (a,\eta )\mapsto a+\eta.
$$
\end{remark}

 By ${\mathcal S}_{\alpha,(\Xi,\Xi_0)}$  we denote the  collection of all $(\Xi,\Xi_0)$-subspaces of $H^3(u^\ast TQ)$.  Fixing a norm $\norm{\cdot}$  on $H^3(u^\ast TQ)$,  we equip ${\mathcal S}_{\alpha,(\Xi,\Xi_0)}$  with the metric defined  by  the Hausdorff distance of the unit-spheres in  the  $(\Xi,\Xi_0)$-subspaces.

\begin{lemma}\label{vvvv}
The topological space ${\mathcal S}_{\alpha,(\Xi,\Xi_0)}$ is contractible.
\end{lemma}
\begin{proof}
Every subset of a stab-set $\Xi$  is again a stab-set. In particular, if 
$z\in \Xi$, then the set $\{z\}$ is a stab-set and we associate it with a special two-dimensional subspace. We modify the element $\eta$ of the $\{z\}$-subspace by 
a local construction using  a  cut-off function to achieve that 
$\eta(z')=0$ for all $z'\in \Xi\setminus\{z\}$ and such that  the properties (1)-(3) hold true for the stab-set $\{z\}$.
If $\Xi_0\subset \Xi$, then the direct sum over all 
$z\in \Xi\setminus\Xi_0$ of the  subspaces constructed this way is a $(\Xi,\Xi_0)$-subspace possessing the properties (1)-(3).
This shows that there exist  $(\Xi,\Xi_0)$-subspaces.  

For every $z\in \Xi$  we fix an ordered basis $\{h_z, k_z\}$ of the symplectic vector space $R(Tu(z))$ satisfying 
$\omega (u(z))(h_z, k_z)>0$ which defines the same orientation as the orientation induced from $j$ on $T_zS$ via $Tu(z)$. In view of the properties (1)-(3), there exist in every 
$(\Xi,\Xi_0)$-subspace $N$, for every $z\in \Xi\setminus\Xi_0$, uniquely determined  vectors $\delta_z^N, \gamma_z^N\in N$ satisfying  $\delta_z^N(z)=h_z$, $\gamma_z^N(z)=k_z$
 and $\delta_z^N(z'),\gamma_z^N(z')=0$ for $z'\neq z$. If we fix a $(\Xi,\Xi_0)$-subspace $N_0$, there is,  for every $(\Xi\setminus \Xi_0)$-subspace,   a uniquely determined isomorphism, mapping $\delta^N_z$ and $\gamma_z^N$, respectively,
 to $\delta^{N_0}_z$ and $\gamma_z^{N_0}$, respectively,  for all $z\in \Xi\setminus\Xi_0$.
The map 
$$
\Gamma:[0,1]\times {\mathcal S}_{\alpha,(\Xi,\Xi_0)}\rightarrow {\mathcal S}_{\alpha,(\Xi,\Xi_0)}
$$
is defined by
$$
\Gamma(t,N) =\text{span}\{ (1-t)\delta_z^N +t\delta_z^{N_0}, (1-t)\gamma_z^N +t\gamma_z^{N_0}\ |\ z\in \Xi\setminus\Xi_0\}.
$$
In order to verify that for $t$ fixed  the vector space
$\Gamma(t,N)$ is a $(\Xi,\Xi_0)$-subspace we first observe that, by construction,  
$\dim \Gamma(t,N)\leq  2\cdot(\sharp\Xi-\sharp\Xi_0)$.
On the other hand, we conclude from 
$$
((1-t)\delta_z^N +t\delta_z^{N_0})(z)=h_z\quad \text{and}\quad  ((1-t)\delta_z^N +t\delta_z^{N_0})(z')=0,
$$
and 
$$
((1-t)\gamma_z^N +t\gamma_z^{N_0})(z)=k_z\quad \text{and}
\quad ((1-t)\gamma_z^N +t\gamma_z^{N_0})(z')=0,
$$
for $z\in\Xi\setminus\Xi_0$ and $z'\in\Xi_0$, that 
$\dim \Gamma(t,N)\leq  2\cdot(\sharp\Xi-\sharp\Xi_0)$ and the evaluation map 
$$ \Gamma(t,N)\to \oplus_{z\in \Xi\setminus \Xi_0}R(Tu(z)), \quad 
\eta\mapsto \bigl(\eta (z)\bigr)_{z\in \Xi\setminus \Xi_0}$$
is surjective.
Hence, $\dim \Gamma(t,N)=2\cdot(\sharp\Xi-\sharp\Xi_0)$ and $\Gamma(t,N)$ belongs indeed to the space ${\mathcal S}_{\alpha, \Xi,\Xi_0}$. We note that, in particular, 
$$
\delta^{\Gamma(t,N)}_z =(1-t)\delta_z^N +t\delta_z^{N_0}\quad  \text{and}\quad  \gamma^{\Gamma(t,N)}_z =(1-t)\gamma_z^N +t\gamma_z^{N_0}.
$$
In view of 
$\Gamma(0,N)=N$ and $\Gamma(1,N)=N_0$, the space 
${\mathcal S}_{\alpha, \Xi\setminus \Xi_0}$ is contractible.
\end{proof}

Let us recall  that $R(Tu(z))$ has a natural orientation given by the ordered basis $\{h_z, k_z\}$, which agrees with  the
orientation induced  from $(T_zS,j)$ by  $Tu(z)$.
Thus we obtain a natural orientation of  the vector space 
$ \bigoplus_{z\in\Xi\setminus\Xi_0} R(Tu(z))$.  This space is,  by definition,  the space
 of maps which associate with every point  $z\in\Xi\setminus\Xi_0$ a vector in $R(Tu(z))$.  Take a numbering $z_1,\ldots ,z_{\sharp(\Xi\setminus\Xi_0)}$ and take
 as a basis 
 $[z\rightarrow \delta_{z_1}(z)h_{z_1}],[z\rightarrow \delta_{z_1}(z)k_{z_1}],\ldots$. Here $\delta_{z_i}(z)=1$ if and only if  $z=z_i$ and otherwise its value is $0$.
 Then the associated orientation
 does not depend on the numbering of the points in $\Xi\setminus \Xi_0$. Using the canonical isomorphism from property (3), 
 \begin{equation}\label{o<?}
 N\rightarrow \bigoplus_{z\in\Xi\setminus\Xi_0} R(Tu(z)),\quad  \eta \mapsto  {(\eta (z))}_{z\in\Xi\setminus\Xi_0},
\end{equation}
 we see that every $(\Xi,\Xi_0)$-subspace $N$ inherits  a natural orientation for which  
 the map \eqref{o<?} is orientation-preserving.

 \begin{definition}
 Let $\alpha$ be a smooth un-noded stable map and $(\Xi,\Xi_0)$ a stab-set pair. The orientation
of the $(\Xi,\Xi_0)$-subspace $N$ of $H^3(u^\ast TQ)$,  for which  the linear isomorphism
 in (\ref{o<?}) is orientation preserving,  is called the {\bf natural orientation for the $(\Xi,\Xi_0)$-subspace $N$}.
 \end{definition}
 
 Note that  the proof of Lemma \ref{vvvv} proves also  the following useful lemma.
 \begin{lemma}
 Let $\alpha$ be a smooth un-noded stable map and $(\Xi,\Xi_0)$ a stab-pair.  If $N_0$ is a $\Xi_0$-subspace and
 $N_1$ a $(\Xi,\Xi_0)$-subspace, then $N=N_0\oplus N_1$ is a $\Xi$-subspace and the map
 $$
 N_0\oplus N_1\rightarrow N, \quad (a,b)\mapsto  a+b
 $$
 is orientation-preserving. (This is also true if we replace $N_0\oplus N_1$ by $N_1\oplus N_0$  since $N_0$ and $N_1$ are even-dimensional.)
 \end{lemma}

The  stab-set pair $(\Xi,\Xi_0)$ and the constraints $H$ for $\Xi$ induce the  set $H_0$ of constraints for $\Xi_0$. Given a $(\Xi,\Xi_0)$-subspace $N$ we have the topological isomorphism
$$
N\oplus H_{H}^3(u^\ast TQ)\rightarrow H^3_{H_0}(u^\ast TQ),\quad (v,\eta)\mapsto  v+\eta.
$$
We note that $N$ and $ H_{H}^3(u^\ast TQ)$ are linear subspaces of $H_{H_0}^3(u^\ast TQ)$.
Given the operator ${\bf L}\in {\mathcal F}_{\alpha,\Xi_0}$ we can define its restriction ${\bf L}'$ to 
$H_{H}^3(u^\ast TQ)$. If  $P:H^3_{H_0}(u^\ast TQ)\rightarrow H^3_{H_0}(u^\ast TQ)$ is  the projection onto
$H^3_{H}(u^\ast TQ)$ along $X$,  the composition 
 ${\bf L}\circ P$  is also an operator in ${\mathcal F}_{\alpha,\Xi_0}$, because 
${\bf L}\circ (Id-P)$ is a compact operator.

\begin{proposition}
We consider for a stab-pair $(\Xi,\Xi_0)$ and the constraints $H$ associated  with  $\Xi$ and the restriction  $H_0=H\vert \Xi_0$,   an operator ${\bf L}\in {\mathcal F}_{\alpha,\Xi_0}$ on 
$H^3_{H_0}(u^\ast TQ)$
and its restriction  ${\bf L}'$ to  $H^3_{H}(u^\ast TQ)$ as described above.  
We assume that the operators in ${\mathcal F}_{\alpha,\Xi}$ and ${\mathcal F}_{\alpha,\Xi_0}$ are equipped with the complex orientations. If the complex orientation of ${\bf L}'$ is defined by the vectors  by $(h_1\wedge\ldots \wedge h_k)\otimes (h_1^\ast\wedge\ldots\wedge h_l^\ast)$, then the complex orientation of ${\bf L}\circ P$ is given by
$(a_1\wedge\ldots\wedge a_{2\sharp(\Xi\setminus\Xi_0)}\wedge h_1\wedge\ldots\wedge h_k)\otimes (h_1^\ast\wedge\ldots\wedge h_l^\ast)$,
where $a_1\wedge\ldots\wedge a_{2\sharp(\Xi\setminus\Xi_0)}$ defines  the orientation of the 
$(\Xi, \Xi_0)$-subspace $N$.
\end{proposition}

\begin{proof}
By a homotopy argument we may assume that ${\bf L}$ is complex linear. For example,  we take the constraints
$(H_z)$ and $J$  satisfying  $J(u(z))H_z=H_z$ and $\nabla J=0$,  and ${\bf L}$ of the form 
$$
\eta\mapsto \nabla\eta+J(u)(\nabla\eta)j.
$$
In this case ${\bf L}'$ is also complex linear. Moreover, we can even assume,  in addition,  that $J(u(z))(R(Tu(z)))=R(Tu(z))$.
This implies that there is a $(\Xi,\Xi_0)$-subspace $N\in {\mathcal S}_{\alpha,(\Xi,\Xi_0)}$  which is complex linear.  Observe that the canonical orientation
on $N$ is the same as the complex orientation. In this situation the projection  $P$ is complex linear
and the homotopy
$$
[0,1]\ni t\mapsto {\bf L}\circ P + t {\bf L}\circ (\text{id}-P)\colon H^3_{H_0}(u^\ast TQ)\rightarrow H^2(u^\ast \Omega_{j,J})
$$
is a homotopy consisting of complex linear operators. Therefore,  the complex orientation of ${\bf L}$ corresponds to the 
complex orientation of ${\bf L}\circ P$. We note further that 
$$
{\bf L}\circ P | H^3_{H}(u^\ast TQ) ={\bf L}'\ \ \text{and}\ \  {\bf L}\circ P|X=0.
$$
Consequently,  $\ker({\bf L}\circ P)= N\oplus \ker({\bf L}')$ and $\ker({\bf L}')$ is a complex linear subspace,
whereas the cokernels are the same complex linear subspaces. This implies the desired result.
\end{proof}

\subsection{A Further Extension}
Next we extend the construction as follows.  We assume that $\alpha$ and $\Xi$ are  as before, and let  $E$ be a finite-dimensional oriented real vector space. In most of the applications $E$ will be a complex vector space, which we shall view as a real vector space equipped
with the orientation defined by the complex structure.
 By ${\mathcal F}_{\alpha,\Xi,E}$ we denote the collection of all operators
of the form
$$
E\oplus H^3_H(u^\ast TQ)\rightarrow H^2(u^\ast\Omega^{0,1}_{j,J}),\quad L(e,\eta)=\nabla\eta +J(u)(\nabla\eta) j +A(e,\eta),
$$
where $A$ is compact. Every 
$L\in {\mathcal F}_{\alpha,\Xi}$
 defines an operator $L_E$ in ${\mathcal F}_{\alpha,\Xi, E}$ by
$L_E(e,\eta)= L\eta$.  For fixed $\alpha$ and $E$,  two operators in ${\mathcal F}_{\alpha,\Xi,E}$ can be connected by a smooth path so that we can talk about continuation of an orientation. If $L\in {\mathcal F}_{\alpha,\Xi}$ and $\mathfrak{o}_L$
is the  orientation of  $\det(L)$ defined by the vector
$(k_1\wedge\ldots \wedge k_n)\otimes (h_1^\ast\wedge\ldots \wedge h_l^\ast)$, 
we obtain the orientation $\mathfrak{o}_{L_E}$ of  $\det(L_E)$ defined by the vector $(e_1\wedge\ldots \wedge e_{2m}\wedge k_1\wedge\ldots \wedge k_n)\otimes(h_1^\ast\wedge\ldots\wedge h_l^\ast)$,
where $e_1,\ldots ,e_{2m}$ is an oriented  basis for the even-dimensional oriented vector space $E$. The orientation $\mathfrak{o}_{L_E}$ only depends on $\mathfrak{o}_L$ and not on the choices involved in its construction.

\begin{proposition}
There is for every $\what{L}$ in ${\mathcal F}_{\alpha,\Xi,E}$ a natural orientation $\mathfrak{o}_{\what{L}}$
of $\det(\what{L})$ characterized by the following.
\begin{itemize}
\item[(1)] Along every smooth path $(\what{L}_t)_{0\leq t\leq 1}$ in ${\mathcal F}_{\alpha,\Xi,E}$, the continuation of $\mathfrak{o}_{\what{L}_0}$ 
is $\mathfrak{o}_{\what{L}_1}$.
\item[(2)] If $L\in {\mathcal F}_{\alpha,\Xi}$ then $\mathfrak{o}_{L_E}$ corresponds to the complex orientation
$\mathfrak{o}_L$ defined for $L\in {\mathcal F}_{\alpha,\Xi}$ via the above construction.
\end{itemize}
\end{proposition}
\begin{proof}
Having fixed an operator $\what{L}_0$ and  its orientation 
$\mathfrak{o}_{\what{L}_0}$,  we obtain an orientation $\mathfrak{o}_{\what{L}}$ 
for any other operator $\what{L}$ by continuation along a path connecting $\what{L}_0$ with  $\what{L}$.
The outcome does not depend on the choice of path since loops are contractible.  We now choose 
$\what{L}_0$ of the form 
$$
\what{L}_0(e,\eta)= \nabla\eta +J(u)(\nabla\eta)j,
$$
where the domain $H^3_H(u^\ast TQ)$ for $\eta$ is  complex. Then we have the distinguished
orientation for the complex operator $\eta\mapsto  \nabla\eta +J(u)(\nabla\eta)j$ and taking a complex basis for $E$
we define the  orientation $\mathfrak{o}_{L_E}$ as  described above the proposition.
Having fixed this orientation and proceeding as in the  previous proofs,  we obtain an orientation 
$\mathfrak{o}_{L}$  for every
$L\in {\mathcal F}_{\alpha,\Xi,E}$. We only need to show that it does not depend
on the specific choice of $L_0$.  But as already used before,  we can connect any two such choices
by an arc of operators in the same class.Therefore,  the propagation of the 
complex orientation is the complex orientation. This completes the proof. 
\end{proof}

It is useful to summarize our findings in this subsection.  Given a smooth stable un-noded $\alpha=(S,j,M,\emptyset,u)$,
 a stabset $\Xi$,  a collection of transversal constraints $H={(H_z)}_{z\in \Xi}$,  an oriented even-dimensional vector space $E$, 
 and a compatible almost complex structure $J$, we can consider the special operators on the  domain $E\oplus H^3_H(u^\ast TQ)$ into the  codomain $H^2(u^\ast\Omega_{j,J})$,  of the form
 $$
L(e,\eta)=\nabla\eta +J(u)(\nabla\eta)j+A(e,\eta).
 $$
 The determinants of these operators have a canonical orientation, called the complex orientation.
If  we change for a fixed stab-set $\Xi$,  any of the data $j$, $J$ or $H$,  the continuation of the complex orientation is the complex orientation.
We have also seen that the push-forward of such an operator by an isomorphism between smooth stable maps produces an operator in the same class
and that the push forward of its  complex orientation is the complex orientation.
In order to proof our main result about the orientations 
we need to  deal with  more general coordinate changes, which will be  done in the next subsection.

\begin{remark}\label{mm<0}
The obvious version of Proposition \ref{m<0} also holds.
\end{remark}

\subsection{General Coordinate Changes}

We consider the  stable un-noded marked Riemann surface $\beta=(S,j,M)$. In  the following it is not relevant  whether  $M$ is ordered, partially ordered or un-ordered.  The real dimension of the component of the Deligne-Mumford space containing the isomorphism class $[S,j,M]$ is equal to $d_\beta=6g(S)-6 + 2\sharp M$.
\begin{definition}\label{0-general}
An {\bf effective, oriented deformation germ for the stable marked surface $\beta=(S,j,M)$} consists of a real oriented vector space
$E$ of dimension $d_\beta$ and a smooth germ of deformations of $j$, 
$$
{\mathcal O}(E,0)\ni v\rightarrow j(v),
$$
satisfying  $j(0)=j$. Moreover, the Kodaira differential
$$
[Dj(0)]:E\rightarrow H^1(\alpha)
$$
is an orientation-preserving isomorphism, where the target $H^{1}(\alpha )$ carries the  orientation coming from its complex structure.
\end{definition}

We consider  tuples $(\alpha,\Xi,E)$ consisting of a smooth un-noded stable map $\alpha=(S,j,M,\emptyset, u)$, a stabilization set $\Xi$ and 
an oriented even-dimensional vector space $E$ of dimension 
$$
\dim_{\mathbb R}(E) = 6g(S)-6 + 2\sharp M+2\sharp\Xi.
$$
We would like to emphasize that $\Xi$ here is a stabilization in the original sense of Definition \ref{stab-f}. It is, in particular,  also a
stab-set.
Fix such a tuple and consider the set ${\mathcal H}$ of all associated linear constraints $H={(H_z)}_{z\in\Xi}$.
For fixed $z$ we can view the set of all associated linear subspaces  $H_z\subset T_zQ$ as an open subset of a Grassmannian. Consequently, 
this collection has in a natural way a smooth manifold structure. The collection ${\mathcal H}$ is a product of these manifolds
and therefore also a smooth manifold. We also recall that ${\mathcal H}$ is contractible. We obtain the  Hilbert space bundle ${\mathcal E}$ over ${\mathcal H}$ whose  fiber over the constraint $H$, ${\mathcal E}_H$,  is  the Hilbert space $H^3_H(u^\ast TQ)$ so that 
$$
{\mathcal E}_H:=H^3_H(u^\ast TQ).
$$

In the following we shall interpret the triples 
$$(\alpha,H,E)$$ 
as objects of a category. We recall that $\alpha$ is an un-noded smooth stable  map $\alpha=(S, j, M, \emptyset, u)$, the marked Riemann surface $(S, j, M)$ is equipped with the stabilization $\Xi$ and $H=(H_z)_{z\in \Xi}$ is an associated constraint. Moreover, $E$ is an oriented even dimensional vector space of real dimension $\dim_{\mathbb R}(E) = 6g(S)-6 + 2\sharp M+2\sharp\Xi$.

In order to define the morphisms of the category having as objects the triples $(\alpha,H,E)$, we take a second 
object $(\alpha',H',E')$ in which $\alpha'=(S', j', M', \emptyset, u')$ is a stable map with the 
stabilization $\Xi'$, the constraint $H'$, and the vector space $E'$. 
 The morphisms 
$$
(\alpha,H,E)\rightarrow (\alpha',H',E')
$$
of the category consists of pairs 
$(\phi,K)$ in which 
$$\phi:\alpha\rightarrow \alpha'$$
is an isomorphism of un-noded stable maps and  
$$K:E\oplus H^3_H(u^\ast TQ)\rightarrow E'\oplus  H^3_{H'}({(u')}^\ast TQ)$$ is a topological linear isomorphism
which is constructed as follows.

We choose a smooth map $\Lambda_u$  defined on an open neighborhood of the zero-section in $u^\ast TQ$ with image in $Q$
and having the following properties.
\begin{itemize}
\item[(1)]  $\Lambda_u(z,0_{u(z)})=u(z)$ for $z\in S$.
\item[(2)] $T_{(z,0_{u(z)})}\Lambda_u:{( u^\ast TQ)}_z=T_{u(z)}Q\rightarrow T_{u(z)}Q$ is the identity map.
\end{itemize}

A typical example is $\Lambda_u=\exp_u$ where $\exp:TQ\rightarrow Q$ is the exponential map of a  Riemann metric on the manifold $Q$ and 
$\exp_u$  is the pull-pack defined by the smooth map by $u:S\to Q$.  For the second triple $(S', j', M')$ we choose an analogous map $\Lambda_{u'}'$ having the analogous properties. Next we choose 
an effective oriented deformation germ  $v\mapsto j(v)$ for $v\in E$ associated with the stable surface $(S,j,M\cup\Xi)$ and satisfying $j(0)=j$. Similarly we take another
effective oriented deformation germ  $v'\mapsto j'(v')$ on $E'$ for the stable surface $(S', j', M'\cup \Xi')$ satisfying $j'(0)=j'$. 

In view of Proposition \ref{imremark}, there are associated uniformizers 
\begin{align*}
\alpha (v, \eta)&=(S, j(v), M, \emptyset, \Lambda_u(\eta))\\
\intertext{and}
\alpha' (v', \eta')&=(S', j'(v'), M', \emptyset, \Lambda'_{u'}(\eta')),
\end{align*}
defined for $(v, \eta)$ near $(0, 0)$ in $E\oplus H^3_{H}(u^*TQ)$, respectively for 
$(v', \eta')$ near $(0, 0)$ in $E'\oplus H^3_{H'}((u')^*TQ)$. Theorem \ref{key-z} guarantees a unique germ 
$$f:(v, \eta)\mapsto (v', \eta')=f(v, \eta)$$
of an sc-diffeomorphism between the parameter spaces satisfying $f(0, 0)=(0, 0)$ and a smooth germ 
$$\phi_{(v,\eta)}:\alpha_{(v,\eta)}\rightarrow \alpha'_{f(v,\eta)}$$
of isomorphisms between the stable maps satisfying  $\phi_{(0,0)}=\phi$.

The topological linear isomorphism  $K$ is defined by
$$
K:=Df(0,0):E\oplus H^3_H(u^*TQ)\to 
E'\oplus H^3_{H'}((u')^*TQ).
$$

In  other words,  the morphisms in our category are pairs $(\phi,K)$ in which  $\phi:\alpha \to \alpha'$ is an isomorphism between stable maps and $K$ is the linearization of a coordinate change associated with  $\phi$.

The identity morphisms of the category are given by  
$1_{(\alpha,H,E)}=(\text{id},\text{id})$ and if 
the morphisms can be composed, the multiplication of two morphisms is simply 
 $(\phi,K)\circ (\psi,K')=(\phi\circ\psi,K\circ K')$.  
\begin{definition}\label{GENERAL}
The morphism $(\phi, K):(\alpha, H, E)\to (\alpha', H', E')$ constructed above is called 
a {\bf  general linear coordinate change}.
\end{definition}
\begin{lemma}
Given $\alpha$, $\alpha'$, the isomorphism $\phi:\alpha\rightarrow \alpha'$, the constraints $H$ and $H'$,
 and the effective oriented germs $v\rightarrow j(v)$ and $v'\rightarrow j'(v')$,  the topological linear isomorphism
 $$
 K:E\oplus H^3_H(u^\ast TQ)\rightarrow E'\oplus H^3_{H'}({(u')}^\ast TQ)
 $$
 is uniquely determined by $\phi$ and the  differentials $Dj(0)$ and $Dj'(0)$. It does not depend on the choices
 of $\Lambda_u$ and $\Lambda_{u'}'$ as long as they have the stated properties.
 \end{lemma}
 \begin{proof}
  To see this we define $\Gamma_0=\phi^{-1}(\Xi')$. Since $u'\circ \phi =u$ we see that
$u$ intersects the nonlinear constraints associated with  $\Xi'$ at the points in $\Gamma_0$. If we deform $u$,  say to
$\Lambda_u(\eta)$,  the intersection set will be a small deformation $\Gamma=\Gamma_\eta$ of $\Gamma_0$.
For convenience we may assume that the points in $\Gamma_0$ are numbered and those in $\Gamma$ are numbered as well,
so that the $i$-th point in $\Gamma$ is the small deformation of the $i$-th point in $\Gamma_0$. Since $\Gamma_0$ is numbered
we obtain a numbering of $\Xi'$ via $\phi$.
Now there exists for given $v\in E$ near $0$ a unique $v'=v'(v,\Gamma)$
near $0\in E'$, and a unique  diffeomorphism $\phi_{v,\Gamma}$   near $\phi$,  which maps $\Gamma$ to $\Xi'$ preserving the numbering, but also
satisfies for the marked points $\phi_{(v,\Gamma)}(m_i)=m_i'$,  so that 
\begin{equation}\label{r<m}
j'(v'(v, \Gamma))\circ T\phi_{(v,\Gamma)} = T\phi_{(v,\Gamma)}\circ j(v).
\end{equation}
 This is proved by an implicit functions theorem in \cite{HWZ-DM}. Indeed,  following the argument there, $F(v,\Gamma,v', \phi_{(v,\Gamma)})=0$, where 
 $F$ is a first order elliptic differential operator in the fourth entry satisfying $F(0,\Gamma_0,0,\phi)=0$,  and for every small deformation $(v,\Gamma)$ of $(0,\Gamma_0)$ there is a unique solution $(v'(v, \Gamma),\phi_{(v,\Gamma)})$ near $(0,\phi)$. The linearization of $F$ with respect to the third and fourth variable
  at $(0,\phi)$ is an isomorphism. If we differentiate $F(v,\Gamma,v'(v,\Gamma),\phi_{(v,\Gamma)})=0$ with respect $(v,\Gamma)$ at $(0,\Gamma_0)$
  we obtain
  \begin{eqnarray*}
  0 &= &(D_{1,2}F)(0,\Gamma_0,0,\phi)(\delta v,\delta\Gamma)\\
  & +& (D_{3,4}F)(0,\Gamma_0,0,\phi)(Dv'(0,\Gamma_0)(\delta v,\delta\Gamma),(D\phi)_{(0,\Gamma_0)}(\delta v,\delta\Gamma)).
  \end{eqnarray*}
  The $D_{(3,4)}$-term is a linear isomorphism determining 
  $$
Dv'(0,\Gamma_0)(\delta v,\delta\Gamma),(D\phi)_{(0,\Gamma_0)}(\delta v,\delta\Gamma)).
  $$
  uniquely in terms of the $D_{(1,2)}$-term.  From (\ref{r<m}) it follows that it is uniquely determined in terms of $\phi$, $Dj(0)$ and $Dj'(0)$. 
  The linear isomorphism $K$ is the linearization of the following composition. For small $(v,\eta)$ we obtain a deformation $\Gamma_\eta$ of $\Gamma_0$
  where $\Gamma_\eta$ consists of the points $\gamma_i$ at which the map $\exp_u(\eta)$ intersects $H_{z_i'}'$. Then we consider the germ
  $$
  (v,\eta)\rightarrow (v,\Gamma_\eta,\Lambda_u(\eta))\rightarrow (v'(v,\Gamma_\eta),\eta'(v,\eta)),
  $$
  where $\eta'=\eta'(v,\eta)$ is defined by $\Lambda_{u'}'(\eta') \circ \phi_{(v,\Gamma_{\eta})} =\Lambda_u(\eta)$.
  From this expression we conclude  that $K$, the derivative at $(0,0)$,  is determined by the derivative of $\phi_{(v,\Gamma)}$ at $(0,\Gamma_0)$.
\end{proof}

We consider,  for two triplets $(\alpha,H,E)$ and $(\alpha',E',H')$ and a fixed isomorphism $\phi:\alpha\rightarrow \alpha'$,  the collection of all morphisms
$(\phi, K):(\alpha,H,E)\rightarrow (\alpha',H',E')$. We equip  this collection  with  a metric defining the distance
between $(\phi,K)$ and $(\phi,K')$  by  the 
 operator norm $\norm{K-K'}$. 
 
\begin{lemma}\label{i<<i}
For fixed triples $(\alpha,H,E)$ and $(\alpha',H',E')$  and fixed isomorphism  $\phi:\alpha\rightarrow \alpha'$, 
the collection of all morphisms of the form $(\phi,K):(\alpha,H,E)\rightarrow (\alpha',H',E')$ is a connected space.
\end{lemma}

\begin{proof}
Having $(H,E)$ and $(H',E')$ fixed,  the operator  $K$ depends only on $\phi$, and the differentials of $v\rightarrow j(v)$ and $v'\rightarrow j'(v')$ at $0$.
Given two deformations $j_1(v)$ and $j_2(v)$  of $j$ with effective Kodaira differentials at $v=0$, we find a smooth arc of germs of deformations
$(k_t)_{1\leq t\leq 2}$ connecting them such  that,  for $i=1,2$,   $k_i$ and $j_i$ have the same differentials and the Kodaira differentials are effective for all $t\in [1,2]$.
Using the deformation $t\mapsto  k_t$
and the analogous deformation $t\mapsto k_t'$,  we can construct a smooth family $(f_t, \phi_i)$ of germs 
of coordinate changes and the resulting arc $Df_t(0, 0)$ of derivatives at $(0,0)$ is a homotopy between the morphisms
associated with the morphisms   for  $ i=1,2$. This completes the proof.
\end{proof}
\begin{remark}In fact,  something more general is true due to the contractibility and therefore connectivity of ${\mathcal H}$ and ${\mathcal H}'$, respectively.
Namely,  given $\phi:\alpha\rightarrow \alpha'$,  the collection of all morphisms
$$
(\phi,K):(\alpha,H,E)\rightarrow (\alpha',H',E')
$$
where $H$ varies in ${\mathcal H}$ and $H'$ in ${\mathcal H}'$,  is connected.
\end{remark}
 \begin{lemma}
If  $(\phi,K):(\alpha,H,E)\rightarrow (\alpha',H',E')$ is  a morphism, and if 
the  the operator ${\bf L}:E\oplus H^3_H(u^\ast TQ)\rightarrow H^2(u^\ast\Omega_{j,J})$
belongs to the distinguished class ${\mathcal F}_{\alpha,\Xi,E}$, 
then the   operator ${\bf L}':E'\oplus H^3_{H'}(u^\ast TQ)\rightarrow H^2({(u)}^\ast\Omega_{j',J})$, 
defined by
$$
{\bf L}'(K(v,\eta))\circ T\phi = {\bf L}(v,\eta),
$$
 belongs to the class ${\mathcal F}_{\alpha',\Xi',E'}$.
 \end{lemma}
 \begin{remark}
 We shall call ${\bf L}'$ the {\bf push-forward}  of ${\bf L}$ and denote it by $(\phi,K)_\ast{\bf L}$. We can use $(\phi,K)$
 in the obvious way to push-forward an orientation $\mathfrak{o}$ of ${\bf L}$ to an orientation $(\phi,K)_\ast\mathfrak{o}$ of
 $(\phi,K)_\ast{\bf L}$.
 \end{remark}
 \begin{proof}
We recall that the morphism $(\phi,K)$ is obtained from a germ of an sc-diffeomorphism $f:(v, \eta)\to (v', \eta')=f(v, \eta)$ in the parameter space, satisfying $f(0, 0)=(0, 0)$,  together with a smooth germ $\phi_{(v, \eta)}:\alpha_{(v, \eta)}\rightarrow \alpha_{f(v, \eta)}'$ of isomorphism between stable maps satisfying  $\phi_{(0,0)}=\phi$.
Abbreviating the maps $\Lambda_u (\eta)=\wt{u}$ and $\Lambda'_{u'} (\eta')=\wt{u}'$ and recalling that $\wt{u}'\circ \phi_{(v,\eta)}=\wt{u}$ (by definition of an isomorphism between stable maps), we obtain, for suitable chosen $\ssc^+$-germs $s$ and $s'$ of sections, the identity 
\begin{equation*}
\begin{split}
\biggl(\dfrac{1}{2}\bigl[ &T\wt{u}'+J(\wt{u}')(T\wt{u}')j'(v')\bigr]-s'(v',\eta')\biggr)\circ T\phi_{(v, \eta)}\\
&= \dfrac{1}{2}\bigl[ T\wt{u}+J(\wt{u})\circ T\wt{u}\circ j(v)\bigr]-s(v,\eta).
\end{split}
\end{equation*}
 Recalling that $Df(0, 0)=K$, we set 
 $K(\delta v,\delta \eta)=(\delta v', \delta \eta')$ for $(\delta v,\delta \eta)\in E\oplus H^3_H(u^*TQ)$, and obtain at the point $(v, \eta)=(0, 0)$ the identity 
\begin{equation*}
\begin{split}
\biggl( \dfrac{1}{2}\bigl[&\nabla'(\delta \eta')+J(u')\nabla'(\delta \eta')\circ j' \bigr] +A'(\delta v', \delta\eta')\biggr)\circ T\phi\\
&=
 \dfrac{1}{2}\bigl[\nabla (\delta\eta)+J(u)\nabla(\delta \eta))j\bigr]+A(\delta v, \delta \eta).
 \end{split}
\end{equation*}
Therefore, 
$$
{\bf L}'\bigl(K(\delta v', \delta \eta')\bigr)\circ T\phi={\bf L}(\delta v, \delta \eta)
$$
and we see that ${\bf L}'$ belongs to the class ${\mathcal F}_{\alpha', \Xi', E'} $ of operators.
\end{proof}
The main result of this section is the following theorem.
\begin{theorem}\label{kkk<0}
Let $(\phi,K):(\alpha,H,E)\rightarrow (\alpha',H',E')$ be a morphism and ${\bf L}:E\oplus H^3_H(u^\ast TQ)\rightarrow H^2(u^\ast\Omega_{j,J})$ an operator in $ {\mathcal F}_{\alpha,\Xi,E}$.  Denote by ${\bf L}'=(\phi,K)_\ast{\bf L}:E'\oplus H^3_{H'}({(u')}^\ast TQ)\rightarrow H^2({(u')}^\ast\Omega_{j,J})$ the operator in ${\mathcal F}_{\alpha',\Xi',E'}$ which is the push-forward of ${\bf L}$.
 Then the push-forward of the complex orientation by $(\phi,K)$ is the complex orientation, i.e.
 $$
 (\phi,K)_{\ast}\mathfrak{o}_{\bf L} =\mathfrak{o}_{(\phi,K)_\ast{\bf L}}.
 $$
\end{theorem}
 We first prove the following special case of the theorem.

\begin{proposition}\label{ert<ll}   We consider  the smooth stable map 
$\alpha=(S,j,M,\emptyset,u)$  and let $\Xi$ and $\Xi_0$ be stabilizations  satisfying  $ \Xi_0\subset\Xi$.
Let $H_0$ be the constraints associated with $\Xi_0$ and $H$ the constraints associated with  $\Xi$. Let
$$
(\phi,K):(\alpha,H_0,E_0)\rightarrow (\alpha,H,E)
$$
be a morphism.  If ${\bf L}=(\phi,K)_\ast{\bf L}_0\in {\mathcal F}_{\alpha, \Xi,E}$ is the push-forward of  ${\bf L}_0\in {\mathcal F}_{\alpha, \Xi_0,E_0}$, 
then the push forward of the complex orientation 
$\mathfrak{o}_{\bf L_0}$ is the complex orientation $\mathfrak{o}_{\bf L}$, 
so that $$
(\phi,K)_\ast\mathfrak{o}_{{\bf L}_0} = \mathfrak{o}_{\bf L}.
$$
\end{proposition}
\begin{proof}
In order to verify the statement we are allowed, using a homotopy argument, to assume that $H_0$ is the restriction of $H$ to $\Xi_0$.
Again by a homotopy argument we also may take particular  deformations $v\mapsto  j_0(v)$ and $w\mapsto j(w)$. We can also change $J$ for the same reason, but we have to use the same structure
for both operators. Further we may assume that $\phi=\text{id}$. 

By the previous discussion,   being able to homotope problems and morphisms, it suffices to make a computation for well-chosen data. 
Consider $(S,j,M\cup\Xi_0)$. We take an effective orientation preserving germ 
$v\rightarrow j_0(v)$ defined on $E_0$ which, therefore, has a bijective, orientation-preserving  Kodaira differential at $v=0$. Moreover we assume  for small $v\in E_0$ that $j_0(v)=j$ near the points in $\Xi$.  Writing $\beta=(S,j,M\cup\Xi_0)$
and letting $\Gamma=\Xi\setminus\Xi_0$ we are going to apply the following lemma whose proof is based on a calculation of the Kodaira differential in \cite{HWZ-DM}.
\begin{lemma}\label{lem_Kodairs_diff}
Let $\beta=(S,j, M)$ be a stable marked Riemann surface  and let $\Gamma=\{z_1,\ldots, z_k\}$ be a collection of finitely many mutually different points not belonging to the set $M$ of marked points. We assume that $j_0(v)$ is an effective oriented deformation of $j$ defined for $v\in E_0$. We assume that $\phi_i:{\mathcal O}(T_{z_i}S, 0)\to S$ are germs of smooth embeddings satisfying 
$$\phi_i(0)=z_i\quad \text{and}\quad \dfrac{\partial \phi_i}{\partial w_i}(0)=\text{id}.$$
Define for small $w=(w_1,\ldots ,w_k)\in E_1$, where 
$$E_1=\bigoplus_{i=1}^kT_{z_i}S,$$
the deformation $\Gamma_w$ of $\Gamma$ by 
$\Gamma_w=\{\phi_1(w_1),\ldots ,\phi_k(w_k)\}.$ Finally, let 
$\Psi_w=\Psi (w, \cdot ):S\to S$ be a germ of smooth family of diffeomorphisms having compact supports near the points in $\Gamma$ and satisfying $\Psi_w(M)=M$ and $\Psi_w(\phi_i(w_i))=z_i$.  Moreover, $\Psi(0, z)=z$  for every $z\in S$ at $w=0$. Define for $(v, w)$ near $(0,0)\in E_0\oplus E_1$ the almost complex structure $j(v, w)$ on $S$ by $j(v, w)=\Psi^*_wj_0(v)$. Then the germ 
$$(v, w)\mapsto (S, j(v, w), M\cup \Gamma)$$
is an effective and orientation preserving deformation germ of $(S, j, M\cup \Gamma)$ on $(v, w)\in E=E_0\oplus E_1$.
\end{lemma}

We apply the lemma to the situation  in which the stable surface is $\beta=(S, j, M\cup \Xi_0)$ and 
$\Gamma$ is the set $\Gamma=\Xi\setminus \Xi_0=\{z_1,\ldots ,z_k\}$. From the lemma we obtain, using that $\Psi_w$ is the identity away from the points in $\Gamma$, for small $(v, w)\in E_0\oplus E_1$, the isomorphisms
$$\Psi_w:(S, j_0(w), M\cup \Xi_0\cup\Gamma_w)\to (S, j(v,w), M\cup \Xi)$$
between marked Riemann surfaces. In particular, 
$$T\Psi_w\circ j_0(v)=j(v, w)\circ T\Psi_w.$$
Moreover, 
\begin{equation}\label{eq_z_i}
\Psi_w(\phi_{i}(w_i))=z_i\quad  \text{for all $z_i\in\Xi\setminus\Xi_0$.}
\end{equation}
Taking the derivative of the map 
$w\mapsto \Psi_w(\phi_{i}(w))$ at the point $w=0$ in the direction 
$\delta w=(\delta w_1,\ldots,\delta w_k)\in \bigoplus_{i=1}^k T_{z_i}S$, and using that 
$\frac{\partial \phi_i}{\partial w_i}(0)=\text{id}$ and 
$\Psi (0, z)=z$ for all $z\in S$, we obtain the relation 
\begin{equation}\label{eq_z_i_1}
\sum_{s=1}^k\dfrac{\partial\Psi}{\partial w_s}(0, z_i)\delta w_s+\delta w_i=0
\end{equation}
for all $i=1,\ldots ,k$.

We  next consider  the germs 
\begin{gather*}
(v,\eta)\rightarrow (S,j_0(v),M,\emptyset,\Lambda_u(\eta))\\
(v,w,\xi)\rightarrow (S,j(v,w),M,\emptyset,\Lambda_u(\xi))
\end{gather*}
of good uniformizing families of stable maps. From Theorem \ref{key-z} we obtain a germ of an 
sc-smooth diffeomorphism in the parameter space
$$f:(v,\eta)\mapsto ((v, w), \xi)=f(v, \eta),$$
where $w=w(\eta)$ and $\xi =\xi (\eta)$ 
satisfy $w(0)=0$ and $\xi(0)=0$. The first aim is to compute the derivative $K=Df(0,0)$,
$$
Df(0,0)(\delta v,\delta \eta)=\biggl(\delta v, \dfrac{\partial w}{\partial \eta}(0)\delta \eta, \dfrac{\partial \xi}{\partial \eta}(0)\delta \eta \biggr).
$$
The diffeomorphism $f$ satisfies 
\begin{equation}\label{e<rr}
\Lambda_u(\eta) = (\Lambda_u(\xi (\eta)))\circ \Psi_{w(\eta)}.
\end{equation}
We recall from the proof of 
Theorem \ref{key-z}
that the map $w(\eta)$ is uniquely determined by the requirement 
$$\Lambda_u(\eta)(\phi_i(w_i(\eta))\in \Lambda_{u}(z_i, H_{z_i})$$
for $i=1,\ldots, k$, so that the  section 
$\Lambda_u(\eta)$ satisfies the constraints at the points $\phi_i(w_i(\eta))$. 
Taking the derivative of the maps 
$\eta\mapsto \Lambda_u(\eta)(\phi_i(w_i(\eta)))$ in the direction of $\delta\eta$ at the point $\eta=0$, 
we obtain, in view of $w_i(0)=0$ and $\phi_i(0)=z_i$, the relations 
\begin{equation*}\label{l<<L}
\delta\eta (z_i) + Tu(z_i)(\delta w_i) \in H_{z_i}, \quad i=1,\ldots, k,
\end{equation*}
where 
$$\delta w_i=\dfrac{\partial w_i}{\partial \eta}(0)\delta \eta\in T_{z_i}S.$$
Denoting by  $\pi_i:T_{u(z_i)}Q\rightarrow T_{u(z_i)}Q$ the projection onto $R(Tu(z_i))$, we conclude from  $\pi_i(H_{z_i})=0$ the equation 
\begin{equation}\label{u<u}
\delta w_i =\dfrac{\partial w_i}{\partial \eta}(0)\delta \eta = - Tu(z_i)^{-1}\pi_i (\delta\eta(z_i)).
\end{equation}
Differentiating the equation \eqref{e<rr} at the point $\eta=0$ in the direction of $\delta\eta$ and 
recalling that $\Psi(0, z)=z$ for all $z\in S$ and $\phi_i(w_i(0))=\phi_i(0)=z_i$,
we obtain, using \eqref{u<u}, the equation 
\begin{equation}\label{o<u}
\delta \eta (z) - Tu (z) \bigl(  \sum_{s=1}^k \frac{\partial\Psi}{\partial w_s}(0, z)\delta w_s\bigr) =\delta \xi (z)
\end{equation}
where, recalling that $\eta\mapsto \xi (\eta)$ satisfies $\xi(0)=0$, 
$$\delta \xi (z)=\biggl(\dfrac{\partial \xi}{\partial \eta}(0)\delta \eta\biggr)(z).$$
We denote the left hand side  of \eqref{o<u} by 
$P(\delta\eta)$,  so that \eqref{o<u}  becomes 
\begin{equation}\label{proj_eta}
P(\delta\eta) =\delta \xi.
\end{equation}
From $P(\delta\eta)(\zeta)=\delta \xi (\zeta)\in H_{\zeta}$ for all $\zeta\in \Xi$ we deduce that 
$\pi_i(P(\delta\eta)(z_j)=0$ for $j=1,\ldots ,k$, and conclude that 
$$P(P\delta\eta)=P(\delta\eta).$$
Hence $P$ is a bounded linear projection 
$$P:H^3_{H_0}(u^*TQ)\to H^3_{H_0}(u^*TQ).$$
Since $\frac{\partial \psi}{\partial w_i}(0, z)=0$ at the points $z$ away from $\Gamma=\{z_1,\ldots,z_k\}$,  the image of the operator $P$ is equal to 
$$P(H^3_{H_0}(u^*TQ))=H^3_{H}(u^*TQ).$$
Moreover, in view of \eqref{o<u} and \eqref{eq_z_i_1} and \eqref{u<u},
$$
[(Id-P)\delta\eta](z_i)= - Tu (z_i)\delta w_i=\pi_i\delta \eta (z_i).
$$
Consequently, the image of the projection 
$(Id-P)$,
$$N_1=(Id-P)(H^3_{H_0}(u^*TQ)),$$
is a $(\Xi,\Xi_0)$-subspace of  $H^3_{H_0} (u^*TQ)$ and we therefore obtain the topological direct sum 
$$
E_0\oplus H^3_{H_0}(u^\ast TQ) = E_0\oplus  N_1\oplus H^3_{H}(u^\ast TQ).
$$

Accordingly  we  decompose  $\delta v\oplus \delta \eta=\delta v\oplus \delta e\oplus\delta \xi$,  and  rewrite the linear isomorphism $K=Df(0, 0)$ in the form 
$$
K(\delta v,\delta e,\delta \xi) = (\delta v,\delta w (\delta e),\delta \xi),
$$
where $\delta w(\delta e)=(\delta w_1(\delta e),\ldots ,\delta w_k(\delta e))\in E_1$ is explicitly given  by 
$$\delta w_i(\delta e)=-Tu(z_i)^{-1}\pi_i(\delta e(z_i)).$$
The operator 
$$
{\bf L}_0(\delta v,\delta e,\delta \xi):= \nabla\xi +J(u)(\nabla\xi)j
$$ 
belongs to ${\mathcal F}_{\alpha, \Xi_0,E_0}$ and its complex orientation $\mathfrak{o}_{L_0}$ is obtained from the complex orientation of the operator $H^3_{H_0}(u^*TQ)\to H^2(u^*\Omega_{j, J})$ on the right hand side and the orientation of $E_0\oplus N_1$.

Since the map $(\delta v,\delta e)\mapsto (\delta v, \delta w)$ is orientation preserving, the orientation of the push-forward operator 
${\bf L}=(\text{id}, K)_\ast{\bf L}_0$ belonging to ${\mathcal F}_{\alpha, \Xi, E}$,  is the complex orientation. The proof of Lemma  
\ref{lem_Kodairs_diff} is complete. 

\end{proof}

Now we are in the position to prove Theorem \ref{kkk<0}.
\begin{proof}[Proof of Theorem \ref{kkk<0}]
We consider the morphism 
 $$
 (\phi,K):(\alpha,H,E)\rightarrow (\alpha',H',E')
 $$
which can be factored into the product
$$
(\alpha,H,E)\xrightarrow{(Id,L)}(\alpha,H^\ast,E')\xrightarrow{(\phi, L')}(\alpha',H',E').
$$
The second morphism is just the geometric pull-back of the data in $(\alpha',H',E')$ via the isomorphism $\phi:\alpha\rightarrow\alpha'$.
We already know from Proposition \ref{m<0} and Remark \ref{mm<0} that the geometric pull-pack and the  push-forward preserve the complex orientation. Therefore,  without loss of generality,  we may assume that
we are in the following situation
$$
(\alpha,H,E)\xrightarrow{(Id,K)} (\alpha,H',E').
$$
Then $\Xi,\Xi'\subset S$ and we define $\Xi''=\Xi\cup \Xi'$. We may assume,  by homotopy,  that the constraints over
$\Xi\cap \Xi'$ coincide. Associated  with $\Xi''$ we have the  constraints $H''$ satisfying $H''\vert \Xi=H$ and $H''\vert \Xi'=H'$. The associated
oriented vector space is denoted by $E''$. The above morphism factors as follows,
$$
(\alpha,H,E)\xrightarrow{(Id,L')}(\alpha,H'',E'')\xrightarrow{(Id,L'')} (\alpha,H',E').
$$
By applying Proposition \ref{ert<ll} to the first morphism and then to  the inverse of the second we see that  the complex orientation is preserved.
This implies the result we are aiming for and the proof of Theorem \ref{kkk<0} is complete.
\end{proof}

 The operators  in this subsection are not yet the operators we have to orient. However, they are very closely  related to the operators arising from the study of the Cauchy-Riemann  section $\ov{\partial}_J$ of the bundle $W\rightarrow Z$.

\subsection{Canonical Orientations for the CR-Operator}
Previously we have introduced a class of linear operators associated with the Cauchy-Riemann operator and have these operators  canonically oriented. In this subsection we relate them to the operators we actually have to orient
in order to turn the Cauchy-Riemann section  $(W\rightarrow Z,\ov{\partial}_J)$ into an oriented sc-Fredholm problem.

From Chapter \ref{four} and Chapter \ref{fredholm-section-z} we recall that the global models $E\rightarrow X$ and the  sections  $F$ representing the CR-section of $W\rightarrow Z$ are constructed from  local situations $F_\cg :\cg\rightarrow \what{\cg}$ in which  $F_\cg$ is represented
by an sc-Fredholm section ${\bf f}_\cg:{\mathcal O}\rightarrow\what{\mathcal O}$ such that the  diagram
$$
\begin{CD}
\cg @>F_\cg >> \what{\cg}\\
@VV \pi V    @VV\what{\pi} V\\
{\mathcal O}@>{\bf f}_\cg>> \what{\mathcal O}
\end{CD}
$$
commutes. 
Here $\pi$ and $\what{\pi}$ are the obvious projections from the graph to the domain. The  parameter spaces ${\mathcal O}$ are  open subsets
of splicing cores which are contractible sets. Later on, in the construction of the ep-groupoid $X$,  we take suitable open subsets of ${\mathcal O}$
such  that the collection ${\mathcal O}_\lambda$ will give on $Z$ a locally finite covering.  In any case, for the current discussion we may assume that the local section 
${\bf f}_\cg$ is defined on a  contractible set.  Therefore there are only two possible orientations
having the continuity property (over the smooth subset).  In view of  the above diagram the same is true for the section $F_\cg$. 

Let us refer to $F_\cg:\cg\rightarrow \what{\cg}$ as a {\bf local situation}.
We shall prove the following result.
\begin{theorem}\label{Maine<}
Every un-noded local situation $F_\cg:\cg\rightarrow \what{\cg}$ has a preferred orientation, called the complex orientation, which enjoys the continuation property along sc-smooth paths. In  other words,  we have for a smooth $x\in \cg$ a canonical orientation $\mathfrak{o}_x$ of $\text{DET}(F_\cg,x)$ so that continuation
along an sc-smooth path connecting two points relates the corresponding orientations. Moreover, the following holds. If 
$F_{\cg}:\cg\rightarrow \what{\cg}$ and $F_{\cg'}:\cg'\rightarrow \what{\cg}'$ are  two  un-noded local situations and if $\Phi:x\rightarrow x'$ is a morphism in $M(\cg,\cg')$ between two smooth points
with lift $\what{\Phi}$, then the associated push forward of the complex orientation is the complex orientation.
\end{theorem}

The proof of Theorem \ref{Maine<} will follow from Propositions \ref{prop_5.61} and \ref{A<A} below. 

We first construct the complex orientation for the local 
situations $F_\cg:\cg\rightarrow \what{\cg}$ built  on the un-noded stable map $\alpha=(S, j, M, \emptyset, u)$ using the notations of Chapter \ref{four} and Chapter \ref{fredholm-section-z}.  We take a good uniformizer 
$$q\rightarrow \alpha_q=(S,j(v),M,\emptyset,\exp_u(\eta)), \quad q=(v, \eta).$$
Note, there is no gluing parameter $a$! The section $\eta\in u^\ast TQ$ satisfies the constraints $\eta(z)\in H_z\equiv H_{u(z)}$ at the points $z\in \Xi$ of the stabilization set $\Xi$. The parameter $q=(v, \eta)$ varies in an open subset ${\mathcal O}$ of an sc-Banach space. We denote by 
$$\cg=\{(q, \alpha_q)\, \vert \, q\in {\mathcal O}\}$$
the graph of the uniformizing family and recall that  $\cg$ possesses the natural M-polyfold structure (in our case even an sc-manifold structure), which turns the projection $\pi:\cg\to \co$
into an sc-diffeomorphism. 

In order to recall also the strong bundle $\wh{\cg}$ over $\cg$ we consider the set $\wh{\co}=\{\wh{q}=(q, \xi)\}$ of pairs, in which $q\in \co$ and $\xi$ is a map 
$$
z\rightarrow [\xi(z):(T_zS,j)\rightarrow (T_{u(z)}Q,J)]
$$
into  complex anti-linear maps of class $H^{2,\delta_0}$ as introduced previously. The bundle $\wh{\co}\to \co$ is a strong bundle.  By 
$$\wh{\cg}=\{(\wh{q}, \wh{\alpha}_{\wh{q}}\, \vert \, \wh{q}\in \wh{\co}\}$$
we denote the graph of the lifted family $\wh{q}\mapsto  \wh{\alpha}_{\wh{q}}$, where $\wh{q}=(q, \xi)$. It is  defined by 
$$\wh{\alpha}_{\wh{q}} =(S,j(v),M,\emptyset,\exp_u(\eta),\Gamma(\exp_u(\eta),u)\circ\xi\circ\delta(v)). 
$$
Here the maps $\Gamma(\exp_u(\eta),u)$ and  $ \delta(v):(TS, j(v)\to (TS, j)$ are the complex linear maps introduced in Section \ref{polstrbundle}. As in the case of the sc-manifold $\cg$ above, the sc-smooth structure on $\wh{\cg}$ is defined by the requirement that the projction $\wh{\cg}\to \wh{\co}$ is an sc-diffeomorphism. 

Continuing with recollections, the section $F_{\cg}$ of the strong bundle $\wh{\cg}\to \cg$ representing the Cauchy-Riemann section is defined by 
\begin{equation}\label{eq_5_59} 
F_{\cg}(q, \alpha_q)=\bigr((q, {\bf f}(q)), \wh{\alpha}_{(q, {\bf f}(q))}\bigr).
\end{equation}
The section ${\bf f}$ of the bundle $\wh{\co}\to \co$ is  the anti-linear map ${\bf f }(q)$, defined at the point $q=(v, \eta)$ by the Cauchy-Riemann section as follows,
\begin{equation}\label{e<r}
\begin{split}
\Gamma(&\exp_u(\eta),u)\circ {\bf f}(q)\circ \delta(v)\\
& =\dfrac{1}{2}\bigl[ T(\exp_u(\eta)) + J(\exp_u(\eta))\circ (T(\exp_u(\eta)))\circ j(v)\bigr].
\end{split}
\end{equation}
It is clear that the linearizations of ${\bf f}$ are related to the linearizations of the Cauchy-Riemann section on the right-hand side of \eqref{e<r}. This will lead to the class of operators introduced in the previous subsection. 

Given a smooth point $q_0=(v_0, \eta_0)$,  we choose an $\ssc^+$-section ${\bf s}$ of $\wh{\co}\to \co$ satisfying ${\bf s}(q_0)={\bf f}(q_0)$ and define the section 
$s$ by 
\begin{equation*}
s(q)=\Gamma(\exp_u(\eta),u)\circ {\bf s}(q)\circ \delta(v), \quad q=(v, \eta).
\end{equation*}
From  \eqref{e<r} we deduce the identity for all $q=(v, \eta)\in \co$, 
\begin{equation}\label{eq_5.61}
\begin{split}
\Gamma(&\exp_u(\eta),u)\circ [{\bf f}(q)-{\bf s}(q)]\circ \delta (v)\\
&= \dfrac{1}{2}\bigl[ T(\exp_u(\eta)) + J(\exp_u(\eta))(T(\exp_u(\eta)))\circ j(v)\bigr]-s(q).
\end{split}
\end{equation}
Abbreviating the fiber derivative at smooth points $q=(v, \eta)$ by 
$\Phi_q(\delta\eta)=\nabla_2\exp_u(\eta)\delta \eta$, we obtain for  the linearization of the identity \eqref{eq_5.61} at the special smooth point $q_0=(v_0, \eta_0)\in \co$ in the direction of $\delta q=(\delta v, \delta \eta)$, recalling that ${\bf f}(q_0)={\bf s}(q_0)$, the identity 
\begin{equation}\label{bb2}
\begin{split}
\Gamma(&\exp_u(\eta_0),u)\circ ({\bf (f-s)}'(q_0)\delta q)\circ \delta(v)\\
& =\bigl[  \nabla (\Phi_{q_0}(\delta\eta)) + J(\exp_u(\eta_0))\nabla(\Phi_{q_0}(\delta \eta))\circ j(v_0)\bigr]\\
&\phantom{=}+A(\delta v,\Phi_{q_0}(\delta\eta))\\
&\equiv  {\bf L}_{q_0}(\delta v,\Phi_{q_0}(\delta\eta)).
\end{split}
\end{equation}
The covariant derivative $\nabla$ has already been used before. The operator  $ {\bf L}_{q_0}$ is defined by the formula 
\eqref{bb2} and belongs to the class of operators discussed in the previous subsection.

 Introducing the linear maps $\wt{\Phi}_{q_0}$ by 
$\wt{\Phi}_{q_0}(\delta v, \delta \eta)=(\delta v, \Phi_{q_0}(\delta \eta))$ and $\Theta_{q_0}(\gamma) =\Gamma(\exp_u(\eta_0),u)\circ \gamma \circ \delta(v_0)$, the equation \eqref{bb2}  
becomes 
\begin{equation}\label{o1}
\Theta_{q_0}\circ \bigl[({\bf f}-{\bf s})'(q_0)(\delta q)\bigr]= {\bf L}_{q_0} \circ \tilde{\Phi}_{q_0}(\delta q)
\end{equation}
where $\delta q=(\delta v, \delta\eta)$.

We now equip the Fredholm operator ${\bf L}_{q_0}$ with the 
the complex orientation from the previous subsection, which, using the coordinate change associated with $ \tilde{\Phi}_{q_0}$ and 
$\Theta_{q_0}$ defines the orientation of the Fredholm operator 
${\bf (f-s)}'(q_0)$.

To be explicit, the orientation $\mathfrak{o}$ of the operator 
$({\bf f}-{\bf s})'(q_0)$ defined by the vector $(h_1\wedge\ldots\wedge h_k)\otimes (h_1^\ast\wedge\ldots \wedge h_l^\ast)$ 
corresponds to the orientation $\mathfrak{o}'$ of ${\bf L}_{q_0}$ defined by the vector
$(\wt{\Phi}_{q_0}(h_1)\wedge\ldots\wedge\wt{\Phi}_{q_0}(h_k))\otimes(h_1^\ast\circ\Theta^{-1}_{q_0}\wedge\ldots\wedge h_l^\ast\circ\Theta^{-1}_{q_0})$.

As a consequence we obtain  an orientation for the section $F_\cg$ of the bundle $\wh{\cg}\to \cg$. The transformations used to transport the orientation 
from ${\bf L}_{q_0}$ to $({\bf f}-{\bf s})'(q_0)$ are level-wise smooth and the orientations for the  operators of type  
${\bf L}_{q_0}$ possess, as we already know, the continuation property along sc-smooth paths. This implies that the orientations of the linearizations at the section ${\bf f}$ at the 
 various smooth points have the continuation property along sc-smooth paths. So far we have proved the following result.

\begin{proposition}\label{prop_5.61}
Given a good uniformizing family $q\mapsto \alpha_q$ around an un-noded  stable map $\alpha$, and given the section $F_\cg$ of the associated strong bundle $\wh{\cg}\rightarrow \cg$, which  represents  the Cauchy-Riemann section,
there is at every smooth point  $(q, \alpha_q)$ a natural orientation of the determinants of the linearizations, called the the complex orientation.
It is the orientation corresponding at a smooth $(q,\alpha_q)$ via the identity \eqref{o1} to the complex orientation of the Cauchy-Riemann operator ${\bf L}_q$.
Since the coordinate change depends smoothly on $q$,  the complex orientation for $F_\cg$ has the continuation property along sc-smooth paths.
\end{proposition}

At this point we have constructed for the section $F_\cg$ of the strong bundle $\wh{\cg}\rightarrow \cg$, built on an un-noded stable map $\alpha$,
a canonical orientation called the {\bf complex orientation}. We shall call  the section $F_\cg:\cg\rightarrow \what{\cg}$ an {\bf un-noded local model} and,  if equipped 
with the complex orientation, we call it  a {\bf canonically oriented un-noded local model}. 

We  next prove the following statement  which relates the orientations between two such local models. 

\begin{proposition}\label{A<A}
Let  $F_\cg:\cg\rightarrow \wh{\cg}$ and $F_{\cg'}:\cg'\rightarrow \wh{\cg}'$ be  canonically oriented un-noded local models.
Let  $\Phi_0\colon  x_0\rightarrow x'_0$ be  an isomorphism between smooth points in $\cg$ resp. $\cg'$,  belonging to to the set $M(\cg,\cg')$ of morphisms. 
Then the push-forward ${(\Phi_0)}_\ast: \text{DET}(F_\cg,x_0)\rightarrow \text{DET}(F_{\cg'},x'_0)$ is orientation preserving. 
\end{proposition}
The proof requires some preparation.  We start with two  smooth un-noded stable maps $\alpha=(S,j,M,\emptyset,u)$ and $\beta=(S',j',M',\emptyset,u')$ having the associated good uniformizing 
families $q\mapsto \alpha_q$  on $q\in \co$ and $p\mapsto  \beta_p$ on $p\in \co'$. We denote the corresponding graphs by 
$\cg$ and $\cg'$.  By construction, the points $q=(v, \eta)\in \co$ belong to $E\oplus H^3_H(u^\ast TQ)$, where the subscript $H$ indicates
the constraints $\eta(z)\in H_{u(z)}$ for $z$  belonging to the  stabilization set $\Xi$. As usual we shall  abbreviate $H_z\equiv H_{u(z)}$. Analogously, the points $p=(w, \xi)\in \co'$ belong 
$E'\oplus H^3_{H'}((u')^\ast TQ)$ satisfying the constraints $H'$ associated with the stabilization set $\Xi'$.

We now assume that there exists an isomorphism 
$$\Phi_0=\bigl((q_0,\alpha_{q_0}), \phi_0, (p_0,\beta_{p_0})\bigr)\in M(\cg, \cg')$$
in the morphism set in which  
$\phi_0\colon \alpha_{q_0}\to \beta_{p_0}$ is an isomorphism between stable maps. 
We use the notation $q_0=(v_0, \eta_0)$ and $p_0=(w_0,\xi_0)$. By Theorem \ref{key-z} there is a germ of an sc-diffeomorphism $p=f(q)$ between the parameters satisfying $p_0=f(q_0)$, and a core-smooth family $q\mapsto  \phi_q$ of isomorphims  
$$
\phi_q:\alpha_q\rightarrow \beta_{f(q)}
$$
between the stable maps satisfying $\phi_{q_0}=\phi_0$.
Differentiating the sc-smooth map $f$  at  the distinguished point $q_0$ we obtain a topological linear isomorphism
\begin{equation}\label{eq_5.65}
\wt{K}=Df(q_0)\colon E\oplus H^3_H(u^\ast TQ)\rightarrow E'\oplus H^3_{H'}((u')^\ast TQ).
\end{equation}
Next we consider the second un-noded local model $F_{\cg'}\colon \cg'\to \wh{\cg}'$ representing the Cauchy-Riemann section and defined by 
$$F_{\cg'}(p, \beta_p)=\bigl((p, {\bf g}(p)), 
\wh{\beta}_{(p, {\bf g}(p))}\bigr),$$
where the section ${\bf g}$ of the bundle $\wh{\co}'\to \co'$ is defined, at $p=(w, \xi)$, by 
\begin{equation}\label{bb11_0}
\begin{split}
\Gamma'(&\exp_{u'}'(\xi),u')\circ ({\bf g}(p)\circ \delta'(w)\\
& =\dfrac{1}{2}\bigl[ T(\exp_{u'}'(\xi)) + J(\exp'_{u'}(\xi))(T(\exp_{u'}'(\xi)))\circ j'(w)\bigr].
\end{split}
\end{equation}
Using the notation $f(q)=(w(q), \xi (q))$ we conclude,  from the properties of the isomorphisms $\phi_q$, that 
$\exp_{u'}'(\xi(q))\circ \phi_q=\exp_u(\eta)$
where $q=(v,\eta)$, and hence 
$$T(\exp_{u'}'(\xi (q))) )\circ T\phi_q=T(\exp_u(\eta)).$$
Moreover, 
$j'(w(q))\circ T\phi_q=T\phi_q\circ j(v).$
Therefore, for all $q=(v, \eta)$,
\begin{equation*}
\begin{split}
\dfrac{1}{2}&\bigl[ T(\exp_{u'}'(\xi(q))) + J(\exp'_{u'}(\xi(q)))(T(\exp_{u'}'(\xi(q)))\circ j'(w(q))\bigr]\circ T\phi_q\\
&\phantom{==}=
\dfrac{1}{2}\bigl[ T(\exp_{u}(\eta)) + J(\exp_{u}(\eta))(T(\exp_{u}(\eta))\circ j(v)\bigr].
\end{split}
\end{equation*}
We obtain the following identity in $q=(a, \eta)$,
\begin{equation*}
[\Gamma'(\exp_{u'}'(\xi (q)),u')\circ {\bf g}(f(q))\circ \delta'(w(q))]\circ  T\phi_q=
\Gamma(\exp_u(\eta ),u)\circ {\bf (f}(q)\circ \delta(v).
\end{equation*}
Hence, for all $q\in \co$ near $q_0$, 
\begin{equation}\label{bb11}
\begin{split}
\bigl[ \Gamma'(&\exp_{u'}'(\xi (q)),u')\circ [{\bf g}(f(q))-{\bf t}(f(q))]\circ \delta'(w(q))\bigr]\circ T\phi_q\\
& = \Gamma(\exp_u(\eta),u)\circ [{\bf (f-s)}(q)\delta q ]\circ \delta(v),
\end{split}
\end{equation}
 where  the section ${\bf t}$ of $\wh{\co}'\to \co$ is defined by 
 $[\Gamma'(\exp_{u'}'(\xi(q)), u')\circ {\bf t}(f(q))\circ \delta'(w(q))]\circ T\phi_q=\Gamma(\exp_u (\eta), u)\circ {\bf s}(q)\circ \delta (v).$
 Differentiating the identity \eqref{bb11} in $q$ at the distinguished  point $q_0=(v_0, \eta_0)$ where $({\bf g}-{\bf t})(f(q_0))=0$ and $({\bf f}-{\bf s})(q_0)=0$, and recalling that $\phi_{q_0}=\phi_0$ and $f(q_0)=p_0$, we obtain the equation 
$$ \Theta'_{p_0}\circ ({\bf g-t})'(p_0)\circ  \wt{K}(\delta q)\circ T\phi_0 = \Theta_{q_0}\circ ({\bf f-s})'(q_0)\delta q,$$
where $\delta q=(\delta v, \delta \eta)$.
In view of the definition of the operator ${\bf L}_{q_0}$ in \eqref{bb2} and the analogous definition of ${\bf L}'_{p_0}$ for the above primed version, we find the equation
\begin{equation}\label{o101}
({\bf L}'_{p_0}\circ \wt{\Phi}'_{p_0}\circ \wt{K}(\delta q))\circ T\phi_0 = {\bf L}_{q_0}\circ\tilde{\Phi}_{q_0}(\delta q).
\end{equation}
Recalling the fiber derivatives 
$$\wt{\Phi}_{q_0}(\delta q)=(\delta v, \nabla_2\exp_u (\eta_0)\delta \eta),$$
where $q_0=(v_0, \eta_0)$ and $\delta q=(\delta v, \delta \eta)$, 
we introduce  the analogous primed version at $p_0=(w_o, \xi_0)$, defined by 
 $$\wt{\Phi}'_{p_0}(\delta p)=(\delta w, \nabla_2\exp'_u (\xi_0)\delta \xi)$$
for $\delta p=(\delta w, \delta \xi)$. We define a new set of constraints $H^0$ by $H^0_\zeta=\nabla_2\exp_u(\eta_0)(H_z)$ for $\zeta=\exp_u (\eta_0)(z)$ and $z\in \Xi$. These are the constraints for the map $u_0=\exp_u(\eta_0)$. Analogously we define the constraints ${H^0}'$ in the primed version for the map 
$u'_0=\exp_{u'}'(\xi_0)$. 
Then 
$$\wt{\Phi}_{q_0}:E\oplus H^3_H(u^*TQ)\to E\oplus H^3_{H^0}(u^*_0TQ)$$
is a topological isomorphism. 
Analogously, in the primed case,
$$\wt{\Phi}'_{q_0}:E'\oplus H^3_{H'}((u')^*TQ)\to E'\oplus H^3_{{H^0}'}((u')^*_0TQ)$$
is a topological isomorphism. The topological linear isomorphism $K$ is defined by requiring that the following diagram is commutative, 
$$
 \begin{CD}
E\oplus H^3_H(u^\ast TQ)@>\wt{K}>> E'\oplus H^3_{H'}({(u')}^\ast TQ)\\
@VV\tilde{\Phi}_{q_0} V    @VV\tilde{\Phi}'_{p_0}V\\
 E\oplus H_{H^0}^3(u_0^\ast TQ)@>K>> E'\oplus H^3_{H^{0'}}({(h_0)}^\ast TQ).
 \end{CD}
 $$
Here $\wt{K}=Df(q_0)$.  The equation \eqref{o101} can be rewritten in the form 
\begin{equation}\label{eq_5.68}
[{\bf L}_{p_0}'\circ K(\delta q)]\circ T\phi_{0} = {\bf L}_{q_0}(\delta q).
\end{equation}
In order to complete the proof of Proposition \ref{A<A} we use the following computation.
 \begin{lemma}\label{hofer}
 The topological linear isomorphism 
 $$
 K: E\oplus H_{H^0}^3(u_0^\ast TQ)\rightarrow E'\oplus H^3_{H^{0'}}({(h_0)}^\ast TQ)
 $$
together with the isomorphism $\phi_0:\alpha_{q_0}\to \beta_{p_0}$  is a general linear coordinate change  $(\phi_0,K)$ in the sense of Definition \ref{GENERAL}.
 \end{lemma}
 \begin{proof}
By a change of coordinates we shall arrive at the setting of Definition \ref{GENERAL}. First we recall the sc-smooth parameter transformation $p=f(q)$. In detail, $p=(w, \xi)=(f_1(q), f_2(q))$ and 
 $q=(v, \eta)$. It satisfies, for all $q$ near $q_0$ the relations
 \begin{align*}
 \exp_{u'}'(\xi)\circ \phi_q &=\exp_u(\eta)\\
 T\phi_q\circ j(v)&=j'(w)\circ T\phi_q.
  \end{align*}
  Moreover, $\phi_q(m)=m'$ for all $m\in M$ and $\phi_{q_0}=\phi_0$. The constraints are $\eta(z)\in H_z$ and $\xi (z')\in H_{z'}'$ for all $z\in \Xi$ and $z'\in \Xi'$. We shall reformulate all these relations in the new coordinates 
 $(\wh{v},\wh{\eta})\in E\oplus H^3_{H^0}(u^\ast_0 TQ)$ and $(\wh{w},\wh{\xi})\in E'\oplus H^3_{H^{0'}}((u_0')^\ast TQ)$,
 where 
 $u_0=\exp_u (\eta_0)$ and $u_0'=\exp_{u'}' (\xi_0)$, defined by the formulae  
 {
 \begin{equation}\label{BUD}
\begin{aligned}
&\text{$v=v_0+\wh{v}$\quad\,\,   \text{and}\quad $\eta=\eta_0+\nabla_2\exp_u(\eta_0)^{-1}\wh{\eta} $}\\
&\text{$w=w_0+\wh{w}$\quad  \text{and}\quad $\xi=\xi_0+\nabla_2\exp'_{u'}(\xi_0)^{-1}\wh{\xi}$}.
\end{aligned}
\end{equation}
}
Correspondingly the almost complex structures become 
$\wh{j}(\wh{v})=j(v_0+\wh{v})$ and $\wh{j}'(\wh{w})=j'(w_0+\wh{w})=j'(w)$ so that 
$$\phi_0\colon (S, \wh{j}(0), M, \emptyset, u_0)\to 
(S', \wh{j}'(0), M', \emptyset, u_0')$$
is the isomorphism from before between the stable maps. We have the associated uniformizers, abbreviating
$\wh{q}=(\wh{v}, \wh{\eta})$ and $\wh{p}=(\wh{w}, \wh{\xi})$, 
 {
 \begin{align*}
\wh{q}\mapsto \wh{\alpha}_{\wh{q}}&=
 \bigl(S, \wh{j}(\wh{v}), M, \emptyset, \exp_u\bigl(\eta_0+\nabla_2\exp_u(\eta_0)^{-1}\wh{\eta}\ \bigr)\\
\wh{p}\mapsto \wh{\beta}_{\wh{p}}&=
 \bigl(S', \wh{j}'(\wh{w}), M', \emptyset, \exp'_{u'}\bigl(\xi_0+\nabla_2\exp_{u'}'(\xi_0)^{-1}\wh{\xi}\ \bigr).
 \end{align*}
}	 
The parameter transformation $f$ becomes, in the new coordinates, {$\wh{p}=(\wh{w}, \wh{\xi})=\wh{f}(\wh{q})$}, and looks in detail as follows,
{
\begin{align*}
\wh{w}&=f_1\bigl( v_0+\wh{v}, \eta_0+\nabla_2\exp_u(\eta_0)^{-1}\wh{\eta}\bigr)-w_0\\
\wh{\xi}&=\nabla_2\exp'_{u'}(\xi_0)\bigl[ f_2\bigl( v_0+\wh{v}, \eta_0+\nabla_2\exp_u(\eta_0)^{-1}\wh{\eta}\bigr)-\xi_0\bigr].
\end{align*}
}
The local sc-diffeomorphism $\wh{f}$  satisfies $\wh{f}(0)=0$.
Rewriting the smooth family $\phi_q=\phi_{(v,\eta)}$ of isomorphisms,  we introduce the map
 {
$$\wh{\phi}_{(\wh{v},\wh{\eta})} :=\phi_{(v_0+\wh{v},\eta_0+\nabla_2\exp_u(\eta_0)^{-1}\wh{\eta})}
$$
}
and obtain that $T\wh{\phi}_{\wh{q}}\circ \wh{j}(\wh{v})=
\wh{j}'(\wh{w})\circ T\wh{\phi}_{\wh{q}}$ and 
 {
$$\exp_{u'}'\bigl(\xi_0+\nabla_2\exp'_{u'}(\xi_0)^{-1}\wh{\xi}\ \bigr)\circ \wh{\phi}_{\wh{q}}=
\exp_{u}\bigl(\eta_0+\nabla_2\exp_{u}(\eta_0)^{-1}\wh{\eta}\bigr).
$$
}
Therefore, $\wh{\phi}_{\wh{q}}:\wh{\alpha}_{\wh{q}}\to 
\wh{\beta}_{\wh{f}(\wh{q})}$ is a smooth family of isomorphisms between stable maps, satisfying $\wh{\phi_0}=\phi_0$. By construction, the constraints become $\wh{\eta}(\zeta)\in H^0_\zeta$ where 
$\zeta=\exp_u (\eta_0)(z)$ for $z\in \Xi$ and $\wh{\xi}(\zeta')\in {H^0}'_{\zeta'}$ where $\zeta'=\exp_{u'}'(\xi_0)(z')$ and $z'\in \Xi'$.
Finally, a computation shows that 
$$\wt{\Phi}'_{p_0}=D\wh{f}(0)=Df(p_0)\circ \wt{\Phi}_{q_0}.$$
Consequently, $K=D\wh{f}(0)$ and we have verified our claim that the pair $(\phi_0, K)$ is indeed a morphism in the sense of 
Definition \ref{GENERAL}. 
 \end{proof}
 In view of Lemma \ref{hofer} and Theorem \ref{kkk<0}  the proof of Proposition \ref{A<A} is finished  \hfill  $\blacksquare$

With Proposition \ref{prop_5.61} and Proposition \ref{A<A} , the proof of Theorem \ref{Maine<} is complete.\hfill 
$\blacksquare$
 
So far we have considered only un-noded local models $\what{\cg}\rightarrow \cg$ with local sections $F_\cg$, where $\cg$ is  built  on families of un-noded stable maps on which the set of nodal pairs is empty. For these sections we have constructed a canonical orientation 
for $\text{DET}(F_\cg,x)$ at smooth points $x\in \cg$, called the complex orientation. It  enjoys the continuation property along smooth paths. Moreover, in view of Proposition \ref{A<A}  a morphism $\Phi\in M(\cg,\cg')$ between two un-noded models  pushes the complex  orientation forward to  the complex orientation.

In the next step we shall canonically orient the section  $F_\cg$ of a noded local  model $\wh{\cg}\to \cg$. We recall that $\cg$ is the graph 
$\cg=\{(q, \alpha_q)\, \vert \, q\in \co\}$ and the family $q\mapsto \alpha_q$ is now a noded uniformizing family where $q$ varies in the splicing core $\co$.  By construction, the splicing core $\co$ is connected and has the property that 
that every sc-smooth loop is contractible. 
Consequently, the section $F_\cg$ has two possible orientations possessing the continuation property along sc-smooth paths. We shall show that there is a canonical choice of one of the  two possible orientations. 
In order to do so we choose a smooth point $x_0=(q_0, \alpha_{q_0})\in \cg$ which  represents an un-noded map.  This can be done since
the un-noded elements are open and dense.

We shall use in the following  that the subset consisting of  the noded stable maps in $\cg$ behaves
as if it has ``codimension $2$''. Namely if $\phi:[0,1]\rightarrow {\mathcal G}$ is an sc-smooth path starting and ending at un-noded elements 
we can take a small deformation avoiding all noded elements. This is  true
because a complex gluing parameter associated with a node has real dimension $2$.  Similarly, if $\phi$ is an sc-smooth path starting at an un-noded element
and ending at a noded element, then it can be deformed to an sc-smooth  path $\psi$, where $\psi(1)$ is the original end point, $\psi(0)$ is the original starting point,
and $\psi(t)$ is un-noded for all $t\in [0,1)$.

 At the previously chosen un-noded element $x_0$ we construct the un-noded local model $\wh{\cg}'\to \cg'$ having the section $F_{\cg'}$. It is represented by the graph $\cg'=\{(p, \alpha'_p)\}$ and satisfies $(0, \alpha'_0)\equiv x_0'=(q_0, \alpha_{q_0})=x_0$. There exists an isomorphism $\Phi=(x_0,\text{id}, x_0')\in M(\cg, \cg')$.

We are going to equip the orientation $\mathfrak{o}$ of $F_\cg$ with the orientation coming from the orientation $\mathfrak{o}_{x_0}$ by continuation along sc-smooth paths.  The orientation 
 $\mathfrak{o}_{x_0}$ is defined by the requirement that its push-forward $\Phi_\ast \mathfrak{o}_{x_0}$ by the coordinate change, is the complex orientation  $\mathfrak{o}_{x_0'}$ of 
 $x_0'\in \cg'$. This definition of the orientation 
  $\mathfrak{o}_{x_0}$ does not depend on the choice of $x_0'$. Indeed, assume that $x_0=x_0''\in \cg''$ and $x_0''=(p_0', \alpha''_{p_0'})$ belongs to a second such un-noded local model $\wh{\cg}''\to \cg''$ having the local section $F_{\cg''}$. Then we have the isomorphism $\Phi'=(x_0, \text{id}, x_0'')\in M(\cg, \cg'')$. By Proposition  \ref{A<A}, the isomorphism
$$\Phi'\circ \Phi^{-1}=(x_0',\text{id}, x_0'')\in M(\cg',\cg'')$$
preserves the complex orientation. Therefore,  
the orientation $\mathfrak{o}_{x_0}$ induced from the complex orientation $\mathfrak{o}_{x_0'}$ agrees with the orientation induced from the complex orientation $\mathfrak{o}_{x_0''}$. The construction is clearly locally stable. Using that the splicing core $\co$ is connected and every sc-smooth loop is contractible, we can, starting from our choice of the orientation of $\text{DET}(F_\cg,x_0)$, using the property of continuation  along smooth paths, orient the set $\text{DET}(F_\cg, x)$ at every smooth point $x\in \cg$. We call this orientation  $\mathfrak{o}$ temporarily the canonical orientation of $\cg$.

We still have to show for an isomorphism $\Phi=(x, \phi, x')\in M(\cg, \cg')$ that the push-forward of the canonical orientation $\mathfrak{o}_x$ is the canonical orientation $\mathfrak{o}_{x'}$, which, as we already know, holds true in the un-noded case. To verify this we take the local sc-diffeomorphism $\phi=t\circ s^{-1}:O(x)\to O(x')$ between the two open neighborhoods such that  the morphisms $\Phi_y=(y, \phi, \phi (y))\in M(\cg, \cg')$ are  associated isomorphisms  for all $y\in O(x)$. We take a smooth point $y_0\in O(x)$ representing an un-noded element and an sc-smooth path 
$\gamma:[0,1]\to O(x)\subset \cg$ connecting $y_0=\gamma (1)$ with $x=\gamma (0)$, such that $\gamma (\tau )$ is un-noded for $\tau\in (0,1]$. Then $\phi (\gamma (\tau))$ is an sc-smooth path connecting $\phi (\gamma (1))$ with $\phi (\gamma (0))=x'$, such that $\phi (\gamma (\tau))$ is un-noded for $\tau \in (0, 1]$. In view of Proposition  \ref{A<A}, the isomorphism 
$\Phi_{y_0}=(y_0, \phi, \phi (y_0))\in M(\cg, \cg')$ pushes the complex orientation $\mathfrak{o}_{y_0}$ to the complex orientation $\mathfrak{o}_{\phi (y_0)}$. Since the canonical orientation $\mathfrak{o}_x$ is obtained by continuation  
 along the path $\gamma$ from the complex orientation $\mathfrak{o}_{y_0}$  and similarly,  the orientationation $\mathfrak{o}_{x}$  is obtained by continuation along the path $\phi\circ \gamma$ from the complex orientation $\mathfrak{o}_{\phi (y_0)}$, we conclude, by continuity, that indeed the isomorphism $\Phi=(x, \phi, x')$ pushes the canonical orientation  $\mathfrak{o}_{x}$  to the canonical orientation  $\mathfrak{o}_{x'}$ as we wanted to show.
 
Now, for every local model $\wh{\cg}\to \cg$ having the section $F_\cg$, the determinant space 
 $\text{DET}(F_\cg, x)$ at a smooth point possesses a canonical orientation, now called {\bf complex orientation}. It has the continuation property along sc-smooth paths. Moreover, a morphism $\Phi=(x, \phi, x')\in M(\cg, \cg')$ between any two local models  connecting smooth points pushes the complex orientation forward to the complex orientation.

Finally we are ready to prove the main result.

\begin{proof}[Proof of Theorem \ref{BINGO}]
In order to complete our construction of the canonical orientation we recall that the models $(E\to X, F)$ for the Cauchy-Riemann section $(W\to Z, \ov{\partial}_J)$ are obtained by gluing the local models $(\wh{\cg}\to \cg, F_\cg)$ by means of isomorphisms together.

At this point all local situations $F_\cg:\cg\rightarrow \what{\cg}$ are oriented with a canonical orientation, called the complex orientation. These orientations are compatible with the morphisms in $M(\cg,\cg')$ and $M(\wh{\cg},\wh{\cg}')$.  In the construction of a model 
$F:X\rightarrow E$ we take open subsets of the graphs $\cg$.  The thereby restricted local models are still oriented. 

It is clear that 
$X$ and $E$,  as disjoint unions of local models, possess a canonical orientation which is compatible with morphisms.

Moreover, if  $X$ and $X'$  are two models constructed  this way we take  the union of both families to obtain the third model $X''$ and two equivalences
$X\rightarrow X''$ and $X'\rightarrow X''$. The equivalences preserve the orientations of the sections of the corresponding strong bundles. 
Whatever model of this kind (it depends on the choice of the local models used),  the orientations are compatible. This implies that the  Cauchy-Riemann section $\ov{\partial}_J:Z\rightarrow W$ possesses  a canonical orientation.
The proof of Theorem \ref{BINGO} is complete.  
\end{proof}

\backmatter
\bibliographystyle{amsalpha}

\bibliographystyle{amsalpha}


\bibliography{Bibliography}

\printindex

\end{document}